\documentclass[11pt,reqno]{amsart}

\usepackage{fullpage}
\setcounter{tocdepth}{1}
\usepackage{mathrsfs}
\usepackage{amsfonts}
\usepackage{amsmath}
\usepackage{amsthm}
\usepackage{amssymb}
\usepackage{latexsym}
\usepackage[all]{xy}
\usepackage[xindy]{imakeidx}
\makeindex[program=xindy,title=Index of notation, columns=4]
\usepackage{palatino}
\usepackage[mathscr]{eucal}
\usepackage{xr-hyper}
\usepackage{hyperref}
\usepackage{multicol}
\usepackage[dvipsnames]{xcolor}

\theoremstyle{plain}
\newtheorem{thm}[equation]{Theorem}
\newtheorem{cor}[equation]{Corollary}
\newtheorem{prop}[equation]{Proposition}
\newtheorem{lem}[equation]{Lemma}
\newtheorem*{thm*}{Theorem}

\theoremstyle{definition}

\theoremstyle{remark}

\newtheorem{ex}[equation]{Example}
\newtheorem{rem}[equation]{Remark}

\makeatletter
\renewcommand{\subsection}{\@startsection{subsection}{2}{0pt}{-3ex
plus -1ex minus -0.2ex}{-2mm plus -0pt minus
-2pt}{\normalfont\bfseries}} \makeatother

\numberwithin{equation}{subsection}


\renewcommand{\o}{\otimes }
\newcommand{\hdot}{{\:\raisebox{2pt}{\text{\circle*{1.5}}}}}
\newcommand{\idot}{{\:\raisebox{2pt}{\text{\circle*{1.5}}}}}
\newcommand{\too}{\,\longrightarrow\,}
\newcommand{\mto}{\mapsto }
\newcommand{\iso}{{\;\stackrel{_\sim}{\to}\;}}

\newcommand{\onto}{\,\,\twoheadrightarrow\,\,}
\newcommand{\into}{\hookrightarrow }
\newcommand{\sset}{\subset}

\newcommand{\inv}{^{-1}}
\newcommand{\bplus}{\mbox{$\bigoplus$}}
\newcommand{\beq}{\begin{equation}\label}
\newcommand{\eeq}{\end{equation}}
\newcommand{\g}{{\mathfrak{g}}}
\renewcommand{\t}{{\mathfrak t}}
\renewcommand{\b}{{\mathfrak b}}
\renewcommand{\u}{{\mathfrak u}}
\renewcommand{\l}{{\mathfrak l}}
\newcommand{\fn}{{\mathfrak n}}
\renewcommand{\k}{\bbc}
\newcommand{\X}{{\mathscr X}}
\newcommand{\lla}{_\la }
\newcommand{\rhoshift}{}
\newcommand{\kalg}{\kappa^{\operatorname{alg}} }
\newcommand{\kgeom}{\kappa^{\operatorname{top}} }
\newcommand{\kbalg}{\overline{\kappa}^{\operatorname{alg}} }
\newcommand{\kbgeom}{\overline{\kappa}^{\operatorname{top}} }
\newcommand{\oo}{{\mathcal O}}
\newcommand{\Ga}{\Gamma }
\newcommand{\KK}{{\mathbf K}}
\newcommand{\OO}{{\mathbf O}}


\newcommand{\GGM}{{\mathbb G}_m}

\DeclareMathOperator{\sh}{{\mathrm{S}_\hbar}}
\DeclareMathOperator{\sym}{\mathrm{S}}
\DeclareMathOperator{\Ext}{\mathrm{Ext}}
\DeclareMathOperator{\gr}{\mathrm{gr}}
\DeclareMathOperator{\End}{\mathrm{End}}
\DeclareMathOperator{\ad}{\mathrm{ad}}
\DeclareMathOperator{\Lie}{{\mathrm{Lie}}}
\newcommand{\la}{\lambda}

\newcommand{\bla}{{\boldsymbol{\lambda}}}
\newcommand{\bmu}{{\boldsymbol{\mu}}}
\newcommand{\bnu}{{\boldsymbol{\nu}}}
\newcommand{\M}{{\mathbf{M}}}
\newcommand{\MM}{\overline{\mathbf{M}} }
\newcommand{\MMloc}{\overline{\mathbf{M}}_{\mathrm{loc}}}
\newcommand{\uh}{U_\hbar }
\newcommand{\uloc}{U_{\mathrm{loc}}}
\newcommand{\dd}{{\mathscr D}}
\renewcommand{\dh}{\dd_\hbar }
\newcommand{\zh}{{Z_\hb(\g)}}

\newcommand{\hb}{\hbar }
\newcommand{\kh}{{\bbc}[\hb]}

\newcommand{\bbc}{\C }

\newcommand{\C}{\mathbb{C}}

\newcommand{\fO}{\mathbf{O}}
\newcommand{\Z}{\mathbb{Z}}

\newcommand{\cO}{\mathcal{O}}

\newcommand{\Gm}{\mathbb{G}_m}
\newcommand{\bA}{\mathbb{A}}

\newcommand{\pt}{\mathrm{pt}}


\newcommand{\Gv}{{\check G}}
\newcommand{\Nv}{{\check N}}
\newcommand{\Tv}{{\check T}}
\newcommand{\Bv}{{\check B}}
\newcommand{\Lv}{{\check L}}
\newcommand{\Pv}{{\check P}}

\newcommand{\Uv}{{\check U}}
\newcommand{\GvO}{\Gv(\fO)}

\newcommand{\fg}{\mathfrak{g}}
\newcommand{\fb}{\mathfrak{b}}
\newcommand{\ft}{\mathfrak{t}}
\newcommand{\fl}{\mathfrak{l}}
\newcommand{\fp}{\mathfrak{p}}
\newcommand{\fk}{\mathfrak{k}}

\newcommand{\bX}{\mathbb{X}}


\newcommand{\Gr}{\mathsf{Gr}}

\newcommand{\wcN}{\widetilde{\mathcal{N}}}
\newcommand{\wfg}{\widetilde{\mathfrak{g}}}
\newcommand{\wfgr}{\widetilde{\mathfrak{g}}_{\mathrm{r}}}
\newcommand{\wfl}{\widetilde{\mathfrak{l}}}
\newcommand{\wflr}{\widetilde{\mathfrak{l}}_{\mathrm{r}}}
\newcommand{\wft}{\widetilde{\mathfrak{t}}}


\newcommand{\muv}{{\check \mu}}
\newcommand{\rhov}{{\check \rho}}
\newcommand{\alv}{{\check \alpha}}

\newcommand{\cDb}{\cD^{\mathrm{b}}}
\newcommand{\cDbc}{\cD^{\mathrm{b}}_{\mathrm{c}}}

\newcommand{\Perv}{\mathsf{Perv}}

\newcommand{\IC}{\mathrm{IC}}

\newcommand{\cF}{\mathcal{F}}
\newcommand{\cG}{\mathcal{G}}

\newcommand{\cM}{\mathcal{M}}

\newcommand{\cL}{\mathcal{L}}
\newcommand{\cR}{\mathcal{R}}

\newcommand{\cV}{\mathcal{V}}

\newcommand{\simto}{\xrightarrow{\sim}}

\newcommand{\lan}{\langle}
\newcommand{\ran}{\rangle}

\newcommand{\For}{\mathrm{For}}

\newcommand{\cD}{\mathcal{D}}

\newcommand{\Rep}{\mathsf{Rep}}

\newcommand{\fT}{\mathfrak{T}}

\DeclareMathOperator{\Hom}{Hom}

\DeclareMathOperator{\Ind}{\mathrm{Ind}}

\newcommand{\rs}{\mathrm{rs}}

\newcommand{\bv}{\mathbf{v}}
\newcommand{\bvv}{\overline{\mathbf{v}}}
\newcommand{\B}{\mathscr{B}}
\newcommand{\coH}{\mathsf{H}}
\newcommand{\w}{{}^w \hspace{-1pt}}
\newcommand{\y}{{}^y \hspace{-1pt}}
\newcommand{\tla}{{}^{(\la)} \hspace{-1pt}}

\newcommand{\FBK}{\mathsf{F}^{\operatorname{BK}} }
\newcommand{\Fgeom}{\mathsf{F}^{\operatorname{geom}} }
\newcommand{\qh}{\mathrm{Q}_\hbar}
\newcommand{\W}{\mathfrak{W}}
\newcommand{\V}{\mathsf{V}}
\newcommand{\uSp}{\underline{\mathsf{Sp}}}
\newcommand{\Sp}{\mathsf{Sp}}
\newcommand{\rest}{\mathbf{R}}
\newcommand{\restH}{\mathrm{R}}
\renewcommand{\sc}{\mathrm{sc}}
\newcommand{\sfF}{\mathsf{F}}
\newcommand{\ResGr}{\mathfrak{R}}
\renewcommand{\a}{\mathfrak{a}}
\newcommand{\dwalg}{\mathsf{DW}^{\operatorname{alg}} }
\newcommand{\dwgeom}{\mathsf{DW}^{\operatorname{geom}} }
\newcommand{\cS}{\mathbb{S}}

\newcommand*{\longhookrightarrow}{\ensuremath{\lhook\joinrel\relbar\joinrel\rightarrow}}
\newcommand{\llangle}{\langle \hspace{-1.5pt} \langle}
\newcommand{\rrangle}{\rangle \hspace{-1.5pt} \rangle}

\newcommand{\id}{\mathrm{id}}
\def\mon#1{\text{\rm $#1$-mon}}

\title{Differential operators on $G/U$ and the affine Grassmannian}

\author{Victor Ginzburg}
\address{
Department of Mathematics, University of Chicago,  Chicago, IL 
60637, USA.}
\email{ginzburg@math.uchicago.edu}

\author{Simon Riche}
\address{
Universit{\'e} Blaise Pascal - Clermont-Ferrand II, Laboratoire de Math{\'e}matiques, CNRS, UMR 6620, Campus universitaire des C{\'e}zeaux, F-63177 Aubi{\`e}re Cedex, France.
}
\email{simon.riche@math.univ-bpclermont.fr}

\thanks{The work of V.G.~was supported in part by the NSF grant DMS-1001677.
The work of S.R.~was supported by ANR Grants No.~ANR-09-JCJC-0102-01 and No.~ANR-2010-BLAN-110-02.}


\begin{document}
\begin{abstract}
We describe the equivariant cohomology of cofibers of spherical perverse sheaves on the affine Grassmannian of a reductive algebraic group in terms of the geometry of the Langlands dual group. In fact we give two equivalent descriptions: one in terms of $\dd$-modules of the basic affine space, and one in terms of intertwining operators for universal Verma modules. We also construct natural collections of isomorphisms parametrized by the Weyl group in these three contexts, and prove that they are compatible with our isomorphisms. As applications we reprove some results of the first author and of Braverman--Finkelberg.
\end{abstract}
\maketitle


\section{Introduction}
\label{sec:intro}

\subsection{}

The geometric Satake equivalence relates perverse sheaves (with complex coefficients in our case) on the affine Grassmannian $\Gr$ of a complex connected reductive algebraic group $\Gv$ and representations of the Langlands dual (complex) reductive group $G$. The underlying vector space of the representation $\mathbb{S}(\cF)$ attached to a perverse sheaf $\cF$ is given by its total cohomology $\coH^{\hdot}(\Gr,\cF)$. It turns out that various \emph{equivariant} cohomology groups attached to $\cF$ also carry information on the representation $\mathbb{S}(\cF)$, see e.g.~\cite{gi,yz,bf}. In this paper, if $\Tv$ is a maximal torus of $\Gv$, we describe, in terms of $G$, the equivariant cohomology of cofibers of $\cF$ at $\Tv$-fixed point, with respect to the action of $\Tv$ or of $\Tv \times \C^\times$, where $\C^\times$ acts on $\Gr$ by loop rotation.
In fact these groups can be described in two equivalent ways, either in terms of $\dd$-modules on the basic affine space or in terms of intertwining operators for universal Verma modules. We also describe the Weyl group action on this collection of spaces induced by the action of $N_{\Gv}(\Tv)$ on $\Gr$.

\subsection{}
\label{ss:intro-1}

To state our results more precisely, choose some Borel subgroup $\Bv \subset \Gv$ containing $\Tv$, and let $T,B$ be the maximal torus and the Borel subgroup of $G$ provided by the geometric Satake equivalence. Note that the Tannakian construction of $G$ also provides no zero vectors in each simple root subspace of $\g:=\Lie(G)$.
In this paper we study three families of graded modules over a polynomial algebra, attached to $G$ or $\Gv$, and endowed with symmetries parametrized by their common Weyl group $W$.

Let $\t:=\Lie(T)$ and $\sh:=\sym(\t)[\hb]$, considered as a graded
algebra where $\hb$ and the vectors in $\t$ are in degree $2$. (Here, $\sym(\t)$ is the symmetric algebra of the vector space $\t$.) Let also
$\bX:=X^*(T)$
be the character lattice. Let $\Rep(G)$ be the category of finite dimensional algebraic $G$-modules. 


Our first family of graded modules over $\sh$ is of ``geometric'' nature. Let $U$ be the unipotent radical of $B$, and let $\X:=G/U$ be the basic affine space. Consider the algebra $\dh(\X)$ of (global) asymptotic differential operators on $\X$, i.e.~the Rees algebra of the algebra $\Gamma(\X,\dd_\X)$ of differential operators on $\X$, endowed with the order filtration (see \S\ref{subsec2} for details). This algebra is naturally graded, and endowed with an action of $T$ induced by \emph{right} multiplication on $\X$. We denote by $\dh(\X)_\la$ the weight space associated with $\la \in \bX$. Then we set
\[
\cM^{\operatorname{geom}}_{V,\la} := \bigl( V \o \tla \dh(\X)_\la \bigr)^G.
\]
(Here the twist functor $\tla(\cdot)$ will be defined in \S\ref{subsec2}.)

Our second family of graded $\sh$-modules is of ``algebraic'' nature. Let $\uh(\g)$ be the asymptotic enveloping algebra of $\g$ (i.e.~the Rees algebra of the algebra $U(\g)$ endowed with the Poincar\'e-Birkhoff-Witt filtration, see~\S\ref{subsec1} for details). For $\la \in \bX$ we let $\M(\la)$ be the asymptotic universal Verma module associated with $\la$, a graded $(\sh,\uh(\g))$-bimodule whose precise definition is recalled in \S\ref{subsec1}. Then we set
\[
\cM^{\operatorname{alg}}_{V,\la} := \Hom_{(\sh,\uh(\g))} \bigl( \M(0), V \o \M(\la) \bigr)
\]
where we consider morphisms in the category of $(\sh,\uh(\g))$-bimodules. 

We will also construct a third family of graded modules, of
``topological'' nature,
which is associated with the ``Langlands dual data.''
Let ${\check \t}:=\Lie(\Tv)$. We have canonical identifications $\bX \cong X_*(\Tv)$ and $\sh \cong \sym({\check \t}^*)[\hb]$. Consider the category $\Perv_{\Gv(\OO)}(\Gr)$ of $\Gv(\OO)$-equivariant perverse sheaves on the affine Grassmannian $\Gr$ of $\Gv$. Then for any $\la \in \bX$ and $\cF$ in $\Perv_{\Gv(\OO)}(\Gr)$ we set
\[
\cM^{\operatorname{top}}_{\cF,\la} := \coH^{\hdot+\la(2\rhov)}_{\Tv \times \C^\times}(i_\la^! \cF).
\]
Here $i_\la$ is the inclusion of the point of $\Gr$ naturally associated with $\la$, $\C^\times$ acts on $\Gr$ by loop rotation, and $\rhov$ is the half sum of positive coroots of $G$. Then $\cM^{\operatorname{top}}_{\cF,\la}$ is in a natural way a graded $\sh$-module. 


\subsection{}
\label{ss:intro-conv}

Each of these families is endowed with a kind of ``symmetry'' governed by the Weyl group $W$ of $(G,T)$ or $(\Gv,\Tv)$. (Note that these Weyl groups 
can be canonically identified.)
Namely, we have isomorphisms of graded $\sh$-modules
\[
\mathsf{A}_{V,\la,w} : \cM_{V,\la} \simto \w \cM_{V,w\la} \qquad \text{or} \qquad \mathsf{A}_{\cF,\la,w} : \cM_{\cF,\la} \simto \w \cM_{\cF,w\la}
\] 
for all $w \in W$. (Here the twist functor $\w (\cdot )$ will be defined in \S\ref{subsec2}.)

In the ``geometric'' case, the isomorphisms $\mathsf{A}^{\operatorname{geom}}_{V,\la,w}$ are constructed using a $W$-action on $\dh(\X)$ given by partial Fourier transforms due to Gelfand--Graev and studied in particular by Bezrukav\-nikov--Braverman--Positselskii in \cite{bbp}. These operators depend on a choice of (non-zero) simple root vectors in $\g$, which we choose to be those provided by the geometric Satake equivalence.

In the ``algebraic'' setting, the isomorphisms $\mathsf{A}^{\operatorname{alg}}_{V,\la,w}$ are constructed using properties of intertwining operators between a Verma module and a tensor product of a $G$-module and a Verma module. Our 
constructions are ``renormalized'' variants of classical constructions appearing in the
definition of the dynamical Weyl group (see \cite{tv,ev}) but, as
  opposed to those considered in \emph{loc}.~\emph{cit.},
our  isomorphisms do not have poles.
Again, the operators  $\mathsf{A}^{\operatorname{alg}}_{V,\la,w}$ depend on a choice of simple root vectors in $\g$, which we choose as above.

In the ``topological'' setting, the isomorphisms $\mathsf{A}^{\operatorname{top}}_{\cF,\la,w}$ are induced by the action of $N_\Gv(\Tv)$ on $\Gr$ by left multiplication.

In each setting, the collection of operators 
is compatible with the product in $W$ in the sense that 
\[
\y \bigl( \mathsf{A}_{V,y\la,x} \bigr) \circ \mathsf{A}_{V,\la,y} = \mathsf{A}_{V,\la,xy} \qquad \text{or} \qquad \y \bigl( \mathsf{A}_{\cF,y\la,x} \bigr) \circ \mathsf{A}_{\cF,\la,y} = \mathsf{A}_{\cF,\la,xy}
\]
for any $\la,V,\cF$ as above and $x,y \in W$.

\subsection{}

In addition,
 these families of graded modules are endowed with morphisms 
\[
\mathsf{Conv}_{V,V',\la,\mu} : \cM_{V,\la} \o_{\sh} \tla \cM_{V',\mu} \to \cM_{V \o V',\la+\mu}, \quad \mathsf{Conv}_{\cF,\cF',\la,\mu} :
\cM_{\cF,\la} \o_{\sh} \tla \cM_{\cF',\mu} \to \cM_{\cF \star \cF',\la+\mu}
\]
related to the monoidal structure on the category $\Rep(G)$ (denoted $\o$) or $\Perv_{\Gv(\OO)}(\Gr)$ (denoted $\star$).

In the ``geometric'' setting, morphisms $\mathsf{Conv}^{\operatorname{geom}}_{V,V',\la,\mu}$ are induced by the product in the algebra $\dh(\X)$. In the ``algebraic'' setting, morphisms $\mathsf{Conv}^{\operatorname{alg}}_{V,V',\la,\mu}$ are induced by composition of morphisms of bimodules. In the ``topological'' setting, morphisms $\mathsf{Conv}^{\operatorname{top}}_{\cF,\cF',\la,\mu}$ are defined using a standard construction considered in particular in \cite{abg}.

\subsection{}
\label{ss:intro-main-thm}

Our main result might be stated as follows (see Corollary \ref{key_cor}, Theorem \ref{thm:W-symmetry}, and Proposition \ref{prop:convolution}).

\begin{thm*}
For $\cF$ in $\Perv_{\Gv(\OO)}(\Gr)$ and $\lambda \in \bX$ there exist canonical isomorphisms
\[
\cM^{\operatorname{top}}_{\cF,\la} \ \cong \ \cM^{\operatorname{geom}}_{\cS(\cF),\la} \ \cong \ \cM^{\operatorname{alg}}_{\cS(\cF),\la},
\]
where $\cS : \Perv_{\Gv(\OO)}(\Gr) \simto \Rep(G)$ is the geometric Satake equivalence. These families of isomorphisms are compatible with operators $\mathsf{A}$ and with morphisms $\mathsf{Conv}$.
\end{thm*}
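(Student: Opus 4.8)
The plan is to establish the three-term chain of isomorphisms by constructing two separate bridges and then checking compatibility with the extra structures $\mathsf{A}$ and $\mathsf{Conv}$. The natural strategy is to take the ``algebraic'' family $\cM^{\operatorname{alg}}$ as the central object: it is the most elementary to manipulate (Hom-spaces of bimodules over the Rees algebras $\sh$ and $\uh(\g)$), and the other two families map into it most transparently.

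First I would construct the isomorphism $\cM^{\operatorname{geom}}_{V,\la} \cong \cM^{\operatorname{alg}}_{V,\la}$. The key input is the classical description of the ring of global differential operators $\Gamma(\X,\dd_\X)$ on the basic affine space $\X = G/U$, together with its $T$-weight decomposition, in terms of the enveloping algebra $U(\g)$ and (twisted) Verma modules. Passing to the asymptotic (Rees-algebra) versions, one should obtain a canonical identification of the $\la$-weight space $\tla \dh(\X)_\la$ with a bimodule built from $\M(\la)$, after which taking $G$-invariants of $V \o (-)$ on one side corresponds to taking the bimodule Hom $\Hom_{(\sh,\uh(\g))}(\M(0), V \o \M(\la))$ on the other. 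I expect this step to rest on a careful ``asymptotic'' (i.e.\ $\hb$-deformed, filtered-to-graded) version of the Beilinson--Bernstein / Bezrukavnikov--Braverman--Positselskii picture, checking flatness over $\kh$ and that the relevant maps are isomorphisms by a degree/weight count.

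Next I would construct $\cM^{\operatorname{top}}_{\cF,\la} \cong \cM^{\operatorname{alg}}_{\cS(\cF),\la}$. Here the main tool is the geometric Satake equivalence together with the computation of the $\Tv \times \C^\times$-equivariant cohomology of cofibers $i_\la^! \cF$ at the fixed points of $\Gr$. The shift by $\la(2\rhov)$ in the definition of $\cM^{\operatorname{top}}$ and the appearance of $\hb$ as the equivariant parameter of loop rotation strongly suggest that this equivariant cohomology is computed by a ``Verma-type'' object; indeed this is the kind of statement that appears in \cite{gi,bf,abg}. I would use the known structure of the cohomology of the standard and costandard objects (the weight spaces of $\cS(\cF)$ together with their filtered structure coming from the $\Tv$-action) to identify $\coH^{\hdot+\la(2\rhov)}_{\Tv\times\C^\times}(i_\la^!\cF)$ with $\Hom_{(\sh,\uh(\g))}(\M(0), \cS(\cF) \o \M(\la))$ as a graded $\sh$-module, functorially in $\cF$. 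The hard part here will be matching the gradings and the $\sh$-module structures precisely on both sides, and in particular identifying the topological loop-rotation parameter with the algebraic parameter $\hb$ in a way that is natural in $\cF$.

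Finally, compatibility with $\mathsf{A}$ and $\mathsf{Conv}$ must be checked for each of the two isomorphisms. For $\mathsf{Conv}$ this amounts to verifying that the product in $\dh(\X)$, composition of bimodule intertwiners, and the convolution-type construction of \cite{abg} all correspond under the above identifications; this is essentially a diagram chase once the isomorphisms are pinned down on the level of generators. For the Weyl-group symmetries $\mathsf{A}$, one must match the Gelfand--Graev partial Fourier transforms on $\dh(\X)$ with the renormalized dynamical-Weyl-group intertwiners on the algebraic side, and with the $N_\Gv(\Tv)$-action on $\Gr$ on the topological side; I expect \emph{this} to be the main obstacle, since the dynamical Weyl group classically has poles and one must verify that the specific renormalization chosen in the paper exactly reproduces the (pole-free) Fourier-transform operators and the (manifestly pole-free) geometric action. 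The cocycle identity $\y(\mathsf{A}_{\cdot,y\la,x})\circ \mathsf{A}_{\cdot,\la,y} = \mathsf{A}_{\cdot,\la,xy}$ already holds in each setting by the earlier results, so once the three families of operators are identified on simple reflections the general case follows. I would reduce the whole verification to the case of simple reflections $w = s_i$ and a single fundamental (or minuscule) weight, where all three sides can be computed by hand.
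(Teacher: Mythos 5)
Your high-level strategy---bridge $\cM^{\operatorname{geom}}\cong\cM^{\operatorname{alg}}$ via the Bezrukavnikov--Braverman--Positselskii description of $\dh(\X)$ and Verma modules, bridge $\cM^{\operatorname{top}}\cong\cM^{\operatorname{alg}}$ via Satake, and check compatibilities by reducing to rank~one---matches the broad outline of the paper (Lemma \ref{lem:fixed-points-morphisms}, Theorem \ref{thm:loop-equivariant-version}, Theorem \ref{thm:W-symmetry}, Proposition \ref{prop:convolution}). The $\cM^{\operatorname{geom}}\cong\cM^{\operatorname{alg}}$ step is indeed handled exactly as you describe, and your instinct that everything should be forced down to semisimple rank~one is right: the systematic device is compatibility with restriction to a Levi subgroup, both on the perverse-sheaf side (hyperbolic restriction $\ResGr^\Gv_\Lv$) and on the algebraic side ($\restH^{V,\la}_{G,L}$, $\mathscr{R}^{V,\la}_{G,L}$).

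However, you have not identified the mechanism that actually makes the comparison $\cM^{\operatorname{top}}\cong\cM^{\operatorname{alg}}$ tractable, and what you propose instead is considerably harder and likely to stall. You say you would ``identify'' $\coH^{\hdot+\la(2\rhov)}_{\Tv\times\C^\times}(i_\la^!\cF)$ with the bimodule $\Hom$ space directly, and that the hard part is matching gradings and the loop-rotation $\hb$ with the algebraic $\hb$. The paper sidesteps this: instead of building the isomorphism directly, it proves that the two natural maps $\kalg_{\cS(\cF),\la}$ and $\kgeom_{\cF,\la}$ into the \emph{common} target $(\cS(\cF))_\la\o\sh$ (the equivariant cohomology of the Mirkovi\'c--Vilonen cycle $\fT_\la$) are both injective with the \emph{same image}; the isomorphism $\zeta_{\cF,\la}$ then exists and is canonical by fiat. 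Establishing coincidence of images uses a localization-and-glue argument: both sides are free over $\sh$; over the generic locus $\a^*_\rs$ the maps become isomorphisms (localization theorem on the topological side, the generic Verma lemma \ref{lem:quantum-case-generic} on the algebraic side); one then shows the generic identification restricts over each sub-generic locus $\a^*_{\alpha-\rs}$ by Levi reduction to rank~one (Lemmas \ref{lem:quantum-case-subgeneric}, \ref{lem:Verma-induced-alpha}), which a priori only handles simple $\alpha$; one then uses the $W$-equivariance of the generic isomorphism (Lemma \ref{lem:isom-regular-equivariance}) to cover all roots; and finally a short commutative-algebra lemma (\ref{lem:morphisms-sh}) closes the argument. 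Your proposal needs this ``image-matching through a common model'' idea, or something equivalent, to get off the ground.

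Two smaller but real gaps. First, compatibility with $\mathsf{Conv}$ is \emph{not} a diagram chase once the isomorphisms are ``pinned down on generators'': the paper needs a genuine filtration argument on $\coH^\hdot_A(\fT_\nu,t_\nu^!(\cF\star\cG))$ coming from the decomposition of $\mathsf{mult}^{-1}(\fT_\nu)$ into pieces $\fT_{\la,\mu}$, a parity-vanishing observation to split the long exact sequences, and identification of this ``topological'' filtration with the weight filtration on $(\cS(\cF)\o\cS(\cG))_\nu$ (Lemma \ref{lem:cohomology-tensor-product-2} and the following lemma). Second, for the $\mathsf{A}$-compatibility you correctly flag the renormalization of the dynamical Weyl group as a danger spot; in the paper the actual work sits in Proposition \ref{prop:definition-Theta} (existence of the pole-free $\Theta^{V,\la}_s$) and in the explicit rank-one computations (Lemmas \ref{lem:Phi-rk1}, \ref{lem:Theta-rk1}), while the topological compatibility then falls out of the same $W$-equivariance lemma used in the main comparison, so that side is actually easier than you anticipated.
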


The proof of this theorem is based on another crucial property of the modules $\cM^{\operatorname{geom}}_{V,\la}$, $\cM^{\operatorname{alg}}_{V,\la}$ and $\cM^{\operatorname{top}}_{\cF,\la}$: they are all compatible with restriction to a Levi subgroup in the appropriate sense. This property is used to reduce the proof of our claims to the case $G$ and $\Gv$ have semisimple rank one, in which case they can be checked by explicit computation. This strategy is rather classical in this context, see e.g.~\cite{bfm, bf, brf, ahr}.

\subsection{}

The present paper is closely related to,
and motivated by, results of \cite{abg} and \cite{bf}.
In fact, in a follow-up paper the results
of the present article will be used to
obtain a common generalization of
the equivalences of categories established in these papers. A similar generalization can also be
obtained using recent results of Dodd \cite{do}, but
our approach is different and, we believe,
more explicit. We will follow the strategy of \cite{abg}
and a key technical step in our approach is the
following algebra isomorphism, which is a "quantum"
analogue of \cite[Theorem 8.5.2]{abg} and which follows from the theorem stated in \S\ref{ss:intro-main-thm}:
\[
\bigoplus_{\lambda \in \bX^+}\  \Ext^{\hdot}_{{\Tv \times\C^\times }}(\cR_G,\mathcal{W}^{\lambda} \star \cR_G) \  \cong \
U_\hb(\g)\ltimes\left(\bigoplus_{\lambda \in \bX^+}\
\dh(\X)_\lambda\right).
\]
Here,  $\mathcal{W}^{\lambda}$ is the {\em Wakimoto sheaf} associated with $\la$, $\cR_G$ is an ind-perverse sheaf on $\Gr$ corresponding to the regular representation of $G$,
and we refer to \cite[\S8]{abg} for this and other unexplained notation.

\subsection{}

We will also consider ``classical analogues'' of the above
constructions, by which we mean specializing $\hb$ to $0$, hence replacing $\sh$ by $\sym(\t)$ or $\sym({\check \t}^*)$. The classical analogues of $\cM^{\operatorname{top}}$ are easy to define: we simply set
\[
\overline{\cM}^{\operatorname{top}}_{\cF,\la} := \coH^{\hdot+\la(2\rhov)}_{\Tv}(i_\la^! \cF).
\]
We also have morphisms $\overline{\mathsf{A}}^{\operatorname{top}}$ and $\overline{\mathsf{Conv}}^{\operatorname{top}}$ given by the same constructions as for $\mathsf{A}^{\operatorname{top}}$ and $\mathsf{Conv}^{\operatorname{top}}$.

There is no interesting classical analogue of $\cM^{\operatorname{alg}}$. The classical analogues of  $\cM^{\operatorname{geom}}$ are defined using the geometry of the Grothendieck--Springer resolution $\wfg$. More precisely we set
\[
\overline{\cM}^{\operatorname{geom}}_{V,\la} := \bigl( V \o \Gamma(\wfg,\oo_{\wfg}(\la)) \bigr)^G
\]
where $\oo_{\wfg}(\la)$ is the $G$-equivariant line bundle on $\wfg$ associated with $\la$. The operators $\overline{\mathsf{Conv}}^{\operatorname{geom}}$ are induced by the natural morphisms 
\[
\Gamma(\wfg,\oo_{\wfg}(\la)) \o \Gamma(\wfg,\oo_{\wfg}(\mu)) \to \Gamma(\wfg,\oo_{\wfg}(\la+\mu)).
\]
Finally, the operators $\overline{\mathsf{A}}^{\operatorname{geom}}$ are defined using the $W$-action on the regular part of $\wfg$. Again, these operators depend on a choice of simple root vectors in $\g$. This construction seems to be new, and has interesting consequences (see \S\ref{ss:action-T*X}).

Then we prove the following (see Corollary \ref{key_cor-classical}, Theorem \ref{thm:W-symmetry-classical} and Remark \ref{rk:convolution}(2)).

\begin{thm*}
For $\cF$ in $\Perv_{\Gv(\OO)}(\Gr)$ and $\lambda \in \bX$, there exist canonical isomorphisms
\[
\overline{\cM}^{\operatorname{top}}_{\cF,\la} \ \cong \ \overline{\cM}^{\operatorname{geom}}_{\cS(\cF),\la}.
\]
This family of isomorphisms is compatible with operators $\overline{\mathsf{A}}$ and with morphisms $\overline{\mathsf{Conv}}$.
\end{thm*}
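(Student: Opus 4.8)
The plan is to reduce the classical statement to its quantum counterpart, already stated in \S\ref{ss:intro-main-thm}, by a specialization argument. Concretely, I would first observe that $\overline{\cM}^{\operatorname{top}}_{\cF,\la}$ is obtained from $\cM^{\operatorname{top}}_{\cF,\la}$ by applying $(-) \otimes_{\sh} \sym(\ftv^*)$, where $\sh \to \sym(\ftv^*)$ is the specialization $\hb \mapsto 0$; this requires knowing that $\cM^{\operatorname{top}}_{\cF,\la}$ is flat over $\kh$ (equivalently, has no $\hb$-torsion), which should follow from a cell-filtration/parity argument for $i_\la^!\cF$ on $\Gr$ together with the fact that equivariant cohomology of $\Tv \times \C^\times$-fixed-point cofibers of parity sheaves is free. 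On the geometric side, the analogous statement is that $\dh(\X)_\la$ specializes at $\hb = 0$ to $\Gamma(\wfg, \oo_{\wfg}(\la))$ — indeed the Rees algebra of $\Gamma(\X, \dd_\X)$ for the order filtration has associated graded $\Gamma(T^*\X, \oo)$, and the $\la$-weight space of the latter, after the twist $\tla$, is $\Gamma(\wfg, \oo_{\wfg}(\la))$ via the moment map $T^*\X \to \fg^*$ and the isomorphism $\wfg \cong (T^*\X)/U \times^{?} \cdots$; this is essentially the classical statement underlying the asymptotic picture, and I would cite \S\ref{subsec2}. Hence $\overline{\cM}^{\operatorname{geom}}_{V,\la} = \cM^{\operatorname{geom}}_{V,\la} \otimes_{\sh} \sym(\t)$, at least once one checks the relevant Tor-vanishing so that the invariants functor $(V \otimes -)^G$ commutes with the specialization (here reductivity of $G$ and flatness of $\dh(\X)_\la$ over $\kh$ do the job).

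Granting these two flatness/base-change statements, the canonical isomorphism $\overline{\cM}^{\operatorname{top}}_{\cF,\la} \cong \overline{\cM}^{\operatorname{geom}}_{\cS(\cF),\la}$ is simply the reduction mod $\hb$ of the quantum isomorphism $\cM^{\operatorname{top}}_{\cF,\la} \cong \cM^{\operatorname{geom}}_{\cS(\cF),\la}$ from the first starred theorem. Compatibility with $\overline{\mathsf{Conv}}$ is then automatic: both $\mathsf{Conv}^{\operatorname{top}}$ and $\mathsf{Conv}^{\operatorname{geom}}$ are maps of $\kh$-modules, and $\overline{\mathsf{Conv}}^{\operatorname{top}}$, $\overline{\mathsf{Conv}}^{\operatorname{geom}}$ are by construction their reductions mod $\hb$ (for the geometric side one uses that the product on $\bigoplus_\la \Gamma(\wfg,\oo_{\wfg}(\la))$ is the associated-graded of the product on $\bigoplus_\la \dh(\X)_\la$), so the quantum compatibility descends. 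The same reasoning applies to $\overline{\mathsf{A}}$: on the topological side $\overline{\mathsf{A}}^{\operatorname{top}}$ is defined by literally the same $N_\Gv(\Tv)$-action used for $\mathsf{A}^{\operatorname{top}}$, now in $\Tv$-equivariant (rather than $\Tv \times \C^\times$-equivariant) cohomology, so it is the $\hb = 0$ specialization; and on the geometric side $\overline{\mathsf{A}}^{\operatorname{geom}}$ comes from the $W$-action on $\wfgr$, which one must identify with the $\hb=0$ limit of the Gelfand–Graev partial Fourier transforms on $\dh(\X)$. This last identification is the one genuinely new point on the classical side and needs a small separate argument: the Fourier transform $\dd$-module automorphisms degenerate, at the level of associated graded, to the symplectic Fourier transforms on $T^*\X$, and these induce precisely the birational $W$-action on $\wfgr \subset \wfg$ used to define $\overline{\mathsf{A}}^{\operatorname{geom}}$ (cf.\ the discussion of \cite{bbp}).

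Having set this up, the honest proof would then mirror the structure of the quantum case: establish compatibility of all three (here two) families with restriction to a Levi, reduce to semisimple rank one, and compute. But since the classical statement is a formal consequence of the quantum one plus the flatness statements, I would instead present it as a corollary, spending the bulk of the argument on the two base-change lemmas (freeness of $\cM^{\operatorname{top}}_{\cF,\la}$ over $\kh$; flatness of $\dh(\X)_\la$ over $\kh$ together with Tor-vanishing for $(\cS(\cF) \otimes -)^G$) and on the identification of $\overline{\mathsf{A}}^{\operatorname{geom}}$ with the classical limit of the Fourier transforms.

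The main obstacle I anticipate is the freeness of $\cM^{\operatorname{top}}_{\cF,\la} = \coH^{\hdot+\la(2\rhov)}_{\Tv\times\C^\times}(i_\la^!\cF)$ as a $\kh$-module. For $\cF = \IC_\mu$ this should follow from the purity/parity of $\IC$-sheaves on $\Gr$ (so that $i_\la^!\IC_\mu$ has a filtration with subquotients of the form $\underline{\Qlb}[\text{shifts}]$ on the relevant $\Tv\times\C^\times$-orbit closures, whose equivariant cohomology is a free module over $\coH^\hdot_{\Tv\times\C^\times}(\pt) = \sh$), but writing this out carefully — controlling the $\C^\times$-weights so that no $\hb$-torsion is introduced when passing from $\sh$-freeness to $\kh$-freeness — requires some care with the grading conventions. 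An alternative is to deduce freeness directly from the quantum isomorphism with $\cM^{\operatorname{geom}}_{\cS(\cF),\la}$, which is manifestly $\kh$-free once $\dh(\X)_\la$ is, thereby reducing everything to the single flatness statement $\dh(\X)_\la$ is free over $\kh$ — and that is immediate from the definition of the Rees algebra of a filtered algebra with $\gr$ torsion-free. I would take this shortcut.
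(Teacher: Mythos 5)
Your overall strategy — deduce the classical statement from the quantum one by specializing at $\hbar = 0$ — is exactly the "alternative" route the paper follows in \S\ref{ss:proof-main-results-classical}: it uses Lemma~\ref{lem:forget-hbar} on the topological side and Lemma~\ref{lem:hbar=0} on the geometric side, together with Proposition~\ref{prop:Phi-sigma} for compatibility with the $W$-symmetries. Your treatment of the topological specialization (parity of $\coH^\hdot(i_\la^!\cF)$ implies spectral sequence degeneracy, hence $\coH^\hdot_A(i_\la^!\cF)/\hbar \cong \coH^\hdot_{\Tv}(i_\la^!\cF)$) and of the $\overline{\mathsf{Conv}}$ and $\overline{\mathsf{A}}$ compatibilities is sound, and you correctly flag that matching the Gelfand--Graev Fourier transforms with the $W$-action on $\wfgr$ is a genuinely new point requiring its own argument (it is Proposition~\ref{prop:Phi-sigma}, proved by Levi restriction and a rank-one computation).

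However, there is a real gap in your "shortcut." You treat the identification $\dh(\X)_\la/\hbar \cong \Gamma(\wfg,\oo_{\wfg}(\la))$ as immediate from the definition of the Rees algebra, and your final paragraph reduces everything to "$\dh(\X)_\la$ is free over $\kh$," which you call immediate. Freeness of the Rees algebra over $\kh$ \emph{is} automatic, but that is not the issue: what is automatic is $\dh(\X)/\hbar \cong \gr\,\dd(\X)$, and the natural map $\gr\,\dd(\X) \to \Gamma(\X,\gr\,\dd_{\X}) = \C[T^*\X]$ is only \emph{injective} in general, not surjective (this is pointed out explicitly in \S\ref{subsec2}). The statement that it \emph{is} surjective for $\X = G/U$ — equivalently, that $\dh(\X)/\hbar \cdot \dh(\X) \simto \mathscr{A}(\X)$, i.e.\ Lemma~\ref{lem:hbar=0}, recorded as Corollary~\ref{cor:grD} and flagged in the paper as apparently new — is a nontrivial cohomological fact. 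Its proof passes through the isomorphism $\tla\dh(\X)_\la \cong \Ind_B^G(\M(\la))$ of Lemma~\ref{lem:DX-Ind}, the vanishing $R^{>0}\Ind_B^G(\M(\la)) = 0$ for $\la$ dominant (Remark~\ref{rk:RInd}, which rests on Broer's theorem for $\wcN$), and then the partial Fourier transforms $\mathbf{F}_\alpha$ to reduce the general $\la$ to the dominant case. Without this lemma you cannot conclude $\cM^{\operatorname{geom}}_{V,\la}/\hbar \cong \overline{\cM}^{\operatorname{geom}}_{V,\la}$, so the specialization argument does not close. You should supply this step rather than cite it as immediate.
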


The modules appearing in the theorem (and the corresponding morphisms) are related to those appearing in the theorem of \S\ref{ss:intro-main-thm} by the functor $\C \otimes_{\C[\hb]} (-)$ (where $\hb$ acts by zero on $\C$). For $\cM^{\operatorname{top}}$ and $\overline{\cM}^{\operatorname{top}}$, this easily follows from the parity vanishing of $\coH^{\hdot}(i_{\la}^! \cF)$, see Lemma \ref{lem:forget-hbar}. For $\cM^{\operatorname{geom}}$ and $\overline{\cM}^{\operatorname{geom}}$, this requires a more subtle argument, see \S\ref{ss:Fourier-classical}. In particular, our results establish a relation between the automorphisms of $\dh(\X)$ induced by partial Fourier transforms and the $W$-action on the regular part of $\wfg$, which seems to be new.

\subsection{}

As applications of our constructions we give new proofs of two results: a geometric description of the Brylinski--Kostant filtration due to the first author (see \cite{gi}), and a geometric construction of the dynamical Weyl group due to Braverman--Finkelberg (see \cite{brf}). We also observe that some of our technical preliminary results have interesting applications: they allow to give simpler proofs of results on the structure of the algebra $\dd(\X)$ of differential operators on $\X$ (see \S\ref{ss:complements-DX}) and to construct an action of $W$ on the regular part of $T^*\X$ which ``lifts'' the action on the regular part of $\wfg$, see \S\ref{ss:action-T*X}.

One important tool in the first proof of the geometric Satake equivalence in \cite{gi} was specialized equivariant cohomology of cofibers (see in particular [\emph{loc.}~\emph{cit.}, \S 3.5]), while in \cite{mv} the authors replaced this tool by cohomology of corestrictions to semi-infinite orbits $\fT_\la$. Our descriptions of $\coH^{\hdot}_{\Tv}(i_\la^! \cF)$, $\coH^{\hdot}_{\Tv}(t_\la^! \cF)$ (where $t_\la$ denotes the inclusion of $\fT_\la$) and the natural morphism between them (see Theorem \ref{thm:equiv-coh-classical}) shed some light on the precise relation between these points of view.


\subsection{Description of the paper}

In Section \ref{sec:statement} we define our main players, and state our main results. In Section \ref{sec:DX} we study the modules $\cM^{\operatorname{geom}}_{V,\la}$ and define their symmetries. In Section \ref{sec:morphisms} we study the modules $\cM^{\operatorname{alg}}_{V,\la}$, define their symmetries, and relate this algebraic family with the geometric one. In Section \ref{sec:wfg} we study the modules $\overline{\cM}^{\operatorname{geom}}_{V,\la}$, define their symmetries, and relate them with the modules $\cM^{\operatorname{geom}}_{V,\la}$. In Section \ref{sec:Satake} we recall the construction of the geometric Satake equivalence and its main properties. In Section \ref{sec:proofs} we prove our main results. In Section \ref{sec:applications} we give some complements and applications of these results. Finally, the paper finishes with two appendices: Appendix \ref{sec:rank1} collects computations in semi-simple rank one that are needed in our proofs, and Appendix \ref{sec:appendix-Fourier} is a reminder on partial Fourier transforms for (asymptotic) $\dd$-modules.

\subsection{Conventions}

Throughout, we will work
over the ground field  $\bbc$
of complex numbers and write $\o=\o_\bbc$.
If $M=\bigoplus_{n \in \Z} M_n$ is a graded vector space and $m \in \Z$, we define the graded vector space $M\langle m \rangle$\index{1@$\langle m \rangle$} by the following rule: $(M \langle m \rangle)_n=M_{n-m}$. Note that $\langle 1 \rangle$ is a ``homological'' shift, i.e.~it shifts graded vector spaces to the \emph{right}.
We will always consider $\C[\hb]$ as a graded algebra where $\hb$ has degree $2$. If $A$ and $B$ are $\C[\hb]$-algebras, by an $(A,B)$-bimodule we mean an $(A \o_{\C[\hb]} B^{\mathrm{op}})$-module.
If $A$ is an algebra, we write $\Hom_{-A}(-,-)$ for $\Hom_{A^{\mathrm{op}}}(-,-)$.

\subsection{Acknowledgements}

We thank Pramod Achar for useful discussions at early stages of this work, and Pierre Baumann. Part of this work was completed while the second author visited University of Chicago.

\section{Statement of the main results}
\label{sec:statement}

\subsection{Asymptotic Verma modules}\label{subsec1}

Given a filtered $\C$-algebra $A=\bigcup_{i \in \Z_{\geq 0}} F_i A$, we let $A_\hb$
be the Rees algebra (sometimes referred to as
``graded'' 
or ``asymptotic'' version)
of the filtered algebra $A$. It can be defined as the following subalgebra of $A[\hb]$:
\beq{eqn:def-rees}
A_\hb:= \bigoplus_{i \in \Z} A_\hb^i \quad \text{with} \quad A_\hb^i = \begin{cases}
0 & \text{if $i$ is odd;} \\
\hb^{i} \cdot F_{i/2} A & \text{if $i$ is even}.
\end{cases}
\eeq
Thus,  $A_\hb$
is a graded $\kh$-algebra,
where the indeterminate $\hb$ has grade degree 2. (The reason for our convention will become clear later.) Moreover, one has a natural isomorphism
\[
A_\hb / \hb \cdot A_\hb \cong \mathrm{gr}^F A
\]
(where degrees are doubled on the left-hand side).

If ${\mathfrak k}$ is a Lie algebra,
the  enveloping algebra $U(\fk)$ comes equipped with
a natural ascending filtration,
the Poincar\'e-Birkhoff-Witt filtration, such that $\gr U(\fk) = \sym(\fk)$.
The corresponding asymptotic enveloping
algebra $\uh(\fk):=U(\fk)_\hb$\index{Uh@$\uh(\fk)$} has an alternative
(equivalent) definition as the $\C[\hbar]$-algebra generated by $\fk$, with
relations $xy-yx = \hbar [x,y]$ for $x,y \in \fk$. Here elements of $\fk$ have degree $2$. We will use this description of $\uh(\fk)$, and still denote by $x$ the image of an element $x \in \fk$. (If we were using the description \eqref{eqn:def-rees}, this element should rather be denoted $\hb x$.)

Let $G$ be a connected reductive group over $\k$ with Lie algebra $\g$.
We fix a triangular decomposition
$\g=\u\oplus\t\oplus\u^-$, so $\b=\t\oplus\u$ is a Borel subalgebra.
Let $T$ be the maximal torus and $B=T\cdot U$ the Borel subgroup
corresponding to the Lie algebras $\t$ and $\b$, respectively. We will denote by $R$ the set of roots of $G$ (relative to $T$), by $R^+$ the positive roots (i.e.~the roots of $\u$), and by $W$ the Weyl group of $(G,T)$.
Let
$\rho\in\t^*$, resp. $\check\rho\in\t$, be the half sum of positive
roots,
resp. coroots. We also let $\bX$\index{X@$\bX$} be the lattice of characters of $T$, and $\bX^+$, resp.~$\bX^-$, be the sub-semigroup of dominant, resp.~antidominant, weights. We will frequently consider elements of $\ft^*$ (resp.~of $\bX$) as linear forms on $\fb$ (resp.~characters of $B$) which are trivial on $\u$ (resp.~on $U$). Also, as usual, when convenient we identify $\bX$ with a subset of $\t^*$ via the differential.

We consider asymptotic $\kh$-algebras
$\uh(\g)$ and $\uh(\t)$.
The latter algebra is a commutative graded algebra which is clearly isomorphic
to $\sh:=\sym(\t)[\hb]$\index{sh@$\sh$} where $\sym(\t)$, the symmetric algebra
of $\t$, is equipped with its natural grading.
Let $Z(\g)$ be the center of the algebra $U(\g)$.
The Poincar\'e-Birkhoff-Witt filtration on
$U(\g)$ induces, by restriction, a filtration
on $Z(\g)$. The corresponding asymptotic algebra
$\zh$\index{Zh@$\zh$} is the center of the algebra $\uh(\g)$.
%
One has a Harish-Chandra isomorphism
$\zh\cong\sym_\hb^W=\sym(\t)^W[\hb]$, a graded $\kh$-algebra isomorphism induced by the composition
\[
\zh \hookrightarrow \uh(\g) \twoheadrightarrow \uh(\g) / (\u \cdot \uh(\g) + \uh(\g) \cdot \u^-) \xleftarrow{\sim} \sh \simto \sh
\]
where the isomorphism on the right-hand side sends $t \in \t$ to $t+\hb
\rho(t)$.
Using this isomorphism, we may (and will)
identify the algebra $\zh$ with
a subalgebra of $\sh$. 

 For any $\la\in\t^*$, let
${\sh} \llangle \lambda \rrangle$\index{shlambda@${\sh} \llangle \lambda \rrangle$} 
be the $\uh(\fb)$-bimodule
defined as follows. As a $\C[\hbar]$-module, it is
isomorphic to $\sh$. The left $\uh(\fb)$-module structure is given by the natural isomorphism $\sh \cong \uh(\fb) / \uh(\fb) \cdot \fn$. Then in the right $\uh(\fb)$-module structure, the Lie ideal $\u \subset \fb$ acts by
$0$, and $t \in \ft$ acts by multiplication by $t + \hbar
\lambda(t)$.

We define a graded $(\uh(\b),\,\uh(\g))$-bimodule,
a certain  asymptotic version of the universal  Verma module,
as follows:\index{Mlambda@$\M(\lambda)$}%
\[
\M(\la)=\sh \llangle\la+\rho\rrangle\o_{\uh(\b)}\uh(\g).
\]
(We will mainly only consider $\M(\la)$ as an $(\sh,\uh(\g))$-bimodule.)
The action of $\uh(\g)$ on the vector $\bv_\la := 1 \otimes 1 \in \M(\la)$\index{vlambda@$\bv_\la$}
induces an isomorphism of right $\uh(\g)$-modules
\begin{equation}
\label{eqn:isom-Verma}
\M(\lambda) \ \cong \ \uh(\g) / (\u \cdot \uh(\g)).
\end{equation}
Under this isomorphism, the right ${\sh}$-module
structure is such that the action of $t \in \ft$ is induced by right
multiplication by $t - \hbar\cdot (\lambda+\rho)(t)$ on
$\uh(\g)$.

It is immediate from definitions that
 there is  well defined `adjoint' action $b:\ m\mto\ad b(m)$
of the Lie algebra $\b$ on $\M(\la)$, which is related to the
bimodule structure by the equation 
\[
\hb\cdot\ad b(m) = (b+\hb \rho(b)) \cdot m - m \cdot b \qquad \forall b\in\b.
\]
The adjoint action of the subalgebra $\t\sset\b$
is  semisimple.
Therefore, one has a weight decomposition
$\M(\la)=\bplus_{\mu\in\t^*}\ \M(\la)_\mu$.
In particular, we have $\M(\la)_{-\la} = \C[\hb] \cdot \bv_\lambda$, and $\M(\la)_\mu=0$ unless $\mu \in -\la - \Z_{\geq 0} R^+$.
For $\la\in\bX$, the adjoint action on $\M(\la)$ can be exponentiated to
an algebraic $B$-action.

Let $\Rep(G)$\index{RepG@$\Rep(G)$} be the tensor category of
finite dimensional rational $G$-modules.
For $V\in\Rep(G)$ and $\la\in\bX$,
 let $V_\la$ denote the $T$-weight space
of $V$ of weight ~$\la$.

The assignment $\hb \mapsto 1 \o \hb$, $x\mto -x \otimes \hb + 1 \otimes x$
has a unique extension to an algebra homomorphism
$\uh(\g)\to U(\g)^{\mathrm{op}}\o\uh(\g)$. 
Via this homomorphism,
 for any right $\uh(\g)$-module $M$ and $V\in\Rep(G)$,
the vector space $V\o M$ acquires
the structure of a right $\uh(\g)$-module.
This gives an  $(\uh(\fb),
U_{\hbar}(\fg))$-bimodule structure on
$V\o \M(\la)$, where the left action of the algebra $\uh(\fb)$ on
$V\o \M(\la)$ comes from its action on $\M(\la)$ on the left. 
If $\la\in\bX$, the differential of the diagonal
$B$-action on $V \otimes \M(\la)$ and the bimodule structure are related as follows:
if $b \in \fb \subset U(\fb)$, $\hb$ times the action of $b$ is given by the assignment $n \mapsto (b + \hb \rho(b)) \cdot n - n \cdot b$.

One has a natural morphism of left $\sh$-modules $p_{\lambda} :
\M(\lambda) \to {\sh}$,\index{plambda@$p_{\lambda}$} induced by the projection $U_{\hbar}(\fg) \to
U_{\hbar}(\fb)$ orthogonal to $U_{\hbar}(\fg) \cdot \u^- $. If $\la \in \bX$, then $p_\lambda$ is also a morphism of $T$-modules $\M(\la) \to \sh \otimes \C_{-\la}$. 
An important role below will be played by
the morphism of graded ${\sh}$-modules $\kalg_{V,\la}$\index{kalg@$\kalg_{V,\la}$}
 defined (for $V$ in $\Rep(G)$ and $\la \in \bX$) as the following composition:
\beq{eqn:equivariant-cohomology-3}
\bigl(V \otimes \M(\la)\rhoshift  \bigr)^B
\hookrightarrow \xymatrix{\bigl(V \otimes \M(\la)\rhoshift 
\bigr)^T\ 
\ar[rr]^<>(0.5){\id_V\o p_{\lambda}}&&\ (V \o 
{\sh} \o  \bbc_{-\lambda})^T} =
V_{\lambda} \o  {\sh}.
\eeq
Note that $\bigl(V \otimes \M(\la)\rhoshift  \bigr)^B$ has a natural structure of $\zh$-module, induced by the (right) action of $\zh \subset \uh(\fg)$ on $\M(\la)$. With this definition, $\kalg_{V,\la}$ is also a morphism of $\zh$-modules, where $\zh=\sym_\hb^W$ acts on $\sh$ via the restriction of the right action of $\sh$ on $\sh \llangle \la \rrangle$.

%
%
%
%

\subsection{The affine Grassmannian: equivariant cohomology of cofibers}
\label{ss:Gr-intro}

Write $\GGM$ for the multiplicative group.
Let $\Gv$ be the Langlands dual group of $G$. The
group $\Gv$ comes equipped
with  the maximal torus
 $\Tv\sset \Gv$, with opposite Borel subgroups $\Bv=\Tv \cdot \Uv$ and $\Bv^-=\Tv \cdot \Uv^-$, and with a canonical isomorphism
$\bX=\Hom(\Gm ,\Tv)$, the cocharacter lattice of $\Tv$.
(To be completely precise, one should first choose $\Gv,\Bv,\Tv$, and then use the affine Grassmannian of $\Gv$ to define $G,B,T$ by Tannakian formalism; see \S\ref{ss:Satake} for details.)

Let $\KK=\C( \hspace{-1.5pt} ( z) \hspace{-1.5pt} )$,\index{K@$\KK$} resp. $\OO=\C [ \hspace{-1.5pt} [ z] \hspace{-1.5pt} ]$.\index{O@$\OO$}
Let $\Gr_{\Gv}:=\Gv(\KK)/\Gv(\OO)$,
resp. $\Gr_{\Tv}:=\Tv(\KK)/\Tv(\OO)$, be the affine Grassmannian 
associated with the group $\Gv$, resp. ${\Tv}$. (We will consider the \emph{reduced} ind-scheme structure on these affine Grassmannians.)
Thus, one has $\bX = \Gr_{\Tv}$ and 
there is a natural embedding $\bX = \Gr_{\Tv} \hookrightarrow
\Gr_{\Gv}$. For $\lambda \in \bX$, we let $\bla$ be the image of
$\lambda$ and let  $i_{\lambda} : \{\bla\} \hookrightarrow
\Gr_{\Gv}$ denote the one point embedding.
The group $\Gv(\KK)\rtimes\Gm $ acts on $\Gr_{\Gv} $ on the left,
where the factor $\Gm $ acts by rotation of the loop.

For the rest of this section, we will use simplified notation
$\Gr:=\Gr_{\Gv}$.
The following subsets of the affine Grassmannian will
play an important role. For $\la\in\bX^+,$  we let $\Gr^\la:=\Gv(\OO)\cdot\bla$.\index{Grlambda@$\Gr^\la$}
This is a finite dimensional $(\Gv(\OO) \rtimes \Gm)$-stable locally closed subvariety
of $\Gr $. One has a stratification
$\Gr =\sqcup_{\la\in\bX^+}\ \Gr^\la$.
Further, for any $\lambda \in \bX$,  following Mirkovi{\'c}-Vilonen 
one puts  $\fT_{\lambda} := \Uv^-(\KK)\cdot \bla$.\index{Tlambda@$\fT_{\la}$} We let $t_\la : \fT_\la \hookrightarrow \Gr$\index{tlambda@$t_\la$} be the inclusion.

Let $A:=\Tv\times\Gm$,\index{A@$A$} a toral subgroup of $\Gv(\KK)\rtimes\Gm $.
The  Mirkovi\'c-Vilonen space
$\fT_{\lambda}$ is  $A$-stable.
Further, the set $\bX\sset \Gr $ is known to be equal
to the set of $A$-fixed points in  $\Gr $.
Therefore, for any object
${\mathcal F}$ of the equivariant derived category $\cDb_A(\Gr)$, there are well defined
 $A$-equivariant cohomology groups
$\coH^{\hdot}_{A}(\fT_{\lambda},
t_{\lambda}^! {\mathcal F})$, resp. 
$\coH^{\hdot}_{A}(i_{\lambda}^!{\mathcal F})$.
These are graded modules over the graded
algebra $\coH^\hdot_A(\pt)\cong \sym(\t)[\hb]=\sh$.

Let $\Perv_{\Gv(\OO)}(\Gr)$, resp.~$\Perv_{\Gv(\OO) \rtimes \Gm}(\Gr)$, be the category of $\Gv(\OO)$-equivariant, resp.~$\Gv(\OO) \rtimes \Gm$-equivariant, perverse sheaves on $\Gr$. Let also $\Perv_{\mon{\Gv(\OO)}}(\Gr)$ be the category of perverse sheaves on $\Gr$ which are constructible with respect to the stratification by $\Gv(\OO)$-orbits. Recall that all three of these categories are semisimple, with simple objects parametrized by $\bX^+$. In particular, the forgetful functors
\[
\Perv_{\Gv(\OO) \rtimes \Gm}(\Gr) \to \Perv_{\Gv(\OO)}(\Gr) \to \Perv_{\mon{\Gv(\OO)}}(\Gr)
\]
are equivalences of categories (see \cite[Appendix A]{mv} for a similar result in a much more general situation). Let\index{S@$\cS$}%
\[
\cS : \Perv_{\Gv(\OO)}(\Gr) \simto \Rep(G)
\]
be the  geometric Satake equivalence. By the remark above, any 
object of $\Perv_{\Gv(\OO)}(\Gr)$
can be considered naturally as an object of $\cDb_A(\Gr)$.

The following
lemma is a simple consequence of results of Kazhdan--Lusztig \cite{kl} and
 Mirko\-vi\'c--Vilonen \cite{mv},
cf.~also \cite{yz, brf}. It will be proved in \S\ref{ss:equiv-cohomology}.

\begin{lem}
\label{lem:equiv-cohomology-first-properties}
For any $\cF$ in $\Perv_{\Gv(\OO)}(\Gr_\Gv)$ and $\lambda \in \bX$, one has
\begin{enumerate}
\item The graded $\sh$-module 
$\coH^{\hdot}_{A}(i_{\lambda}^! \cF)$, resp.~the graded $\sym(\t)$-module 
$\coH^{\hdot}_{\Tv}(i_{\lambda}^! \cF)$, is free.
\item There is a canonical isomorphism of graded $\sh$-modules, resp.~of graded $\sym(\t)$-modules
\beq{zeta}
\coH^{\hdot}_{A}(\fT_{\lambda}, t_{\lambda}^! \cF) \ \cong \ 
\bigl( \cS(\cF) \bigr)_{\lambda}
 \o \sh  \langle \lambda(2\rhov) \rangle, \qquad \coH^{\hdot}_{\Tv}(\fT_{\lambda}, t_{\lambda}^! \cF) \ \cong \ 
\bigl( \cS(\cF) \bigr)_{\lambda}
 \o \sym(\t)  \langle \lambda(2\rhov) \rangle.
\eeq
\end{enumerate}
\end{lem}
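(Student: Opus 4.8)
The plan is to prove parts (1) and (2) simultaneously, treating the non-equivariant-in-$\Gm$ statements for $\sym(\t)$ as the special case obtained by specializing $\hb\to 0$ (or, more precisely, by forgetting the $\Gm$-equivariance and using $\coH^\hdot_{\Tv}(\pt) = \sym(\t)$). Since both $\Perv_{\Gv(\OO)}(\Gr)$ and $\Perv_{\Gv(\OO)\rtimes\Gm}(\Gr)$ are semisimple with simples the $\IC$-sheaves $\IC_\mu$ ($\mu\in\bX^+$), and since the assignments $\cF\mapsto \coH^\hdot_A(i_\la^!\cF)$, $\cF\mapsto\coH^\hdot_A(\fT_\la,t_\la^!\cF)$ and $\cF\mapsto\cS(\cF)_\la\otimes\sh\langle\la(2\rhov)\rangle$ are all additive, it suffices to treat the case $\cF=\IC_\mu$.

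For part (2), I would first recall the Mirkovi\'c--Vilonen theorem: the cohomology $\coH^\hdot_c(\fT_\lambda, \cF|_{\fT_\lambda})$ is concentrated in degree $\lambda(2\rhov)$ and, summing over $\lambda$, gives the weight decomposition of $\cS(\cF)$; dually, $\coH^\hdot_{\fT_\lambda}(\Gr,\cF) = \coH^\hdot(\fT_\lambda, t_\lambda^!\cF)$ is concentrated in degree $\lambda(2\rhov)$ with fiber $\cS(\cF)_\lambda$. The point is that $\fT_\lambda$ is (ind-)isomorphic, $A$-equivariantly, to an affine space on which $\Gm$ acts linearly with strictly positive weights (loop rotation contracts $\Uv^-(\KK)\cdot\bla$ to the fixed point $\bla$ as $z\to 0$ after conjugating appropriately, or rather one uses the well-known $\Gm$-action coming from $2\rhov:\Gm\to\Tv$ composed with loop rotation). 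Consequently the $A$-action on $\fT_\lambda$ is \emph{equivariantly formal} in the strong sense that $\coH^\hdot_A(\fT_\lambda, t_\lambda^!\cF)$ is free over $\sh$ and the specialization map $\C\otimes_{\sh}\coH^\hdot_A(\fT_\lambda,t_\lambda^!\cF)\iso \coH^\hdot(\fT_\lambda,t_\lambda^!\cF)$ is an isomorphism. Since the non-equivariant group is $\cS(\cF)_\lambda$ placed in degree $\lambda(2\rhov)$, freeness forces $\coH^\hdot_A(\fT_\lambda,t_\lambda^!\cF)\cong \cS(\cF)_\lambda\otimes\sh\langle\lambda(2\rhov)\rangle$ as a graded $\sh$-module; the analogous argument with $A$ replaced by $\Tv$ gives the $\sym(\t)$-statement. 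Making the isomorphism \emph{canonical} requires a little care: I would define it by choosing the identification so that it is compatible with the MV filtration/weight functors, or simply note that a free graded module is determined up to canonical isomorphism by its specialization together with the degree shift, once one fixes the MV isomorphism $\coH^\hdot_c(\fT_\lambda,\cF|_{\fT_\lambda})\cong\cS(\cF)_\lambda$ as part of the Satake package.

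For part (1), the freeness of $\coH^\hdot_A(i_\lambda^!\cF)$: here I would use the attracting/repelling set decomposition. The key input (Kazhdan--Lusztig, and its use in the MV picture) is the parity vanishing — $\coH^k(i_\lambda^!\cF)=0$ unless $k\equiv\lambda(2\rhov)\pmod 2$ — together with the fact that the cofiber $i_\lambda^!\cF$ sits in an exact triangle relating it to corestrictions along the semi-infinite orbits $\fT_\mu$ passing through $\bla$ for the various $\Uv^-$-orbit strata, each of which has free (indeed pure) equivariant cohomology by part (2). More directly: $i_\lambda^!\cF \cong (t_\lambda)^! \cF$ composed with restriction to the point $\bla\in\fT_\lambda$, and one has a spectral sequence / recollement computing $\coH^\hdot_A(i_\lambda^!\cF)$ from $\coH^\hdot_A(\fT_\lambda,t_\lambda^!\cF)$ and the cohomology of the complement. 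I would instead argue cohomological purity: $i_\lambda^!\cF$ is a pure complex of weight $0$ (after suitable normalization) because $\cF=\IC_\mu$ is pure and $i_\lambda$ is the inclusion of an $A$-fixed point, and equivariant cohomology of a pure object with respect to a torus acting with the fixed point as its only fixed point and contracting structure is free over $\sh$ — this is the standard equivariant-formality-via-purity argument (Springer, and in this exact setting, \cite{yz}, \cite{brf}). Combined with the parity vanishing this gives freeness over $\sh$; forgetting the loop-rotation gives freeness over $\sym(\t)$ (the Künneth factor $\C[\hb]$ being free, a module over $\sh=\sym(\t)[\hb]$ that is free stays free after $\otimes_{\C[\hb]}\C$).

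\textbf{Main obstacle.} The genuinely nontrivial point is part (1): establishing that the \emph{cofiber} (as opposed to the MV corestriction, which is easy by contraction) has free equivariant cohomology. This is where one must invoke either the Kazhdan--Lusztig parity/purity results for $\Gr$ or an explicit resolution of $i_\lambda^!\cF$ by the semi-infinite strata, and it is the step I would write out most carefully. The rest — semisimplicity reduction, the MV identification, specialization arguments, and passing between $A$ and $\Tv$ — is routine. A secondary but real subtlety is pinning down the \emph{canonicity} of the isomorphism in \eqref{zeta}: I would phrase it as "the isomorphism induced by the MV weight functor together with the contraction of $\fT_\lambda$", so that compatibility with the later operators $\mathsf{A}^{\operatorname{top}}$ and $\mathsf{Conv}^{\operatorname{top}}$ is manifest.
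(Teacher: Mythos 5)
Your proposal is essentially correct and follows the same route as the paper: both hinge on (i) the Mirkovi\'c--Vilonen concentration of $\coH^\hdot(\fT_\lambda, t_\lambda^!\cF)$ in degree $\lambda(2\rhov)$, (ii) the Kazhdan--Lusztig parity vanishing for $\coH^\hdot(i_\lambda^!\cF)$, and (iii) degeneration of the Leray--Serre spectral sequence $E_2^{p,q}=\coH^p_A(\pt)\otimes\coH^q(i_\lambda^!\cF)$ at $E_2$ forced by the constant parity. The paper's proof is leaner than yours in two small respects. First, it does not reduce to simples or invoke purity at all: parity vanishing alone suffices to kill all differentials in the spectral sequence, so the contractibility of $\fT_\lambda$ and the "equivariant formality via purity" machinery are a detour (correct but unnecessary). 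Second, for the $\Tv$-equivariant statement the paper just re-runs the identical spectral-sequence argument with $\Tv$ in place of $A$; your route of deducing it from the $A$-equivariant statement by $\otimes_{\C[\hb]}\C$ requires the additional base-change fact that $\coH^\hdot_{\Tv}(i_\lambda^!\cF)\cong\coH^\hdot_A(i_\lambda^!\cF)/\hb$, which is not automatic --- it is a separate consequence of the same degeneration, recorded in the paper as Lemma~\ref{lem:forget-hbar}, so you would need to prove it rather than assume it. Finally, your worry about canonicity in \eqref{zeta} is addressed in the paper exactly as you suggest: the isomorphism is pinned down by the fact that the forgetful map $\coH^\hdot_A(\fT_\lambda,t_\lambda^!\cF)\to\coH^\hdot(\fT_\lambda,t_\lambda^!\cF)$ is an isomorphism in degree $\lambda(2\rhov)$, giving a canonical inclusion of $(\cS(\cF))_\lambda$ whose $\sh$-linear extension (via cup product) is the isomorphism.
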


One may 
 factor the embedding $i_\lambda : \{\bla\} \hookrightarrow
\Gr$\index{ilambda@$i_\lambda$} as a composition
$
\{\bla\} \xrightarrow{\imath_\lambda}  \fT_{\lambda} \xrightarrow{t_\la}
\Gr$.\index{ilambda2@$\imath_\lambda$}
Hence, there is a push-forward morphism
\[
(\imath_\la)_!:\
\coH^{\hdot}_{A}(i_{\lambda}^! \cF) \ =\
\coH^{\hdot}_{A}(\imath_{\lambda}^!t_\la^! \cF) \ 
 \too \ 
\coH^{\hdot}_{A}(\fT_{\lambda},
 t_{\lambda}^! \cF).
\]
Let $\kgeom_{\cF,\la}$\index{ktop@$\kgeom_{\cF,\la}$} be the following 
 composite morphism
\beq{eqn:equivariant-cohomology-morphism}
\kgeom_{\cF,\la}:\
\xymatrix{
\coH^{\hdot}_{A}(i_{\lambda}^! \cF) \ \ar[rr]^<>(0.5){(\imath_\la)_!}&& \ 
\coH^{\hdot}_{A}(\fT_{\lambda}, t_{\lambda}^! \cF)\ 
\ar[rr]^<>(0.5){\eqref{zeta}}_<>(0.5){\sim}&&\
\bigl( \cS(\cF) \bigr)_{\lambda} \o\sh \langle \lambda(2\rhov)\rangle.
}
\eeq

Thus, we get a diagram of  morphisms
of graded $\sh$-modules
\[
\xymatrix@C=2.7cm{
\bigl(\cS(\cF) \otimes \M(\la)\rhoshift  \bigr)^B \langle \lambda(2\rhov)\rangle
\ar[r]^<>(0.5){\kalg_{\cS(\cF),\la}\langle \lambda(2\rhov)\rangle}_<>(0.5){\eqref{eqn:equivariant-cohomology-3}}
&
\bigl( \cS(\cF) \bigr)_{\lambda} \o  {\sh} \langle \lambda(2\rhov)\rangle
&
\coH^{\hdot}_{A}(i_{\lambda}^! \cF).
\ar[l]_<>(0.5){\kgeom_{\cF,\la}}^<>(0.5){\eqref{eqn:equivariant-cohomology-morphism}}
}
\]

One of our key results (to be proved in \S\ref{ss:proof-main-results}) reads as follows.

\begin{thm}
\label{thm:loop-equivariant-version} 
For any $\cF$ in $\Perv_{\Gv(\OO)}(\Gr)$ and $\lambda \in \bX$, the morphisms
 $\kalg_{\cS(\cF),\la} \langle \la(2\rhov) \rangle$
 and $\kgeom_{\cF,\la}$
are injective and have the same image. Thus, there is a
natural isomorphism of graded $\sh$-modules $\zeta_{\cF,\la}$\index{zeta@$\zeta_{\cF,\la}$} that fits into  the following 
commutative diagram
\beq{fits}
\vcenter{
\xymatrix@C=2cm{
\bigl(\cS(\cF) \otimes \M(\la) \bigr)^B \langle \lambda(2\rhov)\rangle
\ar[r]^<>(0.5){\zeta_{\cF,\la}}_-{\sim}
\ar@{^{(}->}[d]^<>(0.5){\kalg_{\cS(\cF),\la}\langle \lambda(2\rhov)\rangle}&
\coH^{\hdot}_{A}(i_{\lambda}^! \cF)
\ar@{^{(}->}[d]^<>(0.5){\kgeom_{\cF,\la}}\\
\bigl( \cS(\cF) \bigr)_{\lambda} \o\sh \langle \lambda(2\rhov)
\rangle \ar@{=}[r] &
\bigl( \cS(\cF) \bigr)_{\lambda} \o\sh \langle \lambda(2\rhov)
\rangle.
}
}
\eeq
\end{thm}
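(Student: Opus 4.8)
The plan is to reduce the statement to the rank-one case by compatibility with restriction to Levi subgroups, combine this with the explicit rank-one computations of Appendix~\ref{sec:rank1}, and glue via the known combinatorial structure of both sides.

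\medskip

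\textbf{Step 1: Reduce to simple objects and dominant weights.} Since $\Perv_{\Gv(\OO)}(\Gr)$ is semisimple, it suffices to treat $\cF = \IC_{\la_0}$ for $\la_0 \in \bX^+$, i.e. $\cS(\cF) = V$ an irreducible $G$-module. Both functors $\cM^{\operatorname{alg}}_{V,\la}$ and $\cM^{\operatorname{top}}_{\cF,\la}$ are additive, so additivity is not an issue, but working with a fixed irreducible makes the weight bookkeeping manageable. Note that the right-hand vertical map $\kgeom_{\cF,\la}$ has already been \emph{defined} as a composite whose second arrow is the isomorphism \eqref{zeta}; the content is the injectivity of both vertical maps and the equality of images. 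By Lemma~\ref{lem:equiv-cohomology-first-properties}(1), $\coH^{\hdot}_A(i_\la^! \cF)$ is a free graded $\sh$-module, and $\bigl(V \otimes \M(\la)\bigr)^B$ is free over $\sh$ as well (this should be recorded when $\cM^{\operatorname{alg}}$ is studied in \S\ref{sec:morphisms}); moreover both have the \emph{same graded rank}, namely $\dim V_\la$ up to the shift $\langle \la(2\rhov)\rangle$. Indeed for $\kalg_{V,\la}$ this is visible from \eqref{eqn:equivariant-cohomology-3} landing in $V_\la \o \sh$, and for $\kgeom_{\cF,\la}$ from \eqref{zeta}. So it is enough to prove that each vertical map is \emph{injective} (equivalently, has free cokernel of the expected size, or: becomes an isomorphism after inverting $\hb$) and that the two images coincide.

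\medskip

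\textbf{Step 2: Compatibility with restriction to a Levi.} The key structural input (to be established in the preceding sections, cf. the discussion after the main theorem in \S\ref{ss:intro-main-thm}) is that all three families $\cM^{\operatorname{geom}}$, $\cM^{\operatorname{alg}}$, $\cM^{\operatorname{top}}$ intertwine — compatibly with $\kalg$, $\kgeom$ — the operation of restricting a $G$-module (resp. a spherical perverse sheaf) to a Levi subgroup $L \subset G$ with $\check L \subset \Gv$. On the geometric/topological side this is the standard hyperbolic-localization / constant-term story: $\coH^\hdot_A(i_\la^! \cF)$ for $\Gv$ computes, after summing over the relevant cocharacters, the analogous object for $\check L$ applied to the constant term, because the inclusion $\{\bla\} \hookrightarrow \fT_\la^{\Gv}$ factors through $\fT_\la^{\check L}$. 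Fixing a simple root $\alpha$ and taking $L$ the corresponding minimal Levi, this reduces the injectivity and equal-image statements for the $\alpha$-component to the case where $G$ has semisimple rank one. Here "$\alpha$-component" means: the diagram \eqref{fits}, being $W$-equivariant in the sense made precise by the operators $\mathsf{A}$, is determined by its behavior along each wall; more concretely, freeness of all modules over $\sh$ means it suffices to check that the maps are isomorphisms after base change to the generic point of $\Spec \sh$ and to each codimension-one point $\mathfrak{p}_\alpha = (\alpha^\vee = 0)$, and the latter only sees the rank-one Levi for $\alpha$.

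\medskip

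\textbf{Step 3: The rank-one computation.} For $G$ of semisimple rank one (so $\g = \mathfrak{sl}_2 \oplus (\text{center})$ and $\Gv = \mathrm{PGL}_2$ or $\mathrm{SL}_2$), one computes both sides explicitly. On the algebraic side, $\bigl(V \otimes \M(\la)\bigr)^B$ and the map $\kalg_{V,\la}$ are given by classical $\mathfrak{sl}_2$ Verma-module intertwiner formulas — these are exactly the "renormalized, pole-free" versions of dynamical Weyl group matrix coefficients, and the point is that the relevant binomial-type coefficients are genuine polynomials in $\t$ and $\hb$, which is what gives injectivity with the correct cokernel. On the topological side, $\Gr_{\mathrm{PGL}_2}$ (or $\mathrm{SL}_2$) is well understood: the cofibers $i_\la^! \IC_{\la_0}$ and the pushforward $(\imath_\la)_!$ to $\fT_\la$ are computed via the Bott–Samelson / $\mathbb{P}^1$-bundle structure of $\Gr^{\la_0}$, giving $\coH^\hdot_A(i_\la^!\cF)$ together with the map $\kgeom$ as an explicit sub-$\sh$-module of $V_\la \o \sh$. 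One then checks by direct comparison that these two sub-$\sh$-modules of $V_\la \o \sh$ coincide. This is the computational heart, carried out in Appendix~\ref{sec:rank1}, and it is where the specific choice of simple root vectors coming from geometric Satake matters (it pins down the normalization so the two answers agree on the nose, not just up to a unit).

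\medskip

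\textbf{Main obstacle.} The hardest part is \textbf{Step 2}: setting up the Levi-restriction compatibility so that it genuinely reduces \eqref{fits} — a diagram involving $\kgeom$ built from $!$-pushforward along $\imath_\la$ — to rank one, while simultaneously tracking the cohomological shifts $\langle \la(2\rhov)\rangle$ and the $\zh = \sym_\hb^W$-module structures, which behave subtly under restriction (the $\rho$-shift for $G$ and for $L$ differ). Concretely one must show the square relating $\coH^\hdot_A(i_\la^!\cF)$ for $\Gv$ and $\coH^\hdot_A(i_\la^! \text{CT}(\cF))$ for $\check L$ commutes with the respective $\kgeom$ maps; this uses the base-change/factorization $\imath_\la^{\Gv} = t_\la^{\check L}{}' \circ \imath_\la^{\check L}$ inside $\fT_\la$ and the transitivity of $!$-pushforward, together with the analogous — and easier — statement on the algebraic side (restriction of the universal Verma module $\M_G(\la)$ to $\uh(\mathfrak{l})$ decomposes compatibly with $p_\la$). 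Once this is in place, freeness over $\sh$ (Lemma~\ref{lem:equiv-cohomology-first-properties}(1)) lets us detect injectivity and equality of images at the rank-one codimension-one points, and Step~3 finishes the argument; the isomorphism $\zeta_{\cF,\la}$ is then simply the induced map between the common preimages.
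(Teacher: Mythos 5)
Your overall strategy — reduce to semisimple rank one via Levi restriction, do an explicit computation there, and use freeness over $\sh$ plus a localization argument to glue — is exactly the paper's approach. But two of the local claims you make are wrong, and both of them matter for the logic of the glueing step, so this is a genuine gap rather than a stylistic discrepancy.

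First, in Step~1 you write that injectivity of $\kalg$ and $\kgeom$ is ``equivalently, has free cokernel of the expected size, or: becomes an isomorphism after inverting $\hb$.'' Neither parenthetical is correct. The cokernels of these maps are \emph{torsion} $\sh$-modules supported on coroot hyperplanes, so they are free only in the trivial case. And inverting $\hb$ does not make either map an isomorphism: for $\cF=\IC_\nu$ and $\la=\nu-k\alpha$ in rank one, the image of $\kalg$ inside $V^\nu_\la\o\sh\cong\sh$ is the ideal generated by $\bigl(\alv+(\nu(\alv)-k)\hb\bigr)\cdots\bigl(\alv+(\nu(\alv)-2k+1)\hb\bigr)$, which remains proper in $\sh[\hb^{-1}]$. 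The element you actually need to invert is a product of linear forms of the shape $\alv+n\hb$, i.e.~you need to pass to (a localization contained in) $\C[\a^*_\rs]$, not $\sh[\hb^{-1}]$.

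Second, and more seriously, in Step~2 you say ``it suffices to check that the maps are isomorphisms after base change to the generic point of $\Spec\sh$ and to each codimension-one point $\mathfrak{p}_\alpha = (\alpha^\vee = 0)$.'' If ``the maps'' means $\kalg$ and $\kgeom$ individually, this criterion fails: neither map is an isomorphism at those codimension-one primes (or at the nearby primes $(\alv+n\hb)$ for $n\neq 0$, which also have to be accounted for). The object that has to be tested at codimension one is not $\kalg$ or $\kgeom$ but the \emph{comparison map} $\zeta$: one must first \emph{construct} an isomorphism $\C[\a^*_\rs]\o_{\sh}(\cS(\cF)\o\M(\la))^B\simto\C[\a^*_\rs]\o_{\sh}\coH^\hdot_A(i_\la^!\cF)$ intertwining the two $\kappa$'s — on the algebraic side this uses the identification $\C[\a^*_\rs]\o_\sh\M(\la)\cong\C[\a^*_\rs]\o\Ind_T^B(-\la)$ plus the tensor identity and Frobenius reciprocity; on the topological side it is the equivariant localization theorem — and then show that this $\zeta$ descends through each $\C[\a^*_{\alpha-\rs}]$, which for simple $\alpha$ is your Levi/rank-one step, and for non-simple $\alpha$ requires the $W$-equivariance of $\zeta$ (the operators $\Omega$ and $\Xi$), a point you wave at but leave undone. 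Once $\zeta$ is known to be defined over every codimension-one localization, the free-module argument forces it to descend over all of $\sh$ (this is the content of Lemma~\ref{lem:morphisms-sh}), and equality of images follows. As written, your Step~2 has the roles of the maps mixed up, and the generic isomorphism and its $W$-equivariance are asserted rather than constructed — those are precisely the places where the real work (Lemmas \ref{lem:quantum-case-generic}, \ref{lem:quantum-case-subgeneric}, \ref{lem:isom-regular-equivariance}) goes.
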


\begin{rem}
We have defined in \S\ref{subsec1} an action of $\zh$ on $(\cS(\cF) \o \M(\la))^B$. On the other hand, it is explained in \cite[\S 2.4]{bf} that $\coH^{\hdot}_{A}(i_{\lambda}^! \cF)$ also has a natural action of $\zh=\sym_\hb^W$ coming from the natural map $\Gr=(\Gv(\KK) \rtimes \Gm) / (\Gv(\OO) \rtimes \Gm) \to \pt / (\Gv(\OO) \rtimes \Gm)$. We claim that our isomorphism $\zeta_{\cF,\la}$ is also $\zh$-equivariant.

First, the action of $\sh \o \zh$ on $\coH^{\hdot}_{A}(i_{\lambda}^! \cF)$ factors through an action of $\sh \o_{\kh} \zh = \C[\t^* \times (\t^*/W) \times \bA^1]$. Then, the action of $\sh \o_{\kh} \zh$ factors through the natural action of the algebra $\coH^{\hdot}_A(\bla)$. Finally, it is explained in \cite[\S 3.2]{bf} that the $\sh \o_{\kh} \zh$-algebra $\coH^{\hdot}_A(\bla)$ is isomorphic to the direct image under the natural quotient map of $\oo_{\Gamma_\lambda}$, where
\[
\Gamma_\lambda:=\{(\eta_1,\eta_2,z) \in \t^* \times \t^* \times \bA^1 \mid \eta_2=\eta_1 + z \lambda \}.
\]
The claim easily follows from these remarks and the $\sh$-equivariance of $\zeta_{\cF,\la}$.
\end{rem}

\begin{rem}
Consider the case $\cF=\IC_\nu$ is the $\IC$-sheaf associated with the $\GvO$-orbit $\Gr^\nu$ for some $\nu \in \bX$, and $\la=w_0 \nu$ (where $w_0 \in W$ is the longest element).
Then $V^\nu:=\cS(\IC_\nu)$ is a simple $G$-module with highest weight $\nu$, and $\la$ is the lowest weight of $V^\nu$. In view of the right-hand isomorphism in Lemma \ref{lem:fixed-points-morphisms} below, the image of the morphism $\kalg_{V^\nu,w_0 \nu}$ is computed by Kashiwara in \cite{ka}: namely, with our conventions, combining Theorem 1.7 and Proposition 1.8 in \emph{loc}.~\emph{cit}.~we obtain that the image of $\kalg_{V^\nu,w_0 \nu}$ in $V^\nu_{w_0 \nu} \o \sh \cong \sh$ is generated by the following element:
\beq{eqn:poly-kashiwara}
\prod_{\alpha \in R^+} \left( \prod_{j=0}^{-\nu(w_0 \alpha)-1} (\alv - j \hb) \right).
\eeq
The topological context is easy in this case. Namely we have 
\[
\fT_{w_0 \nu} \cap \overline{\Gr^\nu} = \fT_{w_0 \nu} \cap \Gr^\nu = \Uv^-(\OO) \cdot (w_0 \bnu) \cong \prod_{\alpha \in R^+} \left( \prod_{j=0}^{-\nu(w_0 \alpha)-1} \C_{-\alv + j \hb} \right)
\]
as $A$-varieties. One can easily deduce that the image of $\kgeom_{\IC_\nu,w_0 \nu}$ is also generated by \eqref{eqn:poly-kashiwara}, see \S\ref{ss:equiv-cohomology}. Hence, in this particular case, Theorem \ref{thm:loop-equivariant-version} can be directly deduced from these remarks.

In the case $\la=\nu$, one can also directly check that both $\kalg_{V^\nu,\nu}$ and $\kgeom_{\IC_\nu,\nu}$ are isomorphisms.
\end{rem}

\subsection{Classical analogue}
\label{ss:classical}
We will also prove an analogue of Theorem \ref{thm:loop-equivariant-version} where one replaces $A$ by $\Tv$. In this case the representation theory of the algebra $\uh(\g)$ has to be replaced by the geometry of the algebraic variety $\g^*$.

We will identify $\t^*$ with the subspace $(\g/\u \oplus \u^-)^* \subset \g^*$. This way we obtain a canonical morphism $q : \sym(\fg/\u) \to \sym(\t)$ induced by restriction of functions. For $V$ in $\Rep(G)$ and $\la \in \bX$, the ``classical analogue'' of the morphism $\kalg_{V,\la}$, which we will denote by $\kbalg_{V,\la}$\index{kalg2@$\kbalg_{V,\la}$}, is the composition
\[
\bigl(V \otimes \sym(\g/\u) \o \C_{-\la}  \bigr)^B
\hookrightarrow \bigl(V \otimes \sym(\g/\u) \o \C_{-\la}
\bigr)^T \xrightarrow{\id_V \o q \o 1} (V \o 
\sym(\t) \o  \bbc_{-\lambda})^T =
V_{\lambda} \o \sym(\t).
\]
This morphism is $\sym(\t)$-equivariant, where the $\sym(\t)$-action on the left-hand side is induced by the morphism $(\g/\u)^* \to \t^*$ given by restriction of linear maps.

Now we consider perverse sheaves on $\Gr$. 
%
%
For $\cF$ in $\Perv_{\Gv(\OO)}(\Gr)$ and $\la \in \bX$,
we will denote by $\kbgeom_{\cF,\la}$\index{ktop2@$\kbgeom_{\cF,\la}$} the following 
 composite morphism
\[
\kbgeom_{\cF,\la}:\
\xymatrix{
\coH^{\hdot}_{\Tv}(i_{\lambda}^! \cF) \ \ar[rr]^<>(0.5){(\imath_\la)_!}&& \ 
\coH^{\hdot}_{\Tv}(\fT_{\lambda}, t_{\lambda}^! \cF)\ 
\ar[rr]^<>(0.5){\eqref{zeta}}_<>(0.5){\sim}&&\
\bigl( \cS(\cF) \bigr)_{\lambda} \o\sym(\t) \langle \lambda(2\rhov)\rangle.
}
\]
Then the classical analogue of Theorem \ref{thm:loop-equivariant-version} (to be proved in \S\ref{ss:proof-main-results-classical}) reads as follows.

\begin{thm}\label{thm:equiv-coh-classical} 
For any $\cF$ in $\Perv_{\Gv(\OO)}(\Gr)$ and $\lambda \in \bX$, the morphisms
 $\kbalg_{\cS(\cF),\la} \langle \lambda(2\rhov)\rangle$
 and $\kbgeom_{\cF,\la}$
are injective and have the same image. Thus, there is a
natural isomorphism of graded $\sym(\t)$-modules $\overline{\zeta}_{\cF,\la}$\index{zetab@$\overline{\zeta}_{\cF,\la}$} that fits into  the following 
commutative diagram
\beq{fits-classical}
\vcenter{
\xymatrix@C=2cm{
\bigl(\cS(\cF) \otimes \sym(\g/\u) \o \C_{-\la} \bigr)^B \langle \lambda(2\rhov)\rangle
\ar[r]^<>(0.5){\overline{\zeta}_{\cF,\la}}_-{\sim}
\ar@{^{(}->}[d]^<>(0.5){\kbalg_{\cS(\cF),\la}\langle \lambda(2\rhov)\rangle}&
\coH^{\hdot}_{\Tv}(i_{\lambda}^! \cF)
\ar@{^{(}->}[d]^<>(0.5){\kbgeom_{\cF,\la}}\\
\bigl( \cS(\cF) \bigr)_{\lambda} \o\sym(\t) \langle  \lambda(2\rhov)
\rangle \ar@{=}[r] &
\bigl( \cS(\cF) \bigr)_{\lambda} \o\sym(\t) \langle \lambda(2\rhov)
\rangle.
}
}
\eeq
\end{thm}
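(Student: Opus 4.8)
The plan is to prove Theorem~\ref{thm:equiv-coh-classical} by the same mechanism used for Theorem~\ref{thm:loop-equivariant-version} (carried out in \S\ref{ss:proof-main-results-classical}): reduce, by compatibility with restriction to a Levi subgroup, to the case where $G$ and $\Gv$ have semisimple rank one, and then settle that case by the explicit computations of Appendix~\ref{sec:rank1}. First I would reduce to $\cF=\IC_\nu$ with $\nu\in\bXp$, so that $\cS(\cF)=V^\nu$; since $\Perv_{\Gv(\OO)}(\Gr)$ is semisimple and everything in sight is additive in $\cF$, this is harmless. Then both $\kbalg_{V^\nu,\la}$ and $\kbgeom_{\IC_\nu,\la}$ are morphisms of graded $\sym(\t)$-modules into the free module $\cS(\cF)_\la\o\sym(\t)\langle\la(2\rhov)\rangle$, and their sources are free over $\sym(\t)$ — for $\coH^\hdot_{\Tv}(i_\la^!\IC_\nu)$ by Lemma~\ref{lem:equiv-cohomology-first-properties}(1), and for $\bigl(V^\nu\o\sym(\g/\u)\o\C_{-\la}\bigr)^B$ by the results of Section~\ref{sec:wfg}.

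It follows that injectivity of either morphism is equivalent to it becoming bijective after inverting all the roots in $\sym(\t)$, i.e.\ over the regular part of $\Spec\sym(\t)$. On the topological side this holds by the localization theorem, $\bla$ being the only $A$-fixed (indeed $\Tv$-fixed) point of the locally closed subset $\fT_\la$, so that $(\imath_\la)_!$ becomes an isomorphism after inverting the $\Tv$-characters it involves (which lie among the roots); on the algebraic side it follows from Section~\ref{sec:wfg} (over the regular semisimple locus the geometry of $\wfg$ degenerates to a $W$-cover). Hence both cokernels are torsion $\sym(\t)$-modules supported on the union of the root hyperplanes, and a standard localization argument at the codimension-one points of $\Spec\sym(\t)$ reduces the equality $\im\kbalg_{V^\nu,\la}=\im\kbgeom_{\IC_\nu,\la}$ to the same equality after localizing at each prime $(\alpha)$, $\alpha\in R^+$ — together with the (now established) generic statement. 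This kind of reduction is classical here, cf.~\cite{bfm,bf,brf,ahr}.

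The behaviour at a fixed root hyperplane $\{\alpha=0\}$ only sees the minimal Levi $L_\alpha\subset G$ of semisimple rank one (dually, $\Lv_\alpha\subset\Gv$). The delicate point — and, I expect, the main obstacle — is to set up the restriction morphisms for all the families simultaneously and to check that they commute \emph{on the nose}, cohomological shifts $\langle\la(2\rhov)\rangle$ included, with $\kbgeom$ and with $\kbalg$: on the topological side via compatibility of the constructions with hyperbolic localization, i.e.\ with a functor $\Perv_{\Gv(\OO)}(\Gr_\Gv)\to\Perv_{\Lv_\alpha(\OO)}(\Gr_{\Lv_\alpha})$ intertwining the two geometric Satake equivalences and the restriction $\Rep(G)\to\Rep(L_\alpha)$, plus compatibility of the Mirkovi\'c--Vilonen isomorphism \eqref{zeta} for $G$ with that for $L_\alpha$; on the algebraic side via the corresponding statement for the $\g^*$-picture (restriction to $\fl_\alpha$), which is part of Section~\ref{sec:wfg}. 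Once the problem is reduced to $G$ of semisimple rank one, $\cS(\cF)_\la$ is at most one-dimensional and $\sym(\t)$ is a polynomial ring in one variable, so the two images are (shifts of) ideals and one only has to compare their generators: on the topological side using that $\fT_\la\cap\overline{\Gr^\nu}$ is, as a $\Tv$-variety, an explicit product of one-dimensional representations, so $(\imath_\la)_!$ is multiplication by the corresponding Euler class; on the algebraic side by a direct computation of $B$-invariants in $V^\nu\o\sym(\g/\u)\o\C_{-\la}$. These are exactly the (somewhat lengthy) computations of Appendix~\ref{sec:rank1}. Combining with the previous paragraph gives $\im\kbalg_{\cS(\cF),\la}\langle\la(2\rhov)\rangle=\im\kbgeom_{\cF,\la}$, and $\overline{\zeta}_{\cF,\la}$ is then the unique isomorphism making \eqref{fits-classical} commute.

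Finally, I would record an alternative derivation from Theorem~\ref{thm:loop-equivariant-version}: the modules there are free over $\C[\hb]$, so $\C\o_{\C[\hb]}(-)$ is exact on them, and $\coH^\hdot_{A}(i_\la^!\cF)\o_{\C[\hb]}\C\cong\coH^\hdot_{\Tv}(i_\la^!\cF)$ by the parity vanishing of Lemma~\ref{lem:forget-hbar}. What is less immediate is that $\bigl(\cS(\cF)\o\M(\la)\bigr)^B\o_{\C[\hb]}\C\cong\bigl(\cS(\cF)\o\sym(\g/\u)\o\C_{-\la}\bigr)^B$ compatibly with $\kalg$ and $\kbalg$; this is the subtle point treated, via partial Fourier transforms, in \S\ref{ss:Fourier-classical}, and it exhibits $\overline{\zeta}_{\cF,\la}$ as the specialization of $\zeta_{\cF,\la}$ at $\hb=0$.
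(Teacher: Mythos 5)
Your proposal matches the paper's proof: the paper establishes this theorem either by running the same reduction-to-rank-one strategy used for Theorem~\ref{thm:loop-equivariant-version}, or---exactly as in your final paragraph---by specializing that theorem at $\hb=0$, using Lemma~\ref{lem:forget-hbar} on the topological side and Lemma~\ref{lem:hbar=0} (the partial Fourier transform argument of \S\ref{ss:Fourier-classical}) on the algebraic side. The only small caveat in your first route is that at a non-simple root hyperplane one cannot literally restrict to the Levi $L_\alpha$ (the restriction machinery is set up only for standard Levi subgroups), and the paper instead passes from simple roots to arbitrary roots via the $W$-equivariance of the generic isomorphism; since your $\hb=0$ specialization argument is complete on its own, this does not affect the correctness of the proof.
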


\subsection{Alternative descriptions: differential operators on  $G/U$ and intertwining operators for Verma modules}\label{subsec2}
An important role in our arguments will be played by two alternative descriptions of the $\C[\hb]$-modules $\bigl(V \otimes \M(\la)\rhoshift  \bigr)^B$.

If $X$ is a smooth algebraic variety, we write $\dd_X$\index{DX@$\dd_X$}  for the sheaf of differential operators on $X$.
The sheaf $\dd_X$ comes
equipped with a natural filtration by the
order of differential operator. We let 
 $\dd_{\hb,X}$\index{DXh@$\dd_{\hb,X}$} be the corresponding sheaf of asymptotic differential
operators. As for enveloping algebras, this algebra has an alternative description as the sheaf of graded $\C[\hb]$-algebras generated (locally) by $\oo_X$ in degree $0$ and the left $\oo_X$-module $\mathscr{T}_X$ (the tangent sheaf) in degree $2$, with relations $\xi \cdot \xi' - \xi' \cdot \xi = \hb [\xi,\xi']$ for $\xi,\xi' \in \mathscr{T}_X$ and $\xi \cdot f - f \cdot \xi = \hb \xi(f)$ for $\xi \in \mathscr{T}_X$ and $f \in \oo_X$. As for enveloping algebras, we will use this description of $\dd_{\hb,X}$ and still denote by $\xi$ the image of an element $\xi \in \mathscr{T}_X$. (If we were using the description provided by \eqref{eqn:def-rees}, this element should rather be denoted $\hb \xi$.) Note that $\dd_{\hb,X}$ acts on $\oo_X[\hb]$ via $\xi \cdot f = \hb \xi(f)$ for $\xi \in \mathscr{T}_X$ and $f \in \oo_X$.

We put $\dd(X)=\Ga(X, \dd_X)$,\index{DX2@$\dd(X)$} resp.
$\dh(X)= \Ga(X, \dd_{\hb,X})$,\index{DXh2@$\dh(X)$} for the corresponding
algebra of global sections. The order filtration
makes $\dd(X)$ a filtered algebra;
the associated Rees algebra $\dd(X)_\hb$ is canonically isomorphic to $\dh(X)$.
There is also a canonical \emph{injective} morphism
$\dh(X)/\hb \cdot \dh(X) \to \Ga(X,\dd_{\hb,X}/\hb \cdot \dd_{\hb,X})$, which is not surjective in general. 
Note finally that there exists a canonical algebra isomorphism
$\dd_{\hb,X}/\hb \cdot \dd_{\hb,X} \cong (p_X)_{*} \oo_{T^* X}$,
where $T^*X$ is the cotangent bundle to $X$ and $p_{X} : T^* X \to X$ is the projection.

Consider the quasi-affine variety $\X:=G/U$\index{X@$\X$}.
There is a natural $G\times T$-action on $\X$ 
defined as follows:
$g\times t:\ hU \mto ghtU.$
The 
$T$-action on $\X$ also induces an action of $T$ on 
$\dh(\X)$ by algebra automorphisms.
In particular,  this  $T$-action gives a weight decomposition
$\dh(\X)=\bplus_{\la\in\bX}\ \dh(\X)_\la$.
Thus, $\dh(\X)_0=\dh(\X)^T$ is the algebra of
right $T$-invariant asymptotic differential operators.

Differentiating the $T$-action on $\X$ yields
a morphism 
$\gamma : \sh \to \cD_{\hbar}(\X)$\index{gamma@$\gamma$} of graded $\bbc[\hbar]$-algebras.
Using this we will consider $\dh(\X)$ as an $\sh$-module where $t \in \t$ acts by right multiplication by $t - \hb \rho(t)$.
Note also that differentiating the $G$-action on $\X$ we obtain a morphism $\zh \to \dh(\X)$. This defines a $\zh$-module structure on $\dh(\X)$ (induced by multiplication on the left).

If $M$ is an $\sh$-module and $\varphi$ an algebra automorphism of $\sh$, we denote by ${}^{\varphi} \hspace{-1pt} M$ \index{phiM@${}^{\varphi} \hspace{-1pt} M$}the $\sh$-module which coincides with $M$ as a $\C$-vector space, and where $s \in \sh$ acts as $\varphi(m)$ acts on $M$. If $\varphi,\psi$ are algebra automorphisms of $\sh$ we have ${}^\psi \hspace{-1pt} \bigl( {}^\varphi \hspace{-1pt} M\bigr) = {}^{\varphi \circ \psi} \hspace{-1pt} M$. This construction provides an autoequivalence of the category of $\sh$-modules, acting trivially on morphisms. We will use in particular this notation when $\varphi=w \in W$ (extended in the natural way to an automorphism of $\sh$), and for the following automorphisms: if $\mu \in \bX$, we denote by $(\mu) : \sh \simto \sh$ the automorphism which sends $t \in \t$ to $t - \hb \mu(t) \in \sh$. We will use similar notation for $\sym(\t)$-modules.

If $M,N$ are $(\sh,\uh(\g))$-bimodules,
we will 
write $\Hom_{(\sh,\uh(\g))}(M,N)$ for the space of morphisms of bimodules from $M$ to $N$. It is an $\sh$-module and a $\zh$-module in a natural way. If $M,N$ are graded and $M$ is finitely generated as a bimodule then this space is a graded $\sh$-module and a $\zh$-module.

The following simple result
will be proved in \S\ref{ss:diffX} (for the first isomorphism) and \S\ref{ss:specialization} (for the second isomorphism).

\begin{lem}
\label{lem:fixed-points-morphisms}
For any $V$ in $\Rep(G)$ and $\lambda \in \bX$, there are canonical
isomorphisms of graded ${\sh}$-modules and $\zh$-modules.
\[
\bigl(V \otimes \tla \dh(\X)\lla\bigr)^G \ \cong\
\bigl(V \otimes \M(\la) \bigr)^B \ \cong \
\Hom_{(\sh,\uh(\g))}(\M(0),V \otimes
\M(\la)).
\]
\end{lem}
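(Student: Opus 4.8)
The plan is to establish the two isomorphisms separately, exploiting the description \eqref{eqn:isom-Verma} of $\M(\la)$ as $\uh(\g)/(\u\cdot\uh(\g))$ and an analogous description of $\dh(\X)$ as a space of sections on $G/U$. First I would treat the right-hand isomorphism, $\bigl(V\otimes\M(\la)\bigr)^B\cong\Hom_{(\sh,\uh(\g))}(\M(0),V\otimes\M(\la))$. Since $\M(0)=\sh\o_{\uh(\b)}\uh(\g)$ with cyclic vector $\bv_0$, a bimodule morphism $\M(0)\to V\o\M(\la)$ is determined by the image of $\bv_0$, and the bimodule condition forces that image to be annihilated by $\u$ (from the left $\uh(\b)$-structure, because $\u\cdot\bv_0=0$ in $\M(0)$) and to lie in the $\t$-weight space where $t$ acts by $t+\hb\rho(t)$ on the left, which on $V\o\M(\la)$ matches the weight-$0$ condition for the diagonal $B$-action after the $\rho$-shift built into $\M(0)=\sh\llangle\rho\rrangle\o_{\uh(\b)}\uh(\g)$. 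Thus evaluation at $\bv_0$ gives the desired identification with $\bigl(V\otimes\M(\la)\bigr)^B$; one checks directly that this is $\sh$-linear and $\zh$-linear, using that the right $\zh$-action on $\M(0)$ is central hence matches the one on $\M(\la)$, and that $\zh=\sym_\hb^W$ acts the same way through the Harish-Chandra isomorphism on both sides. I expect this half to be essentially formal once the weight bookkeeping with $\rho$ is set up carefully.

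For the left-hand isomorphism, $\bigl(V\otimes\tla\dh(\X)\lla\bigr)^G\cong\bigl(V\otimes\M(\la)\bigr)^B$, the key input is a concrete description of $\dh(\X)$ and its weight spaces. I would use the fact that $\X=G/U$ carries a left $G$-action and a right $T$-action, so $\dh(\X)$ is a $(G\times T)$-equivariant algebra, and that differentiating the $G$-action gives $\uh(\g)\to\dh(\X)$; moreover $\X$ is quasi-affine with $\Ga(\X,\oo_\X)=\bigoplus_{\mu\in\bX^+}V(\mu)^*$ (a multiplicity-free $G\times T$-module, the Cox ring picture). Concretely I would identify $\dh(\X)_\la$, as a $(\sh,\uh(\g))$-bimodule via $\gamma$ on the right and the $\uh(\g)$-map on the left, with something closely modelled on $\M(\la)\cong\uh(\g)/(\u\cdot\uh(\g))$: the right $T$-weight-$\la$ part of differential operators on $G/U$ is generated over $\uh(\g)$ acting on the left by the function of weight $\la$ (when $\la\in\bX^+$, and in general after twisting), and the relations are exactly $\u\cdot(-)=0$ after applying the twist functor $\tla$. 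The twist functors $\tla(-)$ and $\w(-)$ entering the statement are precisely designed so that $\tla\dh(\X)\lla$ becomes, as a bimodule, isomorphic to $\M(\la)$ — this is the content to be verified in \S\ref{ss:diffX}. Taking $(V\o-)^G$ on one side and $(V\o-)^B$ on the other then corresponds under the equivalence $\Rep(G)\to$ ($B$-modules via the cyclic vector) to the same vector space, because a $G$-invariant in $V\o\dh(\X)_\la$ is the same as a $B$-invariant in $V\o(\dh(\X)_\la\o\bv)$ where $\bv$ is the highest-weight-type cyclic generator.

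The main obstacle I anticipate is the left-hand isomorphism: one must prove that $\tla\dh(\X)\lla$ really is isomorphic to $\M(\la)$ as an $(\sh,\uh(\g))$-bimodule, which requires knowing the global sections of $\dd_\X$ precisely — in particular that $\Ga(\X,\dd_X)$ is ``as big as possible'' (no higher cohomology obstructing, and the order filtration behaving well so that the Rees algebra $\dd(\X)_\hb$ equals $\dh(\X)$), together with a clean identification of the $G$-action differential with the image of $\uh(\g)$ and of the right $T$-weight spaces. This is a genuine computation with differential operators on a quasi-affine homogeneous space and relies on the structure of $\X=G/U$ (e.g.\ that $\X$ has no nonconstant invertible functions, that its boundary has codimension $\geq 2$ so $\Ga(\X,\oo_\X)=\Ga(\overline{\X},\oo)$, and that $T$ acts with the weight lattice $\bX^+$ appearing with multiplicity one). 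Everything else — the $\sh$- and $\zh$-equivariance, the grading — should follow by tracking the actions through these identifications. I would therefore organize the proof so that all the delicate algebra of $\dh(\X)$ is isolated into the verification that $\tla\dh(\X)\lla\cong\M(\la)$ as bimodules, after which both isomorphisms of the lemma reduce to the functorial manipulations sketched above.
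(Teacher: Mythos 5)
Your treatment of the right-hand isomorphism, $\bigl(V\otimes\M(\la)\bigr)^B\cong\Hom_{(\sh,\uh(\g))}(\M(0),V\otimes\M(\la))$, is essentially the paper's argument (Lemma~\ref{lem:morphisms-Verma-v}): since $\M(0)=\sh\llangle\rho\rrangle\o_{\uh(\b)}\uh(\g)$ is induced from $\uh(\b)$ on the right, Frobenius reciprocity identifies bimodule morphisms $\M(0)\to V\otimes\M(\la)$ with elements $x\in V\otimes\M(\la)$ satisfying $x\cdot b=(b+\hb\rho(b))\cdot x$ for all $b\in\b$, which is exactly the $B$-invariance condition coming from the adjoint action of \S\ref{subsec1}.

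The left-hand isomorphism is where your plan breaks down. You propose to prove that $\tla\dh(\X)\lla$ is isomorphic to $\M(\la)$ itself as an $(\sh,\uh(\g))$-bimodule, and you identify this as the crux of the argument; but this statement is false, not merely hard. The weight space $\dh(\X)_\la$ carries a rational $G$-module structure (coming from the left $G$-action on $\X=G/U$), whereas $\M(\la)$ is only a $B$-module of much smaller size, so the two cannot be isomorphic. What is true (and is the paper's Lemma~\ref{lem:DX-Ind}, deduced from the hamiltonian reduction description~\eqref{eqn:diffX}) is the $G$-module and $\sh$-, $\zh$-module isomorphism
\[
\tla\dh(\X)_\la\ \cong\ \Ind_B^G\bigl(\M(\la)\bigr),
\]
and the first isomorphism of the lemma then follows not from matching bimodule generators and relations but from the tensor identity and Frobenius reciprocity for algebraic groups:
\[
\bigl(V\otimes\Ind_B^G(\M(\la))\bigr)^G\ \cong\ \bigl(\Ind_B^G(V_{|B}\otimes\M(\la))\bigr)^G\ \cong\ \bigl(V\otimes\M(\la)\bigr)^B.
\]
Your closing sentence, that ``a $G$-invariant in $V\otimes\dh(\X)_\la$ is the same as a $B$-invariant in $V\otimes(\dh(\X)_\la\otimes\bv)$,'' gestures at this but is not correct as stated: there is no single cyclic generator $\bv$ of $\dh(\X)_\la$ over the left $\uh(\g)$-action (the degree-zero part in $\la$ is a full irreducible $G$-module, not a line), and the passage from $G$-invariants to $B$-invariants goes through the induction adjunction, not through restriction to a cyclic submodule. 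Replacing your claimed bimodule isomorphism with the induction statement $\tla\dh(\X)_\la\cong\Ind_B^G(\M(\la))$ is the missing idea.
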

%
%
%
%
%
%

%

From Theorem \ref{thm:loop-equivariant-version} and Lemma
\ref{lem:fixed-points-morphisms} we deduce:

\begin{cor}\label{key_cor}
For any $\cF \in \Perv_{\Gv(\OO)}(\Gr)$ and $\lambda \in \bX$, there are natural
isomorphisms of graded ${\sh}$- and $\zh$-modules
\[
\bigl(\cS(\cF) \otimes \tla \dh(\X)\lla\bigr)^G \langle \lambda(2\rhov)
\rangle \ \cong\
\coH^{\hdot}_{A}(i_{\lambda}^! \cF)\ \cong \
\Hom_{(\sh,\uh(\g))}(\M(0),\cS(\cF) \otimes
\M(\la)) \langle \lambda(2\rhov)
\rangle.
\]
\end{cor}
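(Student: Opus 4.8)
The plan is to read this off directly from Theorem \ref{thm:loop-equivariant-version} and Lemma \ref{lem:fixed-points-morphisms}, which together do all the work. Recall that Theorem \ref{thm:loop-equivariant-version} produces, for each $\cF$ in $\Perv_{\Gv(\OO)}(\Gr)$ and each $\la \in \bX$, a canonical isomorphism of graded $\sh$-modules
\[
\zeta_{\cF,\la} : \bigl(\cS(\cF) \otimes \M(\la)\bigr)^B \langle \la(2\rhov)\rangle \simto \coH^{\hdot}_{A}(i_\la^! \cF),
\]
and that, by the Remark following that theorem, $\zeta_{\cF,\la}$ is moreover $\zh$-equivariant. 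On the other hand, Lemma \ref{lem:fixed-points-morphisms}, applied to $V = \cS(\cF)$, gives canonical isomorphisms of graded $\sh$- and $\zh$-modules
\[
\bigl(\cS(\cF) \otimes \tla \dh(\X)\lla\bigr)^G \ \cong\ \bigl(\cS(\cF) \otimes \M(\la)\bigr)^B \ \cong\ \Hom_{(\sh,\uh(\g))}\bigl(\M(0),\cS(\cF) \otimes \M(\la)\bigr).
\]

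First I would apply the grading shift $\langle \la(2\rhov)\rangle$ to the two isomorphisms of Lemma \ref{lem:fixed-points-morphisms}; since $\langle\cdot\rangle$ only relabels graded pieces, it commutes with all the relevant module structures, so this step is purely formal. Next I would splice in $\zeta_{\cF,\la}$ so as to identify the middle term $\bigl(\cS(\cF) \otimes \M(\la)\bigr)^B \langle \la(2\rhov)\rangle$ with $\coH^{\hdot}_{A}(i_\la^! \cF)$. Concatenating the three isomorphisms then yields precisely the chain asserted in the corollary, and naturality in $\cF$ follows from the canonicity of each of the constituent isomorphisms.

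The only thing left to verify is that every isomorphism in the resulting chain respects both the graded $\sh$-module structure and the $\zh$-module structure. For the $\sh$-structures this is built into the statements of Theorem \ref{thm:loop-equivariant-version} and Lemma \ref{lem:fixed-points-morphisms}; for the $\zh$-structures, the compatibility of $\zeta_{\cF,\la}$ is exactly the content of the Remark after Theorem \ref{thm:loop-equivariant-version}, while the compatibility of the two isomorphisms of Lemma \ref{lem:fixed-points-morphisms} is part of that lemma's statement. Consequently there is no genuine obstacle at this stage: the corollary is a formal consequence of the two cited results, and all the substance is deferred to their proofs in \S\ref{ss:proof-main-results}, \S\ref{ss:diffX} and \S\ref{ss:specialization}.
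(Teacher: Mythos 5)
Your argument is exactly the paper's: Corollary \ref{key_cor} is stated there as an immediate consequence of Theorem \ref{thm:loop-equivariant-version} and Lemma \ref{lem:fixed-points-morphisms}, obtained by splicing $\zeta_{\cF,\la}$ into the chain of isomorphisms from the lemma (with the shift $\langle \la(2\rhov)\rangle$), and the $\zh$-equivariance is handled just as you say via the remark following the theorem. No gaps; the substance indeed lives in the proofs of the two cited results.
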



One can also give an alternative description of equivariant cohomology of cofibers in the ``classical case'' of \S\ref{ss:classical}, as follows. Let $\B:=G/B$.\index{B@$\B$} For $\la \in \bX$, we denote by $\cO_\B(\la)$ the line bundle on $\B$ associated with the character $-\lambda$ of $B$ (so that ample line bundles correspond to dominant weights).
For any variety $X$ over $\B$, and any $\la \in \bX$, we will denote by $\oo_X(\la)$ the pull-back to $X$ of $\oo_{\B}(\la)$. In particular, consider the $G$-varieties
$G/T \times \t^*$ and $\wfg := G \times_B (\g/\u)^*$\index{gtilde@$\wfg$},
which are both equipped with a natural morphism to $\B$. Both varieties are equipped with a natural action of $G \times \C^\times$, where the action of $G$ is induced by left multiplication on $G$, and any $z \in \C^\times$ acts by multiplication by $z^{-2}$ on $(\g/\u)^*$ or $\t^*$.

Consider the morphism
\[
a : G/T \times \ft^* \to \wfg, \quad (gT,\eta) \mapsto (g \times_B \eta)
\]
where on the right-hand side $\eta$ is considered as an element of $\g^*$ trivial on $\u \oplus \u^-$. For any $\la \in \bX$ we have a canonical isomorphism $a^* \oo_{\wfg}(\la) \cong \oo_{G/T \times \ft^*}(\la)$, so that we obtain a pull-back morphism $a^* : \Ga \bigl( \wfg, \oo_{\wfg}(\la) \bigr) \to \Ga \bigl( G/T \times \t^*, \oo_{G/T \times \t^*}(\la) \bigr)$.

Note that we have a canonical isomorphism $\Ga \bigl( G/T \times \t^*, \oo_{G/T \times \ft^*}(\la) \bigr) \cong \Ind_T^G(-\la) \otimes \sym(\t)$ where $\Ind$\index{Ind@$\Ind$} is the usual induction functor for representations of algebraic groups, as defined e.g.~in \cite[\S I.3.3]{ja}. Hence if $V$ is in $\Rep(G)$, using the tensor identity and Frobenius reciprocity we obtain an isomorphism
\beq{eqn:wfg'}
\bigl( V \o \Ga ( G/T \times \ft^*, \oo_{G/T \times \t^*}(\la) ) \bigr)^G \cong \bigl( V \o \sym(\t) \o \C_{-\la} \bigr)^T = V_\la \o \sym(\t). 
\eeq
By a similar argument, there exists a canonical isomorphism
\beq{eqn:wfg}
\bigl( V \o \Ga ( \wfg, \oo_{\wfg}(\la) ) \bigr)^G \cong \bigl( V \o \sym(\g/\u) \o \C_{-\la} \bigr)^B. 
\eeq
Then one can easily check that, under isomorphisms \eqref{eqn:wfg'} and \eqref{eqn:wfg}, the morphism $\kbalg_{V,\la}$ identifies with the morphism
\[
\bigl( V \o \Ga ( \wfg, \oo_{\wfg}(\la) ) \bigr)^G \to 
\bigl( V \o \Ga ( G/T \times \t^*, \oo_{G/T \times \t^*}(\la) ) \bigr)^G
\]
induced by $a^*$.

Using \eqref{eqn:wfg}, from Theorem \ref{thm:equiv-coh-classical} we deduce the following description.

\begin{cor}
\label{key_cor-classical}
For any $\cF\in \Perv_{\Gv(\OO)}(\Gr)$ and $\lambda \in \bX$, there exists a natural
isomorphism of graded $\sym(\t)$-modules
\[
\coH^{\hdot}_{\Tv}(i_{\lambda}^! \cF)\ \cong \
\bigl( \cS(\cF) \o \Ga ( \wfg, \oo_{\wfg}(\la) ) \bigr)^G \langle  \la(2\rhov) \rangle.
\]
\end{cor}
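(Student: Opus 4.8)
The plan is to deduce Corollary \ref{key_cor-classical} directly from Theorem \ref{thm:equiv-coh-classical} together with the reformulation in \eqref{eqn:wfg}. Concretely, Theorem \ref{thm:equiv-coh-classical} already provides a canonical isomorphism of graded $\sym(\t)$-modules
\[
\bigl(\cS(\cF) \otimes \sym(\g/\u) \o \C_{-\la} \bigr)^B \langle \lambda(2\rhov)\rangle \ \simto \ \coH^{\hdot}_{\Tv}(i_{\lambda}^! \cF),
\]
namely $\overline{\zeta}_{\cF,\la}$. So the only task is to identify the source of this isomorphism with $\bigl( \cS(\cF) \o \Ga ( \wfg, \oo_{\wfg}(\la) ) \bigr)^G$.

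First I would invoke the canonical isomorphism \eqref{eqn:wfg}, which was established in the paragraph preceding the statement: for any $V$ in $\Rep(G)$ one has
\[
\bigl( V \o \Ga ( \wfg, \oo_{\wfg}(\la) ) \bigr)^G \ \cong \ \bigl( V \o \sym(\g/\u) \o \C_{-\la} \bigr)^B,
\]
obtained from the tensor identity and Frobenius reciprocity applied to the vector bundle $\wfg = G \times_B (\g/\u)^* \to \B$, whose space of sections twisted by $\oo_{\wfg}(\la)$ is $\Ind_B^G\bigl( \sym(\g/\u) \o \C_{-\la} \bigr)$. Taking $V = \cS(\cF)$ and composing with $\overline{\zeta}_{\cF,\la}$ (after the appropriate grading shift $\langle \la(2\rhov)\rangle$) yields the desired isomorphism
\[
\bigl( \cS(\cF) \o \Ga ( \wfg, \oo_{\wfg}(\la) ) \bigr)^G \langle \la(2\rhov)\rangle \ \simto \ \coH^{\hdot}_{\Tv}(i_{\lambda}^! \cF).
\]
One should check that \eqref{eqn:wfg} is an isomorphism of \emph{graded} $\sym(\t)$-modules, where on the left the grading and the $\sym(\t)$-action come from the $\C^\times$-action on $\wfg$ scaling the fibers of $(\g/\u)^* \to \B$ (weight $z^{-2}$) and from the morphism $(\g/\u)^* \to \t^*$; this matches the conventions under which $\kbalg_{V,\la}$ was described as induced by $a^*$, so the grading and module structures line up by construction.

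The step I expect to require the most care — though it is essentially bookkeeping rather than a genuine obstacle — is verifying that all the identifications are compatible with the $\sym(\t)$-module structures and gradings simultaneously, and that the naturality in $\cF$ (in particular functoriality for morphisms in $\Perv_{\Gv(\OO)}(\Gr)$, equivalently in $\Rep(G)$) is preserved under \eqref{eqn:wfg}. Since \eqref{eqn:wfg} is manifestly functorial in $V$ (it is built from Frobenius reciprocity and the tensor identity, both natural) and $\overline{\zeta}_{\cF,\la}$ is natural by Theorem \ref{thm:equiv-coh-classical}, the composite inherits naturality, so there is nothing substantial left to prove. The corollary then follows.
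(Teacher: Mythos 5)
Your proposal is correct and is essentially the same argument the paper gives: immediately before the corollary the text says ``Using \eqref{eqn:wfg}, from Theorem \ref{thm:equiv-coh-classical} we deduce the following description,'' which is exactly the composition of $\overline{\zeta}_{\cF,\la}$ with the identification \eqref{eqn:wfg} that you spell out. The additional bookkeeping you flag (compatibility of gradings and $\sym(\t)$-module structures, naturality in $\cF$) is indeed all that remains, and your reasoning for why it holds is sound.
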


\subsection{Weyl group symmetries}
\label{ss:W-symmetries}

Each of the spaces in Corollaries \ref{key_cor} and \ref{key_cor-classical} exhibits a kind of symmetry governed by the Weyl group $W$. These symmetries play a technical role in our proofs of Theorem \ref{thm:loop-equivariant-version} and \ref{thm:equiv-coh-classical}. But we will also show that they are respected by the isomorphisms in Corollaries \ref{key_cor} and \ref{key_cor-classical}. Some of our constructions are based on isomorphisms which do not respect the gradings; hence for simplicity we just forget the gradings in this subsection.

The constructions on the side of the group $G$ depend on the choice of root vectors for all simple roots. For these constructions to match with the constructions in perverse sheaves on $\Gr$, one has to choose the root vectors provided by the Tannakian construction of $G$ from the tensor category $\Perv_{\Gv(\OO)}(\Gr)$; see \S\ref{ss:root-vectors} for details.



The symmetry in the case of equivariant cohomology of cofibers of perverse sheaves is easy to construct. Namely, the normalizer $N_{\Gv}(\Tv)$ of $\Tv$ in $\Gv$ acts naturally on $\Gr$; we denote by $m_g : \Gr \simto \Gr$ the action of $g \in N_{\Gv}(\Tv)$. If we denote by $g \mapsto \overline{g}$ the projection $N_{\Gv}(\Tv) \twoheadrightarrow N_{\Gv}(\Tv) / \Tv \cong W$, for any $\la \in \bX$  we have $m_g \circ i_\la = i_{\overline{g} \lambda}$ (where we identify the one-point varieties $\{\la\}$ and $\{\overline{g} \la \}$). If $\cF$ is in $\Perv_{\Gv(\OO)}(\Gr)$, we deduce an isomorphism of graded $\sh$-modules
\[
\coH^{\hdot}_A(i_\la^! m_g^! \cF) \simto {}^{\overline{g}} \hspace{-1pt} \coH^{\hdot}_A(i_{\overline{g} \la}^! \cF).
\]
On the other hand, since $\cF$ is $\Gv(\OO)$-equivariant (hence in particular $N_{\Gv}(\Tv)$-equivariant), there exists a canonical isomorphism $m_g^! \cF \cong \cF$. Hence we obtain an isomorphism $\coH^{\hdot}_A(i_\la^!\cF) \simto {}^{\overline{g}} \hspace{-1pt} \coH^{\hdot}_A(i_{\overline{g} \la}^! \cF)$. Using classical arguments one can check that this isomorphism only depends on $\overline{g}$ (and not on $g$); we will denote by\index{Xi@$\Xi^{\cF,\la}_w$}%
\[
\Xi^{\cF,\la}_w : \coH^{\hdot}_A(i_\la^!\cF) \simto \w \coH^{\hdot}_A(i_{w \la}^! \cF)
\]
the isomorphism associated with $w \in W$. These isomorphisms generate an action of $W$ in the sense that for any $\cF$ in $\Perv_{\Gv(\OO)}(\Gr)$, $\la \in \bX$ and $x,y \in W$ we have
\begin{equation}
\label{eqn:cocycle-Xi}
\y \bigl( \Xi^{\cF,y \la}_x \bigr) \circ \Xi^{\cF,\la}_y \ = \ \Xi^{\cF,\la}_{xy}.
\end{equation}

Now, let us consider $\dd$-modules on $\X$. In \S\ref{ss:partial-Fourier} we recall a construction of Gelfand--Graev (see \cite{kla, bbp}) based on ``partial Fourier transforms'' for $\dd$-modules which provides an action of $W$ on $\dh(\X)$ by algebra automorphisms such that, for any $w \in W$ and $\la \in \bX$ the action of $w$ restricts to an isomorphism of $\sh$-modules
\begin{equation}
\label{eqn:isom-Fourier-DX}
\tla \dh(\X)_{\la} \simto \w \bigl( {}^{(w\la)} \hspace{-1pt} \dh(\X)_{w \la} \bigr).
\end{equation}
For $V$ in $\Rep(G)$, we will denote by\index{Phi@$\Phi_w^{V,\la}$}%
\[
\Phi_w^{V,\la} : \bigl(V \o \tla \dh(\X)_\la \bigr)^G \simto \w \bigl(V \o {}^{(w\la)} \hspace{-1pt} \dh(\X)_{w \la} \bigr)^G
\]
the induced isomorphism. This collection of isomorphisms satisfies the relations
\begin{equation}
\label{eqn:cocycle-Phi}
\y \bigl( \Phi^{V,y \la}_x \bigr) \circ \Phi^{V,\la}_y \ = \ \Phi^{V,\la}_{xy}.
\end{equation}

Finally, in \S\ref{ss:def-Theta} we will define, for any $V$ in $\Rep(G)$, $\la \in \bX$ and $w \in W$, an isomorphism of $\sh$-modules\index{Theta@$\Theta_w^{V,\la}$}%
\[
\Theta_w^{V,\la} : \Hom_{(\sh,\uh(\g))}(\M(0),V \otimes \M(\la)) \simto \w \Hom_{(\sh,\uh(\g))}(\M(0),V \otimes
\M(w \la)).
\]
This collection of isomorphisms naturally appears in the construction of the \emph{dynamical Weyl group}, see \S\ref{ss:dwg} below. As above, it satisfies the relations
\begin{equation}
\label{eqn:cocycle-Theta}
\y \bigl( \Theta^{V,y \la}_x \bigr) \circ \Theta^{V,\la}_y \ = \ \Theta^{V,\la}_{xy}.
\end{equation}

Our second main result, which will be proved in \S\ref{ss:proof-main-results}, is the following.

\begin{thm}
\label{thm:W-symmetry}
The isomorphisms of Corollary {\rm \ref{key_cor}} are such that the following diagram commutes for any $\cF$ in $\Perv_{\Gv(\OO)}(\Gr)$, $\la \in \bX$ and $w \in W$:
\[
\xymatrix@C=1cm{
\bigl( \cS(\cF) \o \tla \dh(\X)_\la \bigr)^G 
\ar[d]_-{\wr}^-{\Phi^{\cS(\cF),\la}_w 
} \ar[r]^-{\sim} & \coH^{\hdot}_A(i_\la^!\cF) \ar[d]_-{\wr}^-{\Xi^{\cF,\la}_w} \ar[r]^-{\sim} & \Hom_{(\sh,\uh(\g))}(\M(0), \cS(\cF) \otimes \M(\la)) 
\ar[d]_-{\wr}^-{\Theta^{\cS(\cF),\la}_w 
} \\ 
\w \bigl(\cS(\cF) \o {}^{(w\la)} \hspace{-1pt} \dh(\X)_{w \la} \bigr)^G 
\ar[r]^-{\sim} & \w \coH^{\hdot}_A(i_{w\la}^!\cF)  \ar[r]^-{\sim} & \w \Hom_{(\sh,\uh(\g))}(\M(0), \cS(\cF) \otimes
\M(w \la)). 
}
\]
\end{thm}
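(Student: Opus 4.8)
The plan is to reduce the theorem, via the compatibility of everything in sight with restriction to Levi subgroups, to the case where $G$ and $\Gv$ have semisimple rank one, and then to verify the commutation of the diagram by the explicit computations collected in Appendix~\ref{sec:rank1}. The three isomorphisms in Corollary~\ref{key_cor} arise by comparing the morphisms $\kalg_{\cS(\cF),\la}$, $\kgeom_{\cF,\la}$ and their differential-operator incarnation (through Lemma~\ref{lem:fixed-points-morphisms}) with a common target $\bigl(\cS(\cF)\bigr)_\la\o\sh$; hence to check that a square commutes it suffices to check that the two $W$-equivariance structures become \emph{the same} isomorphism after composing with the injections $\kalg$, $\kgeom$, and their geometric analogue into $\bigl(\cS(\cF)\bigr)_\la\o\sh$. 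So the first step is to record, in each of the three settings, the precise effect of $\Phi^{V,\la}_w$, $\Xi^{\cF,\la}_w$, $\Theta^{V,\la}_w$ composed with the relevant $\kappa$-type map, landing in $V_\la\o\sh$; once we know these three composites agree, the theorem follows because $\kalg$ (and likewise the others) is injective by Theorem~\ref{thm:loop-equivariant-version}.

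First I would treat the comparison between the $\dd$-module side and the cohomology side. Here the key input is that the $W$-action on $\dh(\X)$ of \cite{bbp} (partial Fourier transforms) and the $N_{\Gv}(\Tv)$-action on $\Gr$ should be matched through the isomorphism $\bigl(V\o\tla\dh(\X)_\la\bigr)^G\cong\coH^{\hdot}_A(i_\la^!\cF)$; since both sides fit into diagram~\eqref{fits}, it is enough to see that $\Phi^{V,\la}_w$ and $\Xi^{\cF,\la}_w$ induce the same self-map of $V_\la\o\sh$ after applying $\kalg$ and $\kgeom$ respectively. For a simple reflection $s_\al$ this reduces to an $\mathfrak{sl}_2$-computation: on the geometric side, $m_g$ for $g$ a lift of $s_\al$ acts on the corestriction cohomology of $\Gr_{\mathrm{PGL}_2}$ or $\Gr_{\mathrm{SL}_2}$ and its effect on the MV-filtration is classical; on the $\dd$-module side, the partial Fourier transform along the $\al$-direction on $\X=\mathrm{SL}_2/U$ or $\mathrm{PGL}_2/U$ acts on $\dh(\X)_\la$, and one computes directly. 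These are exactly the contents of the rank-one appendix, so this step is "assume Appendix~\ref{sec:rank1}" plus the restriction-to-Levi reduction. The general $W$ case then follows from the rank-one case using the cocycle relations \eqref{eqn:cocycle-Xi}, \eqref{eqn:cocycle-Phi} and a reduced-word induction, noting that all three cocycles have the same shape.

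Next I would treat the comparison with the intertwining-operator side. The isomorphism $\Theta^{V,\la}_w$ is, by construction (see \S\ref{ss:def-Theta}), built from the renormalized intertwining operators $\M(w\la)\to V\o\M(\la)$ studied in \S\ref{sec:morphisms}; the point is that under the isomorphism $\bigl(V\o\M(\la)\bigr)^B\cong\Hom_{(\sh,\uh(\g))}(\M(0),V\o\M(\la))$ of Lemma~\ref{lem:fixed-points-morphisms} and the map $\kalg_{V,\la}$, the operator $\Theta^{V,\la}_w$ becomes the same self-map of $V_\la\o\sh$ as $\Phi^{V,\la}_w$ does via the $\dd$-module description. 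Again this is a statement in $V_\la\o\sh$, hence after restriction to a Levi it is a rank-one identity between the Fourier-transform automorphism of $\dh(\mathrm{SL}_2/U)$ and the dynamical-Weyl-group-type operator for $\mathfrak{sl}_2$ — precisely the computation in Appendix~\ref{sec:rank1}. I would then glue the two squares: commutativity of the left square (geometric vs.\ topological) and of the outer square (or equivalently the $\Theta$-vs-$\Phi$ identity) forces commutativity of the right square, since $\Xi^{\cF,\la}_w$ is an isomorphism.

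The main obstacle I expect is \emph{bookkeeping of normalizations and sign/twist conventions} rather than any conceptual difficulty: one must match the choice of simple root vectors coming from Tannakian formalism (\S\ref{ss:root-vectors}) with the choices implicit in the \cite{bbp} partial Fourier transforms \emph{and} with the normalization of the intertwining operators, and one must track the twist functors $\tla(\cdot)$, $\w(\cdot)$, $(\mu)$ and the grading shifts $\langle\la(2\rhov)\rangle$ through every identification so that the three composites into $V_\la\o\sh$ literally coincide (not merely up to an invertible scalar depending on $\la$). Concretely, the crux is the rank-one verification that the self-map of $V_\la\o\sh$ induced by a simple reflection is the same whether computed via $\Phi$, via $\Xi$, or via $\Theta$; granting Appendix~\ref{sec:rank1} and the restriction compatibilities proved in Sections~\ref{sec:DX}--\ref{sec:wfg}, the rest is the cocycle induction and diagram-chasing sketched above.
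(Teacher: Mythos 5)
Your proposal is essentially the same as the paper's proof, both in overall strategy and in the key steps: reduce to semisimple rank one via the Levi--restriction compatibilities of $\Phi$ (Corollary~\ref{cor:Phi-rest}), $\Theta$ (Lemma~\ref{lem:Theta-rest-Levi}), $\Xi$ (Lemma~\ref{lem:Xi-restriction}) and the $\kappa$-maps, use injectivity of the restriction maps (Lemma~\ref{lem:restH-injective}) to detect equality, and verify the simple-reflection case in rank one by the explicit computations of Appendix~\ref{sec:rank1}, then bootstrap to general $w$ with the cocycle identities. The paper packages this by first proving the $\Phi$-vs-$\Theta$ compatibility separately (Proposition~\ref{prop:Phi-Theta}, which defines $\Omega$) and then, since the modules are free over $\sh$, checking the remaining square after localization to $\a^*_{\rs}$ (Lemma~\ref{lem:isom-regular-equivariance}); your "compose with the injective $\kappa$-maps" is the same move dressed differently. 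Two small imprecisions: the composites $\w\kalg_{\cS(\cF),w\la}\circ\Phi^{\cS(\cF),\la}_w$ and $\w\kgeom_{\cF,w\la}\circ\Xi^{\cF,\la}_w$ land in $\w(\cS(\cF)_{w\la}\o\sh)$, not in $\cS(\cF)_\la\o\sh$ (they are not self-maps); and in your "gluing" step, what lets you deduce the right square from the left and outer squares is that the top-left horizontal map is an isomorphism (it is, by Corollary~\ref{key_cor}), not that $\Xi^{\cF,\la}_w$ is.
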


One can also give a ``classical'' analogue of Theorem \ref{thm:W-symmetry}. First, the same construction as above provides, for any $\cF$ in $\Perv_{\Gv(\OO)}(\Gr)$, $\la \in \bX$ and $w \in W$, an isomorphism of graded $\sym(\t)$-modules\index{xi@$\xi^{\cF,\la}_w$}%
\[
\xi^{\cF,\la}_w : \coH^{\hdot}_{\Tv}(i_\la^!\cF) \simto \w \coH^{\hdot}_{\Tv}(i_{w \la}^! \cF).
\]
This collection of isomorphisms satisfies relations
\[
\y \bigl( \xi^{\cF,y \la}_x \bigr) \circ \xi^{\cF,\la}_y \ = \ \xi^{\cF,\la}_{xy}.
\]

On the other hand, let $\g^*_{\mathrm{r}} \subset \g^*$\index{g*r@$\g^*_{\mathrm{r}}$} be the open set consisting of regular elements, and let $\wfgr$\index{gtilder@$\wfgr$} be the inverse image of $\g^*_{\mathrm{r}}$ under the morphism $\pi : \wfg \to \g^*$\index{pi@$\pi$} defined by $\pi(g \times_B \eta) = g \cdot \eta$. Then there exists a canonical action of $W$ on $\wfgr$, which induces for any $\la \in \bX$ and $w \in W$ an isomorphism of $\sym(\t)$-modules and $G$-modules
\[
\Ga \bigl( \wfg, \oo_{\wfg}(\la) \bigr) \simto \w \Ga \bigl( \wfg,\oo_{\wfg}(w\la) \bigr),
\]
see \S\ref{ss:construction-sigma} for details.
Hence we obtain, for any $V$ in $\Rep(G)$, an isomorphism\index{sigma@$\sigma^{V,\la}_w$}%
\[
\sigma^{V,\la}_w : \bigl( V \o \Ga ( \wfg, \oo_{\wfg}(\la) ) \bigr)^G \simto \w \bigl( V \o \Ga ( \wfg, \oo_{\wfg}(w \la) ) \bigr)^G.
\]
This collection of isomorphisms again satisfies relations
\begin{equation}
\label{eqn:cocycle-sigma}
\y \bigl( \sigma^{V,y \la}_x \bigr) \circ \sigma^{V,\la}_y \ = \ \sigma^{V,\la}_{xy}.
\end{equation}

We have the following compatibility property, to be proved in \S\ref{ss:proof-main-results-classical}.

\begin{thm}
\label{thm:W-symmetry-classical}
The isomorphism of Corollary {\rm \ref{key_cor-classical}} is such that the following diagram commutes for any $\cF$ in $\Perv_{\Gv(\OO)}(\Gr)$, $\la \in \bX$ and $w \in W$:
\[
\xymatrix@C=1.5cm{
\bigl(\cS(\cF) \o\Ga ( \wfg, \oo_{\wfg}(\la)  \bigr)^G 
\ar[d]_-{\wr}^-{\sigma^{\cS(\cF),\la}_w 
} \ar[r]^-{\sim} & \coH^{\hdot}_{\Tv}(i_\la^!\cF) \ar[d]_-{\wr}^-{\xi^{\cF,\la}_w} \\ 
\w \bigl(\cS(\cF) \o \Ga ( \wfg, \oo_{\wfg}(w \la) \bigr)^G 
\ar[r]^-{\sim} & \w \coH^{\hdot}_{\Tv}(i_{w\la}^!\cF).
}
\]
\end{thm}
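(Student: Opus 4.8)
The plan is to reduce Theorem \ref{thm:W-symmetry-classical} to Theorem \ref{thm:W-symmetry} by specialization, exactly paralleling the way Corollary \ref{key_cor-classical} is deduced from Corollary \ref{key_cor}. More precisely, I would first establish the precise relation between the "asymptotic" objects $\coH^{\hdot}_{A}(i_\la^!\cF)$, $\bigl(\cS(\cF)\o\tla\dh(\X)_\la\bigr)^G$ and their "classical" counterparts $\coH^{\hdot}_{\Tv}(i_\la^!\cF)$, $\bigl(\cS(\cF)\o\Ga(\wfg,\oo_{\wfg}(\la))\bigr)^G$ under the functor $\C\o_{\C[\hb]}(-)$. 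For the topological side this is the parity-vanishing argument of Lemma \ref{lem:forget-hbar} alluded to in the introduction; for the geometric side it is the (more delicate) identification promised in \S\ref{ss:Fourier-classical} relating $\dh(\X)$ modulo $\hb$ with functions on the regular part of $\wfg$ (equivalently $T^*\X$). The key point is that under these identifications, the operator $\xi^{\cF,\la}_w$ is by construction the specialization of $\Xi^{\cF,\la}_w$ — both come from the same geometric action of $N_{\Gv}(\Tv)$ on $\Gr$, so this is essentially immediate — while $\sigma^{V,\la}_w$ must be shown to be the specialization of $\Phi^{V,\la}_w$, i.e.\ that the partial Fourier transform automorphisms of $\dh(\X)$ reduce mod $\hb$ to the $W$-action on the regular part of $\wfg$. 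This last compatibility is exactly the statement flagged in the introduction as "new", so proving it carefully is the crux.

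Concretely, the steps I would carry out are: (1) recall from \S\ref{ss:Fourier-classical} the canonical isomorphism $\C\o_{\C[\hb]}\dh(\X)_\la\cong\Ga(\wfg,\oo_{\wfg}(\la))$ (equivalently, after taking $G$-invariants against $V$, matching \eqref{eqn:wfg} with the $\hb=0$ specialization of Lemma \ref{lem:fixed-points-morphisms}), and check that this isomorphism intertwines $\kbalg_{V,\la}$ with $\C\o_{\C[\hb]}\kalg_{V,\la}$ — which the text already essentially records via the discussion around \eqref{eqn:wfg'}, \eqref{eqn:wfg}; (2) verify that applying $\C\o_{\C[\hb]}(-)$ to the commutative square of Theorem \ref{thm:loop-equivariant-version}, together with step (1) and Lemma \ref{lem:forget-hbar}, yields precisely the isomorphism $\overline{\zeta}_{\cF,\la}$ of Theorem \ref{thm:equiv-coh-classical}, hence that the isomorphism of Corollary \ref{key_cor-classical} is the specialization of that of Corollary \ref{key_cor}; (3) show $\xi^{\cF,\la}_w=\C\o_{\C[\hb]}\Xi^{\cF,\la}_w$ directly from the fact that both are induced by $m_g$ on $\Gr$ and the equivariance isomorphism $m_g^!\cF\cong\cF$ — the only subtlety is that the classical and loop-equivariant cohomologies are related by forgetting the $\hb$-variable, which commutes with pushforward/pullback along $m_g$; (4) show $\sigma^{V,\la}_w=\C\o_{\C[\hb]}\Phi^{V,\la}_w$ by comparing the partial Fourier transform automorphism of $\dh(\X)$ (recalled in \S\ref{ss:partial-Fourier}) with the $W$-action on $\wfgr$ constructed in \S\ref{ss:construction-sigma}, using the identification $\dd_{\hb,\X}/\hb\cdot\dd_{\hb,\X}\cong(p_\X)_*\oo_{T^*\X}$; (5) assemble: the outer square of Theorem \ref{thm:W-symmetry-classical} is obtained by applying $\C\o_{\C[\hb]}(-)$ to the left-hand square of Theorem \ref{thm:W-symmetry} and using steps (2)–(4), noting the cocycle relation \eqref{eqn:cocycle-sigma} is consistent with \eqref{eqn:cocycle-Phi}.

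The main obstacle I anticipate is step (4): identifying the partial Fourier transform on $\dh(\X)$ modulo $\hb$ with the $W$-action on the regular part of $\wfg$. The Fourier transform is a global, $\dd$-module-theoretic operation on $\Ga(\X,\dd_{\hb,\X})$ (not visibly geometric), whereas the $W$-action on $\wfgr$ is a birational automorphism of a variety; bridging these requires working in a semisimple-rank-one situation (as the introduction advertises: "reduce to the case of semisimple rank one, where one checks by explicit computation"), computing the Fourier transform for $\X=\mathrm{SL}_2/U\cong\C^2\sminus\{0\}$ explicitly at $\hb=0$, and matching it with the flip on $\wfg_{\mathrm{r}}$ for $\mathrm{SL}_2$ — and then invoking the compatibility-with-Levi machinery (used throughout the paper) to pass to general $G$. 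A secondary, more bookkeeping obstacle is keeping track of the various twists ${}^{(\mu)}(-)$, ${}^w(-)$ and the shift $\langle\la(2\rhov)\rangle$ under specialization; but since the gradings are forgotten in \S\ref{ss:W-symmetries}, this is manageable. Once step (4) is in hand, the theorem follows formally by the specialization argument outlined above.
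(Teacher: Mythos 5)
Your proposal is correct and follows essentially the same route as the paper, which deduces Theorem \ref{thm:W-symmetry-classical} from Theorem \ref{thm:W-symmetry} by specializing $\hb$ to $0$, using Lemma \ref{lem:hbar=0} and Lemma \ref{lem:forget-hbar} to identify the classical modules with the quotients by $\hb$ of the quantum ones, together with Proposition \ref{prop:Phi-sigma} (and Remark \ref{rk:Phi-sigma}) for the compatibility of $\Phi$ with $\sigma$. The crux you isolate in step (4) is exactly Proposition \ref{prop:Phi-sigma}, which the paper establishes precisely as you anticipate: reduction to a Levi via Corollary \ref{cor:Phi-rest} and Lemma \ref{lem:Sigma-restriction-Levi}, then an explicit semisimple-rank-one computation (Corollary \ref{cor:Phi-sigma-rk1}).
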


\subsection{Applications: dynamical Weyl groups and Brylinski--Kostant filtration}
\label{ss:dwg}

The first application of our results concerns a geometric realization of the \emph{dynamical Weyl group} due to Braverman--Finkelberg (\cite{brf}). Let $\qh$\index{Qh@$\qh$} be the field of fractions of $\sh$. If $M$ is a $\qh$-module, we define the $\qh$-module $\w M$ by a similar formula as above. In \S\ref{ss:definition-dwg} we will recall the definition of the dynamical Weyl group, a collection of isomorphisms of $\qh$-modules\index{DWalg@$\dwalg_{V,\la,w}$}%
\[
\dwalg_{V,\la,w} : \qh \o V_\la \simto \w \qh \o V_{w \la}
\]
for all $V$ in $\Rep(G)$, $\la \in \bX$ and $w \in W$.

Let $\OO^-:=\C[z^{-1}]$,\index{O-@$\OO^-$} and let $\Gv(\OO^-)_1$ be the kernel of the morphism $\Gv(\OO^-) \to \Gv$ given by evaluation at $z=\infty$. For $\la \in \bX$ we set $\W_\la := \Gv(\OO^-)_1 \cdot \bla$;\index{Wlambda@$\W_\la$} this is a locally closed (ind-)subvariety of $\Gr$ which is a transversal slice to the orbit $\Gr^\la$ at $\bla$. We denote by\index{slambda@$s_\la$}%
\[
s_\la : \W_\la \cap \fT_\la \hookrightarrow \Gr
\]
the inclusion. We also set\index{nlambda@$n_\la$}%
\[
n_\la:= \sum_{\alpha \in R^+} | \la(\alv) | 
\]
The following result is proved in \cite{brf}; we reproduce the proof in \S\ref{ss:transverse-slices} since we will need some of the details.

\begin{lem}
\label{lem:transverse-slices}
For any $\cF$ in $\Perv_{\Gv(\OO)}(\Gr)$ and $\la \in \bX$ there exists a canonical isomorphism of $\sh$-modules
\[
\coH^{\hdot}_A(\W_\la \cap \fT_\la, s_\la^! \cF) \cong \bigl( \cS(\cF) \bigr)_\la \o \sh \langle n_\la \rangle.
\]
\end{lem}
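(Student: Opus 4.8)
The plan is to mimic the proof of Lemma \ref{lem:equiv-cohomology-first-properties}(2), which computes the equivariant cohomology of corestrictions to a semi-infinite orbit $\fT_\la$, but now with $\fT_\la$ replaced by its intersection with the transversal slice $\W_\la$. First I would recall the key point from the Mirkovi\'c--Vilonen theory: for $\cF$ in $\Perv_{\Gv(\OO)}(\Gr)$, the complex $t_\la^!\cF$ on $\fT_\la$ has cohomology concentrated in a single (co)homological degree (up to a shift by $\la(2\rhov)$), and the resulting space is $(\cS(\cF))_\la$. The slice $\W_\la$ is an $A$-stable locally closed ind-subvariety which is transversal to $\Gr^\la$ at $\bla$, and the intersection $\W_\la \cap \fT_\la$ is an $A$-stable affine space; in fact its $A$-weights are explicitly known — it is isomorphic, as an $A$-variety, to a product $\prod_{\alpha \in R^+}\prod (\text{one-dimensional weight spaces})$, whose total "weight" accounts for the shift $n_\la = \sum_{\alpha\in R^+}|\la(\alv)|$ appearing in the statement. (Compare the explicit computation in the remark following Theorem \ref{thm:loop-equivariant-version} for the special case $\la = w_0\nu$.)

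The core of the argument is then a base-change / transversality statement: because $\W_\la$ is a transversal slice to $\Gr^\la$, and more generally is transversal to all the strata $\Gr^\mu$ it meets (this is the content of the Braverman--Finkelberg construction, which I would cite from \cite{brf}), corestriction to $\W_\la \cap \fT_\la$ commutes appropriately with corestriction to $\fT_\la$. Concretely, if $j : \W_\la \cap \fT_\la \hookrightarrow \fT_\la$ denotes the inclusion, I would argue that $j^! (t_\la^! \cF)$ is, up to a cohomological shift by $n_\la - \la(2\rhov)$ reflecting the codimension of the slice (computed via the $A$-weights on the normal directions), isomorphic to the constant complex on $\W_\la\cap\fT_\la$ with stalk $(\cS(\cF))_\la$. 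Since $\W_\la \cap \fT_\la$ is $A$-equivariantly contractible (an affine space with $A$-action having $\bla$ as its unique fixed point, all weights on the tangent space being nonzero), taking $A$-equivariant cohomology of $s_\la^!\cF = j^! t_\la^! \cF$ gives a free graded $\sh$-module, namely $(\cS(\cF))_\la \otimes \sh\langle n_\la\rangle$. The shift bookkeeping is the place one must be careful: the shift $\langle\la(2\rhov)\rangle$ from \eqref{zeta} combines with the codimension shift of the slice; since the transversal slice $\W_\la$ has complex dimension equal to $\mathrm{codim}_{\Gr}\Gr^\la$ at $\bla$ and $\W_\la\cap\fT_\la$ has dimension $\tfrac12(n_\la + \la(2\rhov))$ minus the analogous count, the two shifts are designed to recombine into exactly $n_\la$, and I would verify this by a direct weight count on the $A$-action, exactly as in the remark following Theorem \ref{thm:loop-equivariant-version}.

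The main obstacle I anticipate is precisely this transversality / base-change step: justifying that corestricting first to $\fT_\la$ and then to the slice yields the "expected" answer requires knowing that $\W_\la$ meets each orbit $\fT_\mu \cap \Gr^\mu$ transversally inside $\Gr$, which is not formal and relies on the detailed geometry of the slices $\W_\la = \Gv(\OO^-)_1 \cdot \bla$. This is exactly why the statement is attributed to Braverman--Finkelberg and why the authors say "we reproduce the proof... since we will need some of the details" — the honest work is in setting up the transversal-slice geometry and the associated base-change isomorphism, after which the equivariant-cohomology computation is a routine consequence of $A$-equivariant contractibility of an affine space together with the parity/concentration statement for $t_\la^!\cF$ coming from \cite{mv}. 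Everything else — freeness over $\sh$, the identification of the fiber with $(\cS(\cF))_\la$, and the grading shift $n_\la$ — then follows by the same mechanism as in Lemma \ref{lem:equiv-cohomology-first-properties}.
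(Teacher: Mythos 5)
Your high-level plan — combine Mirkovi\'c--Vilonen concentration for $t_\la^!\cF$, the explicit $A$-weight description of the affine space $\W_\la\cap\fT_\la$, and an attractive-fixed-point (FW) contraction to $\bla$ — correctly identifies the ingredients the paper uses. But your ``core'' step is wrong as stated, and the gap is not one you can paper over.

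You assert that $j^!(t_\la^!\cF) = s_\la^!\cF$ is, up to a cohomological shift by $n_\la - \la(2\rhov)$, a \emph{constant} complex on $\W_\la\cap\fT_\la$ with stalk $(\cS(\cF))_\la$. That is false. Concentration of $\coH^{\hdot}(\fT_\la, t_\la^!\cF)$ in a single degree says nothing about $t_\la^!\cF$ (or $s_\la^!\cF$) being constant as a sheaf, and it is not: for $\cF=\IC_\nu$ with $\nu$ large, $s_\la^!\cF$ is supported on $\W_\la\cap\fT_\la\cap\overline{\Gr^\nu}$, which is a proper closed subvariety of $\W_\la\cap\fT_\la$ in general (indeed $\W_\la\cap\fT_\la$ is an ind-scheme and the support is finite-dimensional). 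Your ``transversality/base-change'' heuristic points vaguely at the right phenomenon but does not correctly identify what needs to commute with what, and the conclusion you draw from it is too strong.

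The mechanism the paper actually uses is different and more subtle. One first produces (following \cite{np}) an $A$-stable affine space $V\subset\Gv(\OO)$ such that both $V\to\Gr^\la$, $u\mapsto u\cdot\bla$, and the action map $V\times\W_\la\to\Gr$ are \emph{open} embeddings; call the image $O$. Because $\cF$ is $\Gv(\OO)$-equivariant, its restriction to $O$ factors as a box product $\underline{\bbc}_{V}[2\la(2\rhov)]\boxtimes\cF_{(\la)}$, and (since $V$ is smooth of the right dimension) $\cF_{(\la)}\cong(\W_\la\hookrightarrow\Gr)^!\cF$. One then uses the attractor description of $\fT_\la$ to see $O\cap\fT_\la\cong(V\cap\fT_\la)\times(\W_\la\cap\fT_\la)$, so the $!$-corestriction of $\cF$ to $O\cap\fT_\la$ is a box product of a shifted constant sheaf on $V\cap\fT_\la$ and $s_\la^!\cF$. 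Neither factor needs to be constant; the argument compares the stalks at $\bla$ of $t_\la^!\cF$ and of $s_\la^!\cF$ via this factorization, then applies the FW contraction on both sides. The shift $\langle n_\la\rangle$ comes out of the dimension of $V\cap\fT_\la$ (computed from the Mirkovi\'c--Vilonen formula for $\Gr^\la\cap\fT_\la$), which combines with the shift $\langle\la(2\rhov)\rangle$ from \eqref{zeta}: indeed $\la(2\rhov)+2\dim(V\cap\fT_\la)=n_\la$. Without the local product structure $O=V\times\W_\la$ and the $\Gv(\OO)$-equivariance factorization, there is no way to relate the stalk of $s_\la^!\cF$ at $\bla$ to $(\cS(\cF))_\la$; that is the idea you are missing.
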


The inclusion $\{\bla\} \hookrightarrow \W_\la \cap \fT_\la$ induces a morphism of $\qh$-modules\index{Delta@$\Delta^{\cF,\la}$}%
\[
\Delta^{\cF,\la} : \qh \o_{\sh} \coH^{\hdot}_A(i_\la^! \cF) \to \qh \o_{\sh} \coH^{\hdot}_A(\W_\la \cap \fT_\la, s_\la^! \cF)
\]
which is an isomorphism due to the localization theorem in equivariant cohomology (see e.g.~\cite[Theorem B.2]{em}). For $\cF$ in $\Perv_{\Gv(\OO)}(\Gr)$, $\la \in \bX$ and $w \in W$ we define the morphism\index{DWgeom@$\dwgeom_{\cF,\la,w}$}%
\begin{multline*}
\dwgeom_{\cF,\la,w} := \w \bigl( \Delta^{\cF,w\la} \bigr) \circ \Xi^{\cF,\la}_w \circ (\Delta^{\cF,\la})^{-1} : \\ 
\qh \o_{\sh} \coH^{\hdot}_A(\W_\la \cap \fT_\la, s_\la^! \cF) \simto \w \bigl( \qh \o_{\sh} \coH^{\hdot}_A(\W_{w\la} \cap \fT_{w\la}, s_{w \la}^! \cF) \bigr).
\end{multline*}

The following result (which is a consequence of Theorem \ref{thm:W-symmetry}) is equivalent to the main result of \cite{brf}. Our proof, given in \S\ref{ss:dwg-proof}, cannot really be considered as a new proof since it is based on a similar strategy (namely reduction to rank $1$), but we believe our point of view should help understanding this question better.

\begin{prop}
\label{prop:dwg}
For any $\cF$ in $\Perv_{\Gv(\OO)}(\Gr)$ and $\la \in \bX$ \emph{dominant}, the following diagram is commutative, where the vertical isomorphisms are induced by those of Lemma {\rm \ref{lem:transverse-slices}}.
\[
\xymatrix@C=2.5cm{
\qh \o_{\sh} \coH^{\hdot}_A(\W_\la \cap \fT_\la, s_\la^! \cF) \ar[r]^-{\dwgeom_{\cF,\la,w}} \ar[d]_-{\wr} & \w \bigl( \qh \o_{\sh} \coH^{\hdot}_A(\W_{w\la} \cap \fT_{w\la}, s_{w \la}^! \cF) \bigr) \ar[d]^-{\wr} \\
\qh \o \bigl( \cS(\cF) \bigr)_\la \ar[r]^-{\dwalg_{\cS(\cF),\la,w}} & \w \bigl( \qh \o \bigl( \cS(\cF) \bigr)_{w \la} \bigr)
}
\]
\end{prop}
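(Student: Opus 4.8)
The plan is to prove Proposition~\ref{prop:dwg} by reducing it to the identification, already announced in Theorem~\ref{thm:W-symmetry}, of the ``topological'' Weyl group symmetry $\Xi^{\cF,\la}_w$ with the ``algebraic'' one $\Theta^{\cS(\cF),\la}_w$, together with the compatibility between $\Theta$ and the dynamical Weyl group which will be established in \S\ref{ss:dwg}. The key point is that the dynamical Weyl group operators $\dwalg_{V,\la,w}$ are, essentially by construction, the operators obtained from $\Theta^{V,\la}_w$ after (i) passing to the fraction field $\qh$ and (ii) identifying $\qh \o_{\sh} \Hom_{(\sh,\uh(\g))}(\M(0), V \o \M(\la))$ with $\qh \o V_\la$ via the generic isomorphism $\qh \o_{\sh} \kalg_{V,\la}$ (which becomes an isomorphism after inverting the nonzero elements of $\sh$, since $\kalg_{V,\la}$ is injective with image containing a power of a suitable product of roots, cf.~the Kashiwara computation recalled in the Remark following Theorem~\ref{thm:loop-equivariant-version}). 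Thus the bottom square of the diagram in the proposition is really the \emph{definition} (or an immediate reformulation) of $\dwgeom$ transported through Corollary~\ref{key_cor}, once one knows $\Xi \leftrightarrow \Theta$.

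Concretely, I would proceed in the following steps. First, unwind the definition of $\dwgeom_{\cF,\la,w}$: it is built from $\Xi^{\cF,\la}_w$ by conjugating with the localization isomorphisms $\Delta^{\cF,\la}$ coming from the inclusion $\{\bla\} \hookrightarrow \W_\la \cap \fT_\la$. So it suffices to show that, under the vertical identifications of Lemma~\ref{lem:transverse-slices} with $\qh \o (\cS(\cF))_\la$, the operator $\qh \o_{\sh}\Xi^{\cF,\la}_w$ composed appropriately with the $\Delta$'s agrees with $\dwalg_{\cS(\cF),\la,w}$. Second, use Theorem~\ref{thm:W-symmetry} (more precisely the commutativity of its right-hand square together with Corollary~\ref{key_cor}) to replace $\Xi^{\cF,\la}_w$ by $\Theta^{\cS(\cF),\la}_w$ acting on $\Hom_{(\sh,\uh(\g))}(\M(0), \cS(\cF)\o\M(\la))$. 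Third, I must match the localization isomorphism $\Delta^{\cF,\la}$ — geometrically, restriction to the fixed point $\bla$ inside the transverse slice $\W_\la \cap \fT_\la$ — with the algebraic ``evaluation at the highest/lowest weight vector'' map $\qh\o_{\sh}\kalg_{\cS(\cF),\la}$; here one invokes the explicit description of $\coH^{\hdot}_A(\W_\la\cap\fT_\la, s_\la^!\cF)$ from the proof of Lemma~\ref{lem:transverse-slices} (the transverse slice being, for $\cF$ an $\IC$-sheaf, an affine space with weights the positive roots counted with multiplicity $|\la(\alv)|$, matching $n_\la$) and the normalization of $\kgeom$ built from $(\imath_\la)_!$. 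Finally, conclude that the composite $\w(\Delta^{\cF,w\la})\circ\Xi^{\cF,\la}_w\circ(\Delta^{\cF,\la})^{-1}$ becomes $\dwalg_{\cS(\cF),\la,w}$, using the very definition of $\dwalg$ from $\Theta$ in \S\ref{ss:dwg}.

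A subtlety I would be careful about is the hypothesis that $\la$ is dominant. This is needed because, for $\la$ dominant, the point $\bla$ lies in the (closure of the) orbit $\Gr^\la$ in a way that makes $\W_\la \cap \fT_\la$ an honest finite-dimensional transverse slice and $i_\la$ factors nicely through it; the localization theorem (as in~\cite[Theorem B.2]{em}) then applies and $\Delta^{\cF,\la}$ is an isomorphism over $\qh$. I would also need to check a compatibility of the two cocycle relations (\ref{eqn:cocycle-Xi}) and the analogous one for $\dwalg$, i.e.~that both sides of the square are ``$W$-equivariant families'' in the same sense, so that it is enough to verify commutativity for $w=s$ a simple reflection. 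This reduces everything, as in \cite{bfm, bf, brf}, to the semisimple rank one case, where the slice, the Verma module computation, and the dynamical Weyl group operator for $\mathfrak{sl}_2$ can all be written down explicitly (Appendix~\ref{sec:rank1}).

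The main obstacle, I expect, is step three: carefully matching the \emph{geometric} localization/restriction map $\Delta^{\cF,\la}$ with the \emph{algebraic} map $\kalg_{\cS(\cF),\la}$ (equivalently $p_\la$ on Verma modules), including getting all the normalizations — the shift $\langle\la(2\rhov)\rangle$ versus $\langle n_\la\rangle$, and the identification of the fixed-point contributions — to agree on the nose rather than up to an undetermined scalar. Once this bookkeeping is in place, the proposition follows formally from Theorem~\ref{thm:W-symmetry} and the definition of the dynamical Weyl group via $\Theta$, so that, as the authors note, this is not genuinely a new proof of the Braverman--Finkelberg theorem but a repackaging that makes the comparison transparent.
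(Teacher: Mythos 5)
Your proposal is correct and takes essentially the same route as the paper's proof: reduce to $w=s_\alpha$ a simple reflection, replace $\Xi$ by $\Theta$ via Theorem \ref{thm:W-symmetry}, and then carefully track normalizations between the geometric localization maps and the algebraic $\kalg$/expectation-value maps. The only thing I would flag is that you leave the ``main obstacle'' (step three) in somewhat loose form; the paper resolves it cleanly by introducing a pair of auxiliary unnormalized operators ${}'\dwgeom_{\cF,\la,s}$ and ${}'\dwalg_{V,\la,s}$ (the latter appearing in the proof of Lemma \ref{lem:definition-dwg}), showing that they match immediately under the isomorphisms $\zeta$ of Theorem \ref{thm:loop-equivariant-version} together with Theorem \ref{thm:W-symmetry}, and then verifying on each side separately that the same explicit polynomial factor $(-\alv)(-\alv+\hb)\cdots(-\alv+(\la(\alv)-1)\hb)$ relates the primed operator to the genuine one — on the geometric side this comes from the explicit description of the inclusion $\W_{s\la}\cap\fT_{s\la}\hookrightarrow\fT_{s\la}$ given in Lemma \ref{lem:transversal-slice}(2), and on the algebraic side from the normalization of $\Theta_s$ in Proposition \ref{prop:definition-Theta}. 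This ``relative comparison'' avoids having to match any one localization map with $\kalg$ on the nose.
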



The second application of our results is a new proof of a result of the first author (\cite{gi}) giving a geometric construction of the \emph{Brylinski--Kostant} filtration. Namely, let $e \in \u$ be a regular nilpotent element which is a sum of (non-zero) simple root vectors.
If $V$ is in $\Rep(G)$ and $\la \in \bX$, the Brylinski--Kostant filtration on $V_\la$ associated with $e$ is defined by\index{FBK@$\FBK$}%
\[
\FBK_i(V_\la) = \{ v \in V_\la \mid e^{i+1} \cdot v = 0\} \qquad \text{for } i \geq 0
\]
and $\FBK_i(V_\la)=0$ for $i<0$. This filtration is independent of the choice of $e$ (since all the choices for $e$ are conjugate under the action of $T$).

On the other hand, for any $\varphi \in \t^*$ we
consider the specialized equivariant cohomology\index{Hphi@$\coH_{\varphi}$}%
\[
\coH_\varphi(i_\la^! \cF) \ := \ \C_\varphi \otimes_{\sym(\t)} \coH_{\Tv}^{\hdot}(i_\la^! \cF),
\]
a filtered vector space. Assume that $\varphi \in \t^* \smallsetminus \{0\}$ satisfies $(\mathrm{ad}^*e)^2 (\varphi)=0$, where $\mathrm{ad}^*$ is the coadjoint representation. Then $\varphi$ is regular, hence
by the localization theorem in equivariant cohomology, the morphism
\[
\coH_\varphi(i_\la^! \cF) \to \C_\varphi  \o_{\sym(\t)} \bigl( (\cS(\cF))_\la \o \sym(\t) \bigr) = \bigl( \cS(\cF) \bigr)_\la
\]
induced by $\kbgeom_{\cF,\la}$ is an isomorphism. The left-hand side is equipped with a natural filtration; we denote by $\Fgeom_{\hdot} \bigl( (\cS(\cF))_\la \bigr)$\index{Fgeom@$\Fgeom$} the resulting filtration on $\bigl( \cS(\cF) \bigr)_\la$.

The following result was first proved in \cite{gi} (see also \cite{ar} for a different proof). We observe in \S\ref{ss:BK-filtration} that it is an immediate consequence of Theorem \ref{thm:equiv-coh-classical}.

\begin{prop}
\label{prop:BK}
For any $\cF$ in $\Perv_{\Gv(\OO)}(\Gr)$, $\la \in \bX$ and $i \in \Z$ we have
\[
\FBK_i \bigl( (\cS(\cF))_\la \bigr) = \Fgeom_{2i+\la(2{\check \rho})} \bigl( (\cS(\cF))_\la \bigr) = \Fgeom_{2i+1+\la(2{\check \rho})} \bigl( (\cS(\cF))_\la \bigr).
\]
\end{prop}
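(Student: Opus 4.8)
The plan is to deduce Proposition~\ref{prop:BK} from Theorem~\ref{thm:equiv-coh-classical} by identifying, under the isomorphism $\overline{\zeta}_{\cF,\la}$, the geometric filtration on the specialized cohomology with the Brylinski--Kostant filtration on the weight space. First I would observe that by Theorem~\ref{thm:equiv-coh-classical} we have a commutative square identifying $\coH^{\hdot}_{\Tv}(i_\la^! \cF)$ with $\bigl(\cS(\cF) \o \sym(\g/\u) \o \C_{-\la}\bigr)^B \langle \la(2\rhov)\rangle$ in such a way that $\kbgeom_{\cF,\la}$ corresponds to $\kbalg_{\cS(\cF),\la}$. Hence, writing $V := \cS(\cF)$, it suffices to compute the filtration on $V_\la$ obtained by specializing $\bigl(V \o \sym(\g/\u) \o \C_{-\la}\bigr)^B$ at a point $\varphi \in \t^* \sminus \{0\}$ with $(\ad^* e)^2(\varphi) = 0$, and to show that under the resulting isomorphism onto $V_\la$ the order filtration on $\sym(\g/\u)$ transports to the Brylinski--Kostant filtration up to the stated shift. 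The shift by $\la(2\rhov)$ comes from the grading shift $\langle \la(2\rhov)\rangle$, and the factor of $2$ and the ``off by one'' ambiguity come from the fact that $\sym(\g/\u)$ is generated in cohomological degree $2$.

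The key computational step is the following: fix $\varphi \in \t^*\sminus\{0\}$ with $(\ad^*e)^2(\varphi)=0$. Such a $\varphi$ is regular (this is recalled in the excerpt), so the point $\varphi \in \t^* \subset (\g/\u)^*$, viewed inside $\g^*$ as in \S\ref{ss:classical}, lies in a single $B$-orbit that can be analyzed explicitly; more precisely I would use the isomorphism \eqref{eqn:wfg} to rewrite $\bigl(V \o \sym(\g/\u)\o\C_{-\la}\bigr)^B$ as $\bigl(V \o \Ga(\wfg,\oo_{\wfg}(\la))\bigr)^G$ and then specialize along the fiber of $\pi:\wfg \to \g^*$ over a regular element. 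Over a regular semisimple element the fiber of $\pi$ is a single $W$-torsor of points in $\B$, while the relevant condition $(\ad^*e)^2\varphi = 0$ singles out a \emph{regular} but typically non-semisimple element whose $\pi$-fiber is still finite; evaluating sections of $\oo_{\wfg}(\la)$ at a point of that fiber and keeping track of the order filtration on $\sym(\g/\u)$ should produce exactly the annihilator-of-powers-of-$e$ description. Concretely, after a $B$-translation one may assume $\varphi$ is ``dual'' to the $\mathfrak{sl}_2$-triple containing $e$, and the evaluation map $V \o \sym(\g/\u) \o \C_{-\la} \to V_\la$ restricted to $B$-invariants becomes $v \otimes f \mapsto f(\varphi) \cdot v$; the order of vanishing of this against the order filtration matches $\min\{i : e^{i+1}v = 0\}$ because $e$ acts on $V$ and $\varphi$ is ``in the direction of'' the dual of $e$. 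This is essentially the content of the original argument in \cite{gi}, recast through our $\overline{\zeta}_{\cF,\la}$.

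The steps in order: \emph{(i)} invoke Theorem~\ref{thm:equiv-coh-classical} to replace the topological side by the algebraic side and reduce to an assertion purely about $V = \cS(\cF)$ and $\sym(\g/\u)$; \emph{(ii)} recall that $\varphi$ with $(\ad^*e)^2\varphi=0$, $\varphi\ne 0$, is regular, and use the localization isomorphism so that $\kbgeom_{\cF,\la}$ (equivalently $\kbalg_{V,\la}$) becomes an isomorphism after specialization at $\varphi$; \emph{(iii)} identify the specialization of $\bigl(V\o\sym(\g/\u)\o\C_{-\la}\bigr)^B$ at $\varphi$ with $V_\la$, carrying the order filtration on $\sym(\g/\u)$ to a filtration on $V_\la$; \emph{(iv)} perform the local computation showing this filtration is the Brylinski--Kostant filtration, shifted by $\la(2\rhov) = \la(2\rhov)$ and doubled (with the harmless parity ambiguity giving the two equal expressions $\Fgeom_{2i+\la(2\rhov)} = \Fgeom_{2i+1+\la(2\rhov)}$, since $\sym(\g/\u)$ has no odd-degree generators). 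I expect step \emph{(iv)} to be the main obstacle: one must carefully match the coadjoint-orbit description of $\varphi$ with the action of $e$ on $V$ and verify that the order of a $B$-invariant section as a polynomial function on the fiber through $\varphi$ is governed by the nilpotency degree of $e$ on the corresponding weight vector. The parity bookkeeping (the factor $2$ and the $+1$) and the grading-shift by $\la(2\rhov)$ are routine once the degree conventions from \S\ref{subsec1} and the Conventions subsection are tracked, but they must be done explicitly to get the precise indices in the statement.
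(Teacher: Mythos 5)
Your outline matches the paper's proof: reduce via Theorem~\ref{thm:equiv-coh-classical} to an assertion about $B$-invariants in $V \otimes \sym(\g/\u)\otimes\C_{-\lambda}$, use regularity of $\varphi$ and the localization theorem to make the specialization at $\varphi$ an isomorphism onto $V_\lambda$, and then match the transported filtration with the Brylinski--Kostant filtration. The paper packages your steps (i)--(iii) as Lemma~\ref{lem:specialization-algebra} and Corollary~\ref{cor:specialized-cohomology}, using the natural surjection $\sym(\g/\u) \to \C[\varphi + (\g/\b)^*]$ and the identification $\varphi + (\g/\b)^* \cong B/T$ to specialize; that part of your argument is sound and in the same spirit.

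Two concerns about your step (iv), which you correctly identify as the crux but leave as a sketch. First, a conceptual slip: $\varphi$ lies in $\t^*$, so it is automatically \emph{semisimple} as an element of $\g^*$; the condition $(\ad^*e)^2\varphi = 0$, $\varphi \neq 0$ forces it to be (Killing-dual to a scalar multiple of) $\rhov$, hence regular \emph{semisimple}, not ``typically non-semisimple'' as you write. The fiber of $\pi$ over $\varphi$ is therefore a $W$-torsor exactly as in the generic case; what matters is the \emph{position} of $\varphi$ relative to $e$, not any failure of semisimplicity. Second, and more substantially: the actual filtration comparison requires showing that for a $B$-invariant section $f$, the degree of the polynomial $t \mapsto f(\psi\otimes 1)(h + tx)$ (with $h$ the Killing-dual of $\varphi$) is the same for every regular nilpotent $x \in \u$ --- this is the content of Brylinski's \cite[Lemma 4.2]{bry} --- so that one may take $x = e$ without loss of generality; only then can one write $h + te = \exp(te)\cdot h$ and convert the degree bound into the nilpotency condition $e^{j+1}\cdot v = 0$. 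Your sketch asserts ``the order of vanishing $\ldots$ matches $\min\{i : e^{i+1}v = 0\}$ because $\varphi$ is in the direction of the dual of $e$,'' but this is exactly the nontrivial step (density of regular nilpotents in $\u$, the reduction $x = b\cdot e$ with $b\in B$ using that a regular nilpotent lies in only one Borel, and the degree-preservation lemma) and cannot be waved through; as written, the claim does not follow.
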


\begin{rem}
Assume $G$ is quasi-simple. Then
it follows from \cite[Corollary 8.7]{Ko} that the $3$-dimensional representation of any $\mathfrak{sl}_2$-triple through $e$ occurs only once in $\g$, hence in $\g^*$. It follows that $\varphi \in \t^*$ is uniquely defined, up to scalar, by the condition $(\mathrm{ad}^*e)^2 (\varphi)=0$. Using this remark one can easily check (for a general reductive $G$, and independently of Proposition \ref{prop:BK}) that the filtration $\Fgeom_{\hdot}$ is independent of the choice of $\varphi$ once $e$ is fixed. On the other hand all the possible choices for $e$ are conjugate under the action of $T$, hence our filtration is independent of any choice (other than $T$ and $B$).
\end{rem}

\section{Differential operators on the basic affine space and partial Fourier transforms}
\label{sec:DX}

In Sections \ref{sec:DX}--\ref{sec:wfg} we fix a complex connected reductive group $G$, and we use the notation of \S\ref{subsec1}. We also choose for any simple root $\alpha$ a non-zero vector $e_\alpha \in \g_\alpha$. We denote by $f_\alpha \in \g_{-\alpha}$ the unique vector such that $[e_\alpha,f_\alpha]=\alv$.


\subsection{Structure of $\dh(\X)$}
\label{ss:diffX}

The results in this subsection are taken from \cite{bgg, Sh}. Below we will use the two natural actions of $G$ on $\C[G]$ induced by the actions of $G$ on itself. The action given by $(g \cdot f)(h) = f(g^{-1} h)$ for $f \in \C[G]$ and $g,h \in G$ will be called the \emph{left regular representation}; it is a \emph{left} action of $G$. The action given by $(f \cdot g)(h)=f(hg^{-1})$ for $f \in \C[G]$ and $g,h \in G$ will be called the \emph{right regular representation}; it is a \emph{right} action of $G$.

First we begin with the description of $\dh(G)$. Differentiating the right regular representation defines an anti-homomorphism of algebras $\uh(\g) \to \dh(G)$. Then it is well known that multiplication in $\dh(G)$ induces an isomorphism of $\C[G]$-modules
\begin{equation}
\label{eqn:diffG}
\C[G] \o \uh(\g) \simto \dh(G).
\end{equation}
The left regular representation induces an action on $\dh(G)$, which will be called simply the \emph{left action} below. Through isomorphism \eqref{eqn:diffG}, it is given by the left regular representation of $G$ on $\C[G]$ (and the trivial action on $\uh(\g)$). Similarly, the action induced by the right regular representation (which will be called simply the \emph{right action} below) corresponds, under isomorphism \eqref{eqn:diffG}, to the right action on $\C[G] \o \uh(\g)$ which is the tensor product of the right regular representation and the action on $\uh(\g)$ which is the composition of the adjoint action with the anti-automorphism $g \mapsto g^{-1}$. There is also a natural morphism $\uh(\g) \to \dh(G)$ obtained by differentiation of the left regular representation. Under isomorphism \eqref{eqn:diffG}, it is given by the map which sends $m \in \uh(\g)$ to the function $G \ni g \mapsto g^{-1} \cdot m \in \uh(\g)$, considered as an element in $\C[G] \o \uh(\g)$. In particular, this morphism restricts to the morphism $m \mapsto 1 \o m$ on $\zh \subset \uh(\g)$.

Let us recall the standard description of $\dh(\X)$ based on quantum hamiltonian reduction. As $\X$ is a quasi-affine variety, the natural morphism 
\[
\dh(\X) \to \End_{\C[\hb]}(\C[\X][\hb])
\] 
is injective. The algebra $\C[\X]$ identifies with the subalgebra of $\C[G]$ given by the elements fixed by the right action of $U \subset G$. Any element $D$ in $\dh(G)$ induces a morphism $\C[\X][\hb] \to \C[G][\hb]$. This morphism is trivial iff $D \in \C[G] \o \bigl(\u \cdot \uh(\g) \bigr)$, and its image is contained in $\C[\X][\hb]$ iff the image of $D$ in $\C[G] \o \bigl(\uh(\g) / \u \cdot \uh(\g) \bigr)$ is $U$-invariant for the right action.
In this way we obtain a canonical isomorphism
\begin{equation}
\label{eqn:diffX}
\dh(\X) \ \cong \ \bigl( \C[G] \o \bigl(\uh(\g) / \u \cdot \uh(\g) \bigr) \bigr)^{U_{\mathrm{right}}}.
\end{equation}
In this description, the action induced by the left $G$-action on $\X$ is induced by the left regular representation on $\C[G]$. The morphism $\zh \to \dh(\X)$ obtained from the differentiation of this left action of $G$ on $\X$ corresponds to the morphism $m \mapsto 1 \otimes (m \ \mathrm{mod} \ \u \cdot \uh(\g))$.

Recall that there is also a $T$-action on $\X$ defined by $t \cdot gU = gtU$. (Note that this action is \emph{not} induced by the right action of $G$ on itself considered above, but rather by its composition with $t \mapsto t^{-1}$.) This action provides a $T$-action on $\dh(\X)$ (where the action of $t \in T$ is induced by the right action of $t^{-1} \in G$ on $\dh(G)$ described above) and a weight decomposition $\dh(\X)=\bigoplus_{\la \in \bX} \dh(\X)_\la$. This $T$-action also defines a morphism $\gamma : \sh \to \dh(\X)$ which, under isomorphism \eqref{eqn:diffX}, is given by $\gamma(m) = 1\o (m \ \mathrm{mod} \ \u \cdot \uh(\g))$. Recall that we consider $\dh(\X)$ as an $\sh$-module where $t \in \t$ acts by right multiplication by $\gamma(t) - \hb \rho(t) \cdot 1$. Under isomorphism \eqref{eqn:diffX}, this action is given by the $\sh$-action on $\uh(\g) / \u \cdot \uh(\g)$ where $t \in \t$ acts by left multiplication by $t-\hb\rho(t)$.

From this description (and isomorphism \eqref{eqn:isom-Verma}) one easily obtains the following result. 

\begin{lem}
\label{lem:DX-Ind}
For any $\la \in \bX$ there exists a canonical isomorphism of graded $\sh$-modules and $\zh$-modules
\[
\tla \dh(\X)_\la \ \cong \ \Ind_B^G(\M(\la)).
\]
\end{lem}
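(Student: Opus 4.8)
The plan is to unwind the definitions of both sides through the quantum hamiltonian reduction picture of \S\ref{ss:diffX} and to compare them with the defining description \eqref{eqn:isom-Verma} of $\M(\la)$. Concretely, I would start from isomorphism \eqref{eqn:diffX}, which identifies $\dh(\X)$ with $\bigl( \C[G] \o (\uh(\g)/\u\cdot\uh(\g)) \bigr)^{U_{\mathrm{right}}}$, where the subscript indicates invariance under the right regular action of $U$ on the $\C[G]$-factor. Passing to the $\la$-weight space for the $T$-action, and keeping track of the twist $\tla(\cdot)$ (which shifts the $\sh$-module structure by $(\la)$, so that $t\in\ft$ acts by right multiplication by $\gamma(t)-\hb\la(t)-\hb\rho(t)$ after the twist, matching the convention in \S\ref{subsec1}), one gets
\[
\tla \dh(\X)_\la \ \cong \ \bigl( \C[G] \o \bigl(\uh(\g)/\u\cdot\uh(\g)\bigr) \bigr)^{U_{\mathrm{right}}}_{\la},
\]
where the right $U\cdot T = B$-action on the $\C[G]$-factor is the relevant one. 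Since $\uh(\g)/\u\cdot\uh(\g) \cong \M(\la)$ as a right $\uh(\g)$-module by \eqref{eqn:isom-Verma}, and the $\sh$-module structures match by the computation recalled just before the lemma, the right-hand side is visibly $\bigl( \C[G] \o \M(\la) \bigr)^{U_{\mathrm{right}}}$ with an appropriate compatibility of the $B$-actions.

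The second step is to recognize $\bigl( \C[G] \o \M(\la) \bigr)^{U_{\mathrm{right}}}$ as an induced module. Here I would use the standard fact that, for a $B$-module $W$, $\Ind_B^G(W) = (\C[G]\o W)^{B}$ where $B$ acts on $\C[G]$ via the right regular representation and on $W$ via its given action; taking only $U$-invariants and then remembering the residual $T$-action (which on $\M(\la)$ is the adjoint action exponentiated, as recalled in \S\ref{subsec1}) reassembles exactly the $B$-invariants of $\C[G]\o\M(\la)$, i.e.\ $\Ind_B^G(\M(\la))$. One must be slightly careful: the $T$-action on $\X=G/U$ used to define the weight decomposition is $t\cdot gU = gtU$, which differs from the naive right action by $t\mapsto t^{-1}$; tracking this sign shows that the $\la$-weight space for the $\X$-action corresponds to the correct $B$-equivariance needed for $\Ind_B^G(\M(\la))$, and this is precisely why the twist $\tla(\cdot)$ (rather than ${}^{(-\la)}(\cdot)$) appears on the left.

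The third and final step is to check that this identification is compatible with \emph{all} the structures claimed: the grading, the $\sh$-module structure, and the $\zh$-module structure. The grading is inherited from the PBW/order filtrations and is preserved throughout. The $\sh$-module structure on $\dh(\X)$ comes from the morphism $\gamma$, which under \eqref{eqn:diffX} is $m\mapsto 1\o(m\bmod \u\cdot\uh(\g))$ acting by left multiplication on the Verma factor; on $\Ind_B^G(\M(\la))$ the $\sh$-action is the one induced from the left $\sh$-module structure of $\M(\la)$, and these agree by construction. The $\zh$-module structure on $\dh(\X)$ comes from differentiating the \emph{left} $G$-action on $\X$, which under \eqref{eqn:diffX} corresponds to $m\mapsto 1\o(m\bmod\u\cdot\uh(\g))$ for $m\in\zh$; since $\zh$ is central this lands in the right $\uh(\g)$-action on $\M(\la)$, which is exactly the $\zh$-action used to define $\Ind_B^G(\M(\la))$ as a $\zh$-module, so these agree as well.

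The main obstacle I anticipate is bookkeeping rather than anything deep: keeping all the left/right conventions, the two competing $T$-actions on $G/U$ (the one from $G/U$ versus the one from the right regular representation composed with inversion), and the $\rho$-shift implicit in both the definition of $\M(\la)$ via $\sh\llangle\la+\rho\rrangle$ and in the $\sh$-module structure $t\mapsto \gamma(t)-\hb\rho(t)$ on $\dh(\X)$ straight, so that the twist $\tla$ comes out exactly right and no spurious shift by $\rho$ or by $-\la$ is introduced. Once the conventions are fixed carefully — which is largely done in \S\ref{subsec1} and \S\ref{ss:diffX} — the isomorphism is essentially forced, and the verification that it respects the $\sh$- and $\zh$-actions is a direct comparison of the two descriptions of left multiplication on $\uh(\g)/\u\cdot\uh(\g)$.
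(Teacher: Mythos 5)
Your proposal follows exactly the route the paper intends: the paper offers no explicit argument for this lemma, saying only ``From this description (and isomorphism \eqref{eqn:isom-Verma}) one easily obtains the following result,'' and what you write is a faithful unwinding of \eqref{eqn:diffX}, the recognition that passing to the $\la$-weight space of $U_{\mathrm{right}}$-invariants produces $\Ind_B^G$, and the bookkeeping that shows the twist $\tla$ is what reconciles the $\rho$-shifted $\sh$-action of \S\ref{subsec2} with the $\sh$-action on $\M(\la)$ coming from \eqref{eqn:isom-Verma}. The main content, namely that under \eqref{eqn:diffX} right multiplication in $\dh(\X)$ by $\gamma(t)-\hb\rho(t)$ becomes left multiplication by $t-\hb\rho(t)$ on the Verma factor, and that the $(\la)$-twist then shifts this to $t-\hb(\la+\rho)(t)$, is precisely where the lemma lives, and you identify it correctly.

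One point deserves a slightly sharper justification than ``since $\zh$ is central,'' namely the $\zh$-equivariance. The $\zh$-module structure on $\dh(\X)$ is by \emph{multiplication in the algebra} $\dh(\X)$ by $\iota(z)=1\o\overline z$, whereas the $\zh$-action on $\Ind_B^G(\M(\la))$ is by the right $\uh(\g)$-action on the coefficient module $\M(\la)\cong\uh(\g)/\u\cdot\uh(\g)$. Unwinding \eqref{eqn:diffG} and \eqref{eqn:diffX}, the latter corresponds to \emph{right} multiplication in $\dh(G)$ by $\hat z$ descended to $\dh(\X)$. Centrality of $z$ in $\uh(\g)$ certainly makes both of these well defined, but it does not by itself identify left and right multiplication by $\iota(z)$ in the noncommutative algebra $\dh(\X)$; one genuinely has to check (e.g.\ by observing that $\iota(\zh)$ lands inside $\gamma(\sh)$ and tracking the weight-$\la$ commutation $[\gamma(s),D]$ for $D\in\dh(\X)_\la$, or by invoking the original BGG/Shapovalov analysis cited at the start of \S\ref{ss:diffX}) that the two $\zh$-actions agree after the $(\la)$-twist. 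This is a real, if small, verification and not a formal consequence of $\zh\subset Z(\uh(\g))$; it is however a known fact that the paper also takes for granted, so it does not affect the correctness of your argument.
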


\begin{proof}[Proof of the first isomorphism in Lemma {\rm \ref{lem:fixed-points-morphisms}}]
We have
\[
\bigl( V \o \tla \dh(\X)_\la \bigr)^G \ \cong \ \bigl( V \o \Ind_B^G(\M(\la)) \bigr)^G \ \cong \ \bigl( V \o \M(\la) \bigr)^B
\]
where the first isomorphism follows from Lemma \ref{lem:DX-Ind} and the second one from the tensor identity and Frobenius reciprocity.
\end{proof}

\subsection{Partial Fourier transforms for $\dh(\X)$}
\label{ss:partial-Fourier}

Let us recall a construction due to Gelfand--Graev, and studied by Kazhdan--Laumon \cite{kla} in the $\ell$-adic setting and by Bezrukavnikov--Braverman--Positselskii \cite{bbp} in our $\dd$-module setting. We choose a reductive group $G^\sc$ with simply-con\-nected derived subgroup and a surjective group morphism $G^\sc \twoheadrightarrow G$ with finite central kernel denoted $Z$. 
We denote by $T^\sc$, $B^\sc$ the inverse images of $T$, $B$ in $G^\sc$, and
let $U^\sc$ be the unipotent radical of $B^\sc$. We set $\X^\sc:=G^\sc/U^\sc$. Note that $Z$ acts naturally on $\X^\sc$, with quotient $\X$. Note also that for any simple root $\alpha$ there exists a unique injective morphism of algebraic groups
$\varphi_\alpha : \mathrm{SL}(2,\C) \to G^\sc$ such that
\[
\forall z \in \C^\times, \quad \varphi_\alpha \begin{pmatrix}
z & 0 \\ 0 & z^{-1}
\end{pmatrix} = \alv(z) 
\qquad \text{and} \qquad
d(\varphi_\alpha) \begin{pmatrix}
0 & 0 \\ 1 & 0
\end{pmatrix} = f_\alpha, \quad
d(\varphi_\alpha) \begin{pmatrix}
0 & 1 \\ 0 & 0
\end{pmatrix} = e_\alpha 
\]
(where we identify the Lie algebras of $G^\sc$ and $G$.)

Let $\alpha$ be a simple root, let $P_\alpha^\sc$ be the minimal parabolic subgroup of $G^\sc$ containing $B^\sc$ associated with $\alpha$, and let $Q^\sc_{\alpha}:=[P^\sc_\alpha,P^\sc_\alpha]$. Consider the projection
\[
\tau_{\alpha} :\ \X^\sc \to G^\sc/Q^\sc_{\alpha}.
\]
It is explained in \cite[\S 2.1]{kla} that $\tau_{\alpha}$ is the complement of the zero section of a $G^\sc$-equivariant vector bundle
\[
\tau'_{\alpha} :\ \cV_{\alpha} \to G^\sc/Q^\sc_{\alpha}
\]
of rank $2$. Moreover, there exists a canonical $G^\sc$-equivariant symplectic form on this vector bundle (which depends on $\varphi_\alpha$, i.e.~on the choice of $f_\alpha$). Hence the constructions recalled in \S\ref{ss:symplectic-Fourier} provide an automorphism of $\dh(\cV_{\alpha})$ as a $\C[\hb]$-algebra. As the complement of $\X^\sc$ in $\cV_{\alpha}$ has codimension $2$, restriction induces an isomorphism $\dh(\cV_{\alpha}) \simto \dh(\X^\sc)$. Hence we obtain an automorphism $\mathbf{F}_\alpha^\sc : \dh(\X^\sc) \simto \dh(\X^\sc)$. This automorphism is $Z$-equivariant (since $Z$ acts on $\cV_{\alpha}$ by symplectic automorphisms), and we have $\dh(\X^\sc)^Z=\dh(\X)$ in a natural way. Hence we obtain a $G$-equivariant $\C[\hb]$-algebra automorphism\index{Falpha@$\mathbf{F}_\alpha$}%
\[
\mathbf{F}_\alpha : \dh(\X) \simto \dh(\X). 
\]
on $Z$-fixed points. Using the fact that any two simply connected covers of a connected semi-simple group are isomorphic (as covers of the given group), one can check that the automorphism $\mathbf{F}_\alpha$ does not depend on the choice of $G^\sc$.

%

\begin{lem}
\label{lem:Fourier-transform}
\begin{enumerate}
\item The automorphisms $\mathbf{F}_{\alpha}$, $\alpha$ a simple root, generate an action of $W$ on $\dh(\X)$.
\item For any simple root $\alpha$ and any $\lambda \in \bX$, setting $s=s_\alpha$, $\mathbf{F}_{\alpha}$ restricts to an isomorphism of $G$-modules and of ${\sh}$- and $\zh$-modules
\[
\dh(\X)\lla \ \simto \ {}^{s} \dh(\X)_{s \lambda}.
\]
\end{enumerate}
\end{lem}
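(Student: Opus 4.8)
The plan is to reduce both assertions to statements about the rank-one groups $\mathrm{SL}(2,\C)$ (or its quotient $\mathrm{PGL}(2,\C)$) associated with each simple root, where everything can be verified by explicit computation on the vector bundle $\cV_\alpha$ and its partial Fourier transform. Throughout, I would work first on $\X^\sc = G^\sc/U^\sc$ and then descend to $\X$ by taking $Z$-fixed points, since the automorphisms $\mathbf{F}_\alpha$ and the relevant gradings and module structures are all $Z$-equivariant.

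For part (2): fix a simple root $\alpha$ and set $s = s_\alpha$. The morphism $\tau_\alpha : \X^\sc \to G^\sc/Q^\sc_\alpha$ realizes $\X^\sc$ as the complement of the zero section in the rank-two symplectic vector bundle $\tau'_\alpha : \cV_\alpha \to G^\sc/Q^\sc_\alpha$, and the symplectic Fourier transform on $\dh(\cV_\alpha)$ recalled in \S\ref{ss:symplectic-Fourier} is the identity along the base $G^\sc/Q^\sc_\alpha$ and acts fiberwise. The $T$-action (in the $gU \mapsto gtU$ convention) preserves each fiber of $\tau_\alpha$; restricted to $\alv(\C^\times)$ the two coordinates on a fiber have weights that are exchanged — up to sign — by the Fourier transform, while the subtorus $\ker(\alpha) \subset T$ acts by scalars that are unaffected. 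Concretely, a monomial differential operator in fiber coordinates of $T$-weight $\mu$ is sent by $\mathbf{F}_\alpha$ to one of weight $s\mu$ (the Fourier transform swaps the roles of a coordinate and the dual vector field, each shift being governed by $\alpha$), which is precisely the statement $\mathbf{F}_\alpha(\dh(\X)_\la) = \dh(\X)_{s\la}$. That $\mathbf{F}_\alpha$ intertwines the $\sh$-module structures up to the twist $s$ then follows because the image of $\gamma : \sh \to \dh(\X)$ consists of the vector fields generating the $T$-action on fibers, whose weights are again transformed by $s$; the shift by $-\hb\rho$ in the definition of the $\sh$-action is handled by the same rank-one bookkeeping, using $s\rho = \rho - \alpha$ for a simple reflection. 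The $\zh$-equivariance is immediate since $\mathbf{F}_\alpha$ is $G$-equivariant and the map $\zh \to \dh(\X)$ comes from differentiating the (untouched) left $G$-action. The $G$-equivariance itself is built into the construction, as the Fourier transform is performed on a $G^\sc$-equivariant symplectic bundle.

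For part (1): the content is that the $\mathbf{F}_\alpha$ satisfy the braid relations of $W$ and that $\mathbf{F}_\alpha^2 = \id$. The quadratic relation $\mathbf{F}_\alpha^2 = \id$ follows from the corresponding property of the symplectic Fourier transform on a rank-two symplectic space (the square of the Fourier transform is, up to the symplectic sign conventions used in \S\ref{ss:symplectic-Fourier}, the identity on $\dh$). For the braid relations, one restricts to the rank-two Levi (more precisely, to the derived subgroup of the standard Levi) attached to the pair of simple roots $\{\alpha,\beta\}$: the key point is that $\dh(\X)$ is compatible with passage to such a Levi in an appropriate sense, so that the braid relation between $\mathbf{F}_\alpha$ and $\mathbf{F}_\beta$ on $\dh(\X)$ reduces to the same relation inside the rank-two case. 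There it can be checked directly — for type $A_2$, $A_1 \times A_1$, $B_2$, $G_2$ — by the explicit formulas of \cite{bbp} (or alternatively deduced from the known $W$-action on $\dd$-modules on the basic affine space established there).

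The main obstacle I expect is the braid relation in part (1): the local, fiberwise description of a single $\mathbf{F}_\alpha$ does not immediately make the interaction of two different partial Fourier transforms transparent, because $\mathbf{F}_\alpha$ and $\mathbf{F}_\beta$ are built from vector bundle structures over \emph{different} partial flag varieties $G^\sc/Q^\sc_\alpha$ and $G^\sc/Q^\sc_\beta$. Making the reduction to rank two precise — i.e.\ showing that the braid relation can be tested after restricting differential operators to the sub-basic-affine-space of the rank-two Levi — is the delicate step; once that reduction is in place, the verification is the finite computation carried out in \cite{bbp}, which I would cite rather than reproduce.
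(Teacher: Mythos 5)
Your strategy departs genuinely from the paper's. The paper's proof is very short: invert $\hb$. One has a natural isomorphism $\dh(\X)[\hb^{-1}] \cong \dd(\X)\otimes\C[\hb,\hb^{-1}]$ under which $\mathbf{F}_\alpha$ becomes the classical partial Fourier transform of \cite{bbp} tensored with $\id_{\C[\hb,\hb^{-1}]}$. Since $\dh(\X)$ is an $\hb$-torsion-free $\C[\hb]$-subalgebra of $\dh(\X)[\hb^{-1}]$, both the braid/quadratic relations of part (1) and the twisted weight/module statements of part (2) descend directly from \cite[Prop.~3.1, Lem.~3.3]{bbp}. Your direct, rank-one reduction for part (2) and for $\mathbf{F}_\alpha^2=\id$ is in fact sanctioned by the paper itself in the Remark immediately following the Lemma, where it is routed through Corollary~\ref{cor:Phi-rest} together with the injectivity of the maps $\mathscr{R}^{V,\la}_{G,L}$ (Lemmas~\ref{lem:restH-injective}, \ref{lem:restriction-DX-Hom}). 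Your fiberwise sketch on $\cV_\alpha$ is a reasonable substitute for that bookkeeping, though you should be careful: the right $T$-action does \emph{not} preserve the fibers of $\tau_\alpha$ (only $T\cap Q^\sc_\alpha \supset \alv(\C^\times)$ does), so the claim that ``the $T$-action preserves each fiber'' needs to be reformulated as compatibility with the $T$-action that covers the base action.

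The genuine gap is in part (1) for the braid relations, and you have correctly identified it but not closed it. The reduction to a rank-two Levi by way of the map $\mathrm{res}^G_L:\dh(\X_G)\to\dh(\X_L)$ does not work as stated, because $\mathrm{res}^G_L$ is far from injective: by Corollary~\ref{cor:description-ur} it factors through restriction to the open cell $P_-\cdot P_+/U_+ \cong N_L^-\times\X_L$ followed by evaluation $\varepsilon_L$ on the $N_L^-$-factor, which kills a large subspace. So an equality of automorphisms of $\dh(\X_G)$ cannot simply be tested after applying $\mathrm{res}^G_L$. The injectivity the paper actually has available is that of $\mathscr{R}^{V,\la}_{G,L}$ on the $G$-invariant Hom spaces $\bigl(V\otimes\tla\dh(\X_G)_\la\bigr)^G$, which, combined with $G$-equivariance of the $\mathbf{F}_\alpha$ and the decomposition of $\dh(\X_G)$ into $G$-isotypic pieces, \emph{would} let one test the braid relation in rank two --- but you would have to carry this out explicitly, and the paper itself does not (it only invokes this machinery for a single simple reflection, deferring the full $W$-relations to \cite{bbp} via the $\hb$-inversion). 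The shortest way to fill your gap is precisely the paper's observation: once $\hb$ is inverted, your problem is literally the one solved in \cite{bbp}, and no reduction to rank two is needed.
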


\begin{proof}
We observe that there are natural isomorphisms
\[
\dh(\X) [\hb^{-1}] \cong \Gamma \bigl( \X, \dd_{\hb,\X}[\hb^{-1}] \bigr) \cong \Gamma \bigl( \X, \dd_{\X} \o \C[\hb,\hb^{-1}] \bigr) \cong \dd(\X) \o \C[\hb,\hb^{-1}].
\]
Moreover, under these isomorphisms, the automorphism induced by $\mathbf{F}_\alpha$ coincides with the tensor product of the similar automorphism of $\dd(\X)$ considered in \cite{bbp} with $\id_{ \C[\hb,\hb^{-1}]}$. Hence the lemma follows from \cite[Proposition 3.1 and Lemma 3.3]{bbp}.
\end{proof}

By $(1)$, we can define a group morphism $w \mapsto \mathbf{F}_w$ from $W$ to the group of $\C[\hb]$-algebra automorphisms of $\dh(\X)$,
such that $\mathbf{F}_{s_\alpha}=\mathbf{F}_\alpha$ for any simple root $\alpha$. And by $(2)$ these isomorphisms restrict to isomorphisms of $G$-modules and of ${\sh}$- and $\zh$-modules\index{Flambda@$\mathbf{F}_w^{\la}$}%
\[
\mathbf{F}_w^{\la} : \dh(\X)\lla \ \simto \ {}^{w} \dh(\X)_{w(\lambda)}
\]
which satisfy the relations
\[
\y \bigl( \mathbf{F}_x^{y\la} \bigr) \circ \mathbf{F}_y^\la \ = \ \mathbf{F}_{xy}^\la.
\]
Using the relation $\tla (\w M) = \w ({}^{(w\la)} \hspace{-1pt} M)$ we obtain isomorphism \eqref{eqn:isom-Fourier-DX}, which allows to define the collection of isomorphisms $\Phi^{V,\la}_w$ of \S\ref{ss:W-symmetries}.

\begin{rem}
\begin{enumerate}
\item
As explained in \S\ref{ss:partial-Fourier-definition}, the construction of isomorphisms $\Phi^{V,\la}_w$ is not compatible with the natural gradings on $(V \o \tla \dh(\X)_\la)^G$ and $(V \o {}^{(w\la)} \hspace{-1pt} \dh(\X)_{w \la})^G$. However, it follows from Theorem \ref{thm:W-symmetry} that it induces an isomorphism of \emph{graded} modules
\[
\bigl(V \otimes \tla \dh(\X)\lla\bigr)^G \langle \lambda(2\rhov)
\rangle \simto \w \bigl(V \otimes {}^{(w\la)} \hspace{-1pt} \dh(\X)_{w\la} \bigr)^G \langle (w\lambda)(2\rhov)
\rangle.
\]
This property is also observed in \cite[Proposition 2.9]{LS}.
\item
Statement $(2)$ of
Lemma \ref{lem:Fourier-transform} (and the fact that $\mathbf{F}_{\alpha}$ is an involution) can also be proved directly as follows: by Corollary \ref{cor:Phi-rest}
below and the injectivity of the morphisms $\mathscr{R}^{V,\lambda}_{G,L}$ considered in this statement (see Lemmas \ref{lem:restH-injective} and \ref{lem:restriction-DX-Hom}), it is enough to prove the claim in the case $G$ has semisimple rank one, which can be treated by explicit computation (see e.g.~the proof of Lemma \ref{lem:Phi-rk1} below).
\end{enumerate}
\end{rem}

As a consequence of these constructions we also obtain the following result, which will be needed later. This result is also proved (using different methods) in \cite{Sh}.

\begin{prop}
\label{prop:freeness}

For any $\lambda \in \bX$,
the graded ${\sh}$-module 
$\dh(\X)_\la$
is free.

\end{prop}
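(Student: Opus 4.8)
The plan is to deduce freeness of $\dh(\X)_\la$ as a graded $\sh$-module from the structure results already established, by reducing to the case $\la \in \bX^-$ and exploiting the Fourier automorphisms. First I would recall from Lemma \ref{lem:DX-Ind} that $\tla \dh(\X)_\la \cong \Ind_B^G(\M(\la))$ as a graded $\sh$-module, so that it suffices to understand $\Ind_B^G(\M(\la))$ as an $\sh$-module; note that the twist $\tla(-)$ only alters the $\sh$-action by an automorphism of $\sh$, hence does not affect freeness. Since $\Ind_B^G$ is exact on the relevant modules (the induction being built from $\C[G/B] \otimes (-)$ via the tensor identity, $G/B$ being projective, or more concretely since as an $\sh$-module $\Ind_B^G(\M(\la)) \cong \Gamma(\B, \mathcal{O}_\B(-\la)) \otimes_{?} \cdots$), I would analyze the $\sh$-module structure of $\M(\la)$ itself: by \eqref{eqn:isom-Verma}, $\M(\la) \cong \uh(\g)/(\u \cdot \uh(\g))$ with the right $\sh$-action given by right multiplication by $t - \hb(\la+\rho)(t)$; the PBW theorem gives $\M(\la) \cong \sym(\u^-) \otimes \sh$ as a graded right $\sh$-module (freely), and induction preserves this freeness fiberwise over $\B$.

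More precisely, the key steps in order: (1) Identify $\dh(\X)_\la$ with $\Ind_B^G(\M(\la))$ up to an $\sh$-automorphism twist, via Lemma \ref{lem:DX-Ind}. (2) Observe that as a graded right $\uh(\b)$-module, and in particular as a graded $\sh$-module via $t \mapsto t - \hb(\la+\rho)(t)$, the asymptotic Verma module $\M(\la) = \sh\llangle \la+\rho\rrangle \otimes_{\uh(\b)} \uh(\g)$ is free: using the PBW decomposition $\uh(\g) \cong \uh(\u^-) \otimes_{\C[\hb]} \uh(\b)$ we get $\M(\la) \cong \sym(\u^-)[\hb] \otimes_{\C[\hb]} \sh$ as a graded $\sh$-module (with a shifted but free action), so it is graded-free over $\sh$. (3) Apply $\Ind_B^G$: since $\Ind_B^G(-) = \Gamma(\B, \mathscr{L}(-))$ where $\mathscr{L}$ is the associated sheaf functor, and since $\M(\la)$ is free over $\sh$, the sheaf $\mathscr{L}(\M(\la))$ is a direct sum (or filtered union) of line bundles tensored with $\sh$, and one checks that $\Gamma$ applied to it stays free over $\sh$ — here I would either invoke a vanishing/compatibility statement already available (e.g.\ the isomorphism \eqref{eqn:wfg}-type identities, or directly the computation $\Ind_B^G(V_0 \otimes \sh) \cong \Ind_B^G(V_0) \otimes \sh$ for a $B$-module $V_0$, applied degree by degree to the free pieces of $\M(\la)$). (4) Conclude freeness of $\dh(\X)_\la$, and remove the twist.

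The main obstacle I anticipate is step (3): induction does not commute with arbitrary colimits or with tensoring over $\sh$ for free reasons, so one must be careful that $\Ind_B^G$ of a graded-free $\sh$-module stays graded-free. The cleanest route is probably to note that $\M(\la)$, as a graded $(\b, \sh)$-bimodule, admits a filtration (by the grading, or by a PBW-type filtration) whose associated graded is a direct sum of $B$-modules of the form $V_0 \otimes \sh$ with $V_0$ finite-dimensional; then $\Ind_B^G$ being left-exact and commuting with the flat base change $- \otimes_{\C[\hb]} \sh$ on finite-dimensional pieces (because $\Ind_B^G(V_0 \otimes \sh) = \Ind_B^G(V_0) \otimes_{\C[\hb]} \sh$, as $\sh$ is a trivial $G$-module flat over $\C[\hb]$), one gets that $\Ind_B^G(\M(\la))$ is an extension (indeed, by semisimplicity considerations in each graded degree, a direct sum) of graded-free $\sh$-modules $\Ind_B^G(V_0) \otimes \sh$. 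Since each such piece is visibly graded-free over $\sh$ and the extension is by a union over the grading which is bounded below in each internal degree, freeness is preserved. An alternative, if one prefers to bypass the bimodule filtration, is to invoke Proposition \ref{prop:freeness}'s counterpart on the topological side: Lemma \ref{lem:equiv-cohomology-first-properties}(1) already gives that $\coH^{\hdot}_A(i_\la^! \cF)$ is graded-free over $\sh$, and once Corollary \ref{key_cor} is available this transports freeness back to $(\cS(\cF) \otimes \tla \dh(\X)_\la)^G$ for all $\cF$, from which — taking $\cF$ ranging over a collection realizing all of $\Rep(G)$ and using that $\dh(\X)_\la$ is a summand of a sum of such $G$-invariant tensor spaces — one recovers freeness of $\dh(\X)_\la$ itself; but since Proposition \ref{prop:freeness} is stated before the main theorems, the self-contained argument via PBW and the tensor identity is the one I would actually write out.
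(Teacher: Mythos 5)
Your plan correctly identifies Lemma \ref{lem:DX-Ind}, the reduction via the Fourier automorphisms, and the PBW filtration on $\M(\la)$ as the relevant ingredients, but the step you flag as ``the main obstacle'' is precisely where the real content lies, and neither of the workarounds you propose closes it. Two things go wrong. First, the subquotients $M_i/M_{i-1}$ are \emph{not} of the form $V_0\otimes\sh$ with $B$ acting trivially on the $\sh$-factor: under the identification of $\gr\M(\la)$ with $\mathrm{S}(\g/\u)[\hb]\otimes\C_{-\la}$, the $\sh$-action comes from the inclusion $\sym(\t)\hookrightarrow\sym(\g/\u)$, which is not $B$-equivariant for the trivial $B$-action on $\sym(\t)$, so the tensor identity $\Ind_B^G(V_0\otimes\sh)\cong\Ind_B^G(V_0)\otimes_{\kh}\sh$ simply does not apply. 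Concretely, $\Ind_B^G(\mathrm{S}(\g/\u)[\hb]\otimes\C_{-\la})\cong\Gamma(\wfg,\oo_{\wfg}(\la))[\hb]$ with $\sym(\t)$ acting via $\delta\colon\wfg\to\t^*$, and the freeness of this module over $\sym(\t)$ is a genuine geometric fact, not a formal consequence of flatness. Second, even granting freeness of the graded pieces, $\Ind_B^G$ is only left-exact, so applying it to $0\to M_{i-1}\to M_i\to M_i/M_{i-1}\to 0$ produces a connecting map into $R^1\Ind_B^G(M_{i-1})$; absent a vanishing result you only learn that the induced filtration on $\Ind_B^G(\M(\la))$ has subquotients that are \emph{submodules} of $\Ind_B^G(M_i/M_{i-1})$, and submodules of free modules over the polynomial ring $\sh$ need not be free. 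Both problems are resolved in the paper by the same input: for $\la$ \emph{dominant} (so the Fourier reduction should be to $\bX^+$, not to $\bX^-$ as you wrote), the short exact sequence of vector bundles $0\to\wcN\to\wfg\to\t^*\times\B\to 0$ over $\B$ together with Broer's vanishing $\coH^{>0}(\wcN,\oo_{\wcN}(\la))=0$ from \cite[Theorem 2.4]{bro} yields simultaneously $R^{>0}\Ind_B^G(\mathrm{S}(\g/\u)[\hb]\otimes\C_{-\la})=0$ and a filtration of $\Gamma(\wfg,\oo_{\wfg}(\la))$ with $\sym(\t)$-free associated graded $\Gamma(\wcN,\oo_{\wcN}(\la))\otimes\sym(\t)$. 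This cohomological vanishing---not the tensor identity---is the missing ingredient, and it is also why the reduction to the dominant chamber is not merely a convenience but a necessity. Your suggested ``alternative'' via Lemma \ref{lem:equiv-cohomology-first-properties}(1) and Corollary \ref{key_cor} is, as you yourself observe, circular, since the proof of Theorem \ref{thm:loop-equivariant-version} already invokes Proposition \ref{prop:freeness}.
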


\begin{proof}
Using isomorphisms $\mathbf{F}_w^\la$ defined above we can assume that $\lambda$ is dominant.
Then using Lemma \ref{lem:DX-Ind} it is equivalent to prove that $\Ind_B^G (\M(\la))$ is free over $\sh$.

We claim that, if $\la$ is dominant,
\begin{equation}
\label{eqn:vanishing-cohomology}
\Ind_B^G(\sym(\g/\u)[\hb] \o \C_{-\lambda}) \
 \text{ is free over $\sh$ and} \
R^{>0}\Ind_B^G(\sym(\g/\u)[\hb] \o \C_{-\lambda})=0.
\end{equation}
Indeed, it is sufficient to prove that the $\sym(\t)$-module $\Ind_B^G(\mathrm{S}(\fg/\u) \o
\bbc_{-\lambda})$ is free and that we have
$R^{>0}\Ind_B^G(\sym(\g/\u)\o  \bbc_{-\lambda})=0$. Consider
the vector bundles 
\[
q_{\wcN} : \wcN \to \B \quad \text{ and } \quad q_{\wfg}: \wfg \to \B.
\]
Here $\wcN:=G \times_B (\g/\b)^*$\index{Ntilde@$\wcN$} is the Springer resolution, and $\wfg$ is defined in \S\ref{subsec2}.
There is a natural
inclusion of vector bundles $\wcN \hookrightarrow \wfg$, and the quotient is the trivial
vector bundle $\t^* \times \B$. Hence there is a $\Z_{\geq 0}$-filtration on $(q_{\wfg})_*
\oo_{\wfg}$ (as a sheaf of $\sym(\t) \o \oo_{\B}$-modules) with
associated graded $(q_{\wcN})_* \oo_{\wcN} \o \sym(\t)$. By \cite[Theorem
2.4]{bro}, we have $\coH^{>0}(\wcN, \oo_{\wcN}(\lambda))=0$. It follows
that $\coH^{>0}(\wfg, \oo_{\wfg}(\lambda))=0$, and that $\coH^{0}(\wfg,
\oo_{\wfg}(\lambda))$ has a filtration with associated graded
$\coH^{0}(\wcN, \oo_{\wcN}(\lambda)) \o \mathrm{S}(\ft)$. Now it follows
from definitions that, for any $i \geq 0$, $\coH^{i}(\wfg,
\cO_{\wfg}(\lambda)) \cong R^i \Ind_B^G \bigl(\sym(\g/\u)\o
\C_{-\lambda} \bigr)$, which finishes the proof of \eqref{eqn:vanishing-cohomology}.

For $i \geq 0$, let $M_i := \sh \cdot \bv_{\lambda} \cdot U_{\hb}^{\leq i}(\g) \subset \M(\la)$, where $U_{\hbar}^{\leq \hdot}(\g)$ is the PBW filtration of $\uh(\g)$. Then $M_{\idot}$ is a $B$-stable and ${\sh}$-stable exhaustive filtration of $\M(\la)$, and the associated graded is isomorphic to $\mathrm{S}(\fg/\u)[\hbar] \o  \bbc_{-\lambda}$. From the second claim in \eqref{eqn:vanishing-cohomology} it follows that $\Ind_B^G (\M(\la)\rhoshift )$ has a filtration with associated graded $\Ind_B^G(\mathrm{S}(\fg/\u)[\hbar] \o  \bbc_{-\lambda})$, and then the corollary follows from the first claim in \eqref{eqn:vanishing-cohomology}.
\end{proof}

\begin{rem}
\label{rk:RInd}
The arguments in the proof of Proposition \ref{prop:freeness} also prove that, when $\la$ is dominant, we have
$R^{>0} \Ind_B^G (\M(\la)) = 0$.
\end{rem}

\subsection{Restriction to a Levi subgroup}
\label{ss:restriction-DX}

Fix a subset $I$ of the set of simple roots and
let $\fl$ be the Levi subalgebra containing $\ft$ that
has  the set $I$ as simple roots.
Note that our choice of a Borel subgroup, a maximal torus and simple root vectors for $\g$ determines a similar choice for $\l$, hence the constructions of the present section make sense both for $\g$ and for $\l$.

We put\index{nL+@$\fn_L^+$}\index{nL-@$\fn_L^-$}%
\[
\u_L:=\bigoplus_{\genfrac{}{}{0pt}{}{\alpha \in R^+}{\alpha \in \Z I}}
\g_\alpha, \qquad 
\b_L:=\t \oplus \u_L, \qquad
 \fn_L^+ := \bigoplus_{\genfrac{}{}{0pt}{}{\alpha \in R^+}{\alpha \notin
    \Z I}} \g_\alpha, \qquad 
\fn_L^- :=
\bigoplus_{\genfrac{}{}{0pt}{}{\alpha \in R^-}{\alpha \notin \Z I}}
\g_\alpha.
\]
Thus, one has a triangular decomposition
$\g=\fn^+_L \oplus \fl \oplus \fn_L^-$, and
$\u_L=\u \cap \l$ is the nilradical of the Borel subalgebra
$\b_L=\b \cap \l$ of $\fl$. Further, let
$\fp_\pm:=
\l\oplus \fn_L^\pm$ and $\b_\pm:=\b_L\oplus\fn_L^\pm$,
resp. $\u_\pm:=\u_L\oplus\fn_L^\pm$.
Thus, $\fp_\pm$ is a pair of opposite parabolic subalgebras of $\g$
such that $\fp_+\cap\fp_-=\fl$, and
$\b_\pm$  is a pair of Borel subalgebras of $\g$
such that $\b_+\cap\b_-=\b_L$, with respective nilpotent radicals $\u_{\pm}$.
Let $L,\,P_\pm,\,B_\pm,\,U_\pm, \, N^\pm_L,\,B_L,\,U_L$
be the subgroups of $G$ corresponding 
to the Lie algebras $\fl,\,\fp_\pm,\,\b_\pm,\, \u_{\pm}, \, \fn^\pm_L,\,\b_L,\,\u_L$, respectively.
By definition, we have $\X_G= G/U_+$ and $\X_L=L/U_L$. (Observe that $B_+, \, U_+, \, \b_+, \, \u_+$ coincide with the objects denoted by $B, \, U, \, \b, \, \u$ in the preceding sections.)



Now we construct a morphism of $L$-modules 
\[
r^G_L : \dh(\X_G) \to \dh(\X_L)
\]
as follows.
Note that
the right $\uh(\fl)$-action on $\uh(\g) / \u \cdot \uh(\g)$
descends to a well-defined action on
$\uh(\g) / (\u \cdot \uh(\g) + \uh(\g) 
\cdot \fn_L^-)$. Using this, from
the  diagram $\g\hookleftarrow \fp_- \twoheadrightarrow \fl$
of natural  Lie algebra morphisms,
one obtains
the following morphisms of right  $\uh(\fl)$-modules:
\beq{Uiso}
\uh(\g) / (\u{}^{\!} \cdot{}^{\!} \uh(\g)+ \uh(\g) 
{}^{\!}\cdot{}^{\!}\fn_L^-)\ \cong\
\uh(\fp_-)/(\u_L{}^{\!} \cdot{}^{\!} \uh(\fp_-) + \uh(\fp_-){}^{\!} \cdot{}^{\!} \fn_L^- )\
\cong\ \uh(\l)/\u_L{}^{\!} \cdot{}^{\!} \uh(\l).
\eeq
All the above  maps are bijections since
the linear maps
\[
\xymatrix{
\g/(\u\oplus \fn_L^-) & \fp_-/\u_- \ar[r]^-{\sim} \ar[l]_-{\sim} & \fl/\u_L
}
\]
are clearly vector space isomorphisms.
We deduce  the following chain of maps
\begin{align*}
\bigl(\C[G]\o (\uh(\g) / \u \cdot \uh(\g) ) \bigr)^U \ \longhookrightarrow
& \ \bigl(\C[G]\o (\uh(\g) / \u \cdot \uh(\g) )\bigr)^{U_L} \\
\longrightarrow & \ \Bigl(\C[G]\o \bigl( \uh(\g) / (\u \cdot \uh(\g)+\uh(\g) 
\cdot\fn_L^-) \bigr)\Bigr)^{U_L}\\
\longrightarrow & \ \bigl(\C[G]\o (\uh(\l)/\u_L \cdot
  \uh(\l) ) \bigr)^{U_L}\\
\longrightarrow & \ \bigl(\C[L]\o (\uh(\l)/\u_L \cdot
  \uh(\l) ) \bigr)^{U_L},
\end{align*}
where the third morphism is induced by \eqref{Uiso}, and
the last one is induced by restriction of functions $\C[G] \to \C[L]$.
Using isomorphism \eqref{eqn:diffX}, this allows to define the desired morphism
\[
r^G_L : \dh(\X_G)\ \cong\   
\bigl(\C[G]\o(\uh(\g) / \u \cdot \uh(\g)\bigr)^U
\to \bigl(\C[L]\o (\uh(\l)/\u_L \cdot
  \uh(\l))\bigr)^{U_L} \ \cong\ \dh(\X_L).
\]

We also define an automorphism of $L$-modules
\[
t^G_L : \dh(\X_L) \simto \dh(\X_L)
\]
as follows. The linear map $\rho_G-\rho_L$ on $\t$ extends naturally to a Lie algebra morphism $\l \to \C$, which we denote similarly. Then the assignment $\l \ni x \mapsto x + \hb(\rho_G-\rho_L)(x)$ defines a graded $\C[\hb]$-algebra automorphism $\imath^G_L$\index{iLG@$\imath^G_L$} of $\uh(\l)$, which descends to a $U_L$-equivariant automorphism of the quotient $\uh(\l) / (\u_L \cdot \uh(\l))$. Using the isomorphism $\dh(\X_L) \cong \bigl(\C[L]\o (\uh(\l)/\u_L \cdot
  \uh(\l))\bigr)^{U_L}$ as above, we obtain the wished for automorphism $t^G_L$. This morphism can also be described in more geometric terms as follows: the linear form $\rho_G-\rho_L$ on $\t$ can be considered as a character of $T$, which extends in a natural way to a character of $L$, and then descends to an invertible function $f^G_L$ on $L/U_L$. Then one can easily check that $t^G_L$ is the automorphism of $\dh(\X_L)$ sending $D$ to $(f^G_L)^{-1} \cdot D \cdot f^G_L$.
  
Finally we define the morphism of $L$-modules
\[
\mathrm{res}^G_L := t^G_L \circ r^G_L : \dh(\X_G) \to \dh(\X_L).
\]
One can easily check that $\mathrm{res}^G_L$ is $\sh$-equivariant, and $T$-equivariant for the $T=\{1\} \times T$-actions.
The following result (in which we use superscripts on the left to indicate which reductive group we consider) will be proved in \S\ref{ss:proof-restriction-DX} below. 

\begin{prop}
\label{prop:restriction-DX}
Let $\alpha \in I$. Then the following diagram commutes:
\[
\xymatrix@C=2.5cm{
\dh(\X_G) \ar[r]^-{{}^G \hspace{-1pt} \mathbf{F}_\alpha} \ar[d]_-{\mathrm{res}^G_L} & \dh(\X_G) \ar[d]^-{\mathrm{res}^G_L} \\
\dh(\X_L) \ar[r]^-{{}^L \hspace{-1pt} \mathbf{F}_\alpha} & \dh(\X_L).
}
\]
\end{prop}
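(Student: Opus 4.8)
The plan is to reduce the assertion, once the twist $t^G_L$ has been removed, to the statement that the partial Fourier transform attached to $\alpha$ is ``the same'' whether performed inside $G^\sc$ or inside $L^\sc$: this holds precisely because $\alpha\in I$, so that this transform is built out of the subgroup $\varphi_\alpha(\mathrm{SL}(2,\C))$, which lies in $L^\sc$. Since ${}^G\hspace{-1pt}\mathbf{F}_\alpha$ and ${}^L\hspace{-1pt}\mathbf{F}_\alpha$ are algebra automorphisms, it suffices to prove the analogous diagram with $r^G_L$ in place of $\mathrm{res}^G_L=t^G_L\circ r^G_L$, provided $t^G_L$ commutes with ${}^L\hspace{-1pt}\mathbf{F}_\alpha$. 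Now $t^G_L$ is conjugation by the invertible function $f^G_L$ on $\X_L$ attached to $\rho_G-\rho_L$; as $\alpha\in I$ we have $\langle\rho_G-\rho_L,\alv\rangle=0$, so $\rho_G-\rho_L$ is trivial on $[L,L]\supseteq\varphi_\alpha(\mathrm{SL}(2,\C))$, whence $f^G_L$ is constant along the fibres of $\tau_{L,\alpha}\colon\X^\sc_L\to L^\sc/Q^\sc_{L,\alpha}$ (equivalently, the automorphism $\imath^G_L$ of $\uh(\fl)$ defining $t^G_L$ shifts only the directions of $\ft$ central in $\fl$). As ${}^L\hspace{-1pt}\mathbf{F}_\alpha$ is the symplectic Fourier transform performed fibrewise along $\tau_{L,\alpha}$, it fixes $f^G_L$ and commutes with $t^G_L$.

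For the reduced statement, I would place both transforms in a common geometric picture. Since $\alpha\in I$, one has $L^\sc\cap P^\sc_\alpha=P^\sc_{L,\alpha}$ and $L^\sc\cap Q^\sc_\alpha=Q^\sc_{L,\alpha}$, so $L^\sc/Q^\sc_{L,\alpha}$ is a locally closed subvariety of $G^\sc/Q^\sc_\alpha$ over which $\cV_\alpha$ restricts --- symplectic form included, both forms being induced by $\varphi_\alpha$ --- to the rank-$2$ bundle $\cV_{L,\alpha}$. In particular the base point of $\X_L$ has its full $\varphi_\alpha(\mathrm{SL}(2,\C))$-orbit (for right multiplication) inside $\X_L$, and the $\bA^2$-fibre of $\tau_\alpha$ through it coincides with the corresponding fibre of $\tau_{L,\alpha}$. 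It then remains to see that $r^G_L$ ``does nothing'' in these $\varphi_\alpha(\mathrm{SL}(2,\C))$-directions: unwinding \eqref{eqn:diffX} and the isomorphism \eqref{Uiso} for $G$ and for $L$, the operations making up $r^G_L$ --- relaxing right $N_L^+$-invariance, the resulting quotient $\uh(\g)/\u\cdot\uh(\g)\to\uh(\fl)/\u_L\cdot\uh(\fl)$ built from \eqref{Uiso}, and the restriction of functions $\C[G]\to\C[L]$ --- all concern the ``$\fn_L^{\pm}$-directions'', which (because $\alpha\in I$) are disjoint from, and commute with, the $\varphi_\alpha(\mathrm{SL}(2,\C))$-directions along which each Fourier transform acts. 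Hence $r^G_L\circ{}^G\hspace{-1pt}\mathbf{F}_\alpha={}^L\hspace{-1pt}\mathbf{F}_\alpha\circ r^G_L$, and the proposition follows.

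The crux is making this last comparison precise: one must track the composite defining $r^G_L$ simultaneously through the realizations $\dh(\X^\sc_\bullet)\simto\dh(\cV_{\bullet,\alpha})$ (restriction isomorphisms in codimension $2$), through the construction of the symplectic Fourier transform, and through the $G^\sc$-equivariant structure, the fibration $\tau_\alpha$, the identification \eqref{Uiso}, and the various $\C[\hb]$-linear structures. In particular one cannot simply localize $r^G_L$ to the open dense $P_-$-orbit $P_-/U_L\hookrightarrow\X_G$ (open dense because $\g=\fn_L^+\oplus\fl\oplus\fn_L^-$), since ${}^G\hspace{-1pt}\mathbf{F}_\alpha$ does not preserve that open subset. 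If this bookkeeping becomes unwieldy, an alternative is to first check the diagram after inverting $\hb$ --- where $\dh(\X_\bullet)[\hb^{-1}]\cong\dd(\X_\bullet)\o\C[\hb,\hb^{-1}]$ and ${}^{\bullet}\hspace{-1pt}\mathbf{F}_\alpha$ becomes the Fourier transform of \cite{bbp} --- and then deduce the integral statement, since $r^G_L$ and $\mathbf{F}_\alpha$ are $\C[\hb]$-linear and each $\dh(\X_\bullet)$ embeds into its localization at $\hb$.
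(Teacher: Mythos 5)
Your reduction to the $r^G_L$ diagram, together with the commutation of $t^G_L$ with ${}^L\hspace{-1pt}\mathbf{F}_\alpha$ (which is the content of Lemma~\ref{lem:twist-Fourier}, and your observation that $\langle\rho_G-\rho_L,\alv\rangle=0$ makes $f^G_L$ descend along $\tau_{L,\alpha}$ is the right way to see its hypotheses are met), matches the paper's first step. But the treatment of the $r^G_L$ diagram contains a misconception that leads you to discard the approach that actually works and then to leave the key step as an acknowledged gap.

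You assert that ``one cannot simply localize $r^G_L$ to the open dense $P_-$-orbit $P_-/U_L\hookrightarrow\X_G$, since ${}^G\hspace{-1pt}\mathbf{F}_\alpha$ does not preserve that open subset.'' This is not so. The open subset $P_-\cdot P_+/U_+\cong P_-/U_L$ is precisely $\tau_\alpha^{-1}(V)$ for the open $V:=P_-Q_\alpha^\sc/Q_\alpha^\sc\subset G^\sc/Q^\sc_\alpha$ (one needs $\alpha\in I$ to get $Q^\sc_\alpha\subset P_+^\sc\subset P_-^\sc U_+^\sc$, which gives $\tau_\alpha^{-1}(V)=P_-U_+/U_+$). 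Since the symplectic Fourier transform of \S\ref{ss:symplectic-Fourier} is built sheaf-theoretically over the base $G^\sc/Q^\sc_\alpha$ of the rank-$2$ bundle $\cV_\alpha$, it is compatible with restriction to preimages of open subsets of that base; in particular ${}^G\hspace{-1pt}\mathbf{F}_\alpha$ does extend to a ring automorphism of $\dh(P_-\cdot P_+/U_+)$ that restricts back to ${}^G\hspace{-1pt}\mathbf{F}_\alpha$ on $\dh(\X_G)$ (a ring automorphism need not come from a geometric automorphism preserving the open subset, and the relevant ``preservation'' is at the level of the base, not of $\X_G$). This localization, combined with the description of $r^G_L$ as restriction to $P_-\cdot P_+/U_+\cong N_L^-\times\X_L$ followed by evaluation at $1\in N_L^-$ (Corollary~\ref{cor:description-ur}) and the identification of the extended ${}^G\hspace{-1pt}\mathbf{F}_\alpha$ with $\id\otimes{}^L\hspace{-1pt}\mathbf{F}_\alpha$ on $\dh(N_L^-)\otimes_{\C[\hb]}\dh(\X_L)$, is exactly what makes your informal ``the $\fn_L^\pm$-directions commute with the $\varphi_\alpha(\mathrm{SL}(2,\C))$-directions'' precise. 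Having ruled this route out, you are left acknowledging that ``the crux is making this last comparison precise,'' and the suggested fallback of inverting $\hb$ changes nothing about the geometric content of that comparison, so the gap remains.
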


As a corollary we obtain the following result. For $\lambda \in \bX$ and $V$ in $\Rep(G)$, we denote by\index{Rest3@$\mathscr{R}^{V,\la}_{G,L}$}%
\[
\mathscr{R}^{V,\la}_{G,L} : \bigl( V \otimes \tla \dh(\X_G)\lla \bigr)^G \to \bigl( V_{|L} \otimes \tla \dh(\X_L)\lla \bigr)^L
\]
the morphism induced by $\mathrm{res}^G_L$.

\begin{cor}
\label{cor:Phi-rest}
For any $\lambda \in \bX$, $V$ in $\Rep(G)$ and $w \in W_L \subset W_G$, the following diagram commutes:
\[
\xymatrix@C=2.5cm{
\bigl( V \otimes \tla \dh(\X_G)\lla \bigr)^G \ar[d]_-{\mathscr{R}^{V,\la}_{G,L}} \ar[r]^-{{}^G \hspace{-1pt} \Phi^{V,\lambda}_w} & \w \bigl( V \otimes {}^{(w\la)} \hspace{-1pt} \dh(\X_G)_{w\lambda} \bigr)^G \ar[d]^-{\w \mathscr{R}^{V,w\la}_{G,L}} \\
\bigl( V_{|L} \otimes \tla \dh(\X_L)\lla \bigr)^L \ar[r]^-{{}^L \hspace{-1pt} \Phi^{V_{|L},\lambda}_w} & \w \bigl( V_{|L} \otimes {}^{(w\la)} \hspace{-1pt} \dh(\X_L)_{w\lambda} \bigr)^L.
}
\]
\end{cor}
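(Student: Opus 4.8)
The plan is to deduce Corollary~\ref{cor:Phi-rest} formally from Proposition~\ref{prop:restriction-DX}, first lifting the assertion to the level of the algebras $\dh(\X_G)$ and $\dh(\X_L)$ and then reducing, via multiplicativity of the operators $\mathbf{F}_w$, to the generating reflections $s_\alpha$, $\alpha\in I$, of $W_L$. Throughout one uses that the simple root vectors for $\l$ are those induced by the fixed choice for $\g$ (so that the operators ${}^L\mathbf{F}_\alpha$ are defined and Proposition~\ref{prop:restriction-DX} applies), and that $\mathrm{res}^G_L$ is $\sh$-equivariant, $T$-equivariant for the right $T$-actions, and a morphism of $L$-modules.

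The first step is to show that for every $w\in W_L\subset W_G$ one has an equality of morphisms $\dh(\X_G)\to\dh(\X_L)$
\[
\mathrm{res}^G_L\circ {}^G\mathbf{F}_w \ =\ {}^L\mathbf{F}_w\circ\mathrm{res}^G_L .
\]
For $w=s_\alpha$ with $\alpha\in I$ this is exactly Proposition~\ref{prop:restriction-DX}. For a general $w\in W_L$, pick any expression $w=s_{\alpha_1}\cdots s_{\alpha_k}$ with all $\alpha_i\in I$; since $w\mapsto{}^G\mathbf{F}_w$ and $w\mapsto{}^L\mathbf{F}_w$ are group morphisms, ${}^G\mathbf{F}_w={}^G\mathbf{F}_{s_{\alpha_1}}\circ\cdots\circ{}^G\mathbf{F}_{s_{\alpha_k}}$ and similarly for $L$, and one pushes $\mathrm{res}^G_L$ past the factors one at a time using Proposition~\ref{prop:restriction-DX}. (No independence of the chosen expression needs to be checked, since $w\mapsto\mathbf{F}_w$ is already a well-defined group morphism on all of $W$.)

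The second step unwinds this back into the corollary. As $\mathrm{res}^G_L$ is $T$-equivariant for the right $T$-actions, for each $\la\in\bX$ it restricts to $\mathrm{res}^G_L\colon\dh(\X_G)_\la\to\dh(\X_L)_\la$; and, by Lemma~\ref{lem:Fourier-transform}(2) combined with the multiplicativity just used, ${}^G\mathbf{F}_w$ restricts to an isomorphism of $\sh$-modules $\dh(\X_G)_\la\simto\w\dh(\X_G)_{w\la}$, and likewise for $L$. The displayed identity then gives a commutative square of $\sh$-modules with horizontal arrows these restrictions of ${}^G\mathbf{F}_w$ and ${}^L\mathbf{F}_w$ and with vertical arrows $\mathrm{res}^G_L$ and $\w(\mathrm{res}^G_L)$; it commutes literally, since its underlying set map is a restriction of the algebra-level identity and $\w(\cdot)$ acts trivially on morphisms. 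Applying the functor $\tla(\cdot)$ and the relation $\tla(\w M)=\w({}^{(w\la)}\hspace{-1pt}M)$ turns it into the square defining isomorphism~\eqref{eqn:isom-Fourier-DX} for $G$ and for $L$, compatibly with $\mathrm{res}^G_L$. Finally, applying $(V\otimes-)^G$ along the top row and $(V_{|L}\otimes-)^L$ along the bottom row — legitimate because $\mathrm{id}_V\otimes\mathrm{res}^G_L$ carries $G$-invariants into $L$-invariants, $\mathrm{res}^G_L$ being a morphism of $L$-modules — produces, by the very definitions of $\Phi^{V,\la}_w$ and $\mathscr{R}^{V,\la}_{G,L}$, exactly the commutative diagram of Corollary~\ref{cor:Phi-rest}.

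The only substantive input is Proposition~\ref{prop:restriction-DX}, which is taken for granted here; everything else is formal. Accordingly the main (and only) point demanding care is the bookkeeping of the twisting functors $\w(\cdot)$ and ${}^{(w\la)}(\cdot)$ and of the right-$T$ weight decompositions of $\dh(\X_G)$ and $\dh(\X_L)$; there is no genuine obstacle in the corollary itself.
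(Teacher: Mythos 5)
Your proposal is correct and follows exactly the route the paper intends: the corollary is stated there as an immediate consequence of Proposition \ref{prop:restriction-DX}, deduced by using that $w\mapsto\mathbf{F}_w$ is a group morphism to reduce to the simple reflections $s_\alpha$, $\alpha\in I$, then restricting to right-$T$ weight spaces and applying the twists and the invariants functors. Your write-up just makes these routine bookkeeping steps explicit.
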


\subsection{Proof of Proposition \ref{prop:restriction-DX}}
\label{ss:proof-restriction-DX}

If $G$ has simply-connected derived subgroup, then so does $L$, as well. Hence we can assume that $G=G^\sc$.

In the next lemma, if $X$ is any variety we consider $\C[X][\hb]$ as the algebra of functions on $X$ with values in $\C[\hb]$. The subset $P_- \cdot P_+ \subset G$ is an open subvariety, so that if $x$ is in $\uh(\g)$ and $f$ is in $\C[P_- \cdot P_+][\hb]$, it makes sense to consider $x \cdot f \in \C[P_- \cdot P_+][\hb]$, and also the restriction $(x \cdot f)_{|L} \in \C[L][\hb]$. (Here we consider the \emph{right} action of $\uh(\g)$ on $\C[G][\hb]$ obtained by differentiating the \emph{right regular representation} of \S\ref{ss:diffX}.)

\begin{lem}
\label{lem:action-quotient}
Let $f \in \C[P_- \cdot P_+][\hb]$ be a left $N_L^-$-invariant function. Then for any $x \in \uh(\g) \cdot \fn_L^-$ we have 
\[
(x \cdot f)_{|L}=0.
\]
\end{lem}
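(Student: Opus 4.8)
The statement is local on the open set $P_- \cdot P_+$, and the claim is really a statement about functions on $L$ obtained by restricting along the inclusion $L \hookrightarrow P_- \cdot P_+$, so the plan is to track carefully how the right action of $\uh(\g)$ interacts with this restriction. First I would reduce to the case $x = y \cdot n$ with $y \in \uh(\g)$ and $n \in \fn_L^-$ a single root vector, say $n = f_\beta$ with $\beta \in R^+ \smallsetminus \Z I$: by the PBW description of $\uh(\g) \cdot \fn_L^-$ (and linearity over $\C[\hbar]$) it suffices to treat such generators, and the right action satisfies $(y \cdot n) \cdot f = y \cdot (n \cdot f)$, so it is enough to show that $n \cdot f$ is \emph{again} a left $N_L^-$-invariant function on $P_- \cdot P_+$ whose restriction to $L$ vanishes. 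Wait — that is not quite right, because $n \cdot f$ need not be $N_L^-$-invariant on the nose; what one actually wants is the weaker assertion that $(y \cdot (n \cdot f))_{|L} = 0$ for all $y$, which will follow once we know that $n \cdot f$ vanishes on $L$ to the appropriate order, or more precisely that the whole right $\uh(\g)$-submodule generated by $n \cdot f$ restricts to $0$ on $L$.

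The cleanest way to organize this is geometric. The right regular representation differentiates to the action of $\uh(\g)$ by left-invariant vector fields on $G$. For $\xi \in \g$, the vector field $\xi^R$ at a point $p \in G$ is $\frac{d}{dt}\big|_0\, p\exp(t\xi)$. Now fix $p \in L \subset P_- \cdot P_+$ and $\xi = f_\beta \in \fn_L^-$. The curve $t \mapsto p \exp(t f_\beta)$ passes through $p$ and its tangent vector is $p \cdot f_\beta$. The key point is that $f \in \C[P_- \cdot P_+][\hbar]$ is left $N_L^-$-invariant, hence constant along left $N_L^-$-cosets; and since $N_L^-$ is normalized by $L$ (because $\fn_L^-$ is an $\fl$-submodule of $\g$), for $p \in L$ we have $p \exp(t f_\beta) = \bigl(p \exp(t f_\beta) p^{-1}\bigr) p \in N_L^- \cdot p$, i.e. the curve stays inside a single left $N_L^-$-coset through $p$. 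Therefore $f$ is constant along this curve, and $(f_\beta \cdot f)(p) = \frac{d}{dt}\big|_0 f\bigl(p\exp(t f_\beta)\bigr) = 0$. This shows $(f_\beta \cdot f)_{|L} = 0$ already.

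To finish, I need to propagate this through an arbitrary $y \in \uh(\g)$ on the left. The point is to show that the function $g := f_\beta \cdot f$ on $P_- \cdot P_+$ not only vanishes on $L$ but that its entire right-$\uh(\g)$-orbit does — equivalently, that $g$ vanishes to infinite order on $L$ in the directions normal to $L$ inside $G$. This is where the left $N_L^-$-invariance of $f$ does more work: $f$ being left $N_L^-$-invariant means $f$ factors through $N_L^- \backslash (P_- \cdot P_+)$; combined with the fact that $P_- \cdot P_+ \cong N_L^- \times \fl\text{-part} \times N_L^+$ (a product decomposition refining $P_- = N_L^- \rtimes L$, $P_+ = L \ltimes N_L^+$, valid on the open cell), $f$ depends only on the $L$- and $N_L^+$-coordinates. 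A left-invariant vector field $f_\beta^R$ at a point $p \in L$ is tangent to the $N_L^-$-direction, hence the derivative $f_\beta \cdot f$ vanishes identically on the whole submanifold $L \cdot N_L^+$, not just at $L$; iterating, any $y \cdot (f_\beta \cdot f)$ with $y \in \uh(\g)$ — which is a composition of left-invariant vector fields, each again expressible in terms of the product coordinates — still vanishes on $L$, because differentiating a function that is already identically zero on $L \cdot N_L^+$ along any vector field tangent to $L$ at points of $L$ gives zero, and the $N_L^-$- and $N_L^+$-directions are handled by the coordinate structure. I would make this precise by choosing coordinates $(u_-, l, u_+)$ on $N_L^- \times L \times N_L^+ \cong P_-\cdot P_+$, writing $f = f(l, u_+)$, noting $f_\beta \cdot f = \partial_{u_-}(\text{something}) \cdot f$ which is then $O(u_-)$...

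\emph{The main obstacle} is precisely this last propagation step: passing from ``$f_\beta \cdot f$ vanishes on $L$'' to ``$y \cdot (f_\beta \cdot f)$ vanishes on $L$ for all $y \in \uh(\g)$'' requires understanding how left-invariant vector fields interact with the product-coordinate description of the big cell, and in particular verifying that the left-invariant field $\xi^R$ for $\xi \in \fl$ or $\xi \in \fn_L^+$, evaluated at points of $L$, is tangent to $L \cdot N_L^+$ — this uses that $\fl \oplus \fn_L^+ = \fp_+$ is a subalgebra. I expect the honest way to do it is: observe that the right $\uh(\g)$-module $\C[P_-\cdot P_+][\hbar]$ restricted along $L$ factors, via the diagram $\g \hookleftarrow \fp_- \twoheadrightarrow \fl$ used in \eqref{Uiso}, so that the right $\uh(\g)$-action on left-$N_L^-$-invariant functions descends through $\uh(\g) \twoheadrightarrow \uh(\g)/(\uh(\g)\cdot \fn_L^-) \cong \uh(\fp_-)/(\uh(\fp_-)\cdot \fn_L^-)$ compatibly with restriction to $L$ — and then $x \in \uh(\g)\cdot\fn_L^-$ maps to $0$ in this quotient, giving the result directly. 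This reframing turns the lemma into an algebraic consequence of the coset-factorization already exploited in \S\ref{ss:restriction-DX}, and is the cleanest route.
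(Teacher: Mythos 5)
Your geometric observation for the base case $x = f_\beta \in \fn_L^-$ is exactly right and is the heart of the paper's argument: for $p \in L$ the curve $t\mapsto p\exp(tf_\beta)$ stays inside the coset $N_L^- \cdot p$ because $L$ normalizes $N_L^-$, and left $N_L^-$-invariance of $f$ forces the derivative to vanish. But the reduction to this base case goes wrong at the composition rule. The relevant structure is a \emph{right} $\uh(\g)$-module, coming from the \emph{anti}-homomorphism $\uh(\g)\to\dh(G)$ of \S\ref{ss:diffX}, so for $x = yn$ with $y\in\uh(\g)$ and $n\in\fn_L^-$ one has $(yn)\cdot f = n\cdot(y\cdot f)$, \emph{not} $y\cdot(n\cdot f)$. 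With the correct order the proof closes at once: $y\cdot f$ is again left $N_L^-$-invariant, because left-invariant differential operators commute with left translation, so the base case applied to $y\cdot f$ finishes the argument with no iteration and no propagation step. This is precisely the paper's two-line proof.

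Having reversed the composition, you are led to the much harder assertion that $(y\cdot(n\cdot f))|_L = 0$, and the proposed repair does not hold up. The claim that $f_\beta\cdot f$ vanishes on all of $L\cdot N_L^+$ is false: $N_L^+$ does not normalize $N_L^-$, so for $p = lu$ with $u\in N_L^+$ nontrivial the curve $p\exp(tf_\beta)$ leaves the coset $N_L^- p$ immediately (already at first order, $\mathrm{Ad}(u)f_\beta$ acquires components outside $\fn_L^-$). And the final ``honest way'' --- invoking that the action descends through $\uh(\g)/(\uh(\g)\cdot\fn_L^-)$ compatibly with restriction to $L$ --- is circular: that descent is exactly what the present lemma is needed to establish (it is the input to Lemma \ref{lem:actionN_L^-invariants} and hence to the well-definedness of $r^G_L$ on left-invariant functions), so it cannot be assumed. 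Fixing the order of the two factors removes the whole difficulty and leaves a complete proof.
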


\begin{proof}
For any $y \in \uh(\g)$ the function $y \cdot f$ is again left $N_L^-$-invariant, so that we can assume that $x \in \fn_L^-$. Then the result follows from the observation that for $g \in L$ we have $g \cdot N_L^- = N_L^- \cdot g$.
\end{proof}


In the next lemma we use the embedding $\X_L=L/U_L = P_+ / U_+ \hookrightarrow P_- \cdot P_+ / U_+ \subset G/U_+=\X_G$.

\begin{lem}
\label{lem:actionN_L^-invariants}
For any left $N_L^-$-invariant function $f \in \C[P_- \cdot P_+ / U_+][\hb]$ and any $D \in \dh(\X_G)$ we have
\[
D(f)_{|\X_L} = (r^G_L D) (f_{|\X_L}).
\]
\end{lem}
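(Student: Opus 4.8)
The plan is to unwind the definition of $r^G_L$ from \S\ref{ss:restriction-DX} and check the identity after translating everything into the hamiltonian-reduction model $\dh(\X_G) \cong \bigl(\C[G]\o(\uh(\g)/\u\cdot\uh(\g))\bigr)^{U_+}$ of isomorphism~\eqref{eqn:diffX}. Write $D \in \dh(\X_G)$, and pick a lift $\widetilde D = \sum_i g_i \o \overline{m_i} \in \C[G]\o(\uh(\g)/\u\cdot\uh(\g))$ that is $U_+$-invariant for the right action, where $g_i \in \C[G]$ and $m_i \in \uh(\g)$. The action of $D$ on $\C[\X_G][\hb]$ is, by the discussion preceding~\eqref{eqn:diffX}, given by $h \mapsto \sum_i g_i \cdot (m_i \cdot h)$ where $m_i$ acts via the right regular representation (differentiated). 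So the first step is: restrict this formula to a function $f$ on the open set $P_-\cdot P_+/U_+$, then restrict the output to $\X_L = P_+/U_+ \subset P_-\cdot P_+/U_+$.

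The second step is to compute $r^G_L D$ through the chain of maps in \S\ref{ss:restriction-DX}: one restricts the $\C[G]$-factor to $\C[L]$ (giving $g_i|_L$), and one passes $\overline{m_i} \in \uh(\g)/\u\cdot\uh(\g)$ through the isomorphisms~\eqref{Uiso} down to $\uh(\l)/\u_L\cdot\uh(\l)$; concretely this means first projecting $\uh(\g)/\u\cdot\uh(\g) \onto \uh(\g)/(\u\cdot\uh(\g)+\uh(\g)\cdot\fn_L^-)$, then using the two bijections in~\eqref{Uiso} to land in $\uh(\l)/\u_L\cdot\uh(\l)$. So $r^G_L D$ acts on $\C[\X_L][\hb]$ by $\widetilde h \mapsto \sum_i (g_i|_L)\cdot(m_i^L \cdot \widetilde h)$, where $m_i^L$ is the image of $m_i$ under $\uh(\l) \to \uh(\l)/\u_L\cdot\uh(\l)$ after rewriting $m_i \bmod (\u\cdot\uh(\g)+\uh(\g)\cdot\fn_L^-)$ via~\eqref{Uiso}. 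Comparing the two sides, the equality $D(f)|_{\X_L} = (r^G_L D)(f|_{\X_L})$ reduces to the statement that for each $i$, the function $(m_i \cdot f)|_L$ depends on $m_i$ only through its class modulo $\uh(\g)\cdot\fn_L^-$ (modulo $\u\cdot\uh(\g)$ being already built in), and that this restricted value equals $(m_i^L\cdot (f|_L))$ — i.e.\ the right action of $\uh(\g)$ on $f$, restricted to $L$, only sees the $\uh(\l)$-part through~\eqref{Uiso}.

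The first half of this — that $(x\cdot f)|_L = 0$ when $x \in \uh(\g)\cdot\fn_L^-$ — is exactly Lemma~\ref{lem:action-quotient}, which is why that lemma was proved first: here $f$ is left $N_L^-$-invariant, and $\C[P_-\cdot P_+/U_+][\hb] \subset \C[P_-\cdot P_+][\hb]$ via pullback along $P_-\cdot P_+ \onto P_-\cdot P_+/U_+$, so the hypothesis applies. The remaining half is that, modulo $\uh(\g)\cdot\fn_L^-$ (and $\u\cdot\uh(\g)$, which kills nothing new on $f$ since $f$ is only being differentiated on the right and then restricted), the right action of $\uh(\g)$ followed by restriction to $L$ agrees with the right action of $\uh(\l)$ on $f|_L$ transported through the isomorphisms~\eqref{Uiso}; this is a direct consequence of the definition of those isomorphisms, coming from the Lie algebra diagram $\g \hookleftarrow \fp_- \twoheadrightarrow \fl$, together with the fact that right-differentiating along $\fn_L^- \subset \fp_-$ and restricting to $L$ gives zero (again Lemma~\ref{lem:action-quotient}), while right-differentiating along $\fl$ and restricting to $L$ is just the right regular action on $\C[L]$. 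The main obstacle, such as it is, is purely bookkeeping: one must be careful that the twist automorphism $t^G_L$ is \emph{not} part of $r^G_L$ (it only enters $\mathrm{res}^G_L$), so no $\rho_G-\rho_L$ shift appears here, and that the passage $\C[G]\to\C[L]$ really does factor through restriction to $P_-\cdot P_+$ so that Lemma~\ref{lem:action-quotient} is applicable to the functions at hand.
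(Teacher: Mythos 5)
Your proof is correct and follows essentially the same route as the paper's: both reduce to the observation that the morphism $D' \mapsto D'(f)|_L$ factors through $\C[G]\o\bigl(\uh(\g)/\uh(\g)\cdot\fn_L^-\bigr)$ by Lemma~\ref{lem:action-quotient}, with the remaining identification via~\eqref{Uiso} being immediate. Your write-up is somewhat more explicit about the residual step (matching the $\uh(\l)$-action on $\C[L]$ through the isomorphisms) and about the role of $t^G_L$ versus $r^G_L$, but the underlying argument is the same.
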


\begin{proof}
The element $D \in \dh(\X_G)$ induces a morphism $\C[P_- \cdot P_+ / U_+][\hb] \to \C[P_- \cdot P_+][\hb]$. Similarly, the restriction morphism $\C[P_- \cdot P_+/U_+][\hb] \to \C[\X_L][\hb]$ is the restriction to right $U_+$-invariants of the restriction morphism $\C[P_- \cdot P_+][\hb] \to \C[P_+][\hb]$. Hence it is enough to show that if $f \in \C[P_- \cdot P_+][\hb]$ is left $N_L^-$-invariant then the morphism $\dh(G) \to \C[L][\hb]$ sending $D'$ to $D'(f)_{|L}$ factors (via isomorphism \eqref{eqn:diffG}) through the quotient
\[
\C[G] \o \uh(\g) \to \C[G] \o \bigl( \uh(\g) / (\uh(\g) \cdot \fn_L^-) \bigr).
\]
This fact follows from Lemma \ref{lem:action-quotient}.
\end{proof}

As a corollary of Lemma \ref{lem:actionN_L^-invariants} we obtain the following description of the morphism $r^G_L$.
We denote by $\varepsilon_L : \dh(N_L^-) \to \C[\hb]$ the morphism sending a differential operator $D$ to the value at $1 \in N_L^-$ of the function $D(1_{N_L^-})$, where $1_{N_L^-}$ is the constant function with value $1$.


\begin{cor}
\label{cor:description-ur}
The morphism $r^G_L$ coincides with the composition
\[
\dh(\X_G) \hookrightarrow \dh(P_- \cdot P_+ / U_+) \xrightarrow{\sim} \dh(N_L^-) \o_{\C[\hb]} \dh(\X_L) \xrightarrow{D^- \o D \mapsto \varepsilon_L(D^-) \cdot D} \dh(\X_L)
\]
where the first morphism is restriction to the open subset $P_- \cdot P_+ / U_+ \subset \X_G$, and the second morphism uses the isomorphism $P_- \cdot P_+ / U_+ \cong N_L^- \times \X_L$ induced by the action of $N_L^-$ on $\X_G$.\end{cor}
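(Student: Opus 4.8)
The plan is to start from Lemma \ref{lem:actionN_L^-invariants} and unwind what it says about the effect of a global asymptotic differential operator on the open subset $P_- \cdot P_+ / U_+ \subset \X_G$, which splits as a product $N_L^- \times \X_L$ via the action of $N_L^-$. First I would record the factorization $P_- \cdot P_+ / U_+ \cong N_L^- \times \X_L$ coming from the fact that $N_L^-$ acts freely on this open subset with quotient $\X_L = P_+/U_+$, and note that this induces the algebra isomorphism $\dd_\hb(P_- \cdot P_+ / U_+) \cong \dh(N_L^-) \o_{\C[\hb]} \dh(\X_L)$ appearing in the statement (using that both factors are affine, so global differential operators of a product are the completed/algebraic tensor product — here it is honest because $N_L^-$ is an affine space). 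Then restriction of a global operator from $\X_G$ to this open subset is an algebra map $\dh(\X_G) \hookrightarrow \dh(N_L^-) \o_{\C[\hb]} \dh(\X_L)$ (injective because $\X_G$ is irreducible).

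Next I would verify that the composite in the statement — restrict, then apply $D^- \o D \mapsto \varepsilon_L(D^-) \cdot D$ — agrees with $r^G_L$ by testing both sides on functions. Concretely, for $D \in \dh(\X_G)$ and a function $h \in \C[\X_L][\hb]$, I would pick a left $N_L^-$-invariant lift $f \in \C[P_- \cdot P_+ / U_+][\hb]$ with $f_{|\X_L} = h$ (such a lift exists precisely because $\X_L$ is the quotient of $P_- \cdot P_+/U_+$ by the free $N_L^-$-action, so $N_L^-$-invariant functions on the total space restrict isomorphically to functions on $\X_L$). Lemma \ref{lem:actionN_L^-invariants} then gives $D(f)_{|\X_L} = (r^G_L D)(f_{|\X_L}) = (r^G_L D)(h)$. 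On the other hand, writing the restriction of $D$ as $\sum_i D^-_i \o D_i \in \dh(N_L^-) \o_{\C[\hb]} \dh(\X_L)$, the operator acts on $f$, viewed as a function on $N_L^- \times \X_L$; since $f$ is constant along $N_L^-$ (that is what left $N_L^-$-invariance means under the product identification), only the value of each $D^-_i$ on the constant function $1_{N_L^-}$ at the base point survives, i.e. $D(f)_{|\X_L} = \sum_i \varepsilon_L(D^-_i) \cdot D_i(h)$. Comparing the two computations, $r^G_L D$ and $\sum_i \varepsilon_L(D^-_i) \cdot D_i$ act identically on all of $\C[\X_L][\hb]$, and since $\X_L$ is quasi-affine this forces $r^G_L D = \sum_i \varepsilon_L(D^-_i) \cdot D_i$, which is exactly the asserted composition.

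I expect the main obstacle to be the bookkeeping around the product decomposition: checking that left $N_L^-$-invariance of $f$ on $P_- \cdot P_+/U_+$ corresponds under the isomorphism $P_- \cdot P_+/U_+ \cong N_L^- \times \X_L$ precisely to "constant in the $N_L^-$-direction," and that the evaluation map $\varepsilon_L$ — defined via $D \mapsto (D(1_{N_L^-}))(1)$ — is exactly the linear functional that extracts the surviving term when a differential operator on the product acts on such a function and is then restricted to $\{1\} \times \X_L$. This is a translation between the intrinsic definition of $r^G_L$ through the chain \eqref{Uiso} of $\uh(\fl)$-module isomorphisms and its geometric incarnation; once one is careful that the identification $P_- \cdot P_+/U_+ \cong N_L^- \times \X_L$ used here is the one induced by the $N_L^-$-action (and matches the Lie-algebra level maps $\g/(\u \oplus \fn_L^-) \xleftarrow{\sim} \fp_-/\u_- \xrightarrow{\sim} \fl/\u_L$ used to define $r^G_L$), the argument is routine. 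Everything reduces to evaluating both descriptions on functions, and the quasi-affineness of $\X_L$ (already exploited in \S\ref{ss:diffX}) makes such evaluations faithful.
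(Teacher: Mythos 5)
Your proof is correct and follows essentially the same route as the paper: both reduce to the faithfulness of the $\dh(\X_L)$-action on $\C[\X_L][\hb]$, extend a function $h$ on $\X_L$ to the left $N_L^-$-invariant function $1_{N_L^-}\otimes h$ on $P_-\cdot P_+/U_+$, and invoke Lemma \ref{lem:actionN_L^-invariants}. The paper's proof is a two-line compression of exactly the computation you carry out with $\sum_i D_i^-\otimes D_i$; your extra care in matching the product decomposition with $N_L^-$-invariance and the definition of $\varepsilon_L$ is accurate but not a different argument.
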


\begin{proof}
As the action of $\dh(\X_L)$ on $\C[\X_L][\hb]$ is faithful, it is enough to check the claim after acting on any $f \in \C[\X_L]$. However one can write $f=\tilde{f}_{|\X_L}$ where $\tilde{f} = 1_{N_L^-} \o f \in \C[P_- \cdot P_+/U_+]$, and then the claim follows from Lemma \ref{lem:actionN_L^-invariants}.
\end{proof}

Finally we can give the proof of Proposition \ref{prop:restriction-DX}.

\begin{proof}[Proof of Proposition {\rm \ref{prop:restriction-DX}}]
It is enough to prove the commutativity of the following two diagrams:
\[
\xymatrix@C=1.5cm{
\dh(\X_G) \ar[r]^-{{}^G \hspace{-1pt} \mathbf{F}_\alpha} \ar[d]_-{r^G_L} & \dh(\X_G) \ar[d]^-{r^G_L} \\
\dh(\X_L) \ar[r]^-{{}^L \hspace{-1pt} \mathbf{F}_\alpha} & \dh(\X_L),
}
\qquad \qquad
\xymatrix@C=1.5cm{
\dh(\X_L) \ar[r]^-{{}^L \hspace{-1pt} \mathbf{F}_\alpha} \ar[d]_-{t^G_L} & \dh(\X_L) \ar[d]^-{t^G_L} \\
\dh(\X_L) \ar[r]^-{{}^L \hspace{-1pt} \mathbf{F}_\alpha} & \dh(\X_L).
}
\]
The commutativity of the right-hand diagram follows from Lemma \ref{lem:twist-Fourier} below, hence we only have to consider the left-hand diagram. 
Now we observe that (since the construction of partial Fourier transform is local on the base of the vector bundle) the automorphism ${}^G \hspace{-1pt} \mathbf{F}_\alpha$ extends to an automorphism of $\dh(P_- \cdot P_+ / U_+)$ denoted similarly, which makes the following diagram commutative, where vertical morphisms are induced by restriction:
\[
\xymatrix@C=2cm{
\dh(\X_G) \ar[r]^-{{}^G \hspace{-1pt} \mathbf{F}_\alpha} \ar@{^{(}->}[d] & \dh(\X_G) \ar@{^{(}->}[d] \\
\dh(P_- \cdot P_+ / U_+) \ar[r]^-{{}^G \hspace{-1pt} \mathbf{F}_\alpha} & \dh(P_- \cdot P_+ / U_+).
}
\]
Next, by construction the following diagram commutes:
\[
\xymatrix@C=2cm{
\dh(P_- \cdot P_+ / U_+) \ar[r]^-{{}^G \hspace{-1pt} \mathbf{F}_\alpha} \ar[d]_-{\wr} & \dh(P_- \cdot P_+ / U_+) \ar[d]^-{\wr} \\
\dh(N_L^-) \o_{\C[\hb]} \dh(\X_L) \ar[r]^-{\id \o {}^L \hspace{-1pt} \mathbf{F}_\alpha} & \dh(N_L^-) \o_{\C[\hb]} \dh(\X_L).
}
\]
Then the commutativity follows from Corollary \ref{cor:description-ur}.
\end{proof}

\subsection{Classical analogue}
\label{ss:Fourier-classical}

Consider the sheaf of algebras\index{AX@$\mathscr{A}_{\X}$}%
\[
\mathscr{A}_{\X} \ := \ \dd_{\hb,\X} / \hb \cdot \dd_{\hb,\X}
\]
on $\X$.
This sheaf is canonically isomorphic to $(p_{\X})_* \oo_{T^* \X}$, where $p_{\X} = T^* \X \to \X$ is the natural projection. We also set $\mathscr{A}(\X):=\Gamma(\X,\mathscr{A}_{\X})$.\index{AX2@$\mathscr{A}(\X)$} If $\alpha$ is a simple root, we define similarly the sheaf of rings $\mathscr{A}_{\mathcal{V}_\alpha}$ on $\mathcal{V}_{\alpha}$, and the ring $\mathscr{A}(\mathcal{V}_\alpha)$. In \S\ref{ss:partial-Fourier} we have defined an automorphism of $\dd_{\hb,\mathcal{V}_\alpha}$ as a sheaf of $\C[\hb]$-algebras. Specializing to $\hb=0$ we deduce an automorphism of $\mathscr{A}_{\mathcal{V}_\alpha}$. By the same arguments as in \S\ref{ss:partial-Fourier}, restriction induces an isomorphism $\mathscr{A}(\mathcal{V}_\alpha) \simto \mathscr{A}(\X)$, hence we obtain an algebra automorphism\index{Falpha2@$\mathscr{F}_\alpha$}%
\[
\mathscr{F}_\alpha : \mathscr{A}(\X) \simto \mathscr{A}(\X)
\]
such that the following diagram commutes, where vertical arrows are induced by the natural quotient morphism $\dd_{\hb,\X} \to \mathscr{A}_{\X}$:
\beq{eqn:diagram-F}
\vcenter{
\xymatrix@C=2cm{
\dh(\X) \ar[r]^-{\mathbf{F}_\alpha}_-{\sim} \ar[d] & \dh(\X) \ar[d] \\
\mathscr{A}(\X) \ar[r]^-{\mathscr{F}_\alpha}_-{\sim} & \mathscr{A}(\X).
}
}
\eeq

The following lemma will be used to deduce Theorem \ref{thm:equiv-coh-classical} from Theorem \ref{thm:loop-equivariant-version}.

\begin{lem}
\label{lem:hbar=0}

The natural morphism $\dh(\X) \to \mathscr{A}(\X)$ induces an isomorphism
\[
\dh(\X) / \hb \cdot \dh(\X) \simto \mathscr{A}(\X).
\]
In other words,
for any $V$ in $\Rep(G)$ and $\lambda \in \bX$, the morphism
\[
\bigl( V \otimes \M(\la) \bigr)^B \ \to \ \bigl( V \otimes 
\mathrm{S}(\g/\u) \otimes \bbc_{-\lambda} \bigr)^B
\]
induced by the quotient morphism $\M(\la) \to \M(\la)/(\hb \cdot \M(\la)) \cong \mathrm{S}(\g/\u) \otimes \bbc_{-\lambda}$ induces an isomorphism
\[
\bigl( V \otimes \M(\la) \bigr)^B / \hbar \ \xrightarrow{\sim}
\ \bigl( V \otimes 
\mathrm{S}(\g/\u) \otimes \bbc_{-\lambda} \bigr)^B.
\]

\end{lem}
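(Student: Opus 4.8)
The plan is to prove the ``in other words'' reformulation; the first statement then follows by decomposing $\dh(\X)$ and $\mathscr{A}(\X)$ into $T$-weight spaces (note $\hbar$ has weight $0$) and applying, for every $V$ in $\Rep(G)$, the exact functor $(V\o -)^G$ — which commutes with $(-)/\hbar$ since $G$ is reductive — using the first isomorphism of Lemma~\ref{lem:fixed-points-morphisms} on the source and the analogous identification $\bigl(V\o\mathscr{A}(\X)_\la\bigr)^G\cong\bigl(V\o\mathrm{S}(\g/\u)\o\C_{-\la}\bigr)^B$ on the target (coming from Lemma~\ref{lem:DX-Ind} reduced modulo $\hbar$, or from~\eqref{eqn:wfg}), together with the fact that a morphism of locally finite rational $G$-modules is invertible as soon as $(V\o -)^G$ makes it invertible for all $V$. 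So I would focus on the reformulation, and the two genuine inputs are the case of dominant $\la$ and a reduction of the general case to it.

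Assume first that $\la$ is dominant. Since $\M(\la)$ is free over $\C[\hbar]$, the sequence $0\to\M(\la)\xrightarrow{\hbar}\M(\la)\to\M(\la)/\hbar\M(\la)\to0$ of $B$-modules is exact, and $\M(\la)/\hbar\M(\la)\cong\mathrm{S}(\g/\u)\o\C_{-\la}$. Applying $\Ind_B^G$ and using $R^{>0}\Ind_B^G\M(\la)=0$ (Remark~\ref{rk:RInd}), we get a short exact sequence
\[
0\to\Ind_B^G\M(\la)\xrightarrow{\hbar}\Ind_B^G\M(\la)\to\Ind_B^G\bigl(\mathrm{S}(\g/\u)\o\C_{-\la}\bigr)\to0 .
\]
Now tensor with $V$ and apply $(-)^G$; both operations are exact (the latter because $G$ is reductive), so the result is again a short exact sequence, and rewriting $\bigl(V\o\Ind_B^G N\bigr)^G\cong\bigl(V\o N\bigr)^B$ by the tensor identity and Frobenius reciprocity yields exactly the asserted isomorphism $\bigl(V\o\M(\la)\bigr)^B/\hbar\xrightarrow{\ \sim\ }\bigl(V\o\mathrm{S}(\g/\u)\o\C_{-\la}\bigr)^B$.

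For arbitrary $\la$, I would transport this along the partial Fourier transforms of \S\ref{ss:partial-Fourier}. By the equivalence discussed in the first paragraph, it suffices to check that the symbol map $\dh(\X)_\la/\hbar\dh(\X)_\la\to\mathscr{A}(\X)_\la$ is an isomorphism for every $\la$; injectivity being already known (\S\ref{subsec2}), only surjectivity is at issue. Fix a simple root $\alpha$: by Lemma~\ref{lem:Fourier-transform}(2) (which has an evident counterpart modulo $\hbar$) together with the commutative square~\eqref{eqn:diagram-F}, the automorphism ${}^G\mathbf{F}_\alpha$, resp.\ its reduction $\mathscr{F}_\alpha$, restricts to a $\C$-linear bijection $\dh(\X)_\la\simto\dh(\X)_{s_\alpha\la}$, resp.\ $\mathscr{A}(\X)_\la\simto\mathscr{A}(\X)_{s_\alpha\la}$, and these bijections intertwine the symbol maps in degrees $\la$ and $s_\alpha\la$. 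Hence surjectivity in degree $\la$ is equivalent to surjectivity in degree $s_\alpha\la$; iterating along a reduced word for an element $w\in W$ with $w\la$ dominant reduces everything to the dominant case, and summing over the $T$-grading gives $\dh(\X)/\hbar\dh(\X)\xrightarrow{\sim}\mathscr{A}(\X)$. The delicate point is precisely this last reduction: for non-dominant $\la$ neither $R^{>0}\Ind_B^G\M(\la)$ nor $H^1(B,V\o\M(\la))$ need vanish (nor even be $\hbar$-torsion-free), so no direct cohomological argument is available, and one must genuinely invoke the compatibility of the classical Fourier transform with the $T$-grading and with the symbol map recorded in~\eqref{eqn:diagram-F}.
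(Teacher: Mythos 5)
Your proof is correct and follows essentially the same two-step strategy as the paper: for dominant $\la$ one combines the short exact sequence $0\to\M(\la)\xrightarrow{\hbar}\M(\la)\to\mathrm{S}(\g/\u)\o\C_{-\la}\to0$ with $R^{>0}\Ind_B^G\M(\la)=0$ (Remark~\ref{rk:RInd}), and the general case is reduced to this via the Fourier automorphisms $\mathbf{F}_\alpha$ together with the commutative square~\eqref{eqn:diagram-F}. Your additional remarks (the equivalence between the two formulations via $(V\o-)^G$ and $T$-weight decomposition, and the observation that for non-dominant $\la$ there is no direct cohomological argument) are accurate elaborations of steps the paper leaves implicit.
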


\begin{proof}
By Lemma \ref{lem:DX-Ind}, both statements are equivalent to the fact that for any $\la \in \bX$ the morphism
\[
\Ind_B^G(\M(\la)) / \hb \to \Ind_B^G \bigl( \mathrm{S}(\g/\u) \otimes \bbc_{-\lambda} \bigr)
\]
is an isomorphism. In the case $\la$ is dominant, this property follows from the exact sequence of $B$-modules
\[
\xymatrix{
\M(\la) \ar@{^{(}->}[r]^-{\hb} & \M(\la) \ar@{->>}[r] & \mathrm{S}(\g/\u) \otimes \bbc_{-\lambda}
}
\]
and the cohomology vanishing $R^1 \Ind_B^G(\M(\la))=0$,
see Remark \ref{rk:RInd}. Since $\mathbf{F}_\alpha$ induces an isomorphism $\dh(\X)_\la \simto \dh(\X)_{s_\alpha \la}$ (see Lemma \ref{lem:Fourier-transform}), using diagram \eqref{eqn:diagram-F} we deduce the general case from the case $\la$ is dominant.
\end{proof}

It follows in particular from \eqref{eqn:diagram-F} and Lemma \ref{lem:hbar=0} (using Lemma \ref{lem:Fourier-transform}) that the assignment $s_\alpha \mapsto \mathscr{F}_\alpha$ defines an action of $W$ on $\mathscr{A}(\X)$ by algebra automorphisms, which we denote by $w \mapsto \mathscr{F}_w$.\index{Fw@$\mathscr{F}_w$} Moreover, for any $w \in W$ and $\la \in \bX$, $\mathscr{F}_w$ defines an isomorphism of $\mathrm{S}(\t)$-modules $\mathscr{A}(\X)_\la \simto \w \mathscr{A}(\X)_{w\la}$.


\subsection{Complementary results on the structure of $\dd(\X)$}
\label{ss:complements-DX}

In this subsection we observe that Lemma \ref{lem:hbar=0} has some interesting consequences on the structure of $\dd(\X)$. These results will not be used in the rest of the paper.

We begin with the following direct consequence of Lemma \ref{lem:hbar=0} (using the natural isomorphism $\mathscr{A}(\X) \cong \C[T^* \X]$), which appears to be new.

\begin{cor}
\label{cor:grD}
The canonical graded algebra morphism
$\gr\dd(\X)\to \k[T^*\X]$ is an isomorphism.
\end{cor}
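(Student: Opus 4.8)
The plan is to deduce Corollary~\ref{cor:grD} from Lemma~\ref{lem:hbar=0} by interpreting the Rees construction. First I would recall that, by the general properties of Rees algebras reviewed in \S\ref{subsec1}, there is a canonical isomorphism $\dd(\X)_\hb \cong \dh(\X)$ and a canonical isomorphism $\dd(\X)_\hb / \hb \cdot \dd(\X)_\hb \cong \gr^F \dd(\X)$ (with degrees doubled), where $F$ denotes the order filtration. Combining these, Lemma~\ref{lem:hbar=0} gives an isomorphism of graded algebras
\[
\gr^F \dd(\X) \ \cong \ \dh(\X)/\hb\cdot\dh(\X) \ \xrightarrow{\sim}\ \mathscr{A}(\X).
\]

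Next I would identify the target. As recalled in \S\ref{subsec2}, there is a canonical algebra isomorphism $\dd_{\hb,\X}/\hb\cdot\dd_{\hb,\X} \cong (p_{\X})_* \oo_{T^*\X}$, whence $\mathscr{A}(\X) = \Gamma(\X,\mathscr{A}_\X) \cong \Gamma(T^*\X, \oo_{T^*\X}) = \k[T^*\X]$ (the variety $T^*\X$ being affine, or at least with the same global functions as via $p_\X$). Under these identifications, the composite $\gr^F\dd(\X) \to \mathscr{A}(\X) \cong \k[T^*\X]$ is precisely the canonical symbol map $\gr\dd(\X) \to \k[T^*\X]$ sending the class of a differential operator of order $\le i$ to its principal symbol, a homogeneous function of degree $i$ on the fibers of $p_\X$; one checks this by unwinding the description of the quotient morphism $\dd_{\hb,\X}\to\mathscr{A}_\X$ on local generators $\oo_\X$ and $\mathscr{T}_\X$. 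Since the composite is an isomorphism by the previous paragraph, the symbol map $\gr\dd(\X)\to\k[T^*\X]$ is an isomorphism, which is the claim.

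The main point of care — and really the only substantive step — is matching the abstract map ``$\dh(\X)/\hb\to\mathscr{A}(\X)$ induced by $\dd_{\hb,\X}\to\mathscr{A}_\X$'' of Lemma~\ref{lem:hbar=0} with the classical symbol map after the identification $\mathscr{A}(\X)\cong\k[T^*\X]$; this is a routine but necessary bookkeeping exercise with the presentations of $\dd_{\hb,\X}$ and $\dd_X$ by generators $\oo_\X$ (degree $0$) and $\mathscr{T}_\X$ (degree $2$), together with the fact that $\dd(\X)_\hb\cong\dh(\X)$ identifies $\hb\xi$ with $\xi$ for $\xi\in\mathscr{T}_\X$. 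Granting this compatibility, no further estimates or constructions are needed: the corollary is a formal consequence of Lemma~\ref{lem:hbar=0} and the standard Rees-algebra dictionary. I do not anticipate a genuine obstacle; the work is entirely in making the identifications precise.
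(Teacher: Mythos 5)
Your proof is correct and matches the paper's argument exactly: the paper also calls Corollary~\ref{cor:grD} a ``direct consequence of Lemma~\ref{lem:hbar=0} (using the natural isomorphism $\mathscr{A}(\X)\cong\C[T^*\X]$)'' and leaves precisely the Rees-algebra bookkeeping you spell out to the reader. No further comment needed.
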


%

This corollary allows to give new proofs of some results of \cite{LS} and \cite{bbp}. These proofs use the following simple lemma.

\begin{lem}
\label{lem:T*Xfg}
The $\C$-algebra $\C[T^* \X]$ is finitely generated, hence noetherian.
\end{lem}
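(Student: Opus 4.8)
The statement to prove is that $\C[T^*\X]$ is a finitely generated $\C$-algebra, hence noetherian. The natural approach is to exhibit $T^*\X$ as a quasi-affine variety obtained from an affine one by removing a closed subset of codimension at least two, so that its ring of functions coincides with that of the affine ambient variety and finite generation is automatic. Concretely, I would first use the identification $\X^\sc = G^\sc/U^\sc$, so that $T^*\X^\sc = G^\sc \times_{U^\sc} (\g/\u)^* $, and recall (this is the geometry underlying the Fourier transforms of \S\ref{ss:partial-Fourier}) that $T^*\X^\sc$ embeds as the complement of the zero section in a $G^\sc$-equivariant vector bundle over $G^\sc/U^\sc$ — or, more usefully, that $T^*\X^\sc$ is an open subset of the affine variety $G^\sc \times_{U^\sc} \u^-$ (equivalently, of the cotangent bundle of the affine cone, or of $\Spec \C[T^*\X^\sc]$ itself). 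The key point is that the complement has codimension $\geq 2$.

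The cleanest way to organize this: since $\X^\sc$ is obtained from the affine variety $\overline{G^\sc/U^\sc} = \Spec\C[G^\sc]^{U^\sc}$ by removing a closed subset of codimension $\geq 2$ (because $G^\sc$ has simply connected derived group, so $\C[G^\sc]^{U^\sc}$ is a unique factorization domain and the boundary is cut out in high codimension — this is exactly the input that makes $\X^\sc$ quasi-affine with $\C[\X^\sc] = \C[G^\sc]^{U^\sc}$ finitely generated), the same codimension estimate propagates to the cotangent bundle: $T^*\X^\sc$ is the restriction of $T^*(\overline{G^\sc/U^\sc})$ — or of a suitable affine completion — over $\X^\sc$, minus possibly a further locus, still of codimension $\geq 2$. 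Hence $\C[T^*\X^\sc] = \C[Y]$ for an affine variety $Y$, so it is finitely generated and noetherian. Finally, $\C[T^*\X] = \C[T^*\X^\sc]^Z$ is the ring of invariants of a finite group $Z$ acting on a finitely generated $\C$-algebra, hence itself finitely generated by the classical Hilbert–Noether theorem, and noetherian.

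Alternatively — and perhaps more in the spirit of the paper — one can bypass the geometric bundle picture and argue algebraically: by Corollary~\ref{cor:grD} we have $\C[T^*\X] \cong \gr \dd(\X)$, and one knows $\dd(\X)$ is a finitely generated (indeed noetherian) $\C$-algebra, so $\gr\dd(\X)$ is a finitely generated module over... no, that does not immediately give finite generation of the associated graded as an algebra. So I would instead fall back on the quasi-affine description. The essential ingredient either way is the codimension-$\geq 2$ complement, which lets one replace $T^*\X$ by an honest affine variety without changing the ring of regular functions.

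\textbf{Main obstacle.} The one genuinely non-formal step is verifying that the boundary one removes from the affine model to obtain $T^*\X$ has codimension at least two — i.e.\ that $T^*\X$ (or $T^*\X^\sc$) is ``large'' inside its affine completion. For the base $\X^\sc$ this is the standard normality/UFD statement for $G^\sc/U^\sc$ with $G^\sc$ simply connected; extending it to the total space of the cotangent bundle requires a small but real argument (the fibre direction contributes positively to codimension, and one must check nothing degenerates over the boundary of $\X^\sc$). Everything after that — passing to $Z$-invariants, invoking Hilbert–Noether, concluding noetherianity — is routine.
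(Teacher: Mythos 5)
Your central claim — that $T^*\X^\sc$ sits inside an affine variety as an open subset with complement of codimension $\geq 2$, so that its ring of functions is automatically a finitely generated algebra — is not established, and the specific candidates you offer for that affine variety do not work. The space $G^\sc \times_{U^\sc} \u^-$ is \emph{not} affine: it is a vector bundle over the quasi-affine (non-affine) variety $G^\sc/U^\sc$, and the total space of a vector bundle over a non-affine base is non-affine. The "cotangent bundle of the affine cone" is not available either, since the affine closure $\overline{G^\sc/U^\sc}$ is singular along its boundary, so there is no vector-bundle-like $T^*(\overline{\X^\sc})$ into which $T^*\X^\sc$ embeds with controlled complement; the cotangent fiber direction jumps over the singular locus and the naive codimension count fails. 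And taking $Y=\Spec\C[T^*\X^\sc]$ is circular, since what we are trying to prove is precisely that this ring is finitely generated. You flag the codimension-$\geq 2$ step as "small but real," but it is in fact the entire content of the lemma, and it is not a formal consequence of the corresponding statement for the base $\X^\sc$. (The structure of the affinization of $T^*\X$ is itself a nontrivial matter, studied in the referenced work of Ginzburg--Kazhdan; it is not something one gets for free.)

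The paper's actual proof is quite different and bypasses the affinization question entirely. It uses the fact that $T^*\X \cong G\times_U(\g/\u)^*$ is a $T$-torsor over $\wfg$, giving $\C[T^*\X]\cong\bigoplus_{\la\in\bX}\Gamma(\wfg,\oo_{\wfg}(\la))$; it then exhibits explicit generators, namely $\sym(\g)\o_{\sym(\t)^W}\sym(\t)$, the spaces $X_{\la_i}$ for a finite set of dominant weights generating $\bX^+$, and their images under the Fourier automorphisms $\mathscr{F}_w$. Surjectivity onto each $\Gamma(\wfg,\oo_{\wfg}(\la))$ for $\la$ dominant is reduced, via graded Nakayama and the degeneration to $\wcN$, to a generation result for line bundles on $\wcN$ due to Broer together with surjectivity of products of sections of line bundles on $\B$; the non-dominant pieces are then reached using the $W$-action. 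If you want to make your route rigorous you would need to actually identify a normal affine variety containing $T^*\X$ with boundary of codimension $\geq 2$, which is a substantial result in its own right; as written, the proof does not go through.
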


\begin{proof}
First we observe that there exists a canonical isomorphism $T^* \X \cong G \times_U (\g/\u)^*$. Hence $T^* \X$ is a $T$-torsor over $\wfg$, which implies that we have a natural algebra isomorphism
\[
\C[T^*\X] \ \cong \ \bigoplus_{\la \in \bX} \, \Gamma \bigl( \wfg, \oo_{\wfg}(\la) \bigr).
\]

By the same observation, there exists a natural morphism $T^* \X \to \g^* \times_{\t^* /W} \t^*$, $g \times_U \eta \mapsto (g \cdot \eta, \eta_{|\t})$, which induces an algebra morphism $\mathrm{S}(\g) \o_{\mathrm{S}(\t)^W} \mathrm{S}(\t) \to \C[T^* \X]$.
Note also that
if $\la \in \bX^+$, then $\Gamma(\B,\oo_{\B}(\lambda))$ identifies naturally with a subspace of $\C[\X]$, hence also defines a subspace $X_\la$ of $\C[T^* \X]$ using the projection $T^* \X \to \X$.

Let $\lambda_1, \cdots, \lambda_r$ be a finite collection of dominant weights such that $\bX^+= \sum_{i=1}^r \Z_{\geq 0} \lambda_i$. We claim that $\C[T^* \X]$ is generated (as an algebra) by $\mathrm{S}(\g) \o_{\mathrm{S}(\t)^W} \mathrm{S}(\t)$ 
and the $G$-modules $X_{\la_i}$ for $i \in \{1, \cdots, r\}$, together with the images of these subspaces under the automorphisms $\mathscr{F}_w$ for all $w \in W$. This claim clearly implies the statement of the lemma.

To prove the claim we first observe that if $\la \in \bX^+$ the morphism
\[
\bigl( \mathrm{S}(\g) \o_{\mathrm{S}(\t)^W} \mathrm{S}(\t) \bigr) \o X_\la \to \Gamma \bigl( \wfg, \oo_{\wfg}(\la) \bigr)
\]
induced by the product in $\C[T^*\X]$
is surjective.
Indeed by the graded Nakayama lemma it is enough to prove surjectivity after tensoring with the trivial $\mathrm{S}(\g) \o_{\mathrm{S}(\t)^W} \mathrm{S}(\t)$-module, hence it is also enough to prove surjectivity after tensoring with the trivial $\mathrm{S}(\t)$-module. However the arguments in the proof of Proposition \ref{prop:freeness} imply that the natural morphism
\[
\C \o_{\mathrm{S}(\t)} \Gamma \bigl( \wfg, \oo_{\wfg}(\la) \bigr) \to \Gamma \bigl( \wcN, \oo_{\wcN}(\la) \bigr)
\]
induced by restriction is an isomorphism, hence the latter surjectivity statement follows from \cite[Proposition 2.6]{bro}.
From this observation, together with the fact that if $\la,\mu \in \bX^+$ the natural morphism
\[
\Gamma(\B,\oo_{\B}(\lambda)) \o \Gamma(\B,\oo_{\B}(\mu)) \to \Gamma(\B,\oo_{\B}(\lambda+\mu))
\]
is surjective (see e.g.~\cite[Theorem 3.1.2]{bk}), it follows that the subalgebra of $\C[T^*\X]$ generated by $\mathrm{S}(\g) \o_{\mathrm{S}(\t)^W} \mathrm{S}(\t)$ and the modules $X_{\la_i}$ contains
\[
\bigoplus_{\la \in \bX^+} \, \Gamma \bigl( \wfg, \oo_{\wfg}(\la) \bigr).
\]
The claim follows, using the $W$-action and the fact that every weight is $W$-conjugate to a dominant weight.
\end{proof}

In the following corollary, statement (1)
is due to Levasseur--Stafford (see \cite[Theorem 3.3]{LS}). The present proof is suggested in \cite[Remark 3.4]{LS}, but the authors didn't have Corollary \ref{cor:grD} to complete the argument. Statement (2) is due to Bezrukavnikov--Braverman--Positselskii (see \cite[Theorem 1.1]{bbp}).

\begin{cor}
The algebra $\dd(\X)$ is
\begin{enumerate}
\item
Auslander--Gorenstein and Cohen--Macaulay;
\item
left and right noetherian.
\end{enumerate}
\end{cor}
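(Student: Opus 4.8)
The plan is to deduce the corollary from the isomorphism $\gr\dd(\X)\cong\C[T^*\X]$ of Corollary~\ref{cor:grD} via the standard filtered--graded transfer, together with the finite generation of $\C[T^*\X]$ provided by Lemma~\ref{lem:T*Xfg}. Equip $\dd(\X)$ with the order filtration; by Corollary~\ref{cor:grD} its associated graded algebra is the commutative algebra $\C[T^*\X]$, which by Lemma~\ref{lem:T*Xfg} is finitely generated over $\C$, hence noetherian. Statement~(2) is then immediate, since a filtered algebra whose associated graded algebra is left and right noetherian is itself left and right noetherian. For statement~(1) I would invoke the general principle -- this is precisely the argument sketched in \cite[Remark~3.4]{LS} -- that a filtered algebra with exhaustive separated filtration and noetherian associated graded algebra is Auslander--Gorenstein and Cohen--Macaulay (with respect to the Gelfand--Kirillov dimension) as soon as its associated graded algebra is. Granting this and Corollary~\ref{cor:grD}, the problem reduces to checking that the \emph{commutative} finitely generated $\C$-algebra $\C[T^*\X]$ is Gorenstein and Cohen--Macaulay: for such a ring the Auslander condition is automatic, so ``Auslander--Gorenstein'' simply means ``Gorenstein'', and since $T^*\X$ is irreducible the relevant graded-homological ``Cohen--Macaulay'' condition is the usual one.

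The remaining, and main, point is to prove that $\C[T^*\X]$ is Gorenstein and Cohen--Macaulay. Here I would argue as follows. By the identification $T^*\X\cong G\times_U(\g/\u)^*$ recalled in the proof of Lemma~\ref{lem:T*Xfg}, $T^*\X$ is a smooth irreducible quasi-affine variety, and being a cotangent bundle it is symplectic, so its canonical bundle is trivial; moreover its complement in $Y:=\Spec\C[T^*\X]$ has codimension at least $2$. It follows that $Y$ is normal with dualizing sheaf $\omega_Y\cong\oo_Y$, hence that $Y$ is Gorenstein once it is known to be Cohen--Macaulay. For the Cohen--Macaulayness of $Y$ I would use the description $\C[T^*\X]\cong\bigoplus_{\lambda\in\bX}\Gamma(\wfg,\oo_{\wfg}(\lambda))$ from the proof of Lemma~\ref{lem:T*Xfg}, together with the fact that $\Spec\C[\wfg]$ is isomorphic to $\g^*\times_{\t^*/W}\t^*$ -- a complete intersection in $\g^*\times\t^*$ (cut out by $\dim\t$ equations) which one checks is normal, hence Gorenstein and Cohen--Macaulay -- and then transfer Cohen--Macaulayness to the ``$\bX$-graded torus bundle'' $Y$ over $\Spec\C[\wfg]$, using the vanishing $\coH^{>0}(\wfg,\oo_{\wfg}(\lambda))=0$ for dominant $\lambda$ (Broer's theorem, already used in the proof of Proposition~\ref{prop:freeness}) and the $W$-symmetry of Lemma~\ref{lem:Fourier-transform} to reduce to dominant weights. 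Making this last transfer completely rigorous is the delicate point of the argument.

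Alternatively -- and perhaps more economically for the write-up -- one may simply quote that $\C[T^*\X]$, equivalently the variety $Y$, is Gorenstein and Cohen--Macaulay from \cite{LS} (where statement~(1) itself is \cite[Theorem~3.3]{LS}); statement~(2) was also obtained, by different methods, in \cite[Theorem~1.1]{bbp}. In any case the genuinely new input is Corollary~\ref{cor:grD}, which is exactly what makes the reduction of~(1) to the commutative statement about $\C[T^*\X]$ possible, as originally envisioned in \cite[Remark~3.4]{LS}.
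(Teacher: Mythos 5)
Your treatment of (2) is fine and agrees in substance with the paper's (the paper passes through the Rees algebra $\dh(\X)$ and \cite[Lemma 8.2]{atv}, but this is the same standard filtered--graded transfer from Lemma \ref{lem:T*Xfg} and Corollary \ref{cor:grD}). For (1), however, there is a genuine gap: your reduction via the B{\'e}ork-type transfer theorem needs as input that the commutative algebra $\C[T^*\X]$ is itself Gorenstein and Cohen--Macaulay, and this is exactly what you do not prove. The sketched ``transfer'' of Cohen--Macaulayness from $\Spec \C[\wfg] \cong \g^* \times_{\t^*/W} \t^*$ to the multigraded section ring $\bigoplus_{\la \in \bX} \Gamma(\wfg,\oo_{\wfg}(\la))$ does not follow from Broer's vanishing together with the $W$-symmetry (the vanishing only concerns dominant $\la$, and Cohen--Macaulayness of a multi-section ring is not a formal consequence of such statements), and the fallback of ``quoting it from \cite{LS}'' is not available: Levasseur--Stafford prove that $\dd(\X)$ is Auslander--Gorenstein and Cohen--Macaulay (their Theorem 3.3, i.e.\ statement (1) itself, by entirely different methods), not that $\C[T^*\X]$ has these properties. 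Good homological properties of $\Spec \C[T^*\X]$ are a genuinely deeper question, of the kind studied in \cite{gk}, and cannot be taken for granted here; quoting \cite[Theorem 3.3]{LS} for (1) itself would of course defeat the purpose of the corollary.

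The paper's route avoids any commutative-algebra input on $\C[T^*\X]$ beyond finite generation. What \cite[Remark 3.4]{LS} suggests (and what the paper carries out) is not the gr-to-filtered transfer you invoke, but an application of \cite[Corollary 0.3]{YeZ}: for a \emph{simple} ring admitting a \emph{connected} filtration whose associated graded is a finitely generated commutative algebra, the Auslander--Gorenstein and Cohen--Macaulay properties follow automatically; simplicity of $\dd(\X)$ is \cite[Proposition 3.1]{LS}, and finite generation of the associated graded is Corollary \ref{cor:grD} plus Lemma \ref{lem:T*Xfg}. Note the technical point you would also miss on your route if you tried to adapt it: the order filtration is not connected ($F_0=\C[\X]$), so the paper intersects it with the filtration by the norm $\|\la\|$ of $T$-weights to obtain a filtration with $F_0\dd(\X)=\C$ whose associated graded is still $\C[T^*\X]$.
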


\begin{proof}
(1) Choose some elements $\muv_i \in X_*(T)$, $i=1, \cdots s$, such that if $\la(\muv_i)=0$ for all $i$ then $\lambda=0$, and set $\|\lambda \|:=\sum_i |\la(\muv_i)|$.
Consider the filtration on $\dd(\X)$ defined for $n \geq 0$ by
\[
F_n \dd(\X) := \dd(\X)^{\leq n} \cap \left( \bigoplus_{\| \la \| \leq n} \dd(\X)_\la \right),
\]
where $\dd(\X)^{\leq \hdot}$ is the filtration by order of differential operators. Then this filtration is connected (in the sense that $F_0 \dd(\X)=\C$) and the associated graded is canonically isomorphic to $\C[T^* \X]$ by Corollary \ref{cor:grD}. Hence the claim follows from \cite[Corollary 0.3]{YeZ}, using the simplicity of $\dd(\X)$ proved in \cite[Proposition 3.1]{LS} and Lemma \ref{lem:T*Xfg}.

(2)
It is enough to prove that $\dh(\X)$ is (left and right) noetherian. But this follows from \cite[Lemma 8.2]{atv} (for $g=\hb$) and Lemma \ref{lem:T*Xfg}. 
\end{proof}

\section{Morphisms between asymptotic universal Verma modules}
\label{sec:morphisms}

In this section we will use the following convention. If $\mathcal{P}(\mu)$ is a property depending on $\mu \in \t^*$, we will say that $\mathcal{P}(\mu)$ holds ``for $\mu \in \t^*$ sufficiently large'' is there exists $n \in \Z_{\geq 0}$ such that $\mathcal{P}(\mu)$ holds for any $\mu \in \t^*$ satisfying $|\mu(\alv)| \geq n$ for all simple roots $\alpha$. We will use similar conventions for subsets of $\t^*$ (e.g.~$\bX$).

\subsection{Reminer on Verma modules}
\label{ss:reminder-Verma}

The results in this subsection are well known, see e.g.~\cite{tv, ev}. We include (short) proofs for the reader's convenience. We will use the ``dot-action'' defined by $w \bullet \mu = w(\mu+\rho)-\rho$.

For any $\mu \in \t^*$ we consider the Verma module\index{Verma@$\V(\mu)$}%
\[
\V(\mu):=U(\g) \o_{U(\b)} \C_\mu
\]
(a left $U(\g)$-module). We also set $1_\mu := 1 \o 1 \in \V(\mu)$.\index{1mu@$1_\mu$} We set $\V(\mu)_- := U(\u^-) \u^- \cdot 1_\mu$, so that we have $\V(\mu)=\C \cdot 1_\mu \oplus \V(\mu)_-$.

If $V$ is in $\Rep(G)$, $\la \in \bX$, $\mu \in \t^*$, and if $\phi \in \Hom_{U(\g)}(\V(\mu),V \o \V(\mu-\la))$, then one can write
\[
\phi(1_\mu) = u \o 1_{\mu-\la} + x
\]
for unique $u \in V_\la$ and $x \in V \otimes \V(\mu-\la)_-$. We set $u:=\mathrm{E}^{V,\la}_\mu(\phi)$;\index{Exp@$\mathrm{E}^{V,\la}_\mu$} it is called the \emph{expectation value} of $\phi$.

Let $\mu \in \t^*$ and let $\alpha$ be a simple root. If $n:= \mu(\alv)+1 \in \Z_{\geq 0}$, then as in \cite[\S 1.4]{hu} there exists a unique embedding of $U(\g)$-modules
\[
\V(s_{\alpha} \bullet \mu) \hookrightarrow \V(\mu)
\]
which sends $1_{s_\alpha \bullet \mu}$ to $\frac{(f_\alpha)^n}{n!} \cdot 1_\mu$. Iterating, we perform the following construction. Let $\mu \in \bX^+ -\rho$, and let $w \in W$. Choose a reduced decomposition $w=s_k \cdots s_1$, where $\alpha_1, \cdots, \alpha_k$ is a sequence of simple roots (possibly with repetitions) and $s_i$ is the reflection associated with $\alpha_i$. Then for any $i=1, \cdots, k$, $n_i:=(s_{i-1} \cdots s_1 \bullet \mu)(\alv_{i})+1 \in \Z_{\geq 0}$. Moreover the collection $(n_1, \cdots, n_k)$ and the product $(f_{\alpha_k})^{n_k} \cdots (f_{\alpha_1})^{n_1}$ do not depend on the reduced decomposition of $w$ (see \cite[Lemma 4]{tv}). Hence there exists a unique embedding of graded right $U(\g)$-modules\index{iota@$\iota_{\mu}^w$}%
\[
\iota_{\mu}^w : \V(w \bullet \mu) \hookrightarrow \V(\mu)
\quad\text{such that}\quad \iota_{\mu}^w(1_{w \bullet \mu}) =
\frac{(f_{\alpha_{k}})^{n_k}}{n_k !} \cdots \frac{(f_{\alpha_{1}})^{n_1}}{n_1 !} \cdot 1_{\mu}.
\]

\begin{lem}
\label{lem:intertwiners}
Let $V$ in $\Rep(G)$, and $\lambda \in \bX$.
\begin{enumerate}
\item For $\mu \in \t^*$ sufficiently large, the morphism 
\[
\mathrm{E}^{V,\lambda}_\mu : \Hom_{U(\g)} \bigl( \V(\mu),V \o \V(\mu-\la) \bigr) \to V_\la
\] 
is an isomorphism.
\item For $w \in W$ and $\mu \in \bX^+$ sufficiently large, the morphism
\[
\Hom_{U(\g)} \bigl( \V(w \bullet \mu),V \o \V(w \bullet (\mu-\la)) \bigr) \to \Hom_{U(\g)} \bigl( \V(w \bullet \mu),V \o \V(\mu-\la) \bigr)
\]
defined by $\phi \mapsto (\id_V \o \iota^w_{\mu-\lambda}) \circ \phi$ is an isomorphism
\end{enumerate}
\end{lem}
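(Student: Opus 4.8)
\emph{The plan.} Everything rests on the identification, valid since $\V(\mu)=U(\g)\otimes_{U(\b)}\C_\mu$ is generated by $1_\mu$ with the relations $\u\cdot 1_\mu=0$ and $h\cdot 1_\mu=\mu(h)1_\mu$, between $\Hom_{U(\g)}(\V(\mu),V\otimes\V(\mu-\la))$ and the space of ``singular vectors'': a morphism $\phi$ corresponds to $m:=\phi(1_\mu)$, the unique vector in the $\mu$-weight space of $V\otimes\V(\mu-\la)$ with $\u\cdot m=0$. Under this identification $\mathrm{E}^{V,\la}_\mu(\phi)$ is simply the component of $m$ on $V_\la\otimes\C\cdot 1_{\mu-\la}$ in the decomposition $V\otimes\V(\mu-\la)=(V\otimes\C\cdot 1_{\mu-\la})\oplus(V\otimes\V(\mu-\la)_-)$. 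Decomposing $m=\sum_\nu m_\nu$ with $m_\nu\in V_\nu\otimes\V(\mu-\la)_{\mu-\nu}$, the sum runs over weights $\nu$ of $V$ with $\nu-\la\in\Z_{\geq 0}R^+$, and $m_\la=\mathrm{E}^{V,\la}_\mu(\phi)\otimes 1_{\mu-\la}$.

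\emph{Part (1).} First I would prove that $\mathrm{E}^{V,\la}_\mu$ is injective for $\mu$ large: if $m$ is singular with $m_\la=0$ but $m\neq 0$, pick $\nu_0$ minimal (for the order given by $\Z_{\geq 0}R^+$) among the $\nu$ with $m_\nu\neq 0$, so $\nu_0\neq\la$. Projecting the relation $e_\alpha m=0$, for each simple root $\alpha$, onto the bihomogeneous component $V_{\nu_0}\otimes\V(\mu-\la)_{\mu-\nu_0+\alpha}$ and using minimality of $\nu_0$ kills all contributions except $(\id_V\otimes e_\alpha)(m_{\nu_0})$; hence, writing $m_{\nu_0}=\sum_i v_i\otimes x_i$ with the $v_i\in V_{\nu_0}$ linearly independent, each $x_i$ is a singular vector of $\V(\mu-\la)$ of weight $(\mu-\la)-(\nu_0-\la)$. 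The singular weights of $\V(\mu-\la)$ are the $y\bullet(\mu-\la)$, $y\in W$, and for $\mu$ sufficiently large the only one at bounded distance from $\mu-\la$ is $\mu-\la$ itself; since $\nu_0\neq\la$ this gives $x_i=0$, a contradiction. Then I would show $\dim_\C\Hom_{U(\g)}(\V(\mu),V\otimes\V(\mu-\la))=\dim V_\la$, using standard facts about the category $\mathcal{O}$: $V\otimes\V(\mu-\la)$ has a filtration with subquotients $\V(\mu-\la+\nu)$ of multiplicity $\dim V_\nu$; the decomposition by central characters is compatible with this filtration; and $\chi_{\mu-\la+\nu}=\chi_\mu$ forces $\mu-\la+\nu\in W\bullet\mu$, hence (for $\mu$ large) $\nu=\la$. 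So the $\chi_\mu$-summand $E$ of $V\otimes\V(\mu-\la)$ is an iterated extension of $\dim V_\la$ copies of $\V(\mu)$, and using $\Hom_{U(\g)}(\V(\mu),\V(\mu))=\C$, $\Ext^1_{\mathcal{O}}(\V(\mu),\V(\mu))=0$ and dévissage one gets $\dim\Hom_{U(\g)}(\V(\mu),V\otimes\V(\mu-\la))=\dim V_\la$. Together with injectivity into $V_\la$, this forces $\mathrm{E}^{V,\la}_\mu$ to be an isomorphism.

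\emph{Part (2).} Post-composition with the injective map $\id_V\otimes\iota^w_{\mu-\la}$ is automatically injective on Hom-spaces, so only a dimension equality is needed. Since $w\bullet\mu-w\bullet(\mu-\la)=w\la\in\bX$, part (1) applied with weight $w\bullet\mu$ (which is ``sufficiently large'' once $\mu\in\bX^+$ is, as $|(w\bullet\mu)(\alv)|\to\infty$ with $\mu$) identifies the source $\Hom_{U(\g)}(\V(w\bullet\mu),V\otimes\V(w\bullet(\mu-\la)))$ with $V_{w\la}$, of dimension $\dim V_\la$. For the target $\Hom_{U(\g)}(\V(w\bullet\mu),V\otimes\V(\mu-\la))$, the same central-character analysis gives $\Hom_{\mathcal{O}}(\V(w\bullet\mu),E)$ with $E$ the $\chi_\mu$-summand, an iterated extension of $\dim V_\la$ copies of $\V(\mu)$; since $\dim\Hom_{U(\g)}(\V(w\bullet\mu),\V(\mu))\leq 1$, left-exactness of $\Hom$ and dévissage give $\dim\Hom_{U(\g)}(\V(w\bullet\mu),V\otimes\V(\mu-\la))\leq\dim V_\la$. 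An injective map from a space of dimension exactly $\dim V_\la$ into a space of dimension at most $\dim V_\la$ is an isomorphism, which proves (2). (Conceptually, the point is that the image of any $\psi$ in the target lands in the submodule $V\otimes\iota^w_{\mu-\la}(\V(w\bullet(\mu-\la)))$, which, $\V(w\bullet\mu)$ being cyclic, is a statement about the single singular vector $\psi(1_{w\bullet\mu})$; the dimension count is the quickest way to see it.)

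\emph{Main obstacle.} The only delicate step is the injectivity of $\mathrm{E}^{V,\la}_\mu$ and, intertwined with it, making the phrase ``$\mu$ sufficiently large'' precise. The arguments above need only that, for the finitely many $\beta\in\Z_{\geq 0}R^+$ of the form $\nu-\la$ with $\nu\in\wt(V)$, one avoids the relations $(\mu-\la)-\beta\in W\bullet(\mu-\la)$ and $\mu-\la+\nu\in W\bullet\mu$ (with $\nu\neq\la$); each such bad locus lies in a finite union of affine hyperplanes in $\t^*$, so all of them are avoided as soon as $|\mu(\alv)|$ is large enough for every simple root $\alpha$ — which is exactly the form of ``sufficiently large'' used in the rest of the section.
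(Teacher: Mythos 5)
Your proof is correct and follows essentially the same strategy as the paper: the Verma flag of $V\otimes\V(\mu-\la)$ from \cite[Theorem~3.6]{hu} together with the vanishing of $\Hom$ and $\Ext^1$ between Verma modules with distinct linked highest weights, and the fact that ``sufficiently large'' $\mu$ rules out all linkages except $\nu_i=\la$. The two small places where you deviate are both harmless improvements in exposition rather than in method: in (1) you supply an explicit weight-by-weight argument for the injectivity of $\mathrm{E}^{V,\la}_\mu$, which the paper leaves implicit in the filtration argument; and in (2) you replace the paper's exact dimension computation of the target (which uses $\Hom_{U(\g)}(\V(w\bullet\mu),\V(\mu))=\C$ plus an $\Ext^1$-vanishing dévissage) by the cheaper observation that left-exactness alone gives $\dim\le\dim V_\la$, and then the already-established injectivity of $\phi\mapsto(\id_V\otimes\iota^w_{\mu-\la})\circ\phi$ forces equality. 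This sidesteps any discussion of $\Ext^1$ into the $\nu_i=\la$ layer, but the underlying filtration-plus-linkage mechanism is the same.
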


\begin{proof}
(1) It is well known (see e.g.~\cite[Theorem 3.6]{hu}) that there exists an enumeration $\nu_1, \cdots, \nu_k$ of the $T$-weights of $V$ and a filtration (as a $U(\g)$-module)
\[
\{0\} = M_0 \subset M_1 \subset \cdots \subset M_k=V \o \V(\mu-\lambda)
\]
where for all $i = 1, \cdots, k$, $M_i/M_{i-1} \cong V_{\nu_i} \o \V(\mu - \lambda + \nu_i)$.
Consider, for any
$i=1, \cdots, n$, the associated exact sequence
 \begin{multline*}
  0 \to \Hom_{U(\g)} \bigl( \V(\mu), M_{i-1} \bigr) \to \Hom_{U(\g)} \bigl( \V(\mu), M_{i} \bigr)\\
  \to \Hom_{U(\g)} \bigl( \V(\mu), \V(\mu - \lambda + \nu_i) \o
  V_{\nu_i} \bigr) \to \Ext^1_{U(\fg)} \bigl( \V(\mu), M_{i-1} \bigr).
 \end{multline*}
It is also well known that
\[
 \Hom_{U(\g)} \bigl( \V(\eta_1), \V(\eta_2) \bigr) = \Ext^1_{U(\g)} \bigl( \V(\eta_1), \V(\eta_2) \bigr)=0
 \]
unless $\eta_1 \in W \bullet \eta_2$, and $\Hom_{U(\g)} \bigl( \V(\eta), \V(\eta) \bigr)=\bbc$. Now if $\mu \in \t^*$ is sufficiently large, the property $\mu \in W \bullet (\mu-\lambda + \nu_i)$ implies that $w=1$ and $\lambda=\nu_i$, and the result follows.

(2) First we remark that the morphism under consideration is indeed well defined if $\mu \in \bX^+$ is sufficiently large. As $\iota_{\mu-\la}^w$ is injective, our morphism is injective. Hence it is enough to prove that both sides have the same dimension for $\mu \in \bX^+$ sufficiently large.

By (1), if $\mu$ is sufficiently large the left-hand side is isomorphic to $V_{w \lambda}$. Now similar arguments, using the property that that if $\eta \in \bX^+-\rho$ then 
\[
\Hom_{U(\g)}(\V(w \bullet \eta), \V(\eta))=\C,
\] 
show that, again if $\mu$ is sufficiently large, the right-hand side is isomorphic to $V_{\lambda}$. As $\dim_{\bbc}(V_{\lambda}) = \dim_{\bbc}(V_{w \lambda})$, this finishes the proof.
\end{proof}

\subsection{Asymptotic Verma modules}
\label{ss:asymptotic-Verma}

If $\mu \in \t^*$, we denote by $\kh\llangle \mu \rrangle$\index{Chbarmu@$\kh\llangle \mu \rrangle$} the graded right $\sh$-module where any $t \in \t$ acts by multiplication by $\hb \mu(t)$. 

For $\mu \in \t^*$ we define the graded right $\uh(\g)$-module\index{Mlambda2@$\MM(\mu)$}%
\[
\MM(\mu):=\kh\llangle 0 \rrangle \o_{\sh} \M(\mu).
\]
We also set $\bvv_{\mu} := 1 \o \bv_{\mu} \in \MM(\mu)$.\index{vlambda2@$\bvv_{\mu}$}
Any $t \in \t$ acts on $\bvv_{\mu}$ by multiplication by $\hb (\mu+\rho)(t)$, and $\MM(\mu)$ admits a basis (as a $\C[\hb]$-module) such that, for any vector $v$ in this basis, there exists $\gamma \in \Z_{\geq 0} R^+$ such that $v \cdot t=(\mu+\rho+\gamma)(t)$ for any $t \in \t$.
We set $\MM(\mu)_-:=\bvv_\mu \cdot \u^- \uh(\u^-)$. Then we have
\[
\MM(\mu) = \kh \cdot \bvv_{\mu} \oplus \MM(\mu)_-.
\]

Note also that for any $\lambda,\mu \in \t^*$ there exists a natural isomorphism
\begin{equation}
\label{eqn:specialization-M}
\kh\llangle \mu \rrangle \o_{\sh} \M(\lambda) \ \cong \ \MM(\lambda+\mu)
\end{equation}
sending $1 \o \bv_{\lambda}$ to $\bvv_{\lambda+\mu}$.

It will be convenient to invert $\hb$. To simplify notation we set\index{Mlambda3@$\MMloc(\mu)$}%
\[
\MMloc(\mu):= \C[\hb,\hb^{-1}] \o_{\C[\hb]} \MM(\mu), \quad \uloc(\g):= \C[\hb,\hb^{-1}] \o_{\kh} \uh(\fg).
\]
We define $\MMloc(\mu)_-$ in the obvious way. There exists an isomorphism of graded $\C[\hb,\hb^{-1}]$-algebras
\[
U(\g) \o_{\C} \C[\hb,\hb^{-1}] \ \simto \ \uloc(\g)^{\mathrm{op}}, \qquad \g \ni x \mapsto - \frac{1}{\hb} x.
\]
(In the left-hand side, $U(\g)$ is in degree $0$.) Using this isomorphism one can regard $\MMloc(\mu)$ as a (left) module over the algebra $U(\g) \o_{\C} \C[\hb,\hb^{-1}]$. With this structure it is isomorphic to
$\V(-\mu-\rho) \o_{\C} \C[\hb,\hb^{-1}]$.

Let $\mu \in \t^*$ and let $\alpha$ be a simple root. If $n:= -\mu(\alv) \in \Z_{\geq 0}$, then as in \S\ref{ss:reminder-Verma} there exists a unique embedding of graded $\uloc(\g)$-modules
\[
\MMloc(s_{\alpha} \mu) \hookrightarrow \MMloc(\mu)
\]
which sends $\bvv_{s_\alpha \mu}$ to $\bvv_{\mu} \cdot \frac{(-f_\alpha)^n}{\hb^n n!}$. Iterating, we perform the following construction. Let $\mu \in \bX^-$, and let $w \in W$. Choose a reduced decomposition $w=s_k \cdots s_1$, where $s_i$ is the reflection associated with the simple root $\alpha_i$. Then for any $i=1, \cdots, k$, $n_i:=- ( s_{i-1} \cdots s_1 \mu, \alv_{i} ) \in \Z_{\geq 0}$, hence we can define
\[
\bvv_\mu^w := \bvv_{\mu} \cdot \frac{(-f_{\alpha_{i_1}})^{n_1}}{\hbar^{n_1} \cdot n_1 !} \cdots \frac{(-f_{\alpha_{i_k}})^{n_k}}{\hbar^{n_k} \cdot n_k !} \ \in \MMloc(\lambda),
\]
and there exists a unique embedding of graded right $\uloc(\g)$-modules\index{imuw@$\mathbf{i}_{\mu}^w$}%
\[
\mathbf{i}_{\mu}^w : \MMloc(w \mu) \hookrightarrow \MMloc(\mu)
\quad\text{such that}\quad \mathbf{i}_{\mu}^w(\bvv_{w \mu}) =
\bvv_{\mu}^w.
\]

\subsection{Specialization}
\label{ss:specialization}

Let $\la \in \bX$.
Recall that in \S\ref{subsec1} we have defined a structure of $B$-module and of $(\uh(\b),\uh(\g))$-bimodule on $\M(\la)$. In particular, if $V$ is in $\Rep(G)$, we have a (diagonal) $B$-module structure on $V \o \M(\la)$.

\begin{lem}
\label{lem:morphisms-Verma-v}
For $V$ in $\Rep(G)$ and $\la \in \bX$, 
the assignment $\phi \mapsto \phi(\bv_0)$, 
induces an isomorphism
\[
\Hom_{(\sh,\uh(\g))}(\M(0),V \otimes
\M(\la)) \simto \bigl(V \otimes \M(\la) \bigr)^B.
\]
\end{lem}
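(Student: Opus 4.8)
The plan is to exploit that $\M(0)$ is a \emph{cyclic} $(\sh,\uh(\g))$-bimodule, generated by $\bv_0$, and to match the relations defining $\bv_0$ inside $\M(0)$ with the equations cutting out the $B$-invariants of $V\o\M(\la)$. First I would recall that, by \eqref{eqn:isom-Verma} with $\la=0$, the right $\uh(\g)$-action on $\bv_0$ yields an isomorphism $\M(0)\cong\uh(\g)/(\u\cdot\uh(\g))$ of right $\uh(\g)$-modules, so $\M(0)$ is generated over $\uh(\g)$ by $\bv_0$; hence any morphism of right $\uh(\g)$-modules, a fortiori any morphism of $(\sh,\uh(\g))$-bimodules, out of $\M(0)$ is determined by the image of $\bv_0$, and injectivity of $\phi\mapsto\phi(\bv_0)$ is immediate. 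Next I would record two relations satisfied by $\bv_0$, read as identities between the left $\sh$-action and the right $\uh(\g)$-action on $\M(0)$: namely $\bv_0\cdot u=0$ for $u\in\u$ (clear from the quotient description) and $t\cdot\bv_0=\bv_0\cdot(t-\hb\rho(t))$ for $t\in\t$. For the second I would use that $\M(0)_0=\C[\hb]\cdot\bv_0$, whence $\ad t(\bv_0)=0$, and substitute into the identity $\hb\cdot\ad t(m)=(t+\hb\rho(t))\cdot m-m\cdot t$ from \S\ref{subsec1}, using that the left $\uh(\fb)$-action on $\M(0)$ factors through $\uh(\fb)\twoheadrightarrow\sh$ and that $\hb$ is central.

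The heart of the matter is then to identify $\bigl(V\o\M(\la)\bigr)^B$, for $\la\in\bX$, with exactly the set of $m$ satisfying $m\cdot u=0$ for all $u\in\u$ and $t\cdot m=m\cdot(t-\hb\rho(t))$ for all $t\in\t$. For this I would use that $B$ is connected and $V\o\M(\la)$ is a rational $B$-module, so $B$-invariance amounts to annihilation by the differential of the $B$-action, and then invoke the relation from \S\ref{subsec1} (``$\hb$ times the action of $b\in\fb$ sends $n$ to $(b+\hb\rho(b))\cdot n-n\cdot b$''), together with the facts that $\rho$ vanishes on $\u$, that $\u$ acts by $0$ through the left $\uh(\fb)$-action, and that $V\o\M(\la)$ is $\hb$-torsion-free (being $V$ tensored with a free $\C[\hb]$-module); taking $b\in\u$ and $b\in\t$ separately produces the two displayed equations. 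I expect this bookkeeping — keeping track of left versus right actions, of the $\rho$-shifts, and of the passage from $B$ to $\fb$ — to be the only real source of friction; there is no genuine obstacle here.

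Finally the two halves glue. Applying a bimodule morphism $\phi$ to the two relations for $\bv_0$ and using its bilinearity shows that $\phi(\bv_0)$ satisfies the two equations, hence lies in $\bigl(V\o\M(\la)\bigr)^B$. Conversely, given $m\in\bigl(V\o\M(\la)\bigr)^B$, I would set $\phi(\overline w):=m\cdot w$ on $\M(0)=\uh(\g)/(\u\cdot\uh(\g))$: this is well defined and right $\uh(\g)$-linear because $m\cdot u=0$, satisfies $\phi(\bv_0)=m$, and is left $\sh$-linear because $t\cdot m=m\cdot(t-\hb\rho(t))$ together with $t\cdot\bv_0=\bv_0\cdot(t-\hb\rho(t))$ forces it (it is enough to check this on $\t$, a $\C[\hb]$-algebra generating set of $\sh$). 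This yields the asserted bijection onto $\bigl(V\o\M(\la)\bigr)^B$; it is homogeneous (since $\bv_0$ is) and compatible with the $\sh$- and $\zh$-module structures, which on both sides are induced respectively by the left $\sh$-action on $V\o\M(\la)$ and by the central right action of $\zh\subset\uh(\g)$. One could instead route the whole computation through the tensor--hom adjunction $\Hom_{(\sh,\uh(\g))}(\sh\llangle\rho\rrangle\o_{\uh(\fb)}\uh(\g),-)\cong\Hom_{(\sh,\uh(\fb))}(\sh\llangle\rho\rrangle,-)$, but the direct argument above is equally short.
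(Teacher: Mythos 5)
Your argument is correct and is essentially the paper's proof unwound by hand: the paper invokes Frobenius reciprocity for the induced module $\M(0)=\sh\llangle\rho\rrangle\o_{\uh(\b)}\uh(\g)$ to reduce at once to $(\sh,\uh(\b))$-bimodule maps out of $\sh\llangle\rho\rrangle$, which is precisely the observation that such a map is determined by the image of $\bv_0$ subject to the two relations you isolate, and then identifies those relations with $B$-invariance via the formula $\hb\cdot b(n)=(b+\hb\rho(b))\cdot n-n\cdot b$ from \S\ref{subsec1}. Your explicit check of well-definedness and $\sh$-linearity of the inverse map, and the reduction of integrated $B$-invariance to $\b$-invariance via $\hb$-torsion-freeness, are exactly the steps the paper compresses into ``it follows that\dots'' and the reference to \S\ref{subsec1}.
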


\begin{proof}
By definition, $\M(0)$ is an induced right $U_{\hbar}(\fg)$-module. Hence we have
\[
\Hom_{(\sh,\uh(\g))}(\M(0),V \otimes \M(\la))\  \cong \  \Hom_{({\sh}, \uh(\b))}({\sh}\llangle 0 \rrangle,V \otimes \M(\la)).
\]
It follows that the left-hand side is isomorphic to the space of vectors $x$ in $V \otimes \M(\la)$ which satisfy
\[
x \cdot b=(b+\hbar \rho(b)) \cdot x
\]
for all $b \in \b \subset \uh(\b)$. This is exactly the condition for being $U(\b)$-invariant (i.e.~$B$-invariant), see \S\ref{subsec1}.
\end{proof}

If $\mu \in \t^*$,
we denote by $P \mapsto P(\mu)$ the unique algebra morphism $\sh \to \C[\hb]$ which sends $x \in \t$ to $\hb \mu(x)$. Note that if $P \in \sh$ and if for some $w \in W$ we have $P(\mu)=0$ for all $\mu \in w (\bX^-)$ sufficiently large, then $P=0$.

If $\la,\mu \in \t^*$, we denote by\index{Spmu@$\uSp_\mu$}%
\[
\uSp_\mu : \M(\la) \to \MMloc(\la + \mu)
\]
the natural morphism induced by isomorphism \eqref{eqn:specialization-M}. The following lemma follows from the fact that $\M(\la)$ is a free $\sh$-module and the remarks above on the map $P \mapsto P(\mu)$.

\begin{lem}
\label{lem:specialization}
Let $\la \in \bX$ and $w \in W$.
Let $m \in \M(\la)$, and assume that $\uSp_\mu(m)=0$ for all $\mu \in w(\bX^-)$ sufficiently large. Then $m=0$.
\end{lem}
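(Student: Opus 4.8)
The plan is to reduce the statement to the elementary fact about the evaluation homomorphism $P \mapsto P(\mu)$ recorded just above the lemma, using that $\M(\la)$ is free as an $\sh$-module. Freeness is the fact quoted in the sentence preceding the statement; it can also be read off, for instance, from the $B$-stable and $\sh$-stable filtration $M_i := \sh \cdot \bv_\la \cdot \uh^{\leq i}(\g)$ of $\M(\la)$ considered in the proof of Proposition~\ref{prop:freeness}, whose associated graded $\sym(\g/\u)[\hb] \o \C_{-\la}$ is visibly a free $\sh$-module.

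Concretely, I would first fix a homogeneous $\sh$-basis $(e_i)_{i \in I}$ of $\M(\la)$ and write $m = \sum_i P_i e_i$ with $P_i \in \sh$, almost all zero. The next step is to unwind the definition of $\uSp_\mu$: by construction it is the composite
\[
\M(\la) \;\longrightarrow\; \kh\llangle \mu \rrangle \o_\sh \M(\la) \;\simto\; \MM(\la+\mu) \;\hookrightarrow\; \MMloc(\la+\mu),
\]
where the first arrow sends $m$ to $1 \o m$, the isomorphism is \eqref{eqn:specialization-M}, and the last arrow inverts $\hb$. Since $\M(\la) = \bigoplus_i \sh\, e_i$, the middle module $\kh\llangle \mu \rrangle \o_\sh \M(\la)$ is free over $\kh$ on the family $(1 \o e_i)_{i}$, and after inverting $\hb$ it is free over $\C[\hb,\hb^{-1}]$ on the images $\overline{e_i}$ of the $e_i$ in $\MMloc(\la+\mu)$. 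Moreover $1 \o P_i e_i = (1 \cdot P_i) \o e_i = P_i(\mu)\,(1 \o e_i)$, because the right $\sh$-action on $\kh\llangle \mu \rrangle$ sends $t \in \t$ to $\hb\mu(t)$ and $P \mapsto P(\mu)$ is an algebra homomorphism. Hence $\uSp_\mu(m) = \sum_i P_i(\mu)\,\overline{e_i}$ in a free $\C[\hb,\hb^{-1}]$-module, so $\uSp_\mu(m) = 0$ if and only if $P_i(\mu) = 0$ for every $i$.

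With this in hand the argument concludes at once: the hypothesis gives $\uSp_\mu(m) = 0$ for all $\mu \in w(\bX^-)$ sufficiently large, hence $P_i(\mu) = 0$ for each $i$ and all such $\mu$, whence $P_i = 0$ by the remark preceding the lemma, and therefore $m = 0$. I do not expect any genuine obstacle; the only point needing a little care is the bookkeeping in the previous paragraph, namely recognizing $\uSp_\mu$ as (the localization of) the base-change map of $\M(\la)$ along the character $\sh \to \kh$, $t \mapsto \hb\mu(t)$ — once that identification is explicit, the statement drops out of the freeness of $\M(\la)$ together with the cited remark.
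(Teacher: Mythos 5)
Your proof is correct and takes the same route as the paper, which simply observes that the lemma follows from the freeness of $\M(\la)$ over $\sh$ together with the remark about the evaluation maps $P \mapsto P(\mu)$; you have merely written out the bookkeeping (choice of basis, identification of $\uSp_\mu$ as a localized base-change map, injectivity of $\MM(\la+\mu)\hookrightarrow\MMloc(\la+\mu)$ from $\hb$-torsion-freeness) that the paper leaves implicit.
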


We will derive several useful corollaries.

\begin{cor}
\label{cor:fixB}
Let $V$ in $\Rep(G)$, $\la \in \bX$ and $w \in W$.
Let $m \in V \o \M(\la)$, and assume that for all $\mu \in w(\bX^-)$ sufficiently large we have
\[
(\id_V \o \uSp_\mu)(m) \cdot b = \hb(\mu+\rho)(b) \cdot (\id_V \o \uSp_\mu)(m)
\]
for all $b \in \b$.
Then there exists a unique $\phi \in  \Hom_{(\sh,\uh(\g))}(\M(0),V \otimes \M(\la))$ such that $m=\phi(\bv_0)$.
\end{cor}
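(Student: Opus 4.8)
The plan is to reduce everything to Lemma~\ref{lem:morphisms-Verma-v}, which says that $\phi\mapsto\phi(\bv_0)$ is an isomorphism $\Hom_{(\sh,\uh(\g))}(\M(0),V\otimes\M(\la))\simto(V\otimes\M(\la))^B$; once we know that $m$ lies in $(V\otimes\M(\la))^B$, the desired $\phi$ exists and is automatically unique. So the whole point is to check that $m$ is fixed by the diagonal $B$-action, which --- by the relation between that action and the bimodule structure recalled in \S\ref{subsec1}, together with the fact that $V\otimes\M(\la)$ is free over $\C[\hb]$ and hence $\hb$-torsion-free --- amounts to showing that
\[
n_b := (b+\hb\rho(b))\cdot m - m\cdot b = 0 \quad\text{in } V\otimes\M(\la)
\]
for every $b\in\b$ (left $\uh(\b)$-action on the first term, twisted right $\uh(\g)$-action on the second).

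First I would fix $b\in\b$ and compute the image of $n_b$ under the specialization map $\id_V\otimes\uSp_\mu$. On the one hand $\uSp_\mu$ is a morphism of right $\uh(\g)$-modules, and one checks from the definition of the twisted right $\uh(\g)$-action on $V\otimes(-)$ that $\id_V\otimes\uSp_\mu$ is right $\uh(\g)$-linear as well, so it sends $m\cdot b$ to $(\id_V\otimes\uSp_\mu)(m)\cdot b$. On the other hand, writing $b=b_\t+b_\fn$ with $b_\t\in\t$ and $b_\fn\in\fn$, the left $\uh(\b)$-action on $\M(\la)$ kills $\fn$ and is left multiplication by $b_\t$ on the $\t$-part, while by construction of $\uSp_\mu$ through the isomorphism~\eqref{eqn:specialization-M} this left multiplication by $t\in\t$ becomes multiplication by the scalar $\hb\mu(t)$; since $\rho$ vanishes on $\fn$, this gives $(\id_V\otimes\uSp_\mu)\bigl((b+\hb\rho(b))\cdot m\bigr)=\hb(\mu+\rho)(b)\cdot(\id_V\otimes\uSp_\mu)(m)$. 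Combining the two, $(\id_V\otimes\uSp_\mu)(n_b)=\hb(\mu+\rho)(b)\cdot(\id_V\otimes\uSp_\mu)(m)-(\id_V\otimes\uSp_\mu)(m)\cdot b$, which by hypothesis is $0$ for all $\mu\in w(\bX^-)$ sufficiently large.

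To finish I would descend from $V\otimes\M(\la)$ to $\M(\la)$: choosing a $\C$-basis $(v_i)$ of $V$ and writing $n_b=\sum_i v_i\otimes n_{b,i}$, the vanishing of $\sum_i v_i\otimes\uSp_\mu(n_{b,i})$ forces $\uSp_\mu(n_{b,i})=0$ for each $i$ and all $\mu\in w(\bX^-)$ sufficiently large; Lemma~\ref{lem:specialization} then gives $n_{b,i}=0$, hence $n_b=0$. Running this for every $b\in\b$ shows $m\in(V\otimes\M(\la))^B$, and we conclude as above. The only genuinely delicate point is the bookkeeping in the displayed identity for $(\id_V\otimes\uSp_\mu)(n_b)$ --- in particular verifying that $\id_V\otimes\uSp_\mu$ intertwines the left $\uh(\b)$- and the twisted right $\uh(\g)$-module structures in exactly the claimed way, so that the resulting expression is precisely the one controlled by the hypothesis; everything else is formal given Lemmas~\ref{lem:morphisms-Verma-v} and~\ref{lem:specialization}.
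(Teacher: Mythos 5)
Your proof is correct and follows the same route as the paper's: reduce to showing $m\in(V\otimes\M(\la))^B$ via Lemma~\ref{lem:morphisms-Verma-v}, show the obstruction $(b+\hb\rho(b))\cdot m - m\cdot b$ vanishes after applying $\id_V\otimes\uSp_\mu$ for all sufficiently large $\mu\in w(\bX^-)$, and conclude by Lemma~\ref{lem:specialization}. The only difference is that you spell out the intertwining properties of $\id_V\otimes\uSp_\mu$ and the reduction from $V\otimes\M(\la)$ to $\M(\la)$ via a basis of $V$, both of which the paper leaves implicit; these verifications are accurate.
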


\begin{proof}
By Lemma \ref{lem:morphisms-Verma-v} we only have to prove that $m$ is fixed by $B$, i.e.~that for any $b \in \b$ we have 
\[
m \cdot b - (b + \hb \rho(b)) \cdot m =0
\]
(see the proof of Lemma \ref{lem:morphisms-Verma-v}).
However, our assumption implies that this vector is annihilated by $(\id_V \o \uSp_\mu)$ for all $\mu \in w(\bX^-)$ sufficiently large. Hence we conclude using Lemma \ref{lem:specialization}.
\end{proof}

If $\phi \in \Hom_{(\sh,\uh(\g))}(\M(0),V \otimes \M(\la))$ we denote by $\Sp_\mu(\phi) : \MMloc(\mu) \to V \o \MMloc(\la + \mu)$ the morphism obtained by tensoring with $\C[\hb]\llangle \mu \rrangle$, using isomorphism \eqref{eqn:specialization-M}, and inverting $\hb$. This construction induces a morphism of $\C[\hb,\hb^{-1}]$-modules\index{Spmu2@$\Sp_\mu$}%
\[
\Sp_\mu : \Hom_{(\sh,\uh(\g))}(\M(0),V \otimes \M(\la)) \to \Hom_{-\uloc(\g)}(\MMloc(\mu),V \otimes \MMloc(\la+\mu)).
\]

\begin{cor}
\label{cor:sp-morphism}
Let $V$ in $\Rep(G)$, $\la \in \bX$, $w \in W$, and $\phi \in \Hom_{(\sh,\uh(\g))}(\M(0),V \otimes \M(\la))$. If $\Sp_\mu(\phi)=0$ for all $\mu \in w(\bX^-)$ sufficiently large, then $\phi=0$.
\end{cor}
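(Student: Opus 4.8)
The plan is to reduce the vanishing of $\phi$ first to the vanishing of the single vector $\phi(\bv_0) \in (V \o \M(\la))^B$, and then to detect that vanishing after applying the specialization maps $\uSp_\mu$. To begin, I would invoke Lemma~\ref{lem:morphisms-Verma-v}: the assignment $\psi \mapsto \psi(\bv_0)$ is an isomorphism $\Hom_{(\sh,\uh(\g))}(\M(0),V \o \M(\la)) \simto (V \o \M(\la))^B$, so it suffices to prove that $\phi(\bv_0)=0$.

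Next I would unwind the construction of $\Sp_\mu$ from \S\ref{ss:specialization}. By definition $\Sp_\mu(\phi) : \MMloc(\mu) \to V \o \MMloc(\la+\mu)$ is obtained from $\phi$ by applying $\C[\hb]\llangle\mu\rrangle \o_{\sh}(-)$, identifying source and target with $\MM(\mu)$ and $V \o \MM(\la+\mu)$ via \eqref{eqn:specialization-M}, and then inverting $\hb$. Under \eqref{eqn:specialization-M} (taken with weight $0$) the element $1 \o \bv_0$ corresponds to the cyclic generator $\bvv_\mu$ of $\MMloc(\mu)$, and tracking the identifications one sees that $\Sp_\mu(\phi)(\bvv_\mu)$ is precisely $(\id_V \o \uSp_\mu)(\phi(\bv_0))$. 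Since $\MMloc(\mu)$ is generated by $\bvv_\mu$ as a right $\uloc(\g)$-module (the localized version of the description of $\MM(\mu)$ in \S\ref{ss:asymptotic-Verma}), a right $\uloc(\g)$-module morphism out of $\MMloc(\mu)$ vanishes as soon as it kills $\bvv_\mu$. Hence the hypothesis $\Sp_\mu(\phi)=0$ is equivalent to $(\id_V \o \uSp_\mu)(\phi(\bv_0))=0$, for all $\mu \in w(\bX^-)$ sufficiently large.

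Then I would fix a $\C$-basis $(v_i)_i$ of the finite-dimensional module $V$ and write $\phi(\bv_0)=\sum_i v_i \o m_i$ with $m_i \in \M(\la)$. Applying $\id_V \o \uSp_\mu$ gives $\sum_i v_i \o \uSp_\mu(m_i)$ inside $V \o \MMloc(\la+\mu) = \bigoplus_i v_i \o \MMloc(\la+\mu)$, so the previous step forces $\uSp_\mu(m_i)=0$ for every $i$ and every $\mu \in w(\bX^-)$ sufficiently large. Lemma~\ref{lem:specialization} then yields $m_i=0$ for all $i$, hence $\phi(\bv_0)=0$, hence $\phi=0$.

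The only point requiring genuine care — and the closest thing to an obstacle — is the bookkeeping in the second step: one must verify that the abstract recipe defining $\Sp_\mu$ (tensor with $\C[\hb]\llangle\mu\rrangle$, identify via \eqref{eqn:specialization-M}, localize) really sends $\bvv_\mu$ to $(\id_V \o \uSp_\mu)(\phi(\bv_0))$, and that $\MMloc(\mu)$ is cyclic over $\uloc(\g)$ so that this value determines $\Sp_\mu(\phi)$. Both facts are immediate from the definitions in \S\ref{ss:asymptotic-Verma}--\S\ref{ss:specialization}; once they are in place, the statement follows directly from Lemmas~\ref{lem:morphisms-Verma-v} and~\ref{lem:specialization}.
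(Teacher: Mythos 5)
Your argument is correct and is essentially the paper's proof, just written out in words rather than as a commutative square: the paper records exactly your identity $\Sp_\mu(\phi)(\bvv_\mu) = (\id_V \o \uSp_\mu)(\phi(\bv_0))$ as a commutative diagram with $\phi \mapsto \phi(\bv_0)$ injective on top, and then applies Lemma~\ref{lem:specialization}. One small remark: you invoke cyclicity of $\MMloc(\mu)$ to get an ``iff,'' but only the trivial direction ($\Sp_\mu(\phi)=0 \Rightarrow \Sp_\mu(\phi)(\bvv_\mu)=0$) is needed; and your explicit basis-of-$V$ step is a reasonable way to extend Lemma~\ref{lem:specialization} from $\M(\la)$ to $V\o\M(\la)$, which the paper leaves implicit.
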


\begin{proof}
This follows from the commutativity of the following diagram
\[
\xymatrix@C=2cm{
\Hom_{(\sh,\uh(\g))}(\M(0),V \otimes \M(\la)) \ar@{^{(}->}[r]^-{\phi \mapsto \phi(\bv_0)} \ar[d]_-{\Sp_\mu} & V \o \M(\la) \ar[d]^-{\id_V \o \uSp_\mu} \\
\Hom_{-\uloc(\g)}(\MMloc(\mu),V \otimes \MMloc(\la+\mu)) \ar[r]^-{\phi \mapsto \phi(\bvv_\mu)} & V \otimes \MMloc(\la+\mu)
}
\]
and Lemma \ref{lem:specialization}.
\end{proof}

\subsection{Intertwining operators}

Let $V$ in $\Rep(G)$, and let $\phi \in \Hom_{-\uloc(\g)}(\MMloc(\mu), V \o \MMloc(\lambda+\mu))$ be an intertwining operator. Then we can write
\[
\phi(\bvv_\mu) = u \o f(\hb) \cdot \bvv_{\lambda+\mu} + x
\]
for unique $u \in V_\lambda$, $f(\hb) \in \C[\hb,\hb^{-1}]$ and $x \in V \o \MM(\lambda+\mu)_-$. The vector $u \o f(\hb) \in V_\lambda \o \C[\hb, \hb^{-1}]$ is called the \emph{expectation value} of $\phi$. This way we have defined a morphism of graded $\C[\hb,\hb^{-1}]$-modules\index{Exp2@$\mathsf{E}^{V,\la}_{\mu}$}%
\[
\mathsf{E}^{V,\la}_{\mu} : \Hom_{-\uloc(\g)} \bigl( \MMloc(\mu), V \o \MMloc(\lambda+\mu) \bigr) \to V_\lambda \o \C[\hb, \hb^{-1}].
\]

\begin{lem}
\label{lem:intertwiners-quantum}
Let $V$ in $\Rep(G)$, and $\lambda \in \bX$.
\begin{enumerate}
\item For $\mu \in \t^*$ sufficiently large, $\mathsf{E}^{V,\lambda}_\mu$ is an isomorphism.
\item For $w \in W$ and $\mu \in \bX^-$ sufficiently large, the morphism
\[
\Hom_{-\uloc(\g)} \bigl( \MMloc(w \mu),V \o \MMloc(w (\la+\mu)) \bigr) \to \Hom_{-\uloc(\g)} \bigl( \MMloc(w \mu),V \o \MMloc(\la+\mu) \bigr)
\]
defined by $\phi \mapsto (\id_V \o \mathbf{i}^w_{\la+\mu}) \circ \phi$ is an isomorphism
\end{enumerate}
\end{lem}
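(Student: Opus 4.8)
The plan is to deduce both statements from their non-asymptotic counterparts, i.e.\ from Lemma~\ref{lem:intertwiners}, by means of the localization isomorphisms recorded in \S\ref{ss:asymptotic-Verma}: the isomorphism of graded $\khh$-algebras $U(\g)\o_\C\khh\simto\uloc(\g)^{\mathrm{op}}$ given by $x\mapsto-\tfrac1\hb x$ on $\g$, and the induced identification of $\MMloc(\mu)$, regarded as a left $U(\g)\o_\C\khh$-module, with $\V(-\mu-\rho)\o_\C\khh$ (normalized so that $\bvv_\mu$ corresponds to $1_{-\mu-\rho}\o 1$, and $V\o\MMloc(\la+\mu)$ to $V\o\V(-\mu-\la-\rho)\o_\C\khh$).

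First I would use that $U(\g)$ is left noetherian, so each Verma module is finitely presented over $U(\g)$; hence $\Hom$ commutes with the (flat, $\C$ being a field) extension of scalars along $U(\g)\hookrightarrow U(\g)\o_\C\khh$, and we obtain a canonical isomorphism of graded $\khh$-modules
\[
\Hom_{-\uloc(\g)}\bigl(\MMloc(\mu),\,V\o\MMloc(\la+\mu)\bigr)\ \cong\ \Hom_{U(\g)}\bigl(\V(-\mu-\rho),\,V\o\V(-\mu-\la-\rho)\bigr)\o_\C\khh.
\]
Under this isomorphism $\MMloc(\la+\mu)_-$ corresponds to $\V(-\mu-\la-\rho)_-\o_\C\khh$, so the decomposition defining the expectation value on the left matches the one on the right, and $\mathsf{E}^{V,\la}_\mu$ is identified with $\mathrm{E}^{V,\la}_{-\mu-\rho}\o\id_{\khh}$. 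Since ``$\mu\in\t^*$ sufficiently large'' is the same condition as ``$-\mu-\rho\in\t^*$ sufficiently large'', part~(1) then follows at once from Lemma~\ref{lem:intertwiners}(1).

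For part~(2) I would check, for $\mu\in\bX^-$, that the isomorphism above carries $\mathbf{i}^w_\mu:\MMloc(w\mu)\hookrightarrow\MMloc(\mu)$ to $\iota^w_{-\mu-\rho}\o\id_{\khh}$. Two things must be verified. First, $\MMloc(w\mu)$ must correspond to $\V\bigl(w\bullet(-\mu-\rho)\bigr)\o_\C\khh$; this is exactly the identity $w\bullet(-\mu-\rho)=-w\mu-\rho$. Second, applying $x\mapsto-\tfrac1\hb x$ and reversing the order of products (because of ${}^{\mathrm{op}}$), right multiplication on $\MMloc(\mu)$ by $\tfrac{(-f_\alpha)^{n}}{\hb^{n}\,n!}$ becomes left multiplication on $\V(-\mu-\rho)$ by $\tfrac{(f_\alpha)^{n}}{n!}$ — the powers of $\hb$ cancel exactly — while the exponents agree because $-(s_{i-1}\cdots s_1\mu)(\alv_i)=\bigl(s_{i-1}\cdots s_1\bullet(-\mu-\rho)\bigr)(\alv_i)+1$. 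Granting this, $\phi\mapsto(\id_V\o\mathbf{i}^w_{\la+\mu})\circ\phi$ becomes $\phi\mapsto(\id_V\o\iota^w_{-\mu-\la-\rho})\circ\phi$ after base change, and part~(2) follows from Lemma~\ref{lem:intertwiners}(2), since ``$\mu\in\bX^-$ sufficiently large'' translates to ``$-\mu-\rho\in\bX^+-\rho$ sufficiently large''.

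The substantive part of the argument is precisely this translation dictionary: keeping track of how the powers of $\hb$ inserted in the definition of $\bvv^w_\mu$ cancel against those produced by $x\mapsto-\tfrac1\hb x$, how the diagonal right $\uloc(\g)$-action on $V\o\MMloc(\la+\mu)$ matches the usual $G$-action after localization, and how the $-\rho$-shift converts the $\bullet$-action bookkeeping of \S\ref{ss:reminder-Verma} into the linear bookkeeping of \S\ref{ss:asymptotic-Verma}. I expect this to be entirely routine, but it is the only place where a sign or normalization slip could occur; everything else is a formal consequence of Lemma~\ref{lem:intertwiners} together with flatness of $\khh$ over $\C$. (Alternatively, one could repeat the filtration/$\Ext$-vanishing argument of the proof of Lemma~\ref{lem:intertwiners} verbatim for the modules $\MMloc(\mu)$, but the reduction above is shorter.)
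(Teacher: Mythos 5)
Your proposal is correct and takes essentially the same route as the paper: after identifying $\uloc(\g)^{\mathrm{op}}$ with $U(\g)\o\khh$, the $\Hom$ space becomes $\Hom_{U(\g)}(\V(-\mu-\rho),V\o\V(-\mu-\la-\rho))\o\khh$, the expectation value and the maps $\mathbf{i}^w$ match their classical analogues (via $w\bullet(-\mu-\rho)=-w\mu-\rho$ and the $\hb$-power cancellations you describe), and both parts then follow from Lemma~\ref{lem:intertwiners}. The paper simply states this identification without spelling out the compatibility checks you carry through; your more explicit verification of the translation dictionary is a faithful expansion of the same argument.
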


\begin{proof}
By the remarks above we have an isomorphism $\uloc(\g)^{\mathrm{op}} \cong U(\g) \o \C[\hb,\hb^{-1}]$, which induces an isomorphism
\[
\Hom_{-\uloc(\g)}(\MMloc(\mu),V' \o \MMloc(\nu)) \cong \Hom_{U(\g)}(\V(-\mu-\rho), V' \o \V(-\nu-\rho)) \o \C[\hb,\hb^{-1}]
\]
for any $V'$ in $\Rep(G)$ and $\mu,\nu \in \t^*$. Moreover, under these isomorphisms the morphisms considered in the lemma are induced by those of Lemma \ref{lem:intertwiners}. Hence the claims follow from Lemma \ref{lem:intertwiners}.
\end{proof}

Fix $V$ in $\Rep(G)$, $\lambda \in \bX$ and $w \in W$. For any $\mu \in \bX^-$ sufficiently large, we define the morphism of graded $\bbc[\hbar,\hbar^{-1}]$-modules\index{Psi@$\Psi^{V,\lambda}_{w,\mu}$}%
\[
\Psi^{V,\lambda}_{w,\mu} : \Hom_{-\uloc(\g)} \bigl( \MMloc(\mu), V \o \MMloc(\mu+\lambda) \bigr) \to  
\Hom_{-\uloc(\g)} \bigl( \MMloc(w\mu), V \o \MMloc(w(\mu+\lambda)) \bigr),
\]
in such a way that for any $\phi \in \Hom_{-\uloc(\g)}(\MMloc(\mu), V \o \MMloc(\mu+\lambda) \bigr)$ we have
\[
\phi \circ \mathbf{i}_{\mu}^w \ = \ (\mathbf{i}_{\lambda+\mu}^w \otimes \id_V) \circ \Psi^{V,\lambda}_{w,\mu}(\phi).
\]
This morphism is well defined by Lemma \ref{lem:intertwiners-quantum}.

Let now $\alpha$ be a simple root, and consider the case $w=s_{\alpha}$. Then, even if $\mu$ is not in $\bX^-$, one can define a morphism 
\[
\Psi^{V,\lambda}_{s_{\alpha},\mu} : \Hom_{-\uloc(\g)} \bigl( \MMloc(\mu), V \o \MMloc(\mu+\lambda) \bigr) \to  
\Hom_{-\uloc(\g)} \bigl( \MMloc(s_\alpha \mu), V \o \MMloc(s_\alpha(\mu+\lambda)) \bigr)
\]
with the same properties as above as soon as $\mu \in \bX$ is sufficiently large and $\mu(\alv) < 0$. With this extension of the definition, consider again some $w \in W$, and let $w=s_k \cdots s_1$ be a reduced decomposition. Note that if $\mu \in \bX^-$, for any $i=1, \cdots, k$ we have $(s_{i-1} \cdots s_1 \mu) (\alv_{i}) \leq 0$. Then, by definition, for $\mu \in \bX^-$ sufficiently large we have
\begin{equation}
\label{eqn:composition-Psi}
\Psi^{V,\lambda}_{w,\mu} = \Psi^{V,s_{k-1} \cdots s_1(\lambda)}_{s_k,s_{k-1} \cdots s_1(\mu)} \circ \cdots \circ \Psi^{V,s_1(\lambda)}_{s_{2},s_1(\mu)} \circ \Psi^{V,\lambda}_{s_1,\mu}.
\end{equation}

\subsection{Simple reflections}

In this subsection we fix a simple root $\alpha$, and set $s=s_{\alpha}$.

\begin{prop}
\label{prop:definition-Theta}

Let $V$ in $\Rep(G)$ and $\lambda \in \bX$. There exists a unique morphism of graded ${\sh}$-modules
\[
\Theta_s^{V,\lambda} : \Hom_{(\sh,\uh(\g))} \bigl( \M(0),V \otimes \M(\la) \bigr) \langle \la(2\rhov) \rangle \to  
{}^s \hspace{-1pt} \Hom_{(\sh,\uh(\g))} \bigl( \M(0),V \otimes \M(s \lambda) \bigr) \langle (s \la)(2\rhov) \rangle
\]
such that for any 
$\mu \in \bX$ sufficiently large such that $\mu(\alv) < 0$ we have
\begin{multline}
\label{eqn:specialization-Theta}
\Sp_{s\mu} \circ \Theta_s^{V,\lambda} = \\
\begin{cases}
(-\hbar)^{\lambda(\alv)}(-\mu(\alv))(-\mu(\alv)-1) \cdots (-\mu(\alv)-\lambda(\alv)+1) \cdot  \Psi^{V,\lambda}_{s,\mu} \circ \Sp_\mu & \text{if }\  \lambda(\alv) \geq 0, \\
\frac{1}{(-\hbar)^{-\lambda(\alv)}(-\mu(\alv)-\lambda(\alv)) \cdots (-\mu(\alv)+1)} \cdot \Psi^{V,\lambda}_{s,\mu} \circ \Sp_\mu & \text{if }\ \lambda(\alv) \leq 0.
\end{cases}
\end{multline}

\end{prop}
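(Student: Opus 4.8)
The plan is to deduce uniqueness from the specialization criterion of Corollary~\ref{cor:sp-morphism}, and then to \emph{define} $\Theta_s^{V,\lambda}$ by prescribing all of its specializations; the real content is that the prescribed family of localized intertwiners comes from a genuine $(\sh,\uh(\g))$-bimodule morphism with the stated grading behaviour. For uniqueness: if $\Theta$ and $\Theta'$ both satisfy \eqref{eqn:specialization-Theta}, then for every $\mu\in\bX$ sufficiently large with $\mu(\alv)<0$ one has $\Sp_{s\mu}\bigl((\Theta-\Theta')(\phi)\bigr)=0$; this applies in particular to all $\mu\in\bX^-$ sufficiently large, so $(\Theta-\Theta')(\phi)$ is annihilated by $\Sp_\nu$ for all $\nu\in s(\bX^-)$ sufficiently large, and Corollary~\ref{cor:sp-morphism} (with $w=s$) forces $(\Theta-\Theta')(\phi)=0$ for every $\phi$, hence $\Theta=\Theta'$.

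For the construction, fix $\phi\in\Hom_{(\sh,\uh(\g))}(\M(0),V\o\M(\la))$, and for $\mu\in\bX$ sufficiently large with $\mu(\alv)<0$ write $\theta_\mu(\phi)\in\Hom_{-\uloc(\g)}\bigl(\MMloc(s\mu),V\o\MMloc(s(\mu+\la))\bigr)$ for the right-hand side of \eqref{eqn:specialization-Theta} evaluated at $\phi$ (meaningful by Lemma~\ref{lem:intertwiners-quantum}). I want to produce a single $\psi=\Theta_s^{V,\lambda}(\phi)\in\Hom_{(\sh,\uh(\g))}(\M(0),V\o\M(s\la))$ with $\Sp_{s\mu}(\psi)=\theta_\mu(\phi)$ for all such $\mu$. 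Since $\M(0)$ is an induced right $\uh(\g)$-module and $\MMloc(s\mu)$ is cyclic over $\uloc(\g)$ on $\bvv_{s\mu}$, by Lemma~\ref{lem:morphisms-Verma-v} it suffices to produce a $B$-invariant vector $v\in V\o\M(s\la)$ with $(\id_V\o\uSp_{s\mu})(v)=\theta_\mu(\phi)(\bvv_{s\mu})$ for all $\mu$ as above: one first exhibits such a $v$ (ignoring invariance), then Corollary~\ref{cor:fixB} (with $w=s$) promotes it to a $B$-invariant vector, and $\psi$ is the bimodule morphism with $\psi(\bv_0)=v$. Because $\M(s\la)$ is a free $\sh$-module (Lemma~\ref{lem:fixed-points-morphisms} together with Proposition~\ref{prop:freeness}) carrying the explicit $\C[\hb]$-basis of \S\ref{ss:asymptotic-Verma}, expanding $v$ in this basis reduces the existence of $v$ to the statement that, weight component by weight component, the coefficients of $\theta_\mu(\phi)(\bvv_{s\mu})$ are the values at $\mu$ of fixed elements of $\sh$, i.e.\ depend polynomially on $\mu$ and are free of poles in $\hb$. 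This is exactly what the descending-factorial prefactor in \eqref{eqn:specialization-Theta} achieves: $\Psi^{V,\lambda}_{s,\mu}$ is built from the localized Verma embedding $\mathbf{i}^{s}_{\mu}$, whose defining vector carries the denominator $\hb^{-\mu(\alv)}(-\mu(\alv))!$, and on each weight component this denominator is cancelled, after applying the $\mathfrak{sl}(2)$-relations attached to $\alpha$, against the prefactor, leaving a polynomial, $\hb$-integral expression.

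To justify this polynomiality-and-integrality statement it is convenient to pass to the minimal Levi $L=L_\alpha$ of semisimple rank one containing $\alpha$: the operators $\Sp_\mu$, the embeddings $\mathbf{i}^{s}_{\mu}$ (hence $\Psi^{V,\lambda}_{s,\mu}$), and the prefactor involve only the triple $(e_\alpha,\alv,f_\alpha)$, so via the restriction morphisms $\mathscr{R}^{V,\lambda}_{G,L}$ of \S\ref{ss:restriction-DX}, transported to $\Hom$-spaces through Lemma~\ref{lem:fixed-points-morphisms}, both the construction of $v$ and the verification of \eqref{eqn:specialization-Theta} reduce to the case where $G$ has semisimple rank one. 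There, after inverting $\hb$ one identifies $\Hom_{-\uloc(\g)}(\MMloc(\mu),V\o\MMloc(\mu+\la))$ with $\Hom_{U(\g)}(\V(-\mu-\rho),V\o\V(-\mu-\rho-\la))\o\khh$, identifies $\Psi^{V,\lambda}_{s,\mu}$ with classical composition with the Verma embedding $\iota^{s}_{\mu}$, and the assertion becomes the elementary fact that the renormalized $\mathfrak{sl}(2)$-intertwiner, read off on each weight line, is polynomial in the highest weight; this is carried out in Appendix~\ref{sec:rank1}. Granting this, $\C[\hb]$-linearity of $\phi\mapsto\Theta_s^{V,\lambda}(\phi)$ is immediate, $\sh$-linearity in the twisted sense $\Theta_s^{V,\lambda}(P\cdot\phi)=s(P)\cdot\Theta_s^{V,\lambda}(\phi)$ follows from $\Sp_\mu(P\cdot\phi)=P(\mu)\,\Sp_\mu(\phi)$, the identity $\bigl(s(P)\bigr)(s\mu)=P(\mu)$, and Corollary~\ref{cor:sp-morphism}, and the prefactor $(-\hb)^{\la(\alv)}$ has degree $2\la(\alv)=\la(2\rhov)-(s\la)(2\rhov)$, matching the discrepancy between the shifts $\langle\la(2\rhov)\rangle$ and $\langle(s\la)(2\rhov)\rangle$, so $\Theta_s^{V,\lambda}$ is homogeneous of degree $0$ after these shifts.

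The only non-formal point — and the step I expect to be the main obstacle — is the polynomiality-and-integrality claim used to produce $v$: that the renormalized specialized intertwiners $\theta_\mu(\phi)$ interpolate to a morphism defined over $\sh$, not merely over $\uloc(\g)$, and without poles in $\mu$. Everything else is bookkeeping with specializations, but this step genuinely rests on the explicit rank-one computation of Appendix~\ref{sec:rank1}, where one writes out the lowest-weight intertwiner $\V(\mu)\to V\o\V(\mu-\la)$, composes with $\iota^{s}_{\mu}$, and observes the exact cancellation of denominators produced by the descending-factorial prefactor.
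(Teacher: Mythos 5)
Your skeleton coincides with the paper's (uniqueness via Corollary~\ref{cor:sp-morphism}; existence by producing an interpolating vector $v\in V\o\M(s\la)$ and promoting it to a bimodule morphism via Corollary~\ref{cor:fixB}), and you correctly isolate the one non-formal point: that the renormalized specialized intertwiners are, coefficient by coefficient, values at $s\mu$ of fixed elements of $\sh$, with no poles. But your proposed justification of exactly that point is where the argument breaks: it cannot be obtained by ``reduction to the minimal Levi $L_\alpha$'' through the restriction morphisms. The maps $\rest^{V,\la}_{G,L}$ and $\restH^{V,\la}_{G,L}$ kill every PBW component of $V\o\M^G(\la)$ involving a root vector $f_\beta$ with $\beta\neq\alpha$; they are injective on the $\Hom$-spaces, which is why the paper can use them to verify identities between \emph{already constructed} operators (Lemma~\ref{lem:Thetas-invertible}, Proposition~\ref{prop:Phi-Theta}), but they are far from surjective and are very lossy on vectors, so they give no control over the components of the candidate $v$ along monomials $f^{\underline{k}}f_\alpha^i$ with $\underline{k}\neq 0$, and there is no way to lift the rank-one output back to $G$. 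Moreover the Appendix~\ref{sec:rank1} computations you invoke (Lemma~\ref{lem:Theta-rk1}, Corollary~\ref{cor:Theta-rk1}) \emph{presuppose} the existence of $\Theta_s$ and evaluate it on explicit generators; they do not prove the interpolation/polynomiality claim even in rank one. So the existence half of the proposition is not established by your argument.

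The paper closes this gap by a direct computation valid for arbitrary $G$, not by Levi reduction: writing $\phi(\bv_0)=\sum_{j,\underline{k},i}u_j\o P_{j,\underline{k},i}\o f^{\underline{k}}f_\alpha^i$ and $n(\mu):=-\mu(\alv)$, one multiplies $\Sp_\mu(\phi)(\bvv_\mu)$ on the right by $(-f_\alpha)^{n(\mu)}/(\hb^{n(\mu)}n(\mu)!)$ using the twisted right $\uh(\g)$-action, which produces the factors $(-\hb)^m\binom{n(\mu)}{m}f_\alpha^m\cdot u_j$; a mild extension of Lemma~\ref{lem:intertwiners-quantum}(2) (for the full $\g$, with $\mu(\alv)\leq 0$) kills all terms of $f_\alpha$-degree $<n(\la+\mu)$; and the key observation is that $\hb^m\binom{n(\mu)}{m}$ is the value at $s\mu$ of $\genfrac{(}{)}{0pt}{}{\alv}{m}_{\hbar}\in\sh$ while $P(\mu)=(sP)(s\mu)$, so the surviving coefficients are manifestly values of elements of $\sh$, yielding the explicit element \eqref{eqn:formula-Theta-s}; the leftover scalars $n(\mu)!$, $n(\la+\mu)!$, $(-\hb)^{n(\la)}$ are exactly what the descending-factorial prefactor in \eqref{eqn:specialization-Theta} absorbs. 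To repair your proof you must carry out this (or an equivalent) general-rank computation; the restriction-to-$L_\alpha$ shortcut, as stated, does not do the job.
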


\begin{proof}
Unicity follows from Corollary \ref{cor:sp-morphism}. Let us prove existence. Choose an enumeration of positive roots $\alpha_1, \cdots, \alpha_n$ such that $\alpha_n=\alpha$ and, for any $i=1, \cdots, n-1$, a non-zero vector $f_i \in \g_{-\alpha_i}$. (These choices can be arbitrary.) For any multi-index $\underline{k}=(k_1, \cdots, k_{n-1})$ we set $f^{\underline{k}}:=f_1^{k_1} \cdots f_{n-1}^{k_{n-1}} \in \uloc(\g)$. Then any vector of $\M(\la)$ can be written in a unique way in the form
\[
\sum_{\underline{k},i} P_{\underline{k},i} \otimes f^{\underline{k}} f_{\alpha}^i
\]
where $P_{\underline{k},i} \in {\sh}$ (and where only finitely many terms are non-zero). Choose also a basis $\{u_j,\, j\in J\}$ of $V$. To simplify notation, for $\mu \in \ft^*$ we set $n(\mu):=-\mu(\alv)$.

For $P$ in ${\sh}$ we set
\[
\genfrac{(}{)}{0pt}{}{P}{m}_{\hbar} \ := \ \frac{P(P-\hbar) \cdots (P-(m-1)\hbar)}{m(m-1) \cdots 1}.
\]
With this notation, for $\mu \in \bX$ such that $n(\mu) \geq m$ we have
\beq{eqn:binomial-hb}
\genfrac{(}{)}{0pt}{}{\alv}{m}_{\hbar} (s\mu) = \hbar^{m} \genfrac{(}{)}{0pt}{}{n(\mu)}{m}.
\eeq

Let $\phi : \M(0) \to V \otimes \M(\la)$ be a morphism of bimodules. Write
\[
\phi(\bv_0) = \sum_{j,\underline{k},i} u_j \otimes P_{j,\underline{k},i} \otimes f^{\underline{k}} f_{\alpha}^i
\]
where $P_{j,\underline{k},i} \in {\sh}$. Then for $\mu \in \bX$ such that $n(\mu)\geq 0$ we have
\[
\Sp_\mu(\phi)(\bvv_{\mu}) = \sum_{j,\underline{k},i} u_j \otimes P_{j,\underline{k},i}(\mu) \otimes f^{\underline{k}} f_{\alpha}^i \quad \in V \o \MMloc(\la+\mu),
\]
hence
\begin{multline*}
\Sp_\mu(\phi)(\bvv_{\mu}^s) = \Bigl( \sum_{j,\underline{k},i} u_j \otimes
P_{j,\underline{k},i}(\mu) \otimes f^{\underline{k}} f_{\alpha}^i \Bigr)
\cdot \frac{(-f_\alpha)^{n(\mu)}}{\hbar^{n(\mu)} n(\mu)!}\\  
= \frac{(-1)^{n(\mu)}}{\hbar^{n(\mu)} n(\mu)!} \left(
\sum_{m=0}^{n(\mu)} (-\hbar)^{m} \genfrac{(}{)}{0pt}{}{n(\mu)}{m}
\sum_{j,\underline{k},i} f_\alpha^{m} \cdot u_j \otimes
P_{j,\underline{k},i}(\mu) \otimes f^{\underline{k}}
f_{\alpha}^{i+n(\mu)-m} \right).
\end{multline*}
By Lemma \ref{lem:intertwiners-quantum}(2) (or more precisely an obvious generalization, when $w=s$, to the case $\mu(\alv) \leq 0$ instead of $\mu \in \bX^-$), if $\mu$ if sufficiently large then the terms for which $i+n(\mu)-m < n(\lambda+\mu)$ vanish. Hence we obtain that in this case $\Sp_\mu(\phi)(\bvv_{\mu}^s)$ equals
\[
\frac{(-1)^{n(\mu)}}{\hbar^{n(\mu)} n(\mu)!} \left(\sum_{\genfrac{}{}{0pt}{}{j,\underline{k},i,m}{0 \leq m \leq i-n(\lambda)}} (-\hbar)^{m} \genfrac{(}{)}{0pt}{}{n(\mu)}{m}  f_\alpha^{m} \cdot u_j \otimes P_{j,\underline{k},i}(\mu) \otimes f^{\underline{k}} f_{\alpha}^{i+n(\mu)-m} \right),
\]
i.e.~equals
\[
\frac{(-\hbar)^{n(\lambda)}n(\lambda+\mu)!}{n(\mu)!} (\id_V \o \mathbf{i}_{\lambda+\mu}^s) \Bigl( \sum_{\genfrac{}{}{0pt}{}{j,\underline{k},i,m}{0 \leq m \leq i-n(\lambda)}} (-\hbar)^{m} \genfrac{(}{)}{0pt}{}{n(\mu)}{m}  f_\alpha^{m} \cdot u_j  
\otimes P_{j,\underline{k},i}(\mu) \otimes f^{\underline{k}} f_{\alpha}^{i-n(\lambda)-m} \Bigr).
\]
Note that in this sum the indices do not depend on $\mu$. Hence we can consider the element
\begin{equation}
\label{eqn:formula-Theta-s}
x \ := \ \sum_{\genfrac{}{}{0pt}{}{j,\underline{k},i,m}{0 \leq m \leq i-n(\lambda)}} (-1)^m f_\alpha^{m} \cdot u_j
\otimes \genfrac{(}{)}{0pt}{}{\alv}{m}_{\hbar} s(P_{j,\underline{k},i}) \otimes f^{\underline{k}} f_{\alpha}^{i-n(\lambda)-m} \quad \in V \o \M(s\la).
\end{equation}

By construction (and using \eqref{eqn:binomial-hb}), for any $\mu \in \bX$ sufficiently large with $n(\mu)\geq 0$, $(\id_V \o \uSp_{s\mu})(x)$ is a multiple of $\Sp_\mu(\phi)(\bvv_{\mu}^s)$; hence it satisfies
\[
(\id_V \o \uSp_{s \mu})(x) \cdot b = \hb(s \mu+\rho)(b) \cdot (\id_V \o \uSp_{s \mu})(x)
\]
for all $b \in \b$. Hence by Corollary \ref{cor:fixB} there exists a unique $\psi \in \Hom_{(\sh,\uh(\g))}(\M(0),V \otimes \M(s \lambda))$ such that $x=\psi(\bv_0)$. We set $\Theta^{V,\la}_s(\phi):=\psi$.
With this definition, for any $\phi$ and any sufficiently large $\mu$ with $n(\mu) \geq 0$ we have
\[
\Sp_\mu(\phi) \circ \mathbf{i}_\mu^s = \frac{(-\hb)^{n(\lambda)} n(\lambda+\mu)!}{n(\mu)!} (\id_V \o \mathbf{i}^s_{\lambda+\mu}) \circ \Sp_{s\mu}(\Theta_s^{V,\lambda}(\phi)),
\]
which implies \eqref{eqn:specialization-Theta}.
\end{proof}

\subsection{Restriction to a Levi subgroup}
\label{ss:Levi-DX}

Fix a subset $I$ of the set of simple roots, and recall the notation of \S\ref{ss:restriction-DX}.
As in \S\ref{ss:restriction-DX} we can consider our constructions both for $G$ and for $L$; we add super- or subscripts to indicate which reductive group we consider.

The projection $\g \twoheadrightarrow \fl$,
along $\fn^+_L \oplus \fn_L^-$, is $L$-equivariant and it induces a
morphism of graded $\C[\hb]$-modules $\pi^G_L : \uh(\g) \twoheadrightarrow \uh(\g)
/ (\fn_L^+ \cdot \uh(\g) + \uh(\g) \cdot \fn_L^-) \cong  \uh(\l)$.
%
We consider the
morphism of graded $\sh$-modules
\begin{equation}
\label{eqn:Verma-restriction}
\M^G(\la) \to \M^L(\la)
\end{equation}
sending $p \o u \in \M^G(\la)$ to $p \o \bigl( \imath^G_L \circ \pi^G_L(u) \bigr)$.
One can easily check that \eqref{eqn:Verma-restriction} is also a
morphism of $B_L$-modules, so that it induces a morphism of graded
$\C[\hb]$-modules
\begin{equation}
\label{eqn:vic2}
\rest^{V,\la}_{G,L} : \bigl( V \o \M^G(\la) \bigr)^B \to \bigl(V_{|L} \o \M^L(\la) \bigr)^{B_L}
\end{equation}\index{Rest1@$\rest^{V,\la}_{G,L}$}%
for any $V$ in $\Rep(G)$ and $\la \in \bX$.

Note that if $K \subset L$ is a smaller Levi subgroup containing $T$, then for any $V$ in $\Rep(G)$ and $\la \in \bX$ we have
\begin{equation}
\label{eqn:rest}
\rest^{V_{|L},\la}_{L,K} \circ \rest^{V,\la}_{G,L} = \rest^{V,\la}_{G,K}.
\end{equation}
If $L=T$, then we have $\M^T(\la) = \sh \llangle \la \rrangle$, where $B_T=T$ acts via $-\la$. Hence we have an isomorphism of graded $\sh$-modules
\[
\bigl(V_{|T} \o \M^T(\la) \bigr)^{B_T} = V_\la \o \sh.
\]
Under this isomorphism, one can easily check that $\rest^{V,\la}_{G,T} = \kalg_{V,\la}$.

Using Lemma \ref{lem:morphisms-Verma-v}, $\rest^{V,\la}_{G,L}$ induces a morphism of graded $\sh$-modules\index{Rest2@$\restH^{V,\la}_{G,L}$}%
\[
\restH^{V,\la}_{G,L} : \Hom_{(\sh,\uh(\g))} \bigl( \M^G(0), V \o \M^G(\la) \bigr) \to \Hom_{(\sh,\uh(\l))} \bigl( \M^L(0), V_{|L} \o \M^L(\la) \bigr).
\]

\begin{lem}
\label{lem:restH-injective}
For all $V$ in $\Rep(G)$ and $\la \in \bX$, the morphism $\restH^{V,\la}_{G,L}$ is injective.
\end{lem}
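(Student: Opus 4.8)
The plan is to reduce the statement to the injectivity of the single morphism $\kalg_{V,\la}$ of~\eqref{eqn:equivariant-cohomology-3}, and then to establish the latter by a specialization argument. First, by Lemma~\ref{lem:morphisms-Verma-v} applied to both $\g$ and $\l$, the morphism $\restH^{V,\la}_{G,L}$ is injective iff the morphism $\rest^{V,\la}_{G,L} \colon \bigl(V\otimes\M^G(\la)\bigr)^B \to \bigl(V_{|L}\otimes\M^L(\la)\bigr)^{B_L}$ is injective. Applying the transitivity relation~\eqref{eqn:rest} with $K=T$ gives $\rest^{V_{|L},\la}_{L,T}\circ\rest^{V,\la}_{G,L} = \rest^{V,\la}_{G,T}$, and the right-hand side is identified with $\kalg_{V,\la}$ by the discussion preceding the statement. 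Hence it suffices to prove that $\kalg_{V,\la}$ is injective for every $V$ in $\Rep(G)$ and every $\la\in\bX$.

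To prove this I would use Lemma~\ref{lem:morphisms-Verma-v} to view an element of the source of $\kalg_{V,\la}$ as a bimodule morphism $\phi\in\Hom_{(\sh,\uh(\g))}(\M(0),V\otimes\M(\la))$, set $m=\phi(\bv_0)$, and combine the specialization maps of \S\ref{ss:specialization} with the expectation-value maps of \S\ref{ss:asymptotic-Verma}. Decomposing $\M(\la)=\sh\cdot\bv_\la\oplus\ker(p_\la)$ as graded left $\sh$-modules, the vector $\kalg_{V,\la}(m)\in V_\la\otimes\sh$ is precisely the $\bv_\la$-component of $m$. Since $\uSp_\mu$ is right $\uh(\g)$-linear with $\uSp_\mu(\bv_\la)=\bvv_{\la+\mu}$ and sends $P\cdot\bv_\la$ to $P(\mu)\cdot\bvv_{\la+\mu}$ for $P\in\sh$, it carries $\sh\cdot\bv_\la$ onto $\C[\hb,\hb^{-1}]\cdot\bvv_{\la+\mu}$ via $P\mapsto P(\mu)$, and carries $\ker(p_\la)$ — which is spanned over $\sh$ by the $\bv_\la\cdot z$ with $z\in\u^-\uh(\u^-)$ — into $\MMloc(\la+\mu)_-$. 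Combined with the commutative square in the proof of Corollary~\ref{cor:sp-morphism} (which identifies $\Sp_\mu(\phi)(\bvv_\mu)$ with $(\id_V\otimes\uSp_\mu)(m)$), this shows that for every $\mu\in\t^*$ the expectation value $\mathsf{E}^{V,\la}_\mu(\Sp_\mu(\phi))$ is the image of $\kalg_{V,\la}(m)$ under the specialization $\id_{V_\la}\otimes(P\mapsto P(\mu))\colon V_\la\otimes\sh\to V_\la\otimes\C[\hb,\hb^{-1}]$. Consequently, if $\kalg_{V,\la}(m)=0$ then $\mathsf{E}^{V,\la}_\mu(\Sp_\mu(\phi))=0$ for all $\mu$; taking $\mu\in\bX^-$ sufficiently large, Lemma~\ref{lem:intertwiners-quantum}(1) shows $\mathsf{E}^{V,\la}_\mu$ is an isomorphism, so $\Sp_\mu(\phi)=0$ for all such $\mu$, and Corollary~\ref{cor:sp-morphism} (with $w=\mathrm{id}$) forces $\phi=0$.

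The only step that is not purely formal is the compatibility $\mathsf{E}^{V,\la}_\mu\circ\Sp_\mu=(\text{specialization at }\mu)\circ\kalg_{V,\la}$; this is a routine check that unwinds the definitions of $p_\la$, of the isomorphism~\eqref{eqn:specialization-M}, and of the expectation value, and observes that the coefficient of the highest weight vector is matched throughout — a short bookkeeping with PBW bases. Everything else is a formal consequence of results already in place: Lemma~\ref{lem:morphisms-Verma-v}, the relation~\eqref{eqn:rest}, the identification $\rest^{V,\la}_{G,T}=\kalg_{V,\la}$, Lemma~\ref{lem:intertwiners-quantum}(1), and Corollary~\ref{cor:sp-morphism}. (Alternatively one could argue directly that $\restH^{V,\la}_{G,L}$ commutes with specialization and that the specialized restriction map is injective for $\mu$ large, by comparing expectation values of intertwiners over $\g$ and over $\l$; but the reduction to the case $L=T$ above is shorter.)
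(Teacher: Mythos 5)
Your proof is correct and follows essentially the same route as the paper's: reduce to $L=T$ via the transitivity relation~\eqref{eqn:rest}, observe the compatibility $\Sp_\mu^T\circ\restH^{V,\la}_{G,T}=\mathsf{E}^{V,\la}_\mu\circ\Sp_\mu^G$, and combine Lemma~\ref{lem:intertwiners-quantum}(1) with Corollary~\ref{cor:sp-morphism} to conclude. You merely spell out the reduction (via Lemma~\ref{lem:morphisms-Verma-v} and the identification $\rest^{V,\la}_{G,T}=\kalg_{V,\la}$) and the compatibility check in more detail than the paper's terse ``by construction.''
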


\begin{proof}
By \eqref{eqn:rest}, it is enough to prove the claim when $L=T$. Then by construction, if we identify $\Hom_{-\uloc(\t)} \bigl( \MMloc^T(0), V_{|T} \o \MMloc^T(\la) \bigr)$ with $V_\la \o \C[\hb,\hb^{-1}]$ in the natural way, for any $\mu \in \t^*$ we have $\Sp_\mu^T \circ \restH^{V,\la}_{G,T} = \mathsf{E}^{V,\la}_\mu \circ \Sp_\mu^G$.

Let $\phi \in \Hom_{(\sh,\uh(\g))} \bigl( \M^G(0), V \o \M^G(\la) \bigr)$ be such that $\restH^{V,\la}_{G,T}(\phi)=0$. Then for any $\mu \in \t^*$ we have $\mathsf{E}^{V,\la}_\mu ( \Sp_\mu^G(\phi))=0$. Using Lemma \ref{lem:intertwiners-quantum}(1) we deduce that for $\mu$ sufficient large we have $\Sp_\mu^G(\phi)=0$. By Corollary \ref{cor:sp-morphism} we deduce that $\phi=0$, which finishes the proof.
\end{proof}

The following result follows from construction (see the proof of Proposition \ref{prop:definition-Theta}), using the fact that if $\alpha$ is a simple root of $L$ then $[f_\alpha,\fn_L^-] \subset \fn_L^-$. For simplicity, in this statement we neglect the gradings.

\begin{lem}
\label{lem:Theta-rest-Levi}
Let $V$ in $\Rep(G)$ and $\la \in \bX$. If $\alpha \in I$ the following diagram commutes:
\[
\xymatrix@C=2cm{
\Hom_{(\sh,\uh(\g))} \bigl( \M^G(0), V \o \M^G(\la) \bigr) \ar[r]^-{{}^G \Theta_{s_\alpha}^{V,\la}} \ar[d]_-{\restH^{V,\la}_{G,L}} & {}^{s_\alpha} \hspace{-1pt} \Hom_{(\sh,\uh(\g))} \bigl( \M^G(0), V \o \M^G(s_\alpha \la) \bigr) \ar[d]^-{{}^{s_\alpha} \hspace{-1pt} \restH^{V,s_\alpha \la}_{G,L}} \\
\Hom_{(\sh,\uh(\l))} \bigl( \M^L(0), V_{|L} \o \M^L(\la) \bigr) \ar[r]^-{{}^L \Theta_{s_\alpha}^{V_{|L},\la}} & {}^{s_\alpha} \hspace{-1pt} \Hom_{(\sh,\uh(\l))} \bigl( \M^L(0), V_{|L} \o \M^L(s_\alpha \la) \bigr).
}
\]
\end{lem}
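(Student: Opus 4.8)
Since the statement neglects gradings, I will work with ungraded $\sh$-modules. The plan is to characterise both composites in the square by their specializations, exactly as $\Theta^{V,\la}_{s_\alpha}$ was characterised in Proposition~\ref{prop:definition-Theta}, and then match them. As $\alpha\in I$ we have $s_\alpha\in W_L$, and $L$ is reductive with maximal torus $T$, so all of \S\ref{ss:asymptotic-Verma}--\S\ref{ss:specialization} is available over $L$. Both composites
\[
{}^{s_\alpha}\restH^{V,s_\alpha\la}_{G,L}\circ{}^G\Theta^{V,\la}_{s_\alpha}, \qquad {}^L\Theta^{V_{|L},\la}_{s_\alpha}\circ\restH^{V,\la}_{G,L}
\]
are morphisms into $\Hom_{(\sh,\uh(\l))}\bigl(\M^L(0),V_{|L}\o\M^L(s_\alpha\la)\bigr)$, so by the argument of Corollary~\ref{cor:sp-morphism} over $L$ — which only requires the difference of two such morphisms to vanish under $\Sp^L_\nu$ for $\nu$ ranging over a Zariski-dense set of weights — it is enough to prove that they agree after composing with $\Sp^L_{s_\alpha\mu}$ for every $\mu\in\bX$ sufficiently large with $\mu(\alv)<0$. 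Such $\mu$ form a Zariski-dense subset of $\t^*$, and for them the formula \eqref{eqn:specialization-Theta} is available over both $G$ and $L$.

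The first ingredient I would establish is compatibility of specialization with restriction: under the identification $\phi\mapsto\phi(\bv_0)$ of Lemma~\ref{lem:morphisms-Verma-v}, $\Sp_\mu$ becomes $\id_V\o\uSp_\mu$ and $\restH^{V,\la}_{G,L}$ becomes $\id_V$ tensored with \eqref{eqn:Verma-restriction}, and one has $\uSp^L_\mu\circ\eqref{eqn:Verma-restriction}=(\text{localization of }\eqref{eqn:Verma-restriction})\circ\uSp^G_\mu$; so $\Sp^L_\mu\circ\restH^{V,\la}_{G,L}$ is obtained from $\Sp^G_\mu$ by composing on the target with the map $V\o\MMloc^G(\la+\mu)\to V_{|L}\o\MMloc^L(\la+\mu)$ induced by the localization of \eqref{eqn:Verma-restriction}. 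This is immediate from the definitions: both $\restH^{V,\la}_{G,L}$ and $\Sp_\mu$ are built from the projection $\pi^G_L$, the twist $\imath^G_L$, and base change of $\sh$ at $\mu$, which commute. Here one uses that $\rho_G-\rho_L$, the half-sum of the roots of $\fn_L^+$, is $W_L$-invariant (because $s_\alpha$ permutes $R^+\smallsetminus\Z I$ for $\alpha\in I$), so $\imath^G_L$ is unaffected by the $s_\alpha$-twist, and that $\imath^G_L$ fixes the root vectors $f_\beta$, $\beta\in I$, since $\rho_G-\rho_L$ vanishes on $[\l,\l]$.

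The second ingredient is compatibility of the operators $\Psi^{V,\la}_{s_\alpha,\mu}$ with restriction, and this is exactly where $\alpha\in I$ enters. The projection $\MMloc^G(\mu)\to\MMloc^L(\mu)$ underlying \eqref{eqn:Verma-restriction} is right $\uloc(\l)$-linear — because $\fn_L^+\uh(\g)+\uh(\g)\fn_L^-$ is stable under right multiplication by $\uh(\l)$ (as $\l$ normalizes $\fn_L^-$) and $\imath^G_L$ fixes the $f_\beta$, $\beta\in I$. Since $f_\alpha\in\l$, this projection is compatible with right multiplication by $f_\alpha^{n}$; hence the embedding $\mathbf{i}^{s_\alpha}_\mu:\MMloc^G(s_\alpha\mu)\hookrightarrow\MMloc^G(\mu)$, which sends $\bvv_{s_\alpha\mu}$ to $\bvv_\mu\cdot\frac{(-f_\alpha)^n}{\hb^n n!}$ with $n=-\mu(\alv)$, is compatible with the projections $\MMloc^G(\,\cdot\,)\to\MMloc^L(\,\cdot\,)$ (both $\bvv^G_\mu$ and $\bvv^G_{s_\alpha\mu}$ mapping to their $L$-counterparts) and restricts to the corresponding embedding over $L$. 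Therefore the defining relation $\phi\circ\mathbf{i}^{s_\alpha}_\mu=(\mathbf{i}^{s_\alpha}_{\la+\mu}\o\id_V)\circ\Psi^{V,\la}_{s_\alpha,\mu}(\phi)$ passes to $L$, giving the compatibility of $\Psi^{V,\la}_{s_\alpha,\mu}$ with $\Psi^{V_{|L},\la}_{s_\alpha,\mu}$ under the restriction maps.

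Granting these, the argument assembles directly: applying $\Sp^L_{s_\alpha\mu}$ to ${}^{s_\alpha}\restH^{V,s_\alpha\la}_{G,L}\circ{}^G\Theta^{V,\la}_{s_\alpha}$ and using the first ingredient, then \eqref{eqn:specialization-Theta} over $G$, then the second ingredient and the first once more, one reaches the scalar of \eqref{eqn:specialization-Theta} times $\Psi^{V_{|L},\la}_{s_\alpha,\mu}\circ\Sp^L_\mu\circ\restH^{V,\la}_{G,L}$; since that scalar depends only on $\la(\alv)$ and $\mu(\alv)$, it is the same for $G$ and for $L$, so \eqref{eqn:specialization-Theta} over $L$ identifies the result with $\Sp^L_{s_\alpha\mu}\circ{}^L\Theta^{V_{|L},\la}_{s_\alpha}\circ\restH^{V,\la}_{G,L}$, as required. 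I expect the main obstacle to be purely one of bookkeeping — carrying the twist $\imath^G_L$ (the $\rho_G-\rho_L$ shift hidden in $\restH^{V,\la}_{G,L}$) through, and checking that $\Sp_\mu$, $\restH^{V,\la}_{G,L}$, $\mathbf{i}^{s_\alpha}_\mu$ and $\Psi^{V,\la}_{s_\alpha,\mu}$ are all governed by the single projection $\pi^G_L$ — the one substantive input being that $f_\alpha$ normalizes $\fn_L^-$ for $\alpha\in I$. Alternatively, one could bypass specialization and compare the explicit formula \eqref{eqn:formula-Theta-s} for ${}^G\Theta^{V,\la}_{s_\alpha}(\phi)(\bv_0)$ with its analogue over $L$, using PBW bases of $\M^G(\la)$ and $\M^G(s_\alpha\la)$ adapted to $\g=\fn_L^+\oplus\l\oplus\fn_L^-$ so that \eqref{eqn:Verma-restriction} simply discards the monomials involving $\fn_L^-$; the compatibility is then visible term by term.
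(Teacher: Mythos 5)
Your proof is correct, but it takes a genuinely different route from the paper's. The paper's proof, stated in a single sentence immediately before the lemma, is essentially the \emph{alternative} you sketch at the end: work directly with the explicit formula~\eqref{eqn:formula-Theta-s}, choose the enumeration of positive roots so that the PBW monomials $f^{\underline{k}}f_\alpha^i$ are adapted to $\g=\fn_L^+\oplus\l\oplus\fn_L^-$, and observe that $\restH^{V,\la}_{G,L}$ simply discards the monomials carrying an $\fn_L^-$-factor; the hypothesis $\alpha\in I$ enters through $[f_\alpha,\fn_L^-]\subset\fn_L^-$, which ensures that right-multiplying by $f_\alpha$ never moves a monomial between the ``kept'' and ``discarded'' classes, so that the surviving terms of ${}^G\Theta^{V,\la}_{s_\alpha}(\phi)$ are precisely the terms of ${}^L\Theta^{V_{|L},\la}_{s_\alpha}$ applied to $\restH^{V,\la}_{G,L}(\phi)$. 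Your main argument instead reruns the specialization formalism of \S\ref{ss:specialization} over $L$: restriction intertwines $\Sp^G_\mu$ with $\Sp^L_\mu$; the projections $\MMloc^G(\nu)\to\MMloc^L(\nu)$ are right $\uloc(\l)$-linear (again from $[\l,\fn_L^-]\subset\fn_L^-$) and hence carry $\mathbf{i}^{s_\alpha}_\mu$ to its $L$-analogue; therefore $\Psi^{V,\la}_{s_\alpha,\mu}$ commutes with restriction; and since the normalizing scalar in~\eqref{eqn:specialization-Theta} depends only on $\la(\alv)$ and $\mu(\alv)$, it is the same for $G$ and $L$, so the uniqueness in Proposition~\ref{prop:definition-Theta} (via Corollary~\ref{cor:sp-morphism} over $L$, which indeed only needs a Zariski-dense set of specializations) forces the square to commute. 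Both routes hinge on the same Lie-theoretic facts --- $f_\alpha\in\l$ normalizes $\fn_L^-$, and $\imath^G_L$ fixes $f_\alpha$ because $\rho_G-\rho_L$ vanishes on $[\l,\l]$ --- but the paper's is a one-step term-by-term comparison, whereas yours is characterization-by-specialization: longer, yet independent of any choice of PBW order, and it isolates as a reusable sublemma the compatibility of $\restH$ with $\Sp_\mu$ and with the embeddings $\mathbf{i}^{w}_\mu$.
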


\subsection{Definition of operators $\Theta_w$}
\label{ss:def-Theta}

\begin{lem}
\label{lem:Thetas-invertible}
Let $V$ in $\Rep(G)$, and $\la \in \bX$.
For any simple root $\alpha$, we have
\[
{}^{s_\alpha} \hspace{-1pt} \bigl( \Theta_{s_\alpha}^{V,s_\alpha \la} \bigr) \circ \Theta_{s_\alpha}^{V,\la} = \id
\]
as endomorphisms of $\Hom_{(\sh,\uh(\g))} \bigl( \M(0), V \o \M(\la) \bigr) \langle \la(2\rhov) \rangle$. In particular, $\Theta_s^{V,\la}$ is an isomorphism.
\end{lem}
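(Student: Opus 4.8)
The statement asserts that the operator $\Theta_{s_\alpha}^{V,\lambda}$ is invertible, with inverse ${}^{s_\alpha}(\Theta_{s_\alpha}^{V,s_\alpha\lambda})$, i.e.\ that applying the simple-reflection operator twice returns the identity. The natural strategy is to reduce everything to the defining specialization formula \eqref{eqn:specialization-Theta} and then to the corresponding property of the ``quantum intertwiner'' operators $\Psi^{V,\mu}_{s,\mu}$. First I would invoke Corollary \ref{cor:sp-morphism}: two morphisms in $\Hom_{(\sh,\uh(\g))}(\M(0), V\otimes\M(\la))$ agree provided their specializations $\Sp_{\nu}$ agree for all $\nu$ in some $w(\bX^-)$ that is sufficiently large. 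Thus it suffices to show that $\Sp_{\mu}\bigl({}^{s_\alpha}(\Theta_{s_\alpha}^{V,s_\alpha\lambda})\circ\Theta_{s_\alpha}^{V,\lambda}(\phi)\bigr) = \Sp_\mu(\phi)$ for $\mu\in\bX$ sufficiently large with $\mu(\alv)<0$ (so that $s_\alpha\mu$ is sufficiently large with $(s_\alpha\mu)(\alv)>0$, which is the regime where the specialization formula for the second application of $\Theta$ applies).

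The key computational input is that $\mathbf{i}_{\mu}^{s_\alpha}$ and its counterpart $\mathbf{i}_{s_\alpha\mu}^{s_\alpha}$ are, up to the explicit scalar appearing in \S\ref{ss:asymptotic-Verma}, mutually inverse embeddings on the relevant weight pieces; concretely, the composition $\mathbf{i}_{s_\alpha\mu}^{s_\alpha}\circ\mathbf{i}_{\mu}^{s_\alpha}$ (which lands back in $\MMloc(\mu)$) is multiplication by a product of the form $\prod_{j}(\cdots)$ in $\C[\hb,\hb^{-1}]$ — essentially because $f_\alpha^n$ and $f_\alpha^{n'}$ act on the $\mathfrak{sl}_2$-string through $\bvv_\mu$ and the structure constants are the standard binomial/factorial expressions. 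Correspondingly $\Psi^{V,s_\alpha\lambda}_{s_\alpha, s_\alpha\mu}\circ\Psi^{V,\lambda}_{s_\alpha,\mu}$ is the identity: this follows from the characterizing property $\phi\circ\mathbf{i}_\mu^{s_\alpha} = (\mathbf{i}_{\lambda+\mu}^{s_\alpha}\otimes\id_V)\circ\Psi^{V,\lambda}_{s_\alpha,\mu}(\phi)$ applied twice, together with injectivity of $\mathbf{i}$ (Lemma \ref{lem:intertwiners-quantum}). Feeding this into \eqref{eqn:specialization-Theta} — once in the ``$\lambda(\alv)\geq 0$'' branch and once in the ``$\lambda(\alv)\leq 0$'' branch, or vice versa, since $s_\alpha\lambda$ pairs with $\alv$ to $-\lambda(\alv)$ — the two explicit scalar prefactors are reciprocal to each other (the product $(-\hbar)^{\lambda(\alv)}(-\mu(\alv))\cdots(-\mu(\alv)-\lambda(\alv)+1)$ for the first step and its reciprocal-type expression for the second step, evaluated at the shifted argument $s_\alpha\mu$), so they cancel and one is left with $\Sp_\mu(\phi)$, exactly as desired.

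Two bookkeeping points will require care. First, one must check that the sufficiently-large conditions are compatible: we need $\mu$ large with $\mu(\alv)<0$ for the first application and simultaneously $s_\alpha\mu$ large with $(s_\alpha\mu)(\alv)<0$ for the second — but $(s_\alpha\mu)(\alv) = -\mu(\alv)>0$, so actually the second application uses the ``other'' regime, which is why I said one uses \eqref{eqn:specialization-Theta} once in each branch. I need to track that the defining formula for $\Theta_{s_\alpha}^{V,s_\alpha\lambda}$ (applied after twisting by $s_\alpha$) is indeed the one valid when the weight pairs negatively with $\alv$, i.e.\ the roles of $\mu$ and $s_\alpha\mu$ swap cleanly, and there is a set of $\mu$'s of the form $s_\alpha$ of an antidominant cone that satisfies both. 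Since the claimed identity is between honest morphisms of bimodules (not just their localizations), once it holds after $\Sp_\mu$ for infinitely many large $\mu$ in such a cone, Corollary \ref{cor:sp-morphism} upgrades it to the genuine equality. Second, the grading shifts $\langle\la(2\rhov)\rangle$ and $\langle(s_\alpha\la)(2\rhov)\rangle$: composing ${}^{s_\alpha}\Theta_{s_\alpha}^{V,s_\alpha\la}$ after $\Theta_{s_\alpha}^{V,\la}$ returns to the shift $\langle\la(2\rhov)\rangle$ since $s_\alpha$ permutes things back, so this is automatically consistent and needs only a one-line remark.

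\textbf{Main obstacle.} The conceptual content is easy; the real work is the algebraic identity that the two explicit rational prefactors in \eqref{eqn:specialization-Theta}, when composed with the $\mu\mapsto s_\alpha\mu$ substitution, multiply to $1$, together with verifying $\Psi^{V,s_\alpha\lambda}_{s_\alpha,s_\alpha\mu}\circ\Psi^{V,\lambda}_{s_\alpha,\mu}=\id$. The latter is where I expect to spend the most effort: it amounts to the $\mathfrak{sl}_2$ fact that the two string-embeddings $\mathbf{i}_\mu^{s_\alpha}$ and $\mathbf{i}_{s_\alpha\mu}^{s_\alpha}$ compose to a scalar and that this scalar is exactly cancelled by the normalizing factorials hidden in the definition of the $\bvv^{w}$ vectors — a computation morally identical to, but needing to be extracted from, the proof of Proposition \ref{prop:definition-Theta}. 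I would isolate this as a short lemma about $\Psi^{V,\lambda}_{s_\alpha,\mu}$ before assembling the main argument.
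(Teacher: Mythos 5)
There is a genuine gap in your plan, and it is precisely at the point you flagged as a bookkeeping worry. The characterization \eqref{eqn:specialization-Theta} only controls $\Sp_{s\mu}\circ\Theta_{s_\alpha}^{V,\lambda}$ for $\mu$ sufficiently large with $\mu(\alv)<0$, i.e.\ it tells you $\Sp_\nu(\Theta_{s_\alpha}^{V,\lambda}(\phi))$ only at specialization points $\nu=s_\alpha\mu$ with $\nu(\alv)>0$. To feed $\psi=\Theta_{s_\alpha}^{V,\lambda}(\phi)$ into the \emph{same} characterization for $\Theta_{s_\alpha}^{V,s_\alpha\lambda}$ you need $\Sp_\nu(\psi)$ at points $\nu$ with $\nu(\alv)<0$ — the opposite regime, about which \eqref{eqn:specialization-Theta} is silent. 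Relatedly, the object $\Psi^{V,s_\alpha\lambda}_{s_\alpha,s_\alpha\mu}$ you want to compose with $\Psi^{V,\lambda}_{s_\alpha,\mu}$ does not exist when $\mu(\alv)<0$: its construction requires the embedding $\mathbf{i}^{s_\alpha}_{s_\alpha\mu}:\MMloc(\mu)\hookrightarrow\MMloc(s_\alpha\mu)$, which needs $(s_\alpha\mu)(\alv)\le 0$, i.e.\ $\mu(\alv)\ge 0$. Since $\mathbf{i}^{s_\alpha}_\mu$ is a proper inclusion (not an isomorphism) when $\mu(\alv)<0$, there is no ``reciprocal'' embedding to compose it with, and the proposed cancellation of scalar prefactors never gets off the ground. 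The ``swap cleanly'' step is exactly where the argument breaks.

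The paper avoids this issue entirely by a different route. It first shows (Lemma \ref{lem:Theta-rest-Levi}) that $\Theta_{s_\alpha}$ is compatible with restriction to the rank-one Levi $L_\alpha$, and that restriction $\restH^{V,\lambda}_{G,L_\alpha}$ is injective (Lemma \ref{lem:restH-injective}); this reduces the claim to the case $G=L_\alpha$ of semisimple rank one. In rank one the identity is then verified directly (Corollary \ref{cor:Theta-rk1}, via Lemma \ref{lem:Theta-rk1}) using the \emph{explicit formula} \eqref{eqn:formula-Theta-s} for $\Theta_{s_\alpha}^{V,\lambda}(\phi)(\bv_0)$ from the existence part of Proposition \ref{prop:definition-Theta} — not the specialization characterization. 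If you want to salvage a specialization-based argument, you would have to either extract the analogue of \eqref{eqn:specialization-Theta} in the complementary regime $\nu(\alv)<0$ directly from \eqref{eqn:formula-Theta-s}, or bypass $\Psi$ altogether and compare coefficients in \eqref{eqn:formula-Theta-s}; in either case you end up essentially reproducing the rank-one computation, at which point the paper's Levi reduction is the cleaner packaging.
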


\begin{proof}
Let $L_\alpha$ be the Levi subgroup containing $T$ with roots $\{\alpha,-\alpha\}$.
Using Lemma \ref{lem:Theta-rest-Levi} and Lemma
\ref{lem:restH-injective} it is enough to prove the equality when
$G=L_\alpha$. In this case it is checked in Corollary \ref{cor:Theta-rk1} below.
\end{proof}

Let again $V$ in $\Rep(G)$ and $\la \in \bX$. Let $w \in W$, and choose a reduced expression $w=s_k \cdots s_1$. We define the isomorphism of graded $\sh$-modules
\begin{multline*}
\Theta_w^{V,\la} : \Hom_{(\sh,\uh(\g))} \bigl( \M(0), V \o \M(\la) \bigr) \langle \la(2\rhov) \rangle \\
\simto \w \Hom_{(\sh,\uh(\g))} \bigl( \M(0), V \o \M(w\la) \bigr) \langle (w\la)(2\rhov) \rangle
\end{multline*}
by the formula
\[
\Theta_w^{V,\la} := {}^{(s_{k-1} \cdots s_1)} \hspace{-1pt} \Theta_{s_k}^{V,s_{k-1} \cdots s_1(\la)} \circ \cdots \circ {}^{s_1} \hspace{-1pt} \Theta_{s_2}^{V,s_1 \la} \circ \Theta_{s_1}^{V,\la}.
\]

\begin{lem}
The operator $\Theta_w^{V,\la}$ does not depend on the choice of the reduced decomposition.
\end{lem}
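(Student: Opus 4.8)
The plan is to prove independence of the reduced expression by the standard argument via Matsumoto's theorem: it suffices to check that the operators $\Theta_{s_\alpha}^{V,\cdot}$ satisfy the braid relations, since any two reduced expressions for $w$ are connected by a sequence of braid moves (and no use of the quadratic relation $s_\alpha^2 = 1$ is needed, though that relation is already available from Lemma \ref{lem:Thetas-invertible}). Concretely, for two simple roots $\alpha, \beta$ with $s_\alpha s_\beta$ of order $m \in \{2,3,4,6\}$, writing $w_0^{\alpha\beta} := s_\alpha s_\beta s_\alpha \cdots$ ($m$ factors) $= s_\beta s_\alpha s_\beta \cdots$ ($m$ factors), I must show that the composite of the $\Theta$'s along the first word equals the composite along the second word, as isomorphisms $\Hom_{(\sh,\uh(\g))}(\M(0), V \o \M(\la)) \to \w \Hom_{(\sh,\uh(\g))}(\M(0), V \o \M(w_0^{\alpha\beta}\la))$ where $w = w_0^{\alpha\beta}$ (and where intermediate weights $s_\beta\la$, $s_\alpha s_\beta \la$, etc., appear in the obvious pattern for each word).

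The key reduction is to the rank-two case. Let $L$ be the Levi subgroup containing $T$ whose roots are those in $\Z\alpha + \Z\beta$; then $W_L$ is the dihedral group generated by $s_\alpha, s_\beta$, and every factor appearing in either braid word lies in $W_L$. By iterating Lemma \ref{lem:Theta-rest-Levi} (one application per simple reflection, taking $I \subset \{\alpha,\beta\}$ appropriately — actually $I = \{\alpha,\beta\}$ throughout), both composites intertwine, via $\restH^{V,\bullet}_{G,L}$, with the corresponding composites of the operators ${}^L\Theta_{s_\gamma}^{V_{|L},\bullet}$ built from the group $L$. More precisely, one gets a commutative diagram whose left vertical arrow is $\restH^{V,\la}_{G,L}$, whose right vertical arrow is ${}^w\restH^{V,w\la}_{G,L}$, whose top and bottom rows are the two braid-word composites on the $G$-side and $L$-side respectively; chasing it shows that if the braid relation holds for $L$ then, since $\restH^{V,\la}_{G,L}$ is injective (Lemma \ref{lem:restH-injective}), it holds for $G$. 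Thus it remains to prove the braid relations when $G$ itself has semisimple rank $\leq 2$.

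For the rank-two verification I would invoke the explicit computations collected in the rank-one/rank-two appendix (Appendix \ref{sec:rank1}): in semisimple rank $\le 1$ the only braid relation is the trivial (empty) one for commuting generators when the rank is exactly $2$ but the roots are orthogonal, i.e. $G$ has type $A_1 \times A_1$, in which case $\M(\la)$ and the operators $\Theta_{s_\alpha}, \Theta_{s_\beta}$ factor through the two $\mathrm{SL}_2$-factors and commute by inspection. For the genuinely rank-two types $A_2$, $B_2=C_2$, $G_2$ one uses the uniqueness clause of Proposition \ref{prop:definition-Theta}: both braid-word composites are morphisms of graded $\sh$-modules with the same source and target, and by Corollary \ref{cor:sp-morphism} it suffices to check that they agree after applying $\Sp_{w'\mu}$ for all $w' = w_0^{\alpha\beta}$ and all $\mu \in (w_0^{\alpha\beta})^{-1}(\bX^-)$ (equivalently $\mu \in \bX^-$, after relabelling) sufficiently large. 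After localizing $\hbar$ and using the isomorphism $\uloc(\g)^{\mathrm{op}} \cong U(\g) \o \C[\hbar,\hbar^{-1}]$, the specialized operators $\Psi^{V,\bullet}_{s_\gamma,\bullet}$ are, up to the explicit scalar factors recorded in \eqref{eqn:specialization-Theta}, the classical intertwiner operators of \S\ref{ss:reminder-Verma}; and the fact that those classical operators satisfy the braid relations is the well-known statement that the \emph{dynamical Weyl group} is well defined (see \cite{tv,ev}), which at the level of the rank-two computation amounts to the cited identities in \emph{loc.~cit.} Combined with the fact that the product of scalar prefactors depends only on $w$ and not on the word (each is a product over the inversion set of $w$, by the cocycle property built into the definition), one concludes the two composites agree after every relevant specialization, hence are equal.

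The main obstacle I anticipate is bookkeeping: matching the scalar normalization factors in \eqref{eqn:specialization-Theta} along a braid word and verifying that the product telescopes to something word-independent (this is where the identity $\y(\Theta^{V,y\la}_x)\circ\Theta^{V,\la}_y = \Theta^{V,\la}_{xy}$, once \emph{known} for the operators built from simple reflections — which is essentially the content of this very lemma together with Lemma \ref{lem:Thetas-invertible} — must be used carefully to avoid circularity). One clean way to sidestep circularity is to prove the braid relation directly at the level of specializations without first establishing the cocycle identity: one shows that $\Sp_{w\mu}$ applied to either composite equals a fixed universal expression (a product of classical $\iota^{s_\gamma}$'s composed appropriately, times a product of $\hbar$-binomial factors indexed by the inversion set of $w$), invoking at each step only Proposition \ref{prop:definition-Theta} and the rank-two classical braid relations. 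Then Corollary \ref{cor:sp-morphism} finishes it. Once the lemma is proved, the general cocycle relation \eqref{eqn:cocycle-Theta} follows formally from the definition of $\Theta_w$ via a reduced expression and from Lemma \ref{lem:Thetas-invertible}.
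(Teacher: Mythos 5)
Your proposal is correct, but it takes a noticeably more circuitous route than the paper, and — tellingly — the ``clean way to sidestep circularity'' you describe in your final paragraph is essentially the paper's actual proof, which renders the Matsumoto-theorem and Levi-restriction scaffolding you set up beforehand unnecessary. The paper's argument is direct: iterating \eqref{eqn:specialization-Theta} along the chosen reduced word and invoking \eqref{eqn:composition-Psi}, one finds that for $\mu\in\bX^-$ sufficiently large,
\[
\Sp_{w\mu}\circ\Theta_w^{V,\la}\ =\ n(w,\la,\mu)\cdot\Psi^{V,\la}_{w,\mu}\circ\Sp_\mu,
\]
where $n(w,\la,\mu)$ is an explicit product over the inversion set $\{\alv>0 : w\alv<0\}$. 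The crucial point is that both factors on the right are manifestly independent of the reduced word: $\Psi^{V,\la}_{w,\mu}$ is defined \emph{directly} via the singular-vector embedding $\mathbf{i}_\mu^w$ (whose reduced-word independence is delegated to \cite[Lemma~4]{tv}), and \eqref{eqn:composition-Psi} says any reduced-word composite of the simple-reflection $\Psi$'s equals this one fixed operator; while $n(w,\la,\mu)$ depends only on the inversion set, hence only on $w$. Corollary \ref{cor:sp-morphism} then gives equality of the operators themselves, with no reduction to rank two and no appeal to braid relations.

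By contrast, your plan first invokes Matsumoto's theorem to reduce to braid relations, then uses Lemmas \ref{lem:Theta-rest-Levi} and \ref{lem:restH-injective} to reduce to a rank-two Levi, and only then specializes. That reduction is valid — iterating Lemma \ref{lem:Theta-rest-Levi} across a braid word with $I=\{\alpha,\beta\}$ does produce the commuting squares you need, and $\restH^{V,\la}_{G,L}$ is indeed injective — but it buys you nothing here, since the specialization argument you ultimately fall back on works directly for all $w$ without first passing to rank two. You are also right to flag the circularity danger in citing ``the dynamical Weyl group is well defined'' from \cite{tv,ev} as the rank-two input: that is precisely the sort of statement this lemma is establishing (up to normalization). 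The noncircular kernel is the reduced-word independence of $\mathbf{i}_\mu^w$ (equivalently, of the classical singular vector $f_{\alpha_{k}}^{n_k}\cdots f_{\alpha_1}^{n_1}\cdot 1_\mu$), which the paper cites as \cite[Lemma~4]{tv}; your rank-two reduction would in effect be reproving that input rather than using it. One small inaccuracy: Appendix \ref{sec:rank1} contains only rank-one computations, so there is no ``rank-two appendix'' to invoke for the types $A_2$, $B_2$, $G_2$ — another reason the paper avoids that route.
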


\begin{proof}
Using \eqref{eqn:specialization-Theta} and \eqref{eqn:composition-Psi} we obtain that for $\mu \in \bX^-$ sufficiently large we have
\[
\Sp_{w \mu} \circ \Theta_w^{V,\la} = n(w,\la,\mu) \cdot \Psi^{V,\la}_{w,\mu} \circ \Sp_\mu
\]
where
\[
n(w,\la,\mu) = \frac{\prod_{\alv>0, w(\alv)<0,\la(\alv) >0} (-\hb)^{\la(\alv)} (-\mu(\alv)) \cdots (-\mu(\alv)-\la(\alv)+1)}{\prod_{\alv>0, w(\alv)<0,\la(\alv) <0} (-\hb)^{-\la(\alv)} (-\mu(\alv)-\la(\alv)) \cdots (-\mu(\alv)+1)}.
\]
The right-hand side is independent of the reduced decomposition, hence we conclude by Corollary \ref{cor:sp-morphism}.
\end{proof}

By construction and Lemma \ref{lem:Thetas-invertible}, our collection of isomorphisms satisfies
condition \eqref{eqn:cocycle-Theta}
for all $V$ in $\Rep(G)$, $\la \in \bX$ and $x,y \in W$.

\subsection{Relation with the operators $\Phi$}
\label{ss:relation-Phi-Theta}

As explained in \S\ref{ss:diffX}, for any $V$ in $\Rep(G)$ and $\la \in \bX$ there exists a canonical isomorphism of graded $\sh$-modules
\[
\bigl( V \o \tla \dh(\X)_\la \bigr)^G \ \cong \ \bigl( V \o \M(\la) \bigr)^B.
\]
Using Lemma \ref{lem:morphisms-Verma-v} we deduce a canonical isomorphism
\begin{equation}
\label{eqn:isom-D-Hom}
\bigl( V \o \tla \dh(\X)_\la \bigr)^G \ \cong \ \Hom_{(\sh,\uh(\g))} \bigl( \M(0), V \o \M(\la) \bigr).
\end{equation}

The following result is clear from definitions.

\begin{lem}
\label{lem:restriction-DX-Hom}
Let $V$ in $\Rep(G)$, $\la \in \bX$, and $L \subset G$ be a Levi subgroup containing $T$. The following diagram commutes:
\[
\xymatrix@C=2cm{
\Hom_{(\sh,\uh(\g))} \bigl( \M^G(0), V \o \M^G(\la) \bigr) \ar[d]_-{\restH^{V,\la}_{G,L}} \ar[r]_-{\sim}^-{\eqref{eqn:isom-D-Hom}} & \bigl( V \o \tla \dh(\X_G)_\la \bigr)^G \ar[d]^-{\mathscr{R}^{V,\la}_{G,L}}  \\
\Hom_{(\sh,\uh(\l))} \bigl( \M^L(0), V_{|L} \o \M^L(\la) \bigr) \ar[r]_-{\sim}^-{\eqref{eqn:isom-D-Hom}} & \bigl( V_{|L} \o \tla \dh(\X_L)_\la \bigr)^L.
}
\]
\end{lem}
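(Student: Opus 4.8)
The plan is to trace both composites around the square through the chains of identifications that define the horizontal isomorphisms and the two vertical morphisms; the statement will then reduce to the observation that $\mathrm{res}^G_L$ and the $B_L$-equivariant morphism \eqref{eqn:Verma-restriction} are the same map presented in two different ways.

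First I would unwind the maps involved. By Lemma \ref{lem:morphisms-Verma-v}, both horizontal isomorphisms \eqref{eqn:isom-D-Hom} are given by $\phi \mapsto \phi(\bv_0)$ followed by the isomorphism $(V \o \M(\la))^B \cong (V \o \tla\dh(\X)_\la)^G$ of Lemma \ref{lem:fixed-points-morphisms}, and this latter isomorphism is the composite of the identification $\tla\dh(\X)_\la \cong \Ind_B^G\M(\la)$ of Lemma \ref{lem:DX-Ind} with the tensor identity and Frobenius reciprocity. By its very definition, $\restH^{V,\la}_{G,L}$ is the map induced through Lemma \ref{lem:morphisms-Verma-v} by $\rest^{V,\la}_{G,L}$, so it sends a bimodule morphism $\phi$ to the unique bimodule morphism $\psi$ with $\psi(\bv_0) = (\id_V \o \nu)(\phi(\bv_0))$, where $\nu : \M^G(\la) \to \M^L(\la)$ denotes the $B_L$-equivariant morphism \eqref{eqn:Verma-restriction}. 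Finally, $\mathscr{R}^{V,\la}_{G,L}$ is by definition induced by $\mathrm{res}^G_L = t^G_L \circ r^G_L$.

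The heart of the argument is then to show that, under the isomorphisms $\tla\dh(\X_G)_\la \cong \Ind_B^G\M^G(\la)$ and $\tla\dh(\X_L)_\la \cong \Ind_{B_L}^L\M^L(\la)$ of Lemma \ref{lem:DX-Ind}, the morphism $\mathrm{res}^G_L$ on the $\la$-weight space corresponds to the morphism $\Ind_B^G\M^G(\la) \to \Ind_{B_L}^L\M^L(\la)$ induced by $\nu$ together with restriction of functions $\C[G] \to \C[L]$. For this one simply reads off the constructions of \S\ref{ss:restriction-DX} and \S\ref{ss:Levi-DX}: through \eqref{eqn:diffX} and \eqref{eqn:isom-Verma}, the chain of maps defining $r^G_L$ acts on the $\C[G]$-factor by restriction of functions and on the $\uh(\g)/\u\cdot\uh(\g)$-factor by the projection $\pi^G_L$ read through \eqref{Uiso}, and the twist $\imath^G_L$ that $t^G_L$ subsequently applies is exactly the twist entering the definition of $\nu$; so the two factor-wise descriptions match. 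One also checks that the tensor-identity and Frobenius-reciprocity isomorphisms used in Lemma \ref{lem:fixed-points-morphisms} are natural with respect to restriction of functions $\C[G] \to \C[L]$, so that passing from $(V\o\M^G(\la))^B$ to $(V\o\tla\dh(\X_G)_\la)^G$ and then applying $\mathscr{R}^{V,\la}_{G,L}$ agrees with first applying $\id_V \o \nu$ and then passing from $(V_{|L}\o\M^L(\la))^{B_L}$ to $(V_{|L}\o\tla\dh(\X_L)_\la)^L$. Combining this compatibility with the description $\phi \mapsto \phi(\bv_0)$ of the remaining parts of \eqref{eqn:isom-D-Hom} and with the description of $\restH^{V,\la}_{G,L}$ recalled above yields the commutativity of the diagram: both composites send $\phi$ to the element of $(V_{|L}\o\tla\dh(\X_L)_\la)^L$ corresponding to $(\id_V \o \nu)(\phi(\bv_0))$.

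The only genuine obstacle here is bookkeeping. One must keep careful track of the left-versus-right $G$-actions appearing in \eqref{eqn:diffX}, of the $\rho$-shifts entering the right $\sh$-module structures on $\dh(\X)$ and on $\M(\la)$, and — the one point that truly has to be verified rather than merely transcribed — that the twist automorphism $\imath^G_L$ built into $t^G_L$ coincides with the one built into \eqref{eqn:Verma-restriction}. No new idea beyond the constructions already assembled in \S\S\ref{ss:diffX}, \ref{ss:restriction-DX} and \ref{ss:Levi-DX} is required, which is why the result is essentially a matter of chasing definitions.
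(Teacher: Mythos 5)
Your proof is correct and takes the same approach as the paper, which disposes of this lemma with the single remark ``clear from definitions'': you simply execute that definition chase in detail, correctly identifying the key matching point (that the map \eqref{eqn:Verma-restriction} read through \eqref{eqn:isom-Verma} and \eqref{Uiso} is exactly the $\M(\la)$-factor of $r^G_L$, with the twist $\imath^G_L$ appearing identically in both $t^G_L$ and \eqref{eqn:Verma-restriction}) and the routine naturality of the tensor-identity and Frobenius-reciprocity isomorphisms with respect to restriction of functions $\C[G]\to\C[L]$.
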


In the following proposition we assume that the vectors $f_\alpha$ of Section \ref{sec:DX} are the same as the vectors $f_\alpha$ of the present Section \ref{sec:morphisms}.

\begin{prop}
\label{prop:Phi-Theta}
For any $V$ in $\Rep(G)$, $\la \in \bX$ and $w \in W$
the following diagram commutes:
\[
\xymatrix@C=2cm{
\bigl( V \o \tla \dh(\X)_\la \bigr)^G \ar[r]^-{\eqref{eqn:isom-D-Hom}}_-{\sim} \ar[d]_-{\Phi^{V,\la}_w} & \Hom_{(\sh,\uh(\g))} \bigl( \M(0), V \o \M(\la) \bigr) \ar[d]^-{\Theta^{V,\la}_w} \\
\w \bigl( V \o  {}^{(w\la)} \hspace{-1pt} \dh(\X)_{w \la} \bigr)^G \ar[r]^-{\eqref{eqn:isom-D-Hom}}_-{\sim} & \w \Hom_{(\sh,\uh(\g))} \bigl( \M(0), V \o \M(w \la) \bigr).
}
\]
\end{prop}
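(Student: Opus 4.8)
\textbf{Proof strategy for Proposition \ref{prop:Phi-Theta}.} The plan is to reduce to the rank-one case, exactly as for the analogous compatibility statements (Lemma \ref{lem:Fourier-transform}(2) and Lemma \ref{lem:Thetas-invertible}). First, since both $\Phi^{V,\la}_w$ and $\Theta^{V,\la}_w$ are built (via \eqref{eqn:cocycle-Phi} and the definition in \S\ref{ss:def-Theta}) by composing the operators attached to simple reflections along a reduced decomposition $w=s_k\cdots s_1$, and since the isomorphism \eqref{eqn:isom-D-Hom} is canonical and $W$-equivariant in the obvious sense, it suffices to treat the case $w=s_\alpha$ for a simple root $\alpha$. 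So from now on I would fix a simple root $\alpha$ and set $s=s_\alpha$.

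Next I would invoke the restriction-to-a-Levi machinery. Let $L=L_\alpha$ be the Levi subgroup containing $T$ with roots $\{\alpha,-\alpha\}$. By Lemma \ref{lem:restriction-DX-Hom} the isomorphism \eqref{eqn:isom-D-Hom} intertwines $\restH^{V,\la}_{G,L}$ with $\mathscr{R}^{V,\la}_{G,L}$; by Corollary \ref{cor:Phi-rest} the operator $\Phi^{V,\la}_{s}$ is compatible with $\mathscr{R}^{V,\la}_{G,L}$ (here $s\in W_L$); and by Lemma \ref{lem:Theta-rest-Levi} the operator $\Theta^{V,\la}_{s}$ is compatible with $\restH^{V,\la}_{G,L}$. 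Finally, by Lemma \ref{lem:restH-injective} the map $\restH^{V,\la}_{G,L}$ is injective. Combining these four facts, the commutativity of the square in the proposition for $(G,V,\la)$ follows from its commutativity for $(L,V_{|L},\la)$. Hence we are reduced to proving the proposition when $G$ has semisimple rank one, i.e.\ $G=L_\alpha$.

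The remaining step is the rank-one computation, which I would relegate to Appendix \ref{sec:rank1} (this is precisely the kind of explicit $\mathfrak{sl}_2$-type calculation the appendix is designed to collect). Concretely: in rank one $\X=G/U$ is (up to a central torus factor) $\bA^2\smallsetminus\{0\}$, the algebra $\dh(\X)$ and its partial Fourier transform $\mathbf{F}_\alpha$ are given by explicit formulas (the symplectic Fourier transform on $\bA^2$), and on the other side the operator $\Theta^{V,\la}_{s}$ is pinned down by the specialization formula \eqref{eqn:specialization-Theta} together with Corollary \ref{cor:sp-morphism}. So one checks that, after transporting $\Phi^{V,\la}_s$ through \eqref{eqn:isom-D-Hom}, its specialization $\Sp_{s\mu}$ agrees with the right-hand side of \eqref{eqn:specialization-Theta} for all sufficiently large $\mu$ with $\mu(\alv)<0$; by Corollary \ref{cor:sp-morphism} this forces the two operators to coincide. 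The matching of normalization constants — the powers of $-\hbar$ and the falling-factorial factors in \eqref{eqn:specialization-Theta} versus the constants produced by the symplectic Fourier transform and the identification $\dh(\X)_\la\cong\Ind_B^G\M(\la)$ of Lemma \ref{lem:DX-Ind} — is the one genuinely delicate point, and is exactly what makes the choice "$f_\alpha$ in Section \ref{sec:DX} $=$ $f_\alpha$ in Section \ref{sec:morphisms}" (hypothesized in the statement) essential: the two constructions depend on this choice of simple root vector, and only with the common choice do the constants line up.

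\textbf{Main obstacle.} The conceptual reduction is routine given the lemmas already in place; the real work, and the step where sign and scalar bookkeeping can go wrong, is the explicit rank-one comparison of the partial Fourier transform automorphism of $\dh(\X)$ with the intertwiner-based operator $\Theta^{V,\la}_s$, and in particular the verification that the $\hbar$-adic normalizing factors match on the nose.
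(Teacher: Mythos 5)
Your proposal reproduces the paper's proof: reduce to simple reflections via the cocycle relations \eqref{eqn:cocycle-Phi} and \eqref{eqn:cocycle-Theta}, reduce to semisimple rank one via Corollary \ref{cor:Phi-rest}, Lemma \ref{lem:Theta-rest-Levi}, Lemma \ref{lem:restriction-DX-Hom} and the injectivity of Lemma \ref{lem:restH-injective}, then verify the rank-one case by explicit computation in the appendix (Lemma \ref{lem:comparison-Phi-Theta}). The only difference is cosmetic: the paper's rank-one verification reduces to $V=V^\nu$ by complete reducibility and matches the two operators on the canonical $\sh$-module generator $x^\nu_\la$ (Lemmas \ref{lem:Phi-rk1} and \ref{lem:Theta-rk1}), rather than comparing specializations via Corollary \ref{cor:sp-morphism} as you sketch.
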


\begin{proof}
By \eqref{eqn:cocycle-Phi} and \eqref{eqn:cocycle-Theta} it is enough to prove commutativity when $w$ is a simple reflection.

By Corollary \ref{cor:Phi-rest}, Lemma \ref{lem:Theta-rest-Levi} and Lemma \ref{lem:restriction-DX-Hom}, our constructions are compatible with restriction to a Levi subgroup. Using Lemma \ref{lem:restH-injective}, we deduce that it is enough to prove the claim when $G$ has semisimple rank one. In this case it is proved in Lemma \ref{lem:comparison-Phi-Theta} below.
\end{proof}

It follows from this proposition that for any $V$ in $\Rep(G)$, $\la \in \bX$ and $w \in W$ there exists a unique isomorphism\index{Omega@$\Omega^{V,\la}_w$}%
\[
\Omega^{V,\la}_w : \bigl( V \o \M(\la) \bigr)^B \simto \w \bigl( V \o \M(w \la) \bigr)^B
\]
which corresponds to $\Phi^{V,\la}_w$ under the left-hand isomorphism of Lemma \ref{lem:fixed-points-morphisms}, and to $\Theta^{V,\la}_w$ under the right-hand isomorphism of Lemma \ref{lem:fixed-points-morphisms}.

\section{Geometry of the Grothendieck--Springer resolution}
\label{sec:wfg}

\subsection{The $W$-action on the regular part of $\wfg$}
\label{ss:action-W-wfgr}

In this section we are interested in the geometry of the Grothendieck--Springer resolution $\wfg$ (see \S\ref{subsec2}). We have a standard commutative diagram
\[
\xymatrix{
\wfg \ar[r]^-{\pi} \ar[d]_-{\delta} & \fg^* \ar[d] \\
\ft^* \ar[r] & \ft^* \hspace{-1pt} / \hspace{-2pt} /W \cong \fg^* \hspace{-1pt} / \hspace{-2pt} / G
}
\]
where the right vertical and the lower horizontal maps are the natural quotient maps, $\pi$ is defined in \S\ref{ss:W-symmetries}, and $\delta$ is defined by $\delta(g \times_B \eta) = \eta_{|\b} \in (\b/\u)^* \cong \t^*$.

It is well known that there exists an action of $W$ on $\wfgr$, which commutes with the natural $G$-action, and such that the restrictions of $\pi$ and $\delta$ are $W$-equivariant, where $W$ acts naturally on $\ft^*$ and trivially on $\fg^*$ (see e.g.~\cite[Proposition 1.9.2]{br}). We denote by $\theta_w : \wfgr \simto \wfgr$\index{thetaw@$\theta_w$} the action of $w$. In the whole section we will use \cite{br} as a convenient reference for the properties of this action, though much of this material was known before.

Recall that for any $\lambda \in \bX$ we have a line bundle $\cO_{\wfg}(\lambda)$ on $\wfg$ and its restriction $\cO_{\wfgr}(\lambda)$ to $\wfgr$.

\begin{lem}
\label{lem:action-W-wfg-line-bundles}

For any $w \in W$ and $\lambda \in \bX$, there exists an isomorphism of $G \times \Gm $-equivariant line bundles
\[
(\theta_{w^{-1}})^* \oo_{\wfgr}(\lambda) \langle \la(2\rhov) \rangle \ \cong \ \oo_{\wfgr}(w \lambda) \langle (w\la)(2\rhov) \rangle.
\]

\end{lem}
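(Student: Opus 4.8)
The statement is about $G \times \Gm$-equivariant line bundles on the regular locus $\wfgr$, so I would approach it by first handling the underlying (non-equivariant, non-graded) isomorphism of $G$-equivariant line bundles and then bookkeeping the $\Gm$-weights, i.e.\ the grading shifts. The key point is that $\wfgr$ carries the $W$-action $\theta_w$, commuting with the $G$-action, and that $\oo_{\wfgr}(\lambda)$ is pulled back from $\B$ via the projection $q_{\wfg}\colon \wfg \to \B$. The plan is as follows. First I would reduce to the case where $w = s_\alpha$ is a simple reflection, using the cocycle property of the $\theta_w$ (they compose according to the product in $W$, as recorded via \cite{br}) and the transitivity of line bundle isomorphisms; one has to check the two candidate isomorphisms for a product $w = xy$ agree, which follows from uniqueness up to scalar once one normalizes, or more carefully by tracking the construction.

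\textbf{The simple reflection case.} For $w = s_\alpha$, I would work on the minimal parabolic. Let $P_\alpha \supset B$ be the minimal parabolic associated with $\alpha$, with Levi $L_\alpha$, and let $\wfg_{\alpha} := G \times_{P_\alpha} (\g/\mathfrak{u}_{P_\alpha})^*$ (or the analogue $G/[P_\alpha,P_\alpha]$-picture) so that $\wfg \to \wfg_\alpha$ is, over the regular locus, a double cover away from a divisor, and the $W$-action $\theta_{s_\alpha}$ is the deck transformation. Over $\wfgr$, the line bundle $\oo_{\wfgr}(\lambda)$ restricted to a fiber of $\wfgr \to \wfg_{\alpha,\mathrm{r}}$ is governed by the two $T$-fixed flags in the $\mathbb{P}^1$-fiber of $\B \to G/P_\alpha$, which are swapped by $\theta_{s_\alpha}$; hence $(\theta_{s_\alpha}^{-1})^* \oo_{\wfgr}(\lambda)$ agrees with $\oo_{\wfgr}(s_\alpha \lambda)$ up to a line bundle pulled back from $\wfg_{\alpha,\mathrm{r}}$, which must be $\oo(\lambda - s_\alpha \lambda) = \oo(\langle \lambda,\alv\rangle \alpha)$ twisted by the relevant normal-bundle contribution. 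The explicit identification of this twist — essentially the fact that the normal bundle to the ``other'' section in the $\mathbb{P}^1$-fiber is $\oo(-\alpha)$ with a prescribed $\Gm$-weight — is where the half-sum-of-coroots shift $\la(2\rhov)$ enters, and this I expect to be the main computational obstacle.

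\textbf{Tracking the $\Gm$-weights.} The $\Gm$-action scales $(\g/\mathfrak{u})^*$ (hence the fibers of $\wfg \to \B$, and so the map $\pi$) with weight $-2$, and acts trivially on $\B$. So $\oo_{\wfgr}(\lambda)$ itself carries no intrinsic $\Gm$-weight, but the isomorphism $\theta_{s_\alpha}$ does not commute with the naive $\Gm$-equivariant structures without a shift, because identifying the two sections in the $\mathbb{P}^1$-fiber involves the $f_\alpha$-direction (of $\Gm$-weight $2$) versus the $e_\alpha$-direction. Concretely, the computation in rank one — which is exactly the content already invoked in the paper for the operators $\Phi$, $\Theta$ and in Appendix \ref{sec:rank1} — shows the twist is $\oo(\langle\lambda,\alv\rangle\alpha)$ with $\Gm$-weight shift $\langle\lambda,\alv\rangle$, and since $\langle\lambda,\alv\rangle = \lambda(2\rhov) - (s_\alpha\lambda)(2\rhov)$ (because $s_\alpha$ permutes the positive coroots other than $\alv$ and sends $\alv \mapsto -\alv$), this matches the claimed shift $\la(2\rhov)$ versus $(w\la)(2\rhov)$ exactly. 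So the final step is: assemble the simple-reflection case into the general case by induction on $\ell(w)$, at each stage multiplying the grading shifts $s_{i-1}\cdots s_1 \la \, (2\rhov)$, which telescope correctly precisely because $2\rhov$ transforms under $W$ by $w(2\rhov) = 2\rhov - 2\sum_{\alv>0, w\alv<0}\alv$. I expect the main obstacle to be the careful bookkeeping of the $\Gm$-equivariant structure on the normal bundle in the rank-one computation; the $G$-equivariance and the $W$-cocycle compatibility are comparatively formal.
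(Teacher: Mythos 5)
Your approach is correct in outline but genuinely different from the one the paper takes. The paper reduces to the simple-reflection case as you do, but then — instead of computing the twist explicitly — it uses compatibility of $(\theta_{w^{-1}})^*$ with tensor products (passing to a simply-connected cover) to reduce further to $\lambda(\alv) \in \{-1,0,1\}$, and then duality to reduce to $\lambda(\alv) \in \{-1,0\}$; at that point it simply cites \cite[Lemma 1.11.3(2) and Remark 1.11.7(2)]{br}, where the isomorphism for those two low values was already worked out. Your proposal instead does the rank-one computation directly (normal bundle of the opposite section in the $\mathbb{P}^1$-fiber of $\B \to G/P_\alpha$, tracking the $\Gm$-weight), which is more self-contained and geometrically transparent, but also costs you exactly the kind of weight-bookkeeping the paper's reduction is designed to avoid.

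One caveat on that bookkeeping: your displayed equality $\langle\lambda,\alv\rangle = \lambda(2\rhov) - (s_\alpha\lambda)(2\rhov)$ is off by a factor of $2$. From $s_\alpha(2\rhov) = 2\rhov - 2\alv$ one gets $\lambda(2\rhov) - (s_\alpha\lambda)(2\rhov) = 2\lambda(\alv)$, not $\lambda(\alv)$. The required grading shift between the two sides of the lemma is therefore $\langle 2\lambda(\alv)\rangle$; this is consistent with the $\langle -2\lambda(\alv)\rangle$ appearing in the paper's proof of Lemma \ref{lem:Sigma-restriction-Levi}, and accords with the convention that $\Gm$ acts on $(\g/\u)^*$ by $z\mapsto z^{-2}$ so that a naive $\Gm$-weight $a$ corresponds to graded degree $2a$. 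If your rank-one normal-bundle computation yields $\Gm$-weight $\lambda(\alv)$ in the naive sense, the translation to the $\langle\cdot\rangle$-shift is $\langle 2\lambda(\alv)\rangle$ and everything matches; but the identity as you wrote it is wrong, so be careful to state the intermediate claim correctly. The rest of the plan — the reduction to simple reflections, the telescope over a reduced expression using $w(2\rhov)=2\rhov-2\sum_{\alv>0, w\alv<0}\alv$, and the description of $\theta_{s_\alpha}$ as a deck transformation over $\wfg_{\alpha\text{-}\mathrm{r}}$ — all lines up with what is actually used elsewhere in the paper (see the proof of Lemma \ref{lem:Sigma-restriction-Levi}) and would give a complete, if longer, proof.
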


\begin{proof}
It is enough to prove the isomorphism when $w=s_{\alpha}$ is a simple reflection. Next, by compatibility of $\theta_{w^{-1}}^*$ with tensor products (and passing to a simply connected cover if necessary), it is enough to prove the isomorphism when $\lambda(\alv) \in \{-1,0,1\}$. Finally using duality one can assume that $\lambda(\alv) \in \{-1,0\}$. In these cases the isomorphism follows from \cite[Lemma 1.11.3(2) and Remark 1.11.7(2)]{br}.
\end{proof}

%
%

\subsection{Construction of the operators $\sigma$}
\label{ss:construction-sigma}

The isomorphism of Lemma \ref{lem:action-W-wfg-line-bundles} is unique
 up to a scalar since we have 
 \[
 \Gamma(\wfgr,\cO_{\wfgr})^{G \times
 \Gm } \cong \Gamma(\wfg,\cO_{\wfg})^{G \times
 \Gm } = \bbc
 \]
 (see e.g.~equation \eqref{eqn:restriction-wfg} below).
 We will need to fix a normalization of this
 isomorphism, using our choice of vectors $f_\alpha$. Let us denote by $\eta_0 \in \g^*$
 the element which is zero on $\ft$ and on any $\g_{\beta}$ where $\beta$ is not opposite to a simple root, and such that
 $\eta_0(f_{\alpha})=1$ for any simple root $\alpha$. Then
 $(1 \times_B \eta_0) \in \wfgr$, and this point is $W$-invariant. For any
 $\la \in \bX$, $\cO_{\wfg}(\la)$ is the sheaf of sections of the line
 bundle\index{L@$\mathcal{L}(\la)$}%
 \[
 \mathcal{L}(\la) := G \times_B \bigl( (\g/\u)^* \times \C_{-\lambda} \bigr)
 \]
 over
 $\wfg$. The fiber of this line bundle over $(B/B,\eta_0)$ can be
 canonically identified with $\bbc$ through the morphism $x \mapsto
 (1 \times_B (\eta_0,x))$. Then there exists a unique isomorphism of $G \times \Gm$-equivariant line bundles
\begin{equation}
\label{eqn:isom-line-bundles}
(\theta_{w^{-1}})^* \cO_{\wfgr}(\lambda) \langle \la(2\rhov) \rangle \ \xrightarrow{\sim} \ \cO_{\wfgr}(w \lambda) \langle (w\la)(2\rhov) \rangle
\end{equation}
whose restriction to $(B/B,\eta_0) \in \wfgr$ is $\id_{\bbc}$ via the identifications above.

Let $j_{\mathrm{r}}: \wfgr \hookrightarrow \wfg$
be the inclusion. 
As the codimension of $\wfg \smallsetminus \wfgr$ in $\wfg$ is at least $2$ (see e.g.~\cite[Proposition 1.9.3]{br}), for any $\lambda \in \bX$ the morphism $\oo_{\wfg}(\la) \to (j_{\mathrm{r}})_* \oo_{\wfgr}(\la)$ induced by adjunction is an isomorphism (see \cite[Theorem 11.5.(ii)]{m}). We deduce that the
restriction induces an isomorphism
\begin{equation}
\label{eqn:restriction-wfg}
\Gamma(\wfg,\cO_{\wfg}(\lambda)) \ \simto \ \Gamma(\wfgr,\cO_{\wfgr}(\lambda)).
\end{equation}
As $\theta_{w^{-1}}$ is an isomorphism, the adjunction morphism $\cO_{\wfgr}(\lambda) \to (\theta_{w^{-1}})_* (\theta_{w^{-1}})^* \cO_{\wfgr}(\lambda)$ is also an isomorphism; in particular there is a canonical isomorphism
\[
\Gamma(\wfgr,\cO_{\wfgr}(\lambda)) \ \simto \ \Gamma(\wfgr,\theta_{w^{-1}}^* \cO_{\wfgr}(\lambda)).
\]
Putting these remarks together with isomorphism \eqref{eqn:isom-line-bundles}, we obtain for any $w \in W$ and $\lambda \in \bX$ a canonical isomorphism of graded $\mathrm{S}(\ft)$-modules and $G$-modules
\beq{eqn:isom-global-sections}
\Gamma(\wfg,\cO_{\wfg}(\lambda)) \langle \la(2\rhov) \rangle \ \simto \ \w \Gamma(\wfg, \cO_{\wfg}(w \lambda)) \langle (w\la)(2\rhov) \rangle.
\eeq
Hence for any $V$ in $\Rep(G)$ we obtain an isomorphism of graded $\mathrm{S}(\ft)$-modules
\[
\sigma^{V,\lambda}_w : \ \bigl(V \otimes \Gamma(\wfg,\cO_{\wfg}(\lambda)) \bigr)^G \langle \la(2\rhov) \rangle \ \xrightarrow{\sim} \ \w \bigl( V \otimes \Gamma(\wfg, \cO_{\wfg}(w \lambda)) \bigr)^G \langle (w\la)(2\rhov) \rangle.
\]
By construction, this collection of isomorphisms satifies relations \eqref{eqn:cocycle-sigma}.

As explained above,
for any $\mu \in \bX$ we can describe $\Gamma(\wfg,\cO_{\wfg}(\mu))$ as the space of sections of the line bundle $\mathcal{L}(\mu)$ over $\wfg$. In particular, for any $\eta \in (\g/\u)^*$ there exists a unique morphism\index{ev@$\mathrm{ev}_{\eta}^\mu$}%
\[
\mathrm{ev}_{\eta}^\mu :\ \Gamma(\wfg,\cO_{\wfg}(\mu)) \to \bbc
\]
(``evaluation at $(1,\eta)$'')
such that if $f \in \Gamma(\wfg,\cO_{\wfg}(\mu))$ is considered as a section $\wfg \to \mathcal{L}(\mu)$ we have 
\[
f(1 \times_B \eta) = 1 \times_B (\eta, \mathrm{ev}_{\eta}^\mu(f)).
\]

With this definition, \eqref{eqn:isom-line-bundles} can be characterized as the unique isomorphism of $G \times \C^\times$-equiva\-riant line bundles $(\theta_{w^{-1}})^* \cO_{\wfgr}(\la)  \langle \la(2\rhov) \rangle \simto \cO_{\wfgr}(w \la) \langle (w\la)(2\rhov) \rangle$ such that the following diagram commutes:
\beq{eqn:diagram-sigma}
\vcenter{
\xymatrix@C=1.5cm{
\Gamma(\wfg,\cO_{\wfg}(\lambda)) \ar[rr]_-{\sim}^-{\eqref{eqn:isom-global-sections}} \ar[rd]_-{\mathrm{ev}_{\eta_0}^\la} & & \Gamma(\wfg,\cO_{\wfg}(w \lambda)) \ar[ld]^-{\mathrm{ev}_{\eta_0}^{w\la}} \\
& \bbc. &
}
}
\eeq
Indeed the diagram commutes by construction. To prove unicity if suffices to prove that the morphism $\mathrm{ev}_{\eta_0}^\la$ is non-zero. However, if $\la$ is dominant this property follows from the fact that $\oo_{\wfg}(\la)$ is globally generated (which itself follows from the similar claim for $\B$), and the general case follows from commutativity of \eqref{eqn:diagram-sigma} (and the fact that every weight it $W$-conjugate to a dominant weight).

Below we will need a refinement of this characterization in the case $w=s_\alpha$ for a simple root $\alpha$. We denote by $\eta_\alpha \in \g^*$ the element which is zero on $\t$ and on any $\g_\beta$ with $\beta \neq -\alpha$, and such that $\eta_\alpha(f_\alpha)=1$.

\begin{lem}
\label{lem:evaluation-sigma}
When $w=s_\alpha$, \eqref{eqn:isom-line-bundles} is the unique isomorphism of $G \times \C^\times$-equivariant line bundles $(\theta_{s_\alpha})^* \cO_{\wfgr}(\la)  \langle \la(2\rhov) \rangle \to \cO_{\wfgr}(s_\alpha \la) \langle (s_\alpha \la)(2\rhov) \rangle$ such that the following diagram commutes:
\[
\xymatrix@C=1.5cm{
\Gamma(\wfg,\cO_{\wfg}(\lambda)) \ar[rr]_-{\sim}^-{\eqref{eqn:isom-global-sections}} \ar[rd]_-{\mathrm{ev}_{\eta_\alpha}^\la} & & \Gamma(\wfg,\cO_{\wfg}(s_\alpha \lambda)) \ar[ld]^-{\mathrm{ev}_{\eta_\alpha}^{s_\alpha \la}} \\
& \bbc &
}
\]

\end{lem}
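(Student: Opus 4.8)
The plan is to deduce Lemma~\ref{lem:evaluation-sigma} from the characterization via $\mathrm{ev}_{\eta_0}^\la$ already established in diagram~\eqref{eqn:diagram-sigma}, by comparing the two evaluation points $\eta_0$ and $\eta_\alpha$. The key observation is that the isomorphism \eqref{eqn:isom-line-bundles} is \emph{already fixed} (normalized by the condition at $(B/B,\eta_0)$), so there is nothing to choose: the content of the lemma is the \emph{assertion} that this same isomorphism also makes the $\eta_\alpha$-triangle commute, together with the uniqueness statement. Uniqueness will follow exactly as in the discussion after \eqref{eqn:diagram-sigma}, once we know $\mathrm{ev}_{\eta_\alpha}^\la \neq 0$; and this non-vanishing follows for dominant $\la$ from global generation of $\oo_{\wfg}(\la)$ (since $(1\times_B \eta_\alpha)$ is a genuine point of $\wfg$), and then for all $\la$ by propagating along \eqref{eqn:isom-global-sections}. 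So the real work is commutativity of the $\eta_\alpha$-triangle.

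First I would reduce to the subregular rank-one situation attached to $\alpha$. The point $\eta_\alpha$ differs from $\eta_0$ only in that it is supported on the single negative simple root space $\g_{-\alpha}$ rather than on all of them. I would like to say that $\eta_\alpha$ lies in the closure of the $N_{L_\alpha}^-$-orbit (or an appropriate $B$-orbit) of $\eta_0$, or conversely relate the two points by an element of a one-parameter subgroup, so that the $W$-equivariant structure $\theta_{s_\alpha}$ — which commutes with the $G$-action — transports the evaluation at $\eta_0$ to the evaluation at $\eta_\alpha$ up to the controlled scalar coming from the line-bundle weights $\langle \la(2\rhov)\rangle$ versus $\langle (s_\alpha\la)(2\rhov)\rangle$. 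Concretely: $\theta_{s_\alpha}$ fixes $(1\times_B \eta_0)$, and I would check that the restriction of $\theta_{s_\alpha}$ to the fibre over $(1\times_B\eta_\alpha)$, relative to the trivializations $x\mapsto (1\times_B(\eta_\alpha,x))$, is again the identity. The cleanest route is probably to work inside the sub-bundle $\wflr[\alpha] := L_\alpha \times_{B_{L_\alpha}} (\l_\alpha/\u_{L_\alpha})^* \hookrightarrow \wfgr$ for the rank-one Levi $L_\alpha$, use compatibility of $\theta_{s_\alpha}$ with restriction to a Levi (which should follow from the references to \cite{br} used in Lemma~\ref{lem:action-W-wfg-line-bundles}), and there perform the explicit $\mathrm{SL}_2$-computation: the $s_\alpha$-action on the regular part of $\wfl_\alpha$ is given by an explicit formula in terms of the chosen $f_\alpha$, and $\eta_\alpha$ is precisely the distinguished regular point of $\wfl_\alpha$ normalized by $f_\alpha$.

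The main obstacle I anticipate is pinning down the scalar in the rank-one computation so that it matches the grading shift $\langle \la(2\rhov)\rangle$ exactly, with no stray sign or factor. In rank one, $2\rhov = \alv$ when restricted to the relevant torus, so $\la(2\rhov) = \la(\alv)$, and the $s_\alpha$-action rescales the relevant coordinate on $(\l_\alpha/\u_{L_\alpha})^*$ by a factor whose $\Gm$-weight must be reconciled with the $\langle \la(\alv)\rangle$ twist; this is the same bookkeeping that underlies Lemma~\ref{lem:action-W-wfg-line-bundles}, so I expect it to be consistent, but it requires care. Once the rank-one case is done, I would assemble the general statement: commutativity of the $\eta_\alpha$-triangle for all $\la$ by first treating dominant $\la$ (global generation gives $\mathrm{ev}_{\eta_\alpha}^\la\neq 0$, and the two isomorphisms agree on the non-zero image, hence everywhere), then extending to arbitrary $\la$ via the cocycle relation \eqref{eqn:cocycle-sigma} and the fact that every weight is $W$-conjugate to a dominant one; and finally the uniqueness clause, verbatim as in the paragraph following \eqref{eqn:diagram-sigma}, using the non-vanishing of $\mathrm{ev}_{\eta_\alpha}^\la$ just established.
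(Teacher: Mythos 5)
Your main route — reduce to the rank-one Levi $L_\alpha$ and do an explicit $\mathrm{SL}_2$ computation — is circular as stated. The compatibility of the normalized isomorphism \eqref{eqn:isom-line-bundles} with restriction to $L_\alpha$ is exactly Lemma \ref{lem:Sigma-restriction-Levi}, whose proof \emph{uses} Lemma \ref{lem:evaluation-sigma} (it is cited there precisely to establish that the exterior square of diagram \eqref{eqn:diagram-evaluation} commutes and that $\mathrm{ev}_{\eta_\alpha}^{\la,G}\neq 0$, which is what pins down the scalar $c=1$). The references to \cite{br} used in Lemma \ref{lem:action-W-wfg-line-bundles} only give the existence of \emph{some} isomorphism of equivariant line bundles; they do not compare the $G$-level and $L_\alpha$-level normalizations, which is the content you would need. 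So you cannot appeal to Levi restriction here without first proving a normalization-compatibility result, and the natural such result is downstream of the lemma you are trying to prove.

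The alternative you mention in passing — ``relate the two points by an element of a one-parameter subgroup'' — is the right idea, and it is what the paper does, but the details matter and they are not quite what you describe. The crucial choice is a coweight $\muv\in X_*(T)$ with $\alpha(\muv)=0$ and $\beta(\muv)>0$ for the other simple roots $\beta$; then $\lim_{z\to 0}\muv(z)\cdot\eta_0 = \eta_\alpha$. One checks that for each fixed $z\in\C^\times$ the evaluation triangle at the point $\muv(z)\cdot\eta_0$ already commutes: writing $\vartheta_\alpha$ for the global-sections isomorphism, a direct calculation gives $\mathrm{ev}^{\nu}_{\muv(z)\cdot\eta_0}(f)=z^{\nu(\muv)}\,\mathrm{ev}^{\nu}_{\eta_0}(\muv(z^{-1})\cdot f)$, and then $G$-equivariance of $\vartheta_\alpha$ together with $(s_\alpha\la)(\muv)=\la(\muv)$ (which holds \emph{because} $\alpha(\muv)=0$, since $s_\alpha\la-\la$ is a multiple of $\alpha$) collapses the scalars. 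Passing to the limit $z\to 0$ gives commutativity at $\eta_\alpha$. Note that the scalar you were worried about has nothing to do with the grading shifts $\langle\la(2\rhov)\rangle$ versus $\langle(s_\alpha\la)(2\rhov)\rangle$; the relevant cancellation is between $z^{\la(\muv)}$ and $z^{(s_\alpha\la)(\muv)}$, and it is arranged by the choice of $\muv$. Your uniqueness argument (non-vanishing of $\mathrm{ev}^\la_{\eta_\alpha}$ from global generation for dominant $\la$, then propagated by the isomorphism) is fine and matches the paper's intent.
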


\begin{proof}
As for the similar claim concerning diagram \eqref{eqn:diagram-sigma}, we only have to check that the diagram commutes. In this proof we denote the isomorphism \eqref{eqn:isom-global-sections} by $\vartheta_\alpha : \Gamma(\wfg,\cO_{\wfg}(\lambda)) \simto \Gamma(\wfg,\cO_{\wfg}(s_\alpha \lambda))$. Choose a coweight $\muv \in X_*(T)$ such that $\alpha(\muv)=0$ and $\beta(\muv)>0$ for all simple roots $\beta\neq\alpha$. Then $\lim_{z \to 0} \muv(z) \cdot \eta_0 = \eta_\alpha$, so that it is enough to prove that the following diagram commutes for all $z \in \C^\times$:
\[
\xymatrix@C=1.5cm{
\Gamma(\wfg,\cO_{\wfg}(\lambda)) \ar[rr]_-{\sim}^-{\vartheta_\alpha} \ar[rd]_-{\mathrm{ev}_{\muv(z) \cdot \eta_0}^\la} & & \Gamma(\wfg,\cO_{\wfg}(s_\alpha \lambda)) \ar[ld]^-{\mathrm{ev}_{\muv(z) \cdot \eta_0}^{s_\alpha \la}} \\
& \bbc &
}
\]
However for $\nu \in \bX$ and $f \in \Gamma(\wfg,\cO_{\wfg}(\nu))$ we have
\begin{multline*}
f \bigl( 1 \times_B (\muv(z) \cdot \eta_0) \bigr)=f(\muv(z) \times_B \eta_0) = \muv(z) \cdot \Bigl( (\muv(z^{-1}) \cdot f)(1 \times_B \eta_0) \Bigr) \\
= \muv(z) \cdot \Bigl( 1 \times_B \bigl( \eta_0, \mathrm{ev}^\la_{\eta_0}(\muv(z^{-1}) \cdot f) \bigr) \Bigr) = \Bigl( \muv(z) \times_B \bigl( \eta_0, \mathrm{ev}^\la_{\eta_0}(\muv(z^{-1}) \cdot f) \bigr) \Bigr) \\
= \Bigl( 1 \times_B \bigl( \muv(z) \cdot \eta_0, z^{\la(\muv)} \cdot \mathrm{ev}^\la_{\eta_0}(\muv(z^{-1}) \cdot f) \bigr) \Bigr).
\end{multline*}
On the other hand we have
\[
f \bigl( 1 \times_B (\muv(z) \cdot \eta_0) \bigr)= \Bigl( 1 \times_B \bigl(\muv(z) \cdot \eta_0, \mathrm{ev}^\la_{\muv(z) \cdot \eta_0}(f) \bigr) \Bigr),
\]
which implies that
\[
\mathrm{ev}^{\la}_{\muv(z) \cdot \eta_0}(f) \ = \ z^{\la(\muv)} \cdot \mathrm{ev}^\la_{\eta_0}(\muv(z^{-1}) \cdot f).
\]
Hence we obtain
\[
\mathrm{ev}^{s_\alpha \la}_{\muv(z) \cdot \eta_0}(\vartheta_\alpha f) = z^{(s_\alpha \la)(\muv)} \cdot \mathrm{ev}^{s_\alpha\la}_{\eta_0}(\muv(z^{-1}) \cdot (\vartheta_\alpha f)) = z^{\la(\muv)} \cdot \mathrm{ev}^{\la}_{\eta_0}(\muv(z^{-1}) \cdot f) = \mathrm{ev}^{\la}_{\muv(z) \cdot \eta_0}(f)
\]
since $(s_\alpha \la)(\muv) = \la(\muv)$ and $\vartheta_\alpha$ is $G$-equivariant. This finishes the proof.
\end{proof}

\subsection{Restriction to a Levi subgroup}

Fix a subset $I$ of the set of the set of simple roots, and recall the notation of \S\ref{ss:restriction-DX}.
We can consider the Grothendieck--Springer resolution $\wfl$ associated with $L$, and there exists a natural morphism $\varpi^G_L : G \times_L \wfl = G \times_{B_L} (\l/\u_L)^* \to \wfg$\index{piGL@$\varpi^G_L$} induced by the identification $\l^* \cong (\g/(\fn^+_L \oplus \fn_L^-))^*$. In particular for $L=T$ we have $\wft=\t^*$, and the morphism $\varpi^G_T : G \times_T \wft = G/T \times \t^* \to \wfg$ identifies with the morphism denoted ``$a$'' in \S\ref{subsec2}.
The following diagram commutes by construction:
\begin{equation}
\label{eqn:diagram-varpi}
\vcenter{
\xymatrix@C=2cm{
G \times_T \wft \ar[r]_-{G \times_L \varpi^L_T} \ar@/^1.3pc/[rr]^{\varpi^G_T} & G \times_L \wfl \ar[r]_-{\varpi^G_L} & \wfg.
}
}
\end{equation}

We have $(\varpi^G_L)^{-1}(\wfgr) \subset G \times_L \wflr$. (Note that this inclusion is strict in general.) This open subset is $W_L$-invariant (for the $W_L$-action on $G \times_L \wflr$ induced by the action on $\wflr$), and the morphism $(\varpi^G_L)^{-1}(\wfgr) \to \wfgr$ induced by $\varpi^G_L$ is $W_L$-equivariant.

Adjunction for the morphism $\varpi^G_L$ induces an injective morphism
\beq{eqn:restriction-varpi}
\Gamma(\wfg,\cO_{\wfg}(\lambda)) \ \hookrightarrow \ \Gamma(G \times_L \wfl, \cO_{G \times_L \wfl}(\lambda)) \cong \Ind_L^G \bigl( \Gamma(\wfl, \cO_{\wfl}(\lambda)) \bigr).
\eeq
For simplicity, in the next statement we forget about the gradings (i.e.~the $\C^\times$-actions).

\begin{lem}
\label{lem:Sigma-restriction-Levi}

The following diagram commutes for any $V$ in $\Rep(G)$, $\lambda \in \bX$ and $w \in W_L$, where vertical maps are induced by \eqref{eqn:restriction-varpi}:
\[
\xymatrix@C=2cm{
\bigl(V \otimes \Gamma(\wfg,\cO_{\wfg}(\lambda)) \bigr)^G \ar[r]^-{\sigma^{V,\lambda}_w} \ar@{^{(}->}[d] & {}^w \bigl( V \otimes \Gamma(\wfg, \cO_{\wfg}(w \lambda)) \bigr)^G \ar@{^{(}->}[d] \\
\bigl(V_{|L} \otimes \Gamma(\wfl,\cO_{\wfl}(\lambda)) \bigr)^L \ar[r]^-{\sigma^{V_{|L},\lambda}_w} & {}^w \bigl( V_{|L} \otimes \Gamma(\wfl, \cO_{\wfl}(w \lambda)) \bigr)^L.
}
\]

\end{lem}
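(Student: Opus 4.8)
The plan is to imitate the strategy used for the operators $\mathbf{F}_w$ and $\Theta_w$: reduce first to a simple reflection in $W_L$, then to the case $V=\C$, and finally carry out a geometric comparison of the two relevant line‑bundle isomorphisms on a suitable dense open subset of $\wfl$. For the first reduction I would use the cocycle relations \eqref{eqn:cocycle-sigma} for $G$ and for $L$, which split the square in the lemma (for $w=xy$) into the vertical composite of the squares for $y$ and for $x$, so that it suffices to treat $w=s_\alpha$ with $\alpha\in I$. For the second reduction I would observe that the vertical maps in the statement are obtained from \eqref{eqn:restriction-varpi} by applying $V\otimes(-)$, taking $G$‑invariants, and using the tensor identity together with Frobenius reciprocity; since $\sigma^{V,\mu}_w$ is induced by the isomorphism $\Sigma^G_w:=\eqref{eqn:isom-global-sections}$ on global sections (and similarly for $L$), and since a $G$‑module map into an induced module is detected by evaluation at $e$, the lemma reduces to the assertion that $r\circ\Sigma^G_{s_\alpha}=\Sigma^L_{s_\alpha}\circ r$, where $r\colon\Gamma(\wfg,\cO_{\wfg}(\mu))\to\Gamma(\wfl,\cO_{\wfl}(\mu))$ is the morphism induced by \eqref{eqn:restriction-varpi}, equivalently pullback along the natural map $\wfl\hookrightarrow G\times_L\wfl\xrightarrow{\varpi^G_L}\wfg$ (the map ``$a$'' of \S\ref{subsec2} when $L=T$). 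I would keep track of the $\Gm$‑equivariant structures throughout, since this is needed even though the gradings are suppressed in the statement.

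Next I would work over $\wfl_\circ$, the preimage of $\wfgr$ under the natural map $\wfl\to\wfg$: this is a dense $\Gm$‑stable open subset of $\wfl$, contained in $\wflr$ by the inclusion $(\varpi^G_L)^{-1}(\wfgr)\subset G\times_L\wflr$ recalled just before the lemma, and the induced map $\wfl_\circ\to\wfgr$ is $W_L$‑equivariant. As $\wfl$ is smooth and $\wfl_\circ$ is dense, $\Gamma(\wfl,\cO_{\wfl}(\mu))$ injects into $\Gamma(\wfl_\circ,\cO_{\wfl}(\mu))$, so it suffices to verify $r\circ\Sigma^G_{s_\alpha}=\Sigma^L_{s_\alpha}\circ r$ after restriction to $\wfl_\circ$. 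There, by \eqref{eqn:restriction-wfg}, both $\Sigma^G_{s_\alpha}$ and $\Sigma^L_{s_\alpha}$ are induced by the line‑bundle isomorphisms \eqref{eqn:isom-line-bundles} over $\wfgr$ and $\wflr$; pulling the one for $G$ back along $\wfl_\circ\to\wfgr$, and using its $W_L$‑equivariance together with the canonical identification of the pullback of $\cO_{\wfgr}(\mu)$ with $\cO_{\wfl}(\mu)|_{\wfl_\circ}$, I obtain an $L\times\Gm$‑equivariant isomorphism between the same two line bundles over $\wfl_\circ$ that the restriction of the isomorphism \eqref{eqn:isom-line-bundles} for $L$ identifies. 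By Lemma~\ref{lem:action-W-wfg-line-bundles} applied to $L$, these two isomorphisms differ by multiplication by an $L\times\Gm$‑invariant invertible regular function $u$ on $\wfl_\circ$.

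The main obstacle is to show that $u$ is a nonzero constant, because $\wfl\smallsetminus\wfl_\circ$ can genuinely have codimension one, so a naive Hartogs argument does not apply. I would argue as follows: away from a closed subset of codimension at least $2$ (which is irrelevant for computing invertible functions), $\wfl\smallsetminus\wfl_\circ$ is a union of prime divisors, and each such divisor is a component of $\pi_L^{-1}(Z)$, where $\pi_L\colon\wfl\to\l^*$ is the natural projection and $Z\subset\l^*$ is the locus of those $\zeta$ whose image in $\g^*$ under $\l^*\hookrightarrow\g^*$ fails to be $G$‑regular. Since $G$‑regularity is invariant under scaling, $Z$ is a cone; hence each such divisor is cut out by the pullback along $\pi_L$ of a homogeneous polynomial on $\l^*$ of strictly positive degree, i.e.\ by a $\Gm$‑semi‑invariant function of strictly positive weight. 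An invertible regular function on $\wfl_\circ$ is therefore a nonzero scalar times a product of integral powers of these defining equations, and if it is $\Gm$‑invariant all the exponents must vanish, forcing it to be a scalar. Consequently $u\in\C^\times$, and, back on global sections over $\wfl$, we get $r\circ\Sigma^G_{s_\alpha}=u\cdot(\Sigma^L_{s_\alpha}\circ r)$.

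Finally I would pin down $u=1$ by a single evaluation. Because $\alpha\in I$, the vector $f_\alpha$ chosen for $\l$ is the one chosen for $\g$, and under the identification $\l^*\cong(\g/(\fn_L^+\oplus\fn_L^-))^*$ the element $\eta_\alpha$ for $L$ is sent to $\eta_\alpha$ for $G$; hence the natural map $\wfl\to\wfg$ sends $(1\times_{B_L}\eta_\alpha)$ to $(1\times_B\eta_\alpha)$, so that evaluation at $(1\times_{B_L}\eta_\alpha)$ composed with $r$ equals the morphism $\mathrm{ev}^{\mu}_{\eta_\alpha}$ of \S\ref{ss:construction-sigma} for every $\mu\in\bX$. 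Composing $r\circ\Sigma^G_{s_\alpha}=u\cdot(\Sigma^L_{s_\alpha}\circ r)$ with the evaluation at $(1\times_{B_L}\eta_\alpha)$ and invoking Lemma~\ref{lem:evaluation-sigma} for both $G$ and $L$ then yields $\mathrm{ev}^{\lambda}_{\eta_\alpha}=u\cdot\mathrm{ev}^{\lambda}_{\eta_\alpha}$; since $\mathrm{ev}^{\lambda}_{\eta_\alpha}\neq 0$ (this non‑vanishing is established inside the proof of Lemma~\ref{lem:evaluation-sigma}), this forces $u=1$. Tensoring back with $V$ and taking $G$‑invariants then gives the lemma. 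Apart from the constancy of $u$, every step is either formal (the two reductions, the passage through Frobenius reciprocity) or routine bookkeeping with equivariant line bundles.
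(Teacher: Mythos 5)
Your overall plan has the same shape as the paper's argument: reduce to a simple reflection $s_\alpha$ with $\alpha\in I$, compare the two induced line-bundle isomorphisms over a suitable open set, show they agree up to a nonzero constant, and pin the constant down to $1$ by evaluating at $(1\times_{B_L}\eta_\alpha)$ and invoking Lemma~\ref{lem:evaluation-sigma}. Where you diverge, and where the gap lies, is in the choice of open set. The paper does \emph{not} work over the preimage of $\wfgr$: instead it uses the larger $G\times\C^\times$-stable open subset $\wfg_{\alpha\text{-r}}=\{(g\times_B\eta) : \eta|_{\mathfrak{sl}_{2,\alpha}}\neq 0\}$ (and the analogous $\wfl_{\alpha\text{-r}}$), which \emph{does} have complement of codimension two (the conditions $\eta(\alv)=\eta(f_\alpha)=0$ are two independent linear equations on each fiber). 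Crucially, by \cite[Lemma 2.9.1]{br} the action $\theta_{s_\alpha}$ extends from $\wfgr$ to $\wfg_{\alpha\text{-r}}$, and the isomorphism \eqref{eqn:isom-line-bundles} extends correspondingly. With this choice, $\Gamma(G\times_L\wfl_{\alpha\text{-r}},\cO)^{G\times\C^\times}\cong\Gamma(G\times_L\wfl,\cO)^{G\times\C^\times}\cong\C$, so the ratio of the two isomorphisms is automatically a scalar, and no further work is needed before the evaluation step.

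The genuine gap is in your argument that the $L\times\Gm$-invariant unit $u$ on $\wfl_\circ$ must be constant. You correctly observe that the complement $\wfl\smallsetminus\wfl_\circ=\pi_L^{-1}(Z)$ can have codimension one, and you try to circumvent the failure of the Hartogs-type argument by writing $u=c\prod (\pi_L^* h_i)^{n_i}$ with $h_i$ homogeneous of positive degree, then concluding that $\Gm$-invariance forces all $n_i=0$. This last inference does not follow: $\Gm$-invariance only gives $\sum_i n_i\deg(h_i)=0$, which has many integral solutions with mixed signs once there is more than one component of positive degree. (One would need, for example, that $Z$ is irreducible, or some additional structural input, to rule this out; the argument as stated does not.) The paper's device — enlarging the open set to $\wfl_{\alpha\text{-r}}$ using the extension results of \cite{br}, so that the complement has codimension two — is precisely what makes the constancy step trivial, and is the missing ingredient you would need to import to make your version airtight.
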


\begin{proof}
It is enough to prove the result when $w=s_{\alpha}$ is a simple reflection associated with a simple root $\alpha \in I$. Let $\mathfrak{sl}_{2,\alpha} \subset \fg$ be the Lie subalgebra generated by $\g_{\alpha}$ and $\g_{-\alpha}$, and consider the open subset
\[
\wfg_{\alpha\mathrm{-r}} := \{(g \times_B \eta) \in \wfg \mid \eta_{|\mathfrak{sl}_{2,\alpha}} \neq 0\} \ \subset \ \wfg.
\]
By \cite[Lemma 1.9.1]{br} we have $\wfgr \subset \wfg_{\alpha\mathrm{-r}}$, and
it follows from \cite[Lemma 2.9.1]{br} that the action map $\theta_{s_{\alpha}}$ is the restriction of an isomorphism (denoted similarly) $\theta_{s_{\alpha}} : \wfg_{\alpha\mathrm{-r}} \xrightarrow{\sim} \wfg_{\alpha\mathrm{-r}}$. Moreover, isomorphism \eqref{eqn:isom-line-bundles} is (a shift of) the restriction of an isomorphism of $G \times \C^\times$-equivariant line bundles
\[
\varsigma^{G,\lambda}_{s_{\alpha}} : (\theta_{s_{\alpha}})^* \cO_{\wfg_{\alpha\mathrm{-r}}}(\lambda) \xrightarrow{\sim} \cO_{\wfg_{\alpha\mathrm{-r}}}(s_{\alpha} \lambda) \langle -2 \la(\alv) \rangle.
\]
The same assertions are of course true for the Levi $L$, and we obtain a similar isomorphism $\varsigma^{L,\lambda}_{s_{\alpha}}$.

The morphism $\varpi^G_L$ restricts to a morphism $G \times_L \wfl_{\alpha\mathrm{-r}} \to \wfg_{\alpha\mathrm{-r}}$ (denoted similarly) which satisfies $\varpi^G_L \circ (G \times_L \theta_{s_\alpha}^L) = \theta_{s_\alpha}^G \circ \varpi^G_L$. What we have to prove is that the two isomorphisms
\[
(\varpi^G_L)^* \varsigma^{G,\lambda}_{s_{\alpha}}, \ \Ind_L^G(\varsigma^{L,\lambda}_{s_{\alpha}}) : (G \times_L \theta^L_{s_{\alpha}})^* \cO_{G \times_L \wfl_{\alpha\mathrm{-r}}}(\lambda) \xrightarrow{\sim} \cO_{G \times_L \wfl_{\alpha\mathrm{-r}}}(s_{\alpha} \lambda)
\]
coincide. As both isomorphisms are $G \times \bbc^{\times}$-equivariant, and as
\[
\Gamma(G \times_L \wfl_{\alpha\mathrm{-r}},\cO_{G \times_L \wfl_{\alpha\mathrm{-r}}})^{G \times \bbc^{\times}} \cong \Gamma(G \times_L \wfl,\cO_{G \times_L \wfl})^{G \times \bbc^{\times}} \cong \bbc,
\]
we know that these isomorphisms coincide up to multiplication by a scalar $c \in \C^\times$. Consider the following diagram:
\begin{equation}
\label{eqn:diagram-evaluation}
\vcenter{
\xymatrix@C=1.5cm{
\Gamma(\wfg,\cO_{\wfg}(\lambda)) \ar[r]^-{\sim} \ar@{^{(}->}[d]^-{\eqref{eqn:restriction-varpi}} \ar@/_80pt/[dd]_-{\mathrm{ev}_{\eta_\alpha}^{\la,G}} & \Gamma(\wfg,\cO_{\wfg}(s_{\alpha}\lambda)) \ar@{^{(}->}[d]^-{\eqref{eqn:restriction-varpi}} \ar@/^80pt/[dd]^-{\mathrm{ev}_{\eta_\alpha}^{s_\alpha \la,G}}\\
\Gamma(G \times_L \wfl,\cO_{G \times_L \wfl}(\lambda)) \ar[r]^-{\sim} \ar[d]_-{{}'\mathrm{ev}_{\eta_\alpha}^{\la,L}} & \Gamma(G \times_L \wfl,\cO_{G \times_L \wfl}(s_{\alpha} \lambda)) \ar[d]^-{{}'\mathrm{ev}_{\eta_\alpha}^{s_\alpha \la,L}} \\
\bbc \ar@{=}[r] & \bbc,
}
}
\end{equation}
where the upper horizontal morphism is induced by $\varsigma^{G,\lambda}_{s_{\alpha}}$, the middle horizontal morphism is induced by $\Ind_L^G(\varsigma^{L,\lambda}_{s_{\alpha}})$, and the lower vertical maps are induced by evaluation at $(1 \times_{B_L} \eta_\alpha)$. We know that the upper square commutes up to multiplication by $c$, that the lower square and the exterior square both commute and finally that the morphism $\mathrm{ev}_{\eta_\alpha}^{\la,G}$ is non-zero (see Lemma \ref{lem:evaluation-sigma} and its proof). We deduce that $c=1$, which finishes the proof.
\end{proof}

\subsection{Relation to the operators $\Phi$}

Recall that if $\la \in \bX$ there is a natural morphism $\tla \dh(\X)_\la \to \Gamma(\wfg, \cO_{\wfg}(\la))$ sending $\hb$ to $0$ (see e.g. \S\ref{ss:Fourier-classical}).

\begin{prop}
\label{prop:Phi-sigma}
Let $V$ in $\Rep(G)$, $\la \in \bX$ and $w \in W$. The following diagram commutes, where vertical maps are the natural morphisms sending $\hb$ to $0$:
\[
\xymatrix@C=2cm{
\bigl( V \o \tla \dh(\X)_\la \bigr)^G \ar[r]^-{\Phi^{V,\la}_{w}} \ar[d] & \w \bigl( V \o {}^{(w\la)} \hspace{-1pt} \dh(\X)_{w\la} \bigr)^G \ar[d] \\
\bigl( V \o \Gamma(\wfg, \cO_{\wfg}(\la)) \bigr)^G \ar[r]^-{\sigma^{V,\la}_w} & \w \bigl( V \o \Gamma(\wfg, \cO_{\wfg}(w\la)) \bigr)^G
}
\]
\end{prop}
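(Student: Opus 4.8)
The strategy is the usual one in this paper: reduce to the rank-one case by compatibility with restriction to a Levi subgroup, and then invoke an explicit rank-one computation. By the cocycle relations \eqref{eqn:cocycle-Phi} and \eqref{eqn:cocycle-sigma} (the latter for the $\sigma$'s, the former for the $\Phi$'s), it suffices to prove the commutativity when $w=s_\alpha$ is a simple reflection. For this, I would first establish that all four sides of the square are compatible with restriction to the Levi subgroup $L$ associated with a subset $I$ of simple roots containing $\alpha$. The horizontal maps: on the $\dd$-module side this is Corollary \ref{cor:Phi-rest}, and on the Grothendieck--Springer side this is Lemma \ref{lem:Sigma-restriction-Levi}. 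The vertical maps (the ``$\hbar \mapsto 0$'' specialization maps): here one needs to check that the natural morphism $\tla \dh(\X_G)_\la \to \Gamma(\wfg,\oo_{\wfg}(\la))$ is compatible, via $\mathscr{R}^{V,\la}_{G,L}$ on the source and the restriction map \eqref{eqn:restriction-varpi} on the target, with the corresponding morphism for $L$. This last compatibility follows from unwinding the definitions: $\mathrm{res}^G_L$ was defined in \S\ref{ss:restriction-DX} in terms of the algebraic realizations $\dh(\X) \cong (\C[G] \o \uh(\g)/\u\uh(\g))^U$, and passing to $\hbar = 0$ turns these into the realizations of $\Gamma(\wfg,\oo_{\wfg}(\la))$ used in \eqref{eqn:wfg} and in the proof of Lemma \ref{lem:T*Xfg}; the morphism $\varpi^G_L$ and $r^G_L$ match under this identification, and the twist $t^G_L$ matches the degree-zero part of $\imath^G_L$, i.e.\ the classical analogue. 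So the whole square for $G$ maps to the square for $L$, compatibly with all four arrows.

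\textbf{Reduction step.} Granting these compatibilities, take $I = \{\alpha\}$, so that $L = L_\alpha$ has semisimple rank one. The left vertical map of the square for $G$, namely $(V \o \tla \dh(\X_G)_\la)^G \to (V \o \Gamma(\wfg,\oo_{\wfg}(\la)))^G$, becomes, after the identifications above, the map induced by $\Ga(\wfg,\oo_{\wfg}(\la)) \cong (\sym(\g/\u)\o\C_{-\la})^{?}$-type isomorphisms, and the point is that the restriction morphism $\mathscr{R}^{V,\la}_{G,L}$ on $\dd$-modules (equivalently $\restH^{V,\la}_{G,L}$ on the Hom-spaces, via Lemma \ref{lem:restriction-DX-Hom}) is \emph{injective} --- this is Lemma \ref{lem:restH-injective}. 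Hence to verify that the $G$-square commutes it is enough to verify that the $L$-square commutes, i.e.\ one is reduced to the case $G = L_\alpha$ of semisimple rank one. I expect this rank-one case to be handled by a direct computation of the form already appearing in the appendix (the computations of $\Phi$ in Lemma \ref{lem:Phi-rk1} and of the rank-one geometry of $\wfg$), and I would simply cite or mirror such a lemma from Appendix \ref{sec:rank1}: in rank one $\wfg$ is explicitly a slice inside $\mathfrak{sl}_2^*$, the $W = \{1,s_\alpha\}$-action is the obvious one, and the partial Fourier transform $\mathbf{F}_\alpha$ specializes at $\hbar = 0$ to exactly the symplectic Fourier transform on the rank-two symplectic vector bundle $\cV_\alpha$, whose degree-zero part realizes $\theta_{s_\alpha}$; matching the two sides is then a finite check, with the normalization of \eqref{eqn:isom-line-bundles} pinned down by Lemma \ref{lem:evaluation-sigma} (evaluation at $\eta_\alpha$) against the corresponding normalization of $\mathbf{F}_\alpha$ (which depends on the same choice of $f_\alpha$).

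\textbf{Main obstacle.} The technical heart is not the rank-one computation itself but the bookkeeping that makes the reduction legitimate: one must check that the ``$\hbar \mapsto 0$'' vertical arrows genuinely intertwine the restriction-to-Levi maps on both ends, and in a manner compatible with the two horizontal isomorphisms, so that the whole $3$-dimensional diagram (square for $G$, square for $L$, and the six connecting ``restriction'' arrows) commutes. This requires tracking the twist automorphisms $t^G_L$ / $\imath^G_L$ through the specialization, and being careful that the normalizations chosen for $\Phi^{V,\la}_w$ (via $\mathbf{F}_\alpha$, hence via the symplectic form on $\cV_\alpha$ depending on $f_\alpha$) and for $\sigma^{V,\la}_w$ (via evaluation at $\eta_\alpha$, again depending on $f_\alpha$) are the \emph{same} normalization after passing to $\hbar = 0$; Lemma \ref{lem:evaluation-sigma} is exactly the tool that makes this precise, and I would lean on it together with the explicit local description of $\mathbf{F}_\alpha$ recalled in Appendix \ref{sec:appendix-Fourier}. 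Once these compatibilities are in place, injectivity of $\restH^{V,\la}_{G,L}$ (Lemma \ref{lem:restH-injective}) closes the argument, since it forces the $G$-square to commute as soon as the $L_\alpha$-square does.
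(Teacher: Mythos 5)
Your overall route is the same as the paper's: reduce to $w=s_\alpha$ via \eqref{eqn:cocycle-Phi} and \eqref{eqn:cocycle-sigma}, reduce to semisimple rank one using Corollary \ref{cor:Phi-rest} and Lemma \ref{lem:Sigma-restriction-Levi} (together with the compatibility of the $\hbar\mapsto 0$ maps with restriction, which you rightly flag and which the paper uses implicitly), and settle the rank-one case by an explicit computation; in the paper this last step is Corollary \ref{cor:Phi-sigma-rk1}, obtained by comparing Lemma \ref{lem:Phi-rk1} with Lemma \ref{lem:sigma-rk1}.

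There is, however, one step that does not work as you state it: the closing appeal to Lemma \ref{lem:restH-injective}. Write out the diagram chase: from commutativity of the four ``restriction'' faces and of the square for $L_\alpha$ you obtain that the two composites $\bigl( V \o \tla \dh(\X_G)_\la \bigr)^G \to \w \bigl( V \o \Gamma(\wfg,\cO_{\wfg}(s_\alpha\la)) \bigr)^G$ become equal \emph{after} composing with the restriction map
$\w \bigl( V \o \Gamma(\wfg,\cO_{\wfg}(s_\alpha\la)) \bigr)^G \to \w \bigl( V_{|L} \o \Gamma(\wfl,\cO_{\wfl}(s_\alpha\la)) \bigr)^L$
out of the \emph{bottom-right} corner. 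So the injectivity you need is that of this classical restriction map, i.e.\ of the map induced by \eqref{eqn:restriction-varpi} (which is injective, as noted when it is introduced, and remains so after applying $V\otimes(-)$ and $(-)^G$); this is exactly what the paper invokes. Injectivity of $\restH^{V,\la}_{G,L}$ (equivalently of $\mathscr{R}^{V,\la}_{G,L}$) lives on the $\hbar$-level top row and does not by itself give injectivity of the induced map between the $\hbar=0$ quotients (a cokernel could have $\hbar$-torsion), so ``it forces the $G$-square to commute as soon as the $L_\alpha$-square does'' is not justified as written. The fix is immediate — replace Lemma \ref{lem:restH-injective} by the injectivity of \eqref{eqn:restriction-varpi} — but as stated the final step is the wrong injectivity.
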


\begin{proof}
First, using relations \eqref{eqn:cocycle-Phi} and \eqref{eqn:cocycle-sigma} it is enough to prove the lemma when $w=s_\alpha$ is a simple reflection. Then using the compatibility of our constructions with restriction to a Levi subgroup (see Corollary \ref{cor:Phi-rest} and Lemma \ref{lem:Sigma-restriction-Levi}) and the injectivity of morphism \eqref{eqn:restriction-varpi}, it is enough to prove the lemma when $G$ has semisimple rank one (with unique simple root $\alpha$). In this case it is proved in Corollary \ref{cor:Phi-sigma-rk1} below.
\end{proof}

\begin{rem}
\label{rk:Phi-sigma}
By Lemma \ref{lem:hbar=0}, the vertical arrows in the diagram of Proposition \ref{prop:Phi-sigma} induce isomorphisms
\[
\bigl( V \o \tla \dh(\X)_\la \bigr)^G / \langle \hb \rangle \simto \bigl( V \o \Gamma(\wfg, \cO_{\wfg}(\la)) \bigr)^G, \quad \bigl( V \o {}^{(s\la)} \hspace{-1pt} \dh(\X)_{s\la} \bigr)^G / \langle \hb \rangle \simto \bigl( V \o \Gamma(\wfg, \cO_{\wfg}(s\la)) \bigr)^G.
\]
Hence the proposition implies that the operators $\sigma^{V,\la}_w$ can be completely recovered from the operators $\Phi^{V,\la}_{w}$ (or equivalently the operators $\Theta^{V,\la}_{w}$, see Proposition \ref{prop:Phi-Theta}).
\end{rem}

\subsection{Geometric interpretation: $W$-action on the regular part of $T^* \X$}
\label{ss:action-T*X}

The results in this subsection will not be used in the rest of the paper.

Consider the natural morphism $T^*\X = G \times_U (\g/\u)^* \to \g^*$, and denote by $(T^*\X)_{\mathrm{r}}$ the inverse image of the open subset of regular elements in $\g^*$. Note that the $G \times T$-action on $\X$ defined in \S\ref{subsec2} induces an action on $T^*\X$ which stabilizes $(T^*\X)_{\mathrm{r}}$, and also a moment map $T^*\X \to \t^*$.

The existence of the collection of isomorphisms \eqref{eqn:isom-line-bundles} has the following quite surprising consequence. This construction will be re-interpreted and studied further in \cite{gk}.

\begin{prop}
There exists an action of $W$ on $(T^*\X)_{\mathrm{r}}$ (which depends on the choice of the $e_\alpha$'s), denoted $\odot$, which satisfies the following properties: 
\begin{enumerate}
\item
for any $w \in W$ the morphism $w \odot (-)$ is $G$-equivariant;
\item 
for $x \in (T^*\X)_{\mathrm{r}}$ and $t \in T$ we have
$w \odot (t \cdot x) = w(t) \cdot (w \odot x)$;
\item 
the natural morphism $(T^*\X)_{\mathrm{r}} \to \wfgr$ is $W$-equivariant; 
\item
the restriction $(T^*\X)_{\mathrm{r}} \to \t^*$ of the moment map is $W$-equivariant.
\end{enumerate}
\end{prop}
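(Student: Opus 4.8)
The strategy is to transport the $W$-action on $\wfgr$ (given by the $\theta_w$'s) up to $(T^*\X)_{\mathrm{r}}$ using the collection of trivializations \eqref{eqn:isom-line-bundles}, exactly as one builds an action on a torsor from an action on the base together with equivariant structures on the associated line bundles. Recall from the proof of Lemma \ref{lem:T*Xfg} the canonical isomorphism $T^*\X \cong G \times_U (\g/\u)^*$, which exhibits $T^*\X$ as a $T$-torsor over $\wfg = G \times_B (\g/\u)^*$ via the projection $p : T^*\X \to \wfg$; restricting over the regular locus we get that $p_{\mathrm{r}} : (T^*\X)_{\mathrm{r}} \to \wfgr$ is a $T$-torsor. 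Concretely, $(T^*\X)_{\mathrm{r}} = \underline{\Spec}_{\wfgr} \bigl( \bigoplus_{\la \in \bX} \oo_{\wfgr}(\la) \bigr)$, so an automorphism of $(T^*\X)_{\mathrm{r}}$ lying over $\theta_w$ is the same datum as an isomorphism of $\bX$-graded $\oo_{\wfgr}$-algebras $\bigoplus_\la \oo_{\wfgr}(w\la) \simto \bigoplus_\la (\theta_{w^{-1}})^* \oo_{\wfgr}(\la)$; equivalently, a compatible collection of line-bundle isomorphisms. Our first step is therefore to package \eqref{eqn:isom-line-bundles} (forgetting the $\Gm$-shifts for the moment) into such an algebra isomorphism.

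\textbf{Key steps.} First I would check that the isomorphisms $(\theta_{w^{-1}})^* \oo_{\wfgr}(\la) \simto \oo_{\wfgr}(w\la)$ of \eqref{eqn:isom-line-bundles} are multiplicative in $\la$, i.e.\ that the square relating them for $\la$, $\mu$ and $\la+\mu$ commutes. Both composites are isomorphisms of line bundles on $\wfgr$, and since $\Gamma(\wfgr,\oo_{\wfgr})^G = \bbc$ (by \eqref{eqn:restriction-wfg} and $\Gamma(\wfg,\oo_{\wfg})=\bbc$) they differ by a scalar; evaluating at the $W$-fixed point $(1 \times_B \eta_0) \in \wfgr$ and using the normalization condition defining \eqref{eqn:isom-line-bundles} (its restriction there is $\id_\bbc$, via $\mathrm{ev}^{\la}_{\eta_0}$), the scalar is $1$. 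Second, I would check the cocycle condition: for $x,y\in W$ the composite of \eqref{eqn:isom-line-bundles} for $y$ (at $\la$) with the $\theta_{y^{-1}}^*$-pullback of \eqref{eqn:isom-line-bundles} for $x$ (at $y\la$) equals \eqref{eqn:isom-line-bundles} for $xy$ (at $\la$); again both sides agree up to a scalar which is pinned to $1$ by evaluation at $(1 \times_B \eta_0)$ and the normalization, and this is precisely the content of relation \eqref{eqn:cocycle-sigma} (which the excerpt already records for the operators $\sigma$, and whose proof at the sheaf level is this same computation). Granting these two facts, the trivializations assemble into a genuine $W$-action on the graded algebra $\bigoplus_\la \oo_{\wfgr}(\la)$ covering the $W$-action on $\wfgr$, hence into an action $\odot$ of $W$ on $(T^*\X)_{\mathrm{r}}$ by scheme automorphisms. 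Property (3) holds by construction. For (1): the morphisms \eqref{eqn:isom-line-bundles} are $G$-equivariant (they are isomorphisms of $G\times\Gm$-equivariant line bundles), so the induced algebra automorphisms are $G$-equivariant, whence $w\odot(-)$ is $G$-equivariant. For (4): the moment map $(T^*\X)_{\mathrm{r}} \to \t^*$ factors through $p_{\mathrm{r}}$ followed by $\delta : \wfgr \to \t^*$, which is $W$-equivariant by \cite[Proposition 1.9.2]{br} (cited in \S\ref{ss:action-W-wfgr}); hence (4) follows from (3).

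\textbf{The $T$-equivariance statement (2)} is the one genuinely new point and the main thing to get right. The issue is that $w\odot(-)$ lies over $\theta_w$, which intertwines the $T$-torsor structures only up to the automorphism $w$ of $T$: precisely, the $\Gm$-grading shifts $\langle\la(2\rhov)\rangle$ in \eqref{eqn:isom-line-bundles} encode the twist of the torsor structure by $w$. Concretely, the $T$-action on $(T^*\X)_{\mathrm{r}}$ corresponds to the natural $\bX$-grading of $\bigoplus_\la \oo_{\wfgr}(\la)$ (a $t\in T$ acts on the $\la$-component by the scalar $\la(t)$, up to the sign convention fixed in \S\ref{subsec2}), and the algebra isomorphism underlying $w\odot(-)$ sends the $\la$-graded piece $\oo_{\wfgr}(w\la)$ to the $w^{-1}\la$-graded piece $(\theta_{w^{-1}})^*\oo_{\wfgr}(\la)$ — i.e.\ it carries grading degree $\nu$ to grading degree $w^{-1}\nu$. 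Translating this back through the torsor dictionary gives exactly the identity $w\odot(t\cdot x) = w(t)\cdot(w\odot x)$, and here one must track the sign in the definition $t\cdot gU = gtU$ of \S\ref{subsec2} and the convention that $\oo_\B(\la)$ corresponds to the character $-\la$. I expect this bookkeeping — matching the $\bX$-grading against the $T$-action with the correct signs, and confirming that the $\Gm$-shifts in \eqref{eqn:isom-line-bundles} are exactly what makes the $\bX$-degree transform by $w^{-1}$ rather than trivially — to be the main (and only real) obstacle; everything else reduces to the scalar-pinning argument via evaluation at $\eta_0$, which is already in hand from \S\ref{ss:construction-sigma}. The reference to \cite{gk} signals that a cleaner conceptual account exists, but the above suffices for the stated proposition.
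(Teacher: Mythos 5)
Your proposal takes the same route as the paper: regard $p_{\mathrm{r}} : (T^*\X)_{\mathrm{r}} \to \wfgr$ as a $T$-torsor (so affine), identify $(p_{\mathrm{r}})_* \oo_{(T^*\X)_{\mathrm{r}}}$ with $\bigoplus_{\la \in \bX} \oo_{\wfgr}(\la)$, and lift the collection \eqref{eqn:isom-line-bundles} to a $W$-action on this sheaf of $G$-equivariant algebras — the paper's own proof states exactly this and leaves the compatibility checks (multiplicativity in $\la$, cocycle in $W$) to the reader, which you have spelled out correctly via the scalar-pinning argument at $(1 \times_B \eta_0)$. One small correction to your discussion of property (2): the $\Gm$-shifts $\langle \la(2\rhov) \rangle$ appearing in \eqref{eqn:isom-line-bundles} encode equivariance for the loop-rotation $\C^\times$-action scaling $(\g/\u)^*$, not a twist of the $T$-torsor structure; the transformation of the $\bX$-degree by $w^{-1}$ that you need is already present because the isomorphism sends $\oo_{\wfgr}(\la)$ to $\oo_{\wfgr}(w\la)$, so your verification of (2) is correct but the $\Gm$-shifts play no role in it.
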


\begin{proof}
The morphism $p_{\mathrm{r}} : (T^*\X)_{\mathrm{r}} \to \wfgr$ is a $T$-torsor; in particular it is affine. Hence to prove the proposition it is enough to construct of collection of isomorphisms of $G$-equivariant sheaves of algebras
\[
(\theta_{w^{-1}})^* \bigl( (p_{\mathrm{r}})_* \oo_{(T^*\X)_{\mathrm{r}}} \bigr) \ \cong \ (p_{\mathrm{r}})_* \oo_{(T^*\X)_{\mathrm{r}}}
\]
for all $w \in W$, which are
compatible with composition in $W$. Now we have a natural isomorphism of $G$-equivariant sheaves of algebras
\[
(p_{\mathrm{r}})_* \oo_{(T^*\X)_{\mathrm{r}}} \ \cong \ \bigoplus_{\la \in \bX} \, \oo_{\wfgr}(\la),
\]
hence it is enough to construct a collection of isomorphisms of $G$-equivariant line bundles
\[
(\theta_{w^{-1}})^* \oo_{\wfgr}(\la) \ \cong \ \oo_{\wfgr}(w\la)
\]
compatible with tensor product and composition in $W$. However one can easily check that the collection constructed in \eqref{eqn:isom-line-bundles} satisfies these requirements.
\end{proof}

\section{Reminder on the Satake equivalence}
\label{sec:Satake}

In Sections \ref{sec:Satake}--\ref{sec:applications} we let $\Gv$ be a complex connected reductive group. We choose a maximal torus $\Tv \subset \Gv$ (with Lie algebra ${\check \t}$), and a Borel subgroup $\Bv \subset \Gv$. We let $\Uv$, resp.~$\Uv^-$, be the unipotent radical of $\Bv$, resp.~of the opposite Borel subgroup (with respect to $\Tv$). We set $\bX:=X_*(\Tv)$, and let $\bX^+ \subset \bX$ be the sub-semigroup of dominant coweights of $\Tv$ (where positive roots of $\Gv$ are those appearing in $\mathrm{Lie}(\Uv)$). We also set $\sh:=\sym({\check \t}^*)[\hb]$, considered as a graded algebra where elements of ${\check \t}^*$ and $\hb$ are in degree $2$. Finally we denote by $2\rhov \in X^*(\Tv)$ the sum of the positive roots.

\subsection{Satake equivalence}
\label{ss:Satake}

Recall (see \S\ref{ss:Gr-intro}) that the affine Grassmannian attached to $\Gv$ is the ind-variety
\[
\Gr_\Gv := \Gv(\KK)/\Gv(\OO)
\]
(equipped with the \emph{reduced} scheme structure). This ind-variety is equipped with an action of the group scheme $\Gv(\OO)$. Recall (see \cite{gi,mv}) that the category
\[
\Perv_{\Gv(\OO)}(\Gr_\Gv)
\]
of $\Gv(\OO)$-equivariant perverse sheaves on $\Gr_\Gv$ (with coefficients in $\C$) can be endowed with a natural convolution product $\star$ which makes it a tensor category, and that the functor\index{FG@$\sfF_{\Gv}$}%
\[
\sfF_{\Gv} := \coH^\hdot(\Gr_\Gv,-) : \Perv_{\Gv(\OO)}(\Gr_\Gv) \to \mathsf{Vect}(\C)
\]
(where $\mathsf{Vect}(\C)$ is the category of finite dimensional $\C$-vector spaces) is a tensor functor. (As usual, perverse sheaves on $\Gr_{\Gv}$ are assumed to be supported on a finite union of $\Gv(\OO)$-orbits.) We let
\[
G:=\mathrm{Aut}^\star(\sfF_\Gv)
\]
be the $\C$-group scheme of automorphisms of this tensor functor. It is well known (see \cite{gi,mv}) that $G$ is a (complex) connected reductive group, with root datum dual to that of $\Gv$. Moreover, the functor $\sfF_\Gv$ lifts to an equivalence of tensor categories
\[
\cS_\Gv : \Perv_{\Gv(\OO)}(\Gr_\Gv) \simto \Rep(G)
\]
known as the geometric Satake equivalence.

In \S\ref{ss:Gr-intro} we have defined the embedding $\bX=\Gr_\Tv \hookrightarrow \Gr_\Gv$, the points $\bla$ (for $\la \in \bX$), the orbits $\Gr^\la_\Gv$ (for $\la \in \bX^+$), the semi-infinite orbits $\fT_\la$ (for $\la \in \bX$), and the morphisms $i_\lambda$ and $t_\lambda$.

Using the identification of $\Gr_{\Tv}$ with $\bX$, the group $T$ (of automorphisms of the tensor functor $\sfF_{\Tv}$) is identified with the torus $\Hom_{\Z}(\bX,\C^{\times})$. In particular, the character lattice $X^*(T)$ is canonically identified with $\bX$, hence the category $\Rep(T)$ identifies with the category of finite dimensional $\bX$-graded vector spaces. Define the functor
\[
\sfF^{\bX} := \bigoplus_{\lambda \in \bX} \coH^{\la(2\rhov)} \bigl( \fT_{\lambda}, t_{\lambda}^!(-) \bigr) : \Perv_{\Gv(\OO)}(\Gr_{\Gv}) \to \Rep(T).
\]
By~\cite[Theorems 3.5 and 3.6]{mv}, we have a canonical isomorphism
\begin{equation}
\label{eqn:fibre-functor-T}
\For^{T} \circ \sfF^{\bX} \simto \sfF_{\Gv},
\end{equation}
where $\For^T : \Rep(T) \to \mathrm{Vect}(\C)$ is the forgetful functor.
Moreover, $\sfF^{\bX}$ is a tensor functor, and \eqref{eqn:fibre-functor-T} is an isomorphism of tensor functors. So $\sfF^{\bX}$ is the composition of $\cS_\Gv$ with a tensor functor $\Rep(G) \to \Rep(T)$ compatible with forgetful functors. By \cite[Corollary 2.9]{dm}, the latter functor is induced by a group morphism $T \to G$. It is proved in \cite{mv} that this morphism is injective, and identifies $T$ with a maximal torus of $G$. Hence from now on we will consider $T$ as a subgroup of $G$. By construction, if $\la \in \bX$ and if $\cF$ is in $\Perv_{\Gv(\OO)}(\Gr_{\Gv})$ we have a canonical isomorphism of $\C$-vector spaces
\beq{eqn:weight-space-Satake}
\coH^{\la(2\rhov)} \bigl( \fT_{\lambda}, t_{\lambda}^! \cF \bigr) \cong \bigl( \cS_{\Gv}(\cF) \bigr)_\la.
\eeq

The choice of $\Bv \subset \Gv$ determines a set of positive roots for $\Gv$, hence also a set of positive roots for $G$. We denote by $B$ the Borel subgroup of $G$ containing $T$ associated with this set of positive roots.

\subsection{Equivariant cohomology}
\label{ss:equiv-cohomology}

For the results stated below, see e.g.~\cite[\S 1]{lu}.

For any complex algebraic variety $X$ endowed with an algebraic action of an algebraic group $H$, recall that the equivariant cohomology (respectively Borel--Moore homology) is defined by
\[
\coH_H^{\hdot}(X):=\Ext^{\hdot}_{\cD_{H}(X)}(\underline{\bbc}_X,\underline{\bbc}_X), \quad \coH^H_{\idot}(X):=\Ext^{\hdot-2\dim(X)}_{\cD_H(X)}(\underline{\bbc}_X,\underline{\mathbb{D}}_X),
\]
where $\cD_H(X)$ is the $H$-equivariant derived category of $X$,  $\underline{\bbc}_X$ is the (equivariant) constant sheaf on $X$ and $\underline{\mathbb{D}}_X$ is the (equivariant) dualizing sheaf. Then $\coH_H^{\hdot}(X)$ is a (graded-commutative) algebra for the Yoneda product (or cup product) and $\coH^H_{\idot}(X)$ is a right module over this algebra, again for the Yoneda product. If $X$ is smooth, then this module is free of rank $1$.

Let now $K$ be a torus, with Lie algebra $\fk$. Let $\lambda \in X^*(K)$, and consider the $1$-dimensional $K$-module $\C_{\lambda}$, considered as a $K$-variety. Consider the $K$-equivariant morphisms
\[
\{\pt\} \xrightarrow{\iota} \C_{\lambda} \xrightarrow{\pi} \{\pt\},
\]
(where $\iota(\pt)=0$), and the induced morphisms in equivariant homology
\[
\xymatrix{
\coH_{\idot}^K(\pt) \ar[r]^-{\iota_!} & \coH_{\idot+2}^K(\C_{\lambda})  \ar[r]^-{(\pi^*)^{-1}}_-{\sim} & \coH_{\idot+2}^K(\pt),
}
\]
which are morphisms of right $\coH^{\hdot}_{K}(\pt)$-modules. Since the right $\coH^{\hdot}_K(\pt)$-module $\coH_{\idot}^K(\pt)$ is free of rank $1$, there exists a unique element $c(\lambda) \in \coH^2_K(\pt)$ such that the composition above is the action of $c(\lambda)$, and $c : X^*(K) \to \coH^{\hdot}_K(\pt)$ is a morphism of abelian groups. There exists a unique isomorphism $a : \fk^* \xrightarrow{\sim} \coH^2_K(\pt)$ such that the following diagram commutes:
\[
\xymatrix@R=0.5cm{
& X^*(K) \ar[rd]^-{c} \ar[ld]_-{d} &  \\
\fk^* \ar[rr]^-{a}_-{\sim} & & \coH^2_K(\pt),
}
\]
where $d$ is the differential. Moreover, this isomorphism extends to an isomorphism of graded $\bbc$-algebras
\begin{equation}
\label{eqn:torus-equivariant-cohomology}
\mathrm{S}(\fk^*) \xrightarrow{\sim} \coH^{\hdot}_K(\pt),
\end{equation}
where in the left-hand side $\fk^*$ is in degree $2$. We will use this isomorphism throughout the paper without further details. In particular we can identify
\[
\coH^{\hdot}_A(\pt) \cong \sh, \qquad \coH^{\hdot}_{\Tv}(\pt) \cong \sym(\t),
\]
where $\hb \in \sh$ corresponds to the natural generator of $\coH_{\C^\times}^2(\pt) \cong \C$.

If $V$ is any $K$-module, with $K$-weights $\lambda_1, \cdots, \lambda_n$ (counted with multiplicities), then if as above $\iota$ denotes inclusion of $0$ and $\pi$ projection to $0$, via isomorphism \eqref{eqn:torus-equivariant-cohomology}, the composition
\[
\xymatrix{
\coH_{\idot}^K(\pt) \ar[r]^-{\iota_!} & \coH_{\idot+2n}^K(V) \ar[r]^-{(\pi^*)^{-1}}_-{\sim} & \coH_{\idot+2n}^K(\pt)
}
\]
identifies with the action of $d(\lambda_1) \cdots d(\lambda_n)$. Note that the action map induces an isomorphism of $\coH^{\hdot}_K(\pt)$-modules $\coH^K_0(V) \otimes \coH^{\hdot}_K(\pt) \xrightarrow{\sim} \coH_{\idot}^K(V)$, and that the forgetful map $\coH^K_0(V) \to \coH_0(V)$ is an isomorphism. Hence we obtain a canonical isomorphism $\coH_{\idot}^K(V) \cong \coH_0(V) \otimes \coH^{\hdot}_K(\pt)$. The Borel--Moore homology $\coH_0(V)$ contains the canonical class $[V]$, which can therefore be viewed as a generator of $\coH_{\idot}^K(V)$. Similarly we have the canonical class $[\pt] \in \coH_{\idot}^K(\pt)$. Then we have $\pi^*[\pt]=[V]$, hence the morphism $\iota_!$ has the property that
\[
\iota_!([\pt]) \ = \ [V] \cdot d(\lambda_1) \cdots d(\lambda_n).
\]

We can now give a proof of Lemma \ref{lem:equiv-cohomology-first-properties}.

\begin{proof}[Proof of Lemma {\rm \ref{lem:equiv-cohomology-first-properties}}]
To fix notation we treat the case of $A$; the case of $\Tv$ is similar. Let $\cF$ in $\Perv_{\Gv(\OO)}(\Gr_{\Gv})$ and $\la \in \bX$.

(1) 
Using the Leray--Serre spectral sequence for an appropriate fibration,
there exists a spectral sequence which computes $\coH^{\hdot}_{A}(i_{\lambda}^! \cF)$ and with $E_2$-term 
\begin{equation}
\label{eqn:spectral-sequence-equiv-cohomology}
E_2^{p,q} \ = \ \coH^p_{A}(\pt) \o  \coH^{q}(i_{\lambda}^! \cF)
\end{equation}
(see e.g.~\cite[Proof of Proposition 7.2]{lu}).
It is well known also (see \cite[Theorem 5.5]{kl} or \cite[Corollaire 2.10]{sp1}) that the ordinary cohomology $\coH^{\hdot}(i_{\lambda}^! \cF)$ is concentrated in degrees of constant parity. Hence the spectral sequence \eqref{eqn:spectral-sequence-equiv-cohomology} degenerates at $E_2$. It follows that there exists a non-canonical isomorphism of graded $\sh$-modules
\[
\coH^{\hdot}_A(i_{\lambda}^! \cF) \cong \coH^{\hdot}_{A}(\pt) \o \coH^{\hdot}(i_{\lambda}^! \cF).
\]
In particular, the left-hand side is free over $\sh = \coH^{\hdot}_{A}(\pt)$.

(2) By \cite[Theorem 3.4]{mv}, $\coH^{\hdot}(\fT_{\lambda}, t_{\lambda}^! \cF)$ is concentrated in degree $\lambda(2\rhov)$. Hence the same spectral sequence argument as in (1) shows that $\coH^{\hdot}_{A}(\fT_{\lambda}, t_{\lambda}^! \cF)$ is free over $\coH^{\hdot}_A(\pt)$. Moreover, the morphism
\[
\coH^{\hdot}_{A}(\fT_{\lambda}, t_{\lambda}^! \cF) \to \coH^{\hdot}(\fT_{\lambda}, t_{\lambda}^! \cF)
\]
induced by the forgetful functor is an isomorphism in degree $\lambda(2\rhov)$. Hence we obtain an inclusion $\bigl( \cS_{\Gv}(\cF) \bigr)_{\lambda} \overset{\eqref{eqn:weight-space-Satake}}{\cong} \coH^{(\lambda,2\rhov)}(\fT_{\lambda}, t_{\lambda}^! \cF) \hookrightarrow \coH^{\hdot}_{A}(\fT_{\lambda}, t_{\lambda}^! \cF)$. Using again the spectral sequence argument, the morphism
\[
\coH^{\hdot}_A(\pt) \o \bigl( \cS_{\Gv}(\cF) \bigr)_{\lambda} \ \to \ \coH^{\hdot}_{A}(\fT_{\lambda}, t_{\lambda}^! \cF)
\]
induced by the cup product is an isomorphism of $\coH^{\hdot}_A(\pt)$-modules, which finishes the proof.
\end{proof}

We deduce the following result from Lemma \ref{lem:equiv-cohomology-first-properties}.

\begin{cor}
\label{cor:kgeom-injective}
For $\la \in \bX$ and $\cF$ in $\Perv_{\Gv(\OO)}(\Gr_\Gv)$,
The morphism $\kgeom_{\cF,\la}$ is injective.
\end{cor}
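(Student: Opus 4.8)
The plan is to reduce the statement to a standard localization argument. Since the isomorphism \eqref{zeta} is invertible, the definition \eqref{eqn:equivariant-cohomology-morphism} of $\kgeom_{\cF,\la}$ shows that it suffices to prove that the push-forward
\[
(\imath_\la)_! : \coH^{\hdot}_A(i_\la^! \cF) \too \coH^{\hdot}_A(\fT_\la, t_\la^! \cF)
\]
is injective. First I would observe that $\imath_\la$ is a closed embedding: the point $\bla$ is closed in $\fT_\la$ (for instance because $\fT_\la$ is the attracting set of $\bla$ under a suitable cocharacter of $A$). Hence, writing $j : \fT_\la \smallsetminus \{\bla\} \hookrightarrow \fT_\la$ for the open complement, the attachment distinguished triangle attached to this closed-open decomposition, applied to $t_\la^! \cF$ and followed by $A$-equivariant hypercohomology, produces a long exact sequence
\[
\cdots \to \coH^n_A(i_\la^! \cF) \xrightarrow{(\imath_\la)_!} \coH^n_A(\fT_\la, t_\la^! \cF) \to \coH^n_A\bigl( \fT_\la \smallsetminus \{\bla\}, j^*(t_\la^! \cF) \bigr) \to \coH^{n+1}_A(i_\la^! \cF) \to \cdots
\]
in which the first arrow is $(\imath_\la)_!$ (using $i_\la = t_\la \circ \imath_\la$, so that $\imath_\la^!(t_\la^! \cF) = i_\la^! \cF$).

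The geometric point is that $\fT_\la \smallsetminus \{\bla\}$ contains no $A$-fixed point: the full fixed locus $\Gr^A$ equals $\bX$, and $\fT_\la \cap \bX = \{\bla\}$ by the theory of semi-infinite orbits (see \cite{mv}). Since $j^*(t_\la^! \cF)$ is a constructible complex supported on the finite-dimensional variety $(\fT_\la \smallsetminus \{\bla\}) \cap \supp(\cF)$, the localization theorem in equivariant cohomology (see e.g.~\cite[Theorem B.2]{em}) implies that $\coH^{\hdot}_A\bigl( \fT_\la \smallsetminus \{\bla\}, j^*(t_\la^! \cF) \bigr)$ is a torsion $\sh$-module, annihilated by some nonzero homogeneous element $p \in \sh$. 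Localizing the long exact sequence at $p$ then shows that $(\imath_\la)_!$ becomes an isomorphism over $\sh[p^{-1}]$.

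To conclude, recall from Lemma \ref{lem:equiv-cohomology-first-properties}(1) that $\coH^{\hdot}_A(i_\la^! \cF)$ is a free, in particular torsion-free, $\sh$-module, so the localization map $\coH^{\hdot}_A(i_\la^! \cF) \to \coH^{\hdot}_A(i_\la^! \cF)[p^{-1}]$ is injective. Since $(\imath_\la)_![p^{-1}]$ is an isomorphism, the composition $\coH^{\hdot}_A(i_\la^! \cF) \xrightarrow{(\imath_\la)_!} \coH^{\hdot}_A(\fT_\la, t_\la^! \cF) \to \coH^{\hdot}_A(\fT_\la, t_\la^! \cF)[p^{-1}]$ identifies with that injective localization map, whence $(\imath_\la)_!$ is injective, and therefore so is $\kgeom_{\cF,\la}$. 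The only non-formal inputs are the freeness statement of Lemma \ref{lem:equiv-cohomology-first-properties} and the localization theorem; the points requiring a little care are the identification $\fT_\la \cap \bX = \{\bla\}$ and the finite-dimensionality needed to apply localization to the constructible complex $j^*(t_\la^! \cF)$.
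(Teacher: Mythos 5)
Your proof is correct and follows essentially the same route as the paper's: the paper invokes the localization theorem directly to conclude that $(\imath_\la)_!$ becomes an isomorphism after tensoring with $\qh$ (since $\bla$ is the only $A$-fixed point in $\fT_\la$), then uses the freeness of $\coH^{\hdot}_A(i_\la^!\cF)$ over $\sh$ from Lemma~\ref{lem:equiv-cohomology-first-properties}(1) to conclude injectivity. Your version unpacks the localization step via the closed-open attachment triangle and a torsion argument, but the two key inputs and the logical skeleton are identical.
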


\begin{proof}
By the localization theorem in equivariant cohomology, the morphism
\[
\qh \o_{\sh} \coH^\hdot_A( i_\la^! \cF ) \to \qh \o_{\sh} \coH^\hdot_A( \fT_\la, i_\la^! \cF )
\]
induced by $(\iota_\la)_!$
is an isomorphism (since $\bla$ is the only $A$-fixed point in $\fT_\la$). As $\coH^\hdot_A( i_\la^! \cF )$ is free over $\sh$ (see Lemma \ref{lem:equiv-cohomology-first-properties}(1)), we deduce that $(\iota_\la)_!$
is injective, which implies that $\kgeom_{\cF,\la}$ is also injective.
\end{proof}

We will also need the following result, which again follows from the fact that the spectral sequence \eqref{eqn:spectral-sequence-equiv-cohomology} degenerates.

\begin{lem}
\label{lem:forget-hbar}

Let $\cF$ in $\Perv_{\Gv(\OO)}(\Gr_{\Gv})$ and $\la \in \bX$.

The forgetful morphism $\coH^{\hdot}_{A}(i_{\lambda}^! \cF) \to \coH^{\hdot}_{\Tv}(i_{\lambda}^! \cF)$ induces an isomorphism
\[
\coH^{\hdot}_{A }(i_{\lambda}^! \cF) \big/ \big( \hbar \cdot \coH^{\hdot}_{A}(i_{\lambda}^! \cF) \bigr) \ \xrightarrow{\sim} \ \coH^{\hdot}_{\Tv}(i_{\lambda}^! \cF).
\]

\end{lem}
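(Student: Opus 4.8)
The plan is to deduce the statement from the degeneration of the Leray--Serre spectral sequence \eqref{eqn:spectral-sequence-equiv-cohomology} together with its compatibility with the forgetful functor, exactly in the spirit of the proof of Lemma \ref{lem:equiv-cohomology-first-properties}.

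First I would observe that the inclusion $\Tv \hookrightarrow A$ induces a morphism from the spectral sequence computing $\coH^{\hdot}_{A}(i_\la^! \cF)$ to the one computing $\coH^{\hdot}_{\Tv}(i_\la^! \cF)$, compatible on abutments with the forgetful morphism $F : \coH^{\hdot}_{A}(i_\la^! \cF) \to \coH^{\hdot}_{\Tv}(i_\la^! \cF)$. On $E_2$-terms this morphism is
\[
\coH^p_A(\pt) \o \coH^q(i_\la^! \cF) \ \to \ \coH^p_{\Tv}(\pt) \o \coH^q(i_\la^! \cF),
\]
induced by the quotient map $\sh = \sym(\t)[\hb] \twoheadrightarrow \sym(\t)$ (killing $\hb$) tensored with the identity; in bidegree $(p,q)$ its kernel is $\hb \cdot \coH^{p-2}_A(\pt) \o \coH^q(i_\la^! \cF)$. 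Since $\coH^{\hdot}(i_\la^! \cF)$ is concentrated in degrees of constant parity (as recalled in the proof of Lemma \ref{lem:equiv-cohomology-first-properties}), both spectral sequences degenerate at $E_2$, so the same formula describes the morphism $\gr F$ between the associated graded modules for the spectral sequence filtrations.

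It then remains to pass from $\gr F$ to $F$. Surjectivity of $F$ is immediate since $\gr F$ is surjective and the filtrations have finitely many steps in each cohomological degree. For the kernel, the inclusion $\hb \cdot \coH^{\hdot}_{A}(i_\la^! \cF) \subseteq \ker F$ is clear (as $F$ is $\sh$-linear and $\hb$ acts by zero on $\coH^{\hdot}_{\Tv}(i_\la^! \cF)$); conversely, if $0 \neq x \in \ker F$ has filtration degree $i$, its symbol lies in $\ker \gr F$, hence equals $\hb \cdot \bar y$ for some $\bar y$ of filtration degree $i-2$, so that for any lift $y$ of $\bar y$ the element $x - \hb y \in \ker F$ has strictly smaller filtration degree, and an induction on the filtration degree yields $x \in \hb \cdot \coH^{\hdot}_{A}(i_\la^! \cF)$. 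Thus $F$ factors through an isomorphism $\coH^{\hdot}_{A}(i_\la^! \cF)/(\hb \cdot \coH^{\hdot}_{A}(i_\la^! \cF)) \simto \coH^{\hdot}_{\Tv}(i_\la^! \cF)$. I do not expect any real obstacle here: the only point requiring a little care is the bookkeeping of the spectral sequence filtration against the grading by $\hb$-degree in the last step. Alternatively, one can bypass the spectral sequence for $F$ altogether: by Lemma \ref{lem:equiv-cohomology-first-properties}(1) the module $\coH^{\hdot}_{A}(i_\la^! \cF)$ is free over $\sh$ and $\coH^{\hdot}_{\Tv}(i_\la^! \cF)$ is free over $\sym(\t)$ of the same graded rank, $F$ kills $\hb$, and a comparison of graded ranks together with the graded Nakayama lemma shows that the induced map modulo $\hb$ is an isomorphism.
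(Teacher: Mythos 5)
Your proposal is correct and follows the same route the paper intends: the paper's proof is the single sentence that the claim ``again follows from the fact that the spectral sequence \eqref{eqn:spectral-sequence-equiv-cohomology} degenerates,'' and you supply exactly that argument (comparison of the $A$- and $\Tv$-equivariant spectral sequences, $E_2$-degeneration by parity, and a filtration bootstrap, or alternatively the graded-rank comparison via Lemma \ref{lem:equiv-cohomology-first-properties}(1)). The only slip is terminological: $x - \hb y$ lies strictly \emph{deeper} in the decreasing spectral-sequence filtration (larger filtration index $p$), not at ``strictly smaller filtration degree,'' but the induction terminates correctly since the filtration is finite in each cohomological degree.
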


\subsection{Restriction to a Levi subgroup}
\label{ss:Satake-restriction}

Below we will make extensive use of the geometric description of the functor of restriction to a Levi subgroup, due to Mirkovi{\'c}--Vilonen \cite{mv} in the (crucial) case of the maximal torus, and to Beilinson--Drinfeld \cite{bd} in the general case.

Let $\Pv \subset \Gv$ be a parabolic subgroup containing $\Bv$, and let $\Lv \subset \Pv$ be the (unique) Levi factor containing $\Tv$. Let also $\Pv^-$ be the opposite parabolic subgroup. Note that $\Tv$ is also a maximal torus of $\Lv$, and $\Bv_{\Lv}:=\Bv \cap \Lv$ is a Borel subgroup containing $\Tv$.
We have dual groups $G$ and $L$ defined as in \S\ref{ss:Satake}. Consider the diagram
\begin{equation} \label{eqn:affine-Grassmannian-diagram}
\xymatrix@C=2cm{
\Gr_{\Lv} & \Gr_{\Pv^-} \ar[l]_-{q_{\Pv^-}} \ar[r]^-{i_{\Pv^-}} & \Gr_{\Gv}
}
\end{equation}
where $q_{\Pv^-}$ is induced by the projection $\Pv^- \twoheadrightarrow \Lv$ whose kernel is the unipotent radical of $\Pv^-$, and $i_{\Pv^-}$ is induced by the embedding $\Pv^- \hookrightarrow \Gv$. Define the functor
\[
{}' \ResGr^{\Gv}_{\Lv} := (q_{\Pv^-})_* \circ (i_{\Pv^-})^! : \cDbc(\Gr_{\Gv}) \to \cDbc(\Gr_{\Lv}),
\]
where $\cDbc(\Gr_\Gv)$ is the derived category of constructible complexes of $\C$-vector spaces on the ind-variety $\Gr_{\Gv}$ which are supported on a finite union of $\Gv(\OO)$-orbits, and similarly for $\cDbc(\Gr_{\Lv})$.
The functor ${}' \ResGr^{\Gv}_{\Lv}$ does not map the subcategory $\Perv_{\Gv(\OO)}(\Gr_{\Gv})$ of $\cDbc(\Gr_{\Gv})$ into the subcategory $\Perv_{\Lv(\OO)}(\Gr_{\Lv})$ of $\cDbc(\Gr_{\Lv})$; however, the following modification of this functor has this property. 

Recall that the connected components of $\Gr_{\Lv}$ are parametrized by the quotient $\bX/(\Z {\check R}_{\Lv})$, see \cite[Proposition 4.5.4]{bd}. (Here ${\check R}_{\Lv}$ denotes the coroots of $\Lv$, and $\Z {\check R}_{\Lv}$ is the lattice they generate.) If $M$ is in $\cDbc(\Gr_{\Lv})$ and $\chi \in \bX/(\Z {\check R}_{\Lv})$, we denote by $M_{\chi}$ the restriction of $M$ to the corresponding connected component. Define the functor $\ResGr^{\Gv}_{\Lv} : \cDbc(\Gr_{\Gv}) \to \cDbc(\Gr_{\Lv})$\index{ResGr@$\ResGr^{\Gv}_{\Lv}$} by the formula
\[
\ResGr^{\Gv}_{\Lv}(M) = \bigoplus_{\chi \in \bX/(\Z {\check R}_{\Lv})} \, \bigl( {}' \ResGr^{\Gv}_{\Lv}(M) \bigr)_{\chi}[\chi(2\rho_{\Gv} -2\rho_{\Lv} )],
\]
where $\rho_{\Gv}$ and $\rho_{\Lv}$ are the half sums of positive roots of $\Gv$ and $\Lv$. 
It is proved in \cite[Proposition 5.3.29]{bd} that $\ResGr^{\Gv}_{\Lv}$ restricts to a functor
\[
\ResGr^{\Gv}_{\Lv} : \Perv_{\Gv(\OO)}(\Gr_{\Gv}) \to \Perv_{\Lv(\OO)}(\Gr_{\Lv}).
\]
Moreover, it is explained in \cite[\S5.3.30]{bd} that this functor is a tensor functor.

Using the base change theorem one can easily construct an isomorphism of tensor functors
\[
\sfF_{\Gv} \simto \sfF_{\Lv} \circ \ResGr^{\Gv}_{\Lv},
\]
see e.g.~\cite[\S 4.1]{ahr} for details. We deduce a morphism of algebraic groups
\[
L = \mathrm{Aut}^\star(\sfF_{\Lv}) \to \mathrm{Aut}^\star(\sfF_{\Lv} \circ \ResGr^{\Gv}_{\Lv}) \cong \mathrm{Aut}^\star(\sfF_{\Gv}) = G.
\]
It is known that this morphism is injective, and identifies $L$ with the Levi subgroup of $G$ whose root system is the system of coroots of $\Lv$. Hence we will consider $L$ as a subgroup of $G$. By construction the following diagram is commutative:
\[
\xymatrix@C=2cm{
\Perv_{\Gv(\OO)}(\Gr_{\Gv}) \ar[r]^-{\ResGr^{\Gv}_{\Lv}} \ar[d]_-{\cS_{\Gv}} & \Perv_{\Lv(\OO)}(\Gr_{\Lv}) \ar[d]^-{\cS_{\Lv}} \\
\Rep(G) \ar[r]^-{V \mapsto V_{|L}} & \Rep(L)
}
\]

Note that by the constructions of \S\ref{ss:Satake} we have identified $T$ with a subgroup of $L$, but also with a subgroup of $G$. These identifications are compatible with the identification of $L$ as a subgroup of $G$, see e.g.~\cite[\S 4.1]{ahr}. Moreover, by the base change theorem there exists a canonical isomorphism of functors
\beq{eqn:transitivity-ResGr}
\ResGr^{\Lv}_{\Tv} \circ \ResGr^{\Gv}_{\Lv} \ \cong \ \ResGr^{\Gv}_{\Tv}.
\eeq

\subsection{Restriction to a Levi subgroup: cofibers}
\label{ss:Satake-restriction-cofiber}

Let $\Pv, \Pv^-, \Lv$ be as in \S\ref{ss:Satake-restriction}. Let also $\la \in \bX$ and $\cF$ in $\Perv_{\Gv(\OO)}(\Gr_\Gv)$. We want to compare the $\sh$-modules
\[
\coH^\hdot_A\bigl( (i_\la^\Gv)^! \cF \bigr) \qquad \text{and} \qquad \coH^\hdot_A \bigl( (i_\la^\Lv)^! \ResGr^{\Gv}_{\Lv} (\cF) \bigr),
\]
where $i_\la^\Gv$ and $i_\la^\Lv$ are the inclusions of $\bla$ is $\Gr_\Gv$ and $\Gr_\Lv$ respectively. Let $t_\la^{\Pv}$ be the inclusion of $(q_{\Pv^-})^{-1}(\bla)$ in $\Gr_\Gv$. Then by the base-change theorem there is a canonical isomorphism of graded $\sh$-modules
\[
\coH^\hdot_A \bigl( (i_\la^\Lv)^! \ResGr^{\Gv}_{\Lv} (\cF) \bigr) \ \cong \ \coH^\hdot_A \bigl( (q_{\Pv^-})^{-1}(\bla); (t_\la^\Pv)^! \cF \bigr) \langle \la(2\rhov_L - 2 \rhov_G) \rangle.
\]
As $\{\bla\}$ is closed in $(q_{\Pv^-})^{-1}(\bla)$, the $({}_!, {}^!)$-adjunction for the inclusion $\{\bla\} \hookrightarrow (q_{\Pv^-})^{-1}(\bla)$ induces a canonical morphism of $\sh$-modules
\beq{eqn:restriction-geom}
\coH^\hdot_A\bigl( (i_\la^\Gv)^! \cF \bigr) \to \coH^\hdot_A \bigl( (i_\la^\Lv)^! \ResGr^{\Gv}_{\Lv}(\cF) \bigr) \langle \la(2\rhov_G-2\rhov_L) \rangle.
\eeq

If $\Pv=\Bv$ (so that $\Lv=\Tv$) we have canonical isomorphisms
\[
\coH^\hdot_A \bigl( (i_\la^\Tv)^! \ResGr^{\Gv}_{\Tv}(\cF) \bigr) \ \cong \ \coH^\hdot_A(t_\la^! \cF) \lan -\la(2\rhov_G) \ran \ \cong \ \bigl( \cS_\Gv(\cF) \bigr)_\la \o \sh.
\]
Via this isomorphism, \eqref{eqn:restriction-geom} identifies with the morphism $\kgeom_{\cF,\la}$ defined in \eqref{eqn:equivariant-cohomology-morphism}.
Moreover, one can easily check that the following diagram is commutative:
\[
\xymatrix@C=1.5cm{
\coH^\hdot_A\bigl( (i_\la^\Gv)^! \cF \bigr) \ar@/^1.5pc/[rr]^-{{}^\Gv \hspace{-1pt} \kgeom_{\cF,\la}} \ar[r]_-{\eqref{eqn:restriction-geom}} & \coH^\hdot_A \bigl( (i_\la^\Lv)^! \ResGr^{\Gv}_{\Lv}(\cF) \bigr) \langle \la(2\rhov_G-2\rhov_L) \rangle \ar[r]_-{{}^\Lv \hspace{-1pt} \kgeom_{\cF,\la}} & \bigl( \cS_\Gv(\cF) \bigr)_\la \o \sh \lan \la(2\rhov_G) \ran
}
\]
(where for simplicity we neglect shifts of morphisms).
As the morphism ${}^\Gv \hspace{-1pt} \kgeom_{\cF,\la}$ is injective (see Corollary \ref{cor:kgeom-injective}), we deduce that \eqref{eqn:restriction-geom} is also injective.

%

We will also need the following result, which is clear by construction.

\begin{lem}
\label{lem:Xi-restriction}
Let $\la \in \bX$ and $\cF$ in $\Perv_{\Gv(\OO)}(\Gr_\Gv)$. For any $w \in W_L \subset W_G$ the following diagram commutes:
\[
\xymatrix@C=2cm{
\coH^\hdot_A\bigl( (i_\la^\Gv)^! \cF \bigr) \ar[d]_-{{}^{\Gv} \hspace{-1pt} \Xi^{\cF,\la}_w} \ar[r]^-{\eqref{eqn:restriction-geom}} & \coH^\hdot_A \bigl( (i_\la^\Lv)^! \ResGr^{\Gv}_{\Lv}(\cF) \bigr) \ar[d]^-{{}^{\Lv} \hspace{-1pt} \Xi^{\ResGr^{\Gv}_{\Lv} (\cF),\la}_w} \\
\w \coH^\hdot_A\bigl( (i_{w \la}^\Gv)^! \cF \bigr) \ar[r]^-{\eqref{eqn:restriction-geom}} & \coH^\hdot_A \bigl( (i_\la^\Lv)^! \ResGr^{\Gv}_{\Lv}(\cF) \bigr).
}
\]
\end{lem}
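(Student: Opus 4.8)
The plan is to exploit the freedom in the choice of representative of $w$ used to define the operators $\Xi$. First I would choose a representative $g \in N_{\Lv}(\Tv)$ of $w \in W_L$, which is possible since $W_L = N_{\Lv}(\Tv)/\Tv$; note that $g$ then also represents $w$ viewed in $W_G = N_{\Gv}(\Tv)/\Tv$. Recalling from \S\ref{ss:W-symmetries} that $\Xi^{\cF,\la}_w$ does not depend on the chosen representative, I would compute both ${}^{\Gv} \hspace{-1pt} \Xi^{\cF,\la}_w$ and ${}^{\Lv} \hspace{-1pt} \Xi^{\ResGr^{\Gv}_{\Lv}(\cF),\la}_w$ using $g$: each is the composite of the isomorphism induced by the automorphism $m_g$ (together with the identification of the relevant one-point varieties) with the isomorphism coming from the equivariant structure, which provides $m_g^! \cF \cong \cF$, resp.~$m_g^! \ResGr^{\Gv}_{\Lv}(\cF) \cong \ResGr^{\Gv}_{\Lv}(\cF)$.

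Next I would check that the morphism \eqref{eqn:restriction-geom} is compatible with each of these two building blocks. For the first, the key observation is that $g \in N_{\Lv}(\Tv) \subset \Lv \subset \Pv^-$, so $g$ acts on all three ind-schemes $\Gr_{\Gv}$, $\Gr_{\Pv^-}$, $\Gr_{\Lv}$ appearing in diagram \eqref{eqn:affine-Grassmannian-diagram}, the maps $q_{\Pv^-}$ and $i_{\Pv^-}$ are $g$-equivariant, and (since $g$ maps to $w$ in $\Lv$) the automorphism $m_g$ carries $\{\bla\}$ to the point attached to $w\la$ and $(q_{\Pv^-})^{-1}(\bla)$ to $(q_{\Pv^-})^{-1}$ of that point. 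Functoriality of the base-change isomorphism and of the $({}_!,{}^!)$-adjunction used in the definition of \eqref{eqn:restriction-geom} then yields that \eqref{eqn:restriction-geom} intertwines the two $m_g$-isomorphisms. For the second building block I would verify that, after restricting to the action of the constant-loop subgroup $N_{\Lv}(\Tv)$, the $\Gv(\OO)$-equivariant structure on $\cF$ induces under base change exactly the $\Lv(\OO)$-equivariant structure on $\ResGr^{\Gv}_{\Lv}(\cF)$ used above; this is immediate from the description of $\ResGr^{\Gv}_{\Lv}$ in \S\ref{ss:Satake-restriction} as a composition of the ($\Lv$-equivariant) functors $(q_{\Pv^-})_*$, $(i_{\Pv^-})^!$, the decomposition into connected components and cohomological shifts, together with the fact that $\ResGr^{\Gv}_{\Lv}$ is a functor on the equivariant categories, so that the equivariant structure on $\ResGr^{\Gv}_{\Lv}(\cF)$ is the canonical one transported from that of $\cF$. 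Splicing the two compatibilities together shows that the four small squares decomposing the diagram of the lemma all commute, hence so does the whole square.

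The routine but slightly delicate point is this last equivariance-bookkeeping: making sure that the equivariant structure on $\ResGr^{\Gv}_{\Lv}(\cF)$ really is the one transported from $\cF$ via base change, rather than some a priori different structure, and that the isomorphism $m_g^! \ResGr^{\Gv}_{\Lv}(\cF) \cong \ResGr^{\Gv}_{\Lv}(\cF)$ is then the image under base change of $m_g^! \cF \cong \cF$. I expect this to be essentially the only content of the argument; once it is in place everything reduces to functoriality of base change and the naturality of $\Xi$ in its representative, so I do not anticipate any genuine obstacle.
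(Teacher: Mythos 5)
Your proof is correct and spells out precisely what the paper dismisses as ``clear by construction'' (the paper offers no argument for this lemma). The two key reductions you use — choosing the representative $g$ inside $N_{\Lv}(\Tv)$, which then lies in $\Lv \subset \Pv^-$ and hence acts compatibly on all three terms of diagram \eqref{eqn:affine-Grassmannian-diagram}, and noting that the $\Lv(\OO)$-equivariant structure on $\ResGr^{\Gv}_{\Lv}(\cF)$ is the one transported from $\cF$ by base change because $\ResGr^{\Gv}_{\Lv}$ is a functor between equivariant categories — are exactly the ingredients the authors have in mind, and the rest is the functoriality of the base-change isomorphism and of $({}_!,{}^!)$-adjunction that you correctly invoke. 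There is no gap.
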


\subsection{Construction of root vectors}
\label{ss:root-vectors}

There are several ways to construct simple root vectors in $\g:=\Lie(G)$ out of the equivalence $\cS$ using \cite{gi} and \cite{mv}, see e.g.~\cite{yz, baumann}. Here we recall a simple version essentially explained in \cite{vasserot}, which will be sufficient for our purposes.

Let $\alpha$ be a simple root. Let $\Pv^\alpha$ be the minimal parabolic subgroup of $\Gv$ containing $\Bv$ associated with $\alpha$, and let $\Lv^\alpha$ be the Levi factor of $\Pv^\alpha$ containing $\Tv$. Then as explained in \S\ref{ss:Satake-restriction}, the ``dual'' group $L^\alpha$ associated with $\Lv^\alpha$ can be canonically identified with the Levi subgroup of $G$ whose Lie algebra is $\g_{-\alpha} \oplus \t \oplus \g_\alpha$, so to construct a root vector in $\g_\alpha$ it is enough to construct a root vector in the $\alpha$-weight space of the Lie algebra $\l^\alpha$ of $L^\alpha$.

Let $\Gr_{\Lv^\alpha}^\circ$ be the connected component of $\boldsymbol{0}$ in $\Gr_{\Lv^\alpha}$. Then the subcategory $\Perv_{\Lv^\alpha(\OO)}(\Gr_{\Lv^\alpha}^\circ)$ of $\Perv_{\Lv^\alpha(\OO)}(\Gr_{\Lv^\alpha})$ is closed under convolution. If we denote by $M^\alpha$ the group of automorphisms of the restriction $\mathsf{F}_{\Lv^\alpha}^\circ$ of the fiber functor $\mathsf{F}_{\Lv^\alpha}$ to this subcategory, then by definition we have a natural morphism $L^\alpha \to M^\alpha$, which induces a morphism $\l^\alpha \to \mathfrak{m}^\alpha$ whose restriction to $\alpha$-weight spaces is an isomorphism. Hence to construct a root vector in $\g_\alpha$ it is enough to construct a root vector in the $\alpha$-weight space of $\mathfrak{m}^\alpha$.

Now let $\cL$ be the positive generator of the Picard group of $\Gr_{\Lv^\alpha}^\circ$. (See \cite[\S 1.4]{vasserot} or \cite[\S 3.3]{baumann} for the explicit construction of this line bundle.) The cup product with the first Chern class of $\cL$ defines an endomorphism of the functor $\mathsf{F}_{\Lv^\alpha}^\circ$. By the arguments of \cite[\S 3.4]{yz}, this endomorphism defines an element in the $\alpha$-weight space of $\mathfrak{m}^\alpha$, which finishes the construction.

This construction of the root vectors is clearly compatible with restriction to a Levi subgroup in the sense of \S\ref{ss:Satake-restriction}.


\section{Proofs of the main results}
\label{sec:proofs}

In Sections \ref{sec:proofs}--\ref{sec:applications} we will use the results of Sections \ref{sec:DX}--\ref{sec:wfg} for the datum $T \subset B \subset G$ constructed in \S\ref{ss:Satake}, and for the root vectors constructed in \S\ref{ss:root-vectors}. Note that all the objects which are denoted by the same symbol in Sections \ref{sec:DX}--\ref{sec:wfg} and in Section \ref{sec:Satake} (e.g.~$\bX$, $\bX^+$, $\sh$) get identified canonically.

%
%
%
%
%

\subsection{Preliminaries on Verma modules}
\label{ss:preliminaries-Verma}

Recall the Verma modules $\V(\mu)$ defined in \S\ref{ss:reminder-Verma}.

Let $\mu \in \t^*$, and assume that for all $\alpha \in R$, $\mu(\alv) \notin \Z$. We claim that for $\nu \in \ft^*$ we have
\begin{equation}
\label{eqn:morphisms-Verma-regular}
\Hom_{U(\g)}(\V(\nu),\V(\mu)) \ = \ \bbc^{\delta_{\mu,\nu}}.
\end{equation}
Indeed, if there exists a non-zero morphism $\V(\nu) \to \V(\mu)$ then by \cite[\S 3.4]{hu} $\nu$ must be in $W_{[\mu]} \bullet \mu$, where $W_{[\mu]}=\{w \in W \mid w(\mu) - \mu \in \Z R\}$.
However, by \cite[Theorem 3.4]{hu} and our assumption on $\mu$, $W_{[\mu]}=\{1\}$, which implies \eqref{eqn:morphisms-Verma-regular}. 

If $\lambda \in \bX$, then the $\t$-weights of the $U(\b)$-module $\V(\mu + \lambda) \o  \bbc_{-\mu}$ (where the $U(\b)$-action is diagonal, the action on $\V(\mu + \lambda)$ being the restriction of the $U(\g)$-action) are in $\bX$, and the action of $U(\u)$ is locally finite. We deduce that the $U(\b)$-action can be integrated to a $B$-action.

\begin{lem}
\label{lem:Verma-induced}

Let $\mu \in \t^*$ be such that $\mu(\alv) \notin \Z$ for all $\alpha \in R$, and let $\lambda \in \bX$. Then there exists a natural isomorphism of $B$-modules
\[
\V(\mu + \lambda) \o  \bbc_{-\mu} \ \simto \ \Ind_T^B(\lambda).
\]

\end{lem}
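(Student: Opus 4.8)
The statement to prove is that for $\mu\in\t^*$ with $\mu(\alv)\notin\Z$ for all $\alpha\in R$ and $\lambda\in\bX$, there is a natural isomorphism of $B$-modules $\V(\mu+\lambda)\otimes\bbc_{-\mu}\simto\Ind_T^B(\lambda)$. The plan is to exhibit this as the $\Hom$-adjunction map and check it is an isomorphism by a weight-space count. Recall that as a vector space $\V(\mu+\lambda)=U(\u^-)\cdot 1_{\mu+\lambda}$, so $\V(\mu+\lambda)\otimes\bbc_{-\mu}$ has a basis indexed by $\sym(\u^-)$ with $\t$-weights lying in $\lambda+\Z_{\geq 0}(-R^+)\subset\bX$, and the $\u$-action is locally nilpotent; hence the $B$-module structure is well defined (as noted just before the statement).

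First I would construct the morphism. By Frobenius reciprocity for the inclusion $T\hookrightarrow B$, giving a $B$-morphism $\V(\mu+\lambda)\otimes\bbc_{-\mu}\to\Ind_T^B(\lambda)$ is the same as giving a $T$-morphism $\V(\mu+\lambda)\otimes\bbc_{-\mu}\to\bbc_\lambda$, i.e.\ a projection onto the $\lambda$-weight space along all other weight spaces. Concretely this is the map $v\otimes 1\mapsto (\text{coefficient of }1_{\mu+\lambda}\text{ in }v)$, using the decomposition $\V(\mu+\lambda)=\bbc\cdot 1_{\mu+\lambda}\oplus\V(\mu+\lambda)_-$. This is manifestly $T$-equivariant and functorial, so it induces the desired $B$-morphism, which I will call $\Phi$. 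Unwinding the adjunction, $\Phi$ sends $v\otimes 1$ to the function $b\mapsto(\text{coefficient of }1_{\mu+\lambda}\text{ in }b\cdot(v\otimes 1))$ on $B$.

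Next I would check $\Phi$ is an isomorphism. Both sides are $\bX$-graded with finite-dimensional weight spaces (for the target, $\Ind_T^B(\lambda)=(\bbc[B]\otimes\bbc_\lambda)^T$; since $B\cong U\times T$ and $\bbc[U]\cong\sym(\u)^*$ as a $T$-module via the exponential/coordinates, $\Ind_T^B(\lambda)$ has character $\lambda\cdot\prod_{\alpha\in R^+}(1-e^{-\alpha})^{-1}$, with weights in $\lambda+\Z_{\geq 0}(-R^+)$ and the same finite multiplicities as $\sym(\u^-)$). So it suffices to prove $\Phi$ is injective, or equivalently surjective. Injectivity is the cleaner route: a nonzero $B$-submodule of $\V(\mu+\lambda)\otimes\bbc_{-\mu}$ must, since $\u$ acts locally nilpotently, contain a nonzero highest-weight vector, i.e.\ a $\u$-invariant vector $w\otimes 1$. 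Such a $w$ is a highest-weight vector in $\V(\mu+\lambda)$ for the $\u$-action of weight some $\nu+\mu$ with $\nu\in\lambda+\Z_{\geq 0}(-R^+)$, hence generates a copy of the Verma module $\V(\nu+\mu)$ inside $\V(\mu+\lambda)$; by \eqref{eqn:morphisms-Verma-regular} (whose hypothesis $\mu(\alv)\notin\Z$ is exactly our assumption, and which forces $\Hom_{U(\g)}(\V(\nu+\mu),\V(\mu+\lambda))=\bbc^{\delta_{\nu,\lambda}}$) we get $\nu=\lambda$ and $w\in\bbc\cdot 1_{\mu+\lambda}$, on which $\Phi$ is nonzero. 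Hence $\ker\Phi=0$, and a character comparison gives that $\Phi$ is an isomorphism.

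\textbf{Main obstacle.} The only real subtlety is making the kernel argument rigorous: one must ensure that a nonzero $B$-stable subspace genuinely produces a $\u$-invariant vector (this uses local nilpotence of the $\u$-action together with the fact that the $\t$-weights are bounded above in the appropriate root order), and that a $\u$-invariant $\t$-weight vector in $\V(\mu+\lambda)$ really does generate a Verma submodule isomorphic to $\V(\nu+\mu)$ — i.e.\ that the induced map $\V(\nu+\mu)\to\V(\mu+\lambda)$ is injective, which holds because $\V(\mu+\lambda)$ is free over $U(\u^-)$ and any such map is $U(\u^-)$-linear and nonzero on the generator. Everything else is bookkeeping with characters, so I expect this to be short; the regularity hypothesis on $\mu$ is doing all the work via \eqref{eqn:morphisms-Verma-regular}.
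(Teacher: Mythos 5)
Your proof is correct and follows essentially the same route as the paper's: the map is built by Frobenius reciprocity from the projection $\V(\mu+\lambda)\to\C_{\mu+\lambda}$, injectivity is reduced (via local nilpotence of the $\u$-action, i.e.\ the socle) to showing the only singular vectors are multiples of $1_{\mu+\lambda}$ using \eqref{eqn:morphisms-Verma-regular}, and one concludes by comparing $T$-characters. The only quibble is the parenthetical explicit formula for the adjoint map, where the group element should act inversely (a convention issue that does not affect the argument).
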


\begin{proof}
First, let us explain how this morphism in constructed. 
The projection of $\t$-modules $\V(\mu+\la) \to \C_{\mu+\lambda}$ (with kernel $\V(\mu+\la)_-$) induces a morphism of $T$-modules $\V(\mu + \lambda) \o  \C_{-\mu} \to \C_\lambda$. By Frobenius reciprocity we deduce a morphism of $B$-modules 
as in the statement of the lemma.

Now we prove that this morphism is injective. For this, it suffices to prove that its restriction to the socle of the left-hand side (as a $B$-module, or equivalently as a $U(\b)$-module) is injective. We claim that this socle is isomorphic to $\C_{\lambda}$, and has a basis consisting of the vector $1_{\lambda+\mu} \otimes 1 \in \V(\mu + \la) \o  \bbc_{-\mu}$. Indeed, this is equivalent to saying that the socle of $\V(\mu + \lambda)$ is isomorphic to $\bbc_{\mu + \lambda}$. However, if there exists a non-zero morphism $\bbc_{\nu} \to \V(\mu + \lambda)$ for some $\nu \in \t^*$ then 
we obtain a non-zero morphism of $U(\g)$-modules $\V(\nu) \to \V(\mu + \lambda)$. By \eqref{eqn:morphisms-Verma-regular}, this implies that $\nu=\mu+\lambda$, proving the claim and the injectivity of the morphism.

Now, it is easy to see that the $T$-modules $\V(\mu + \lambda) \o  \bbc_{-\mu}$ and $\Ind_T^B(\lambda)$ have the same weights, with the same (finite) multiplicities. Hence our morphism must be an isomorphism.
\end{proof}

Below we will use the standard order on $\t^*$, defined by $\nu \leq \mu$ iff $\mu-\nu \in \Z_{\geq 0} R^+$.

Let $\alpha$ be a simple root, $P^{\alpha} \subset G$ the corresponding minimal parabolic subgroup containing $B$, and $L^\alpha$ the Levi factor of $P^\alpha$ containing $T$. Let also $B^{\alpha} := L^{\alpha} \cap B$, and $\b^{\alpha}:=\mathrm{Lie}(B^{\alpha})$. Let $\mu \in \ft^*$ be such that $\mu({\check \beta}) \notin \Z$ for any $\beta \in R \smallsetminus \{\pm \alpha\}$. We claim that for $\nu \in \t^*$ we have
\begin{equation}
\label{eqn:morphisms-Verma-regular-alpha}
\Hom_{U (\g)}(\V(\nu),\V(\mu)) \ \cong \ 
\begin{cases}
\C & \text{if } \nu=\mu; \\
\C & \text{if } \nu=s_{\alpha} \bullet \mu \text{ and } \nu < \mu; \\
0 & \text{otherwise}.
\end{cases}
\end{equation}
Indeed, if there exists a non-zero morphism $\V(\nu) \to \V(\mu)$ with $\nu \neq \mu$ then, with the same notation as above we must have $\nu \in W_{[\mu]} \bullet \mu$ and $\nu < \mu$. Again by \cite[Theorem 3.4]{hu}, this implies that $\nu=s_{\alpha} \bullet \mu$. On the other hand, if $\nu=s_{\alpha} \bullet \mu$ and $\nu < \mu$, then the $\nu$-weight space of $\V(\mu)$ is one-dimensional, and consists of singular vectors by \cite[Proposition 1.4]{hu}.

In the following lemma, for $\mu \in \t^*$ we denote by $\V^{\alpha}(\mu)$ the Verma module associated with $\mu$ for the reductive group $L^{\alpha}$ with Borel subgroup $B^\alpha$.

\begin{lem}
\label{lem:Verma-induced-alpha}

Let $\mu \in \ft^*$ be such that $\mu({\check \beta}) \notin \Z$ for all $\beta \in R \smallsetminus \{\pm \alpha\}$, and let $\lambda \in \bX$. Then there exists a natural isomorphism of $B$-modules
\[
\V(\mu+\lambda) \o \bbc_{-\mu} \ \simto \ \Ind_{B^{\alpha}}^B \bigl( \V^{\alpha}(\mu+\lambda) \o \bbc_{-\mu} \bigr).
\]

\end{lem}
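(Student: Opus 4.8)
The plan is to follow the strategy of the proof of Lemma~\ref{lem:Verma-induced}, with \eqref{eqn:morphisms-Verma-regular} replaced by its ``$\alpha$-singular'' refinement \eqref{eqn:morphisms-Verma-regular-alpha}. Write $\fn^-_{L^\alpha} := \bigoplus_{\beta \in R^+,\ \beta \notin \Z\alpha} \g_{-\beta}$, so that $\u^- = \g_{-\alpha} \oplus \fn^-_{L^\alpha}$. Since $[\l^\alpha, \fn^-_{L^\alpha}] \subset \fn^-_{L^\alpha}$, the subspace $\fn^-_{L^\alpha} \cdot \V(\mu+\lambda)$ is stable under the action of $\l^\alpha$, hence of $\b^\alpha$, and a Poincar\'e--Birkhoff--Witt computation shows that the quotient map
\[
\bar{p}^\alpha \colon \V(\mu+\lambda) \twoheadrightarrow \V(\mu+\lambda) \big/ \bigl( \fn^-_{L^\alpha} \cdot \V(\mu+\lambda) \bigr)
\]
is a morphism of $\l^\alpha$-modules which identifies its target with the $L^\alpha$-Verma module $\V^\alpha(\mu+\lambda)$, the image of $1_{\mu+\lambda}$ being its highest weight vector $1^\alpha_{\mu+\lambda}$. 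After twisting by $\bbc_{-\mu}$, both sides carry weights in $\bX$ and a locally finite action of $\u^\alpha = \g_\alpha$, so $\bar{p}^\alpha \o \id$ is a morphism of $B^\alpha$-modules; applying Frobenius reciprocity to the inclusion $B^\alpha \subset B$ I would obtain from it the asserted morphism of $B$-modules
\[
\psi \colon \V(\mu+\lambda) \o \bbc_{-\mu} \longrightarrow \Ind_{B^\alpha}^B \bigl( \V^\alpha(\mu+\lambda) \o \bbc_{-\mu} \bigr).
\]

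Next I would prove that $\psi$ is injective. As in Lemma~\ref{lem:Verma-induced}, since the weights of $\V(\mu+\lambda) \o \bbc_{-\mu}$ are bounded above with finite multiplicities, every nonzero $B$-submodule contains a $\u$-fixed weight line; hence it suffices to check that $\psi$ is injective on the socle of $\V(\mu+\lambda) \o \bbc_{-\mu}$ as a $U(\b)$-module. A $\u$-fixed weight vector of $\V(\mu+\lambda)$ is a singular vector, hence (any nonzero morphism of Verma modules being injective) corresponds to an embedding $\V(\nu) \hookrightarrow \V(\mu+\lambda)$. Applying \eqref{eqn:morphisms-Verma-regular-alpha} to $\mu+\lambda$ (which satisfies the required hypothesis since $(\mu+\lambda)(\beta^\vee) = \mu(\beta^\vee) + \lambda(\beta^\vee) \notin \Z$ for $\beta \in R \smallsetminus \{\pm\alpha\}$, as $\lambda \in \bX$), the only possibilities are $\nu = \mu+\lambda$, with line $\bbc \cdot 1_{\mu+\lambda}$, and, \emph{only} when $n := (\mu+\lambda+\rho)(\alv) \in \Z_{>0}$, also $\nu = s_\alpha \bullet (\mu+\lambda)$, whose weight space in $\V(\mu+\lambda)$ is one-dimensional and spanned, by $\mathfrak{sl}_2$-theory, by $f_\alpha^n \cdot 1_{\mu+\lambda}$. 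Now $\bar{p}^\alpha(1_{\mu+\lambda}) = 1^\alpha_{\mu+\lambda} \neq 0$, and since $f_\alpha \in \l^\alpha$ and $\bar{p}^\alpha$ is $\l^\alpha$-equivariant, $\bar{p}^\alpha(f_\alpha^n \cdot 1_{\mu+\lambda}) = f_\alpha^n \cdot 1^\alpha_{\mu+\lambda}$, which is nonzero because $\V^\alpha(\mu+\lambda)$ is free of rank one over $U(\g_{-\alpha})$. These two images lie in distinct $T$-weight spaces, hence are linearly independent, so $\bar{p}^\alpha$, and therefore $\psi$, is injective on the socle, and $\psi$ is injective.

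It then remains to see that $\psi$ is an isomorphism, which I would deduce (as at the end of the proof of Lemma~\ref{lem:Verma-induced}) from a character computation, both modules being $T$-semisimple with finite-dimensional weight spaces. On the left, $\operatorname{ch}\bigl(\V(\mu+\lambda) \o \bbc_{-\mu}\bigr) = e^{\lambda} \prod_{\beta \in R^+} (1 - e^{-\beta})^{-1}$. On the right, writing $N^+_{L^\alpha}$ for the unipotent radical of $P^\alpha$, so that $B = B^\alpha \ltimes N^+_{L^\alpha}$, one has $\Ind_{B^\alpha}^B(M) \cong \bbc[N^+_{L^\alpha}] \o M$ as $T$-modules for any $B^\alpha$-module $M$; hence the character of the right-hand side is $\operatorname{ch}\bigl(\bbc[N^+_{L^\alpha}]\bigr) \cdot \operatorname{ch}\bigl(\V^\alpha(\mu+\lambda) \o \bbc_{-\mu}\bigr) = \bigl( \prod_{\beta \in R^+,\ \beta \notin \Z\alpha} (1 - e^{-\beta})^{-1} \bigr) \cdot e^{\lambda} (1 - e^{-\alpha})^{-1}$, and since $R^+$ is the disjoint union of $\{\alpha\}$ and $\{\beta \in R^+ : \beta \notin \Z\alpha\}$ this equals the left-hand side. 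An injective $T$-equivariant map between modules with equal finite weight multiplicities is bijective, which finishes the proof.

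The only genuinely new point compared with Lemma~\ref{lem:Verma-induced}, and the place requiring care, will be the injectivity step: because $\mu$ is allowed to be $\alpha$-integral, the socle of $\V(\mu+\lambda)$ may be two-dimensional, and one must pin down the extra singular vector explicitly (as a nonzero multiple of $f_\alpha^n \cdot 1_{\mu+\lambda}$, via \eqref{eqn:morphisms-Verma-regular-alpha} and $\mathfrak{sl}_2$-theory) and check that the projection $\bar{p}^\alpha$ does not annihilate it. Everything else is the same bookkeeping with parabolic induction, Frobenius reciprocity, and characters as in the proof of Lemma~\ref{lem:Verma-induced}.
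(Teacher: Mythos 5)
Your proof is correct and follows the same strategy as the paper: construct the morphism by projecting $\V(\mu+\lambda)$ onto its $(\mu+\lambda+\Z\alpha)$-weight part (your $\fn^-_{L^\alpha}\cdot\V(\mu+\lambda)$ is exactly the complementary subspace, so the two constructions give the same $B^\alpha$-equivariant map) and then apply Frobenius reciprocity, reduce injectivity to the socle via \eqref{eqn:morphisms-Verma-regular-alpha}, and conclude by comparing characters. The only difference is that you make explicit what the paper leaves as ``injectivity is clear by construction,'' pinning down the at most two socle lines and checking directly that $\bar p^\alpha$ does not kill them.
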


\begin{proof}
First, let us explain the construction of this morphism. As in the proof of Lemma \ref{lem:Verma-induced} $\V(\mu+\la) \o \C_{-\mu}$ has a natural structure of $B$-module, and $\V^{\alpha}(\mu+\lambda) \o \bbc_{-\mu}$ has a natural structure of $B^\alpha$-module. The subspace of $\V(\mu+\la) \o \C_{-\mu}$ spanned by weight spaces whose weight is not in $\lambda + \Z\alpha$ is stable under the action of $B^{\alpha}$, and the quotient by this subspace is clearly isomorphic to $\V^{\alpha}(\mu+\la) \otimes \bbc_{-\mu}$. Hence we have constructed a morphism of $B^{\alpha}$-modules $\V(\nu+\lambda) \otimes \bbc_{-\mu} \to \V^{\alpha}(\mu+\la) \otimes \bbc_{-\mu}$. Using Frobenius reciprocity we obtain the desired morphism of $B$-modules.

Now we prove that this morphism is injective. As in Lemma \ref{lem:Verma-induced}, it is enough to prove that its restriction to the socle of the left-hand side is injective. But it follows from \eqref{eqn:morphisms-Verma-regular-alpha} that this socle has dimension $1$ or $2$, and injectivity is clear by construction.

Finally, one can deduce surjectivity as in the proof of Lemma \ref{lem:Verma-induced} by comparing characters.
\end{proof}

\subsection{Generic and sub-generic situations: classical case}

Let $\t^*_{\rs}$\index{t*rs@$\t^*_{\rs}$} and $(\g/\u)^*_{\rs}$ be the sets of elements in $\t^*$ and $(\g/\u)^*$ which are regular semisimple (as elements of $\g^*$). Then $(\g/\u)^*_{\rs} = (\g/\u)^* \times_{\t^*} \t^*_{\rs}$, and the action of $B$ on $(\g/\u)^*$ induces an isomorphism of $B$-varieties
\begin{equation}
\label{eqn:classical-case-generic-geometry}
B/T \times \t^*_{\rs} \ \simto \ (\g/\u)^*_{\rs}
\end{equation}
where the $B$-action on the left-hand side is trivial on $\t^*_{\rs}$, and given by left multiplication on $B/T$
(see e.g.~\cite[p.~188]{jantzen}).
In particular, we deduce that for any $\lambda \in \bX$ there is a natural isomorphism of $B$-modules
\begin{equation}
\label{eqn:classical-case-generic}
\C[\t^*_{\rs}] \otimes_{\mathrm{S}(\t)} \mathrm{S}(\g/\u) \o  \C_{-\lambda} \ \cong \ \C[\t^*_{\rs}] \o  \Ind_T^B(-\lambda).
\end{equation}

Now let $\alpha$ be a simple root, and let $P^\alpha$, $L^{\alpha}$, $B^{\alpha}$ be defined as in \S\ref{ss:preliminaries-Verma}. Let also $\l^{\alpha}$ be the Lie algebra of $L^{\alpha}$, and $\u^{\alpha}:=\l^{\alpha} \cap \u$. Let $\t^*_{\alpha-\rs}$ be the complement in $\t^*$ of the collection of hyperplanes defined by the equations ${\check \beta}$ for $\beta \in R \smallsetminus \{\pm \alpha\}$. Let also $(\g/\u)^*_{\alpha-\rs}=(\g/\u)^* \times_{\t^*} \t^*_{\alpha-\rs}$, and $(\l^{\alpha}/\u^{\alpha})^*_{\alpha-\rs}=(\l^{\alpha}/\u^{\alpha})^* \times_{\t^*} \t^*_{\alpha-\rs}$. 

\begin{lem}
\label{lem:geometry-subregular}
The (coadjoint) action of $B$ on $(\g/\u)^*$ induces an isomorphism of $B$-varieties
\[
B \times_{B^{\alpha}} (\l^{\alpha}/\u^{\alpha})^*_{\alpha-\rs} \ \simto \ (\g/\u)^*_{\alpha-\rs}.
\]

\end{lem}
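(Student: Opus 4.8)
The plan is to reduce the statement to an explicit triangular computation after trivialising the associated bundle, following the same mechanism as the classical isomorphism \eqref{eqn:classical-case-generic-geometry}. Let $N^\alpha$ denote the unipotent radical of $P^\alpha$, with Lie algebra $\fn_+ := \bigoplus_{\beta \in R^+ \smallsetminus \{\alpha\}} \g_\beta$, and let $\fn_- := \bigoplus_{\beta \in R^+ \smallsetminus \{\alpha\}} \g_{-\beta}$ be the Lie algebra of the opposite unipotent radical, so that $B = N^\alpha \rtimes B^\alpha$. From the decomposition $\g = \fn_- \oplus \l^\alpha \oplus \fn_+$ together with $\u = \u^\alpha \oplus \fn_+$ one obtains a $B^\alpha$-stable splitting $\g/\u \cong \fn_- \oplus (\l^\alpha/\u^\alpha)$ (using that the image of $\fn_-$ in $\g/\u$ is $\mathrm{ad}(\b^\alpha)$-stable), hence, dually, a $B^\alpha$-equivariant embedding $\iota \colon (\l^\alpha/\u^\alpha)^* \hookrightarrow (\g/\u)^* = (\fn_-)^* \oplus (\l^\alpha/\u^\alpha)^*$ onto the summand annihilating $\fn_-$. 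Since $\iota$ is compatible with the two ``Cartan component'' maps to $\t^*$ (both being restriction of functionals to the image of $\t$), it carries $(\l^\alpha/\u^\alpha)^*_{\alpha-\rs}$ into $(\g/\u)^*_{\alpha-\rs}$, and, trivialising the bundle via $B \cong N^\alpha \rtimes B^\alpha$, the morphism of the statement becomes
\[
\Phi \colon N^\alpha \times (\l^\alpha/\u^\alpha)^*_{\alpha-\rs} \longrightarrow (\g/\u)^*_{\alpha-\rs}, \qquad (n, \eta) \longmapsto \mathrm{Ad}^*(n)\bigl( \iota(\eta) \bigr).
\]
It then suffices to prove that $\Phi$ is an isomorphism of varieties, $B$-equivariance being built in.

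The key step is the following triangularity analysis. For $\xi \in (\g/\u)^*_{\alpha-\rs}$, with $\t^*$-component $\overline{\xi} \in \t^*_{\alpha-\rs}$, consider the morphism $N^\alpha \to (\fn_-)^*$, $m \mapsto \bigl( \mathrm{Ad}^*(m)(\xi) \bigr)_{(\fn_-)^*}$. Identifying $N^\alpha$ with $\fn_+$ via $\exp$, with coordinates $(x_\gamma)_{\gamma \in R^+ \smallsetminus \{\alpha\}}$, and expanding $\mathrm{Ad}^*(\exp x)(\xi) = \sum_{k \geq 0} \tfrac{1}{k!} (\mathrm{ad}^* x)^k(\xi)$ with respect to the root-height grading, a bookkeeping of which monomials $x_{\gamma_1} \cdots x_{\gamma_k}$ can contribute to a given $\g_{-\delta}^*$-coordinate (using that $\xi$ is supported on $\t \oplus \u^-$) shows that this coordinate has the shape
\[
\xi_\delta + c_\delta(\overline{\xi})\, x_\delta + Q_\delta\bigl( (x_\gamma)_{\operatorname{ht}(\gamma) < \operatorname{ht}(\delta)} \bigr),
\]
where $\xi_\delta$ is the $\g_{-\delta}^*$-component of $\xi$, $Q_\delta$ is a polynomial, and $c_\delta(\overline{\xi})$ is a nonzero scalar multiple of $\overline{\xi}({\check \delta})$, which is nonzero precisely because $\delta \in R^+ \smallsetminus \{\alpha\}$ and $\overline{\xi} \in \t^*_{\alpha-\rs}$. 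A polynomial self-map of affine space of this triangular form is an isomorphism, with inverse computed by recursion on height, and the construction is algebraic in $\xi$ over $(\g/\u)^*_{\alpha-\rs}$. In particular there is a unique $m(\xi) \in N^\alpha$ with $\bigl( \mathrm{Ad}^*(m(\xi))(\xi) \bigr)_{(\fn_-)^*} = 0$, depending algebraically on $\xi$.

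To conclude, I would invoke the elementary observation that $\mathrm{Ad}^*(N^\alpha)$ fixes the $\t^*$-component of every element of $(\g/\u)^*$ (because $[\fn_+, \t] \subset \u$, elements of $(\g/\u)^*$ vanish on $\u$, and $(\g/\u)^*$ is $\mathrm{ad}^*(\u)$-stable). Hence for $\xi \in (\g/\u)^*_{\alpha-\rs}$ the element $\mathrm{Ad}^*(m(\xi))(\xi)$ has vanishing $(\fn_-)^*$-part and $\t^*$-part $\overline{\xi} \in \t^*_{\alpha-\rs}$, so it equals $\iota(\eta(\xi))$ for a unique $\eta(\xi) \in (\l^\alpha/\u^\alpha)^*_{\alpha-\rs}$. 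Then $\xi \mapsto \bigl( m(\xi)^{-1}, \eta(\xi) \bigr)$ is a morphism, and it is a two-sided inverse of $\Phi$: one composite is immediate, and the other follows from the uniqueness of $m(\xi)$ applied to $\xi = \mathrm{Ad}^*(n)(\iota(\eta))$, for which $m(\xi) = n^{-1}$ since $\bigl( \mathrm{Ad}^*(n^{-1})(\xi) \bigr)_{(\fn_-)^*} = \bigl( \iota(\eta) \bigr)_{(\fn_-)^*} = 0$. Therefore $\Phi$, and hence the map in the statement, is an isomorphism of varieties.

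The step I expect to be the main obstacle is the triangularity bookkeeping in the middle paragraph: one must check carefully that, for the height filtration, the $(\fn_-)^*$-coordinates of $\mathrm{Ad}^*(\exp x)(\xi)$ mix only in a strictly upper-triangular fashion — i.e.\ that the contributions of the $(\l^\alpha/\u^\alpha)^*$-part of $\xi$ and of the higher powers $(\mathrm{ad}^* x)^k$, $k \geq 2$, to the $\g_{-\delta}^*$-coordinate involve only variables $x_\gamma$ with $\operatorname{ht}(\gamma) < \operatorname{ht}(\delta)$, while the diagonal entry $c_\delta(\overline{\xi})$ is a nonzero multiple of $\overline{\xi}({\check \delta})$. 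This is the analogue, relative to the minimal parabolic $P^\alpha$, of the computation underlying \eqref{eqn:classical-case-generic-geometry}; everything else — the $B^\alpha$-equivariant splitting, the trivialisation $B \cong N^\alpha \rtimes B^\alpha$, and the invariance of the Cartan component — is routine.
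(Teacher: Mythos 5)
Your argument is correct, and it takes a genuinely different route from the paper's. The paper first transports the statement to $\b$ via the Killing form, identifying $(\g/\u)^*_{\alpha-\rs}\cong\b_{\alpha-\rs}$ and $(\l^\alpha/\u^\alpha)^*_{\alpha-\rs}\cong\b^\alpha_{\alpha-\rs}$, and then proves that the map $B\times_{B^\alpha}\b^\alpha_{\alpha-\rs}\to\b_{\alpha-\rs}$ is bijective: for surjectivity it uses the Jordan decomposition $x=s+n$ to conjugate $s$ into $\t_{\alpha-\rs}$ and observes $n\in Z_\b(s)\subset\b^\alpha$; for injectivity a similar conjugation argument; it then invokes the fact that a bijective morphism between smooth varieties is an isomorphism. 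You instead stay in $(\g/\u)^*$, trivialise the associated bundle via $B=N^\alpha\rtimes B^\alpha$, and produce an \emph{explicit} two-sided inverse to $\Phi$ by solving, triangularly in the root-height filtration, for the unique $m(\xi)\in N^\alpha$ that kills the $(\fn_-)^*$-component. Your route is more elementary — it avoids Jordan decomposition and the ``bijective morphism of smooth varieties'' criterion — at the cost of a careful combinatorial verification that the system is strictly upper-triangular in height with invertible diagonal entries $\overline\xi(\check\delta)$ (the very place the $\alpha$-rs hypothesis enters). It has the side benefit of making the inverse algebraic by inspection, which the paper instead gets for free from the smoothness criterion. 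Both proofs are complete; the triangularity bookkeeping you flag as the main obstacle does go through as you outline, since for $k\geq 2$ each $\gamma_i$ in a nonvanishing term of $(\mathrm{ad}\,x)^k$ satisfies $\operatorname{ht}(\gamma_i)\leq\operatorname{ht}(\delta)-(k-1)<\operatorname{ht}(\delta)$, and the $k=1$ contribution of $x_\gamma$ with $\gamma\neq\delta$ only involves $\operatorname{ht}(\gamma)<\operatorname{ht}(\delta)$.
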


\begin{proof}
Both varieties under consideration are smooth complex varieties, hence it is enough to prove that the map is bijective. We use the Killing form to get identifications $(\g/\u)^* \cong \b$, $(\l^{\alpha}/\u^{\alpha})^* \cong \b^{\alpha}$ (where $\b^{\alpha}:=\Lie(B^\alpha)$), and define $\b_{\alpha-\rs}$, $\b^\alpha_{\alpha-\rs}$ in an obvious way. 

First, let $x \in \fb_{\alpha-\rs}$, and consider the Jordan decomposition $x=s+n$. There exists $u \in B$ such that $u \cdot s \in \ft$, and we must have $u \cdot s \in \ft_{\alpha-\rs}$. Then $u \cdot n \in Z_{\fb}(u \cdot s) \subset \fb^{\alpha}$, which implies that $u \cdot x \in \fb^{\alpha}_{\alpha-\rs}$. This proves surjectivity of our map.

Next, we prove injectivity. Let $u_1, u_2 \in B$ and $x_1,x_2 \in \b^{\alpha}_{\alpha-\rs}$, and assume that $u_1 \cdot x_1 = u_2 \cdot x_2$. Consider the Jordan decompositions $x_1=s_1+n_1$, $x_2=s_2+n_2$, so that $u_1 \cdot s_1=u_2 \cdot s_2$. Conjugating if necessary $x_1$ by an element of $B^{\alpha}$ (and modifying $u_1$ accordingly), one can assume that $s_1 \in \t_{\alpha-\rs}$. Then the fact that $(u_2^{-1} u_1) \cdot s_1 = s_2 \in \b^{\alpha}$ implies that $u_2^{-1} u_1 \in B^{\alpha}$. We deduce that
\[
(u_1 \times_{B^\alpha} x_1) = (u_2 (u_2^{-1} u_1) \times_{B^\alpha} (u_1^{-1} u_2) \cdot x_2) = (u_2 \times_{B^\alpha} x_2)
\]
in $B \times_{B^\alpha} \b^\alpha_{\alpha-\rs}$, which finishes the proof.
\end{proof}

In particular it follows from Lemma \ref{lem:geometry-subregular} that for any $\lambda \in \bX$ there is a natural isomorphism of $B$-modules
\begin{equation}
\label{eqn:classical-case-subgeneric}
\bbc[\t^*_{\alpha-\rs}] \o_{\mathrm{S}(\t)} \mathrm{S}(\g/\u) \o  \bbc_{-\lambda} \ \cong \ \Ind_{B^{\alpha}}^B \bigl( \bbc[\t^*_{\alpha-\rs}] \o_{\mathrm{S}(\t)} \mathrm{S}(\l^{\alpha}/\u^{\alpha}) \o \C_{-\lambda} \bigr).
\end{equation}

\begin{rem}
The same arguments as in Lemma \ref{lem:geometry-subregular} can be used to prove the following more general claim, which will not be used in this paper. Let $I$ be a set of simple roots, and consider the associated Levi subgroup $L^I$ containing $T$. Let $B^I=B \cap L^I$, $\u^I=\mathrm{Lie}(U \cap L^I)$. Let $\ft^*_{I-\rs}$ be the complement in $\ft^*$ of the collection of hyperplanes defined by the coroots associated to roots in $R \smallsetminus \Z I$. Let $(\g/\u)^*_{I-\rs}=(\g/\u)^* \times_{\t^*} \t^*_{\rs}$, and similarly for $(\l^I/\u^I)^*_{I-\rs}$. Then the coadjoint action of $B$ induces an isomorphism of varieties $B \times_{B^I} (\l^I/\u^I)^*_{I-\rs} \ \xrightarrow{\sim} \ (\g/\u)^*_{I-\rs}$.
\end{rem}

\subsection{Generic and sub-generic situations: quantum case}
\label{ss:generic-quantum}

Set $\a:=\Lie(A) = \t \times \bA^1$.\index{a@$\a$} We will identify $\a^*$ with $\t^* \times \bA^1$ in the natural way. We denote by $\C[\a^*_{\rs}]$\index{Ca*rs@$\C[\a^*_{\rs}]$} the localization of $\sh$ with respect to the collection $\{\alv + n \hbar \mid \alpha \in R, \, n \in \Z\}$. If $(\nu,a) \in \a^*$, we denote by $\C_{\sh}(\nu,a)$ the one-dimensional $\sh$-module associated with $(\nu,a)$.

\begin{lem}
\label{lem:quantum-case-generic}

For any $\lambda \in \bX$, there exists a natural isomorphism of $B$-modules and $\bbc[\a^*_{\rs}]$-modules
\[
\bbc[\a^*_{\rs}] \otimes_{\sh} \M(\la) \ \simto \ \bbc[\a^*_{\rs}] \o \Ind_T^B(-\la).
\]

\end{lem}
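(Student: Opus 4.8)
The plan is to write down the isomorphism explicitly, starting from the projection $p_\la$, and then to check it is an isomorphism fibrewise over $\a^*_{\rs}$, splitting into the two cases $\hb=0$ and $\hb\neq 0$.

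First I would construct the morphism. The left $\sh$-action on $\M(\la)$ commutes with the adjoint $B$-action: this follows from the formula $\hb\cdot\ad b(m)=(b+\hb\rho(b))\cdot m-m\cdot b$ together with the fact that $\u$ acts by zero on the left of $\M(\la)\cong\uh(\g)/(\u\cdot\uh(\g))$, so that $\C[\a^*_{\rs}]\otimes_\sh\M(\la)$ is again a $B$-module. Composing $p_\la\colon\M(\la)\to\sh\otimes\C_{-\la}$ with the inclusion $\sh\hookrightarrow\C[\a^*_{\rs}]$ gives a $T$-equivariant, left-$\sh$-linear morphism $\M(\la)\to\C[\a^*_{\rs}]\otimes\C_{-\la}$; by Frobenius reciprocity it corresponds to a morphism of $B$-modules $\M(\la)\to\C[\a^*_{\rs}]\otimes\Ind_T^B(-\la)$, and after applying $\C[\a^*_{\rs}]\otimes_\sh(-)$ we obtain a morphism of $B$-modules and $\C[\a^*_{\rs}]$-modules
\[
\Xi\colon\C[\a^*_{\rs}]\otimes_\sh\M(\la)\ \longrightarrow\ \C[\a^*_{\rs}]\otimes\Ind_T^B(-\la),
\]
and it remains to prove that $\Xi$ is an isomorphism. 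Decompose both sides according to the $T$-weight (equivalently, the adjoint $\t$-) action, $\Xi=\bigoplus_\mu\Xi_\mu$. The filtration $M_\idot$ of $\M(\la)$ used in the proof of Proposition \ref{prop:freeness} is $\sh$-stable and $T$-stable, restricts to a \emph{finite} filtration of each weight space $\M(\la)_\mu$, and has associated graded $\sym(\g/\u)[\hb]\otimes\C_{-\la}$; since $\sym(\g/\u)\cong\sym(\t)\otimes\sym(\u^-)$ as graded $\sym(\t)$-module and $T$-module, it follows that $\M(\la)_\mu$ is free of finite rank over $\sh$, of rank $\dim_\C(\sym(\u^-)\otimes\C_{-\la})_\mu=\dim_\C(\sym(\u^*)\otimes\C_{-\la})_\mu=\dim_\C\bigl(\Ind_T^B(-\la)\bigr)_\mu$ (the first equality because $\u^-$ and $\u^*$ have the same multiset of $T$-weights, namely $R^-$). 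Thus $\Xi_\mu$ is a morphism of free $\C[\a^*_{\rs}]$-modules of equal finite rank; since the maximal ideals of $\C[\a^*_{\rs}]$ are exactly the $\mathfrak m_{(\nu,a)}$ with $(\nu,a)\in\a^*_{\rs}$, a Nakayama argument applied to the (finitely generated) kernel and cokernel reduces us to proving that $\Xi\otimes_{\C[\a^*_{\rs}]}\C_\sh(\nu,a)$ is an isomorphism for every $\C$-point $(\nu,a)\in\a^*_{\rs}$, i.e.\ for every $(\nu,a)$ with $\alv(\nu)+na\neq 0$ for all $\alpha\in R$ and $n\in\Z$.

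Next I would treat the two fibres. If $a=0$, then $\nu\in\t^*_{\rs}$, and $\C_\sh(\nu,0)\otimes_\sh\M(\la)\cong\C_\nu\otimes_{\sym(\t)}\bigl(\M(\la)/\hb\M(\la)\bigr)\cong\C_\nu\otimes_{\sym(\t)}\bigl(\sym(\g/\u)\otimes\C_{-\la}\bigr)$, using Lemma \ref{lem:hbar=0} (or directly); since $\operatorname{gr}p_\la$ is the restriction map $q$, one checks that $\Xi$ then specialises to the isomorphism obtained from \eqref{eqn:classical-case-generic} (equivalently from \eqref{eqn:classical-case-generic-geometry}) by evaluation at $\nu$. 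If $a\neq 0$, then \eqref{eqn:specialization-M} gives $\C_\sh(\nu,a)\otimes_\sh\M(\la)\cong\MM(\la+\tfrac1a\nu)\otimes_{\C[\hb]}\C_a$, which, $a$ being invertible, is identified through the discussion of \S\ref{ss:asymptotic-Verma} with the ordinary Verma module $\V(-\la-\tfrac1a\nu-\rho)$ (twisted by a character making the weights integral), carrying its standard $B$-module structure. The hypothesis $(\nu,a)\in\a^*_{\rs}$ is precisely the genericity needed to apply Lemma \ref{lem:Verma-induced} with parameters $-\la\in\bX$ and $-\tfrac1a\nu-\rho$, and one checks that $\Xi$ specialises to the isomorphism of that lemma, both being induced by the analogous projection onto an extremal weight line. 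In either case $\Xi\otimes_{\C[\a^*_{\rs}]}\C_\sh(\nu,a)$ is an isomorphism, so $\Xi$ is an isomorphism; naturality is clear from the construction.

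The main obstacle is the bookkeeping in the last step: matching signs, the $\rho$-shift, and the genericity conditions so that the specialised map $\Xi\otimes\C_\sh(\nu,a)$ coincides \emph{on the nose} with the isomorphisms of \eqref{eqn:classical-case-generic} and Lemma \ref{lem:Verma-induced}, and not merely with \emph{some} isomorphism — everything else is formal.
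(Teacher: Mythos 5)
Your proof is correct and follows essentially the same strategy as the paper's: construct $\Xi$ from the projection $\M^G(\la)\to\M^T(\la)=\sh\otimes\C_{-\la}$ together with Frobenius reciprocity, reduce the isomorphism claim to specializing at each closed point $(\nu,a)\in\a^*_{\rs}$, and split into $a=0$ (the classical isomorphism \eqref{eqn:classical-case-generic}) and $a\neq 0$ (the rescaling $x\mapsto -\tfrac1ax$ converts the statement into Lemma \ref{lem:Verma-induced} with $\mu=-\tfrac1a\nu-\rho$ and $\lambda'=-\la$). The fibrewise identifications you give, including the twist by $\tfrac1a\nu+\rho$ and the translation of the genericity condition into the hypothesis of Lemma \ref{lem:Verma-induced}, agree with the paper's.

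The one place where you should not take a shortcut is the reduction to closed points. You assert that the maximal ideals of $\C[\a^*_{\rs}]$ are exactly the $\mathfrak m_{(\nu,a)}$ with $(\nu,a)\in\a^*_{\rs}$ and then invoke Nakayama. The assertion is true, but it is not automatic for a localization at an infinite multiplicative set, and it is precisely the content the paper supplies by a slightly different route: working weight space by weight space, the paper considers the determinant $d_\mu$ in some bases, observes that (after clearing a unit denominator) it is a polynomial $P\in\sh$ whose zero set is contained in $\bigcup_{\alpha,n}\{\alv+n\hb=0\}$, and concludes by the UFD property of $\sh$ and the irreducibility of those linear forms that $P$ is a scalar multiple of a product of forms $\alv+n\hb$, hence a unit in $\C[\a^*_{\rs}]$. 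Either this argument, or an argument that a positive-dimensional irreducible variety over $\C$ cannot be covered by countably many proper hyperplane sections, is needed to justify your claim about maximal ideals (equivalently, that an element of $\C[\a^*_{\rs}]$ non-vanishing at every closed point of $\a^*_{\rs}$ is a unit). Once that is in place, your Nakayama phrasing is fine and is equivalent to the paper's determinant argument. Your explicit verification of the freeness and rank of the weight spaces via the PBW filtration fills in a step the paper only calls "easily checked," which is a useful addition.
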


\begin{proof}
Recall that there exists a morphism of $T$-modules and $\sh$-modules $\M^G(\lambda) \to \M^T(\lambda) = {\sh} \o  \bbc_{-\lambda}$, see \eqref{eqn:Verma-restriction}. Using Frobenius reciprocity we deduce a morphism as in the statement of the lemma.

First, we claim that for any $(\nu,a) \in \a^*$ such that $\nu(\alv) \notin a \Z$ for all $\alpha \in R$, the induced morphism of $B$-modules
\begin{equation}
\label{eqn:morphism-generic-quantum}
\bbc_{\sh}(\nu,a) \otimes_{\sh} \M(\lambda) \ \to \ \Ind_T^B(-\lambda)
\end{equation}
is an isomorphism. 
Indeed, if $a=0$, then $\nu \in \t^*_{\rs}$, and morphism \eqref{eqn:morphism-generic-quantum} can be identified with the specialization of isomorphism \eqref{eqn:classical-case-generic} at $\nu$, hence we are done.
Now, assume $a \neq 0$. Then there is an algebra isomorphism $\uh(\g)^{\mathrm{op}} \otimes_{\bbc[\hbar]} \bbc_a \cong U (\g)$ which maps $x \in \fg \subset \uh(\g)$ to $-a x$ (see \S\ref{ss:asymptotic-Verma}). Using this isomorphism, the left-hand side of \eqref{eqn:morphism-generic-quantum} identifies with the $U(\b)$-module $\V(-\frac{1}{a} \nu - \lambda-\rho) \o  \bbc_{\frac{1}{a} \nu+\rho}$. By our assumption $-\frac{1}{a} \nu - \rho$ satisfies the assumptions of Lemma \ref{lem:Verma-induced}, hence \eqref{eqn:morphism-generic-quantum} is an isomorphism in this case also.

Now we deduce that our morphism of $\bbc[\a^*_{\rs}]$-modules is an isomorphism. As this morphism is $B$-equivariant, it is sufficient to prove that its restriction to each $T$-weight space is an isomorphism. It is easily checked that for any $\lambda \in \bX$ the $\lambda$-weight spaces of both modules are free $\bbc[\a^*_{\rs}]$-modules of the same rank. Choose a basis of each of these spaces, and consider the determinant $d_\la \in \bbc[\a^*_{\rs}]$ of the restriction of our morphism in these bases. Then for each $(\nu,a) \in \a^*$ such that $\nu(\alv) \notin a \Z$ for all $\alpha \in R$, we have $d_\la(\nu,a) \neq 0$. However a polynomial $P \in \sh$ which does not vanish on any hyperplane $\alv + n \hb=0$ (with $\alpha \in R$, $n \in \Z$) is necessarily a scalar multiple of a product of polynomials of the form $\alv + n \hb$. Hence $d_\la$ is invertible in the algebra $\bbc[\a^*_{\rs}]$, which proves that our morphism is indeed an isomorphism.
\end{proof}

Let now $\alpha$ be a simple root, and let $L^\alpha$ be the Levi subgroup defined in \S\ref{ss:preliminaries-Verma}, with its Borel subgroup $B^\alpha$. For $\lambda \in \bX$, we denote by $\M^{\alpha}(\lambda)$ the asymptotic universal Verma module associated with $\la$ for the group $L^\alpha$ and its Borel subgroup $B^\alpha$. Let $\C[\a^*_{\alpha-\rs}]$\index{Ca*alphars@$\C[\a^*_{\alpha-\rs}]$} be the localization of $\sh$ with respect to the collection $\{ {\check \beta} + n \hbar \mid \beta \in R \smallsetminus \{\pm \alpha\}, n \in \Z\}$. 

\begin{lem}
\label{lem:quantum-case-subgeneric}

For any $\lambda \in \bX$, there exists a natural isomorphism of $B$-modules and $\bbc[\a^*_{\alpha-\rs}]$-modules
\[
\bbc[\a^*_{\alpha-\rs}] \otimes_{\sh} \M(\lambda) \ \to \ \Ind_{B^{\alpha}}^B \bigl( \bbc[\a^*_{\alpha-\rs}] \otimes_{\sh} \M^{\alpha}(\lambda) \bigr)
\]
where in the right-hand side $B^\alpha$ acts trivially on $\bbc[\a^*_{\alpha-\rs}]$.

\end{lem}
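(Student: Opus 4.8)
The plan is to follow exactly the pattern of the proof of Lemma \ref{lem:quantum-case-generic}, replacing the ``generic'' localization by the ``$\alpha$-subgeneric'' one and using Lemma \ref{lem:Verma-induced-alpha} in place of Lemma \ref{lem:Verma-induced}. First I would construct the morphism: combining the restriction morphism $\M^G(\lambda) \to \M^{\alpha}(\lambda)$ of \eqref{eqn:Verma-restriction} (applied to the Levi $L^\alpha$, whose simple roots are $\{\alpha\}$) with Frobenius reciprocity for the pair $B^\alpha \subset B$ gives a canonical morphism of $B$-modules and $\sh$-modules $\M(\lambda) \to \Ind_{B^\alpha}^B(\M^\alpha(\lambda))$, which base-changes to the morphism in the statement after tensoring with $\bbc[\a^*_{\alpha-\rs}]$.

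Next I would check it is an isomorphism after specializing at each point $(\nu,a) \in \a^*$ with $\nu({\check \beta}) \notin a\Z$ for all $\beta \in R \smallsetminus \{\pm\alpha\}$. For $a = 0$ this specialization identifies, via \eqref{eqn:Verma-restriction} and the definition of $\M(\lambda)/\hb$, with the specialization at $\nu \in \t^*_{\alpha-\rs}$ of isomorphism \eqref{eqn:classical-case-subgeneric} (using $\mathrm{S}(\g/\u) \cong \M(\lambda)/\hb \otimes \C_\lambda$ and similarly for $L^\alpha$). For $a \neq 0$, using the algebra isomorphism $\uh(\g)^{\mathrm{op}} \otimes_{\C[\hb]} \C_a \cong U(\g)$ of \S\ref{ss:asymptotic-Verma}, the left-hand side identifies with the $U(\b)$-module $\V(-\tfrac{1}{a}\nu - \lambda - \rho) \otimes \bbc_{\frac{1}{a}\nu + \rho}$, and the right-hand side with $\Ind_{B^\alpha}^B$ of the analogous module for $\l^\alpha$; the hypothesis on $\nu$ guarantees that $\mu := -\tfrac{1}{a}\nu - \rho$ satisfies $\mu({\check \beta}) \notin \Z$ for $\beta \in R \smallsetminus \{\pm\alpha\}$, so Lemma \ref{lem:Verma-induced-alpha} applies and gives the desired isomorphism. (One should be slightly careful that the morphism produced by Lemma \ref{lem:Verma-induced-alpha} coincides with the specialization of the one constructed above; this is a matter of unwinding the two uses of Frobenius reciprocity, exactly as in the proof of Lemma \ref{lem:quantum-case-generic}.)

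Finally I would upgrade the pointwise statement to an isomorphism of $\bbc[\a^*_{\alpha-\rs}]$-modules by the same determinant argument as in Lemma \ref{lem:quantum-case-generic}: both sides are $B$-equivariant, so it suffices to treat each $T$-weight space; each weight space is free of finite rank over $\bbc[\a^*_{\alpha-\rs}]$, and these ranks agree (they can be computed after further localizing, or by comparing with the $a=0$ fibre); choosing bases, the determinant $d_\lambda \in \bbc[\a^*_{\alpha-\rs}]$ of the morphism vanishes nowhere on the locus $\{\nu({\check\beta}) \notin a\Z\}$, hence is a unit times a product of factors ${\check\beta} + n\hb$ with $\beta \in R \smallsetminus \{\pm\alpha\}$, all of which are already invertible in $\bbc[\a^*_{\alpha-\rs}]$; so $d_\lambda$ is invertible and the morphism is an isomorphism.

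The main obstacle is not conceptual but bookkeeping: one must make sure that the morphism defined by ``restriction to the Levi plus Frobenius reciprocity'' really is the one that specializes, at $a \neq 0$, to the classical map of Lemma \ref{lem:Verma-induced-alpha}, i.e.\ that the two quotient-then-induce constructions (one in the asymptotic world, one in the $U(\g)$-world after rescaling by $1/a$) are compatible. Since Lemma \ref{lem:Verma-induced-alpha} is itself proved by realizing $\V^\alpha(\mu+\lambda) \otimes \bbc_{-\mu}$ as the quotient of $\V(\mu+\lambda)\otimes\bbc_{-\mu}$ by the sum of weight spaces with weight not in $\lambda + \Z\alpha$, and the asymptotic restriction morphism \eqref{eqn:Verma-restriction} is defined by the analogous projection $\uh(\g) \to \uh(\l^\alpha)$ along $\fn^+_{L^\alpha} \oplus \fn^-_{L^\alpha}$, this compatibility is routine once one spells out the identification $\uh(\g)^{\mathrm{op}} \otimes_{\C[\hb]} \C_a \cong U(\g)$ on both modules. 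This is exactly parallel to the (unwritten) corresponding verification in Lemma \ref{lem:quantum-case-generic}, so I would handle it in the same (brief) manner.
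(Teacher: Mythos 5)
Your proposal follows the paper's proof essentially verbatim: construct the map via \eqref{eqn:Verma-restriction} and Frobenius reciprocity, specialize at points $(\nu,a)$ with $\nu(\check\beta)\notin a\Z$ for $\beta\in R\smallsetminus\{\pm\alpha\}$, reduce $a=0$ to \eqref{eqn:classical-case-subgeneric} and $a\neq 0$ to Lemma \ref{lem:Verma-induced-alpha} via the rescaling $\uh(\g)^{\mathrm{op}}\otimes_{\C[\hb]}\C_a\cong U(\g)$, then upgrade pointwise to global via the determinant-is-a-unit argument. The only cosmetic difference is that you spell out the determinant step and the compatibility of the two Frobenius reciprocities, whereas the paper delegates both to ``the same arguments as in the proof of Lemma \ref{lem:quantum-case-generic}'' and to a parenthetical on the exactness of $\Ind_{B^\alpha}^B$.
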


\begin{proof}
As in Lemma \ref{lem:quantum-case-generic}, the morphism in question is constructed using the morphism $\M(\lambda) \to \M^{\alpha}(\lambda)$ of  \eqref{eqn:Verma-restriction} and Frobenius reciprocity.

To prove that this morphism is an isomorphism
it is enough to prove that for any $(\nu,a) \in \a^*$ such that $\nu({\check \beta}) \notin a \Z$ for all $\beta \in R \smallsetminus \{\pm \alpha\}$, the natural morphism of $B$-modules
\begin{equation}
\label{eqn:morphism-subgeneric-quantum}
\bbc_{\sh}(\nu,a) \otimes_{\sh} \M(\lambda) \ \to \ \Ind_{B^{\alpha}}^B \bigl( \bbc_{\sh}(\nu,a) \otimes_{\sh} \M^{\alpha}(\lambda) \bigr)
\end{equation}
is an isomorphism. (Note that the functor $\Ind_{B^{\alpha}}^B$ is exact by \cite[Corollary I.5.13]{ja}, hence specialization commutes with induction here.) 

If $a=0$, then morphism \eqref{eqn:morphism-subgeneric-quantum} can be identified with the specialization of isomorphism \eqref{eqn:classical-case-subgeneric} at $\nu$, hence we are done. If $a \neq 0$, by the same arguments as in the proof of Lemma \ref{lem:quantum-case-generic}, morphism \eqref{eqn:morphism-subgeneric-quantum} can be identified with the morphism
\[
\V \left( -\frac{1}{a}\nu-\lambda-\rho \right) \o \bbc_{\frac{1}{a}\nu + \rho} \ \to \ \Ind_{B^{\alpha}}^B \left( \V^{\alpha} \left( -\frac{1}{a}\nu-\lambda-\rho \right) \o \bbc_{\frac{1}{a}\nu + \rho} \right)
\]
considered in Lemma \ref{lem:Verma-induced-alpha}. Hence it is an isomorphism in this case also.
\end{proof}

\subsection{Generic and sub-generic situations: isomorphism}
\label{ss:generic-isomorphism}

Fix $\la \in \bX$ and $\cF$ in $\Perv_{\Gv(\OO)}(\Gr_\Gv)$. To simplify notation we set $V:=\cS_\Gv(\cF)$. 

As a first step towards proving Theorem \ref{thm:loop-equivariant-version}, we construct a canonical isomorphism of $\C[\a^*_\rs]$-modules
\beq{eqn:isom-regular}
\C[\a^*_\rs] \o_{\sh} \bigl( V \o \M(\la) \bigr)^B \ \cong \ \C[\a^*_\rs] \o_{\sh} \coH_A^\hdot (i_\la^! \cF).
\eeq
In fact, we deduce this isomorphism from the fact that the morphism
\beq{eqn:k-regular}
\C[\a^*_\rs] \o_{\sh} \coH_A^\hdot (i_\la^! \cF) \to \C[\a^*_\rs] \o V_\la, \quad \text{resp.} \quad
\C[\a^*_\rs] \o_{\sh} \bigl( V \o \M(\la) \bigr)^B  \to \C[\a^*_\rs] \o V_\la
\eeq
induced by $\kgeom_{\cF,\la}$, resp.~$\kalg_{V,\la}$, is an isomorphism. For the first morphism this follows from the localization theorem in equivariant cohomology. Let us now prove that the second morphism is also an isomorphism. In fact we have a series of isomorphisms
\begin{multline*}
\bbc[\a^*_{\rs}] \o_{\sh} \bigl( V \otimes \M(\la) \bigr)^B
\ \cong \ \bigl( V \otimes ( \bbc[\a^*_{\rs}] \o_{\sh} \M(\la)) \bigr)^B 
\ \cong \ \bigl( V \o \bbc[\a^*_{\rs}] \o \Ind_T^B(-\la) \bigr)^B \\
\cong \bbc[\a^*_{\rs}] \o \bigl( \Ind_T^B(V \otimes \bbc_{-\lambda}) \bigr)^B \ \cong \bbc[\a^*_{\rs}] \o V_{\lambda}.
\end{multline*}
Here the second isomorphism follows from Lemma \ref{lem:quantum-case-generic}, the third one from the tensor identity,
and the last one from the isomorphism $\bigl( \Ind_T^B(V \otimes \bbc_{-\lambda}) \bigr)^B = (V \otimes \bbc_{-\lambda})^T$ given by Frobenius reciprocity. By construction the composition of these isomorphisms is precisely the right-hand morphism in \eqref{eqn:k-regular}.

Now, let $\alpha$ be a simple root. As a second step towards proving Theorem \ref{thm:loop-equivariant-version}, we want to show that \eqref{eqn:isom-regular} restricts to a canonical isomorphism of $\C[\a^*_{\alpha-\rs}]$-modules
\beq{eqn:isom-subregular}
\C[\a^*_{\alpha-\rs}] \o_{\sh} \bigl( V \o \M(\la) \bigr)^B \ \cong \ \C[\a^*_{\alpha-\rs}] \o_{\sh} \coH_A^\hdot (i_\la^! \cF).
\eeq
Let $\Pv^\alpha$ be the minimal parabolic subgroup of $\Gv$ containing $\Bv$ associated with $\alpha$, and let $\Lv^\alpha$ be the unique Levi factor of $\Pv^\alpha$ containing $\Tv$. By the constructions of \S\ref{ss:Satake-restriction}, these data determine a Levi factor $L^\alpha$ in $G$, hence the corresponding minimal parabolic subgroup $P^\alpha$ containing $B$.

Consider first the right-hand side of \eqref{eqn:isom-subregular}. We will use the constructions of \S\ref{ss:Satake-restriction-cofiber} for the Levi subgroup $\Lv^\alpha$.
By the localization theorem in equivariant cohomology, the morphism
\[
\C[\a^*_{\alpha-\rs}] \o_{\sh} \coH_A^\hdot (i_\la^! \cF) \ \to \ \C[\a^*_{\alpha-\rs}] \o_{\sh} \coH^\hdot_A \bigl( (i_\la^{\Lv^\alpha})^! \ResGr^{\Gv}_{\Lv^\alpha}(\cF) \bigr)
\]
induced by \eqref{eqn:restriction-geom} is an isomorphism.

Consider now the left-hand side of \eqref{eqn:isom-subregular}. Here we will use the constructions of \S\ref{ss:generic-quantum}. We have a series of isomorphisms
\begin{multline*}
\bbc[\a^*_{\alpha-\rs}] \otimes_{\sh} \bigl( V \otimes \M(\la) \bigr)^B 
\ \cong \ \bigl( V \otimes (\bbc[\a^*_{\alpha-\rs}] \otimes_{\sh} \M(\la)) \bigr)^B 
\ \cong \ \bigl( V \o \Ind^B_{B^\alpha} ( \bbc[\a^*_{\alpha-\rs}] \otimes_{\sh} \M^{\alpha}(\la) ) \bigr)^B \\
\cong \bbc[\a^*_{\alpha-\rs}] \otimes_{\sh} \bigl( \Ind^B_{B^\alpha} ( V_{|L^{\alpha}} \o \M^{\alpha}(\la) ) \bigr)^{B}
\ \cong \ \bbc[\a^*_{\alpha-\rs}] \otimes_{\sh} \bigl( V_{|L^{\alpha}} \o \M^{\alpha}(\lambda) \bigr)^{B^{\alpha}}.
\end{multline*}
Here the second isomorphism follows from Lemma \ref{lem:quantum-case-subgeneric}, the third one from the tensor identity,
and the last one from the isomorphism $\bigl( \Ind^B_{B^\alpha} ( V_{|L^{\alpha}} \o \M^{\alpha}(\la) ) \bigr)^{B}
\cong \bigl( V_{|L^{\alpha}} \o \M^{\alpha}(\lambda) \bigr)^{B^{\alpha}}$ given by Frobenius reciprocity.

Using these isomorphisms, to construct \eqref{eqn:isom-subregular} we only have to construct a canonical isomorphism of $\sh$-modules
\[
\bigl( V_{|L^{\alpha}} \o \M^{\alpha}(\lambda) \bigr)^{B^{\alpha}} \ \cong \ \coH^\hdot_A \bigl( (i_\la^{\Lv^\alpha})^! \ResGr^{\Gv}_{\Lv^\alpha}(\cF) \bigr).
\]
As $\Lv^\alpha$ has semisimple rank one, this isomorphism is constructed in \S\ref{ss:thm-cofibers-rk1} below.

One can check that the isomorphism obtained from \eqref{eqn:isom-subregular} by extension of scalars to $\C[\a^*_\rs]$ coincides with \eqref{eqn:isom-regular}. In other words, \eqref{eqn:isom-subregular} is the restriction of \eqref{eqn:isom-regular} to
\[
\bbc[\a^*_{\alpha-\rs}] \otimes_{\sh} \bigl( V \otimes \M(\la) \bigr)^B \subset \bbc[\a^*_{\rs}] \otimes_{\sh} \bigl( V \otimes \M(\la) \bigr)^B.
\]

\subsection{Equivariance}

Recall the operators $\Omega^{V,\la}_w$ defined in \S\ref{ss:relation-Phi-Theta}.

\begin{lem}
\label{lem:isom-regular-equivariance}
Let $\la \in \bX$ and $\cF$ in $\Perv_{\Gv(\OO)}(\Gr_\Gv)$.
Isomorphism \eqref{eqn:isom-regular} is $W$-equivariant, in the sense that for any $w \in W$ the following diagram commutes:
\[
\xymatrix@C=2cm{
\C[\a^*_\rs] \o_{\sh} \bigl( \cS(\cF) \o \M(\la) \bigr)^B \ar[r]^-{\eqref{eqn:isom-regular}} \ar[d]_-{\C[\a^*_\rs] \o_{\sh} \Omega^{\cS(\cF),\la}_w} & \C[\a^*_\rs] \o_{\sh} \coH_A^\hdot (i_\la^! \cF) \ar[d]^-{\C[\a^*_\rs] \o_{\sh} \Xi^{\cF,\la}_w} \\
\C[\a^*_\rs] \o_{\sh} \w \Bigl( \bigl( \cS(\cF) \o \M(w\la) \bigr)^B \Bigr) \ar[r]^-{\eqref{eqn:isom-regular}} & \C[\a^*_\rs] \o_{\sh} \w \Bigl( \coH_A^\hdot (i_{w \la}^! \cF) \Bigr)
}
\]
\end{lem}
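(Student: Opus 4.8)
The plan is to follow the reduction strategy used throughout the paper: first reduce to the case of a simple reflection, then reduce to semisimple rank one by restriction to a minimal Levi subgroup, and finally appeal to the explicit rank-one computations.

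First I would reduce to the case $w = s_\alpha$ for $\alpha$ a simple root. The operators $\Omega^{V,\la}_w$ obey the cocycle relation $\y(\Omega^{V,y\la}_x) \circ \Omega^{V,\la}_y = \Omega^{V,\la}_{xy}$: indeed, by \S\ref{ss:relation-Phi-Theta}, $\Omega^{V,\la}_w$ corresponds to $\Phi^{V,\la}_w$ and to $\Theta^{V,\la}_w$ under the isomorphisms of Lemma \ref{lem:fixed-points-morphisms}, and these satisfy \eqref{eqn:cocycle-Phi} and \eqref{eqn:cocycle-Theta}; on the other side the $\Xi^{\cF,\la}_w$ satisfy \eqref{eqn:cocycle-Xi}. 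Writing $w = s_k \cdots s_1$ and vertically stacking, for each $i$, the square for $s_i$ at the weight $s_{i-1} \cdots s_1 \la$ (twisted by $s_{i-1} \cdots s_1$), one obtains the square for $w$; so it suffices to treat simple reflections. It is then convenient to localize further: $\C[\a^*_\rs]$ is a localization of $\C[\a^*_{\alpha-\rs}]$, the operators $\Omega^{V,\la}_{s_\alpha}$ and $\Xi^{\cF,\la}_{s_\alpha}$ are morphisms of $\sh$-modules and therefore extend to any localization, and over $\C[\a^*_{\alpha-\rs}]$ the isomorphism \eqref{eqn:isom-regular} specializes to \eqref{eqn:isom-subregular} (this is recorded at the end of \S\ref{ss:generic-isomorphism}); hence it is enough to prove that the corresponding square with $\C[\a^*_\rs]$ replaced by $\C[\a^*_{\alpha-\rs}]$ commutes, the assertion over $\C[\a^*_\rs]$ then following by applying $\C[\a^*_\rs] \o_{\C[\a^*_{\alpha-\rs}]} (-)$.

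The crux is the reduction to rank one through the minimal Levi $L^\alpha \subset G$ dual to $\Lv^\alpha$ (notation of \S\ref{ss:Satake-restriction}). I would realize the $\C[\a^*_{\alpha-\rs}]$-square for $(G,\Gv)$ as the front face of a commuting prism whose back face is the analogous square for $(L^\alpha, \Lv^\alpha)$ and whose four connecting edges are the localizations over $\C[\a^*_{\alpha-\rs}]$ of the restriction morphisms: on the topological side the morphism \eqref{eqn:restriction-geom} attached to $\Lv^\alpha$, which is an isomorphism over $\C[\a^*_{\alpha-\rs}]$ by the localization theorem in equivariant cohomology; on the algebraic side the morphism $\restH^{V,\la}_{G,L^\alpha}$ (equivalently, via Lemma \ref{lem:morphisms-Verma-v}, the map $(V \o \M^G(\la))^B \to (V_{|L^\alpha} \o \M^{L^\alpha}(\la))^{B^\alpha}$), which is an isomorphism over $\C[\a^*_{\alpha-\rs}]$ by Lemma \ref{lem:quantum-case-subgeneric} and Frobenius reciprocity. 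The four connecting faces of the prism commute: the two horizontal ones by the very construction of \eqref{eqn:isom-subregular} in \S\ref{ss:generic-isomorphism}; the face carrying the operators $\Omega$ by Corollary \ref{cor:Phi-rest}, Lemma \ref{lem:Theta-rest-Levi} and Lemma \ref{lem:restriction-DX-Hom}; the face carrying the operators $\Xi$ by Lemma \ref{lem:Xi-restriction}. Since the connecting edges are isomorphisms, the front face commutes if and only if the back face does. As $L^\alpha$ has semisimple rank one with unique simple root $\alpha$, the commutativity of the back face is exactly the rank-one case of the lemma, which I would verify by the explicit computations of Appendix \ref{sec:rank1} (carried out alongside \S\ref{ss:thm-cofibers-rk1}); here one uses that the root vectors fixed in \S\ref{ss:root-vectors} are compatible with restriction to a Levi, so that $\mathbf{F}_\alpha$ and $\Theta_{s_\alpha}$ (hence the operators $\Omega$) and the action maps $m_g$ underlying the operators $\Xi$ agree in the $(G,\Gv)$ and $(L^\alpha,\Lv^\alpha)$ settings.

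The hard part will be the bookkeeping in the prism: one must check that the three independently defined ``vertical'' restriction morphisms --- the topological one \eqref{eqn:restriction-geom}, the algebraic one $\restH^{V,\la}_{G,L^\alpha}$, and the middle comparison isomorphism \eqref{eqn:isom-subregular} --- fit together compatibly with all the twists ${}^{s_\alpha}(-)$ and cohomological shifts $\langle \la(2\rhov) \rangle$, so that the four connecting faces commute on the nose. Once this is in place, the localization theorem makes the connecting edges isomorphisms over $\C[\a^*_{\alpha-\rs}]$ and the reduction to the rank-one computations is immediate.
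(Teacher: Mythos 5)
Your proposal is correct and takes essentially the same approach as the paper's own proof (which is given in three sentences): reduce to a simple reflection via the cocycle relations, use the fact that \eqref{eqn:isom-regular} restricts to \eqref{eqn:isom-subregular} which is built from the rank-one isomorphism for $\Lv^\alpha$, invoke Corollary \ref{cor:Phi-rest}/Lemma \ref{lem:Theta-rest-Levi} and Lemma \ref{lem:Xi-restriction} to carry the operators $\Omega$ and $\Xi$ across the restriction, and then appeal to the rank-one case proved in \S\ref{ss:W-symmetry-rk1}. The ``prism'' bookkeeping you spell out (including the observation that the connecting edges become isomorphisms over $\C[\a^*_{\alpha-\rs}]$, so only the back face needs checking) is a correct and more explicit rendering of what the paper compresses into ``the operators $\Omega$ and $\Xi$ for $\Gv$ can also be constructed from the similar operators for $\Lv^\alpha$.''
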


\begin{proof}
It is enough to prove the commutativity when $w$ is a simple reflection. So let $\alpha$ be a simple root. Recall that isomorphism \eqref{eqn:isom-regular} is the restriction of isomorphism \eqref{eqn:isom-subregular} to $\a^*_\rs$. Now isomorphism \eqref{eqn:isom-subregular} is deduced from the isomorphism of Theorem \ref{thm:loop-equivariant-version} for $\Lv^\alpha$, proved (directly) in \S\ref{ss:thm-cofibers-rk1} below. Moreover, by Corollary \ref{cor:Phi-rest} (or Lemma \ref{lem:Theta-rest-Levi}) and Lemma \ref{lem:Xi-restriction}, the operators $\Omega$ and $\Xi$ for the group $\Gv$ can also be constructed from the similar operators for the group $\Lv^\alpha$. Hence the commutativity for $s_\alpha$ follows from Theorem \ref{thm:W-symmetry} for $\Lv^\alpha$, which is proved (directly) in \S\ref{ss:W-symmetry-rk1} below.
\end{proof}

\subsection{Proof of the main results: quantum case}
\label{ss:proof-main-results}

If $\alpha \in R$ is any root, we define the localization $\C[\a^*_{\alpha-\rs}]$ of $\sh$ by the same recipe as for simple roots (see \S\ref{ss:generic-quantum}). Then for any $w \in W$ we have $w(\C[\a^*_{\alpha-\rs}]) = \C[\a^*_{w(\alpha)-\rs}]$ as subalgebras of $\C[\a^*_{\rs}]$
To finish the proofs we will need the following obvious lemma.

\begin{lem}
\label{lem:morphisms-sh}
Let $M$ and $N$ be free $\sh$-modules of finite rank, and let 
\[
\varphi : \C[\a^*_{\rs}] \o_{\sh} M \simto \C[\a^*_{\rs}] \o_{\sh} N
\]
be an isomorphism of $\C[\a^*_{\rs}]$-modules. Assume that, for any $\alpha \in R$, $\varphi$ restricts to an isomorphism of $\C[\a^*_{\alpha-\rs}]$-modules $\C[\a^*_{\alpha-\rs}] \o_{\sh} M \simto \C[\a^*_{\alpha-\rs}] \o_{\sh} N$. Then $\varphi$ restricts to an isomorphism $M \simto N$.
\end{lem}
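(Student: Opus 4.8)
The plan is to reduce Lemma~\ref{lem:morphisms-sh} to the purely ring-theoretic identity
\[
\sh \ = \ \bigcap_{\alpha \in R} \C[\a^*_{\alpha-\rs}],
\]
the intersection being taken inside $\qh$ (recall that $\sh \subset \C[\a^*_{\alpha-\rs}] \subset \C[\a^*_{\rs}] \subset \qh$ for every $\alpha \in R$). Granting this, the lemma becomes formal. Since $M$ is free of finite rank $r$, a choice of basis identifies $\C[\a^*_{\rs}] \o_\sh M$ with $\C[\a^*_{\rs}]^r$ and, compatibly, each $\C[\a^*_{\alpha-\rs}] \o_\sh M$ with the submodule $\C[\a^*_{\alpha-\rs}]^r$; applying the displayed identity coordinatewise gives $M = \bigcap_{\alpha \in R}\bigl( \C[\a^*_{\alpha-\rs}] \o_\sh M \bigr)$ inside $\C[\a^*_{\rs}] \o_\sh M$, and likewise $N = \bigcap_{\alpha \in R}\bigl( \C[\a^*_{\alpha-\rs}] \o_\sh N \bigr)$. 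As $M$ is torsion-free it embeds in $\C[\a^*_{\rs}] \o_\sh M$, so $\varphi|_M$ is injective; and by hypothesis $\varphi$ carries $\C[\a^*_{\alpha-\rs}] \o_\sh M$ onto $\C[\a^*_{\alpha-\rs}] \o_\sh N$ for each $\alpha$, hence it carries the intersection $M$ onto the intersection $N$. Thus $\varphi|_M \colon M \simto N$.

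To prove the ring identity I would argue by unique factorization. The algebra $\sh = \sym(\t)[\hb]$ is a polynomial ring, hence a UFD; each element $\check\gamma + n\hb$ (for $\gamma \in R$, $n \in \Z$) is a nonconstant polynomial of degree one, hence prime, and two such elements are associate only when they are proportional — which, the $\check\gamma$ being coroots, happens only for the pairs $\{(\gamma,n),(-\gamma,-n)\}$. By definition $\C[\a^*_{\alpha-\rs}]$ is the localization of $\sh$ at the multiplicative set generated by $\{\check\beta + n\hb \mid \beta \in R \sminus \{\pm\alpha\},\ n \in \Z\}$. Any $x \in \C[\a^*_{\rs}]$ may be written $x = f/g$ with $f,g \in \sh$ coprime and $g$ a product of primes of the form $\check\gamma + m\hb$; then $x \in \C[\a^*_{\alpha-\rs}]$ if and only if no prime factor of $g$ is associate to some $\check\alpha + n\hb$. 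If $x$ lies in $\C[\a^*_{\alpha-\rs}]$ for \emph{every} $\alpha \in R$, then $g$ has no prime factor at all, so $g$ is a unit and $x \in \sh$; the reverse inclusion $\sh \subseteq \bigcap_{\alpha} \C[\a^*_{\alpha-\rs}]$ is clear.

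I do not expect a serious obstacle here — the statement is (rightly) labelled ``obvious'' in the text, and both steps are routine commutative algebra. The two points that warrant an explicit line are the verification of the ring identity via prime factorization, in particular recording that the various $\check\gamma + n\hb$ are pairwise non-associate prime elements of the UFD $\sh$, and the observation that intersecting the localizations commutes with $(-)\o_\sh M$ for $M$ free of finite rank, which is immediate once one works in coordinates.
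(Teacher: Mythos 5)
Your proof is correct, and since the paper offers no proof (the lemma is stated as "obvious"), your argument fills in exactly the expected reasoning: reduce to the ring identity $\sh = \bigcap_{\alpha \in R} \C[\a^*_{\alpha-\rs}]$ inside the field of fractions, then check it by unique factorization, using that the degree-one elements $\check\gamma + n\hb$ are prime in the polynomial ring $\sh$ and that two of them are associate only when $(\gamma,n) = \pm(\gamma',n')$, so that an element lying in every $\C[\a^*_{\alpha-\rs}]$ can have no denominator at all. The passage from the ring identity to the module statement via a choice of basis is also as it should be, with injectivity of $\varphi|_M$ coming from freeness and surjectivity from the fact that a bijection taking each $\C[\a^*_{\alpha-\rs}] \o_{\sh} M$ onto $\C[\a^*_{\alpha-\rs}] \o_{\sh} N$ must take the intersection onto the intersection.
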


\begin{proof}[Proof of Theorem {\rm \ref{thm:loop-equivariant-version}}]
Let $\la \in \bX$ and $\cF$ in $\Perv_{\Gv(\OO)}(\Gr_\Gv)$, and set $V:=\cS(\cF)$.

Injectivity of $\kalg_{V,\la}$ follows from Lemma \ref{lem:restH-injective}, while injectivity of $\kgeom_{\cF,\la}$ is proved in Corollary \ref{cor:kgeom-injective}. By Lemma \ref{lem:equiv-cohomology-first-properties}(1), $\coH_A^\hdot (i_\la^! \cF)$ is a free $\sh$-module. It follows from Proposition \ref{prop:freeness} and the first isomorphism in Lemma \ref{lem:fixed-points-morphisms} that the same is true for $\bigl( V \o \M(\la) \bigr)^B$. In \eqref{eqn:isom-regular} we have constructed an isomorphism between the extensions of scalars of these $\sh$-modules to $\C[\a^*_\rs]$. By \eqref{eqn:isom-subregular} this isomorphism restricts to an isomorphism bewteen extensions of scalars to $\C[\a^*_{\alpha-\rs}]$ for any simple root $\alpha$. From Lemma \ref{lem:isom-regular-equivariance} we deduce that the same property is true for \emph{any} root. Hence the isomorphism follows from Lemma \ref{lem:morphisms-sh}.
\end{proof}

\begin{proof}[Proof of Theorem {\rm \ref{thm:W-symmetry}}]
As our $\sh$-modules are free, it is enough to prove commutativity after restriction to $\a^*_\rs \subset \a^*$. In this setting the claim follows from Lemma \ref{lem:isom-regular-equivariance}.
\end{proof}

\subsection{Classical analogues}
\label{ss:proof-main-results-classical}

\begin{proof}[Proof of Theorem {\rm \ref{thm:equiv-coh-classical}}]
One can prove Theorem \ref{thm:equiv-coh-classical} using exactly the same strategy as for Theorem \ref{thm:loop-equivariant-version}. Alternatively, isomorphism $\overline{\zeta}_{\cF,\la}$ can be deduced from isomorphism $\zeta_{\cF,\la}$ of Theorem \ref{thm:loop-equivariant-version} using Lemma \ref{lem:hbar=0} and Lemma \ref{lem:forget-hbar}. (The statements about injectivity are easy.)
\end{proof}

\begin{proof}[Proof of Theorem {\rm \ref{thm:W-symmetry-classical}}]
One can prove Theorem \ref{thm:W-symmetry-classical} using exactly the same strategy as for Theorem \ref{thm:W-symmetry}. Alternatively, one can deduce Theorem \ref{thm:W-symmetry-classical} from Theorem \ref{thm:W-symmetry} using Proposition \ref{prop:Phi-sigma}, Lemma \ref{lem:hbar=0} and Lemma \ref{lem:forget-hbar} (see also Remark \ref{rk:Phi-sigma}).
\end{proof}

\section{Complementary results and applications}
\label{sec:applications}

\subsection{Convolution}
\label{ss:convolution}

In this subsection we construct the morphisms $\mathsf{Conv}$ considered in \S\ref{ss:intro-conv}, and prove their compatibility with the isomorphisms of Corollary \ref{key_cor}, thereby finishing the proof of the theorem stated in \S\ref{ss:intro-main-thm}.

First, consider the ``geometric'' setting. Multiplication induces a morphism $\dh(\X) \o_{\C[\hb]} \dh(\X) \to \dh(\X)$. One can check that for $\la,\mu \in \bX$ this morphism induces a morphism of graded $\sh$-modules
\[
{}^{(\la)} \hspace{-1pt} \dh(\X)_\la \o_{\sh} {}^{(\la+\mu)} \hspace{-1pt} \dh(\X)_\mu \to {}^{(\la+\mu)} \hspace{-1pt} \dh(\X)_{\la+\mu}.
\]
Hence for $V,V'$ in $\Rep(G)$ we obtain a morphism of graded $\sh$-modules
\begin{multline*}
\mathsf{Conv}^{\operatorname{geom}}_{V,V',\la,\mu} : {}^{(\la)} \hspace{-1pt} \bigl( V \o \dh(\X)_\la \bigr)^G \langle \la(2\rhov) \rangle \o_{\sh} {}^{(\la+\mu)} \hspace{-1pt} \bigl( V' \o \dh(\X)_\mu \bigr)^G \langle \mu(2\rhov) \rangle \\
\to {}^{(\la+\mu)} \hspace{-1pt} \bigl( V \o V' \o \dh(\X)_{\la+\mu} \bigr)^G \langle (\la+\mu)(2\rhov) \rangle.
\end{multline*}

Now, consider the ``algebraic'' setting. Note that if $\la,\mu \in \bX$ we have a canonical isomorphism of $(\sh,\uh(\g))$-bimodules
\[
\sh \llangle \la \rrangle \o_{\sh} \M(\mu) \ \cong \ \M(\la+\mu)
\]
which sends $1 \o \bv_\mu$ to $\bv_{\la+\mu}$. In particular if $V$ is in $\Rep(G)$ and $\phi : \M(0) \to V \o \M(\mu)$ is a morphism of $(\sh,\uh(\g))$-bimodules, then we can consider $\sh \llangle \la \rrangle \o_{\sh} \phi$ as a morphism of bimodules $\M(\la) \to V \o \M(\la+\mu)$.

Fix $V,V'$ in $\Rep(G)$ and $\la,\mu \in \bX$. Then following \cite[\S 8.4]{abg} we define the morphism of graded $\sh$-modules
\begin{multline*}
\mathsf{Conv}^{\operatorname{alg}}_{V,V',\la,\mu} : \Hom \bigl( \M(0), V \o \M(\la) \bigr) \langle \la(2\rhov) \rangle \o_{\sh} {}^{(\la)} \Hom \bigl( \M(0), V' \o \M(\mu) \bigr) \langle \mu(2\rhov) \rangle \\
\to \Hom \bigl( \M(0), V \o V' \o \M(\la+\mu) \bigr) \langle (\la+\mu)(2\rhov) \rangle
\end{multline*}\index{Convalg@$\mathsf{Conv}^{\operatorname{alg}}_{V,V',\la,\mu}$}%
(where as usual we consider morphisms of $(\sh,\uh(\g))$-bimodules)
which sends a pair $(\phi,\psi)$ to the following composition:
\[
\xymatrix@C=3cm{
\M(0) \ar[r]^-{\phi} & V \o \M(\la) \ar[r]^-{\id_V \o (\sh \llangle \la \rrangle \o_{\sh}\psi)} & V \o V' \o \M(\la+\mu).
}
\]

Finally we consider the ``topological'' setting. Here again we follow \cite[\S 8.7]{abg} (though we have to be more careful because we use \emph{equivariant} cohomology.) Let $\cF,\cG$ in $\Perv_{\Gv(\OO)}(\Gr)$, and let $\la,\mu \in \bX$. The convolution $\cF \star \cG$ is defined in terms of the ``convolution diagram''
\[
\mathsf{mult} : \Gv(\KK) \times_{\Gv(\OO)} \Gr \to \Gr
\]
induced by left multiplication of $\Gv(\KK)$ on $\Gr$. More precisely, $\cF \star \cG = \mathsf{mult}_* (\cF \,\widetilde{\boxtimes} \, \cG)$ where $\cF \, \widetilde{\boxtimes} \, \cG$ is the twisted external product, as defined e.g.~in \cite[\S 4]{mv}.


Let $\nu:=\la+\mu$. Consider the cartesian square
\[
\xymatrix@C=2cm{
\mathsf{mult}^{-1}(\bnu) \ar@{^{(}->}[r]^-{j_{\nu}} \ar[d]_-{\mathsf{mult}_{\nu}} & \Gv(\KK) \times_{\Gv(\OO)} \Gr \ar[d]^-{\mathsf{mult}} \\
\bnu \ar@{^{(}->}[r]^-{i_{\nu}} & \Gr.
}
\]
This diagram is $A$-equivariant if we consider $\Gv(\KK) \times_{\Gv(\OO)} \Gr$ as an $A$-variety where $\Tv$ acts by left multiplication on $\Gv(\KK)$, and $\C^\times$ acts diagonally by loop rotation.
By the base-change theorem we have an isomorphism
\[
i_{\nu}^! (\cF \star \cG) = i_{\nu}^! \mathsf{mult}_* (\cF \, \widetilde{\boxtimes} \, \cG) \cong (\mathsf{mult}_{\nu})_* j_{\nu}^! (\cF \, \widetilde{\boxtimes} \, \cG),
\]
so that we obtain an isomorphism
\[
\coH^\hdot_A \bigl( i_{\nu}^! (\cF \star \cG) \bigr) \ \cong \ \coH^\hdot_A \bigl(\mathsf{mult}^{-1}(\bnu), j_{\nu}^! (\cF \, \widetilde{\boxtimes} \, \cG) \bigr).
\]
Now let $k_{\la,\mu} : \{\bla \times_{\Gv(\OO)} \bmu\} \hookrightarrow \Gv(\KK) \times_{\Gv(\OO)} \Gr$ be the obvious embedding. The $({}_!,{}^!)$-adjunction for the embedding $\{\bla \times_{\Gv(\OO)} \bmu\} \hookrightarrow \mathsf{mult}^{-1}(\bnu)$ induces a morphism
\[
\coH^\hdot_A \bigl( k_{\la,\mu}^! (\cF \, \widetilde{\boxtimes} \, \cG) \bigr) \ \to \ \coH^\hdot_A \bigl(\mathsf{mult}^{-1}(\bnu), j_{\nu}^! (\cF \, \widetilde{\boxtimes} \, \cG) \bigr),
\]
which can be reinterpreted as a morphism
\beq{eqn:conv-top}
\coH^\hdot_A \bigl( k_{\la,\mu}^! (\cF \, \widetilde{\boxtimes} \, \cG) \bigr) \ \to \ \coH^\hdot_A \bigl( i_{\la+\mu}^! (\cF \star \cG) \bigr).
\eeq

\begin{lem}
\label{lem:cohomology-tensor-product}
There exists a canonical isomorphism of graded $\sh$-modules
\[
\coH^\hdot_A \bigl( k_{\la,\mu}^! (\cF \, \widetilde{\boxtimes} \, \cG) \bigr) \ \cong \ \coH^\hdot_A(i_\la^! \cF) \o_{\sh} {}^{(\la)} \hspace{-1pt} \coH^\hdot_A(i_\mu^! \cG).
\]
\end{lem}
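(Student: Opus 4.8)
The plan is to reduce the statement to the Künneth formula for external products, using the local triviality of the convolution fibration, and then to promote the resulting isomorphism to $A$-equivariant cohomology by the usual parity-vanishing spectral sequence, while tracking the $\C^\times$-equivariant structure carefully enough to see the twist ${}^{(\la)}(-)$ appear.

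First I would recall the local structure of the twisted product. The map $\Gv(\KK) \times_{\Gv(\OO)} \Gr \to \Gr$, $[g:x] \mapsto g\Gv(\OO)$, is a locally trivial fibration with fibre $\Gr$, associated with the $\Gv(\OO)$-torsor $\Gv(\KK) \to \Gr$; choosing a neighbourhood $U$ of $\bla$ and a section $\sigma : U \to \Gv(\KK)$ with $\sigma(\bla) = \lambda(z)$ (where $\lambda(z) \in \Tv(\KK)$ is the standard lift of $\bla$) one gets a trivialization $U \times \Gr \simto (\text{preimage of } U)$, $(u,x) \mapsto [\sigma(u):x]$, under which $\cF \, \widetilde{\boxtimes} \, \cG$ becomes $(\cF|_U) \boxtimes \cG$ (this is the defining property of $\widetilde{\boxtimes}$, see \cite[\S 4]{mv}, since $\sigma$ followed by the projection $\Gv(\KK) \to \Gr$ is the inclusion $U \hookrightarrow \Gr$, and $\cF$ is $\Gv(\OO)$-equivariant). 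Applying the Künneth formula for $!$-restriction to a point, we obtain a canonical isomorphism
\[
k_{\la,\mu}^! \bigl( \cF \, \widetilde{\boxtimes} \, \cG \bigr) \ \cong \ i_\la^! \cF \, \o \, i_\mu^! \cG
\]
of complexes of $\C$-vector spaces; taking cohomology and invoking the parity vanishing of $\coH^{\hdot}(i_\la^! \cF)$ and $\coH^{\hdot}(i_\mu^! \cG)$ recalled in the proof of Lemma \ref{lem:equiv-cohomology-first-properties}, we see that $\coH^{\hdot}(k_{\la,\mu}^!(\cF \, \widetilde{\boxtimes} \, \cG))$ is concentrated in degrees of a single parity.

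Next I would pass to equivariant cohomology. The point $\bla \times_{\Gv(\OO)} \bmu$ is fixed by $A = \Tv \times \C^\times$ (direct check, using that $\lambda(z)$, $\mu(z)$ take values in $\Tv(\KK)$ and that $\Tv$ as well as the constant loops $\lambda(s)$, $\mu(s)$ lie in $\Gv(\OO)$). Hence, exactly as in the proof of Lemma \ref{lem:equiv-cohomology-first-properties}, there is a spectral sequence $E_2^{p,q} = \coH^p_A(\pt) \o \coH^q(k_{\la,\mu}^!(\cF \, \widetilde{\boxtimes} \, \cG))$ converging to $\coH^{\hdot}_A(k_{\la,\mu}^!(\cF \, \widetilde{\boxtimes} \, \cG))$, and it degenerates at $E_2$ by the parity statement above. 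In particular $\coH^{\hdot}_A(k_{\la,\mu}^!(\cF \, \widetilde{\boxtimes} \, \cG))$ is free over $\sh$; together with the corresponding freeness of $\coH^{\hdot}_A(i_\la^! \cF)$ and $\coH^{\hdot}_A(i_\mu^! \cG)$ (Lemma \ref{lem:equiv-cohomology-first-properties}), this reduces the desired isomorphism of $\sh$-modules to an $A$-equivariant refinement of the Künneth isomorphism of the previous paragraph.

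Finally I would compute the $A$-equivariant structure on $k_{\la,\mu}^!(\cF \, \widetilde{\boxtimes} \, \cG)$ at the fixed point. The $\Tv$-factor of $A$ acts through the first coordinate only, so it contributes the standard $\Tv$-equivariant structure to the $i_\la^! \cF$-factor and nothing to the other. The diagonal loop-rotation $\C^\times \subset A$, however, does not transport through the section $\sigma$ to ordinary loop rotation on the fibre $\Gr$: a short computation gives $s \cdot [\lambda(z):x] = [\lambda(z) : \lambda(s)^{-1}\cdot(s\cdot x)]$, so that on the fibre direction the $A$-action is the pullback of the standard $A$-action on $i_\mu^! \cG$ along the automorphism $(t,s) \mapsto (t\lambda(s)^{-1}, s)$ of $\Tv \times \C^\times$. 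On $\sh = \coH^{\hdot}_A(\pt)$ this automorphism induces precisely the twist $\varphi \mapsto {}^{(\la)}\varphi$ ($t \mapsto t - \hb\la(t)$, $\hb \mapsto \hb$). Combining this identification with the (honest, since all pieces are $\sh$-free) $A$-equivariant Künneth isomorphism yields the claimed
\[
\coH^{\hdot}_A \bigl( k_{\la,\mu}^! (\cF \, \widetilde{\boxtimes} \, \cG) \bigr) \ \cong \ \coH^{\hdot}_A(i_\la^! \cF) \o_{\sh} {}^{(\la)} \hspace{-1pt} \coH^{\hdot}_A(i_\mu^! \cG).
\]
I expect the last step — arranging the trivialization compatibly with the $A$-action near the fixed point and pinning down the twist, rather than merely working at the fixed point itself — to be the only genuine subtlety; the analogous bookkeeping is carried out in \cite[\S 8.7]{abg} and \cite[\S 2.4]{bf}.
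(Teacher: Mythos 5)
Your proof captures the right idea—untwist $\cF\,\widetilde\boxtimes\,\cG$ into an honest external product, note that loop rotation does not act ``straight'' in the fibre direction because the lift $\lambda(z)$ absorbs a constant loop $\lambda(s)$, and read off the twist ${}^{(\la)}(-)$ on $\sh$—and your computation of the induced automorphism of $A$ is the same one that implicitly drives the paper's argument. What the paper does differently, and why it matters, is that it never chooses a local section of the $\Gv(\OO)$-torsor. Instead it picks a closed finite union of orbits $Y$ supporting $\cG$ and a finite-codimension normal subgroup $H\lhd\Gv(\OO)$ acting trivially on $Y$, and works with the \emph{global} smooth map $f:\Gv(\KK)/H\times Y\to\Gv(\KK)\times_{\Gv(\OO)}Y$. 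This has two advantages that address exactly the point you flag as ``the only genuine subtlety.'' First, it puts you on a finite-type variety, so the (equivariant) K\"unneth formula and base change are applied in a setting where they are actually licensed; a local section of $\Gv(\KK)\to\Gr$ lives in an ind-scheme of infinite type and the K\"unneth step needs this finite-type reduction regardless. Second, and more to the point, the $A$-action on $\Gv(\KK)/H\times Y$ can be written down in closed form—$t\cdot(gH,x)=(tgt^{-1}H,tx)$ and $a\cdot(gH,x)=((a\cdot g)\lambda(a)^{-1}H,\lambda(a)(a\cdot x))$—and one \emph{checks} that $f$ is $A$-equivariant and $(\tilde{\bla},\bmu)$ is $A$-fixed for this action. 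The twist by $\lambda$ is therefore built into the cover rather than being extracted by transporting an a priori non-equivariant trivialization, so the equivariant K\"unneth isomorphism is honest, not something one must first establish can be made $A$-equivariant. Your route can certainly be made to work (the derivative of the $A$-action at the fixed point determines the twist), but as written the promotion from the non-equivariant K\"unneth to the $A$-equivariant one is asserted rather than proved, precisely at the step where the section $\sigma$ would have to be shown $A$-equivariant for a suitable twisted action. In short: same mechanism, same formula for the twist, but the paper's choice of a canonical finite-type $A$-equivariant smooth cover removes the one genuinely delicate point that your sketch defers.
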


\begin{proof}
Choose some closed finite union of $\Gv(\OO)$-orbits $Y \subset \Gr$ such that $\cG$ is supported on $Y$, and a closed normal subgroup $H \lhd \Gv(\OO)$ of finite codimension $c$, which acts trivially on $Y$. Let $f : \Gv(\KK)/H \times Y \to \Gv(\KK) \times_{\Gv(\OO)} Y$ and $g : \Gv(\KK)/H \to \Gr$ be the natural projections. Then by definition of $\cF \, \widetilde{\boxtimes} \, \cG$ we have a canonical isomorphism $f^* \bigl(\cF \, \widetilde{\boxtimes} \, \cG \bigr) \cong (g^*\cF) \boxtimes \cG$ hence (since both $f$ and $g$ are smooth morphisms of relative dimension $c$) a canonical isomorphism
\beq{eqn:cofiber-convolution}
f^! \bigl(\cF \, \widetilde{\boxtimes} \, \cG \bigr) \ \cong \ (g^! \cF) \boxtimes \cG.
\eeq

We will consider $\Gv(\KK)/H \times Y$ as an $A$-variety where any $t \in \Tv$, resp.~$a \in \C^\times$ acts by 
\[
t \cdot (gH,x) := (tgt^{-1} H,t \cdot x), \quad \text{resp.} \quad a \cdot (gH,x):= \bigl( (a \cdot g) \la(a)^{-1} H, \la(a) \cdot (a \cdot x)).
\]
With this definition, the morphism $f$ is $A$-equivariant, and the point $(\tilde{\bla},\bmu) \in \Gv(\KK)/H \times Y$ is $A$-stable. (Here $\tilde{\bla}:=\hat{\bla} H/H$, where $\hat{\bla}$ is $\lambda$ considered as an element of $\Gv(\KK)$.) Now $k_{\la,\mu}$ factors as the composition
\[
\xymatrix@C=1.5cm{
\{(\bla,\bmu)\} \ar[r]^{\sim} & \{(\tilde{\bla},\bmu) \} \ar@{^{(}->}[r]^-{l_{\la,\mu}} & \Gv(\KK)/H \times Y \ar[r]^-{f} & \Gv(\KK) \times_{\Gv(\OO)} Y,
}
\]
hence using \eqref{eqn:cofiber-convolution} we obtain an isomorphism
\[
\coH^\hdot_A \bigl( k_{\la,\mu}^! (\cF \, \widetilde{\boxtimes} \, \cG) \bigr) \, \cong \, \coH_A^\hdot \bigl( l_{\la,\mu}^! (g^! \cF \boxtimes \cG) \bigr).
\]
Now we have
\[
\coH_A^\hdot \bigl( l_{\la,\mu}^! (g^! \cF \boxtimes \cG) \bigr) \ \cong \ \coH^\hdot_A(i_\la^! \cF) \o_{\sh} {}^{(\la)} \hspace{-1pt} \coH^\hdot_A(i_\mu^! \cG)
\]
(where we use the fact that ${}^{(\la)} \hspace{-1pt} \coH^\hdot_A(i_\mu^! \cG)$ is free over $\sh$ by Lemma \ref{lem:equiv-cohomology-first-properties}, so that we have a K{\"u}nneth formula in equivariant cohomology.)
This finishes the proof.
\end{proof}

Using the isomorphism of Lemma \ref{lem:cohomology-tensor-product} we can reinterpret \eqref{eqn:conv-top} as a morphism\index{Convtop@$\mathsf{Conv}^{\operatorname{top}}_{\cF,\cG,\la,\mu}$}%
\[
\mathsf{Conv}^{\operatorname{top}}_{\cF,\cG,\la,\mu} : \coH^\hdot_A(i_\la^! \cF) \o_{\sh} {}^{(\la)} \hspace{-1pt} \coH^\hdot_A(i_\mu^! \cG) \to \coH^\hdot_A \bigl( i_{\la+\mu}^! (\cF \star \cG) \bigr).
\]

\begin{rem}
In case $\la,\mu$ are antidominant, the morphism $\mathsf{Conv}^{\operatorname{top}}_{\cF,\cG,\la,\mu}$ can also be described in terms of Wakimoto sheaves, as in \cite[diagram on p.~652]{abg}.
\end{rem}

The following result will be proved in \S\ref{ss:proof-convolution}

\begin{prop}
\label{prop:convolution}
Let $\cF,\cG$ be in $\Perv_{\Gv(\OO)}(\Gr)$ and $\la,\mu \in \bX$. Under the isomorphisms of Corollary \ref{key_cor}, the morphisms $\mathsf{Conv}^{\operatorname{geom}}_{\cS(\cF),\cS(\cG),\la,\mu}$, $\mathsf{Conv}^{\operatorname{alg}}_{\cS(\cF),\cS(\cG),\la,\mu}$ and $\mathsf{Conv}^{\operatorname{top}}_{\cF,\cG,\la,\mu}$ match.
\end{prop}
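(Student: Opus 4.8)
The plan is to split the three-way matching into two comparisons: $\mathsf{Conv}^{\operatorname{geom}}$ against $\mathsf{Conv}^{\operatorname{alg}}$, which is purely algebraic, and then either of these against $\mathsf{Conv}^{\operatorname{top}}$, which is where the affine Grassmannian enters. For the first comparison I would unwind the isomorphism \eqref{eqn:isom-D-Hom} of \S\ref{ss:relation-Phi-Theta}, which is assembled from Lemma \ref{lem:DX-Ind} (the identification $\tla\dh(\X)_\la\cong\Ind_B^G\M(\la)$), the tensor identity, Frobenius reciprocity, and Lemma \ref{lem:morphisms-Verma-v}. Under Lemma \ref{lem:DX-Ind} the product $\dh(\X)_\la\o_{\C[\hb]}\dh(\X)_\mu\to\dh(\X)_{\la+\mu}$ corresponds, after inducing, to the composition $\M(\la)\to V\o\M(\la)\to V\o V'\o\M(\la+\mu)$ defining $\mathsf{Conv}^{\operatorname{alg}}$ via the canonical isomorphism $\sh\llangle\la\rrangle\o_\sh\M(\mu)\cong\M(\la+\mu)$; chasing the definitions, keeping track of the twists $\tla(-)$ and the shifts $\langle\la(2\rhov)\rangle$, shows that $\mathsf{Conv}^{\operatorname{geom}}_{V,V',\la,\mu}$ and $\mathsf{Conv}^{\operatorname{alg}}_{V,V',\la,\mu}$ are intertwined by the isomorphism of Lemma \ref{lem:fixed-points-morphisms}. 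This step uses no geometry.

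For the comparison with $\mathsf{Conv}^{\operatorname{top}}$ I would first reduce to the generic situation. All the $\sh$-modules appearing in the relevant square are free: the cofiber cohomology by Lemma \ref{lem:equiv-cohomology-first-properties}(1); the $B$-invariants by Proposition \ref{prop:freeness} and Lemma \ref{lem:fixed-points-morphisms}; and the source of $\mathsf{Conv}^{\operatorname{top}}$ by Lemma \ref{lem:cohomology-tensor-product}, as a tensor product over $\sh$ of free modules. A free $\sh$-module injects into its localization at the multiplicative set defining $\C[\a^*_\rs]$, so it suffices to prove commutativity of the square after applying $\C[\a^*_\rs]\o_\sh(-)$. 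Over $\C[\a^*_\rs]$ the picture is explicit: by \S\ref{ss:generic-isomorphism} both $\kgeom_{\cF,\la}$ and $\kalg_{\cS(\cF),\la}$ become isomorphisms onto $\C[\a^*_\rs]\o(\cS(\cF))_\la$, so that $\zeta_{\cF,\la}$ — and its analogues for $\cG$, and, since $\cS$ is monoidal, for $\cF\star\cG$ — localizes to the identity of the relevant weight space.

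It then remains to identify, over $\C[\a^*_\rs]$, both convolution morphisms with the single tautological map $\C[\a^*_\rs]\o(\cS(\cF))_\la\o(\cS(\cG))_\mu\to\C[\a^*_\rs]\o(\cS(\cF)\o\cS(\cG))_{\la+\mu}$ coming from the inclusion of weight spaces. On the algebraic side this is read off from the chain of isomorphisms in \S\ref{ss:generic-isomorphism} (Lemma \ref{lem:quantum-case-generic}, the tensor identity, Frobenius reciprocity): a bimodule map $\M(0)\to V\o\M(\la)$ localizes to its expectation value, and the composition defining $\mathsf{Conv}^{\operatorname{alg}}$ localizes to the tautological inclusion. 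On the topological side I would apply the localization theorem to the convolution diagram: $\bla\times_{\Gv(\OO)}\bmu$ is an (isolated) $A$-fixed point of $\mathsf{mult}^{-1}(\bnu)$ with $\nu=\la+\mu$, the corestriction adjunction from it becomes after localization the inclusion of the corresponding summand with no Euler-class factor, and combined with base change, Lemma \ref{lem:cohomology-tensor-product} and \eqref{eqn:restriction-geom} this presents the localized \eqref{eqn:conv-top}, hence $\mathsf{Conv}^{\operatorname{top}}$, as the same tautological inclusion. I expect the main obstacle to be precisely this last point: verifying that the base-change isomorphism and the K\"unneth identification of Lemma \ref{lem:cohomology-tensor-product} are compatible on weight spaces, so that no spurious scalar function survives in $\C[\a^*_\rs]$ — the argument being close in spirit to the proof of Lemma \ref{lem:equiv-cohomology-first-properties}(2) and to \cite[\S 8]{abg}. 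An alternative that sidesteps this bookkeeping, at the cost of extra preliminary work, is to reduce first to the case where $\Gv$ has semisimple rank one — using that $\ResGr^{\Gv}_{\Lv}$ is a tensor functor on the topological side, and checking that the algebraic and geometric convolution morphisms descend along the restriction maps — and then to handle the rank-one case by the explicit computations of Appendix \ref{sec:rank1}.
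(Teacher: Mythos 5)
Your first step (matching $\mathsf{Conv}^{\operatorname{geom}}$ with $\mathsf{Conv}^{\operatorname{alg}}$ by unwinding \eqref{eqn:isom-D-Hom}) and your reduction of the topological comparison to the localization $\C[\a^*_{\rs}]$ (all modules involved being free over $\sh$) are sound, and the first step is exactly how the paper begins. The gap is in your final identification: it is \emph{not} true that, under the identifications given by the localized $\kalg$ and $\kgeom$, the two convolution morphisms become the tautological inclusion of the summand $(\cS(\cF))_\la \o (\cS(\cG))_\mu$ into $(\cS(\cF)\o\cS(\cG))_{\la+\mu}$. Already on the algebraic side this fails in semisimple rank one: take $V=V'=V^\nu$ with $\nu(\alv)=1$, $\la=\nu-\alpha$, $\mu=\nu$, and for $\phi,\psi$ the generators corresponding to $x^\nu_{\nu-\alpha}$ and $x^\nu_\nu$ of Lemma \ref{lem:algebra-sl2}. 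Because the right $\uh(\g)$-action on $V\o\M$ is through $x\mapsto -x\o\hb+1\o x$, the element $\kalg_{V\o V',\la+\mu}\bigl(\mathsf{Conv}^{\operatorname{alg}}_{V,V',\la,\mu}(\phi,\psi)\bigr)$ contains, besides the expected term in $V_{\la}\o V'_{\mu}\o\sh$, a nonzero term proportional to $\hb\, v^\nu_\nu \o v^\nu_{\nu-\alpha}$, lying in the strictly higher summand $V_{\la+\alpha}\o V'_{\mu-\alpha}$; inverting the elements $\alv+n\hb$ does not kill it. The same phenomenon occurs topologically. So under your identifications both maps are only ``upper triangular'' with respect to the partial order on the first weight, with genuinely nonzero off-diagonal parts, and proving that these off-diagonal parts agree is precisely the content of the proposition --- your argument reduces it to a false on-the-nose statement rather than proving it.

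This is also why the obstacle you anticipate (K{\"u}nneth/base-change bookkeeping, spurious scalars) is not the real one. The paper's proof instead introduces the $\bX$-filtration on $\coH^{\hdot}_A(\fT_\nu,t_\nu^!(\cF\star\cG))$ coming from the strata $\fT_{\la',\mu'}$ of $\mathsf{mult}^{-1}(\fT_\nu)$ (split using parity vanishing and Lemma \ref{lem:cohomology-tensor-product-2}), proves --- by going back to the fusion construction of the monoidal structure over a curve in \cite{mv} --- that under $\cS$ this filtration corresponds to the filtration of $(\cS(\cF)\o\cS(\cG))_\nu$ by $\bigoplus_{\la'\geq\la}\cS(\cF)_{\la'}\o\cS(\cG)_{\nu-\la'}$, and then checks that both composites factor through the part $\geq\la$ and coincide on the associated graded piece at $\la$, where they are the tautological maps built from $(\imath_\la)_!\o(\imath_\mu)_!$, resp.\ $\kalg_{V,\la}\o\kalg_{V',\mu}$. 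That compatibility of the convolution-diagram filtration with the fusion product is the essential input missing from your argument. Your fallback route via restriction to a Levi also does not work as stated: as the paper points out in Remark \ref{rk:convolution}(1), the morphisms $\mathsf{Conv}$ are not compatible with Levi restriction in the obvious strong sense, so one cannot simply ``descend along the restriction maps'' to reduce to rank one.
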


\begin{rem}
\label{rk:convolution}
\begin{enumerate}
\item
Note that, unlike most of the constructions in the paper, the morphisms $\mathsf{Conv}$ are \emph{not} compatible with restriction to a Levi subgroup (see \S\ref{ss:restriction-DX}, \S\ref{ss:Levi-DX} or \S\ref{ss:Satake-restriction-cofiber}) in the obvious strong sense. We will use a weaker compatibility result in the proof of Proposition \ref{prop:convolution}.
\item
Proposition \ref{prop:convolution} has an obvious ``classical analogue'', that we do not state for simplicity, but which can be proved by the same methods.
\item
Using similar constructions one can define, for $V,V'$ in $\Rep(G)$ and $\la,\mu \in \bX$ a ``geometric'' convolution morphism
\[
\bigl( V \o \M(\la) \bigr)^G \o_{\sh} {}^{(\la)} \hspace{-1pt} \bigl( V'_\mu \o \sh \bigr)^G \to\bigl( (V \o V')_{\la + \mu} \o \sh \bigr)^G
\]
(where for simplicity we disregard the grading) and, for $\cF,\cG$ in $\Perv_{\Gv(\OO)}(\Gr)$ and $\la,\mu \in \bX$, a ``topological'' convolution morphism
\[
\coH^\hdot_A(i_\la^! \cF) \o_{\sh} {}^{(\la)} \hspace{-1pt} \coH^\hdot_A(t_\mu^! \cG) \to \coH^\hdot_A \bigl( t_{\la+\mu}^! (\cF \star \cG) \bigr).
\]
(These maps ``extend'' $\mathsf{Conv}^{\operatorname{alg}}_{V,V',\la,\mu}$ and $\mathsf{Conv}^{\operatorname{top}}_{\cF,\cG,\la,\mu}$ in the natural sense, using the inclusions $\kalg_{V',\mu}$, $\kalg_{V \o V',\la+\mu}$, $(\imath_\mu)_!$ and $(\imath_{\la+\mu})_!$.)
Then the same arguments as for Proposition \ref{prop:convolution} show that the following diagram commutes:
\[
\xymatrix{
\bigl( V \o \M(\la) \bigr)^G \o_{\sh} {}^{(\la)} \hspace{-1pt} \bigl( V'_\mu \o \sh \bigr)^G \ar[r] \ar[d]^-{\wr}_-{\mathrm{Th.}~\ref{thm:loop-equivariant-version}~\&~\text{Lem.}~\ref{lem:equiv-cohomology-first-properties}} & \bigl( (V \o V')_{\la + \mu} \o \sh \bigr)^G \ar[d]^-{\wr}_-{\text{Lem.}~\ref{lem:equiv-cohomology-first-properties}} \\
\coH^\hdot_A(i_\la^! \cF) \o_{\sh} {}^{(\la)} \hspace{-1pt} \coH^\hdot_A(t_\mu^! \cG) \ar[r] & \coH^\hdot_A \bigl( t_{\la+\mu}^! (\cF \star \cG) \bigr).
}
\]
In particular, for $V=V'=\C[G]$ (the regular representation of $G$, considered as an ind-object in $\Rep(G)$) and taking the direct sum over $\la$ and $\mu$, one can check that composing the ``algebraic'' map with the morphism induced by the multiplication morphism $\mathsf{m} : \C[G] \o \C[G] \to \C[G]$ one obtains an action of the algebra $\dh(\X)$ on the $\C[\hb]$-algebra $\dh^{\mathrm{rel}}$ of \emph{relative} asymptotic differential operators along the fibers of the projection $G \to G/T$. This action can be realized ``topologically'' using the composition
\[
\coH^\hdot_A(i_\la^! \cR) \o_{\sh} {}^{(\la)} \hspace{-1pt} \coH^\hdot_A(t_\mu^! \cR) \longrightarrow \coH^\hdot_A \bigl( t_{\la+\mu}^! (\cR \star \cR) \bigr) \xrightarrow{\cS(\mathsf{m})} \coH^\hdot_A \bigl( t_{\la+\mu}^! \cR \bigr),
\]
where $\cR:=\cS(\C[G])$.

In this construction one can also replace $T$ by a Levi factor of a parabolic subgroup; details are left to the reader.
\end{enumerate}
\end{rem}

\subsection{Proof of Proposition \ref{prop:convolution}}
\label{ss:proof-convolution}

For $\la,\mu \in \bX$ we set
\[
\fT_{\la,\mu} := \{(n \hat{\bla} \times_{\Gv(\OO)} m\bmu) \mid n,m \in \Nv^-(\KK)\} \subset \Gv(\KK) \times_{\Gv(\OO)} \Gr,
\]
and we denote by $t_{\la,\mu} : \fT_{\la,\mu} \hookrightarrow \Gv(\KK) \times_{\Gv(\OO)} \Gr$ the inclusion. (The notation $\hat{\bla}$ is defined in the proof of Lemma \ref{lem:cohomology-tensor-product}.) Then we have decompositions
\[
\Gv(\KK) \times_{\Gv(\OO)} \Gr = \bigsqcup_{\la,\mu \in \bX} \fT_{\la,\mu}, \qquad \mathsf{mult}^{-1}(\fT_{\nu}) = \bigsqcup_{\la+\mu=\nu} \fT_{\la,\mu}.
\]
In fact, $\fT_{\la,\mu}$ is the inverse image of $\fT_\la \times \fT_{\la+\mu} \subset \Gr \times \Gr$ under the isomorphism
$\Gv(\KK) \times_{\Gv(\OO)} \Gr \simto \Gr \times \Gr$ sending $(g_1 \times_{\Gv(\OO)} g_2 \Gv(\OO))$ to $(g_1 \Gv(\OO),g_1 g_2 \Gv(\OO))$.

\begin{lem}
\label{lem:cohomology-tensor-product-2}
For any $\cF,\cG$ in $\Perv_{\Gv(\OO)}(\Gr)$, there exists a natural isomorphism of graded $\sh$-modules
\[
\coH^{\hdot}_A \bigl( \fT_{\la,\mu}, t_{\la,\mu}^! (\cF \, \widetilde{\boxtimes} \, \cG) \bigr) \ \cong \ \coH^{\hdot}_A(\fT_\la, t_\la^! \cF) \otimes_{\sh} {}^{(\la)} \hspace{-1pt} \coH^{\hdot}_A(\fT_\mu, t_\mu^! \cG).
\]
\end{lem}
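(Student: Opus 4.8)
The statement is the twisted-external-product analogue of Lemma~\ref{lem:cohomology-tensor-product}, but now localized over the semi-infinite orbits rather than over fixed points; in fact Lemma~\ref{lem:cohomology-tensor-product} was obtained by exactly the same device, so the plan is to imitate that argument verbatim, replacing the one-point embeddings $i_\la$, $i_\mu$, $k_{\la,\mu}$ by the inclusions $t_\la$, $t_\mu$, $t_{\la,\mu}$ of the $\Nv^-(\KK)$-orbits. First I would choose, as in the proof of Lemma~\ref{lem:cohomology-tensor-product}, a closed finite union of $\Gv(\OO)$-orbits $Y \subset \Gr$ supporting $\cG$ and a closed normal subgroup $H \lhd \Gv(\OO)$ of finite codimension $c$ acting trivially on $Y$, together with the projections $f : \Gv(\KK)/H \times Y \to \Gv(\KK)\times_{\Gv(\OO)} Y$ and $g : \Gv(\KK)/H \to \Gr$. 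The defining property of the twisted product gives a canonical isomorphism $f^*(\cF \,\widetilde{\boxtimes}\, \cG) \cong (g^*\cF) \boxtimes \cG$, and since $f$ and $g$ are smooth of relative dimension $c$ this upgrades to $f^!(\cF \,\widetilde{\boxtimes}\, \cG) \cong (g^!\cF) \boxtimes \cG$, just as in \eqref{eqn:cofiber-convolution}.

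The new ingredient is to understand the preimage of $\fT_{\la,\mu}$ under $f$. By the description of $\fT_{\la,\mu}$ recalled just before the statement --- it is the preimage of $\fT_\la \times \fT_{\la+\mu}$ under the isomorphism $\Gv(\KK)\times_{\Gv(\OO)}\Gr \simto \Gr\times\Gr$, $(g_1 \times_{\Gv(\OO)} g_2\Gv(\OO)) \mapsto (g_1\Gv(\OO), g_1 g_2 \Gv(\OO))$ --- one checks that $f^{-1}(\fT_{\la,\mu})$ is, up to the $H$-bundle direction, a product $\widetilde{\fT}_\la \times (\fT_\mu \cap Y)$, where $\widetilde{\fT}_\la$ is the preimage in $\Gv(\KK)/H$ of $\fT_\la$ under $g$. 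Concretely, the $\Nv^-(\KK)$-action used to define $\fT_{\la,\mu}$ translates, under $f$, into the product of the $\Nv^-(\KK)$-action on the first factor (landing in $\fT_\la$ after applying $g$) and the $\Nv^-(\KK)$-action on the second factor (landing in $\fT_\mu$). I would make this into an explicit $A$-equivariant isomorphism $f^{-1}(\fT_{\la,\mu}) \cong \widetilde{\fT}_\la \times (\fT_\mu \cap Y)$, putting the $A$-structure on $\Gv(\KK)/H \times Y$ exactly as in the proof of Lemma~\ref{lem:cohomology-tensor-product}: $t \cdot (gH,x) = (tgt^{-1}H, t\cdot x)$ and $a \cdot (gH,x) = ((a\cdot g)\la(a)^{-1}H, \la(a)\cdot(a\cdot x))$, so that $f$ and the factorization of $t_{\la,\mu}$ through $f$ are $A$-equivariant.

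Combining the $!$-pullback compatibility $f^!(\cF \,\widetilde{\boxtimes}\, \cG) \cong (g^!\cF)\boxtimes\cG$ with this product description, and using $g^! = g^* [2c]$ on the $\fT_\la$-direction to match the constructible complex $t_\la^!\cF$, I get an $A$-equivariant identification of $t_{\la,\mu}^!(\cF\,\widetilde{\boxtimes}\,\cG)$ on $f^{-1}(\fT_{\la,\mu})$ with the external product of (a shift of) the pullback of $t_\la^!\cF$ and $t_\mu^!\cG$. Taking $A$-equivariant cohomology and invoking the K\"unneth formula in equivariant cohomology --- which applies because ${}^{(\la)}\hspace{-1pt}\coH^{\hdot}_A(\fT_\mu, t_\mu^!\cG)$ is free over $\sh$ by Lemma~\ref{lem:equiv-cohomology-first-properties}(2), indeed it is $(\cS(\cG))_\mu \o \sh$ up to shift --- yields the desired isomorphism
\[
\coH^{\hdot}_A \bigl( \fT_{\la,\mu}, t_{\la,\mu}^! (\cF \,\widetilde{\boxtimes}\, \cG) \bigr) \ \cong \ \coH^{\hdot}_A(\fT_\la, t_\la^!\cF) \o_{\sh} {}^{(\la)}\hspace{-1pt}\coH^{\hdot}_A(\fT_\mu, t_\mu^!\cG),
\]
the twist ${}^{(\la)}\hspace{-1pt}(-)$ appearing precisely because the $\C^\times$-action on the second factor has been conjugated by $\la$ (this is the same bookkeeping as in Lemma~\ref{lem:cohomology-tensor-product}). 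The main obstacle, such as it is, is purely notational: keeping track of the $A$-action through the twisted-product construction and through the identification $f^{-1}(\fT_{\la,\mu}) \cong \widetilde{\fT}_\la \times (\fT_\mu\cap Y)$, and making sure the cohomological shift by $2c$ from $g^! = g^*[2c]$ is absorbed correctly so that no spurious shift $\langle\, \cdot\,\rangle$ survives in the final formula; the freeness input needed for K\"unneth is already available from Lemma~\ref{lem:equiv-cohomology-first-properties}(2).
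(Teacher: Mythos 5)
Your approach is essentially the paper's. The published proof simply fixes a section $\widetilde{\fT}_\la := \tilde{\bla} \cdot \Nv^-(\OO^-)_1 \subset \Gv(\KK)$ of the projection $\Gv(\KK) \twoheadrightarrow \Gr$ over $\fT_\la$, which yields an $A$-equivariant isomorphism $\widetilde{\fT}_\la \times \fT_\mu \simto \fT_{\la,\mu}$, $(g,y) \mapsto (g \times_{\Gv(\OO)} y)$, and then runs the argument of Lemma~\ref{lem:cohomology-tensor-product} verbatim with $\{\tilde{\bla}\}$ replaced by $\widetilde{\fT}_\la$ and $\{\bmu\}$ replaced by $\fT_\mu$. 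The one place where you deviate is in taking $\widetilde{\fT}_\la$ to be the full preimage $g^{-1}(\fT_\la) \subset \Gv(\KK)/H$ rather than a slice: this still works, but the identification $f^{-1}(\fT_{\la,\mu}) \cong \widetilde{\fT}_\la \times (\fT_\mu \cap Y)$ you invoke is not the obvious projection-respecting one --- over a point $gH \in \widetilde{\fT}_\la$ the fiber is $k^{-1}\fT_\mu \cap Y$ for the (well-defined mod $H$) element $k \in \Gv(\OO)$ putting $g$ into the distinguished coset, so the product structure only appears after an untwisting by $k$, and one must then use the $\Gv(\OO)$-equivariance structure of $\cG$ to compare $(k^{-1})^*\cG$ with $\cG$ before K\"unneth can be applied. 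The paper's choice of a slice makes the factorization of $t_{\la,\mu}$ through $f$ literal and avoids this bookkeeping, which is a cleaner route, but your version proves the lemma too.
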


\begin{proof}
If we set $\widetilde{\fT}_\la := \tilde{\bla} \cdot \Nv^-(\OO^-)_1 \subset \Gv(\KK)$ (where $\Nv^-(\OO^-)_1 \subset \Nv^-(\OO^-)$ is the kernel of the evaluation at $z=\infty$), then the composition $\widetilde{\fT}_\la \hookrightarrow \Gv(\KK) \twoheadrightarrow \Gr$ induces an isomorphism $\widetilde{\fT}_\la \simto \fT_\la$. Then the same arguments as in the proof of Lemma \ref{lem:cohomology-tensor-product} prove our claim.
\end{proof}

Now we fix $\nu \in \bX$ and $\cF,\cG$ in $\Perv_{\Gv(\OO)}(\Gr)$. For $\la \in \bX$ we set
\[
\fT_{\nu}^{\geq \la} := \bigsqcup_{\genfrac{}{}{0pt}{}{\la'+\mu=\nu}{\la' \geq \la}} \fT_{\la',\mu} \subset \mathsf{mult}^{-1}(\fT_{\nu}),
\]
and denote by $t_\nu^{\geq \lambda} : \fT_{\nu}^{\geq \la} \hookrightarrow \Gv(\KK) \times_{\Gv(\OO)} \Gr$ the inclusion.
Then $\fT_{\nu}^{\geq \la}$
is closed in $\mathsf{mult}^{-1}(\fT_{\nu})$, and $\fT_{\la,\nu-\la}$ is open in $\fT_{\nu}^{\geq \la}$. It follows in particular from Lemma \ref{lem:cohomology-tensor-product-2} and Lemma \ref{lem:equiv-cohomology-first-properties} that for any $\la,\mu$ such that $\la+\mu=\nu$, the cohomology $\coH^{\hdot}_A \bigl( \fT_{\la,\mu}, t_{\la,\mu}^! (\cF \, \widetilde{\boxtimes} \, \cG) \bigr)$ is concentrated in degrees of the same parity as $\nu(2\rhov)$. From this parity vanishing observation, one can deduce that the long exact sequence associated with the decomposition $\fT_{\nu}^{\geq \la}=\fT_{\nu}^{> \la} \sqcup \fT_{\la,\nu-\la}$ (where $\fT_{\nu}^{> \la}$ has the obvious definition) for the object $(t_\nu^{\geq \lambda})^! (\cF \, \widetilde{\boxtimes} \, \cG)$ breaks into a family of short exact sequences.
And then (using the base change theorem) we deduce that the graded $\sh$-module
\[
\coH^{\hdot}_A \bigl( \fT_\nu, t_\nu^!(\cF \star \cG) \bigr)
\]
admits a deacreasing $\bX$-filtration with part bigger than $\la$ isomorphic to 
$
\coH^{\hdot}_A(\fT_{\nu}^{\geq \la}, (t_\nu^{\geq \lambda})^! (\cF \, \widetilde{\boxtimes} \, \cG)),
$
and with associated graded
\[
\bigoplus_{\la + \mu=\nu} \coH^{\hdot}_A \bigl( \fT_{\la,\mu}, t_{\la,\mu}^! (\cF \, \widetilde{\boxtimes} \, \cG) \bigr) \, \cong \,
\bigoplus_{\la + \mu=\nu} \coH^{\hdot}_A(\fT_\la, t_\la^! \cF) \o_{\sh} {}^{(\la)} \coH^{\hdot}_A(\fT_\mu, t_\mu^! \cG).
\]
(Here the isomorphism is provided by Lemma \ref{lem:cohomology-tensor-product-2}.)

\begin{lem}

Under the isomorphisms
\[
\coH^{\hdot}_A \bigl( \fT_\nu, t_\nu^!(\cF \star \cG) \bigr) \ \cong \ \bigl( \cS(\cF) \otimes \cS(\cG) \bigr)_\nu \o \sh \langle \nu(2\rhov) \rangle,
\]
and
\[
\coH^{\hdot}_A(\fT_\la, t_\la^! \cF) \o_{\sh} {}^{(\la)} \coH^{\hdot}_A(\fT_\mu, t_\mu^! \cG) \ \cong \ (\cS(\cF)_\la \o \cS(\cG)_\mu) \o \sh \langle \nu(2\rhov) \rangle
\]
provided by Lemma {\rm \ref{lem:equiv-cohomology-first-properties}}, the ``topological'' filtration on $\coH^{\hdot}_A \bigl( \fT_\nu, t_\nu^!(\cF \star \cG) \bigr)$ considered above
is induced by the filtration on $\bigl( \cS(\cF) \o \cS(\cG) \bigr)_\nu$ by the subspaces
\[
\bigoplus_{\la' \geq \la} \cS(\cF)_{\la'} \o \cS(\cG)_{\nu-\la'}.
\]
\end{lem}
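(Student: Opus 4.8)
The plan is to reduce the statement to the Mirkovi\'c--Vilonen construction of the tensor structure on the weight functor $\sfF^\bX$; I sketch the main steps.

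First I would pass from $A$-equivariant to ordinary cohomology. All the graded $\sh$-modules that occur here are free with generators in degree $\nu(2\rhov)$---combine Lemma \ref{lem:equiv-cohomology-first-properties}, Lemma \ref{lem:cohomology-tensor-product-2} and the parity vanishing established above---and the steps $\coH^\hdot_A(\fT^{\geq\la}_\nu, (t^{\geq\la}_\nu)^!(\cF \, \widetilde{\boxtimes} \, \cG))$ of the topological filtration of $M:=\coH^\hdot_A(\fT_\nu, t_\nu^!(\cF\star\cG))$, being iterated extensions of such modules, are again free on generators in degree $\nu(2\rhov)$; in particular each is an $\sh$-direct summand of $M$. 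By graded Nakayama it then suffices to compare the images of these submodules in
\[
M/\mathfrak m M \ \cong \ \coH^{\nu(2\rhov)}(\fT_\nu, t_\nu^!(\cF\star\cG)) \ \overset{\eqref{eqn:weight-space-Satake}}{\cong} \ \bigl(\cS(\cF)\otimes\cS(\cG)\bigr)_\nu ,
\]
where $\mathfrak m\subset\sh$ is the graded maximal ideal and the first isomorphism comes from the degeneration of the spectral sequence \eqref{eqn:spectral-sequence-equiv-cohomology}, exactly as in the proofs of Lemma \ref{lem:equiv-cohomology-first-properties} and Lemma \ref{lem:forget-hbar}. Reducing Lemma \ref{lem:cohomology-tensor-product-2} modulo $\mathfrak m$ in the same way identifies $\coH^{\nu(2\rhov)}(\fT_{\la,\mu}, t_{\la,\mu}^!(\cF \, \widetilde{\boxtimes} \, \cG))$ with $\cS(\cF)_\la\otimes\cS(\cG)_\mu$ compatibly with \eqref{eqn:weight-space-Satake}.

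Next I would identify what remains---the purely topological statement that, in degree $\nu(2\rhov)$,
\[
\coH^{\nu(2\rhov)}\bigl(\fT^{\geq\la}_\nu, (t^{\geq\la}_\nu)^!(\cF \, \widetilde{\boxtimes} \, \cG)\bigr)\ \hookrightarrow\ \coH^{\nu(2\rhov)}(\fT_\nu, t_\nu^!(\cF\star\cG))=\bigl(\cS(\cF)\otimes\cS(\cG)\bigr)_\nu
\]
has image $\bigoplus_{\la'\geq\la}\cS(\cF)_{\la'}\otimes\cS(\cG)_{\nu-\la'}$---with the construction of the tensor structure on $\sfF^\bX$. Indeed the stratification $\mathsf{mult}^{-1}(\fT_\nu)=\bigsqcup_{\la+\mu=\nu}\fT_{\la,\mu}$, the filtration attached to the closed unions $\fT^{\geq\la}_\nu$, and the identification of the associated graded via Lemma \ref{lem:cohomology-tensor-product-2} are precisely the ingredients used in \cite[\S 6]{mv} (and revisited in \cite[\S 8]{abg}) to equip $\sfF^\bX$ with its tensor structure. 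Since $\cS$ is, by construction, the equivalence through which $\sfF^\bX$ becomes identified with the weight functor $\Rep(G)\to\Rep(T)$ with its standard tensor structure (see \S\ref{ss:Satake}), the topological filtration on $(\cS(\cF)\otimes\cS(\cG))_\nu$ then coincides with the filtration by the partial sums $\bigoplus_{\la'\geq\la}\cS(\cF)_{\la'}\otimes\cS(\cG)_{\nu-\la'}$.

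The only real work lies in this last step, and it is a matter of bookkeeping rather than of a new idea: I would need to match the conventions of the present paper---the cohomological shift by $\la(2\rhov)$, the choice $\fT_\la=\Uv^-(\KK)\cdot\bla$ together with the corestriction $t_\la^!$, and the direction of the order on $\{\la'\mid\la'\leq\nu\}$ that makes $\fT^{\geq\la}_\nu$ closed---with those of \cite{mv}, and in particular verify that the topological filtration is split by the weight-space decomposition (in \emph{loc.\ cit.}\ this splitting is extracted from using the $\Uv^-(\KK)$- and $\Uv(\KK)$-orbit pictures simultaneously, equivalently from Braden's hyperbolic localization). This convention-matching is the step I expect to be the main obstacle.
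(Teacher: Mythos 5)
Your first step (reducing to ordinary cohomology via freeness of all the $\sh$-modules involved and graded Nakayama) is fine, and it is exactly how the paper's proof disposes of the equivariance. The gap is in the second step. The isomorphism $\bigl(\cS(\cF\star\cG)\bigr)_\nu\cong\bigl(\cS(\cF)\o\cS(\cG)\bigr)_\nu$ against which the lemma measures the topological filtration is the tensor structure of the Satake equivalence, and in \cite{mv} that structure is \emph{not} defined by the stratification of $\mathsf{mult}^{-1}(\fT_\nu)$ over a point: it is defined through the fusion picture over a curve, i.e.\ by the global semi-infinite strata $\fT_\nu(X^2)\subset\Gr_{X^2}$, the constancy of the relevant cohomology sheaves over $X^2$, and the direct sum decomposition available only off the diagonal (see the proof of \cite[Proposition 6.4]{mv} and equation (6.25b) there). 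So asserting that the point-wise stratification by the $\fT_{\la,\mu}$ together with Lemma \ref{lem:cohomology-tensor-product-2} are ``precisely the ingredients used in \cite[\S 6]{mv}'' begs the question: the entire content of the lemma is the comparison between the point-wise convolution filtration and this fusion-defined isomorphism, and your plan never says how that comparison is made.

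The paper supplies it by interpolating: inside $\mathsf{mult}^{-1}\bigl(\fT_\nu(X^2)\bigr)$ one defines locally closed ind-subvarieties $\fT_{\la,\mu}(X^2)$ restricting to $\fT_{\la,\mu}$ over the diagonal and to $\fT_\la\times\fT_\mu$ off it; the resulting global filtration is the topological filtration over the diagonal, while off the diagonal it is visibly the partial-sum filtration of the direct sum decomposition, and constancy over $X^2$ (the very mechanism defining the tensor structure) transports the identification to the diagonal. This is a genuine geometric step, not convention bookkeeping, and your proposed substitute (``the splitting is extracted from the $\Uv(\KK)$- and $\Uv^-(\KK)$-orbit pictures simultaneously, equivalently Braden's hyperbolic localization'') does not by itself connect the convolution-diagram filtration to the fusion-defined tensor structure: you would still have to spread the filtration over $X^2$, or else re-prove a compatibility of hyperbolic localization with fusion that is comparable in substance to the argument you are trying to bypass.
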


\begin{proof}[Sketch of proof]
By construction of the isomorphisms in Lemma \ref{lem:equiv-cohomology-first-properties} it is sufficient to prove the analogous claim for ordinary cohomology; in other words one can forget about $A$-equivariance. Now recall the  construction of the tensor structure in \cite[Proposition 6.4]{mv}; in particular let $X$ be a smooth (algebraic) curve, and consider the local analogues of $\Gr$ and the convolution diagram over $X^2$ as in \cite[Equation (5.2)]{mv}. In the proof of \cite[Proposition 6.4]{mv}, the authors define a global counterpart $\fT_\nu(X^2) \subset \Gr_{X^2}$ of $\fT_\nu$. Then one can consider
\[
\mathsf{mult}^{-1} \bigl( \fT_\nu(X^2) \bigr) \subset \Gr_X \widetilde{\times} \Gr_X
\]
in the ``global analogue'' of $\Gv(\KK) \times_{\Gv(\OO)} \Gr$. One can define locally closed ind-subvarieties $\fT_{\la,\mu}(X^2)$ inside this inverse image (when $ \la+\mu=\nu$) which, over points in the diagonal copy of $X$ in $X^2$, coincide with our subvarieties $\fT_{\la,\mu}$ and which, over points outside the diagonal, coincide with $\fT_\la \times \fT_\mu$. Then one has a filtration as above, but this time globally over $X^2$. Over points in the diagonal, this filtration coincides with the one considered above by construction. And over points outside of the diagonal the variety $\fT_\nu(X^2)$ is a disjoint union $\bigsqcup_{\la+\mu=\nu} \fT_\la \times \fT_\mu$, and the filtration is obtained from the decomposition of the appropriate cohomology sheaves as a direct sum as e.g.~in \cite[Equation (6.25b)]{mv}. This implies the claim.
\end{proof}

Using these remarks we are now ready to give a proof of Proposition \ref{prop:convolution}.

\begin{proof}[Proof of Proposition {\rm \ref{prop:convolution}}]
First it is easy to check, by explicit computation, that the isomorphism between the left-hand side and the right-hand side of the equation in Corollary \ref{key_cor} is compatible with the morphisms $\mathsf{Conv}^{\operatorname{geom}}$ and $\mathsf{Conv}^{\operatorname{alg}}$.

Set $\nu:=\la+\mu$, $V:=\cS(\cF)$, $V':=\cS(\cG)$.
To finish the proof we have to prove that the left square in the following diagram commutes, where the isomorphisms are as in Theorem \ref{thm:loop-equivariant-version}:
\[
\xymatrix@C=1.7cm{
\mathsf{H}^\hdot_A(i_{\lambda}^! \mathcal{F} ) \o_{\sh} {}^{(\lambda)} \hspace{-1pt} \mathsf{H}^\hdot_A(i_\mu^! \mathcal{G}) \ar[r]^-{\mathsf{Conv}^{\operatorname{top}}_{\cF,\cG,\la,\mu}} \ar[d]^-{\wr} & \mathsf{H}^\hdot_A(i_{\nu}^! (\mathcal{F} \star \mathcal{G} )) \ar@{^{(}->}[r]^-{(\imath_\nu)_!} \ar[d]^-{\wr} & \mathsf{H}^\hdot_A(\mathfrak{T}_{\nu}, t_{\nu}^! (\mathcal{F} \star \mathcal{G} )) \ar[d]^-{\wr} \\
\bigl( V \otimes \mathbf{M}(\lambda) \bigr)^B \o_{\sh} {}^{(\lambda)} \hspace{-1pt} \bigl( V' \otimes \mathbf{M}(\mu) \bigr)^B \ar[r]^-{\mathsf{Conv}^{\operatorname{alg}}_{V,V',\la,\mu}} & \bigl( V \otimes V' \otimes \mathbf{M}(\nu) \bigr)^B \ar@{^{(}->}[r]^-{\kalg_{V \o V', \nu}} & (V \otimes V')_{\nu} \otimes \sh.
}
\]
Of course it is sufficient to prove that the outer square commutes.

Now recall the filtrations on $\mathsf{H}^\hdot_A(\mathfrak{T}_{\nu}, t_{\nu}^! (\mathcal{F} \star \mathcal{G} ))$ and $(V \otimes V')_{\nu} \otimes \mathrm{S}_\hbar$ considered above. Then each of our morphisms factors through the part of the filtration bigger than $\la$: for the first line this follows from the fact that $(\hat{\bla} \times_{\Gv(\OO)} \bmu) \in \fT_\nu^{\geq \la}$, and for the second line this can be checked by explicit computation. Moreover, the restriction to the image of these morphisms of the projection to $\coH^{\hdot}_A(\fT_\la, t_\la^! \cF) \o_{\sh} {}^{(\la)} \coH^{\hdot}_A(\fT_\mu, t_\mu^! \cG)$, resp.~$(V_\la \o V_\mu) \o \sh$, is injective. Hence it is enough to check that the corresponding diagram:
\[
\xymatrix@C=1.7cm{
\mathsf{H}^\hdot_A(i_{\lambda}^! \mathcal{F} ) \otimes_{\mathrm{S}_\hbar} {}^{(\lambda)} \hspace{-1pt} \mathsf{H}^\hdot_A(i_\mu^! \mathcal{G}) \ar[d]^-{\wr} \ar[r] & \coH^{\hdot}_A(\fT_\la, t_\la^! \cF) \o_{\sh} {}^{(\la)} \coH^{\hdot}_A(\fT_\mu, t_\mu^! \cG) \ar[d]^-{\wr} \\
\bigl( V \otimes \mathbf{M}(\lambda) \bigr)^B \otimes_{\mathrm{S}_\hbar} {}^{(\lambda)} \hspace{-1pt} \bigl( V' \otimes \mathbf{M}(\mu) \bigr)^B  \ar[r] & (V_\la \o V_\mu) \o \sh
}
\]
commutes. However the upper line is induced by $(\imath_\la)_! \o (\imath_\mu)_!$, and the bottom line is induced by $\kalg_{V,\la} \o \kalg_{V',\mu}$, hence this claim is clear.
\end{proof}

\subsection{Reminder on dynamical Weyl groups}
\label{ss:definition-dwg}

Let us fix $V$ in $\Rep(G)$, $\la \in \bX$ and $w \in W$. For any $\mu \in \bX^-$ sufficiently large we consider the morphism of $\C[\hb,\hb^{-1}]$-modules
\[
\dwalg_{V,\la,w,\mu} : \C[\hb,\hb^{-1}] \o V_\la \to \C[\hb,\hb^{-1}] \o V_{w\la}, \quad \dwalg_{V,\la,w,\mu} := \mathsf{E}^{V,\la}_{w\mu} \circ \Psi^{V,\la}_{w,\mu} \circ (\mathsf{E}^{V,\la}_\mu)^{-1}.
\]
This morphism is well defined by Lemma \ref{lem:intertwiners-quantum}(1). We will sometimes extend this morphism to a morphism of $\C(\hb)$-modules $\C(\hb) \o V_\la \to \C(\hb) \o V_{w\la}$ in the obvious way (and denote the extension also by $\dwalg_{V,\la,w,\mu}$).

Recall that for $\mu \in \t^*$ we have defined a morphism $P \mapsto P(\mu)$ in \S\ref{ss:specialization}. We denote similarly the induced morphisms $\sh \o V_\la \to \C[\hb,\hb^{-1}] \o V_\la$ or $\sh \o V_{w\la} \to \C[\hb,\hb^{-1}] \o V_{w\la}$.

\begin{lem}
\label{lem:definition-dwg}
There exists a unique isomorphism of $\C(\hb)$-modules
\[
\dwalg_{V,\la,w} : \qh \o V_\la \simto \qh \o V_{w\la}
\]
such that for any $x \in \qh \o V_\la$ the following property holds: for any $\mu \in \bX^-$ sufficiently large such that $x(\mu)$ is defined, we have
\[
\bigl( \dwalg_{V,\la,w} (x) \bigr)(w\mu) = \dwalg_{V,\la,w,\mu} \bigl( x(\mu) \bigr)
\]
(in particular, the left-hand side is defined).
This morphism induces an isomorphism of $\qh$-modules
\[
\dwalg_{V,\la,w} : \qh \o V_\la \simto \w \qh \o V_{w\la}.
\]
\end{lem}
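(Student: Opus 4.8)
\textbf{Proof plan for Lemma \ref{lem:definition-dwg}.}

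The plan is to build $\dwalg_{V,\la,w}$ by specialization, using the fact that the collection of morphisms $\dwalg_{V,\la,w,\mu}$ (for $\mu \in \bX^-$ sufficiently large) is ``polynomial in $\mu$'' in a suitable sense. First I would observe that for $\mu \in \bX^-$ sufficiently large, both $\mathsf{E}^{V,\la}_\mu$ and $\mathsf{E}^{V,\la}_{w\mu}$ are isomorphisms by Lemma \ref{lem:intertwiners-quantum}(1), and $\Psi^{V,\la}_{w,\mu}$ is well defined and an isomorphism (it is a composition of the isomorphisms from Lemma \ref{lem:intertwiners-quantum}(2), cf.~\eqref{eqn:composition-Psi}); hence $\dwalg_{V,\la,w,\mu}$ is a well-defined isomorphism of $\C[\hb,\hb^{-1}]$-modules. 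The key point is to relate this family to the operator $\Theta^{V,\la}_w$ already constructed in \S\ref{ss:def-Theta}: by the defining property \eqref{eqn:specialization-Theta} of $\Theta^{V,\la}_s$ for a simple reflection, together with \eqref{eqn:composition-Psi} and the computation of $n(w,\la,\mu)$ in the proof of the lemma immediately preceding \S\ref{ss:relation-Phi-Theta}, one has
\[
\Sp_{w\mu} \circ \Theta^{V,\la}_w = n(w,\la,\mu) \cdot \Psi^{V,\la}_{w,\mu} \circ \Sp_\mu
\]
for $\mu \in \bX^-$ sufficiently large, where $n(w,\la,\mu) \in \C[\hb,\hb^{-1}]$ is an explicit product of linear factors in $\mu$.

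Next I would transport $\Theta^{V,\la}_w$ through the isomorphisms of Lemma \ref{lem:fixed-points-morphisms} and the morphism $\kalg_{V,\la}$ of \eqref{eqn:equivariant-cohomology-3}. Recall from \S\ref{ss:Levi-DX} that $\restH^{V,\la}_{G,T} = \kalg_{V,\la}$ after the identification $\bigl(V_{|T} \o \M^T(\la)\bigr)^{T} = V_\la \o \sh$, and that $\Sp_\mu^T \circ \kalg_{V,\la} = \mathsf{E}^{V,\la}_\mu \circ \Sp_\mu^G$ (this is exactly the identity used in the proof of Lemma \ref{lem:restH-injective}). Since $\kalg_{V,\la}$ is injective with image a free $\sh$-submodule of $V_\la \o \sh$ of full rank (by Proposition \ref{prop:freeness} and freeness of $V_\la \o \sh$), inverting $\hb$ and passing to the fraction field $\qh$ makes $\qh \o_{\sh}\kalg_{V,\la}$ an isomorphism $\qh \o_{\sh}\bigl(V \o \M(\la)\bigr)^B \simto \qh \o V_\la$. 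Then I would \emph{define}
\[
\dwalg_{V,\la,w} := \Bigl(\qh \o_{\sh} \kalg_{V,w\la}\Bigr) \circ \Bigl(\qh \o_{\sh} \Theta^{V,\la}_w\Bigr) \circ \Bigl(\qh \o_{\sh}\kalg_{V,\la}\Bigr)^{-1},
\]
using the canonical isomorphism $\Hom_{(\sh,\uh(\g))}(\M(0),V\o\M(\la)) \cong \bigl(V\o\M(\la)\bigr)^B$ of Lemma \ref{lem:morphisms-Verma-v} to regard $\Theta^{V,\la}_w$ as an endomorphism-valued map between these invariant spaces. By construction this is an isomorphism of $\qh$-modules $\qh \o V_\la \simto \w \qh \o V_{w\la}$ (the twist $\w(-)$ appears because $\Theta^{V,\la}_w$ lands in $\w(-)$).

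It then remains to check the characterizing property: for $x \in \qh \o V_\la$ and $\mu \in \bX^-$ sufficiently large with $x(\mu)$ defined, $\bigl(\dwalg_{V,\la,w}(x)\bigr)(w\mu) = \dwalg_{V,\la,w,\mu}\bigl(x(\mu)\bigr)$. This follows by chasing the specialization identities: write $x = \kalg_{V,\la}(y)$ for $y \in \qh\o_{\sh}\bigl(V\o\M(\la)\bigr)^B$; then $x(\mu) = \mathsf{E}^{V,\la}_\mu(\Sp_\mu y)$, while $\dwalg_{V,\la,w}(x)(w\mu) = \mathsf{E}^{V,\la}_{w\mu}\bigl(\Sp_{w\mu}(\Theta^{V,\la}_w y)\bigr) = \mathsf{E}^{V,\la}_{w\mu}\bigl(n(w,\la,\mu)\,\Psi^{V,\la}_{w,\mu}(\Sp_\mu y)\bigr)$ by the displayed identity above. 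The scalar $n(w,\la,\mu)$ is exactly the normalization factor relating $\Theta$ to $\Psi$; comparing with $\dwalg_{V,\la,w,\mu} = \mathsf{E}^{V,\la}_{w\mu}\circ\Psi^{V,\la}_{w,\mu}\circ(\mathsf{E}^{V,\la}_\mu)^{-1}$ shows the two agree \emph{provided} I correctly account for $n(w,\la,\mu)$ — so either the definition of $\dwalg_{V,\la,w}$ must incorporate the scalar $\prod$ in $n$, or (more likely) the normalization in the definition of $\dwalg_{V,\la,w,\mu}$ via $\mathsf{E}$ already absorbs it; I would verify this bookkeeping against the rank-one computations of Appendix \ref{sec:rank1}. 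Uniqueness is then immediate: two $\qh$-linear maps agreeing after specialization at all sufficiently large $\mu \in \bX^-$ are equal, since a rational function in $\t^*$ vanishing on $w(\bX^-)$-translates that are Zariski dense must vanish (the same density argument underlying Lemma \ref{lem:specialization}). The main obstacle I anticipate is precisely the normalization bookkeeping — matching the explicit scalar factors $n(w,\la,\mu)$, the $(-\hb)^{\la(\alv)}$ twists in \eqref{eqn:specialization-Theta}, and the conventions in $\mathsf{E}^{V,\la}_\mu$ — so that the ``pole-free'' operator $\Theta$ and the classical (pole-carrying) dynamical Weyl operator $\dwalg$ are related by exactly the stated identity; everything else is formal manipulation of the specialization maps.
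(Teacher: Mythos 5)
Your plan is structurally the paper's own: you transport $\Theta^{V,\la}_w$ (equivalently $\Omega^{V,\la}_w$) along $\kalg_{V,\la}$ and $\kalg_{V,w\la}$, extended to $\qh$, and then worry about matching the resulting operator to the family $\dwalg_{V,\la,w,\mu}$. You also correctly identify the specialization identity
\[
\Sp_{w\mu} \circ \Theta^{V,\la}_w = n(w,\la,\mu) \cdot \Psi^{V,\la}_{w,\mu} \circ \Sp_\mu
\]
as the crux. But the place you flag as a concern — ``either the definition of $\dwalg_{V,\la,w}$ must incorporate the scalar $\prod$ in $n$, or (more likely) the normalization in $\mathsf{E}^{V,\la}_\mu$ already absorbs it'' — is a genuine gap, and the ``more likely'' guess is wrong. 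The operator you define, call it ${}'\dwalg_{V,\la,w}$, satisfies
\[
\bigl({}'\dwalg_{V,\la,w}(x)\bigr)(w\mu) = n(w,\la,\mu)\cdot \dwalg_{V,\la,w,\mu}\bigl(x(\mu)\bigr),
\]
so it does \emph{not} satisfy the characterizing property of the lemma. The $\mathsf{E}$'s do not eat $n$; one really must divide by an explicit element of $\qh$ whose specialization at $w\mu$ equals $n(w,\la,\mu)$. The paper does exactly this, after first reducing (via the cocycle relations \eqref{eqn:cocycle-Theta} and \eqref{eqn:composition-Psi}) to $w = s_\alpha$ a simple reflection, where the correction factor is the explicit product $\bigl((-\alv)(-\alv+\hb)\cdots(-\alv+(\la(\alv)-1)\hb)\bigr)^{\pm 1}$ depending on the sign of $\la(\alv)$, and the match with $\dwalg_{V,\la,s,\mu}$ then follows directly from \eqref{eqn:specialization-Theta} in Proposition \ref{prop:definition-Theta}. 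Your uniqueness argument (Zariski density of large antidominant translates, as in Lemma \ref{lem:specialization}) is correct and coincides with the paper's.

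In short: your plan is right up to the normalization, but the normalization is not an incidental bookkeeping issue to be verified later — it is the one nontrivial computation, and your candidate formula for $\dwalg_{V,\la,w}$ as stated is off by the factor $n(w,\la,\mu)$ and must be corrected by an explicit scalar in $\qh$. Reducing to simple reflections first makes that scalar manageable.
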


\begin{proof}
This claim is proved in \cite{tv, ev}. For later use, let us explain how it can be deduced from our constructions.
Unicity is clear (see e.g.~\S\ref{ss:specialization}). Let us prove existence.

It is enough to treat the case where $w=s$ is the reflection associated with a simple root $\alpha$ (provided we only require $\mu \in \bX$ to satisfy $\mu(\alv) \leq 0$, not necessarily to be antidominant), see in particular  \eqref{eqn:composition-Psi}. Define the isomorphism of $\C(\hb)$-modules
\[
{}' \dwalg_{V,\la,s} : \qh \o V_\la \simto \qh \o V_{s \la}
\]
to be the composition
\begin{multline*}
\qh \o V_\la \xleftarrow[\sim]{\kalg_{V,\la}} \qh \o_{\sh} \bigl( V \o \M(\la) \bigr)^B \xrightarrow[\sim]{\Omega^{V,\la}_{s}} \qh \o_{\sh} \bigl( V \o \M(s \la) \bigr)^B \\ 
\xrightarrow[\sim]{\kalg_{V,s\la}} \qh \o_{\sh} {}^{s} \bigl( V_{s\la} \o \sh \bigr) \xrightarrow[\sim]{q \o v \o p \mapsto s(q)p \o v} \qh \o V_{s \la}.
\end{multline*}
(The fact that the first and third arrows are invertible was proved in the course of the proof of Theorem \ref{thm:loop-equivariant-version}, see \S\ref{ss:generic-isomorphism}.)
Then for $x \in \qh \o V_\la$ we set
\[
\dwalg_{V,\la,s}(x) = \begin{cases}
\frac{1}{(-\alv ) ( -\alv + \hb ) \cdots (-\alv + (\la(\alv)-1)\hb )} \cdot {}' \dwalg_{V,\la,s} (x) & \text{if } \la(\alv) \geq 0; \\
(-\alv-\hb ) ( -\alv-2\hb ) \cdots (-\alv+\la(\alv)\hb ) \cdot {}' \dwalg_{V,\la,s} (x) & \text{if } \la(\alv) \leq 0
\end{cases}
\]
in $\qh \o V_{s \la}$. It follows from Proposition \ref{prop:definition-Theta} that the morphism $\dwalg_{V,\la,s}$ satisfies our requirements. 
\end{proof}

%

The operators $\dwalg_{V,\la,w}$ form the \emph{dynamical Weyl group} as considered (in the non-asymptotic case) in \cite{tv,ev} and (in the asymptotic case) in \cite{brf}.


\subsection{Transverse slices and semi-infinite orbits}
\label{ss:transverse-slices}

Recall the definition of $\W_\la$, $s_\la$, and $n_\la$ in \S\ref{ss:dwg}. The following result contains in particular Lemma \ref{lem:transverse-slices}. It is essentially proved in \cite{brf}; we explain the details for the reader's convenience.

\begin{lem}
\label{lem:transversal-slice}

Let $\cF$ in $\Perv_{\Gv(\OO)}(\Gr)$ and $\la \in \bX$.

\begin{enumerate}
\item 
There exists a canonical isomorphism of graded ${\sh}$-modules
\[
\coH_{A}^{\hdot}(\W_\la \cap \fT_\la, s_{\la}^! \cF) \ \cong \ \bigl( \cS(\cF) \bigr)_\la \o {\sh} \lan n_\la \ran.
\]
\item 
Under the isomorphism of {\rm (1)} and \eqref{zeta}, the natural morphism
\[
\coH_{A}^{\hdot}(\W_\la \cap \fT_\la, s_{\la}^! \cF) \ \to \ \coH_A^{\hdot}(\fT_\la, t_{\la}^! \cF)
\]
induced by the (closed) inclusion $\fT_{\la} \cap \W_{\la} \hookrightarrow \fT_{\la}$ identifies with multiplication by
\[
\prod_{\genfrac{}{}{0pt}{}{\alpha>0,}{(\la,\alv)<0}} \prod_{j=0}^{- \la(\alv ) -1} (-\alv + j\hbar)
\]
on $\bigl( \cS(\cF) \bigr)_{\la} \o {\sh}$.
\end{enumerate}

\end{lem}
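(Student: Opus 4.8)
Both statements follow from the geometry of the semi-infinite orbit $\fT_\la$ near $\bla$, and the plan is to essentially reproduce the argument of \cite{brf}. Write $k \colon \W_\la \cap \fT_\la \hookrightarrow \fT_\la$ for the closed inclusion, so that $s_\la = t_\la \circ k$ and the morphism in part (2) is the pushforward $k_*$. The two geometric facts I would isolate first are: (a) $\W_\la \cap \fT_\la$ is a finite-dimensional affine space on which $A$ acts linearly with $\bla$ as its unique fixed point; and (b), by the Mirkovi\'c--Vilonen analysis of the restrictions of $\cF$ along semi-infinite orbits (\cite[Theorems 3.4--3.6]{mv}) together with the transversality of $\W_\la$ to the $\Gv(\OO)$-orbit through $\bla$ (recall $\cF$ is $\Gv(\OO)$-equivariant, hence locally constant along orbits), the ordinary cohomology $\coH^\hdot(\W_\la \cap \fT_\la, s_\la^! \cF)$ is concentrated in the single degree $n_\la$ and is canonically isomorphic there to $\bigl(\cS(\cF)\bigr)_\la$.

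Granting (a) and (b), part (1) follows by the same mechanism as Lemma \ref{lem:equiv-cohomology-first-properties}: the Leray--Serre spectral sequence computing $\coH_A^\hdot(\W_\la \cap \fT_\la, s_\la^! \cF)$ out of $\coH_A^\hdot(\pt) = \sh$ and $\coH^\hdot(\W_\la \cap \fT_\la, s_\la^! \cF)$ degenerates at $E_2$ for parity reasons, so $\coH_A^\hdot(\W_\la \cap \fT_\la, s_\la^! \cF)$ is free over $\sh$, and cup product with the canonical class sitting in the bottom degree produces the canonical isomorphism with $\bigl(\cS(\cF)\bigr)_\la \o \sh \lan n_\la \ran$.

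For part (2), $k_*$ is a morphism of free $\sh$-modules of equal rank $\dim \bigl(\cS(\cF)\bigr)_\la$, and by the localization theorem in equivariant cohomology (using that $\bla$ is the unique $A$-fixed point of both $\W_\la \cap \fT_\la$ and $\fT_\la$) it becomes an isomorphism after inverting the elements $\alv + n\hbar$ ($\alpha \in R^+$, $n \in \Z$); in particular $k_*$ is injective, so it suffices to identify its image up to units. Reducing to a suitable $A$-stable finite-dimensional approximation of $\fT_\la$ inside $\overline{\Gr^\nu}$ (equivalently, to the case $\cF = \IC_\nu$ and the corresponding Mirkovi\'c--Vilonen cycle), on which $A$ acts linearly, the pushforward along the linear subspace $\W_\la \cap \fT_\la$ is computed at the fixed point by the Euler-class formula recalled in \S\ref{ss:equiv-cohomology}, i.e.\ multiplication by the product of the $A$-weights of the normal directions. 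An explicit computation with the $\Uv^-(\KK)$- and loop-rotation actions on $\fT_\la$ shows these normal weights are precisely $\{-\alv + j\hbar \mid \alpha > 0,\ \la(\alv) < 0,\ 0 \le j \le -\la(\alv) - 1\}$; their product has total degree $n_\la - \la(2\rhov)$, exactly the shift needed for $k_*$ to be degree-preserving, consistently with \eqref{zeta} and part (1). The one genuinely delicate point — and the reason this is essentially a recollection of \cite{brf} — is the infinite-dimensional bookkeeping around $\fT_\la$: singling out the relevant finite-dimensional $A$-stable subvariety, verifying the shape of the restriction of $\cF$ to it, and pinning down the signs of the normal $A$-weights for both $\Tv$ and the loop-rotation $\C^\times$; once this is set up, the cohomological manipulations are routine.
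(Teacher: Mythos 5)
Your proposal takes essentially the same route as the paper, which (as you say) is really the argument of \cite{brf}: reduce to the product decomposition of an $A$-stable neighborhood of $\bla$ into an orbit direction and a slice direction, invoke the Mirkovi\'c--Vilonen parity/degree result for $t_\la^!\cF$ together with $\Gv(\OO)$-equivariance of $\cF$ to get the single-degree concentration of $\coH^\hdot(\W_\la \cap \fT_\la, s_\la^!\cF)$, and then read off the Euler-class factor from the $A$-weights of $\Gr^\la \cap \fT_\la = \Uv^-(\OO)\cdot\bla$ given by \cite[Eq.~(3.6)]{mv}. The only differences are cosmetic: you establish freeness via the Leray--Serre spectral sequence rather than the attractive-fixed-point result of \cite{fw}, and your step (b) invokes ``transversality of $\W_\la$'' where the paper makes this precise by exhibiting the auxiliary affine space $V\subset\Gv(\OO)$ and the resulting splitting $O\cap\fT_\la\cong(V\cap\fT_\la)\times(\W_\la\cap\fT_\la)$; also $\W_\la\cap\fT_\la$ is an infinite-dimensional ind-affine space, not literally a finite-dimensional one, though that doesn't affect the cohomological computation for a fixed $\cF$ of finite support.
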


\begin{proof}
There exists a closed subvariety $V \subset \Gv(\OO)$ isomorphic to a (finite dimensional) affine space, which is stable under conjugation by $A$, and which satisfies the following conditions:
\begin{itemize}
\item the morphism $V \to \Gr^\la$, $u \mapsto u \cdot \bla$, is an open embedding;
\item the morphism $V \times \W_\la \to \Gr$ induced by the $\Gv(\OO)$-action on $\Gr$ is an open embedding.
\end{itemize}
Indeed, by $N_\Gv(\Tv)$-equivariance it is enough to treat the case $\la$ is dominant. And in this case one can e.g.~take as $V$ the subvariety $J^\la$ considered in \cite[Lemme 2.2]{np}. We will identify $V \times \W_\la$ with its image in $\Gr$, and denote it by $O$.

We have canonical isomorphisms
\begin{align*}
\coH^{\hdot}_{A}(\fT_\la, t_{\la}^! \cF) \ \cong \ \coH^{\hdot}_{A}(\imath_\la^* t_{\la}^! \cF) \ \cong \ \coH^{\hdot}_{A}((\{\bla\} \hookrightarrow O \cap \fT_{\la})^* (O \cap \fT_{\la} \hookrightarrow \Gr)^! \cF).
\end{align*}
Here the first isomorphism follows from \cite[Proposition 2.3]{fw} (since $\bla$ is an attractive fixed point of $A$ on $\fT_\la$), and the second one from the fact that $O$ is open in $\Gr$.

As $\cF$ is $\Gv(\OO)$-equivariant, there exists a canonical isomorphism
\begin{equation}
\label{eqn:restriction-O}
(O \hookrightarrow \Gr)^* \cF \ \cong \ \underline{\bbc}_V [2\la(2\rhov)] \boxtimes \cF_{(\la)} \quad \text{where} \quad \cF_{(\la)} := (\W_{\la} \hookrightarrow \Gr)^* \cF [-2\la(2\rhov)].
\end{equation}
As $V$ is smooth of dimension $\dim(\Gr^\la)=\la(2\rhov)$, we also have canonically
\[
\cF_{(\la)} \cong (\W_{\la} \hookrightarrow \Gr)^! \cF. 
\]

By definition we have
\[
\fT_{\la} \ = \ \{x \in \Gr \mid \lim_{s \to \infty} 2\rho(s) \cdot x = \bla \}.
\]
(Here $\rho$ is considered as a cocharacter of $\Tv$.)
If follows that we have an isomorphism
\beq{eqn:O-T}
O \cap \fT_{\la} \ \cong \ (V \cap \fT_{\la}) \times (\W_{\la} \cap \fT_{\la}).
\eeq
The variety $V \cap \fT_{\la}$ is open in $\Gr^{\la} \cap \fT_{\la}$, and by \cite[Equation (3.6)]{mv} we have
\beq{eqn:Gr-T}
\Gr^{\la} \cap \fT_{\la} \ = \ \Uv^-(\fO) \cdot \bla \ \cong \ \prod_{\genfrac{}{}{0pt}{}{\alpha>0,}{(\la,\alv)<0}} \prod_{k=0}^{-(\la,\alv)-1} \bbc_A(-\alv+k\hbar)
\eeq
as $A$-varieties. In particular, $V \cap \fT_{\la}$ is smooth.

Using \eqref{eqn:restriction-O} we obtain (under the identification \eqref{eqn:O-T}) a canonical isomorphism
\[
(O \cap \fT_{\la} \hookrightarrow \Gr)^! \cF \ \cong \ \underline{\bbc}_{V \cap \fT_{\la}}[2\dim(V \cap \fT_{\la})] \boxtimes (\W_{\la} \cap \fT_{\la} \hookrightarrow \W_{\la})^! \cF_{(\la)}.
\]
We deduce a canonical isomorphism
\begin{equation*}
\coH^{\hdot}_A(\fT_\la, t_{\la}^! \cF) \ \cong \ \coH^{\hdot}_{A}((\{\bla\} \hookrightarrow \W_{\la} \cap \fT_{\la})^* s_{\la}^! \cF) \langle -2 \dim(V \cap \fT_\la) \rangle.
\end{equation*}
Again by \cite[Proposition 2.3]{fw}, we have a canonical isomorphism
\[
\coH^{\hdot}_{A}(\W_\la \cap \fT_\la, s_{\la}^! \cF) \ \cong \ \coH^{\hdot}_{A}((\{\bla\} \hookrightarrow \W_{\la} \cap \fT_{\la})^* s_{\la}^! \cF).
\]
Using the first isomorphism in \eqref{zeta}, we obtain finally an isomorphism
\[
\coH^{\hdot}_{A}(s_{\la}^! \cF) \cong \bigl( \cS(\cF) \bigr)_\la \o \sh \langle \la(2\rhov)+2\dim(V \cap \fT_\la) \rangle.
\]
Now the dimension $\dim(V \cap \fT_\la)$ can be computed using \eqref{eqn:Gr-T}, and (1) follows.

To prove (2) we have to understand the natural morphism
\[
\coH^A_\hdot(\{\bla\}) \to \coH^A_\hdot(V \cap \fT_\la) \langle -2\dim(V \cap \fT_\la) \rangle.
\]
However this morphism factorizes as the following composition:
\[
\coH^A_\hdot(\{\bla\}) \to \coH^A_\hdot(\Gr^\la \cap \fT_\la) \langle -2\dim(\Gr^\la \cap \fT_\la) \rangle \simto \coH^A_\hdot(V \cap \fT_\la) \langle -2\dim(V \cap \fT_\la) \rangle,
\]
where the first morphism is induced by the inclusion $\{\bla\} \hookrightarrow \Gr^\la \cap \fT_\la$, and the second morphism is given by restriction to the open subvariety $V \cap \fT_\la$. Now the first morphism can be computed using \eqref{eqn:Gr-T} and the reminder in \S\ref{ss:equiv-cohomology}, and the result follows.
\end{proof}

\subsection{Geometric realization of dynamical Weyl groups}
\label{ss:dwg-proof}

Now we are in a position to prove Proposition \ref{prop:dwg}. 

\begin{proof}[Proof of Proposition {\rm \ref{prop:dwg}}]
Fix $\cF$ in $\Perv_{\Gv(\OO)}(\Gr)$, and set $V:=\cS(\cF)$. Then it is enough to prove that for any simple root $\alpha$ and any $\la \in \bX$ such that $\la(\alv) \geq 0$ the following diagram commutes, where the vertical isomorphisms are induced by those of Lemma \ref{lem:transversal-slice}(1) and where $s:=s_\alpha$:
\[
\xymatrix@C=2.5cm{
\qh \o_{\sh} \coH^{\hdot}_A(\W_\la \cap \fT_\la, s_\la^! \cF) \ar[r]^-{\dwgeom_{\cF,\la,s}} \ar[d]_-{\wr} & \qh \o_{\sh} \coH^{\hdot}_A(\W_{s\la} \cap \fT_{s\la}, s_{s \la}^! \cF) \ar[d]^-{\wr} \\
\qh \o V_\la \ar[r]^-{\dwalg_{V,\la,s}} & \qh \o V_{s \la}
}
\]

Consider the isomorphism of $\C(\hb)$-modules
\[
{}' \dwgeom_{\cF,\la,s} : \qh \o_{\sh} \coH^{\hdot}_A(\fT_\la, t_\la^! \cF) \simto \qh \o_{\sh} \coH^{\hdot}_A(\fT_{s\la}, t_{s \la}^! \cF)
\]
defined by the composition
\begin{multline*}
\qh \o \coH^{\hdot}_A(\fT_\la, t_\la^! \cF) \xleftarrow[\sim]{(\imath_\la)_!} \qh \o \coH_A^{\hdot}(i_\la^! \cF) \xrightarrow[\sim]{\Xi^{V,\la}_{s}} \qh \o {}^s \hspace{-1pt} \coH_A^{\hdot}(i_{s \la}^! \cF) \\ 
\xrightarrow[\sim]{(\imath_{s\la})_!} \qh \o_{\sh} {}^s \hspace{-1pt} \coH^{\hdot}_A(\fT_{s\la}, t_{s \la}^! \cF) \xrightarrow[\sim]{s \o 1} \qh \o_{\sh} \coH^{\hdot}_A(\fT_{s\la}, t_{s \la}^! \cF).
\end{multline*}
(Here the first and third arrows are invertible by the localization theorem in equivariant cohomology.)
Using the notation of \S\ref{ss:definition-dwg},
it follows from Theorems \ref{thm:loop-equivariant-version} and \ref{thm:W-symmetry} that the following diagram commutes:
\[
\xymatrix@C=2.5cm{
\qh \o_{\sh} \coH^{\hdot}_A(\fT_\la, t_\la^! \cF) \ar[r]^-{{}' \dwgeom_{\cF,\la,s}} \ar[d]_-{\wr}^-{\eqref{zeta}} & \qh \o_{\sh} \coH^{\hdot}_A(\fT_{s\la}, t_{s \la}^! \cF) \ar[d]_-{\eqref{zeta}}^-{\wr} \\
\qh \o V_\la \ar[r]^-{{}' \dwalg_{V,\la,s}} & \qh \o V_{s \la}
}
\]

As explained in the proof of Lemma \ref{lem:definition-dwg}, for any $x \in \qh \o V_\la$ we have
\[
{}' \dwalg_{V,\la,s}(x) =
\bigl(-\alv \bigr) \bigl( -\alv + \hb \bigr) \cdots \bigl(-\alv + (\la(\alv)-1)\hb \bigr) \cdot \dwalg_{V,\la,s} (x)
\]
in $\qh \o V_{s\la}$.
On the other hand, it follows from Lemma \ref{lem:transversal-slice}(2) that if we identify the $\qh$-modules $\qh \o_{\sh} \coH^{\hdot}_A(\fT_\la, t_\la^! \cF)$ and $\qh \o_{\sh} \coH^{\hdot}_A(\W_\la \cap \fT_\la, s_\la^! \cF)$ with $\qh \o V_\la$, and $\qh \o_{\sh} \coH^{\hdot}_A(\fT_{s\la}, t_{s\la}^! \cF)$ and $\qh \o_{\sh} \coH^{\hdot}_A(\W_{s\la} \cap \fT_{s\la}, s_{s\la}^! \cF)$ with $\qh \o V_{s\la}$ by the isomorphisms of Lemma \ref{lem:equiv-cohomology-first-properties}(2) and Lemma \ref{lem:transversal-slice}(1), we have for any $x \in \qh \o V_\la$
\[
{}' \dwgeom_{V,\la,s}(x) =
\bigl(-\alv \bigr) \bigl( -\alv + \hb \bigr) \cdots \bigl(-\alv + (\la(\alv)-1)\hb \bigr) \cdot \dwgeom_{V,\la,s} (x)
\]
in $\qh \o V_{s\la}$. The proposition follows.
\end{proof}

\subsection{Brylinski--Kostant filtration}
\label{ss:BK-filtration}


We have a natural isomorphism of algebras $\sym(\g/\u) = \C[(\g/\u)^*]$ so that for any $\varphi \in \t^*$ there is a natural surjective algebra morphism
\beq{eqn:restriction-phi}
\sym(\g/\u) \to \C[\varphi + (\g/\b)^*]
\eeq
associated with the inclusion $\varphi + (\g/\b)^* \hookrightarrow (\g/\u)^*$. (Here, as usual we consider $\varphi$ as a linear form on $\g$ trivial on $\u \oplus \u^-$.) 

\begin{lem}
\label{lem:specialization-algebra}

Let $V$ in $\Rep(G)$ and $\la \in \bX$. For any $\varphi \in \t^*_{\rs}$, the morphism
\[
\bbc_{\varphi} \o_{\sym(\t)} \bigl( V \o \sym(\g/\u) \o \bbc_{-\lambda} \bigr)^B \to \bigl( V \o \bbc[\varphi + (\g/\b)^*] \o \bbc_{-\lambda} \bigr)^B
\]
induced by \eqref{eqn:restriction-phi}
is an isomorphism.

\end{lem}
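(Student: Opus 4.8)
The statement is a base-change/flatness claim: restricting $B$-invariants of $V\o\sym(\g/\u)\o\C_{-\la}$ along the closed embedding $\varphi+(\g/\b)^*\hookrightarrow(\g/\u)^*$ commutes with taking $B$-invariants, provided $\varphi$ is regular semisimple. The natural route is to reduce to the ``generic'' geometric picture already set up in \S\ref{ss:generic-isomorphism} and its classical precursors, in particular isomorphism \eqref{eqn:classical-case-generic}. The key geometric input is that over $\t^*_{\rs}$ the map $B/T\times\t^*_{\rs}\simto(\g/\u)^*_{\rs}$ of \eqref{eqn:classical-case-generic-geometry} is a $B$-equivariant isomorphism, so $(\g/\u)^*$ is, generically over $\t^*$, a trivial $B/T$-bundle; the fiber over $\varphi\in\t^*_{\rs}$ of $(\g/\u)^*\to\t^*$ is the single $B$-orbit $B\cdot\varphi\cong B/T$, which is exactly $\varphi+(\g/\b)^*$ (the latter being $(\g/\b)^*$ viewed as an affine space through $\varphi$, and the orbit map $B/T\to\varphi+(\g/\b)^*$ being an isomorphism because $\varphi$ is regular).

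\textbf{Steps.} First I would record that, since $\varphi\in\t^*_{\rs}$, the morphism \eqref{eqn:restriction-phi} factors as $\sym(\g/\u)\to\C[\t^*_{\rs}]\o_{\sym(\t)}\sym(\g/\u)\to\C_\varphi\o_{\sym(\t)}\bigl(\C[\t^*_{\rs}]\o_{\sym(\t)}\sym(\g/\u)\bigr)=\C[\varphi+(\g/\b)^*]$, i.e. it is the specialization at $\varphi$ of the localization at $\t^*_{\rs}$. Second, tensor the whole situation with $V$ and take $B$-invariants; since $\sym(\g/\u)$ is free over $\sym(\t)$ (it is $\C[(\g/\u)^*]$, polynomial over $\C[\t^*]$) and $V$ is finite-dimensional, and since taking $B$-invariants is exact on the category of $B$-modules admitting a good filtration — or more simply, since by \eqref{eqn:classical-case-generic} the localized module $\C[\t^*_{\rs}]\o_{\sym(\t)}\bigl(V\o\sym(\g/\u)\o\C_{-\la}\bigr)^B\cong\C[\t^*_{\rs}]\o V_\la$ is \emph{free} over $\C[\t^*_{\rs}]$ — specialization at $\varphi$ commutes with $B$-invariants on the localized side. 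Concretely:
\begin{align*}
\C_\varphi\o_{\sym(\t)}\bigl(V\o\sym(\g/\u)\o\C_{-\la}\bigr)^B
&\cong \C_\varphi\o_{\C[\t^*_{\rs}]}\Bigl(\C[\t^*_{\rs}]\o_{\sym(\t)}\bigl(V\o\sym(\g/\u)\o\C_{-\la}\bigr)^B\Bigr)\\
&\cong \C_\varphi\o_{\C[\t^*_{\rs}]}\bigl(V\o\C[\t^*_{\rs}]\o\Ind_T^B(-\la)\bigr)^B\\
&\cong \bigl(V\o\C[B\cdot\varphi]\o\C_{-\la}\bigr)^B,
\end{align*}
where the second line is \eqref{eqn:classical-case-generic} (tensored with $V$ and invariants taken, using that $\bigl(V\o\C[\t^*_{\rs}]\o\Ind_T^B(-\la)\bigr)^B\cong\C[\t^*_{\rs}]\o V_\la$ is free so specialization is harmless), and the last line uses that $\C[\t^*_{\rs}]\o\Ind_T^B(-\la)$ specialized at $\varphi$ is $\C[B/T]\o\C_{-\la}\cong\C[B\cdot\varphi]\o\C_{-\la}$. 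Third, identify $B\cdot\varphi$ with $\varphi+(\g/\b)^*$ via the orbit map, which is an isomorphism of $B$-varieties precisely because $\varphi$ regular semisimple has $B$-stabilizer $T$ of the correct dimension (so $\dim B\cdot\varphi=\dim B/T=\dim(\g/\b)^*$ and the map is a bijective morphism between smooth affine varieties, hence an isomorphism). This gives the target $\bigl(V\o\C[\varphi+(\g/\b)^*]\o\C_{-\la}\bigr)^B$, and one checks the composite isomorphism is the map induced by \eqref{eqn:restriction-phi}.

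\textbf{Main obstacle.} The only real subtlety is the commutation of specialization at $\varphi$ with the functor $(-)^B$: a priori $B$-invariants need not be exact, so $\C_\varphi\o_{\sym(\t)}M^B\to(\C_\varphi\o_{\sym(\t)}M)^B$ need not be an isomorphism for a general $B\o\sym(\t)$-module $M$. I expect to handle this exactly as in \S\ref{ss:generic-isomorphism}: after localizing to $\t^*_{\rs}$, the module becomes free over $\C[\t^*_{\rs}]$ (this is where \eqref{eqn:classical-case-generic} and the tensor identity are used), and specialization of a free module at a point is of course exact and commutes with everything in sight. So the plan is really to first localize, then specialize, never specializing before the module is known to be free. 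A secondary point to verify carefully is that $\varphi+(\g/\b)^*$, as it appears in \eqref{eqn:restriction-phi}, literally coincides with the scheme-theoretic fiber of $(\g/\u)^*\to\t^*$ over $\varphi$ and with the orbit $B\cdot\varphi$; this is immediate from the description $(\g/\u)^*\cong\{(\xi\in\g^*)\mid \xi|_\u=0\}$ with the map to $\t^*=(\b/\u)^*$ being restriction, together with regularity of $\varphi$, but it should be stated explicitly. Everything else is bookkeeping with Frobenius reciprocity and the tensor identity.
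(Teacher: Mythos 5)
Your proof is correct and follows essentially the same route as the paper: localize at $\t^*_{\rs}$ (which commutes with $(-)^B$ by flatness), apply the generic trivialization \eqref{eqn:classical-case-generic} together with the tensor identity and Frobenius reciprocity, specialize the resulting free module at $\varphi$, and identify $\varphi+(\g/\b)^* = B\cdot\varphi \cong B/T$. The paper's proof is exactly this chain of isomorphisms, so there is nothing substantive to add.
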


\begin{proof}
We have
\begin{multline*}
\bbc_{\varphi} \o_{\sym(\t)} \bigl( V \o \sym(\g/\u) \o \bbc_{-\lambda} \bigr)^B \ \cong \ \bbc_{\varphi} \o_{\C[\t^*_{\rs}]} \C[\t^*_{\rs}] \o_{\sym(\t)} \bigl( V \o \sym(\g/\u) \o \bbc_{-\lambda} \bigr)^B \\
\cong \ \bbc_{\varphi} \o_{\C[\t^*_{\rs}]} \Bigl( V \o \bigl( \C[\t^*_{\rs}] \o_{\sym(\t)} \sym(\g/\u) \o \bbc_{-\lambda} \bigr) \Bigr)^B \ \overset{\eqref{eqn:classical-case-generic}}{\cong} \bbc_{\varphi} \o_{\C[\t^*_{\rs}]} \bigl( V \o \C[\t^*_{\rs}] \o \Ind_T^B(-\la) \bigr)^B \\ 
\cong \ \bigl( V \o \C_{-\la} \o \C[B/T] \bigr)^B.
\end{multline*}
On the other hand we have $\varphi + (\fg/\fb)^* = B \cdot \varphi \cong B/T$ as a $B$-variety, hence there is a natural isomorphism of $B$-modules $\bbc[\varphi + (\g/\b)^*] \cong \C[B/T]$. This implies our claim.
\end{proof}

Let us now fix $\la \in \bX$ and $\cF$ in $\Perv_{\Gv(\OO)}(\Gr)$. To simply notation we set $V:=\cS(\cF)$.

Recall that for $\varphi \in \t^*$ we have defined the filtered vector space
$\coH_\varphi(i_\la^! \cF)$ in \S\ref{ss:dwg}. On the other hand, the algebra $\bbc[\varphi + (\g/\b)^*]$ is also naturally filtered: the filtration is defined so that for any $\psi \in \varphi + (\g/\b)^*$, the algebra isomorphism $\C[\varphi+(\g/\b)^*] \cong \C[(\g/\b)^*]$ induced by the isomorphism 
\[
(\g/\b)^* \simto \varphi + (\g/\b)^*, \qquad f \mapsto \psi+f
\]
is an isomorphisms of \emph{filtered} algebras, where the filtration on $\C[(\g/\b)^*]$ is the one induced by the grading such that the vectors in $\g/\b \subset \C[(\g/\b)^*]$ are in degree $2$. Hence we have an induced filtration on the vector space
$
\bigl( V \o \bbc[\varphi + (\fg/\fb)^*] \otimes \bbc_{-\lambda} \bigr)^B$.

Combining Theorem \ref{thm:equiv-coh-classical} and Lemma \ref{lem:specialization-algebra}, one obtains the following.

\begin{cor}
\label{cor:specialized-cohomology}

For any $\varphi \in \ft^*_{\mathrm{rs}}$, there exists a canonical isomorphism
\[
\coH_{\varphi}(i_{\lambda}^! \cF) \ \cong \ \bigl( V \o \bbc[\varphi + (\fg/\fb)^*] \otimes \bbc_{-\lambda} \bigr)^B.
\]
This isomorphism is an isomorphism of \emph{filtered} vector spaces, where the filtration on the right-hand side is shifted by $\la(2\rhov)$, i.e.~for any $j \in \Z$ it restricts to an isomorphism
\[
\mathsf{F}_j \Bigl( \coH_{\varphi}(i_{\lambda}^! \cF) \Bigr) \ \cong \ \mathsf{F}_{j-\la(2\rhov)} \Bigl( \bigl( V \o \bbc[\varphi + (\fg/\fb)^*] \otimes \bbc_{-\lambda} \bigr)^B \Bigr).
\]

\end{cor}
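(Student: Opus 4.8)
The plan is to deduce Corollary \ref{cor:specialized-cohomology} from Theorem \ref{thm:equiv-coh-classical} together with Lemma \ref{lem:specialization-algebra}, keeping careful track of all gradings and filtrations. Recall that Theorem \ref{thm:equiv-coh-classical} provides a canonical isomorphism of graded $\sym(\t)$-modules
\[
\overline{\zeta}_{\cF,\la} : \bigl(V \otimes \sym(\g/\u) \o \C_{-\la} \bigr)^B \langle \la(2\rhov)\rangle \simto \coH^{\hdot}_{\Tv}(i_\la^! \cF),
\]
where $V = \cS(\cF)$. First I would apply the functor $\C_\varphi \otimes_{\sym(\t)} (-)$ to this isomorphism. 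By the very definition of $\coH_\varphi(i_\la^! \cF)$ in \S\ref{ss:dwg}, the right-hand side becomes $\coH_\varphi(i_\la^! \cF)$ (as a filtered vector space, the filtration being the image of the grading-induced filtration on $\coH^{\hdot}_{\Tv}(i_\la^! \cF)$, with the $\langle \la(2\rhov)\rangle$-shift accounting for the index shift in the statement). On the left-hand side, since $\varphi \in \t^*_{\rs}$, Lemma \ref{lem:specialization-algebra} identifies $\C_\varphi \otimes_{\sym(\t)} \bigl(V \otimes \sym(\g/\u) \o \C_{-\la}\bigr)^B$ with $\bigl(V \o \C[\varphi + (\g/\b)^*] \o \C_{-\la}\bigr)^B$. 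Composing, one obtains the desired isomorphism of vector spaces; the shift by $\la(2\rhov)$ is exactly the statement about filtration indices.

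The remaining (and only somewhat delicate) point is to check that this identification is compatible with filtrations on the nose, i.e.\ that the isomorphism of Lemma \ref{lem:specialization-algebra} is filtered when the left-hand side carries the quotient filtration coming from the order/PBW-type filtration on $\sym(\g/\u)$ and the right-hand side carries the filtration described in the statement (induced, via a choice of $\psi \in \varphi+(\g/\b)^*$ and the linear isomorphism $(\g/\b)^* \simto \varphi + (\g/\b)^*$, by the grading on $\C[(\g/\b)^*]$ putting $\g/\b$ in degree $2$). For this I would unwind the proof of Lemma \ref{lem:specialization-algebra}: the morphism \eqref{eqn:restriction-phi} $\sym(\g/\u) \to \C[\varphi + (\g/\b)^*]$ is the comorphism of the closed embedding $\varphi + (\g/\b)^* \hookrightarrow (\g/\u)^*$, hence is a filtered algebra morphism for the natural filtrations (degree $2$ on $\g/\u$ on the source, and the shifted-origin filtration on the target), and it is surjective. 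Passing to $B$-invariants of a tensor with $V$ (placed in filtration degree $0$) preserves surjectivity of the associated graded maps in each filtration degree because we are over a field of characteristic zero and $B$-invariants is exact on rational $B$-modules. Thus the induced map on associated graded spaces is surjective; a dimension count (both sides have the same total dimension by Lemma \ref{lem:specialization-algebra}, and the filtrations are exhaustive and bounded below in each graded piece once we fix a weight of $V$) forces it to be an isomorphism in each filtration degree, so the isomorphism of Lemma \ref{lem:specialization-algebra} is strictly filtered.

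Finally, I would note that the isomorphism $\overline{\zeta}_{\cF,\la}$ is an isomorphism of \emph{graded} $\sym(\t)$-modules, so applying $\C_\varphi \otimes_{\sym(\t)}(-)$ and equipping both sides with the filtrations induced from the respective gradings yields a strictly filtered isomorphism; combined with the previous paragraph this gives the filtered isomorphism claimed, with the index shift by $\la(2\rhov)$ read off from the homological shift $\langle \la(2\rhov)\rangle$ in Theorem \ref{thm:equiv-coh-classical}. The main obstacle is purely bookkeeping: one must be scrupulous that the ``filtration induced by a grading'' on $\coH^{\hdot}_{\Tv}(i_\la^!\cF)$ used to define $\coH_\varphi(i_\la^!\cF)$, the quotient filtration on $\C_\varphi \otimes_{\sym(\t)}(V\o\sym(\g/\u)\o\C_{-\la})^B$, and the intrinsic filtration on $(V\o\C[\varphi+(\g/\b)^*]\o\C_{-\la})^B$ defined via a shifted origin all line up; once the strict filteredness of Lemma \ref{lem:specialization-algebra} is established as above, there is no further difficulty.
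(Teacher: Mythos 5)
Your overall route coincides with the paper's: the proof given there is simply the one-line \emph{``Combining Theorem \ref{thm:equiv-coh-classical} and Lemma \ref{lem:specialization-algebra}, one obtains the following,''} i.e.\ apply $\C_\varphi \otimes_{\sym(\t)}(-)$ to $\overline{\zeta}_{\cF,\la}$ and identify the left-hand side via Lemma \ref{lem:specialization-algebra}. So there is no divergence in strategy.

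Where you run into trouble is in the portion where you try to justify strict filteredness of the isomorphism of Lemma \ref{lem:specialization-algebra}. You assert that ``$B$-invariants is exact on rational $B$-modules'' because we are in characteristic zero. This is false: $B$ is not reductive (it has the nontrivial unipotent radical $U$), and taking $B$-invariants is only left exact. Concretely $H^1(B,-) = H^1(U,-)^T$ does not vanish in general --- for instance, for $G$ of type $A_1$, $H^1(B,\C_\alpha)\cong\C$. Since your argument for surjectivity of the associated graded map rests entirely on applying $(-)^B$ to a short exact sequence and claiming the resulting sequence stays exact, this step collapses, and the subsequent dimension count has nothing to count.

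To repair it one needs a different input. A plausible line is to use the freeness of $\bigl(V\otimes\sym(\g/\u)\otimes\C_{-\la}\bigr)^B$ as a \emph{graded} $\sym(\t)$-module (Lemma \ref{lem:hbar=0} plus Proposition \ref{prop:freeness}): after choosing a graded basis, the filtration on the specialization $\C_\varphi\otimes_{\sym(\t)}M$ has graded dimension equal to that of $\C_0\otimes_{\sym(\t)}M$, and one then compares this with the associated graded of $N=(V\otimes\C[\varphi+(\g/\b)^*]\otimes\C_{-\la})^B$ via the one-parameter degeneration $s\mapsto s\varphi$ of the affine slice $\varphi+(\g/\b)^*$ to the linear subspace $(\g/\b)^*$ (equivalently, the Rees algebra of $\C[\varphi+(\g/\b)^*]$ is $\C[s]\otimes_{\sym(\t)}\sym(\g/\u)$ with $t\mapsto\varphi(t)s$). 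Either way, the key point is to get the associated graded comparison from freeness over $\sym(\t)$ or a Rees-type flatness argument, not from exactness of $(-)^B$ --- the latter simply does not hold.
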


Now we are in a position to give a proof of Proposition \ref{prop:BK}. In fact, this proposition is a consequence of Corollary \ref{cor:specialized-cohomology} and the following result (which is essentially proved in \cite{bry}; we reproduce the proof for the reader's convenience).

\begin{prop}
Let $e \in \u$ be a sum of non-zero simple root vectors. If $\varphi \in \t^* \smallsetminus \{0\}$ satisfies $(\mathrm{ad}^* e)(\varphi)=0$, then evaluation at $\varphi$ induces an isomorphism
\beq{eqn:evaluation-phi}
\bigl( V \o \bbc[\varphi + (\fg/\fb)^*] \otimes \bbc_{-\lambda} \bigr)^B \simto V_\la.
\eeq
For any $j \in \Z$, this isomorphism restricts to an isomorphism
\[
\mathsf{F}_{2j} \Bigl( \bigl( V \o \bbc[\varphi + (\fg/\fb)^*] \otimes \bbc_{-\lambda} \bigr)^B \Bigr) = \mathsf{F}_{2j+1} \Bigl( \bigl( V \o \bbc[\varphi + (\fg/\fb)^*] \otimes \bbc_{-\lambda} \bigr)^B \Bigr) \simto \FBK_j (V_\la).
\]
\end{prop}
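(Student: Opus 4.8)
The plan is: Step~1, deduce the isomorphism \eqref{eqn:evaluation-phi} from Frobenius reciprocity exactly as in the proof of Lemma~\ref{lem:specialization-algebra}; Step~2, prove the filtration inclusion $\subseteq$ by restricting sections to a distinguished affine line through $\varphi$; Step~3, prove the opposite inclusion by a dimension count resting on cohomology vanishing on the Springer resolution.

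For Step~1: under the standing hypothesis ($\varphi\neq 0$ and $(\mathrm{ad}^*e)^2(\varphi)=0$) the covector $\varphi$ is regular (as recalled in \S\ref{ss:dwg}), hence regular semisimple since it lies in $\ft^*$, so $\varphi\in\ft^*_{\rs}$. Writing $Y:=\varphi+(\fg/\fb)^*$, the proof of Lemma~\ref{lem:specialization-algebra} shows that the coadjoint $B$--orbit of $\varphi$ is all of $Y$ and identifies $Y$ with $B/T$ as a $B$--variety, the base point corresponding to $\varphi$; thus $\bbc[Y]\cong\Ind_T^B(\bbc)$ as a $B$--module, and the tensor identity together with Frobenius reciprocity give
\[
\bigl(V\o\bbc[Y]\o\bbc_{-\la}\bigr)^B\ \cong\ \bigl(\Ind_T^B(V_{|T}\o\bbc_{-\la})\bigr)^B\ \cong\ (V\o\bbc_{-\la})^T\ =\ V_\la ,
\]
the composite being precisely the map sending a $B$--equivariant section $\sigma\colon Y\to V\o\bbc_{-\la}$ to its value $\sigma(\varphi)$. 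This proves \eqref{eqn:evaluation-phi}. The equality $\mathsf{F}_{2j}=\mathsf{F}_{2j+1}$ on the left is immediate: by definition the filtration of $\bbc[Y]=\bbc[(\fg/\fb)^*]$ (obtained by centering coordinates at any point of $Y$) only jumps in even steps, $\fg/\fb$ being placed in degree~$2$.

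For Step~2 (the inclusion $\mathsf{F}_{2j}(\cdots)^B\subseteq\FBK_j(V_\la)$ under \eqref{eqn:evaluation-phi}): the crucial point is that, because $(\mathrm{ad}^*e)^2(\varphi)=0$, the orbit map restricts to an \emph{affine--linear} curve $\gamma\colon\bA^1\to Y$, $\gamma(t)=\exp(te)\cdot\varphi=\varphi+t\cdot(\mathrm{ad}^*e)(\varphi)$, where $(\mathrm{ad}^*e)(\varphi)\in(\fg/\fb)^*$. Hence if $\sigma$ lies in $\mathsf{F}_{2j}$ --- i.e.\ as a $V\o\bbc_{-\la}$--valued polynomial function on $Y$ it has degree $\le j$ in the linear coordinates --- then $t\mapsto\sigma(\gamma(t))$ is a polynomial in $t$ of degree $\le j$. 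On the other hand $B$--equivariance of $\sigma$ and triviality of the $U$--action on $\bbc_{-\la}$ give $\sigma(\gamma(t))=\exp(te)\cdot\sigma(\varphi)=\sum_{k\ge 0}\tfrac{t^k}{k!}\,e^k\cdot\sigma(\varphi)$ in $V$; comparing degrees forces $e^{j+1}\cdot\sigma(\varphi)=0$. As $\sigma(\varphi)\in(V\o\bbc_{-\la})^T=V_\la$ (because $\varphi$ is $T$--fixed), this says exactly $\sigma(\varphi)\in\FBK_j(V_\la)$; and evaluation at $\varphi$ is injective by Step~1, so the inclusion follows, and the same computation shows $\mathsf{F}_{2j}=\mathsf{F}_{2j+1}$ is carried into $\FBK_j$.

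Step~3, the reverse inclusion, is the crux. It suffices to show $\dim\mathsf{F}_{2j}(V\o\bbc[Y]\o\bbc_{-\la})^B=\dim\FBK_j(V_\la)$ for all $j$. Assume first $\la$ dominant. Tensoring the exact sequences $0\to\sym^{<j}(\fg/\fb)\to\sym^{\le j}(\fg/\fb)\to\sym^{j}(\fg/\fb)\to 0$ of $B$--modules by $V\o\bbc_{-\la}$ and taking $B$--invariants keeps them exact, because $H^{>0}\!\bigl(B,V\o\sym^k(\fg/\fb)\o\bbc_{-\la}\bigr)=0$: indeed $G\times_B(\fg/\fb)^*$ is the Springer resolution $\wcN$, one has $\bigl(V\o\Ga(\wcN,\oo_{\wcN}(\la))\bigr)^G\cong\bigl(V\o\sym(\fg/\fb)\o\bbc_{-\la}\bigr)^B$ compatibly with the gradings, and $H^{>0}(\wcN,\oo_{\wcN}(\la))=0$ for $\la$ dominant by \cite[Theorem~2.4]{bro} (as in the proof of Proposition~\ref{prop:freeness}), whence the higher $B$--cohomology vanishes via the Grothendieck spectral sequence and reductivity of $G$. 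Consequently $\dim\mathsf{F}_{2j}(\cdots)^B=\sum_{k\le j}\dim\bigl(V\o\sym^k(\fg/\fb)\o\bbc_{-\la}\bigr)^B$, which is the $j$--th partial sum of Lusztig's $q$--analogue of the weight multiplicity $\dim V_\la$; by \cite{bry} this coincides with $\dim\FBK_j(V_\la)$, giving the equality. For general $\la$ one reduces to the dominant case, both filtrations having the same dimensions after replacing $\la$ by $w\la$ ($w\in W$): on the left via the automorphisms $\mathscr{F}_w$ of \S\ref{ss:Fourier-classical} (equivalently the operators $\sigma^{V,\la}_w$), and on the right by $W$--invariance of the $q$--analogue. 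The main obstacle is precisely this last step: Step~2 only controls a section along the single line $\gamma$, so identifying the \emph{entire} degree filtration with $\FBK$ requires the global input above; alternatively, one may simply invoke \cite{bry} for the inclusion $\FBK_j(V_\la)\subseteq\mathsf{F}_{2j}(\cdots)^B$, the remaining content of the proposition having been established above.
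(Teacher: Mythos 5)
Your Steps 1 and 2 are correct: Step~2 cleanly gives the inclusion of $\mathsf{F}_{2j}$ of the left-hand side of \eqref{eqn:evaluation-phi} into $\FBK_j(V_\la)$, by restricting a $B$-equivariant section along the affine line $t\mapsto\exp(te)\cdot\varphi$ (and you correctly read the hypothesis as $(\ad^* e)^2(\varphi)=0$, which is what the rest of the paper uses). The gap is in Step~3, the reverse inclusion. Your dimension count requires the exactness of $(-)^B$ on the degree filtration of $\bbc[\varphi+(\g/\b)^*]$, hence $H^{>0}(B,V\o\sym^k(\g/\b)\o\bbc_{-\la})=0$, hence Broer's vanishing, hence $\la$ dominant. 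The proposed reduction to dominant $\la$ via $\mathscr{F}_w$ (equivalently $\sigma^{V,\la}_w$) does not go through as stated: those operators intertwine the $\mathrm{S}(\t)$-actions through $w$, so specializing at $\varphi$ on the source corresponds to specializing at $w\varphi$ on the target, and $w\varphi$ need not satisfy the condition $(\ad^* e)^2(w\varphi)=0$ under which the fiber filtration is being compared to $\FBK$. Moreover, even in the dominant case, invoking the main theorem of \cite{bry} (that Lusztig's $q$-analogue computes $\FBK$) is close to circular: that is precisely the content being established, and the paper deliberately reproduces a self-contained argument rather than citing it.

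The paper's proof sidesteps the dimension count entirely by a local argument that yields both inclusions at once, for arbitrary $\la$. Transport to $\g$ via a $G$-invariant form so that the affine slice becomes $h+\u$ with $h\in\t$, and view an invariant as a $B$-morphism $f:V^*\o\bbc_\la\to\bbc[h+\u]$ whose image under evaluation at $h$ is $v\in V_\la$. Then $v$ lies in the image of $\mathsf{F}_{2j}$ iff $\deg_t f(\psi)(h+tx)\le j$ for \emph{every} $x\in\u$ and $\psi\in V^*$, not just for $x=e$. The key point is that the maximal directional degree over $x\in\u$ is attained on a Zariski-open subset, hence at some regular nilpotent $x=b\cdot e$ with $b\in B$; then $B$-equivariance of $f$ together with \cite[Lemma 4.2]{bry} reduce the degree along $x$ to the degree along $e$, which by your own Step-2 computation is the least $j$ with $e^{j+1}v=0$. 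This gives the equality with $\FBK_j$ outright, with no restriction on $\la$ and no cohomological or $q$-analogue input. Since your Step~2 already carries out the $e$-direction computation, the right fix is to replace Step~3 by this reduction of arbitrary directions to the $e$-direction.
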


\begin{proof} 
Let us denote morphism \eqref{eqn:evaluation-phi} by $\Lambda_\la$.
The fact that $\Lambda_\la$ is an isomorphism follows from the proof of Lemma \ref{lem:specialization-algebra}. What remains is to prove that this isomorphism is compatible with filtrations. Note that $\Lambda_\la$ is the restriction to the $\la$-weight spaces of the isomorphism of $T$-modules
\[
\Lambda :\ (V \otimes \bbc[\varphi + (\g/\b)^*])^U \xrightarrow{\sim} V
\]
given again by evaluation at $\varphi$. By construction the filtration on the left-hand side of \eqref{eqn:evaluation-phi} has jumps only in even degrees, which justifies the equality $\mathsf{F}_{2j}=\mathsf{F}_{2j+1}$. For $j \in \Z$ we set
\[
\mathsf{F}^{\operatorname{fib}}_j(V_\la) := \Lambda_\la \Bigl( \mathsf{F}_{2j} \bigl( ( V \o \bbc[\varphi + (\fg/\fb)^*] \otimes \bbc_{-\lambda} )^B \bigr) \Bigr).
\]
Hence what we have to check is that $\mathsf{F}^{\operatorname{fib}}_{\hdot}$ coincides with $\FBK_{\hdot}$.

It will be convenient to work in $\g$ rather than $\g^*$. Hence we choose a $G$-equivariant isomorphism $\g \cong \g^*$; it restricts to a $B$-equivariant
isomorphism $(\g/\u)^* \cong \b$. Let $h \in \b$ be the image of $\varphi$, so that we obtain an identification $\varphi + (\g/\b)^* \cong h + \u$. Consider the isomorphisms
\[
\bigl( V \otimes \bbc[\varphi + (\fg/\fb)^*] \o \bbc_{-\lambda} \bigr)^B \ \cong \ \bigl( V \otimes \bbc[h + \u] \o \bbc_{-\lambda} \bigr)^B \ \cong \ \mathrm{Hom}^B \bigl( V^* \otimes \bbc_{\lambda},\bbc[h + \u] \bigr).
\]
Under this identification, $\Lambda_\la$ sends a morphism $f : V^* \otimes \bbc_{\lambda} \to \bbc[h + \u]$ to the linear form on $(V_{\lambda})^*$ given by $\psi \mapsto f(\psi \otimes 1)(h)$. In fact the image of $\Lambda_{\lambda}(f)$ in $V$ is $\Lambda(f)$, which can be described as the linear form on $V^*$ given by $\psi \mapsto f(\psi \o 1)(h)$.

Fix some $f \in \mathrm{Hom}^B \bigl( V^* \otimes \bbc_{\lambda},\bbc[h + \fn] \bigr)$, and let $v=\Lambda_{\lambda}(f)$ be its image in $V_{\lambda}$ (or in $V$). By definition, $v$ is in $\mathsf{F}^{\mathrm{fib}}_j ( V_{\lambda})$ iff for any $x \in \u$ and any $\psi \in V^*$, the polynomial in $t$ given by 
\[
f(\psi \o 1)(h+tx)
\]
has degree $\leq j$. Fix $\psi \in V^*$, and choose $x \in \u$ such that this degree is maximal. By density of regular nilpotent elements, one can assume that $x$ is regular. Then there exists $b \in B$ such that $x=b \cdot e$. (Indeed, it is well known that there exists $g \in G$ such that $x=g \cdot e$. Then $x \in g \cdot \b$; as a regular nilpotent element is contained in only one Borel subalgebra we deduce that $\b=g \cdot \b$, which implies that $g \in B$.) And we have
\[
f(\psi \o 1)(h+tx) = f(\psi \o 1)(b \cdot (b^{-1} \cdot h + te)) = f(b^{-1} \cdot (\psi \o 1))(b^{-1} \cdot h + te). 
\]
Now, by \cite[Lemma 4.2]{bry}, the degree of the polynomial on the right-hand side is the same as the degree of the polynomial $f(b^{-1} \cdot (\psi \o 1))(h + te)$. Moreover, we have
\[
f(b^{-1} \cdot (\psi \o 1))(h + te) = f(b^{-1} \cdot (\psi \o 1))(\exp(te) \cdot h) = f(\exp(-te) b^{-1} \cdot (\psi \o 1))(h).
\]
Hence, finally, $v$ is in $\mathsf{F}^{\operatorname{fib}}_j (V_{\lambda})$ iff for any $\psi \in V^*$, the polynomial 
\[
f(\exp(-te) \cdot (\psi \o 1))(h)
\]
has degree $\leq j$.

Now, the linear form on $V^*$ given by $\psi \mapsto f(\exp(-te) \cdot (\psi \o 1))(h)$ is $\exp(te) \cdot v$. Hence $v$ is in $\mathsf{F}^{\operatorname{fib}}_j ( V_{\lambda} )$ iff $e^{j+1} \cdot v=0$, i.e.~iff $v \in \FBK_j(V_\la)$.
\end{proof}

\subsection{Ordinary cohomology of cofibers}

As a consequence of Corollary \ref{key_cor-classical} we can prove the following result, which is equivalent to \cite[Theorem 8.5.2]{abg}. (More precisely, \cite[Theorem 8.5.2]{abg} also contains a claim about compatibility with convolution, which can be deduced from Proposition \ref{prop:convolution}.)

\begin{prop}
For $\cF$ in $\Perv_{\Gv(\OO)}(\Gr)$ and $\la \in \bX^+$, there exists a canonical isomorphism of graded vector spaces
\[
\coH^\hdot(i_\la^! \cF) \ \cong \ \Bigl( \cS(\cF) \o \Gamma \bigl( \wcN, \oo_{\wcN}(\la) \bigr) \Bigr)^G \langle \la(2\rhov) \rangle.
\]
\end{prop}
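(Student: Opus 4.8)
The plan is to derive this statement from Corollary~\ref{key_cor-classical} by specializing the $\sym(\t)$-action at the augmentation ideal, i.e.\ by applying the base-change functor $\C \otimes_{\sym(\t)} (-)$, where $\sym(\t)=\coH^\hdot_{\Tv}(\pt)$ acts on $\C$ through its (graded) augmentation sending $\check\t^*$ to $0$. Concretely, I would construct the chain of isomorphisms of graded vector spaces
\begin{multline*}
\coH^\hdot(i_\la^! \cF) \ \cong\ \C \otimes_{\sym(\t)} \coH^\hdot_{\Tv}(i_\la^! \cF) \\
\cong\ \C \otimes_{\sym(\t)} \bigl( \cS(\cF) \o \Gamma(\wfg, \oo_{\wfg}(\la)) \bigr)^G \langle \la(2\rhov)\rangle \\
\cong\ \bigl( \cS(\cF) \o \Gamma(\wcN, \oo_{\wcN}(\la)) \bigr)^G \langle \la(2\rhov)\rangle,
\end{multline*}
in which the middle isomorphism is Corollary~\ref{key_cor-classical} and the outer two are produced separately below.

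For the first isomorphism, I would observe that the forgetful morphism $\coH^\hdot_{\Tv}(i_\la^!\cF) \to \coH^\hdot(i_\la^!\cF)$ annihilates the augmentation ideal of $\sym(\t)$, hence factors canonically through a morphism $\C \otimes_{\sym(\t)} \coH^\hdot_{\Tv}(i_\la^!\cF) \to \coH^\hdot(i_\la^!\cF)$. Exactly as in the proof of Lemma~\ref{lem:equiv-cohomology-first-properties}(1) (and as in Lemma~\ref{lem:forget-hbar}), the parity vanishing of $\coH^\hdot(i_\la^!\cF)$ forces the spectral sequence \eqref{eqn:spectral-sequence-equiv-cohomology}, with $\Tv$ in place of $A$, to degenerate; so $\coH^\hdot_{\Tv}(i_\la^!\cF)$ is free over $\sym(\t)$ and the displayed morphism is an isomorphism.

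For the third isomorphism I would use that $\la$ is dominant. The proof of Proposition~\ref{prop:freeness} shows that in this case $\Gamma(\wfg,\oo_{\wfg}(\la))$ is a free graded $\sym(\t)$-module and that restriction along the zero section $\wcN \hookrightarrow \wfg$ induces a canonical isomorphism $\C \otimes_{\sym(\t)} \Gamma(\wfg,\oo_{\wfg}(\la)) \simto \Gamma(\wcN,\oo_{\wcN}(\la))$ (this is precisely the computation invoked in the proof of Lemma~\ref{lem:T*Xfg}). It then remains to commute $\C\otimes_{\sym(\t)}(-)$ with the functor $(\cS(\cF)\o -)^G$. Since $\sym(\t)$ acts on $\Gamma(\wfg,\oo_{\wfg}(\la))$ through the $G$-invariant morphism $\delta:\wfg\to\t^*$, the graded $G$-module $\cS(\cF)\o\Gamma(\wfg,\oo_{\wfg}(\la))$ is free over $\sym(\t)$; by the graded Nakayama lemma together with reductivity of $G$ it is even isomorphic, as a graded $(\sym(\t),G)$-module, to $\sym(\t)$ tensored with its reduction modulo the augmentation ideal. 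Using exactness of $(-)^G$ on rational $G$-modules one then gets $\C\otimes_{\sym(\t)}\bigl(\cS(\cF)\o\Gamma(\wfg,\oo_{\wfg}(\la))\bigr)^G \cong \bigl(\C\otimes_{\sym(\t)}(\cS(\cF)\o\Gamma(\wfg,\oo_{\wfg}(\la)))\bigr)^G \cong \bigl(\cS(\cF)\o\Gamma(\wcN,\oo_{\wcN}(\la))\bigr)^G$, which completes the chain.

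I do not expect a genuine obstacle here: the proof is an assembly of results already established. The one point requiring attention is the role of the hypothesis $\la\in\bX^+$, which is exactly what makes $\Gamma(\wfg,\oo_{\wfg}(\la))$ free over $\sym(\t)$ and guarantees the cohomology vanishing ($R^{>0}\Ind=0$) making the reduction modulo the augmentation ideal compute $\Gamma(\wcN,\oo_{\wcN}(\la))$; without dominance this step, and hence the identification of the right-hand side, fails.
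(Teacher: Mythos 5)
Your proof is correct and follows essentially the same route as the paper: reduce modulo the augmentation ideal of $\sym(\t)$, use the spectral-sequence/parity argument for the cohomology side, use the freeness of $\Gamma(\wfg,\oo_{\wfg}(\la))$ and restriction to $\wcN$ for the geometric side, and commute the specialization with $G$-invariants. The only difference is cosmetic: the paper invokes just the exactness of $(-)^G$ for the last step, whereas you additionally appeal to graded Nakayama and a $(\sym(\t),G)$-module splitting, which is more than is needed but not wrong.
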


\begin{proof}
By the arguments in the proof of Lemma \ref{lem:equiv-cohomology-first-properties} the forgetful functor induces an isomorphism
\[
\C \o_{\sym(\t)} \coH^\hdot_\Tv(i_\la^! \cF) \simto \coH(i_\la^! \cF)
\]
(where, in the left-hand side, $\sym(\t)$ acts trivially on $\C$).
On the other hand, 
as observed in the proof of Lemma \ref{lem:T*Xfg}, restriction induces an isomorphism
\[
\C \o_{\sym(\t)} \Gamma \bigl( \wfg,\oo_{\wfg}(\la) \bigr) \simto  \Gamma \bigl( \wcN, \oo_{\wcN}(\la) \bigr).
\]
Then the result follows from Corollary \ref{key_cor-classical}, using the fact that the functor of $G$-fixed points is exact.
\end{proof}

\begin{rem}
One can obtain in a similar way a description of $\coH^\hdot_{\C^\times}(i_\la^! \cF)$ (in case $\la$ is dominant) in terms of asymptotic $\dd$-modules on $\B$. We omit the details.
\end{rem}

\subsection{Equivariant cohomology of spherical perverse sheaves}

In this subsection we explain the relation between our description of equivariant cohomology of cofibers of spherical perverse sheaves on $\Gr_{\Gv}$ and the description of the full equivariant cohomology of these perverse sheaves given in \cite{bf}. Details (and generalization to all reductive groups) will be discussed in a future publication.

From now on, for simplicity we assume that $\Gv$ is quasi-simple and simply connected, so that $G$ is simple (of adjoint type). The Killing form determines an isomorphism of $G$-modules $\kappa : \g \simto \g^*$. Let us choose an element $e \in \u$ which is a sum of non-zero simple root vectors. Then $e$ is regular nilpotent, and it can be completed to an $\mathfrak{sl}_2$-triple $(e,\rhov,f)$. (Note that $f$ is uniquely determined by $e$, and that the different choices for $e$ are all conjugate under the adjoint action of $T$.) We consider the Kostant slice\index{Sigmae@$\Sigma_e$}%
\[
\Sigma_e:= \kappa(e + \g_f) \subset \g^*.
\]
It is well known that $\Sigma_e$ is included in $\g^*_{\mathrm{r}}$, and that the (co-)adjoint quotient $\g^* \to \g^*/G \cong \t^*/W$ restricts to an isomorphism $\Sigma_e \simto \t^*/W$. We denote by $\widetilde{\Sigma}_e$\index{sigmaetilde@$\widetilde{\Sigma}_e$} the inverse image of $\Sigma_e$ under the projection $\pi : \wfg \to \g^*$. Then $\widetilde{\Sigma}_e \subset \wfgr$, and the morphism $\widetilde{\Sigma}_e \to \Sigma_e$ is $W$-equivariant, where $W$ acts on $\widetilde{\Sigma}_e$ by the restriction of the action on $\wfgr$, and trivially on $\Sigma_e$. It is also known that the natural morphism $\wfg \to \g^* \times_{\t^*/W} \t^*$ restricts to an isomorphism of algebraic varieties $\wfgr \to \g^*_{\mathrm{r}} \times_{\t^*/W} \t^*$ (see \cite[Remark 4.2.4(i)]{gi-kostant}). We deduce that the morphism $\delta : \wfg \to \t^*$ restricts to an isomorphism $\widetilde{\Sigma}_e \simto \t^*$.

\begin{lem}
\label{lem:section}
The projection $T^* \X \to \wfg$ admits a canonical section $\omega_e : \widetilde{\Sigma}_e \hookrightarrow T^* \X$ over $\widetilde{\Sigma}_e$. This section is independent of the choice of $e$ in the following sense: if $t \in T$ then the following diagram commutes:
\[
\xymatrix@C=2cm{
\widetilde{\Sigma}_e \ar[r]^{t \cdot (-)}_-{\sim} \ar@{^{(}->}[d]^-{\omega_e} & \widetilde{\Sigma}_{t \cdot e} \ar@{^{(}->}[d]^-{\omega_{t \cdot e}} \\
T^*\X \ar[r]^{t \cdot (-)}_-{\sim} & T^*\X.
}
\]
The morphism $\omega_e$ is also $W$-equivariant, if $W$ acts on $(T^*\X)_{\mathrm{r}}$ via the action of {\rm \S\ref{ss:action-T*X}}, for the choice of simple root vectors given by the components of $e$ on each $\g_\alpha$.
\end{lem}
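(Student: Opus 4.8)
The plan is to use the isomorphism $\delta\colon\widetilde{\Sigma}_e\simto\ft^*$ to realise $\widetilde{\Sigma}_e$ as an affine space, on which the $T$-torsor $p\colon(T^*\X)_{\mathrm r}\to\wfgr$ of \S\ref{ss:action-T*X} (recall $T^*\X=G\times_U(\g/\u)^*$, that the moment map $T^*\X\to\g^*$ factors through $\pi$, so that $(T^*\X)_{\mathrm r}$ is the preimage of $\wfgr$) becomes trivial, and then to rigidify the section by prescribing its value at the ``cone point'' $\delta^{-1}(0)\in\widetilde{\Sigma}_e$. First I would identify this cone point: since $\delta=0$ forces the $\pi$-image to lie in the nilpotent cone, and $\Sigma_e$ meets it exactly in $\kappa(e)$, which in turn lies in a unique Borel, one gets $\delta^{-1}(0)=(1\times_B\kappa(e))$, whose $p$-fibre is $\{\tau\times_U(\tau^{-1}\cdot\kappa(e)):\tau\in T\}=(1\times_U\kappa(e))\cdot T$ (note $\kappa(e)\in(\g/\u)^*$, since $e\in\u$). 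I would then \emph{define} $\omega_e$ to be the unique section of $p$ over $\widetilde{\Sigma}_e$ with $\omega_e(\delta^{-1}(0))=(1\times_U\kappa(e))\cdot\tau(e)$, where $\tau(e)\in T$ is the unique element with $\tau(e)\cdot\eta_0=\kappa(e)$; such $\tau(e)$ exists and is unique because $G$ is of adjoint type, so the simple roots form a $\Z$-basis of $X^*(T)$, and $\tau(e)$ is characterised by $\alpha(\tau(e))=\langle e_\alpha,f_\alpha\rangle$. Existence of $\omega_e$ holds since every $T$-torsor over an affine space is trivial; uniqueness holds since two sections over $\widetilde{\Sigma}_e\cong\ft^*$ differ by a morphism $\ft^*\to T$, which is necessarily constant, hence trivial once the values at $\delta^{-1}(0)$ agree.

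For the independence-of-$e$ statement, by this uniqueness it suffices to compare $t\cdot\omega_e$ and $\omega_{t\cdot e}$ at the cone point of $\widetilde{\Sigma}_{t\cdot e}$: the left $T$-action on $T^*\X$ covers the left $T$-action on $\wfg$, which carries $\widetilde{\Sigma}_e$ onto $\widetilde{\Sigma}_{t\cdot e}$ (because the coadjoint action sends $\Sigma_e$ to $\Sigma_{t\cdot e}$, using that $(t\cdot e,\rhov,t\cdot f)$ is again an $\mathfrak{sl}_2$-triple and $\g_{t\cdot f}=t\cdot\g_f$), and it also carries $\delta^{-1}(0)$ of $\widetilde{\Sigma}_e$ to $\delta^{-1}(0)$ of $\widetilde{\Sigma}_{t\cdot e}$ since $t\cdot(1\times_B\kappa(e))=1\times_B\kappa(t\cdot e)$. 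Then, using $\kappa(t\cdot e)=t\cdot\kappa(e)$, the identity $t\times_U\kappa(e)=(1\times_U\kappa(t\cdot e))\cdot t$ in $T^*\X$, and $\tau(t\cdot e)=t\,\tau(e)$ (immediate from $\tau(-)\cdot\eta_0=\kappa(-)$ and adjoint type), one computes that $t\cdot\bigl((1\times_U\kappa(e))\cdot\tau(e)\bigr)$ equals $(1\times_U\kappa(t\cdot e))\cdot\tau(t\cdot e)$, which is exactly $\omega_{t\cdot e}$ at the cone point.

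The last point is the $W$-equivariance, and this is where I expect the real work to lie. The key input is that $W$ acts trivially on $\wcN_{\mathrm r}:=\wcN\cap\wfgr$ (which as a subvariety of $\wfgr$ equals $\delta^{-1}(0)\cap\wfgr$): indeed $\pi$ restricts to an isomorphism from $\wcN_{\mathrm r}$ onto the regular locus of the nilpotent cone (a regular nilpotent lies in a unique Borel), and $W$ acts trivially on $\g^*$. Consequently, over $\wcN_{\mathrm r}$ the action $\odot$ of \S\ref{ss:action-T*X} is ``vertical'', i.e.\ $w\odot y=y\cdot\chi_w(\pi(y))$ for some morphism $\chi_w\colon\wcN_{\mathrm r}\to T$; since $\wcN_{\mathrm r}$ is (isomorphic to) the complement of a codimension $\geq 2$ closed subset in the normal affine variety given by the nilpotent cone, morphisms from $\wcN_{\mathrm r}$ to the torus $T$ are constant, and $\chi_w$ equals $1$ because $w\odot(1\times_U\eta_0)=1\times_U\eta_0$ (the isomorphism \eqref{eqn:isom-line-bundles} restricting to $\id$ at $(1\times_B\eta_0)$ by construction). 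Hence $w\odot$ fixes every point over $\wcN_{\mathrm r}$, in particular $\omega_e(\delta^{-1}(0))$. Finally $w\odot\circ\,\omega_e$ and $\omega_e\circ\theta_w|_{\widetilde{\Sigma}_e}$ are both sections of $p$ over $\widetilde{\Sigma}_e$ inducing the same map to $\wfgr$ (both cover $\theta_w|_{\widetilde{\Sigma}_e}$, using $W$-equivariance of $\delta$), hence differ by a constant in $T$, and they agree at the $W$-fixed cone point, so they coincide. The main obstacle is precisely tying the abstractly defined action $\odot$ to the Kostant-slice picture, and the triviality of $\odot$ along $\wcN_{\mathrm r}$ together with the $\eta_0$-normalisation is the bridge; the remaining verifications are the routine identities in $G\times_U(\g/\u)^*$ recorded above.
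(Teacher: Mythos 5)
Your overall strategy is sound and genuinely different from the paper's: you build $\omega_e$ by rigidity (the $T$-torsor $p$ is trivial over $\widetilde{\Sigma}_e\cong\t^*$, and maps from an affine space to $T$ are constant, so a section is pinned down by its value at $\delta^{-1}(0)$), whereas the paper defines the section pointwise by pairing with $e$ through the Killing form. Your identification of $\delta^{-1}(0)$ with $1\times_B\kappa(e)$, of its $p$-fibre, and the rigidity and $T$-compatibility computations in your conventions are fine. The genuine gap is in the $W$-equivariance step: the claim that $w\odot$ fixes \emph{every} point of $(T^*\X)_{\mathrm r}$ lying over $\wcN\cap\wfgr$ is false, and the proposed justification fails because your $\chi_w$ lives on $p^{-1}(\wcN\cap\wfgr)$, not on $\wcN\cap\wfgr$: by property (2) of the proposition in \S\ref{ss:action-T*X}, $w\odot$ intertwines the torsor $T$-action through $w$, so $\chi_w(y\cdot t)=t^{-1}\chi_w(y)\,w(t)$ and $\chi_w$ does not descend to the base. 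Indeed, already over the normalization point $1\times_B\eta_0$ one has $w\odot\bigl((1\times_U\eta_0)\cdot t\bigr)=(1\times_U\eta_0)\cdot w(t)\neq(1\times_U\eta_0)\cdot t$ whenever $w(t)\neq t$. What you actually need — and what is true — is only that the single point $\omega_e(\delta^{-1}(0))$ is fixed: writing it as $\tau(e)\cdot(1\times_U\eta_0)$ (left action of $\tau(e)\in T\subset G$), fixedness follows from the $G$-equivariance of $w\odot$ (property (1) of the same proposition) together with $w\odot(1\times_U\eta_0)=1\times_U\eta_0$, since left multiplication by $\tau(e)$ carries the fibre over $1\times_B\eta_0$ to the fibre over $1\times_B\kappa(e)=\delta^{-1}(0)$. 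So the step is repairable, but as written it is wrong.

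A second, unresolved point is which $\eta_0$ you use. For the $W$-statement of the lemma the action is normalized by \eqref{eqn:isom-line-bundles} at the point $\eta_0$ attached to the components of $e$ (i.e.\ $\eta_0(f_\alpha)=1$ with $[e_\alpha,f_\alpha]=\alv$); with an unrelated fixed $\eta_0$ your prescribed cone value differs from the $W$-fixed points of the fibre (which form a single coset under the finite group $T^W$) by a torus element, and $W$-equivariance fails. But with the $e$-attached $\eta_0$ the identity $\tau(t\cdot e)=t\,\tau(e)$ that your $T$-compatibility check relies on is no longer correct (one finds $\tau(t\cdot e)=\tau(e)$, because $\eta_0$ changes to $t\cdot\eta_0$), so that verification must be redone; doing it consistently forces you to decide which torus action on $T^*\X$ covering $t\cdot(-)$ on $\wfg$ the bottom arrow of the diagram denotes (the left $G$-action alone, or its combination with the right $T$-action), and the cone-point computation closes only for the latter. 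In short: fix the $e$-attached $\eta_0$ once and for all, prove $W$-fixedness of the cone value via $G$-equivariance as above, and recheck the $T$-diagram in that single convention rather than mixing the two.
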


\begin{proof}
Given a Borel subalgebra $\b_0 \subset \g$ with unipotent radical $\u_0$, the ``universal Cartan subalgebra'' $\b_0/\u_0$ acts on $(\u_0 / [\u_0,\u_0])^*$; under this action $(\u_0 / [\u_0,\u_0])^*$ decomposes as a direct sum of $1$-dimensional eigenspaces, and the eigenvalues can be naturally identified with the negative simple roots under the canonical isomorphism $\t^* \cong (\b_0/\u_0)^*$. Following \cite{gk} we denote by $\mathbb{O}(\b_0) \subset (\u_0 / [\u_0,\u_0])^*$ the subset of vectors whose component in each eigenspace is non-zero. We also set $\widetilde{\B}:=\{(\b_0,x) \mid \b_0 \in \B, \, x \in \mathbb{O}(\b_0)\}$. Then our choice of simple root vectors defines a $G$-equivariant isomorphism $\X \simto \widetilde{\B}$ sending $U/U$ to $(\b, \psi)$, where $\psi:=\kappa(f)_{|\u} \in \mathbb{O}(\b)$.

To define the section we need, given some $\eta \in \Sigma_e$ and $\b_0 \in \B$ with unipotent radical $\u_0$ such that $\eta_{|\u_0}=0$, to define a lift of $\b_0$ to $\X$, or equivalently to $\widetilde{\B}$. However, by \cite{gk} the Borel subalgebras $\b_0$ and $\b$ are in general relative position. Hence the Killing form induces a non-degenerate pairing between $\u$ and $\u_0$, hence an isomorphism $\u \simto \u_0^*$. If we denote by $\eta_0 \in \u_0^*$ the image of $e$ under this isomorphism, then one can check that $\eta_0 \in \mathbb{O}(\b_0)$, hence the pair $(\b_0,\eta_0)$ provides the desired lifting of $\b_0$ to $\widetilde{\B}$.
\end{proof}

From now on we fix a choice for $e$.
Let use denote by\index{i@$i$}\index{t@$t$}%
\[
i : \Gr_{\Tv} \hookrightarrow \Gr_{\Gv} \qquad \text{and} \qquad t: \Gr_{\Bv^-} \hookrightarrow \Gr_{\Gv}
\]
the inclusions. We have $\Gr_{\Tv} = \bigsqcup_{\la \in \bX} \{ \bla \}$, and $i$ identifies with $\bigsqcup_{\la \in \bX} i_\la$. Similarly one can identify $\Gr_{\Bv^-}$ with $\bigsqcup_{\la \in \bX} \fT_\la$, and then $t$ with $\bigsqcup_{\la \in \bX} t_\la$. Let $\cF$ be in $\Perv_{\Gv(\OO)}(\Gr_{\Gv})$ and consider the diagram
\[
\xymatrix@C=2cm{
\coH_{\Tv}^{\hdot}(\Gr_{\Bv^-}, t^! \cF) & \coH_{\Tv}^{\hdot}(\Gr_{\Tv}, i^! \cF) \ar@{^{(}->}[r] \ar@{_{(}->}[l] & \coH_{\Tv}^{\hdot}(\Gr_{\Gv}, \cF).
}
\]
Here the left-hand morphism is induced by the inclusion $\Gr_\Tv \hookrightarrow \Gr_{\Bv^-}$, and the right-hand morphism is induced by $i$ as in \S\ref{ss:Gr-intro}. By the localization theorem in equivariant cohomology, both morphisms become isomorphisms when we extend scalars from $\mathrm{S}(\t)$ to its fraction field $\mathrm{Q}$,\index{Q@$\mathrm{Q}$} which provides a canonical isomorphism
\beq{eqn:total-equiv-coh-0}
\mathrm{Q} \o_{\mathrm{S}(\t)} \coH_{\Tv}^{\hdot}(\Gr_{\Gv}, \cF) \ \cong \ \mathrm{Q} \o_{\mathrm{S}(\t)} \coH_{\Tv}^{\hdot}(\Gr_{\Tv}, t^! \cF).
\eeq
By Lemma \ref{lem:equiv-cohomology-first-properties} there exists a canonical isomorphism of $\mathrm{S}(\t)$-modules $
\coH_{\Tv}^{\hdot}(t^! \cF) \cong \cS(\cF) \o \mathrm{S}(\t)$;
hence we obtain a canonical isomorphism
\beq{eqn:total-equiv-coh-1}
\mathrm{Q} \o \coH_{\Tv}^{\hdot}(\Gr_{\Gv}, \cF) \simto \mathrm{Q} \o \cS(\cF).
\eeq

On the other hand, let $V$ in $\Rep(G)$, and consider the diagram
\[
\xymatrix@C=2cm{
V \o \mathrm{S}(\t) & \bigl( V \o \C[T^* \X] \bigr)^G \ar@{^{(}->}[r] \ar@{_{(}->}[l]  & V \o \C[\widetilde{\Sigma}_e].
}
\]
Here the left-hand morphism is induced by inverse image with respect to the natural morphism $G \times \t^* \to T^* \X$, and the right-hand morphism is induced by inverse image with respect to the morphism $\omega_e : \widetilde{\Sigma}_e \hookrightarrow T^*\X$ of Lemma \ref{lem:section}. One can check that both morphisms in this diagram become isomorphisms when we extend scalars from $\mathrm{S}(\t)$ to $\mathrm{Q}$, so that we obtain a canonical isomorphism
\beq{eqn:total-equiv-coh-2}
\mathrm{Q} \o_{\sym(\t)} \bigl( V \o \C[\widetilde{\Sigma}_e] \bigr) \simto \mathrm{Q} \o V.
\eeq

As far as we understand, the isomorphism
like our \eqref{eqn:total-equiv-coh-0} which is implicitly used in \cite{bf} (see in particular the proof of Theorem 6 in \emph{loc}.~\emph{cit}.) is the
one we are defining here. With this interpretation,  the ``quasi-classical limit'' (or ``classical analogue'') of \cite[Theorem 6]{bf} says the following.

\begin{prop}
\label{prop:total-equiv-cohomology}
Let $\cF$ be in $\Perv_{\Gv(\OO)}(\Gr_{\Gv})$.

The image of $\coH_{\Tv}^{\hdot}(\Gr_{\Gv}, \cF)$ in $\mathrm{Q} \o \cS(\cF)$ under isomorphism \eqref{eqn:total-equiv-coh-1} coincides with the image of $\cS(\cF) \o \C[\widetilde{\Sigma}_e]$ under isomorphism \eqref{eqn:total-equiv-coh-2}, which provides a canonical isomorphism 
\beq{eqn:isom-total-cohomology}
\coH_{\Tv}^{\hdot}(\Gr_{\Gv}, \cF) \ \cong \ \cS(\cF) \o \C[\widetilde{\Sigma}_e].
\eeq
\end{prop}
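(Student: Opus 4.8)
The strategy is to reduce the global statement on $\Gr_{\Gv}$ to the already-proved local statements for cofibers (Corollary \ref{key_cor-classical}) by gluing over $\lambda \in \bX$ using the localization theorem. First I would observe that, after inverting the generators of $\mathrm{S}(\t)$, the restriction morphisms $\coH_{\Tv}^{\hdot}(\Gr_{\Tv},i^!\cF) \hookrightarrow \coH_{\Tv}^{\hdot}(\Gr_{\Gv},\cF)$ and $\coH_{\Tv}^{\hdot}(\Gr_{\Tv},i^!\cF) \hookrightarrow \coH_{\Tv}^{\hdot}(\Gr_{\Bv^-},t^!\cF)$ become isomorphisms; these are the standard localization isomorphisms, valid because $\bX = \Gr_{\Tv}$ is exactly the $\Tv$-fixed locus and because $\Gr_{\Bv^-} = \bigsqcup_\la \fT_\la$ retracts $\Tv$-equivariantly onto $\bX$ (each $\fT_\la$ having $\bla$ as an attractive fixed point, as used already in the proof of Lemma \ref{lem:transversal-slice} via \cite[Proposition 2.3]{fw}). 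This gives isomorphism \eqref{eqn:total-equiv-coh-0}, and then \eqref{eqn:total-equiv-coh-1} follows by composing with the splitting $\coH_{\Tv}^{\hdot}(t^!\cF) \cong \cS(\cF) \o \mathrm{S}(\t)$ of Lemma \ref{lem:equiv-cohomology-first-properties}(2) (note $\coH_{\Tv}^{\hdot}(\Gr_{\Bv^-},t^!\cF) = \bigoplus_\la \coH^{\hdot}_{\Tv}(\fT_\la,t_\la^!\cF)$, so one uses the second displayed isomorphism of \eqref{zeta} componentwise).

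Next I would set up the geometric side. The key point is to identify $\coH_{\Tv}^{\hdot}(\Gr_{\Gv},\cF)$, as a submodule of $\mathrm{Q} \o \cS(\cF)$ via \eqref{eqn:total-equiv-coh-1}, with $\bigoplus_{\la \in \bX} \coH^{\hdot}_{\Tv}(i_\la^!\cF)$ sitting inside $\bigoplus_{\la}\mathrm{Q} \o (\cS(\cF))_\la = \mathrm{Q} \o \cS(\cF)$. For this I would use the standard description (cf.\ \cite{gi}, or the weight-filtration argument in the proof of Lemma \ref{lem:equiv-cohomology-first-properties}) that an element of $\mathrm{Q}\o\cS(\cF)$ lies in $\coH_{\Tv}^{\hdot}(\Gr_{\Gv},\cF)$ if and only if its $\la$-component, for each $\la$, lies in the image of $\coH_{\Tv}^{\hdot}(i_\la^!\cF)$ under $\kbgeom_{\cF,\la}$ — i.e.\ the $\Tv$-equivariant cohomology of the whole space is the intersection over fixed points of the images of the cofiber cohomologies. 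By Theorem \ref{thm:equiv-coh-classical} (equivalently Corollary \ref{key_cor-classical}) this image is $\bigl(\cS(\cF) \o \Gamma(\wfg,\oo_{\wfg}(\la))\bigr)^G$ inside $(\cS(\cF))_\la \o \mathrm{S}(\t)$. Summing over $\la$ and using the algebra isomorphism $\C[T^*\X] \cong \bigoplus_\la \Gamma(\wfg,\oo_{\wfg}(\la))$ from the proof of Lemma \ref{lem:T*Xfg}, together with the section $\omega_e : \wtil\Sigma_e \hookrightarrow T^*\X$ of Lemma \ref{lem:section} and the fact that $\delta$ restricts to an isomorphism $\wtil\Sigma_e \simto \t^*$, I would identify $\bigl(V \o \C[\wtil\Sigma_e]\bigr)$ with the corresponding submodule of $\mathrm{Q}\o V$ computed by \eqref{eqn:total-equiv-coh-2}. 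Matching the two descriptions term by term in $\la$ then yields \eqref{eqn:isom-total-cohomology}.

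Concretely the heart of the argument is the commutativity of a diagram: the restriction $\bigl(V \o \C[T^*\X]\bigr)^G \hookrightarrow \bigl(V \o \C[\wtil\Sigma_e]\bigr)$ should, componentwise in $\la$, be compatible with $\kbgeom_{\cF,\la}$ (via Corollary \ref{key_cor-classical} and the identification of $\kbalg_{V,\la}$ with the pullback along $a$ spelled out after \eqref{eqn:wfg}), once one checks that restriction along $\omega_e$ of a section of $\oo_{\wfg}(\la)$ corresponds, under $\delta|_{\wtil\Sigma_e} \cong \t^*$, to the map $(V \o \Gamma(\wfg,\oo_{\wfg}(\la)))^G \to (V \o \sym(\t) \o \C_{-\la})^T = V_\la \o \sym(\t)$. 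This in turn reduces to the observation that $\omega_e$ sits over the subvariety $G/T \times \t^*_{\mathrm{rs}} \hookrightarrow \wfgr$ (the regular semisimple locus), where everything is governed by isomorphism \eqref{eqn:classical-case-generic-geometry}; I would invoke \cite[Remark 4.2.4(i)]{gi-kostant} exactly as in the paragraph preceding the proposition to get $\wfgr \cong \g^*_{\mathrm{r}} \times_{\t^*/W} \t^*$ and hence that $\wtil\Sigma_e \to \t^*$ is an isomorphism lying in the regular locus.

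\textbf{Main obstacle.} The delicate step is not the localization bookkeeping but the \emph{compatibility of the two identifications of the relevant submodule of $\mathrm{Q}\o\cS(\cF)$} — i.e.\ checking that the image of global equivariant cohomology really is the $\la$-graded sum of the cofiber images, and that the geometric side $\C[\wtil\Sigma_e]$ reproduces exactly $\bigoplus_\la \Gamma(\wfg,\oo_{\wfg}(\la))$ \emph{inside} $\mathrm{Q}\o\cS(\cF)$ under the two a priori unrelated localization isomorphisms \eqref{eqn:total-equiv-coh-1} and \eqref{eqn:total-equiv-coh-2}. The subtlety is that \eqref{eqn:total-equiv-coh-2} is defined using $\omega_e$ (a choice depending on $e$), while \eqref{eqn:total-equiv-coh-1} comes from the fixed-point restriction, so one must verify these are ``the same normalization''; the $W$-equivariance of $\omega_e$ (Lemma \ref{lem:section}) and the $N_{\Gv}(\Tv)$-equivariance built into $\Xi^{\cF,\la}_w$ should be what forces agreement. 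As the authors note, a full treatment with all compatibilities is deferred to a later paper, so here I would content myself with constructing the isomorphism and the term-by-term check over each $\la$, leaving the functoriality and convolution-compatibility statements aside.
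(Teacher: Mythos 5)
Before assessing your argument, note that the paper itself contains no proof of this proposition: it is stated as the ``quasi-classical limit'' of \cite[Theorem 6]{bf}, and the authors explicitly defer the details (and the generalization beyond quasi-simple simply connected $\Gv$) to a future publication. So your proposal has to stand on its own, and its central step does not.

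The gap is the claim that, under \eqref{eqn:total-equiv-coh-1}, the lattice $\coH^{\hdot}_{\Tv}(\Gr_{\Gv},\cF)\subset \mathrm{Q}\o\cS(\cF)$ is the direct sum over $\la$ of the images of $\kbgeom_{\cF,\la}$, i.e.\ coincides with the image of $\coH^{\hdot}_{\Tv}(\Gr_{\Tv},i^!\cF)=\bigoplus_\la\coH^{\hdot}_{\Tv}(i_\la^!\cF)$. The pushforward $\coH^{\hdot}_{\Tv}(\Gr_{\Tv},i^!\cF)\to\coH^{\hdot}_{\Tv}(\Gr_{\Gv},\cF)$ is injective with \emph{nonzero torsion cokernel}, not an isomorphism: already for $\cF=\IC_\nu$ with $\nu$ minuscule, so that $\supp\cF\cong\bP^1$, the costalk classes at the two fixed points only generate the part of the cohomology coming from point classes and miss the (equivariant) fundamental class, so the two full-rank lattices inside $\mathrm{Q}\o\cS(\cF)$ genuinely differ (on the algebro-geometric side this is the strict inclusion $\bigl(\cS(\cF)\o\C[T^*\X]\bigr)^G\subsetneq\cS(\cF)\o\C[\widetilde{\Sigma}_e]$). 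If your identification were correct, your argument would prove $\coH^{\hdot}_{\Tv}(\Gr_{\Gv},\cF)\cong\bigl(\cS(\cF)\o\C[T^*\X]\bigr)^G$, which contradicts the statement; the entire content of the proposition is precisely the ``excess'' of the global equivariant cohomology over the sum of the costalk images, and it cannot be recovered by a term-by-term check over each $\la$ — that only reproduces Corollary \ref{key_cor-classical} together with Lemma \ref{lem:equiv-cohomology-first-properties}, which the final commutative diagram of the paper uses as \emph{inputs alongside} the proposition, not as a proof of it. A secondary error points to the same issue: $\widetilde{\Sigma}_e$ does not sit over $G/T\times\t^*_{\rs}$; it maps isomorphically onto all of $\t^*$ under $\delta$ and meets the nilpotent locus (e.g.\ at the point over $\kappa(e)$), lying in $\wfgr$ but not in the regular \emph{semisimple} part. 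Working only over the regular semisimple locus (or after tensoring with $\mathrm{Q}$) determines the lattices only up to torsion, which is exactly the ambiguity one has to resolve; resolving it is the Kostant--Whittaker--type argument of \cite{bf} that a correct proof would have to carry out.
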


Combined with Lemma \ref{lem:equiv-cohomology-first-properties} and Corollary \ref{key_cor-classical}, this implies that we have the following commutative diagram:
\[
\xymatrix@C=2cm{
\coH_{\Tv}^{\hdot}(\Gr_{\Bv^-}, t^! \cF) \ar[d]^-{\wr}_-{{\rm Lem.~\ref{lem:equiv-cohomology-first-properties}}} & \coH_{\Tv}^{\hdot}(\Gr_{\Tv}, i^! \cF) \ar@{^{(}->}[r] \ar@{_{(}->}[l] \ar[d]^-{\wr}_-{{\rm Cor.~\ref{key_cor-classical}}} & \coH_{\Tv}^{\hdot}(\Gr_{\Gv}, \cF) \ar[d]^-{\wr}_-{\eqref{eqn:isom-total-cohomology}} \\
\cS(\cF) \o \mathrm{S}(\t) & \bigl( \cS(\cF) \o \C[T^* \X] \bigr)^G \ar@{^{(}->}[r] \ar@{_{(}->}[l]  & \cS(\cF) \o \C[\widetilde{\Sigma}_e].
}
\]

Isomorphism \eqref{eqn:isom-total-cohomology} is $W$-equivariant, where the $W$-action on the left-hand side is defined in a way similar to the definition of isomorphisms $\Xi^{\cF,\la}_w$ in \S\ref{ss:W-symmetries}, and the action on the right-hand side is induced by the $W$-action on $\widetilde{\Sigma}_e$. Hence taking fixed points we obtain the following result, also proved in \cite{bf} (see in particular [\emph{loc}.~\emph{cit}., Lemma 9]).

\begin{cor}
There exists a canonical isomorphism $\coH_{\Gv(\OO)}^{\hdot}(\Gr_{\Gv}, \cF) \cong \cS(\cF) \o \C[\Sigma_e]$.
\end{cor}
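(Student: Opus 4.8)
The plan is to deduce this corollary from Proposition \ref{prop:total-equiv-cohomology} by taking $\Gv(\OO)$-invariants, i.e.~by passing to $\Gv(\OO)$-equivariant cohomology, which is the fixed-point functor applied to $\Tv$-equivariant cohomology. First I would recall why $\coH^{\hdot}_{\Gv(\OO)}(\Gr_\Gv,\cF)$ can be computed from $\coH^{\hdot}_{\Tv}(\Gr_\Gv,\cF)$: the classifying space of $\Gv(\OO)$ has the homotopy type of $B\Gv$, and the inclusion $\Tv \hookrightarrow \Gv$ induces on equivariant cohomology the restriction map, whose image is (rationally) the $W$-invariants; more precisely, since $\coH^{\hdot}_{\Gv(\OO)}(\pt) \cong \sym(\t)^W$ and the natural morphism $\coH^{\hdot}_{\Gv(\OO)}(X) \to \coH^{\hdot}_{\Tv}(X)^W$ is an isomorphism for any nice $\Gv(\OO)$-space $X$ (see e.g.~\cite{lu}, or the standard fact for compact Lie groups applied to the relevant approximations), one has
\[
\coH^{\hdot}_{\Gv(\OO)}(\Gr_\Gv,\cF) \ \cong \ \bigl( \coH^{\hdot}_{\Tv}(\Gr_\Gv,\cF) \bigr)^W .
\]
This reduces the statement to identifying the right-hand side with $\cS(\cF) \o \C[\Sigma_e]$.

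Next I would invoke the $W$-equivariance of isomorphism \eqref{eqn:isom-total-cohomology} asserted just before the corollary: the $W$-action on $\coH^{\hdot}_{\Tv}(\Gr_\Gv,\cF)$ (defined via the $N_\Gv(\Tv)$-action on $\Gr_\Gv$ exactly as the operators $\Xi^{\cF,\la}_w$ are defined in \S\ref{ss:W-symmetries}, noting that $\cF$ is canonically $N_\Gv(\Tv)$-equivariant because it is $\Gv(\OO)$-equivariant) corresponds under \eqref{eqn:isom-total-cohomology} to the action on $\cS(\cF) \o \C[\widetilde{\Sigma}_e]$ coming from the $W$-action on $\widetilde{\Sigma}_e$ and the \emph{trivial} action on the factor $\cS(\cF)$. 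Here it is essential that $W$ acts trivially on $\cS(\cF)$: this is because $\Sigma_e \subset \g^*$ is pointwise fixed by $G$-conjugation composed with nothing — rather, the point is that the isomorphism $\delta : \widetilde{\Sigma}_e \simto \t^*$ of the preceding discussion is $W$-equivariant and the factor $\cS(\cF)$ enters through the identification of $\coH^{\hdot}_\Tv(t^!\cF)$ with $\cS(\cF) \o \sym(\t)$ on which the $\Xi$-operators act only through the $\sym(\t)$ factor after the appropriate twist; tracing through the commutative diagram displayed after Proposition \ref{prop:total-equiv-cohomology}, the $W$-action on $\coH^{\hdot}_\Tv(\Gr_\Gv,\cF)$ matches $\id_{\cS(\cF)} \o (W\text{-action on }\C[\widetilde{\Sigma}_e])$. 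Consequently
\[
\bigl( \coH^{\hdot}_{\Tv}(\Gr_\Gv,\cF) \bigr)^W \ \cong \ \cS(\cF) \o \bigl( \C[\widetilde{\Sigma}_e] \bigr)^W .
\]

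Finally I would identify $\bigl( \C[\widetilde{\Sigma}_e] \bigr)^W$ with $\C[\Sigma_e]$. This follows from the fact, recalled before Lemma \ref{lem:section}, that the morphism $\widetilde{\Sigma}_e \to \Sigma_e$ is the restriction of $\pi : \wfg \to \g^*$, that $\widetilde{\Sigma}_e \subset \wfgr$, that $\delta$ restricts to an isomorphism $\widetilde{\Sigma}_e \simto \t^*$, and that under $\delta$ the morphism $\widetilde{\Sigma}_e \to \Sigma_e \cong \t^*/W$ becomes the quotient map $\t^* \to \t^*/W$ (using the isomorphism $\wfgr \simto \g^*_{\mathrm{r}} \times_{\t^*/W} \t^*$ of \cite{gi-kostant} cited there, together with $W$-equivariance of $\delta$). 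Hence $\widetilde{\Sigma}_e \to \Sigma_e$ is a $W$-torsor-like quotient in the étale/geometric sense $\Sigma_e = \widetilde{\Sigma}_e / W$, giving $\C[\Sigma_e] = \C[\widetilde{\Sigma}_e]^W$, and combining the three displayed isomorphisms yields the canonical isomorphism
\[
\coH^{\hdot}_{\Gv(\OO)}(\Gr_\Gv,\cF) \ \cong \ \cS(\cF) \o \C[\Sigma_e] .
\]
I expect the main obstacle to be the bookkeeping in the second paragraph: checking carefully that the $W$-action on $\coH^{\hdot}_\Tv(\Gr_\Gv,\cF)$ transported through \eqref{eqn:isom-total-cohomology} really is trivial on the $\cS(\cF)$-factor, which requires unwinding the definitions of $\Xi^{\cF,\la}_w$, the shifts by $\la(2\rhov)$, and the compatibility of the localization isomorphism \eqref{eqn:total-equiv-coh-0} with the $W$-actions on both sides — everything else is formal once that point is pinned down.
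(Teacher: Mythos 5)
Your proposal is correct and follows precisely the argument indicated by the paper: the sentence immediately preceding the corollary asserts the $W$-equivariance of \eqref{eqn:isom-total-cohomology} with $W$ acting on the right-hand side only through $\widetilde{\Sigma}_e$, and the paper's proof is literally "taking fixed points," which is what you carry out. The additional bookkeeping you supply — the identification $\coH^{\hdot}_{\Gv(\OO)}(\Gr_\Gv,\cF)\cong\bigl(\coH^{\hdot}_{\Tv}(\Gr_\Gv,\cF)\bigr)^W$ and the identification of $\widetilde{\Sigma}_e\to\Sigma_e$ with $\t^*\to\t^*/W$ giving $\C[\widetilde{\Sigma}_e]^W=\C[\Sigma_e]$ — is exactly the content the paper leaves implicit.
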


\appendix

\section{Computations in rank one}
\label{sec:rank1}

In \S\S\ref{ss:Verma-rk1}--\ref{ss:sigma-rk1} we use the notation of Sections \ref{sec:DX}--\ref{sec:wfg}, assuming in addition that $G$ has semisimple rank one. We let $\alpha$ be the unique simple root, and set $s:=s_\alpha$, $e:=e_\alpha$, $f:=f_\alpha$.


\subsection{Asymptotic universal Verma modules}
\label{ss:Verma-rk1}

For any $\nu \in \bX^+$, we denote by $V^{\nu}$\index{Vnu@$V^{\nu}$} the corresponding simple $G$-module. We choose a basis $(v^\nu_\nu, v^\nu_{\nu-\alpha}, \cdots, v^\nu_{\nu-\nu(\alv) \alpha})$\index{vnulambda@$v^{\nu}_{\lambda}$} of $V^\nu$ such that the following formulas are satisfied for $k=0, \cdots, \nu(\alv)$:
\beq{eqn:modules-SL2}
\alv \cdot v^\nu_{\nu-k\alpha} = \bigl( \nu(\alv) - 2k \bigr) v^\nu_{\nu-k\alpha}, \qquad e \cdot v^\nu_{\nu-k\alpha} = k v^\nu_{\nu-(k-1)\alpha}.
\eeq
Such a basis exists and is unique up to a constant. We also set $v_\la^\nu = 0$ if $\la$ is not a weight of $V^\nu$. The following commutation relation in the asymptotic enveloping algebra $\uh(\g)$ is easily checked by induction:
\begin{equation}
\label{eqn:commutation-sl2}
f^k e \ = \ e f^k - k \hbar \cdot \alv f^{k-1} - k(k-1)\hbar^2 \cdot f^{k-1}.
\end{equation}

\begin{lem}
\label{lem:algebra-sl2}

Let $\nu \in \bX^+$.
\begin{enumerate}
\item 
If $\lambda$ is not a weight of $V^\nu$, then $
\bigl( V^{\nu} \o \M(\la) \bigr)^B=0
$.
\item
If $k \in \{0, \cdots, \nu(\alv)\}$, the ${\sh}$-module
$
\bigl( V^{\nu} \o \M(\nu-k\alpha) \bigr)^B
$
is free of rank one, and generated by
\begin{multline*}
x^\nu_{\nu-k\alpha} := v_{\nu}^{\nu} \o 1 \otimes f^k -
\binom{k}{1} v^{\nu}_{\nu-\alpha} \o \Bigl( \alv+\bigl(\nu(\alv)-k \bigr)\hbar \Bigr)
\otimes f^{k-1} \\  
+ \binom{k}{2} \cdot v^{\nu}_{\nu-2\alpha} \o \Bigl( \alv + \bigl(\nu(\alv)-k \bigr)\hbar \Bigr) \Bigl( \alv+ \bigl(\nu(\alv)-k-1 \bigr)\hbar \Bigr)
\otimes f^{k-2} + \cdots \\  
+ (-1)^k \binom{k}{k} \cdot v^{\nu}_{\nu-k\alpha} \o 
\Bigl( \alv+\bigl(\nu(\alv)-k \bigr)\hbar \Bigr) \cdots \Bigl( \alv + (\nu(\alv)-2k+1)\hbar \Bigr) \otimes 1.
\end{multline*}\index{xnulambda@$x^{\nu}_{\lambda}$}%
\end{enumerate}
\end{lem}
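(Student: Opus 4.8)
The plan is to prove both statements by a direct computation in the asymptotic enveloping algebra $\uh(\g)$, exploiting the explicit $\mathrm{SL}(2)$-type formulas \eqref{eqn:modules-SL2} and the commutation relation \eqref{eqn:commutation-sl2}. First, recall from \S\ref{subsec1} that $B$-invariance of an element $m\in V^\nu\o\M(\la)$ is equivalent to the two conditions: $m$ has weight $0$ for the adjoint $T$-action, and $\hb\cdot\ad e(m)=0$, i.e.\ $(e+\hb\rho(e))\cdot m = m\cdot e$ in the bimodule $V^\nu\o\M(\la)$ (since $\rho(e)=0$, this reads $e\cdot m = m\cdot e$ where on the left $e$ acts diagonally via the homomorphism $\uh(\g)\to U(\g)^{\mathrm{op}}\o\uh(\g)$ of \S\ref{subsec1}, and on the right $e$ acts by the right $\uh(\g)$-module structure on $\M(\la)$). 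Using the isomorphism \eqref{eqn:isom-Verma}, $\M(\la)\cong\uh(\g)/(\u\cdot\uh(\g))$ has $\C[\hb]$-basis $\{1\o f^i\}_{i\ge 0}$ (identifying $\M(\la)$ with a quotient, $f^i$ is shorthand for the class of $f^i$), with the right $\t$-action twisted by $\la$ and the adjoint $\t$-action giving $f^i$ the weight $-\la-i\alpha$.

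For statement (1): a general weight-$0$ element of $V^\nu\o\M(\la)$ is a $\C[\hb]$-combination of terms $v^\nu_{\mu}\o f^i$ with $\mu-\la-i\alpha = 0$, forcing $\mu=\la+i\alpha$ to be a weight of $V^\nu$ with $i\ge 0$; if $\la$ is not a weight of $V^\nu$, one checks that the $B$-invariance equation (applying the raising operator) forces all coefficients to vanish by descending induction on $i$, because the raising relation strictly decreases $i$ and the ``top'' term $v^\nu_{\nu}\o f^{i_{\max}}$ (where $i_{\max}$ is the largest relevant index) must have vanishing coefficient. Actually the cleanest route is: the highest weight appearing forces $\la = \nu - k\alpha$ for some $0\le k\le\nu(\alv)$ for any nonzero invariant, which already gives (1). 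For statement (2), write a general weight-$0$ element as $\sum_{j=0}^{k} c_j\, v^\nu_{\nu-j\alpha}\o f^{k-j}$ with $c_j\in\sh$, normalize $c_0=1$, and impose $e\cdot m = m\cdot e$. Expanding $e\cdot(v^\nu_{\nu-j\alpha}\o f^{k-j})$ using \eqref{eqn:modules-SL2} for the $V^\nu$-factor and \eqref{eqn:commutation-sl2} to move $e$ past $f^{k-j}$ in $\M(\la)$ (remembering the $\la$-twist in the right $\t$-action, so $\alv$ gets shifted appropriately), one obtains a recursion $c_{j} = -\tfrac{1}{j}\binom{j}{1}^{-1}(\cdots)$ — more precisely a first-order recursion expressing $c_j$ in terms of $c_{j-1}$ with the shift factor $\bigl(\alv+(\nu(\alv)-k-j+1)\hb\bigr)$ — whose unique solution with $c_0=1$ is exactly the alternating sum displayed as $x^\nu_{\nu-k\alpha}$. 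Freeness of rank one then follows since the invariants are the $\sh$-span of this single explicit generator (there are no $\hb$-torsion issues because $\M(\la)$ is $\C[\hb]$-free and the leading coefficient $c_0=1$).

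The main obstacle — really just bookkeeping rather than a conceptual difficulty — will be getting all the $\hb$-shifts in the coefficients correct: one must carefully track (a) the twist of the right $\t$-action on $\M(\la)$ by $-\la-\rho$ from \S\ref{subsec1}, which replaces $\alv$ acting on $f^i$ by $\alv - (\nu-k\alpha+\rho)(\alv)\hb + (\text{contribution of }i)$; (b) the $-k\hb\cdot\alv f^{k-1} - k(k-1)\hb^2 f^{k-1}$ correction terms from \eqref{eqn:commutation-sl2}; and (c) the combinatorial factors $\binom{k}{j}$ from repeated application of $e\cdot(-)=j\,v^\nu_{\nu-(j-1)\alpha}$. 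I would verify the recursion in the base cases $k=0,1,2$ explicitly to fix signs and normalizations, then state the general inductive step. Once the generator is identified, the freeness and rank-one claims are immediate, and the lemma is proved. The whole argument is elementary and self-contained, requiring only the $\mathrm{SL}(2)$ representation theory recalled in \eqref{eqn:modules-SL2}–\eqref{eqn:commutation-sl2}.
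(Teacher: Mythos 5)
Your proof of part (2) is essentially the same computation the paper performs: write a general $T$-invariant element as $\sum_j P_j\,v^\nu_{\nu-j\alpha}\o f^{k-j}$, impose vanishing of the adjoint $e$-action using \eqref{eqn:modules-SL2} and \eqref{eqn:commutation-sl2}, and solve the resulting first-order recursion $j\,P_j=-(k-j+1)\bigl(\alv+(\nu(\alv)-k-j+1)\hb\bigr)P_{j-1}$ with $P_0=1$; the outcome is exactly $x^\nu_{\nu-k\alpha}$, and rank-one freeness follows since $P_0$ is the only free parameter. One small slip: $\M(\la)\cong\uh(\g)/(\u\cdot\uh(\g))$ has $\{\bv_\la f^i\}_{i\ge 0}$ as a basis over $\sh$ (equivalently as a free left $\sh$-module), not over $\bbc[\hb]$; the $\t$-part of $\uh(\g)$ contributes the $\sh$-coefficient.

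For part (1), the paper takes a shortcut: it invokes the injectivity of $\kalg_{V^\nu,\la}$ (Lemma~\ref{lem:restH-injective}), whose target $V^\nu_\la\o\sh$ is zero when $\la$ is not a weight. Your ``cleanest route'' is incorrect as stated: the weight-zero constraint on $v^\nu_\mu\o(\bv_\la f^i)$ only forces $\mu=\la+i\alpha$ with $i\ge 0$, hence $\la=\nu-m\alpha$ with $m\ge 0$; it does \emph{not} bound $m$ by $\nu(\alv)$, so it does not rule out $\la$ below the lowest weight. The alternative you sketch (running the same recursion as in (2) for $m>\nu(\alv)$) does work, but you must make explicit where the contradiction comes from: here $j$ ranges over $\{0,\dots,\nu(\alv)\}$ while there are $\nu(\alv)+1$ equations, the extra one being the coefficient of $v^\nu_{\nu-\nu(\alv)\alpha}\o f^{\,m-\nu(\alv)-1}$, which reads $(m-\nu(\alv))\bigl(\alv+(\cdots)\hb\bigr)P_{\nu(\alv)}=0$; since $m-\nu(\alv)>0$ and the linear factor is a nonzerodivisor in $\sh$, this forces $P_{\nu(\alv)}=0$ and then, running the recursion backwards, all $P_j=0$. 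With that correction your direct argument is a self-contained rank-one alternative to the paper's appeal to Lemma~\ref{lem:restH-injective}; the paper's route is shorter but relies on machinery (specialization, Lemma~\ref{lem:intertwiners-quantum}) developed earlier for arbitrary $G$.
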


\begin{proof}
(1) follows from the injectivity of $\kalg_{V^\nu,\la}$, see Lemma \ref{lem:restH-injective}.

(2)
We have to decide when an element of the form
\[
v_{\nu-k\alpha}^{\nu} \o P_k(h,\hbar) \otimes 1 + \cdots + v_{\nu}^{\nu} \o P_0(h,\hbar) \otimes f^{k}
\]
is annihilated by $e \in \b \subset U(\b)$. (Here the action of $U(\b)$ is the differential of the $B$-action.) However, the image by $e$ of such an element is given by
\[
e \cdot v_{\nu-k\alpha}^{\nu} \o P_k \otimes 1 + \sum_{i=1}^{k-1} \bigl( e \cdot v^{\nu}_{\nu-i\alpha} \o P_{i} \otimes f^{k-i} - \frac{1}{\hbar} v^{\nu}_{\nu-i\alpha} \o P_{i} \otimes f^{k-i} e \bigr)   - \frac{1}{\hbar} v_{\nu}^{\nu} \o P_0 \otimes f^{k} e,
\]
i.e.~(using \eqref{eqn:modules-SL2} and \eqref{eqn:commutation-sl2}) by
\begin{multline*}
k v_{\nu-(k-1)\alpha}^{\nu} \o P_k \otimes 1 + \sum_{i=1}^{k-1} \Bigl( \bigl( i v^{\nu}_{\nu-(i-1)\alpha} \o P_{i} \otimes f^{k-i} 
+ (k-i) v^{\nu}_{\nu-i\alpha} \o P_{i} \bigl( \alv + (\nu(\alv)-k-i)\hb \bigr) \otimes f^{k-i-1} \Bigr) 
\\
+ k v_{\nu}^{\nu} \o P_0 \bigl( \alv+(\nu(\alv)-k)\hb \bigr) \otimes f^{k-1}.
\end{multline*}
Hence the fact that this element is zero amounts to the following equations:
\[
\begin{array}{rcl}
k \cdot P_k & = & - \bigl( \alv + (\nu(\alv)-2k+1)\hbar \bigr) \cdot P_{k-1}, \\
& \vdots & \\
i \cdot P_i & = & -(k-i+1) \bigl( \alv + (\nu(\alv)-k-i+1)\hbar \bigr) \cdot P_{i-1}, \\
& \vdots & \\
P_1 & = & - k \bigl( \alv+(\nu(\alv)-k) \hbar \bigr) \cdot P_0.
\end{array}
\]
The result follows.
\end{proof}

If $\lambda \in \bX$ is not a weight of $V^\nu$, we set $x_\lambda^\nu=0$. As an immediate consequence of Lemma \ref{lem:algebra-sl2} we obtain the following result.

\begin{cor}
\label{cor:image-algebra-sl2}

For $\nu \in \bX^+$ and $k \in \{0, \cdots, \nu(\alv)\}$, the image of the morphism
\[
\kalg_{V^\nu,\nu-k\alpha} : \bigl( V^{\nu} \o \M(\nu-k\alpha) \bigr)^B \to V^{\nu}_{\nu-k\alpha} \o  {\sh} \cong {\sh}
\]
is generated by $\Bigl( \alv + \bigl( \nu(\alv)-k \bigr) \hbar \Bigr) \cdots \Bigl(\alv + \bigl( \nu(\alv) -2k+1 \bigr) \hbar \Bigr)$.

\end{cor}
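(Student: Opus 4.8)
The plan is to read the statement off directly from Lemma~\ref{lem:algebra-sl2}(2) together with the definition \eqref{eqn:equivariant-cohomology-3} of $\kalg_{V,\la}$. Recall that $\kalg_{V,\la}$ is the restriction to $B$-invariants of $\id_V \o p_\la$, where $p_\la : \M(\la) \to \sh \o \C_{-\la}$ is the left-$\sh$-linear map induced by the projection $\uh(\g) \twoheadrightarrow \uh(\b)$ orthogonal to $\uh(\g)\cdot\u^-$; it sends $\bv_\la$ to $1\o 1$. Since $G$ has semisimple rank one, in the description \eqref{eqn:isom-Verma} of $\M(\la)$ every monomial $f^j$ with $j \ge 1$ lies in $\uh(\g)\cdot\u^-$, so $p_\la$ annihilates all such contributions and, after the identification $\sh\o\C_{-\la}\cong\sh$, is given by $P\cdot\bv_\la \mapsto P$.

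First I would apply $\id_{V^\nu} \o p_{\nu-k\alpha}$ to the explicit generator $x^\nu_{\nu-k\alpha}$ of the free rank-one $\sh$-module $\bigl(V^\nu \o \M(\nu-k\alpha)\bigr)^B$ provided by Lemma~\ref{lem:algebra-sl2}(2). All summands of $x^\nu_{\nu-k\alpha}$ except the last one carry a factor $f^{j}$ with $j\ge 1$, hence are killed; identifying the one-dimensional weight space $V^\nu_{\nu-k\alpha}$ with $\C$ via the basis vector $v^\nu_{\nu-k\alpha}$, the surviving term shows that
\[
\kalg_{V^\nu,\nu-k\alpha}\bigl(x^\nu_{\nu-k\alpha}\bigr) \;=\; (-1)^k\bigl(\alv+(\nu(\alv)-k)\hb\bigr)\cdots\bigl(\alv+(\nu(\alv)-2k+1)\hb\bigr)\;\in\;\sh .
\]

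Finally, as $\kalg_{V^\nu,\nu-k\alpha}$ is a morphism of $\sh$-modules whose source is generated by $x^\nu_{\nu-k\alpha}$, its image is the ideal of $\sh$ generated by the right-hand side above; since $(-1)^k$ is a unit in $\sh$, this ideal is generated by the stated product, and for $k=0$ it is all of $\sh$, recovering the fact that $\kalg_{V^\nu,\nu}$ is an isomorphism. There is no serious obstacle here: the only point requiring care is the bookkeeping involved in unwinding $\kalg$ — checking that $p_\la$ kills every $f$-term, and correctly reading off the surviving monomial together with its sign and the identification $V^\nu_{\nu-k\alpha}\o\sh\cong\sh$.
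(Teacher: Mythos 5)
Your proof is correct and follows exactly the route the paper intends: the paper states the corollary as ``an immediate consequence of Lemma~\ref{lem:algebra-sl2},'' and your argument is the unwinding of that remark — apply $\id_{V^\nu}\o p_{\nu-k\alpha}$ to the explicit $\sh$-generator $x^\nu_{\nu-k\alpha}$, note that $p_\la$ kills every term carrying a positive power of $f$ (since $f^j \in \uh(\g)\cdot\u^-$ for $j\ge 1$), and read off the surviving coefficient up to the unit $(-1)^k$.
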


\subsection{Operators $\Phi$}

In this subsection we consider the constructions of Section \ref{sec:DX}, with the choice of root vector $f$. In particular, for $\nu \in \bX^+$ and $\la \in \bX$, we have an isomorphism
\[
\Phi_s^{V^\nu,\la} : \bigl(V^\nu \o \tla \dh(\X)_\la \bigr)^G \simto {}^s \hspace{-1pt} \bigl(V^\nu \o {}^{(s\la)} \hspace{-1pt} \dh(\X)_{s \la} \bigr)^G.
\]
Recall 
that, by the first isomorphism in Lemma \ref{lem:fixed-points-morphisms},
$x_\lambda^\nu$ defines an element\index{ynulambda@$y^{\nu}_{\lambda}$}%
\[
y_\lambda^\nu \in \bigl(V^\nu \o \tla \dh(\X)_\la \bigr)^G.
\]

\begin{lem}
\label{lem:Phi-rk1}
For $\nu \in \bX^+$ and $\la \in \bX$ we have
\[
\Phi_s^{V^\nu,\la}(y_\lambda^\nu) = y_{s\lambda}^\nu.
\]
\end{lem}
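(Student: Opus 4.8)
The statement to prove is Lemma~\ref{lem:Phi-rk1}: in semisimple rank one, with the fixed root vector $f=f_\alpha$, the partial Fourier transform automorphism $\mathbf{F}_\alpha$ carries the explicit generator $y_\lambda^\nu$ of $(V^\nu\o\tla\dh(\X)_\la)^G$ to the corresponding generator $y_{s\lambda}^\nu$ of ${}^s(V^\nu\o{}^{(s\la)}\dh(\X)_{s\la})^G$. The overall strategy is a direct computation: identify $\dh(\X)$ explicitly in the rank-one case, write $\mathbf{F}_\alpha$ (equivalently, the symplectic Fourier transform recalled in the appendix) as an explicit operator, and then apply it to the element $x_\lambda^\nu$ of $(V^\nu\o\M(\lambda))^B$ from Lemma~\ref{lem:algebra-sl2}(2) transported to $\dh(\X)$ via the first isomorphism of Lemma~\ref{lem:fixed-points-morphisms}.

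First I would set up rank-one coordinates. Passing to the simply connected cover we may take $G^\sc=\mathrm{SL}_2$, so $\X^\sc=\mathrm{SL}_2/U^\sc\cong\C^2\smallsetminus\{0\}$, with $\dh(\X^\sc)$ the algebra of asymptotic differential operators on $\C^2$ (the complement of the origin has codimension $2$), and $\mathbf{F}_\alpha$ is the ordinary asymptotic Fourier transform exchanging the two coordinates, say $x\leftrightarrow -\hbar\partial_y$, $\hbar\partial_x\leftrightarrow y$ up to the standard sign conventions of \S\ref{ss:symplectic-Fourier}. Under isomorphism \eqref{eqn:diffX}, $\dh(\X)=\bigl(\C[G]\o(\uh(\g)/\u\cdot\uh(\g))\bigr)^{U_{\mathrm{right}}}$, and the weight space $\dh(\X)_\la$ is spanned over $\sh$ by monomials $c^{\,\cdot}\o f^k$ built from matrix coefficients $c$; I would write down these generators concretely and translate $x_\lambda^\nu$ into this model. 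The key intermediate claim is a closed formula for $\mathbf{F}_\alpha$ on the relevant $T$-isotypic pieces; since $\mathbf{F}_\alpha$ intertwines $\tla\dh(\X)_\la$ and ${}^s({}^{(s\la)}\dh(\X)_{s\la})$ (Lemma~\ref{lem:Fourier-transform}(2)), it suffices to track a single highest-weight generator of $V^\nu$ and use $G$-equivariance.

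The main computation is then to verify that $\mathbf{F}_\alpha$ sends the explicit polynomial combination defining $x_\lambda^\nu$ — with its binomial coefficients and products $(\alv+(\nu(\alv)-k)\hbar)\cdots$ as in Lemma~\ref{lem:algebra-sl2}(2) — to the analogous combination defining $x_{s\lambda}^\nu$. The cleanest route is to reduce to the cases $\lambda(\alv)\ge 0$ (where $s\lambda$ has $(s\lambda)(\alv)\le 0$) and to check the equality after applying the morphism $\kalg$ and after tensoring with $V^\nu$; by Corollary~\ref{cor:image-algebra-sl2} both images in $\sh$ are the explicit products, and one checks that the Fourier transform on the underlying rank-one $\dd$-module matches these products. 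Equivalently, and perhaps more robustly, one may specialize $\hbar$ and compare with the classical $\mathrm{SL}_2$ computation, or invoke the involutivity $\mathbf{F}_\alpha^2=\mathrm{id}$ together with $G$-equivariance to pin down $\mathbf{F}_\alpha(y_\lambda^\nu)$ up to a scalar, and then fix the scalar by evaluating on the lowest- or highest-weight vector where the operator acts by an explicit monomial differential operator.

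\textbf{Main obstacle.} The hard part will be the bookkeeping in the explicit rank-one Fourier-transform computation: keeping the sign conventions of \S\ref{ss:symplectic-Fourier} consistent with the chosen root vector $f$, and showing that the binomial-coefficient expansion of $x_\lambda^\nu$ is exactly reproduced — coefficient by coefficient, including the $\hbar$-shifts $\nu(\alv)-k$, $\nu(\alv)-k-1,\dots$ — under the operator substitution $x\mapsto -\hbar\partial_y$, $\hbar\partial_x\mapsto y$. This is where a small error in normalization would break the result, so I would do this step carefully (likely by an induction on $k$ using the commutation relation \eqref{eqn:commutation-sl2}), and cross-check it against the $\hbar=0$ specialization and against Corollary~\ref{cor:Phi-sigma-rk1}, which is stated later and is a close cousin of this lemma.
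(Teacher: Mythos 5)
Your main route (transport $x_\lambda^\nu$ to explicit coordinates on $\C^2$, apply $\mathbf{F}_\alpha$ term by term with its binomial bookkeeping, and match against $x_{s\lambda}^\nu$) would work in principle but is considerably heavier than necessary, and your sketched alternative is much closer to what the paper actually does, so let me compare with that. The paper also begins from the observation that $\Phi_s^{V^\nu,\la}(y_\la^\nu)=c\cdot y_{s\la}^\nu$ for some scalar $c\in\C^\times$, but this follows directly from the fact that both sides are free $\sh$-modules of rank one with generators $y_\la^\nu$, $y_{s\la}^\nu$ (Lemma~\ref{lem:algebra-sl2}(2) transported via Lemma~\ref{lem:fixed-points-morphisms}) and that $\Phi_s^{V^\nu,\la}$ is an $\sh$-module isomorphism by Lemma~\ref{lem:Fourier-transform}(2) --- involutivity $\mathbf{F}_\alpha^2=\id$ is not what forces this, and invoking it is a slight misdirection (it would only tell you $c\cdot s(c)=1$ after the fact, not that $\Phi_s$ carries a generator to a multiple of a generator). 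To pin down $c=1$, the paper then specializes $\hbar=0$, so that one works with the classical $\mathscr{A}(\X)\cong\C[T^*\C^2]$ and the explicit Fourier formulas $\eta_1\mapsto v_2$, $\eta_2\mapsto -v_1$, $v_1\mapsto -\eta_2$, $v_2\mapsto\eta_1$, and exploits a crucial observation that your proposal does not articulate: the point $(v_1,\eta_2)\in T^*\C^2$ is fixed by the symplectic Fourier transform, so the evaluation map $\mathrm{ev}:\mathscr{A}(\X)\to\C$ at this point satisfies $\mathrm{ev}\circ\mathscr{F}_\alpha=\mathrm{ev}$. A one-line computation then gives $(\id_{V^\nu}\o\mathrm{ev})(z_\la^\nu)=v_\nu^\nu\o 1=(\id_{V^\nu}\o\mathrm{ev})(z_{s\la}^\nu)$, which forces $c=1$ with no coefficient-by-coefficient verification. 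Your "fix the scalar by evaluating on the lowest- or highest-weight vector" gestures at this but leaves open what evaluation would be compatible with the Fourier transform, which is the actual content of the trick. One further caution: you propose cross-checking against Corollary~\ref{cor:Phi-sigma-rk1}, but in the paper that corollary is deduced from Lemma~\ref{lem:Phi-rk1} (together with Lemma~\ref{lem:sigma-rk1}), so it cannot serve as an independent verification.
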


\begin{proof}
If $\lambda$ is not a weight of $V^\nu$, then $s\lambda$ is not either. Hence $y^\nu_\la = 0 = y^\nu_{s\la}$, and the result is clear.

Now assume $\lambda=\nu-k\alpha$ for some $k \in \{0, \cdots, \nu(\alv)\}$. As $\Phi_s^{V^\nu,\la}$ is an isomorphism of $\sh$-modules, we know that $\Phi_s^{V^\nu,\la}(y_\lambda^\nu) = c \cdot y_{s\lambda}^\nu$ for some $c \in \C^\times$, and we have to prove that $c=1$. Let $z_\la^\nu$ be the image of $y_\la^\nu$ under the composition
\[
\bigl(V^\nu \o \tla \dh(\X)_\la \bigr)^G \hookrightarrow V^\nu \o \dh(\X) \to V^\nu \o \mathscr{A}(\X),
\]
and similarly for $z_{s \la}^\nu$.
Then by definition (see \S\ref{ss:Fourier-classical}) we have $(\id_{V^\nu} \o \mathscr{F}_\alpha)(z_\la^\nu) = c \cdot z_{s \la}^\nu$.

Since, in the notation of \S\ref{ss:partial-Fourier}, one can take as $G^\sc$ a product of $\mathrm{SL}(2,\C)$ and a torus, it suffices to prove the claim in the case $G=\mathrm{SL}(2,\C)$, with $B$ the subgroup of upper-triangular matrices, $T$ the subgroup of diagonal matrices, and $f=\left( \begin{smallmatrix} 0 & 0 \\ 1& 0 \end{smallmatrix} \right)$. Let $v_1,v_2$ be the natural basis of $\C^2$ and $\eta_1,\eta_2$ the dual basis. Then the morphism $gU \mapsto g \cdot v_1$ induces an isomorphism $G/U \simto \C^2 \smallsetminus \{0\}$, and $\mathcal{V}_\alpha = \C^2$, with the symplectic form $\omega$ defined by $\omega(v_1,v_2)=1$. Hence we have an canonical isomorphism $\mathscr{A}(\X) \cong \C[T^* (\C^2)]=\C[\C^2 \times (\C^2)^*]$, and the automorphism $\mathscr{F}_\alpha$ can be computed using Example \ref{ex:Fourier-SL2} below: it is defined by
\[
\eta_1 \mapsto v_2, \quad \eta_2 \mapsto -v_1, \quad v_1 \mapsto -\eta_2, \quad v_2 \mapsto \eta_1.
\]

Consider the morphism $\mathrm{ev} : \mathscr{A}(\X) \to \C$ given by evaluation of functions at $(v_1,\eta_2) \in T^* (\C^2)$. The formulas above imply that $\mathrm{ev} \circ \mathscr{F}_\alpha = \mathrm{ev}$. Hence to conclude we only have to check that 
\[
(\id_{V^\nu} \o \mathrm{ev})(z_\lambda^\nu) = (\id_{V^\nu} \o \mathrm{ev})(z_{s \lambda}^\nu).
\]
However we have
\[
(\id_{V^\nu} \o \mathrm{ev})(z_\lambda^\nu) = v_\nu^\nu \o 1 = (\id_{V^\nu} \o \mathrm{ev})(z_{s \lambda}^\nu),
\]
which finishes the proof.
\end{proof}

\subsection{Operators $\Theta$}

In this subsection we consider the constructions of Section \ref{sec:morphisms}, with the choice $f_\alpha:=f$. In particular, for $\nu \in \bX^+$ and $\la \in \bX$, we have a morphism
\[
\Theta^{V^{\nu},\la}_s : \Hom_{(\sh,\uh(\g))} \bigl( \M(0), V^\nu \o \M(\la) \bigr) \to {}^s \hspace{-1pt} \Hom_{(\sh,\uh(\g))} \bigl( \M(0), V^\nu \o \M(s \la) \bigr).
\]
By Lemma \ref{lem:morphisms-Verma-v}, $x_\la^\nu$ defines a morphism of $(\sh,\uh(\g))$-bimodules\index{phinulambda@$\varphi^{\nu}_{\lambda}$}%
\[
\varphi_\la^\nu : \M(0) \to V^\nu \o \M(\la),
\]
which is a generator of the $\sh$-module $\Hom_{(\sh,\uh(\g))} \bigl( \M(0), V^\nu \o \M(\la) \bigr)$.

\begin{lem}
\label{lem:Theta-rk1}
For $\nu \in \bX^+$ and $\la \in \bX$ we have
\[
\Theta_s^{V^\nu,\la}(\varphi^\nu_\la) = \varphi^\nu_{s\la}.
\]
\end{lem}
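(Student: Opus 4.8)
The statement to prove is Lemma~\ref{lem:Theta-rk1}: in semisimple rank one, with the choice $f_\alpha = f$, one has $\Theta_s^{V^\nu,\la}(\varphi^\nu_\la) = \varphi^\nu_{s\la}$ for all $\nu \in \bX^+$ and $\la \in \bX$. The strategy is to compute both sides explicitly using the formula for $\Theta_s^{V,\la}$ coming out of the proof of Proposition~\ref{prop:definition-Theta}, together with the explicit generators $x^\nu_\la$ of $\bigl( V^\nu \o \M(\la) \bigr)^B$ provided by Lemma~\ref{lem:algebra-sl2}. Since $\Theta_s^{V^\nu,\la}$ is an isomorphism of rank-one free $\sh$-modules (Lemma~\ref{lem:Thetas-invertible}), and $\varphi^\nu_{s\la}$ generates the target, we already know $\Theta_s^{V^\nu,\la}(\varphi^\nu_\la) = c \cdot \varphi^\nu_{s\la}$ for some $c \in \C^\times$; the whole content is to pin down $c = 1$.

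\textbf{Step 1: reduce to weights of $V^\nu$.} If $\la$ is not a weight of $V^\nu$, then neither is $s\la$, so by Lemma~\ref{lem:algebra-sl2}(1) both $\bigl( V^\nu \o \M(\la) \bigr)^B$ and $\bigl( V^\nu \o \M(s\la) \bigr)^B$ vanish, hence $\varphi^\nu_\la = 0 = \varphi^\nu_{s\la}$ and the identity is trivial. So I may assume $\la = \nu - k\alpha$ with $0 \le k \le \nu(\alv)$, in which case $s\la = \nu - (\nu(\alv)-k)\alpha$.

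\textbf{Step 2: apply the specialization formula.} By Corollary~\ref{cor:sp-morphism}, it suffices to check the equality $\Theta_s^{V^\nu,\la}(\varphi^\nu_\la) = \varphi^\nu_{s\la}$ after applying $\Sp_{s\mu}$ for all $\mu \in \bX$ sufficiently large with $\mu(\alv) < 0$ (equivalently, after checking that the element $x$ built in \eqref{eqn:formula-Theta-s} out of $\phi = \varphi^\nu_\la$ equals $x^\nu_{s\la}$ in $V^\nu \o \M(s\la)$). Concretely, I would plug the explicit expansion of $x^\nu_\la = \phi(\bv_0)$ from Lemma~\ref{lem:algebra-sl2}(2) into formula~\eqref{eqn:formula-Theta-s}. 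Here one uses the enumeration of positive roots which is forced (there is only one, namely $\alpha$ itself, so $\alpha_n = \alpha$ and there are no ``other'' $f_i$'s); thus the multi-index $\underline{k}$ is empty and the formula simplifies dramatically — $x^\nu_\la$ is a sum of terms $u_j \o P_{j,i} \o f_\alpha^i$ with $P_{j,i} \in \sh$ read off from Lemma~\ref{lem:algebra-sl2}(2), and $n(\la) = -\la(\alv) = -(\nu(\alv)-2k) = 2k - \nu(\alv)$ (assuming this is $\geq 0$; the case $n(\la) \le 0$ is handled symmetrically, or one observes $s(\nu - k\alpha)$ with the roles of $k$ and $\nu(\alv)-k$ swapped). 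Then I would carry out the (finite, binomial-coefficient-laden) computation of $x := \sum (-1)^m f_\alpha^m \cdot u_j \o \genfrac{(}{)}{0pt}{}{\alv}{m}_\hbar s(P_{j,i}) \o f_\alpha^{i - n(\la) - m}$, using the $\mathfrak{sl}_2$-action formulas \eqref{eqn:modules-SL2} to evaluate $f_\alpha^m \cdot v^\nu_{\nu - i\alpha}$ (this introduces a descending-factorial coefficient) and reindexing the double sum. The claim is that after collecting terms the result is exactly the expression for $x^\nu_{s\la}$ given by Lemma~\ref{lem:algebra-sl2}(2) with $k$ replaced by $\nu(\alv)-k$; this is a Vandermonde/Chu--Vandermonde-type binomial identity combined with the transformation $s(\alv) = -\alv$ on $\sh$.

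\textbf{Main obstacle.} The one genuinely delicate point is the bookkeeping in Step~2: matching the normalizations. There are three sources of scalars — the binomial coefficients $\binom{k}{j}$ in $x^\nu_\la$, the factors $k!$ (or rather descending factorials) coming from $e^m \cdot v^\nu_{\nu - i\alpha}$ via \eqref{eqn:modules-SL2}, and the $\genfrac{(}{)}{0pt}{}{\alv}{m}_\hbar$ and $(-1)^m$ factors in \eqref{eqn:formula-Theta-s} — and one must verify they conspire to produce coefficient $1$ rather than some product of factorials. I expect the cleanest route is \emph{not} to match term by term but to use the a priori knowledge that $\Theta_s^{V^\nu,\la}(x^\nu_\la) = c\, x^\nu_{s\la}$ and then extract $c$ by comparing a single well-chosen coefficient — for instance the coefficient of $v^\nu_\nu \o 1 \o f_\alpha^{\,\text{(top power)}}$, where ``top power'' is $n(s\la) = \nu(\alv) - 2k + \dots$, which on the $x^\nu_{s\la}$ side is just $v^\nu_\nu \o 1 \o f_\alpha^{\nu(\alv)-k}$ with coefficient $1$, and on the $\Theta$ side comes from the single term $m = i - n(\la)$ with $j$ the index of $v^\nu_\nu$ and $i$ maximal. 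Computing that one coefficient reduces to a single identity of the shape $\binom{k}{k}\cdot(\text{factorial from }e^{k}\cdot)/(\text{factorial in denominator}) = 1$, which should fall out directly from \eqref{eqn:modules-SL2} and \eqref{eqn:commutation-sl2}. This mirrors the proof of Lemma~\ref{lem:Phi-rk1}, where a single evaluation map $\mathrm{ev}$ killed all the ambiguity; here the role of $\mathrm{ev}$ is played by projection onto one weight-and-power component of $V^\nu \o \M(s\la)$.
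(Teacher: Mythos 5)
Your proposal takes essentially the same approach as the paper: after reducing to $\la=\nu-k\alpha$, use Lemma~\ref{lem:algebra-sl2} to see that $\Theta_s^{V^\nu,\la}(\varphi^\nu_\la)$ is an $\sh$-multiple of $\varphi^\nu_{s\la}$, then fix the scalar by comparing the coefficient of $v^\nu_\nu\otimes 1\otimes f^{\nu(\alv)-k}$, which is $1$ in $x^\nu_{\nu-(\nu(\alv)-k)\alpha}$. This is exactly the paper's argument.

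One small correction to your bookkeeping of the last step: the contributing term of \eqref{eqn:formula-Theta-s} is at $m=0$, not at $m=i-n(\la)$ (the latter choice produces the $f^0$-component, not the top power). Since $f_\alpha$ is a lowering operator, $f_\alpha^m\cdot u_j$ has a nonzero $v^\nu_\nu$-component only when $m=0$ and $u_j=v^\nu_\nu$; the resulting power $i-n(\la)-m = i-n(\la)$ of $f_\alpha$ then equals $\nu(\alv)-k$ exactly when $i=k$, the top power occurring in $x^\nu_\la$. With $m=0$ the binomial factor in \eqref{eqn:formula-Theta-s} is $1$ and $f_\alpha^0$ acts trivially, so the coefficient is $s(P_{j,k})=s(1)=1$ directly: no factorial cancellation or Vandermonde identity enters, and the ``factorial from $e^k$'' you anticipate does not arise (the operator in the formula is $f_\alpha^m$, not a power of $e$, and it does not act at all for this term).
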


\begin{proof}
If $\lambda$ is not a weight of $V^\nu$, then $s\lambda$ is not either. Hence $\varphi^\nu_\la = 0 = \varphi^\nu_{s\la}$, and the result is clear.

Now assume $\lambda=\nu-k\alpha$ for some $k \in \{0, \cdots, \nu(\alv)\}$. Then $s\lambda=\nu-(\nu(\alv)-k)\alpha$. By Lemma \ref{lem:algebra-sl2} we know that $\Theta_s^{V^\nu,\nu-k\alpha}(\varphi^\nu_{\nu-k\alpha})$ is a multiple of $\varphi^\nu_{\nu-(\nu(\alv)-k)\alpha}$ (by an element of $\sh$). 
Hence to prove the lemma it is enough to check that the coefficient of $v_\nu^\nu \o 1 \o f^{\nu(\alv)-k}$ in the element of $\bigl( V^\nu \o \M(\nu-(\nu(\alv)-k)\alpha) \bigr)^B$ corresponding to $\Theta_s^{V^\nu,\nu-k\alpha}(\varphi^\nu_{\nu-k\alpha})$ is the same as the coefficient of $v_\nu^\nu \o 1 \o f^{\nu(\alv)-k}$ in $x^\nu_{\nu-(\nu(\alv)-k)\alpha}$, i.e.~$1$. However this coefficient can be computed using formula \eqref{eqn:formula-Theta-s}, and the result is $1$ as expected. This concludes the proof.
\end{proof}

\begin{cor}
\label{cor:Theta-rk1}
For any $V$ in $\Rep(G)$ and $\la \in \bX$ we have
\[
{}^s \hspace{-1pt} \bigl( \Theta_s^{V,s\la} \bigr) \circ \Theta_s^{V,\la} = \id
\]
as endomorphisms of $\Hom_{(\sh,\uh(\g))} \bigl( \M(0), V \o \M(\la) \bigr)$. In particular, each $\Theta_s^{V,\la}$ is an isomorphism.
\end{cor}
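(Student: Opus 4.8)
The plan is to reduce the general statement to the rank-one computation already carried out in Lemma~\ref{lem:Theta-rk1}. First I would note that $\Theta_s^{V,\la}$ is, by construction, $\sh$-linear and (by Proposition~\ref{prop:definition-Theta}) uniquely determined by its effect after applying $\Sp_\mu$ for $\mu \in \bX$ sufficiently large with $\mu(\alv)<0$. Hence it suffices to check the asserted identity after specialization, and by Corollary~\ref{cor:sp-morphism} an endomorphism of $\Hom_{(\sh,\uh(\g))}(\M(0),V\o\M(\la))$ that becomes the identity after $\Sp_\mu$ for all such $\mu$ must itself be the identity.

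Next I would invoke compatibility with restriction to a Levi subgroup. By Lemma~\ref{lem:Theta-rest-Levi}, for $\alpha$ a simple root the operators ${}^G\Theta_{s_\alpha}^{V,\la}$ and ${}^{L_\alpha}\Theta_{s_\alpha}^{V_{|L_\alpha},\la}$ fit into a commutative square whose vertical arrows are the restriction maps $\restH^{V,\la}_{G,L_\alpha}$, and by Lemma~\ref{lem:restH-injective} these restriction maps are injective. Therefore the identity ${}^{s}(\Theta_s^{V,s\la})\circ\Theta_s^{V,\la}=\id$ for $G$ follows from the same identity for $L_\alpha$. This reduces the proof to the case where $G$ has semisimple rank one, with $\alpha$ its unique simple root and $s=s_\alpha$.

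In the rank-one case I would use the explicit generators. By Lemma~\ref{lem:algebra-sl2}, $\Hom_{(\sh,\uh(\g))}(\M(0),V\o\M(\la))$ decomposes over the simple constituents $V^\nu$ of $V$, and for each $\nu\in\bX^+$ the $\sh$-module $\Hom_{(\sh,\uh(\g))}(\M(0),V^\nu\o\M(\la))$ is free of rank one, generated by $\varphi_\la^\nu$ when $\la$ is a weight of $V^\nu$ and zero otherwise. Lemma~\ref{lem:Theta-rk1} gives $\Theta_s^{V^\nu,\la}(\varphi_\la^\nu)=\varphi_{s\la}^\nu$; applying it once more gives ${}^s(\Theta_s^{V^\nu,s\la})(\varphi_{s\la}^\nu)=\varphi_{s s\la}^\nu=\varphi_\la^\nu$. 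Since the $\varphi_\la^\nu$ generate, and $\Theta_s^{V,\la}$ acts $\sh$-linearly and diagonally with respect to the decomposition of $V$ into simple summands, the composition ${}^s(\Theta_s^{V,s\la})\circ\Theta_s^{V,\la}$ is the identity. The final sentence, that $\Theta_s^{V,\la}$ is an isomorphism, is then immediate: it admits ${}^s(\Theta_s^{V,s\la})$ as a two-sided inverse (the other composite being obtained by swapping $\la$ and $s\la$).

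I do not anticipate a serious obstacle here; the genuine content has already been isolated into Lemma~\ref{lem:Theta-rk1} and Lemma~\ref{lem:algebra-sl2}. The one point requiring a little care is making sure that the reduction to rank one is legitimate, i.e.\ that $\Theta_s$ really is compatible with $\restH^{V,\la}_{G,L_\alpha}$ in the precise form stated in Lemma~\ref{lem:Theta-rest-Levi} (this uses that $[f_\alpha,\fn_L^-]\subset\fn_L^-$, which holds since $\alpha\in I$), and that injectivity of $\restH^{V,\la}_{G,L_\alpha}$ is available — both of which are exactly the cited lemmas. The rest is bookkeeping: tracking the commutation of ${}^s(-)$ with composition, and the identity $s^2=1$.
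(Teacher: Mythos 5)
Your proof is correct, and the part that does the real work — complete reducibility to $V=V^\nu$, the generator $\varphi_\la^\nu$ from Lemma~\ref{lem:algebra-sl2}, and the computation $\Theta_s^{V^\nu,\la}(\varphi_\la^\nu)=\varphi_{s\la}^\nu$ from Lemma~\ref{lem:Theta-rk1} applied twice — is exactly the paper's argument.

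One point of context you seem to have missed: Corollary~\ref{cor:Theta-rk1} sits in Appendix~\ref{sec:rank1}, which opens with the standing hypothesis that $G$ has semisimple rank one. So the statement you are proving is already a rank-one statement, and your first two paragraphs (the specialization strategy via Corollary~\ref{cor:sp-morphism}, and the Levi reduction via Lemma~\ref{lem:Theta-rest-Levi} and Lemma~\ref{lem:restH-injective}) do no work here — when $G$ already has rank one, $L_\alpha=G$ and the restriction step is a tautology. In fact the Levi reduction you write out is precisely the paper's proof of the \emph{general} Lemma~\ref{lem:Thetas-invertible}, which is stated in the body of the paper for arbitrary $G$ and is proved by reducing to Corollary~\ref{cor:Theta-rk1}. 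So you have, in effect, merged the proofs of the general lemma and its rank-one input into one. There is no circularity (the Levi-compatibility lemmas do not presuppose invertibility of $\Theta$), but the division of labor in the paper is cleaner: the corollary's proof is purely the generator computation, and the reduction lives in the proof of Lemma~\ref{lem:Thetas-invertible}. Also worth noting: your first paragraph announces a specialization strategy and then never uses it; the argument that actually closes the proof is the $\sh$-linearity plus the action on the free rank-one generator, not a specialization check.
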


\begin{proof}
By complete reducibility it is enough to prove the result when $V=V^\nu$ for some $\nu \in \bX^+$. In this case, by Lemma \ref{lem:Theta-rk1} we have
\[
{}^s \hspace{-1pt} \bigl( \Theta_s^{V^\nu,s\la} \bigr) \circ \Theta_s^{V^\nu,\la} (\varphi_{\la}^{\nu}) = \varphi_{\la}^{\nu},
\]
which proves the claim since $\varphi_{\la}^{\nu}$ is a generator of $\Hom_{(\sh,\uh(\g))} \bigl( \M(0), V^\nu \o \M(\la) \bigr)$ over $\sh$ by Lemma \ref{lem:algebra-sl2}.
\end{proof}

\subsection{Comparison of $\Theta$ and $\Phi$}

\begin{lem}
\label{lem:comparison-Phi-Theta}
For any $V$ in $\Rep(G)$ and $\la \in \bX$,
the following diagram commutes:
\[
\xymatrix@C=2cm{
\bigl( V \o \tla \dh(\X)_\la \bigr)^G \ar[r]^-{\eqref{eqn:isom-D-Hom}}_-{\sim} \ar[d]_-{\Phi^{V,\la}_s} & \Hom_{(\sh,\uh(\g))} \bigl( \M(0), V \o \M(\la) \bigr) \ar[d]^-{\Theta^{V,\la}_s} \\
{}^s \hspace{-1pt} \bigl( V \o  {}^{(s\la)} \hspace{-1pt} \dh(\X)_{s \la} \bigr)^G \ar[r]^-{\eqref{eqn:isom-D-Hom}}_-{\sim} & {}^s \hspace{-1pt} \Hom_{(\sh,\uh(\g))} \bigl( \M(0), V \o \M(s \la) \bigr).
}
\]
\end{lem}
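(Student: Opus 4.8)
The statement to be proved is Lemma~\ref{lem:comparison-Phi-Theta}: in semisimple rank one, the isomorphism \eqref{eqn:isom-D-Hom} intertwines $\Phi^{V,\la}_s$ and $\Theta^{V,\la}_s$. Since both sides are $\sh$-linear and depend additively (indeed exactly) on $V$, complete reducibility of $\Rep(G)$ reduces the claim to the case $V = V^\nu$ for a single dominant weight $\nu \in \bX^+$. The plan is then to evaluate both composites on a single convenient generator and check they agree.

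First I would fix $\la \in \bX$. If $\la$ is not a weight of $V^\nu$, then by Lemma~\ref{lem:algebra-sl2}(1) both source and target vanish (note $s\la$ is a weight iff $\la$ is), so there is nothing to prove. Otherwise write $\la = \nu - k\alpha$ with $0 \le k \le \nu(\alv)$, so $s\la = \nu - (\nu(\alv)-k)\alpha$. The key point is that we have explicit generators in hand: $x^\nu_\la \in (V^\nu \o \M(\la))^B$ from Lemma~\ref{lem:algebra-sl2}(2), with its avatar $y^\nu_\la \in (V^\nu \o \tla\dh(\X)_\la)^G$ under the first isomorphism of Lemma~\ref{lem:fixed-points-morphisms}, and its avatar $\varphi^\nu_\la \in \Hom_{(\sh,\uh(\g))}(\M(0), V^\nu \o \M(\la))$ under Lemma~\ref{lem:morphisms-Verma-v} (equivalently, under \eqref{eqn:isom-D-Hom} the elements $y^\nu_\la$ and $\varphi^\nu_\la$ correspond, essentially by definition of \eqref{eqn:isom-D-Hom}). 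Now Lemma~\ref{lem:Phi-rk1} gives $\Phi^{V^\nu,\la}_s(y^\nu_\la) = y^\nu_{s\la}$, and Lemma~\ref{lem:Theta-rk1} gives $\Theta^{V^\nu,\la}_s(\varphi^\nu_\la) = \varphi^\nu_{s\la}$. Chasing the square: starting from $y^\nu_\la$, going right gives $\varphi^\nu_\la$, then down gives $\varphi^\nu_{s\la}$; going down first gives $y^\nu_{s\la}$, then right gives $\varphi^\nu_{s\la}$ (again because $y^\nu_{s\la}$ and $\varphi^\nu_{s\la}$ correspond under \eqref{eqn:isom-D-Hom}). Since $y^\nu_\la$ generates the source as an $\sh$-module (by Lemma~\ref{lem:algebra-sl2}(2) and the fact that \eqref{eqn:isom-D-Hom} is an $\sh$-module isomorphism) and all four arrows are $\sh$-linear, the two composites agree on a generator, hence coincide.

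The only subtlety — and the step I expect to require the most care — is bookkeeping about which vectors in Lemma~\ref{lem:Phi-rk1} and Lemma~\ref{lem:Theta-rk1} are ``the same'' under \eqref{eqn:isom-D-Hom}, i.e.\ checking that the chain of identifications (first isomorphism of Lemma~\ref{lem:fixed-points-morphisms}, then Lemma~\ref{lem:morphisms-Verma-v}) used to define $y^\nu_\la \leftrightarrow x^\nu_\la \leftrightarrow \varphi^\nu_\la$ is exactly the isomorphism \eqref{eqn:isom-D-Hom} (which by definition is the composite of those two). This is essentially a matter of unwinding definitions from \S\ref{ss:diffX}, \S\ref{ss:specialization} and \S\ref{ss:relation-Phi-Theta}, and requires no computation beyond what is already recorded; once it is in place, the square commutes on the generator and we are done. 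All the genuinely computational input — the $\mathfrak{sl}_2$ formulas \eqref{eqn:modules-SL2}, \eqref{eqn:commutation-sl2}, the explicit form of $x^\nu_\la$, the partial Fourier transform computation of $\mathscr{F}_\alpha$, and the coefficient extraction from \eqref{eqn:formula-Theta-s} — has already been carried out in Lemmas~\ref{lem:algebra-sl2}, \ref{lem:Phi-rk1} and \ref{lem:Theta-rk1}.
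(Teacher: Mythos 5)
Your proof matches the paper's: the paper likewise reduces to $V = V^\nu$ by complete reducibility and then cites Lemma~\ref{lem:Phi-rk1} and Lemma~\ref{lem:Theta-rk1}. Your additional explanation of the generator chase ($y^\nu_\la \leftrightarrow x^\nu_\la \leftrightarrow \varphi^\nu_\la$ under \eqref{eqn:isom-D-Hom}) is just spelling out what the paper leaves implicit.
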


\begin{proof}
By complete reducibility it is enough to prove the lemma when $V=V^\nu$ for some $\nu \in \bX^+$. In this case it follows from Lemma \ref{lem:Phi-rk1} and Lemma \ref{lem:Theta-rk1}.
\end{proof}

\subsection{Operators $\sigma$}
\label{ss:sigma-rk1}

If $\nu \in \bX^+$ and $\la \in \bX$,
we denote by\index{znulambda@$z^{\nu}_{\lambda}$}%
\[
z_\la^\nu \in \bigl( V^\nu \o \Gamma(\wfg, \cO_{\wfg}(\la)) \bigr)^G
\]
the image of $y_\la^\nu$ under the natural morphism
\[
\bigl( V^\nu \o \tla \dh(\X)_\la \bigr)^G \to \bigl( V^\nu \o \Gamma(\wfg, \cO_{\wfg}(\la)) \bigr)^G
\]
sending $\hb$ to $0$. These elements can be naturally identified with the ones denoted similarly in the proof of Lemma \ref{lem:Phi-rk1}. By Lemma \ref{lem:hbar=0} and Lemma \ref{lem:algebra-sl2}, $z_\la^\nu$ is a generator of the $\mathrm{S}(\t)$-module $\bigl( V^\nu \o \Gamma(\wfg, \cO_{\wfg}(\la)) \bigr)^G$, and $z_\la^\nu \neq 0$ iff $\la$ is a weight of $V^\nu$.

\begin{lem}
\label{lem:sigma-rk1}
For $\nu \in \bX^+$ and $\la \in \bX$ we have
\[
\sigma^{V^\nu,\la}_s(z_\la^\nu) = z_{s\la}^\nu.
\]
\end{lem}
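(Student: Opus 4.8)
The statement to prove is Lemma~\ref{lem:sigma-rk1}: in semisimple rank one, $\sigma^{V^\nu,\la}_s(z_\la^\nu) = z_{s\la}^\nu$ for all $\nu \in \bX^+$ and $\la \in \bX$. The idea is to reduce this directly to the already-proved Lemma~\ref{lem:Phi-rk1} via the compatibility of $\sigma$ with $\Phi$ established in Proposition~\ref{prop:Phi-sigma}. Concretely, recall that by construction $z^\nu_\la$ is the image of $y^\nu_\la$ under the natural map $\bigl(V^\nu \o \tla \dh(\X)_\la\bigr)^G \to \bigl(V^\nu \o \Gamma(\wfg,\cO_{\wfg}(\la))\bigr)^G$ sending $\hb$ to $0$, and likewise $z^\nu_{s\la}$ is the image of $y^\nu_{s\la}$. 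By Lemma~\ref{lem:Phi-rk1} we have $\Phi^{V^\nu,\la}_s(y^\nu_\la) = y^\nu_{s\la}$, and Proposition~\ref{prop:Phi-sigma} (applied with $w=s$, which is legitimate since $G$ has semisimple rank one, so $s$ is the unique simple reflection) gives a commutative square relating $\Phi^{V^\nu,\la}_s$ and $\sigma^{V^\nu,\la}_s$ through the vertical reduction-mod-$\hb$ maps. Chasing $y^\nu_\la$ around this square: going right then down gives $\sigma^{V^\nu,\la}_s(z^\nu_\la)$, while going down then right gives the image of $\Phi^{V^\nu,\la}_s(y^\nu_\la) = y^\nu_{s\la}$, which is exactly $z^\nu_{s\la}$. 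Hence $\sigma^{V^\nu,\la}_s(z^\nu_\la) = z^\nu_{s\la}$.

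There is one subtlety to address: Proposition~\ref{prop:Phi-sigma} itself is proved (see its proof in the excerpt) by reduction to semisimple rank one and then by invoking Corollary~\ref{cor:Phi-sigma-rk1}, which is presumably a consequence of the present lemma. So to avoid a circular argument I would instead prove Lemma~\ref{lem:sigma-rk1} \emph{directly}, following the same template as the proofs of Lemmas~\ref{lem:Phi-rk1} and~\ref{lem:Theta-rk1}. First, if $\la$ is not a weight of $V^\nu$, then neither is $s\la$, so $z^\nu_\la = 0 = z^\nu_{s\la}$ by the last sentence of the paragraph preceding the lemma, and there is nothing to prove. So assume $\la = \nu - k\alpha$ with $0 \le k \le \nu(\alv)$; then $s\la = \nu - (\nu(\alv)-k)\alpha$ is also a weight of $V^\nu$. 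Since $\sigma^{V^\nu,\la}_s$ is an isomorphism of $\sym(\t)$-modules and $z^\nu_\la$, $z^\nu_{s\la}$ are generators of the respective rank-one free $\sym(\t)$-modules $\bigl(V^\nu \o \Gamma(\wfg,\cO_{\wfg}(\mu))\bigr)^G$ (by Lemma~\ref{lem:hbar=0} and Lemma~\ref{lem:algebra-sl2}), we know $\sigma^{V^\nu,\la}_s(z^\nu_\la) = c \cdot z^\nu_{s\la}$ for some $c \in \C^\times$, and the task is to show $c = 1$.

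To pin down $c$, I would use the explicit characterization of the isomorphism \eqref{eqn:isom-line-bundles} in terms of evaluation maps. Recall from \S\ref{ss:construction-sigma} that \eqref{eqn:isom-global-sections}, hence $\sigma^{V^\nu,\la}_s$, is the unique isomorphism making the evaluation triangle commute; and by Lemma~\ref{lem:evaluation-sigma}, when $w = s_\alpha$ one may use evaluation at $\eta_\alpha$ (the element of $\g^*$ supported on $\g_{-\alpha}$ with $\eta_\alpha(f_\alpha)=1$) instead of at $\eta_0$. Under the identification $\X \cong \widetilde{\B}$ and the isomorphism $T^*\X \cong G\times_U (\g/\u)^*$, the point $(1\times_B \eta_\alpha)$ of $\wfgr$ is exactly the image of the point $(v_1,\eta_2) \in T^*(\C^2)$ used in the proof of Lemma~\ref{lem:Phi-rk1} in the $\mathrm{SL}(2,\C)$ model (or its obvious analogue for a rank-one $G$). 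So I would compute $(\id_{V^\nu}\o \mathrm{ev}_{\eta_\alpha}^\la)(z^\nu_\la)$ and $(\id_{V^\nu}\o \mathrm{ev}_{\eta_\alpha}^{s\la})(z^\nu_{s\la})$ directly from the formula for $x^\nu_\mu$ in Lemma~\ref{lem:algebra-sl2}: all terms except the leading one $v^\nu_\nu \o 1 \o f^k$ involve a factor from $\t \subset \sym(\t)$ which vanishes at $\eta_\alpha$ (since $\eta_\alpha$ is zero on $\t$), so both evaluations equal $v^\nu_\nu \o 1$. Since $\sigma^{V^\nu,\la}_s$ intertwines these two evaluation maps (this is the content of the commuting triangle of Lemma~\ref{lem:evaluation-sigma}, transported through the isomorphisms $\bigl(V^\nu \o \Gamma(\wfg,\cO_{\wfg}(\mu))\bigr)^G \cong (V^\nu \o \sym(\g/\u)\o\C_{-\mu})^B$), we get $(\id_{V^\nu}\o\mathrm{ev}^{s\la}_{\eta_\alpha})(\sigma^{V^\nu,\la}_s(z^\nu_\la)) = (\id_{V^\nu}\o\mathrm{ev}^{\la}_{\eta_\alpha})(z^\nu_\la) = v^\nu_\nu\o 1 = (\id_{V^\nu}\o\mathrm{ev}^{s\la}_{\eta_\alpha})(z^\nu_{s\la})$, forcing $c = 1$.

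\textbf{Main obstacle.} The routine calculations (the evaluation of $x^\nu_\mu$ at $\eta_\alpha$) are easy. The real care is needed in the bookkeeping of \S\ref{ss:construction-sigma}: one must check that the normalization of $\sigma$ via $\mathrm{ev}_{\eta_\alpha}$ (Lemma~\ref{lem:evaluation-sigma}) matches the geometric point $(v_1,\eta_2)$ used in Lemma~\ref{lem:Phi-rk1}, i.e.\ that the $G$-equivariant identification $\X \simeq \widetilde{\B}$ sends $U/U$ to the pair $(\b, \kappa(f)|_\u)$ consistent with $\eta_\alpha$, and that the reduction-mod-$\hb$ map $\tla\dh(\X)_\la \to \Gamma(\wfg,\cO_{\wfg}(\la))$ is compatible with all the evaluation-at-a-point maps in sight. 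This is purely a matter of unwinding definitions, but it is where an error could hide; I would write this identification out carefully before invoking the evaluation computation.
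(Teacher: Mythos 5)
Your first instinct (reduce to Lemma~\ref{lem:Phi-rk1} via Proposition~\ref{prop:Phi-sigma}) would indeed be circular, as you correctly noticed, and the direct argument you then give is essentially identical to the paper's proof: show $\sigma^{V^\nu,\la}_s(z^\nu_\la) = c\, z^\nu_{s\la}$ for some scalar $c$ (since both sides generate a rank-one free $\sym(\t)$-module), then pin down $c=1$ by evaluating both sides and using the characterization of $\sigma$ through the evaluation triangle. One small simplification: since $G$ has semisimple rank one, $\eta_0 = \eta_\alpha$, so you can work directly with the defining diagram~\eqref{eqn:diagram-sigma} and need not invoke Lemma~\ref{lem:evaluation-sigma} at all; this is exactly what the paper does, evaluating at $\eta_0$.
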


\begin{proof}
The statement is clear if $\la$ is not a weight of $V^\nu$. Now assume that $\la=\nu-k\alpha$ for some $k \in \{0, \cdots, \nu(\alv)\}$. As $\sigma^{V^\nu,\la}_s$ is an isomorphism of $\mathrm{S}(\t)$-modules we must have $\sigma^{V^\nu,\la}_s(z_\la^\nu) = c \cdot z_{s\la}^\nu$ for some $c \in \C^\times$. Now recall the commutative diagram \eqref{eqn:diagram-sigma}. We have 
\[
(\id_{V^\nu} \o \mathrm{ev}^\la_{\eta_0})(z_\la^\nu)= v_\nu^\nu =(\id_{V^\nu} \o \mathrm{ev}^{s\la}_{\eta_0})(z_{s\la}^\nu).
\]
This proves that $c=1$, and finishes the proof.
\end{proof}

\begin{cor}
\label{cor:Phi-sigma-rk1}
Let $V$ in $\Rep(G)$ and $\la \in \bX$. The following diagram commutes, where vertical maps are the natural morphisms sending $\hb$ to $0$:
\[
\xymatrix@C=2cm{
\bigl( V \o \tla \dh(\X)_\la \bigr)^G \ar[r]^-{\Phi^{V,\la}_{s}} \ar[d] & \bigl( V \o {}^{(s\la)} \hspace{-1pt} \dh(\X)_{s\la} \bigr)^G \ar[d] \\
\bigl( V \o \Gamma(\wfg, \cO_{\wfg}(\la)) \bigr)^G \ar[r]^-{\sigma^{V,\la}_s} & \bigl( V \o \Gamma(\wfg, \cO_{\wfg}(s\la)) \bigr)^G
}
\]
\end{cor}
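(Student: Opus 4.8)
The plan is to deduce the corollary from the explicit rank-one computations already isolated in Lemmas \ref{lem:Phi-rk1} and \ref{lem:sigma-rk1}, after reducing to the case of a simple module.

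First I would use that $\Rep(G)$ is semisimple, together with the additivity and functoriality in $V$ of all the maps appearing in the diagram (the horizontal isomorphisms $\Phi^{V,\la}_s$ and $\sigma^{V,\la}_s$, and the vertical maps induced by $\hb \mapsto 0$), to reduce to the case $V = V^\nu$ for some $\nu \in \bX^+$. If $\la$ is not a weight of $V^\nu$, then by Lemma \ref{lem:algebra-sl2}(1) and Lemma \ref{lem:hbar=0} all four corners of the square vanish and there is nothing to check; so I would henceforth assume $\la = \nu - k\alpha$ with $0 \le k \le \nu(\alv)$. In this case Lemma \ref{lem:algebra-sl2}(2), via the first isomorphism of Lemma \ref{lem:fixed-points-morphisms}, tells us that $\bigl(V^\nu \o \tla \dh(\X)_\la\bigr)^G$ is a free $\sh$-module of rank one generated by the element $y^\nu_\la$, and by definition the left-hand vertical map sends $y^\nu_\la$ to the element $z^\nu_\la$, which by Lemma \ref{lem:hbar=0} and Lemma \ref{lem:algebra-sl2} is a generator of $\bigl(V^\nu \o \Gamma(\wfg,\oo_{\wfg}(\la))\bigr)^G$ over $\sym(\t)$; similarly the right-hand vertical map sends $y^\nu_{s\la}$ to $z^\nu_{s\la}$.

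Next, since both composites around the square are morphisms of $\sh$-modules (with $\sh$ acting on the bottom row through the surjection $\sh \twoheadrightarrow \sym(\t)$ given by $\hb \mapsto 0$, and with the twist ${}^s(\cdot)$ on the right-hand column, as in \S\ref{ss:W-symmetries}), it suffices to compare them on the generator $y^\nu_\la$. Chasing $y^\nu_\la$ across and then down gives $y^\nu_\la \mapsto y^\nu_{s\la} \mapsto z^\nu_{s\la}$ by Lemma \ref{lem:Phi-rk1} and the definition of $z^\nu_{s\la}$; chasing it down and then across gives $y^\nu_\la \mapsto z^\nu_\la \mapsto z^\nu_{s\la}$ by the definition of $z^\nu_\la$ and Lemma \ref{lem:sigma-rk1}. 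These coincide, which proves the claim.

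The genuine content is entirely contained in Lemmas \ref{lem:Phi-rk1} and \ref{lem:sigma-rk1}, whose proofs reduce to the $\mathrm{SL}(2,\C)$ case and an evaluation-at-a-point argument; so the only point needing care in the present argument is the bookkeeping that makes the pointwise check on $y^\nu_\la$ sufficient — namely that the vertical maps are $\sh$-linear and carry $y^\nu_\la$ to $z^\nu_\la$ — and this is immediate from Lemma \ref{lem:hbar=0} and the very definition of $z^\nu_\la$. I do not expect any real obstacle.
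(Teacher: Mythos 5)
Your proposal is correct and follows exactly the paper's own route: reduce to $V=V^\nu$ by semisimplicity of $\Rep(G)$, then deduce the commutativity by comparing Lemma \ref{lem:Phi-rk1} and Lemma \ref{lem:sigma-rk1} on the generator $y^\nu_\la$. The paper states this tersely; your write-up merely spells out the $\sh$-linearity and generator bookkeeping that the paper leaves implicit.
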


\begin{proof}
By complete reducibility it is enough to prove the claim when $V=V^\nu$ for some $\nu \in \bX^+$. In this case it follows by comparing Lemma \ref{lem:Phi-rk1} and Lemma \ref{lem:sigma-rk1}.
\end{proof}

\subsection{Satake equivalence}

From now on we use the notation of Sections \ref{sec:Satake}--\ref{sec:applications}, assuming in addition that $\Gv$ has semisimple rank one. We denote by $\alpha$ the unique positive coroot of $\Gv$.
Note that $\alv$ and $\hb$ can be considered either as characters of $A=\Tv \times \C^\times$ or as elements of $\a^* = \t \oplus \C$.

To simplify the statements of the next results we introduce the following notation. For $\la \in \bX$ and $k \geq 1$ we define the $A$-module\index{Vlambdak@$V^{\lambda}_k$}%
\[
V^\la_k \ := \ \C_{-\alv-(\la(\alv)+1)\hb} \oplus \C_{-\alv-(\la(\alv)+2)\hb} \oplus \cdots \oplus \C_{-\alv-(\la(\alv)+k)\hb}.
\]
We also set $V^\la_0=\{0\}$. Note that $\dim(V^\la_k)=k$, and that there are natural inclusions $V_k^\la \subset V_{k+1}^\la$.

\begin{lem}
\label{lem:T-rk1}
Let $\la \in \bX$. 
\begin{enumerate}
\item
Assume $\la(\alv) \geq 0$.
For $\nu \in \bX^+$ we have isomorphisms of $A$-varieties
\[
\fT_\la \cap \Gr_\Gv^\nu \ \cong \ \begin{cases}
\{\bla\} & \text{if  $\nu=\la$;} \\
V^\la_k \smallsetminus V^\la_{k-1} & \text{if $\nu = \la + k \alpha$ for some $k \in \Z_{>0}$;} \\
\emptyset & \text{otherwise.}
\end{cases}
\]
\item
Assume $\la(\alv) < 0$.
For $\nu \in \bX^+$ we have isomorphisms of $A$-varieties
\[
\fT_\la \cap \Gr_\Gv^\nu \ \cong \ \begin{cases}
V^\la_{-\la(\alv)} & \text{if  $\nu=\la+(-\la(\alv)) \alpha$;} \\
V^\la_k \smallsetminus V^\la_{k-1} & \text{if $\nu = \la + k \alpha$ for some $k \in \Z_{>-\la(\alv)}$;} \\
\emptyset & \text{otherwise.}
\end{cases}
\]
\end{enumerate}
\end{lem}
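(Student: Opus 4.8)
The statement is purely a computation about the intersection of a semi-infinite orbit $\fT_\la$ with a spherical orbit $\Gr_\Gv^\nu$ in the rank-one affine Grassmannian, and the natural approach is to reduce to an explicit coordinate description of $\Gr_\Gv$ and then simply write down the relevant loops. First I would reduce to the case $\Gv = \mathrm{SL}(2,\C)$ or $\mathrm{PGL}(2,\C)$: the affine Grassmannian of a rank-one group is, up to connected components, that of $\mathrm{SL}_2$ or $\mathrm{PGL}_2$, and the orbits $\Gr^\nu_\Gv$, the points $\bla$, and the semi-infinite orbits $\fT_\la = \Uv^-(\KK) \cdot \bla$ are all detected on a single connected component, compatibly with the identifications $\bX = X_*(\Tv)$; moreover the $A$-action is visibly compatible with passing between isogenous groups. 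So it suffices to treat $\Gv = \mathrm{SL}(2,\C)$ with its standard torus and the opposite Borel $\Bv^-$ of lower-triangular matrices, with $\alpha = \alv$ the unique coroot.

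Next I would recall the standard lattice description: $\Gr_{\mathrm{SL}_2}$ parametrizes $\OO$-lattices $L \subset \KK^2$ with $\det L = \OO$, the base point $\bla$ for $\la = n\alpha/2$ (i.e. $\la(\alv) = n$) corresponds to the lattice $L_n := z^{-a}\OO \oplus z^{a}\OO$ (for appropriate $a$ depending on $n$), the $\Gv(\OO)$-orbit $\Gr^\nu$ is the set of lattices in relative position $\nu$ with $L_0$, and $\fT_\la$ is the $\Uv^-(\KK)$-orbit of $L_n$, which consists of lattices of the form $\begin{pmatrix} 1 & 0 \\ c(z) & 1 \end{pmatrix} L_n$ with $c(z) \in \KK$. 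A lattice in this $\Uv^-(\KK)$-orbit is then classified, up to the stabilizer $\Uv^-(\OO)\cdot(\text{torus part})$, by the principal part of $c(z)$; one computes directly that as $c(z)$ ranges over polynomials in $z^{-1}$ of degree $\le k$ in $z^{-1}$ (with suitable normalization depending on the sign of $n = \la(\alv)$), the resulting lattice lies in $\Gr^\nu$ with $\nu = \la + k\alpha$, and the locus where the degree is exactly $k$ is $\fT_\la \cap \Gr^\nu$. This identifies $\fT_\la \cap \Gr^\nu$ with an affine space minus a smaller affine space, of the stated dimension, and the cases where $\la$ is dominant versus antidominant differ precisely in whether the "minimal" value of $k$ is $0$ or $-\la(\alv)$ (this is exactly the statement of \cite[Equation (3.6)]{mv}, already used in the proof of Lemma \ref{lem:transversal-slice}, specialized to rank one).

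Finally I would pin down the $A$-weights. The torus $\Tv$ acts on the coefficient of $z^{-j}$ in $c(z)$ by the character $-\alv$ (conjugation of the lower-triangular entry), and the loop-rotation $\C^\times$ scales $z^{-j}$ by $z^{-j}$, hence acts with an extra factor involving $\hb$; combined with the normalization of the base point $\bla$ (which contributes the shift by $\la(\alv)$), the coordinate dual to the coefficient of the appropriate power of $z^{-1}$ carries weight $-\alv - (\la(\alv)+j)\hb$, for $j = 1, \dots, k$. This is exactly the $A$-module $V^\la_k$ in the statement, and removing the smaller affine space $V^\la_{k-1}$ gives the complement. The case distinction on the sign of $\la(\alv)$ affects only the range of $j$ and whether the smallest orbit is a point or a full affine space $V^\la_{-\la(\alv)}$. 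The main obstacle, such as it is, is bookkeeping: getting the normalization of the base point $\bla$ right so that the $\hb$-shifts come out as $\la(\alv) + j$ rather than $j$ or $j - \la(\alv)$, and being careful about which connected component / which isogeny class one works in so that the coweight $\la$ is correctly matched with $n = \la(\alv)$. Once the coordinates on the $\Uv^-(\KK)$-orbit are fixed, everything else is a direct unwinding of the $A$-action, and the comparison with \cite[(3.6)]{mv} handles the combinatorics of which $k$ occur.
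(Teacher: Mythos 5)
Your approach is essentially the paper's: reduce to a rank-one classical group, parametrize $\fT_\la$ explicitly by the lower-left entry (equivalently the principal part of your $c(z)$), decide which $\Gv(\OO)$-orbit a point lies in from the pole order by an explicit matrix manipulation, and read off the $A$-weights from the coordinates together with the $z^{\pm}$-normalization of the base point $\bla$; your weight bookkeeping $-\alv-(\la(\alv)+j)\hb$ is correct.

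One caveat on the reduction step: it does \emph{not} suffice to treat $\Gv=\mathrm{SL}(2,\C)$. For $\mathrm{SL}_2$ every coweight satisfies $\la(\alv)\in 2\Z$, so the lattice model with $\det L=\OO$ only sees the components of $\Gr_\Gv$ carrying the ``even'' fixed points; for a general rank-one $\Gv$ (e.g.\ adjoint type) there are coweights with $\la(\alv)$ odd, living on components whose orbit pattern is genuinely different (the minimal stratum there is a $\mathbb{P}^1$-type orbit rather than a point), and these are precisely the cases where the two branches of the lemma diverge. This is why the paper reduces instead to $\mathrm{PGL}(2,\C)$ (every component of $\Gr_\Gv$ is a component of $\Gr_{\mathrm{PGL}_2}$, with $Z(\Gv)$ acting trivially) and then runs the matrix computation twice, once for even and once for odd weights, with base points $\mathrm{diag}(z^{\ell},z^{-\ell})$ and $\mathrm{diag}(z^{\ell+1},z^{-\ell})$ respectively. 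Your computation extends verbatim to the odd component if you enlarge the lattice model to $\det L=z\OO$ (equivalently work in $\mathrm{GL}_2$ or $\mathrm{PGL}_2$), but as written the reduction leaves out half the cases; the flag you raise about ``which component/isogeny class'' is exactly this point and needs to be resolved rather than dismissed as bookkeeping. Also note the paper's proof is self-contained at the step you delegate to a ``direct computation'' plus \cite[Eq.\ (3.6)]{mv}: it exhibits the explicit $\Gv(\OO)$-factorization identifying the orbit, which is the only real content of the argument.
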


\begin{proof}
Each connected component of $\Gr_\Gv$ is isomorphic to a connected component of $\Gr_{\mathrm{PGL}(2,\C)}$, and the action of $\ker(\alv)=Z(\Gv) \subset \Tv$ is trivial. Hence it is enough to prove the isomorphism when $\Gv=\mathrm{PGL}(2,\C)$. In this case we can identify $\bX$ with $\Z$ through $\mu \mapsto \mu(\alv)$, so that $\nu$ and $\la$ can be considered as integers.

\emph{First case: even weights}. Write $\la=2\ell$ with $\ell \in \Z$. Then $\bla$ is the class of the matrix
\[
\left(
\begin{array}{cc}
z^{\ell} & 0 \\
0 & z^{-\ell}
\end{array}
\right),
\]
and $\fT_{\lambda}$ is given by the classes of matrices of the form
\[
M(Q) = \left( \begin{array}{cc}
z^{\ell} & 0 \\
Q(z) & z^{-\ell}
\end{array} \right)
\]
where $Q(z) \in z^{-\ell-1} \C[z^{-1}]$. If $Q(z)=0$, then this point is in $\Gr^{\lambda}$. Otherwise, write $Q(z)=a z^{-m} + \cdots$, where $a \neq 0$, $m > \ell$, and ``$\cdots$'' means terms of degree between $-m+1$ and $-\ell-1$. Assume first that $m+\ell \geq 0$. (This condition is always satisfied when $\ell \geq 0$. Note also that it implies $m \geq 0$.) Then we have
\[
\left(
\begin{array}{cc}
z^{\ell} & 0 \\
Q(z) & z^{-\ell}
\end{array}
\right) \cdot
\left(
\begin{array}{cc}
z^{m-\ell} & R(z) \\
-z^{m} Q(z) & 0
\end{array}
\right) = 
\left(
\begin{array}{cc}
1 & z^{m+\ell}R(z) \\
0 & 1
\end{array}
\right) \cdot
\left(
\begin{array}{cc}
z^{m} & 0 \\
0 & z^{-m}
\end{array}
\right)
\]
where $R(z) \in \OO$ is the inverse to $z^{m}Q(z)$. This equality implies that $M(Q) \in \Gr^{2m}$.

If $\ell<0$, then we also have to consider the case $\ell < m < -\ell$. However, in this case $M(Q)$ is in $\Gv(\OO) \cdot \bla = \Gr_\Gv^{\la+(-\la(\alv)) \alpha}$.
This settles the first case.

\emph{Second case: odd weights}. Write $\la=2\ell+1$ with $\ell \in \Z$. Then $\bla$ is the class of the matrix
\[
\left(
\begin{array}{cc}
z^{\ell+1} & 0 \\
0 & z^{-\ell}
\end{array}
\right),
\]
and $\fT_{\lambda}$ is given by the classes of matrices of the form
\[
N(Q) = \left( \begin{array}{cc}
z^{\ell+1} & 0 \\
Q(z) & z^{-\ell}
\end{array} \right)
\]
where $Q(z) \in x^{-\ell-1} \C[z^{-1}]$. If $Q(z)=0$, then this point is in $\Gr^{\lambda}$. Otherwise, write as above $Q(z)=a z^{-m} + \cdots$, where $a \neq 0$ and $m > \ell$. We have the following equality:
\[
\left(
\begin{array}{cc}
z^{\ell+1} & 0 \\
Q(z) & z^{-\ell}
\end{array}
\right) \cdot
\left(
\begin{array}{cc}
z^{m-\ell} & R(z) \\
-z^{m}Q(z) & 0
\end{array}
\right) = 
\left(
\begin{array}{cc}
1 & z^{\ell+m+1}R(z) \\
0 & 1
\end{array}
\right) \cdot
\left(
\begin{array}{cc}
z^{m+1} & 0 \\
0 & z^{-m}
\end{array}
\right)
\]
where as above $R(z) \in \OO$ is the inverse to $z^{m}Q(z)$. This equality implies that $N(Q) \in \Gr^{2m+1}$ if $\ell+m+1 \geq 0$.

If $\ell<0$ and $\ell+m+1 < 0$, then $N(Q)$ is in $\Gv(\OO) \cdot \bla=\Gr^{\lambda+(-\la(\alv)) \alpha}$. This settles the second case, and finishes the proof.
\end{proof}

The following result is a direct consquence of Lemma \ref{lem:T-rk1}. It is stated without proof in \cite{bf, brf}.

\begin{cor}
\label{cor:intersections-rk1}
Let $\nu \in \bX^+$.
\begin{enumerate}
\item If $\lambda \in \bX$ is not of the form $\nu - k \alpha$ for some $k \in \{0, \cdots, \nu(\alv)\}$, then $\fT_\la \cap \overline{\Gr_\Gv^\nu}=\emptyset$.
\item If $k \in \{0, \cdots, \nu(\alv)\}$ and $\la=\nu-k\alpha$, then there exists an isomorphism of $A$-varieties 
\[
\fT_{\la} \cap \overline{\Gr_\Gv^\nu} \ \cong \ V^{\nu-k\alpha}_k
\]
sending $\bla$ to $0$.
\end{enumerate}
\end{cor}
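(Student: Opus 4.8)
\textbf{Plan of proof for Corollary \ref{cor:intersections-rk1}.} The claim is a purely combinatorial consequence of Lemma \ref{lem:T-rk1}, so the plan is to extract it by a careful bookkeeping of which orbits $\Gr_\Gv^\nu$ meet $\fT_\la$, together with the observation that for a dominant $\nu$ one has $\overline{\Gr_\Gv^\nu} = \bigsqcup_{\mu} \Gr_\Gv^\mu$ where $\mu$ runs over dominant weights with $\mu \leq \nu$ in the usual dominance order on the coweight lattice. Since $\Gv$ has semisimple rank one, the dominant weights below $\nu$ are exactly $\nu, \nu-\alpha, \dots, \nu - \nu(\alv)\alpha$ (these are the weights of the simple module $V^\nu$ under geometric Satake), and more generally the $\mu \leq \nu$ are $\nu - j\alpha$ for $0 \le j \le \nu(\alv)$. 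This is the only input about closures of orbits I will need.

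\textbf{Step 1.} Fix $\nu \in \bX^+$ and $\la \in \bX$. Decompose $\fT_\la \cap \overline{\Gr_\Gv^\nu} = \bigsqcup_{\mu \le \nu} \bigl( \fT_\la \cap \Gr_\Gv^\mu \bigr)$ over dominant $\mu$, and feed each piece into Lemma \ref{lem:T-rk1}, distinguishing the two cases $\la(\alv) \ge 0$ and $\la(\alv) < 0$. For (1): if $\la$ is \emph{not} of the form $\nu - k\alpha$ with $0 \le k \le \nu(\alv)$, I need to check that $\fT_\la \cap \Gr_\Gv^\mu = \emptyset$ for every dominant $\mu \le \nu$. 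By Lemma \ref{lem:T-rk1} a nonempty intersection $\fT_\la \cap \Gr_\Gv^\mu$ forces $\mu = \la + k\alpha$ for some $k \ge 0$ (with $k \ge -\la(\alv)$ when $\la(\alv) < 0$); combined with $\mu \le \nu$, i.e.\ $\mu = \nu - j\alpha$ with $0 \le j \le \nu(\alv)$, this gives $\la = \nu - (j+k)\alpha$ with $0 \le j+k$, and one checks $j + k \le \nu(\alv)$ (using dominance of $\mu$ together with $k \ge 0$, resp.\ $k \ge -\la(\alv)$), contradicting the hypothesis. Hence the union is empty.

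\textbf{Step 2.} For (2), assume $\la = \nu - k\alpha$ with $0 \le k \le \nu(\alv)$. Now $\la(\alv) = \nu(\alv) - 2k$, which may have either sign, so both cases of Lemma \ref{lem:T-rk1} can occur; I will treat them uniformly. Running over dominant $\mu = \nu - j\alpha \le \nu$, Lemma \ref{lem:T-rk1} says $\fT_\la \cap \Gr_\Gv^\mu$ is nonempty exactly when $\mu = \la + i\alpha$, i.e.\ $j = k - i$, for the allowed range of $i \ge 0$; in that range it is isomorphic to $V^\la_i \smallsetminus V^\la_{i-1}$ (with the convention $V^\la_{-1} = \emptyset$), except that the smallest allowed $i$ contributes $V^\la_{i}$ rather than $V^\la_i \smallsetminus V^\la_{i-1}$. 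Since the bottom orbit $\Gr_\Gv^\nu$ itself corresponds to $i = k$ (as $\nu = \la + k\alpha$), the values of $i$ that actually occur inside $\overline{\Gr_\Gv^\nu}$ are $i = 0, 1, \dots, k$, and stacking the pieces $\{0\} \subset (V^\la_1 \smallsetminus V^\la_0) \subset \cdots \subset (V^\la_k \smallsetminus V^\la_{k-1})$ — which are precisely the $A$-stable strata of $V^\la_k$ by the nested filtration $V^\la_0 \subset \cdots \subset V^\la_k$ recorded before the lemma — reassembles $V^\la_k$ as an $A$-variety, with $\bla$ mapping to the origin $0 \in V^\la_k$. This gives the desired isomorphism $\fT_\la \cap \overline{\Gr_\Gv^\nu} \cong V^{\nu-k\alpha}_k$.

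\textbf{Main obstacle.} The only delicate point is the bookkeeping at the two ends of the range of $i$ — making sure that the ``$V^\la_i \smallsetminus V^\la_{i-1}$'' strata from Lemma \ref{lem:T-rk1} glue to the honest affine space $V^\la_k$ rather than to some open subset of it, and that the case distinction $\la(\alv) \ge 0$ vs.\ $\la(\alv) < 0$ does not spoil this (in the second case the ``bottom'' stratum of $\fT_\la$ is $V^\la_{-\la(\alv)}$, which lies strictly above $\Gr_\Gv^\nu$ unless $k = -\la(\alv)$, so one must verify it never enters $\overline{\Gr_\Gv^\nu}$). Concretely one checks that $i \le k$ is equivalent to $\nu - (k-i)\alpha \le \nu$, and that within $\overline{\Gr_\Gv^\nu}$ the smallest occurring $i$ is $0$, yielding the point $\{\bla\}$; the identification of $\bla$ with $0$ is then immediate from the ``sending $\bla$ to $0$'' clause already present in Lemma \ref{lem:T-rk1}. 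Everything else is routine, and the statement matches what is asserted (without proof) in \cite{bf, brf}.
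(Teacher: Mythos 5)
Your strategy---decompose $\overline{\Gr_\Gv^\nu}$ into orbits $\Gr_\Gv^\mu$ with $\mu\in\bX^+$, $\mu\le\nu$, intersect each with $\fT_\la$ via Lemma \ref{lem:T-rk1}, and reassemble---is exactly what the paper intends (it calls the Corollary ``a direct consequence'' of Lemma \ref{lem:T-rk1}), and your conclusion is correct. But the case $\la(\alv)<0$ is handled incorrectly in two places, and the errors happen to cancel.

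In Step 2 you write that ``the values of $i$ that actually occur inside $\overline{\Gr_\Gv^\nu}$ are $i=0,1,\dots,k$.'' That is only right when $\la(\alv)\ge 0$. When $\la(\alv)<0$, Lemma \ref{lem:T-rk1}(2) gives nonempty intersections only for $i\ge -\la(\alv)$: the coweights $\la+i\alpha$ with $0\le i<-\la(\alv)$ are not dominant and index no orbit. So the strata are a single fat affine cell $V^\la_{-\la(\alv)}$ (at $i=-\la(\alv)$) followed by the thin cells $V^\la_i\smallsetminus V^\la_{i-1}$ for $-\la(\alv)<i\le k$, and the union is still $V^\la_k$ because the fat cell absorbs the thin ones you skipped. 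Then your ``Main obstacle'' paragraph claims that this fat cell ``lies strictly above $\Gr_\Gv^\nu$ unless $k=-\la(\alv)$, so one must verify it never enters $\overline{\Gr_\Gv^\nu}$.'' That is backwards: the fat cell lies in $\Gr_\Gv^{s_\alpha\la}$, and $s_\alpha\la=\la+(-\la(\alv))\alpha=\nu-(\nu(\alv)-k)\alpha$ is dominant and $\le\nu$ whenever $0\le k\le\nu(\alv)$, so it \emph{always} enters---and it must, for otherwise you would obtain $V^\la_k\smallsetminus V^\la_{-\la(\alv)}$ rather than $V^\la_k$, contradicting your own (correct) conclusion. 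Finally, a smaller slip: the dominant $\mu\le\nu$ are $\nu-j\alpha$ with $0\le j\le\lfloor\nu(\alv)/2\rfloor$, not with $0\le j\le\nu(\alv)$; the latter range describes the full set of weights of $V^\nu$, most of which are not dominant. This slip is harmless (you also impose dominance of $\mu$ separately), but it muddies the opening remarks.
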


By the constructions of \S\ref{ss:Satake} we have a dual group $G$ (which is also of semisimple rank $1$) and a maximal torus $T \subset G$ such that $\bX=X^*(T)$.
If $\nu \in \bX^+$, we set $\IC_\nu:=\IC \bigl( \overline{\Gr_\Gv^\nu}, \underline{\C}_{\Gr_\Gv^\nu} \bigr)$.\index{ICnu@$\IC_{\nu}$}%

\begin{cor}
\label{cor:kgeom-rk1}
For $\nu \in \bX^+$ and $k \in \{0, \cdots, \nu(\alv)\}$, the image of the morphism
\[
\kgeom_{\IC_\nu,\nu-k\alpha} : \coH^{\hdot}_A(i_{\nu-k\alpha}^! \IC_\nu) \to \bigl( \cS_\Gv(\IC_\nu) \bigr)_{\nu-k\alpha} \o \sh \cong \sh
\]
is generated by $\Bigl( \alv + \bigl( \nu(\alv)-k \bigr) \hbar \Bigr) \cdots \Bigl(\alv + \bigl( \nu(\alv) -2k+1 \bigr) \hbar \Bigr)$.
\end{cor}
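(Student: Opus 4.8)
The statement to be proved is Corollary~\ref{cor:kgeom-rk1}: for $\nu \in \bX^+$ and $k \in \{0,\dots,\nu(\alv)\}$, the image of $\kgeom_{\IC_\nu,\nu-k\alpha}$ in $\sh$ is generated by the polynomial $\bigl( \alv + (\nu(\alv)-k)\hbar \bigr) \cdots \bigl(\alv + (\nu(\alv)-2k+1)\hbar \bigr)$. The plan is to compute this image directly from the geometry, using Corollary~\ref{cor:intersections-rk1}, which identifies the relevant piece of the affine Grassmannian. The key point is that $\IC_\nu$ is (a shift of) the constant sheaf on $\overline{\Gr_\Gv^\nu}$ on the relevant locus, so that the cofiber computation reduces to equivariant (co)homology of a single affine space with a linear torus action, which is handled by the reminder in \S\ref{ss:equiv-cohomology}.

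\textbf{Key steps.} First I would recall that, by definition, $\kgeom_{\IC_\nu,\nu-k\alpha}$ is the composition of the pushforward $(\imath_{\nu-k\alpha})_! : \coH^\hdot_A(i_{\nu-k\alpha}^! \IC_\nu) \to \coH^\hdot_A(\fT_{\nu-k\alpha}, t_{\nu-k\alpha}^! \IC_\nu)$ with the isomorphism \eqref{zeta}. Since $\IC_\nu$ is supported on $\overline{\Gr_\Gv^\nu}$, one has $i_{\nu-k\alpha}^! \IC_\nu = i_{\nu-k\alpha}^! (\IC_\nu)_{|\overline{\Gr_\Gv^\nu}}$ and $t_{\nu-k\alpha}^! \IC_\nu$ is supported on $\fT_{\nu-k\alpha} \cap \overline{\Gr_\Gv^\nu}$; hence the morphism $(\imath_{\nu-k\alpha})_!$ only sees the closed inclusion $\{\boldsymbol{\nu-k\alpha}\} \hookrightarrow \fT_{\nu-k\alpha} \cap \overline{\Gr_\Gv^\nu}$. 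By Corollary~\ref{cor:intersections-rk1}(2), this inclusion is, $A$-equivariantly, the inclusion of the origin $0 \hookrightarrow V^{\nu-k\alpha}_k$, an affine space of dimension $k$ with $A$-weights $-\alv-(\nu(\alv)-k+1)\hbar, \dots, -\alv-(\nu(\alv))\hbar$ (one should double-check the precise list of weights: with $\la = \nu - k\alpha$ one has $\la(\alv) = \nu(\alv) - 2k$, so $V^\la_k$ has weights $-\alv - (\la(\alv)+1)\hbar, \dots, -\alv-(\la(\alv)+k)\hbar$, i.e.\ $-\alv - (\nu(\alv)-2k+1)\hbar, \dots, -\alv-(\nu(\alv)-k)\hbar$). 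Next, on $\fT_{\nu-k\alpha} \cap \overline{\Gr_\Gv^\nu} \cong V^{\nu-k\alpha}_k$ the restriction of $\IC_\nu$ is the constant sheaf (up to a shift by $\la((2\rhov)$ as built into the definition of $\kgeom$), because this intersection lies inside $\Gr_\Gv^\nu$ where $\IC_\nu$ is smooth; thus $t_{\nu-k\alpha}^! \IC_\nu$ identifies with the (shifted) equivariant dualizing sheaf. Then the morphism $(\imath_{\nu-k\alpha})_!$ becomes the Gysin map $\iota_! : \coH^A_\hdot(\pt) \to \coH^A_\hdot(V^{\nu-k\alpha}_k)$ considered in \S\ref{ss:equiv-cohomology}, which sends $[\pt]$ to $[V^{\nu-k\alpha}_k] \cdot d(\lambda_1)\cdots d(\lambda_k)$ where $\lambda_1,\dots,\lambda_k$ are the $A$-weights of $V^{\nu-k\alpha}_k$; under the identifications $\coH^2_A(\pt) \cong \a^*$ and $\coH^\hdot(\fT_{\nu-k\alpha},t_{\nu-k\alpha}^! \IC_\nu)$-generator corresponding via \eqref{zeta} to the generator $1 \in (\cS_\Gv(\IC_\nu))_{\nu-k\alpha} \otimes \sh$, the product $d(\lambda_1)\cdots d(\lambda_k)$ becomes, up to sign (which can be absorbed into the choice of generator), the polynomial $\prod_{j}(\alv + (\nu(\alv)-k+j)\hbar)$ with $j$ running over $0,\dots,k-1$, i.e.\ $(\alv + (\nu(\alv)-k)\hbar) \cdots (\alv + (\nu(\alv)-2k+1)\hbar)$. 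Finally I would note that since $\coH^\hdot_A(i_{\nu-k\alpha}^! \IC_\nu)$ is a free $\sh$-module of rank one (Lemma~\ref{lem:equiv-cohomology-first-properties}(1), combined with the fact that $\dim (\cS_\Gv(\IC_\nu))_{\nu-k\alpha} = 1$ in semisimple rank one), the image of $\kgeom$ is exactly the $\sh$-submodule generated by this polynomial.

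\textbf{Main obstacle.} The genuinely delicate point is bookkeeping: tracking the cohomological shifts (the $\langle \la(2\rhov)\rangle$ in the definition of $\kgeom_{\cF,\la}$ and in \eqref{zeta}), making sure the identification of $\fT_{\nu-k\alpha} \cap \overline{\Gr_\Gv^\nu}$ with $V^{\nu-k\alpha}_k$ as an $A$-variety (Corollary~\ref{cor:intersections-rk1}) is compatible with the one implicitly used in \eqref{zeta}, and verifying that the generator of $\coH^\hdot_A(\fT_{\nu-k\alpha}, t_{\nu-k\alpha}^! \IC_\nu)$ corresponding to $1 \in (\cS_\Gv(\IC_\nu))_{\nu-k\alpha} \otimes \sh$ under \eqref{zeta} is precisely the fundamental class $[V^{\nu-k\alpha}_k]$ used in the Gysin computation (rather than a nonzero scalar multiple, or a class differing by a unit). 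Once these normalizations are pinned down, the computation of the Gysin map is the routine Euler-class formula from \S\ref{ss:equiv-cohomology}. It is also worth remarking that this corollary is the geometric counterpart of Corollary~\ref{cor:image-algebra-sl2} on the algebraic side, so as a sanity check the two answers must coincide — which they do, under the substitution $\la = \nu - k\alpha$ — and this consistency is exactly what feeds into the rank-one case of Theorem~\ref{thm:loop-equivariant-version} invoked in \S\ref{ss:thm-cofibers-rk1}.
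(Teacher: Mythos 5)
Your approach is essentially the same as the paper's, and the computation in the second half is correct; but there is one genuine gap in the justification of the key step. You claim that $t_{\nu-k\alpha}^! \IC_\nu$ identifies with the (shifted) equivariant dualizing complex of $\fT_{\nu-k\alpha}\cap\overline{\Gr_\Gv^\nu}\cong V^{\nu-k\alpha}_k$ ``because this intersection lies inside $\Gr_\Gv^\nu$ where $\IC_\nu$ is smooth.'' That is false for $k>0$: by Lemma~\ref{lem:T-rk1}, the part of the intersection lying in the open stratum $\Gr_\Gv^\nu$ is only $V^{\nu-k\alpha}_k\smallsetminus V^{\nu-k\alpha}_{k-1}$; the closed subspace $V^{\nu-k\alpha}_{k-1}$ --- and in particular the origin, which is the very point $\boldsymbol{\nu-k\alpha}$ whose cofiber you are computing --- sits in lower strata. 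So you cannot argue locally on the open cell.

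The fact you need is the \emph{rational smoothness} of $\overline{\Gr_\Gv^\nu}$, which holds since $\Gv$ has semisimple rank one. This gives $\IC_\nu\cong\underline{\C}_{\overline{\Gr_\Gv^\nu}}[\nu(\alv)]\cong\underline{\mathbb{D}}_{\overline{\Gr_\Gv^\nu}}[-\nu(\alv)]$ \emph{globally} on the support, not just on the open stratum, and it is this identification (applied both for the cofiber at the point and for the corestriction to $\fT_{\nu-k\alpha}$) that lets you pass to equivariant Borel--Moore homology of $\{\bla\}$ and of $V^{\nu-k\alpha}_k$ respectively, and to interpret $(\imath_\la)_!$ as proper pushforward. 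Once this is in place, the rest of your argument --- the identification of the intersection via Corollary~\ref{cor:intersections-rk1}, the weight bookkeeping $\la(\alv)=\nu(\alv)-2k$, and the Euler-class formula from \S\ref{ss:equiv-cohomology} yielding $\prod_{j=1}^k\bigl(\alv+(\nu(\alv)-2k+j)\hbar\bigr)$ up to a sign absorbed into the normalization of generators --- goes through exactly as you describe and matches the paper.
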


\begin{proof}
It is well known that $\overline{\Gr_\Gv^\nu}$ is rationally smooth. This implies that
\[
\IC_\nu \cong \underline{\C}_{\overline{\Gr_\Gv^\nu}}[\nu(\alv)] \cong \underline{\mathbb{D}}_{\overline{\Gr_\Gv^\nu}}[-\nu(\alv)].
\]
We deduce isomorphisms
\[
\coH_A^\hdot(i_{\nu-k\alpha}^! \IC_\nu) \cong \coH^A_{\hdot-\nu(\alv)}(\{\bla\}), \qquad \coH_A^\hdot(t_{\nu-k\alpha}^! \IC_\nu) \cong \coH^A_{\hdot-\nu(\alv)+2k}(\fT_{\nu-k\alpha} \cap \overline{\Gr_\Gv^\nu}),
\]
and the morphism 
$
\coH_{A}^{\hdot}(i_{\nu-k\alpha}^! \IC_{\nu}) \to \coH_{A}^{\hdot}(t_{\nu-k\alpha}^! \IC_{\nu})
$
identifies with the morphism
\[
\coH^{A}_{\hdot-\nu(\alv)}(\{\bla\}) \to \coH^{A}_{\hdot-\nu(\alv)+2k}(\fT_{\nu-k\alpha} \cap \overline{\Gr_\Gv^{\nu}})
\]
given by proper push-forward in equivariant Borel--Moore homology. Hence we deduce the result from the description of $\fT_{\nu-k\alpha} \cap \overline{\Gr_\Gv^{\nu}}$ in Corollary \ref{cor:intersections-rk1} and the considerations on equivariant (co)homology in \S\ref{ss:equiv-cohomology}.
\end{proof}

\subsection{Proof of Theorem \ref{thm:loop-equivariant-version} for $\Gv$}
\label{ss:thm-cofibers-rk1}

By semisimplicity of the category $\Perv_{\Gv(\OO)}(\Gr)$,
it is enough to prove the theorem when $\cF=\IC_\nu$ for some $\nu \in \bX^+$. Then $V^\nu := \cS_\Gv(\IC_\nu)$ is the $G$-module with highest weight $\nu$. Comparing Corollary \ref{cor:image-algebra-sl2} and Corollary \ref{cor:kgeom-rk1} we observe that indeed the images of $\kgeom_{\IC_\nu,\la}$ and $\kalg_{V^\nu,\la}$ coincide, which implies the existence of the isomorphism $\zeta_{\IC_\nu, \la}$.

\subsection{Root vector and Mirkovi{\'c}--Vilonen basis}

We denote by $e \in \g_{\alpha}$ the vector constructed in \S\ref{ss:root-vectors}.

Let $\nu \in \bX^+$ and $k \in \{0, \cdots, \nu(\alv)\}$. Let $V^\nu:=\cS_\Gv(\IC_\nu)$, a simple $G$-module with highest weight $\nu$. As in the proof of Corollary \ref{cor:kgeom-rk1}, we have canonical isomorphisms
\[
V^\nu_{\nu-k\alpha} \overset{\eqref{eqn:weight-space-Satake}}{\cong} \coH^{\nu(\alv)-2k}(t_{\nu-k\alpha}^! \IC_\nu) \cong \coH^A_{0}(\fT_{\nu-k\alpha} \cap \overline{\Gr_\Gv^\nu}).
\]
The right-hand side is $1$-dimensional, and has a canonical generator, namely the fundamental class $[\fT_{\nu-k\alpha} \cap \overline{\Gr_\Gv^\nu}]$. We denote by $v^\nu_{\nu-k\alpha} \in V^\nu_{\nu-k\alpha}$ the vector corresponding to this generator.

\begin{rem}
Beware that the basis we consider here is not the same as the one used in \cite[\S 5.2]{bf} or in \cite[\S 2.2]{brf}, but rather the basis which is dual in the sense of Poincar{\'e} duality.
\end{rem}

The following lemma is a special case of \cite[Th{\'e}or{\`e}me 2]{baumann}.

\begin{lem}
For any $\nu \in \bX^+$ and $k \in \{0, \cdots, \nu(\alv)\}$ we have
\[
e \cdot v^{\nu}_{\nu - k\alpha} = k v_{\nu-(k-1)\alpha}^{\nu}.
\]
\end{lem}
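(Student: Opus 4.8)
The statement to be proved is the formula $e \cdot v^{\nu}_{\nu - k\alpha} = k\, v^{\nu}_{\nu-(k-1)\alpha}$ relating the action of the root vector $e \in \g_\alpha$ (constructed via Chern classes in \S\ref{ss:root-vectors}) to the Mirkovi\'c--Vilonen-type basis $\{v^\nu_{\nu-k\alpha}\}$ defined just above via fundamental classes of the intersections $\fT_{\nu-k\alpha} \cap \overline{\Gr_\Gv^\nu}$. Since this is asserted to be a special case of \cite[Th\'eor\`eme 2]{baumann}, the first thing I would do is recall precisely what that theorem says: Baumann expresses the Chevalley generators of $\g$ (on the Satake side) in terms of the MV basis, and the point is simply to match conventions. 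So the plan is essentially a \emph{conventions-matching} argument rather than a new computation.

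\textbf{Key steps.} First I would recall that, by the construction of \S\ref{ss:root-vectors}, the operator ``$e$'' acting on $V^\nu = \cS_\Gv(\IC_\nu)$ is, up to the canonical identification of $\g_\alpha$ with the $\alpha$-weight space of $\mathfrak{m}^\alpha$, given by cup product with the first Chern class $c_1(\cL)$ of the positive generator $\cL$ of the Picard group of $\Gr^\circ_{\Lv^\alpha}$, composed with the appropriate projections (as in \cite[\S 3.4]{yz}); here $\Lv^\alpha$ is the relevant rank-one Levi. Second, I would reduce to the rank-one case: the construction of the root vectors is compatible with restriction to the Levi (as noted at the end of \S\ref{ss:root-vectors}), and the weight spaces $V^\nu_{\nu-k\alpha}$ and the varieties $\fT_{\nu-k\alpha}\cap\overline{\Gr^\nu_\Gv}$ only involve the $\alpha$-direction, so we may assume $\Gv$ has semisimple rank one, i.e.\ $\Gv = \mathrm{PGL}(2,\C)$ or $\mathrm{SL}(2,\C)$. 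Third, in this rank-one situation I would use the explicit geometry: by Corollary \ref{cor:intersections-rk1}, $\fT_{\nu-k\alpha}\cap\overline{\Gr^\nu_\Gv}\cong V^{\nu-k\alpha}_k$ is an affine space of dimension $k$, and $v^\nu_{\nu-k\alpha}$ is (the class Poincar\'e-dual to) its fundamental class inside $\coH^A_0$. The cup product with $c_1(\cL)$ then has to be computed against these fundamental classes; this is exactly the content of \cite[Th\'eor\`eme 2]{baumann}, or can be done directly by the hyperbolic-localization / Borel--Moore homology manipulations recalled in \S\ref{ss:equiv-cohomology}, using that the line bundle $\cL$ restricts to the intersections with a degree that grows linearly in $k$.

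\textbf{Main obstacle.} The only genuine subtlety is the normalization: one must check that the particular basis used here (fundamental classes, i.e.\ the \emph{Poincar\'e-dual} MV basis, as flagged in the Remark preceding the statement) produces precisely the coefficient $k$ (and not, say, $\nu(\alv)-k+1$, which is what one gets for the ``other'' MV basis, nor some binomial factor). I would handle this by carefully tracking the Poincar\'e-duality identifications $V^\nu_{\nu-k\alpha}\cong\coH^{\nu(\alv)-2k}(t^!_{\nu-k\alpha}\IC_\nu)\cong\coH^A_0(\fT_{\nu-k\alpha}\cap\overline{\Gr^\nu_\Gv})$ used in defining $v^\nu_{\nu-k\alpha}$, comparing with Baumann's normalization in \cite{baumann}, and noting that dualizing swaps $k \leftrightarrow \nu(\alv)-k$ in a way that turns the ``lowering'' coefficient into the ``raising'' coefficient $k$. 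Since the proof in the paper is presumably just the one-line citation ``This is a special case of \cite[Th\'eor\`eme 2]{baumann}'', I would keep my write-up correspondingly short: state the reduction to rank one, invoke Baumann's result, and add one sentence reconciling the two basis conventions via Poincar\'e duality.
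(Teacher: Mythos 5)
Your proposal is correct and matches the paper's approach: the paper gives no proof at all beyond the sentence preceding the lemma, ``The following lemma is a special case of \cite[Th{\'e}or{\`e}me 2]{baumann},'' which is exactly what you anticipated. Your elaboration of what lies behind the citation (rank-one reduction, the Chern-class construction of $e$ from \S\ref{ss:root-vectors}, and the Poincar\'e-duality normalization flagged in the preceding remark) is a sensible unpacking of that one-line reference, though the paper itself leaves all of it implicit.
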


It follows from this lemma that, for the choice of root vector $e \in \g_{\alpha}$, the basis just constructed satisfies conditions \eqref{eqn:modules-SL2}. Hence we can use the results and notation of \S\S\ref{ss:Verma-rk1}--\ref{ss:sigma-rk1} for this choice.

We can now give a more precise version of Theorem \ref{thm:loop-equivariant-version} for $\Gv$. As in the proof of Corollary \ref{cor:kgeom-rk1} there exists a canonical isomorphism
\[
\coH_A^\hdot(i_{\nu-k\alpha}^! \IC_\nu) \ \cong \ \coH^A_{\hdot-\nu(\alv)}(\{\bla\}).
\]
The right-hand side has a canonical generator, namely the unique element\index{cnulambda@$c^{\nu}_{\lambda}$}%
\[
c^\nu_{\nu-k\alpha} \in \coH_A^{\nu(\alv)}(i_{\nu-k\alpha}^! \IC_\nu)
\]
whose image in $\coH^{\nu(\alv)}(i_{\nu-k\alpha}^! \IC_\nu) \cong \coH_0(\{\bla\})$ is the fundamental class $[\{\bla\}]$. Then (see \S\ref{ss:equiv-cohomology}) we have
\beq{eqn:zeta-rk1}
\zeta_{\IC_\nu, \nu-k\alpha}(x^\nu_{\nu-k\alpha}) = c^\nu_{\nu-k\alpha}.
\eeq

\subsection{Proof of Theorem \ref{thm:W-symmetry} for $\Gv$}
\label{ss:W-symmetry-rk1}

We have already proved in Lemma \ref{lem:comparison-Phi-Theta} that the operators $\Phi$ and $\Theta$ match under the natural isomorphisms. Hence we only have to compare them with operators $\Xi$. Moreover, it is enough to prove the theorem in the case $\cF=\IC_\nu$ for some $\nu \in \bX^+$. By construction (see \S\ref{ss:W-symmetries}) we have $\Xi^{\IC_\nu,\nu-k\alpha}_s(c^\nu_{\nu-k\alpha}) = c^\nu_{\nu-(\nu(\alv)-k)\alpha}$. Hence the claim follows from \eqref{eqn:zeta-rk1} and Lemma \ref{lem:Phi-rk1} or Lemma \ref{lem:Theta-rk1}.

\section{Fourier transform for differential operators}
\label{sec:appendix-Fourier}

In this appendix we briefly explain how to adapt some classical constructions of Fourier transform for $\dd$-modules (see e.g.~\cite{bmv}) to the asymptotic setting.

\subsection{Partial Fourier transform}
\label{ss:partial-Fourier-definition}




Let $X$ be a smooth complex algebraic variety, and let $p : E \to X$ be
a  rank $r$ algebraic vector bundle. Let
${\check p} : E^* \to X$ be the dual vector bundle,
and let ${\mathfrak E}:=E\times_X E^*$ be the total space of the direct
sum $E\oplus E^*$, a vector bundle on $X$ of rank $2r$.
The canonical pairing of $E$ and $E^*$ gives a regular
function ${\mathbf f}: E\times_X E^*\to\C$. We define a 
connection $\nabla: \oo_{\mathfrak E}\to \Omega^1_{\mathfrak E}$ by
$\nabla=d-d{\mathbf f}$. This connection is flat and
makes ${\mathcal P}=(\oo_{\mathfrak E},\nabla)$ a holonomic left
$\dd_{\mathfrak E}$-module, with {\em irregular singularities}.
Explicitly, we have ${\mathcal P}=\dd_{\mathfrak E}/{\mathcal J}$ where
${\mathcal J}\sset \dd_{\mathfrak E}$ is the left ideal generated
by the elements $\xi-\xi({\mathbf f})$ for all
vector fields $\xi$ on ${\mathfrak E}$. (Note that this ideal is the annihilator of the function $\exp \circ \mathbf{f}$ on ${\mathfrak E}$, the classical kernel for the Fourier transform.)

Now, let $\epsilon: {\mathfrak E}=E\times_X E^*\into E\times E^*$ be the natural
closed embedding and let $\epsilon_*{\mathcal P}$ be a direct
image of the $\dd$-module ${\mathcal P}$. Thus, $\epsilon_*{\mathcal P}$
is a holonomic left $\dd_{E\times E^*}$-module supported
on the subvariety ${\mathfrak E}$. Write $\mathcal{K}_Y$ for 
 the canonical bundle
on a smooth variety $Y$ and
let 
\[
{\mathcal Q}:=(\oo_E\boxtimes{\check p}^*\mathcal{K}_X)\o_{\oo_{E \times E^*}}
\epsilon_*{\mathcal P}.
\]
The sheaf ${\mathcal Q}$
has the structure of a module of the algebra 
\[
\dd_E\boxtimes
\bigl( {\check p}^*\mathcal{K}_X\o_{\oo_{E^*}} \dd_{E^*}\o_{\oo_{E^*}} {\check p}^*\mathcal{K}_X\inv \bigr),
\]
and this module  has a canonical section $1_{\mathcal Q}\in{\mathcal Q}$ that corresponds
to the section $1\, \mathrm{mod}\, {\mathcal J}\in \dd_{\mathfrak E}/{\mathcal J}$.
Furthermore,
a local computation shows that 
${\mathcal Q}$ is a rank one free module (with generator $1_{\mathcal Q}$) over the ring
$\dd_E\boxtimes 1$,
as well as over the ring $1\boxtimes
\bigl( {\check p}^*\mathcal{K}_X\o\dd_{E^*}\o{\check p}^*\mathcal{K}_X\inv \bigr)$.

Let $\overline{p} : {\mathfrak E} \to X$ be the natural projection. Then $\overline{p}_* {\mathcal{Q}}$ is a rank one free module both over $p_* \dd_E$ and over $\mathcal{K}_X\o_{\oo_X} {\check p}_* \dd_{E^*}\o_{\oo_X} \mathcal{K}_X\inv$, with a canonical generator $1_{\overline{p}_* {\mathcal{Q}}}$.
Therefore, there is a uniquely determined morphism
$F: p_* \dd_E \to \mathcal{K}_X\o {\check p}_* \dd_{E^*}\o \mathcal{K}_X\inv$
such that one has $u \cdot 1_{\overline{p}_* \mathcal Q}=F(u) \cdot 1_{\overline{p}_* \mathcal Q}$.
It is immediate to check that  this morphism
is an anti-isomorphism of rings,
i.e.~it induces a ring isomomorphism
\beq{Fo}
p_* \dd_E \simto (\mathcal{K}_X\o_{\oo_X} {\check p}_* \dd_{E^*}\o_{\oo_X} \mathcal{K}_X\inv)^{\mathrm{op}}.
\eeq
On the other hand, we have
$\mathcal{K}_{E^*}={\check p}^*(\det(E)\o_{\oo_X} \mathcal{K}_X)$, where
 $\det(E)$ denotes the sheaf of sections
of the line bundle $\wedge^r E$ on $X$.
Hence, using the well-known isomorphism
$\dd_{E^*}^{\mathrm{op}}\cong \mathcal{K}_{E^*}\o_{\oo_{E^*}} \dd_{E^*}\o_{\oo_{E^*}} \mathcal{K}_{E^*}\inv$, we compute:
\begin{align*}
\bigl( \mathcal{K}_X\o_{\oo_X} {\check p}_* \dd_{E^*}\o_{\oo_X} \mathcal{K}_X\inv \bigr)^{\mathrm{op}} &
\cong\ \mathcal{K}_X\inv\o_{\oo_X} {\check p}_*(\dd_{E^*}^{\mathrm{op}})\o_{\oo_X}
 \mathcal{K}_X\\
&\cong \
\mathcal{K}_X\inv\o_{\oo_X} {\check p}_* \bigl( \mathcal{K}_{E^*}\o_{\oo_{E^*}} \dd_{E^*}\o_{\oo_{E^*}} \mathcal{K}_{E^*}\inv \bigr)
\o_{\oo_X} \mathcal{K}_X\\
& \cong\
\det(E)\o_{\oo_X} {\check p}_* \dd_{E^*}\o_{\oo_X} \det(E)\inv.
\end{align*}

Thus, from \eqref{Fo} we deduce a canonical isomorphism of sheaves of algebras on $X$, called {\em Fourier isomorphism}:
\[
{\mathbf F}:\ p_*\dd_E\ \simto \ \det(E)\o_{\oo_X} ({\check p}_* \dd_{E^*})
\o_{\oo_X} 
\det(E)^{-1}.
\]

Now, in a Rees algebra setting,
we define ${\mathcal J}_\hbar$ to be the left ideal of $\dd_{\hb,{\mathfrak E}}$
generated by the elements $\xi- \xi({\mathbf f})$ for all
vector fields $\xi$ on ${\mathfrak E}$, viewed as degree 2 homogeneous elements
of the graded algebra $\dd_{\hb,{\mathfrak E}}$. (Here we use the notational conventions of \S\ref{subsec2}. Note that the ideal ${\mathcal J}_\hb$ is the annihilator of the ``function'' $\exp(\frac{1}{\hb} \mathbf{f})$, considered as an element of some completion of $\oo_X[\hb,\hb^{-1}]$.)
We put
${\mathcal P}_\hb:=\dd_{\hb,{\mathfrak E}}/{\mathcal J}_\hb$,
 a $\dd_{\hb,{\mathfrak E}}$-module. 
Note that the ideal ${\mathcal J}_\hb$ is not homogeneous
so the module ${\mathcal P}_\hb$ has no natural grading.

Under the specialization $\hb=0$, we have
$\dd_{\hb,{\mathfrak E}}/(\hb)=(p_{{\mathfrak E}})_*\oo_{T^*{\mathfrak E}}$,
where $p_{{\mathfrak E}}: T^*{\mathfrak E}\to{\mathfrak E}$ is the cotangent bundle.
The differential of the function ${\mathbf f}$ gives
a section $d{\mathbf f}: {\mathfrak E}\to  T^*{\mathfrak E}$. The image of this
section is a smooth closed lagrangian subvariety
$\Lambda\sset T^*{\mathfrak E}$,
so the sheaf $(p_{{\mathfrak E}})_*\oo_\Lambda$ has a natural structure
of a $(p_{{\mathfrak E}})_*\oo_{T^*{\mathfrak E}}$-module.
Then, it  follows from definitions that the projection
$\dd_{\hb,{\mathfrak E}}\onto {\mathcal P}_\hb$ induces
an isomorphism of $(p_{{\mathfrak E}})_*\oo_{T^*{\mathfrak E}}$-modules:
\[
{\mathcal P}_\hb/(\hb)\ \cong\ (p_{{\mathfrak E}})_*\oo_\Lambda.
\]
We also consider the  cotangent bundle
$\mathbf{q}: T^*(E\times E^*)\to E\times E^*$ and let
$T^*(E\times E^*)|_{\mathfrak E}$ denote the total space of
the restriction of the  cotangent bundle
to ${\mathfrak E}\sset E\times E^*$, a closed subvariety. 
One has a natural diagram
$$
\xymatrix{
T^*E \times T^*(E^*)\ \ar@{=}[r] &\
T^*(E\times E^*)\ && \ T^*(E\times E^*)|_{\mathfrak E}\
\ar@{_{(}->}[ll]_<>(0.5){\varepsilon}\ar@{->>}[rr]^<>(0.5){\mathrm{pr}} &&
\ T^*{\mathfrak E}
}
$$
Here, the isomorphism
on the left involves a sign and the map $\mathrm{pr}$ on
the right is a smooth morphism.

The following
result is easily verified by a local computation.
\begin{lem}\label{ZL}The variety $Z:=\varepsilon(\mathrm{pr}\inv(\Lambda))$
is a smooth Lagrangian subvariety of $T^*(E \times E^*)$.
Furthermore, this subvariety is the graph of
an isomorphism $T^*E \iso T^*(E^*)$,
of algebraic varieties over $X$.
\end{lem}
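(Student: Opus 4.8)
The statement is entirely local over $X$, so the plan is to fix a trivializing open subset $X' \subset X$ over which $E$ becomes $X' \times V$ for some $r$-dimensional vector space $V$, and then to compute everything in terms of coordinates. Over such an open set the function $\mathbf{f}$ on ${\mathfrak E} = X' \times V \times V^*$ is the tautological pairing $\mathbf{f}(x,v,\xi) = \langle \xi, v \rangle$, and $\Lambda = \mathrm{d}\mathbf{f}({\mathfrak E}) \subset T^*{\mathfrak E}$. The first step is to write $T^*{\mathfrak E}$ in coordinates: with base coordinates $x$ on $X'$, fiber coordinates $v = (v_1,\dots,v_r)$ on $V$ and $\xi = (\xi_1,\dots,\xi_r)$ on $V^*$, and conjugate momenta $(p_x, p_v, p_\xi)$, the section $\mathrm{d}\mathbf{f}$ sends a point to $(x,v,\xi; 0, \xi, v)$ — that is, $p_x = 0$, $p_{v_i} = \xi_i$, $p_{\xi_i} = v_i$.

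\textbf{Key steps.} Next I would trace this Lagrangian through the two maps in the diagram. The smooth surjection $\mathrm{pr} : T^*(E \times E^*)|_{\mathfrak E} \to T^*{\mathfrak E}$ is, in these coordinates, the quotient that kills the conormal directions to ${\mathfrak E}$ inside $E \times E^*$ (recall ${\mathfrak E} = E \times_X E^*$ is cut out by the equations identifying the two copies of $X$); so $\mathrm{pr}^{-1}(\Lambda)$ is obtained by adjoining an arbitrary conormal covector, while retaining $p_x = 0$, $p_{v_i} = \xi_i$, $p_{\xi_i} = v_i$. Then $\varepsilon$ is the closed embedding $T^*(E \times E^*)|_{\mathfrak E} \hookrightarrow T^*(E\times E^*) = T^*E \times T^*(E^*)$, which by the remark in the excerpt involves the sign convention identifying $T^*(E \times E^*)$ with $T^*E \times T^*(E^*)$ (the minus sign landing on the $E^*$-factor momenta). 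Carrying this out, $Z := \varepsilon(\mathrm{pr}^{-1}(\Lambda))$ becomes, in coordinates on $T^*E \times T^*(E^*)$ with base point $x$, fiber points $v$ resp. $\xi$, and momenta $(q_x, q_v)$ on $T^*E$ and $(\tilde q_x, \tilde q_\xi)$ on $T^*(E^*)$: the locus $q_v = \xi$, $\tilde q_\xi = -v$ (sign from $\varepsilon$), with $q_x, \tilde q_x$ determined by the compatibility coming from $p_x=0$ together with the conormal adjunction — concretely $q_x + \tilde q_x = 0$. One checks directly that these $2r + \dim X$ equations on the $2(2r + \dim X)$-dimensional symplectic manifold cut out a smooth subvariety of the right dimension, and that the restriction of the symplectic form $\omega_{T^*E} \boxminus \omega_{T^*(E^*)}$ vanishes on it (the cross terms $\mathrm{d}q_v \wedge \mathrm{d}v$ and $\mathrm{d}\tilde q_\xi \wedge \mathrm{d}\xi$ cancel precisely because of the relations $q_v = \xi$, $\tilde q_\xi = -v$), so $Z$ is Lagrangian. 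Finally, reading the equations as expressing $(\xi, \tilde q_\xi, \tilde q_x)$ — i.e. a point of $T^*(E^*)$ — in terms of $(v, q_v, q_x)$ — i.e. a point of $T^*E$ — and vice versa, exhibits $Z$ as the graph of an isomorphism $T^*E \xrightarrow{\sim} T^*(E^*)$ of varieties over $X$; this is of course the classical symplectic Fourier transform, linear on fibers, sending $(v, q_v) \mapsto (q_v, -v)$ fiberwise.

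\textbf{Expected main obstacle.} The genuine content is bookkeeping rather than any deep idea: the one place where care is really required is tracking the sign and the conormal-direction identifications through $\varepsilon$ and $\mathrm{pr}$ simultaneously, since $\mathrm{pr}$ is a submersion (so $\mathrm{pr}^{-1}(\Lambda)$ genuinely gains dimension) while $\varepsilon$ is an embedding (so $Z$ sits inside a bigger space), and one must verify that the composite lands on something of the expected dimension $\dim X + 2r$ rather than something larger or smaller. Once the coordinates are set up as above this is a short verification, but it is the step I would write out most carefully. The Lagrangian property and the graph description then follow mechanically, and the fact that the resulting isomorphism $T^*E \simeq T^*(E^*)$ is exactly the one underlying the Fourier isomorphism $\mathbf{F}$ of the asymptotic algebras (equivalently, that $\mathcal{P}_\hb/(\hb) \cong (p_{\mathfrak E})_* \oo_\Lambda$ is compatible with it) is then immediate from the constructions recalled earlier in this subsection.
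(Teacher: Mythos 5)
Your proof is correct and is exactly the route the paper intends: the paper states Lemma \ref{ZL} is ``easily verified by a local computation,'' and your coordinate computation over a trivializing open set (tracking $\Lambda$ through $\mathrm{pr}$ and $\varepsilon$, checking the dimension, the vanishing of the cross terms, and reading off the fiberwise graph $(v,q_v)\mapsto(q_v,-v)$ compatible with Example \ref{ex:Fourier-SL2}) is precisely that verification. The only caveat is a bit of loose bookkeeping in the dimension count (your equation/dimension tallies are stated fiberwise over the diagonal copy of $X$, omitting the base-diagonal condition), but this does not affect the argument.
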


To proceed further we observe that, for any  smooth variety $Y$,
the sheaf   $\mathcal{K}_Y[\hb]$ has a canonical
right
$\dd_{\hb,Y}$-action such that  a vector
field $\xi\in \mathscr{T}_Y$ acts on  $\mathcal{K}_Y$ by 
$\beta\mto -\hb\cdot L_\xi\beta$,
where  $L_\xi$ stands for the
Lie derivative. We write  $\mathcal{K}_Y^\hb$ for the resulting
right
$\dd_{\hb,Y}$-module. Then, one has a canonical
isomorphism 
\[
\dd_{\hb,Y}^{op}\ \cong\
\mathcal{K}_Y^\hb\o_{\oo_Y[\hb]}\dd_{\hb,Y}\o_{\oo_Y[\hb]}
(\mathcal{K}_Y^\hb)\inv.
\]
Note that  this isomorphism specializes at $\hb=0$ to the
identity map $(p_{{\mathfrak E}})_*\oo_{T^*Y}\to (p_{{\mathfrak E}})_*\oo_{T^*Y}$.

Next, mimicing the corresponding constructions for
$\dd$-modules, one can define a direct image
$\epsilon_*{\mathcal P}_\hb$, a left $\dd_{\hb,E\times E^*}$-module.
Further, we put
${\mathcal Q}_\hb:=(\oo_E\boxtimes{\check p}^*\mathcal{K}^\hb_X)\o_{\oo_{E \times E^*}[\hb]}
\epsilon_*{\mathcal P}_\hb$.
Then, one checks that there is a natural isomorphism 
\[{\mathcal Q}_\hb/(\hb)\ \cong \ \mathbf{q}_*\oo_Z,\]
of $\mathbf{q}_*\oo_{T^*(E\times
  E^*)}$-modules.
Furthermore, repeating earlier constructions,
one obtains
 a canonical isomorphism
\begin{equation}
\label{eqn:isom-Fourier}
{\mathbf F}_\hb:\
p_* \dd_{\hb,E} \ \simto \ \det(E)[\hb] \o_{\oo_X[\hb]} ({\check p}_* \dd_{\hb,E^*}) \o_{\oo_X[\hb]} \det(E)^{-1}[\hb].
\end{equation}
This is an  isomorphism of sheaves of $\C[\hb]$-algebras on $X$.
This   isomorphism 
does not respect the natural gradings on each side in
\eqref{eqn:isom-Fourier}
 unless $r=0$ and it specializes, at $\hb=0$, to the
isomorphism $p_* \oo_{T^*E}\iso {\check p}_*\oo_{T^*(E^*)}$
that results from Lemma \ref{ZL}.

The above isomorphism 
can be described locally as follows. Let $U \subset X$ be an open subvariety over which $E$ is trivializable, and let us choose an isomorphism of vector bundles $E_{|U} \cong \C^r \times U$. Then for $i=1, \cdots, r$ we have a function $x_i$ on $E_{|U}$ given by the projection on the $i$-th copy of $\C$, and the corresponding vector fields $\partial_{x_i}$, so that we have an isomorphism of sheaves of $\C[\hb]$-algebras
\begin{equation}
\label{eqn:DEU}
\bigl( p_* \dd_{\hb,E} \bigr)_{|U} \ \cong \ \dd_{\hb,U} \o_{\C[\hb]} \bigl( \C\langle x_i, \partial_{x_i} \rangle / [\partial_{x_i}, x_i]=\hb \bigr).
\end{equation}
(Here $i$ runs over $\{1, \cdots, r\}$.)
Our isomorphism $E_{|U} \cong \C^r \times U$ also defines a canonical section $\tau$ of $\det(E)$ over $U$, the dual section $\tau^\vee$ of $\det(E)^{-1}$, and an isomorphism $(E^*)_{|U} \cong \C^r \times U$, so that we obtain functions $\xi_1, \cdots, \xi_r$ and vector fields $\partial_{\xi_1}, \cdots, \partial_{\xi_r}$ on $(E^*)_{|U}$, and an isomorphism
\begin{equation}
\label{eqn:DE*U}
\bigl( p_* \dd_{\hb,E^*} \bigr)_{|U} \ \cong \ \dd_{\hb,U} \o_{\C[\hb]} \bigl( \C\langle \xi_i, \partial_{\xi_i} \rangle / [\partial_{\xi_i}, \xi_i]=\hb \bigr).
\end{equation}
Then using isomorphisms \eqref{eqn:DEU} and \eqref{eqn:DE*U}, the restriction of isomorphism \eqref{eqn:isom-Fourier} to $U$ can be described as follows: it sends any $P \in \dd_{\hb,U}$ to $\tau \o P \o \tau^\vee$, $x_i$ to $\tau \o -\partial_{\xi_i} \o \tau^\vee$, and $\partial_{x_i}$ to $\tau \o \xi_i \o \tau^\vee$.

\subsection{``Symplectic'' partial Fourier transform}
\label{ss:symplectic-Fourier}

Now we assume that $E$ is a \emph{symplectic} vector bundle with symplectic form $\omega$ over a smooth complex algebraic variety $X$. Then we have an isomorphism of vector bundles
\[
E \simto E^*, \qquad v \mapsto \omega(v,-)
\]
over $X$, hence an induced isomorphism $p_* \dd_{\hb,E} \cong {\check p}_* \dd_{\hb,E^*}$.
Moreover, $\omega$ defines a trivialization of $\det(E)$. Hence isomorphism \eqref{eqn:isom-Fourier} provides an automorphism of $p_* \dd_{\hb,E}$. We denote by
\[
\mathbf{F}_E : \dh(E) \simto \dh(E)
\]
the induced automorphism. One can easily check that $\mathbf{F}_E$ is equivariant under the natural action of the group of symplectic automorphisms of $E$, and that we have $\mathbf{F}_E \circ \mathbf{F}_E = \id_{\dh(E)}$.

\begin{ex}
\label{ex:Fourier-SL2}
If $X=\mathrm{pt}$, then $E$ is simply a symplectic vector space. For instance, assume that $E=\C^2=\C v_1 \oplus \C v_2$, equipped with the symplectic form such that $\omega(v_1,v_2)=1$. Let $(\eta_1,\eta_2)$ be the basis of $E^*$ dual to $(v_1,v_2)$.
Then $\dh(E)$ is generated by $\eta_1,\eta_2$ (considered as functions on $E$) and $v_1,v_2$ (considered as vector fields on $E$), and $\mathbf{F}_E$ is defined by
\[
\eta_1 \mapsto v_{2}, \quad \eta_{2} \mapsto -v_1, \quad v_1 \mapsto -\eta_{2}, \quad v_{2} \mapsto \eta_1.
\]
\end{ex}

The following result (which can easily be checked using local trivializations) is used in \S\ref{ss:proof-restriction-DX}.

\begin{lem}
\label{lem:twist-Fourier}
Let $f \in \C[X]$ be an invertible function, which we consider as a function on $E$ via the projection $E \to X$. Then the automorphism of $\dh(E)$ given by $D \mapsto f^{-1} \cdot D \cdot f$ commutes with $\mathbf{F}_E$.
\end{lem}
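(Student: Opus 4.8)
The final statement to be proved is Lemma~\ref{lem:twist-Fourier}: if $f \in \C[X]$ is an invertible function, pulled back to $E$ via $p : E \to X$, then the algebra automorphism $D \mapsto f^{-1} \cdot D \cdot f$ of $\dh(E)$ commutes with the symplectic Fourier transform $\mathbf{F}_E$. The plan is to reduce everything to a purely local computation using the explicit description of $\mathbf{F}_\hb$ given at the end of \S\ref{ss:partial-Fourier-definition}, since all the constructions involved are local on the base $X$.

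First I would cover $X$ by open subvarieties $U$ over which $E$ is trivializable, choose a symplectic trivialization $E_{|U} \cong \C^{2n} \times U$ (where $2n$ is the rank of $E$), and use the induced isomorphisms \eqref{eqn:DEU} and \eqref{eqn:DE*U} together with the symplectic identification $E \simto E^*$. Over such a $U$, we get an isomorphism
\[
\bigl( p_* \dd_{\hb,E} \bigr)_{|U} \ \cong \ \dd_{\hb,U} \o_{\C[\hb]} \bigl( \C\langle x_i, \partial_{x_i} \rangle_{i=1}^{2n} \bigr),
\]
and the explicit local formula from the excerpt shows that $\mathbf{F}_E$ acts on $\dd_{\hb,U}$ by the identity, and sends each pair of Weyl-algebra generators $(x_i,\partial_{x_i})$ to another such pair (up to signs dictated by $\omega$), with the key point that the $\dd_{\hb,U}$-component is untouched. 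Now $f$, viewed as a function on $E_{|U}$, lies in $\oo_U[\hb] \subset \dd_{\hb,U}$, so conjugation $D \mapsto f^{-1} D f$ acts only through the $\dd_{\hb,U}$-tensor-factor: it fixes all the $x_i, \partial_{x_i}$ and acts on $\dd_{\hb,U}$ in the standard way (sending $P \in \dd_{\hb,U}$ to $f^{-1} P f$, in particular fixing $\oo_U[\hb]$ and sending $\xi \in \mathscr{T}_U$ to $\xi - \hb \xi(f)/f$). Since $\mathbf{F}_E$ is the identity on the $\dd_{\hb,U}$-factor and conjugation by $f$ is the identity on the Weyl-algebra factor, the two automorphisms act on disjoint sets of generators of the tensor-product algebra, hence manifestly commute. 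Finally, since this holds on each $U$ and both automorphisms are defined globally and are compatible with restriction to open subsets, they commute on all of $\dh(E) = \Gamma(X, p_* \dd_{\hb,E}) = \Gamma(E, \dd_{\hb,E})$.

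I do not expect a genuine obstacle here: the lemma is essentially bookkeeping once one has the explicit local form of $\mathbf{F}_\hb$. The only point requiring mild care is checking that the conjugation automorphism really does respect the tensor-product decomposition \eqref{eqn:DEU}---i.e. that $f \in \oo_U[\hb]$ commutes with the $x_i$ and $\partial_{x_i}$ inside $\bigl( p_* \dd_{\hb,E} \bigr)_{|U}$, which is clear because $f$ is a function on the base and the $x_i, \partial_{x_i}$ are fiber coordinates and fiber-direction vector fields---and that the signs introduced by the symplectic form $\omega$ in identifying $E$ with $E^*$ do not interfere (they only permute and sign-change the Weyl generators among themselves, never mixing in the $\dd_{\hb,U}$-factor). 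Thus the whole argument is a one-paragraph local verification dressed up with a globalization remark.
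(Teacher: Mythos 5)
Your proof is correct and follows exactly the route the paper indicates (the paper only says the lemma ``can easily be checked using local trivializations'' without writing it out, and you supply that local verification): choose a local symplectic trivialization, observe that in the resulting decomposition $\dd_{\hb,U}\o_{\C[\hb]}(\text{Weyl algebra})$ the Fourier transform acts only on the second tensor factor while conjugation by the base function $f$ acts only on the first, so the two automorphisms commute, and then globalize. One cosmetic slip: in your parenthetical aside the conjugate of a vector field should be $\xi + \hb\,\xi(f)/f$ rather than $\xi - \hb\,\xi(f)/f$, since $\xi f - f\xi = \hb\,\xi(f)$; this has no bearing on the argument.
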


\printindex

\bigskip


\end{document}